\newcommand{\beq}{\begin{equation}}
\newcommand{\eeq}{\end{equation}}
\newcommand{\beqs}{\begin{equation*}}
\newcommand{\eeqs}{\end{equation*}}
\newcommand{\ben}{\begin{eqnarray}}
\newcommand{\een}{\end{eqnarray}}
\newcommand{\beno}{\begin{eqnarray*}}
\newcommand{\eeno}{\end{eqnarray*}}
\renewcommand{\Re}{{\rm Re}\,}
\renewcommand{\Im}{{\rm Im}\,}
\newcommand{\Id}{{\rm Id}}
\newcommand{\Supp}{{\rm Supp}\,}
\newcommand{\Rmnum}[1]{\uppercase\expandafter{\romannumeral #1} }
 \numberwithin{equation}{section}
\DeclarePairedDelimiterX{\inp}[2]{\langle}{\rangle}{#1, #2}
\newtheorem{thm}{Theorem}[section]
\newtheorem{lem}[thm]{Lemma}
\newtheorem{prop}[thm]{Proposition}
\newtheorem{rmk}[thm]{Remark}
\newtheorem{cor}[thm]{Corollary}
\def\op {\mathrm{Op}}
\def\curl{\mathop{\rm curl}\nolimits}
\def \d {\mathrm {d}}
\def\cA{{\mathcal A}}
\def\cC{{\mathcal C}}
\def\cE{{\mathcal E}}
\def\cF{{\mathcal F}}
\def\cH{{\mathcal H}}
\def\cJ{{\mathcal J}}
\def\cK{{\mathcal K}}
\def\cL{{\mathcal L}}
\def\cM{{\mathcal M}}
\def\cN{{\mathcal N}}
\def\cO{{\mathcal O}}
\def\cP{{\mathcal P}}
\def\cQ{{\mathcal Q}}
\def\cR{{\mathcal R}}
\def\cS{{\mathcal S}}
\def\cT{{\mathcal T}}
\def\cY{{\mathcal Y}}
\let\f=\frac
\def \p {\partial}
\def\mR {\mathbb{R}}
\def\mZ {\mathbb{Z}}
\def\ep{\epsilon}
\def \pt {\partial_{t}}
\def \vr {\varrho}
\def \div {\,\mathrm{div}}
\def \vp {\varphi}
\def \vr {\varrho}
\def \cX {\mathcal{X}} 
\def \bI {\mathbb{I}}
\def \sech {\mathrm{sech}}
\def \kdv {\textnormal{\scalebox{0.8}{KdV}}}
\def \kp {
\textnormal{\scalebox{0.8}{KP}}} 
\def \diag {\textnormal{diag}}
\def \na{\nabla}
\title{Transverse asymptotic stability of line solitary \\ waves   for  the  Ionic Euler-Poisson system}
\begin{document}

\author{Fr\'ed\'eric Rousset}
\address{Universit\'e Paris-Saclay,  CNRS, Laboratoire de Math\'ematiques d'Orsay (UMR 8628),  91405 Orsay Cedex, France}
\email{frederic.rousset@universite-paris-saclay.fr }

\author{Changzhen Sun}
\address{Université Marie-et-Louis-Pasteur, Laboratoire de Mathématiques de Besançon, UMR CNRS 6623, 25000 Besançon, France}
\email{changzhen.sun@univ-fcomte.fr}

\maketitle
\begin{abstract}
    We prove the linear and nonlinear asymptotic stability of small amplitude one-dimensional solitary waves submitted to small localized irrotational perturbations  in the  three dimensional Euler-Poisson system describing the dynamics of ions. In particular, in this regime,  we obtain  the existence of global smooth solutions
    and describe their asymptotic behavior.
    \end{abstract}

\section{Introduction}
We consider the following three dimensional ionic Euler-Poisson system 
 \beqs
 \left\{
\begin{array}{l}
\displaystyle \pt \rho^i +\div_{\mathrm{x}}\big( \rho^i u^i\big)=0,\\[3pt]
\displaystyle \rho^i\pt  u^i+\rho^i u^i\cdot \na_{\mathrm{x}} u^i+
{\nabla_{\mathrm{x}} P(\rho^i)}+\nabla_{\mathrm{x}} \phi^i=0 ,  \\[2pt]
\displaystyle \Delta_{\mathrm{x}} \phi^i =\rho^e-\rho^i,\\[2pt]
\displaystyle u^i|_{t=0} =u_{0}^i,\, \rho^i|_{t=0}=\rho^i_{0}\,,
\end{array}
\right.
\eeqs
where the space variable $\mathrm{x}=(x, y_1, y_2)^t$ is in the whole space $\mathbb{R}^3.$
The unknowns $\rho^i\in \mR^{+}, u^i\in \mR^3, -\na_{\mathrm{x}}\phi^i$ stand for  the ions density, the ions velocity and the self-consistent electric field respectively. The pressure $P=P(\rho^i)$ is  assumed to be smooth and strictly increasing  with respect to $\rho^i.$ The above system, where  magnetic effects are  neglected,  is  a hydrodynamical model of plasma, describing the motion of ions coupled with a background  electron density. The electrons are assumed to be in
 thermodynamical equilibrium, satisfying the Maxwell-Boltzmann relation $\rho^e=\overline{\rho} \exp (\phi^i),$ where
$\overline{\rho}$ is a positive constant. For a thorough justification of this model from the more general two-fluid Euler-Poisson system, we refer to \cite{GGPS-derivationIEP}.
Without loss of generality, we assume that $\overline{\rho}=1$ and study the following system for $(n^i=\rho^i-1, u)^t:$  
\beq\label{EPION3d}
 \left\{
\begin{array}{l}
\displaystyle \pt n^i +\div_{\mathrm{x}}\big( (1+n^i) u\big)=0,\\[3pt]
\displaystyle \pt  u^i+u^i\cdot \na_{\mathrm{x}} u^i+
\f{\nabla_{\mathrm{x}} P(1+n^i)}{1+n^i}+\nabla_{\mathrm{x}} \phi=0 ,  \\[2pt]
\displaystyle \Delta_{\mathrm{x}} \phi =e^{\phi}-1-n^i,\\[2pt]
\displaystyle u^i|_{t=0} =u_{0},\,\, n^i|_{t=0}=n_{0}\,.
\end{array}
\right.
\eeq

Despite the quasilinear-hyperbolic nature of the higher order part of the  system, 
Guo and Pausader  in \cite{G-P-global} proved the global existence of  smooth irrotational small and localized  solutions for the above system, by using the dispersion of the linearized system combined  with the technique of normal form transformation (or more generally the  `space-time resonance' method \cite{G-N-T-normalform,GMS-annals,Germain-JEDP,WW-Acta}). This result can be seen as the  proof of  the asymptotic  stability of  the constant equilibrium
when submitted to small localized perturbations.  The Euler-Poisson  system also supports  many  physically  interesting nontrivial patterns  such as solitary  waves which are spatially localized traveling waves \cite{sagdeev1966cooperative,BK-existencesolitary-EP,degond} or periodic traveling waves \cite{degond,NRS-EEP}. To justify  their observation   in nature, the  study of the dynamics around these non-trivial  patterns is also of much importance. 

Regarding  the 
stability of solitary waves in the  one-dimensional Euler-Poisson system (1d EP), that is to say \eqref{EPION3d} restricted to functions depending only on $x \in \mathbb{R}$,  Haragus and Scheel proved  in \cite{Mariana-scheel-1dEP} the linear asymptotic stability in the context of  the pressureless (1d EP), where the pressure $P$ is taken as zero.
More recently, Bae and Kwon  studied  in \cite{Bae-Kwon-linearEP}  the case of  general pressure laws.  Both studies are performed  in the framework of exponentially weighted spaces in the spirit of the earlier works of Pego and Weinstein for the KdV equation and related models \cite{Pego-Weinstein-kdv,Pego-Weinstein-Boussinesq} 
and rely crucially on the 
KdV approximation in the long-wave regime \cite{Guo-Pu-kdvlimit}. The main reason for the use of this  approach is that the energy-momentum
approach to the orbital stability cannot be used even formally  since the energy momentum functional has no sign for high frequencies. 
The nonlinear stability of one-dimensional solitary waves in (1d EP) remains an open problem.
Note that an important  difficulty in order to get nonlinear  stability results  for (1d EP), even in the study of the stability of  the constant equilibrium 
is  to rule out  singularity formations in the considered regime.  It is proved in  \cite{BCK-nonlinearity}   that smooth solutions to  (1d EP)  can lose their
 $C^1$ regularity in finite time when the gradients of the Riemann invariants are initially large.  
 The structure of the resulting (cusp-type) singularities near the blow-up point has been established more  recently in  \cite{bae2024structure}.
The corresponding electron case, where the Poisson equation is replaced by 
$ \Delta  \phi= n$  is  more favorable, the stability of constants under small localized perturbations  has been proved in \cite{GHZ-EEP,Li-Wu-2dEEP,Ionescu-pausader-2dEEP,Guo-3dEEP}.
Note that the electron model does not support one-dimensional  solitary waves.

The study of the transverse stability of one-dimensional solitary waves  has a long history in dispersive PDE.
In the early 1970's,  by using the theory of integrable systems Zakharov \cite{Zakharov-K} obtained the transverse instability of the soliton of the KdV equation considered as a one-dimensional solution of the (two-dimensional) KP-I equation.
A general criterion allowing to get linear transverse instability of solitary waves has now  been obtained in \cite{RT-MRL}. This criterion applies to a large class of systems like the KP-I equation and the water-waves system with strong surface tension
(Bond number bigger than $1/3$).
 One can then obtain  nonlinear instability, see  \cite{RT-JMPA,RT-AIHP}  and in particular \cite{R-T-Transinstability} for the transverse
 nonlinear instability of solitary water-waves with strong surface tension. 
 Note that when studying the transverse stability of   kinks or topological solitons, 
  the criterion of \cite{RT-MRL} is not matched and one usually expects stability, see for example \cite{Cuccagna}.
 When the algebraic criterion of \cite{RT-MRL} is not matched for solitary waves,  there are various model situations.
  For the hyperbolic Schr\"odinger equation (where the usual  Laplacian is replaced by $\partial_{x}^2 - \Delta_{y}$), as established
  in \cite{Zakharov-R}, there is still transverse linear instability of the one-dimensional solitary wave whereas for the KP-II
  equation, there is stability \cite{Alexander-Pego-Sachs} and even   nonlinear stability as  proven in an important work by Mizumachi \cite{Mizumachi-KP-nonlinear}.
  One can then conjecture that for small amplitude solitary waves, when the weakly transverse long-wave regime is given by 
  the KP-II equation, which is the case for the three dimensional Euler-Poisson system (see Appendix A.1 for the  justification) one should get transverse stability.  This heuristics was recently justified by Mizumachi \cite{Mizumachi-BL-linear} for the Benney-Luke equation which is another
  asymptotic model that can be obtained from the water-waves system while keeping the KP-II equation as
  a long-wave weakly transverse model.  Note that  for  these semilinear equations,  which are globally well-posed in the energy space, 
   nonlinear asymptotic stability has been obtained  \cite{Mizumachi-KP-nonlinear,Mizumachi-BL-nonlinear}. 
  In these results, which can be seen as the extension to higher dimensions of the classical asymptotic stability results for
   the KdV equation \cite{Pego-Weinstein-kdv, Martel-Merle,Mizu-Nikolay},  the dynamics of the perturbation is not described  everywhere in space, 
   the decay is proven only in front of the solitary wave i.e  in a region $\{x-ct \geq R\}$ where $\mathrm{x}= (x, y)$.
   The proof of the nonlinear transverse stability for   
  quasilinear equations,  which  are not well posed in the energy space and are expected to face singularity formations at least  in some regimes, like the Euler-Poisson system here as mentioned above,   thus requires new ingredients.
  Indeed, in order to exclude singularity formations and thus prove that a high order Sobolev norm will remain finite, we need to control the decay of the perturbation everywhere in space.
  The goal  of this  work is to address this general question on the example of the  three-dimensional Euler-Poisson system. 
   Other interesting physical systems share the same type of structure, for example,  the gravity water-waves system, 
 the asymptotic linear stability has been obtained recently in \cite{RS-WW}, 
  the approach  developed in this paper will be  useful to attack them. 
  Note that from a broader perspective,  the   stability of non-constant equilibria in quasilinear PDEs is still  largely unexplored, 
  we can refer for example to the recent works   \cite{LOS-catenoid}, or \cite{Kerr1}, \cite{Kerr2}, \cite{Kerr3} in very different settings.

\subsection{Existence of line solitary waves}
We shall consider in this work only  irrotational flows 
 and thus assume  that  the velocity can  be written as $u^i=\na\psi^i$ so that  the Euler-Poisson system \eqref{EPION3d} can be rewritten as
\beq\label{EP3d-potential}
 \left\{
\begin{array}{l}
\displaystyle \pt n^i+\div_{\mathrm{x}}\big( (1+n^i) \na_{\mathrm{x}} \psi^i\big)=0,\\[3pt]
\displaystyle \pt  \psi^i+\f12{|\na_{\mathrm{x}} \psi^i|^2} +
h(1+n^i)+\nabla_{\mathrm{x}} \phi=0 ,  \\
\displaystyle \Delta_{\mathrm{x}} \phi =e^{\phi}-1-n^i,
\end{array}
\right.
\eeq
where $\mathrm{x}=(x, y_1, y_2)^t\in \mR^3$ and
$$h(z):= \int_1^z \f{P'(z')}{z'} \, \d z'.$$
 Note that \eqref{EPION3d} and \eqref{EP3d-potential} 
 are equivalent up to an arbitrary function of time in the right hand side of the $\psi^{i}$ equation. However, since we will work within a spatially weighted space, this function must necessarily be zero.
To study solitary waves with speed $c>0,$ we begin by performing the change of variable $\mathrm{x}\rightarrow (x-ct,y_1,y_2)^t,$ which transforms equation \eqref{EP3d-potential} into: 
\beq\label{EP3d-potential-mov}
 \left\{
\begin{array}{l}
\displaystyle \pt n^i-c\p_x n^i +\div_{\mathrm{x}}\big( (1+n^i) \na_{\mathrm{x}} \psi^i\big)=0,\\[3pt]
\displaystyle \pt  \psi^i-c\p_x \psi^i+\f12{|\na_{\mathrm{x}} \psi^i|^2} +
h(1+n^i)+\nabla_{\mathrm{x}} \phi=0 ,  \\
\displaystyle \Delta_{\mathrm{x}} \phi =e^{\phi}-1-n^i\,.
\end{array}
\right.
\eeq

The existence of a family of \textit{one}-dimensional small amplitude solitary waves which are stationary solutions to \eqref{EP3d-potential-mov} depending only on the one-dimensional  variable $x$ is established in \cite{BK-existencesolitary-EP, degond}. For the convenience of the reader, we formulate the result of \cite{BK-existencesolitary-EP} in the following theorem:

\begin{thm}[Restatement of Theorem 1.1 \& 1.2, \cite{BK-existencesolitary-EP}]\label{thm-existence}
Let  $V=\sqrt{1+P'(1)}$ and assume that $c=V+\ep^2.$  There exists $\bar{\ep}_0$ such that for every $\ep\in (0, \bar{\ep}_0),$  the system \eqref{EP3d-potential-mov} has 
 a stationary solution 
  $(n_{c}(x), \psi_{c}(x), \phi_c(x))$ (independent of $y_1, y_2$), given by 
  \beq\label{solitarywave-def}
n_{c}(x)=\ep^2\Theta_{\ep}(\ep x) , \qquad \psi_{c}(x)=\ep\Psi_{\ep}(\ep x) , \quad \phi_c(x)=\ep^2\Phi_{\ep}(\ep x) ,
  \eeq
 where $\Theta_{\ep}, \Psi_{\ep},$ $\Phi_{\ep}$ satisfy the following: \\[2pt]
  (1)
  For some  $d>0$ and  for any integers $k\geq 0 , \, \ell\geq 1, $ there exists $C_k, \, C_{\ell}$ which are independent of $\ep\in(0,\ep_0],$ such that: 
  \beqs 
|\p_{\hat{x}}^k (\Theta_{\ep}-\Theta_{0}, \Phi_{\ep}-\Phi_{0}) (\hat{x})|\leq C_k \, \ep^2 \, e^{-d\,|\hat{x}|} , \qquad |\p_{\hat{x}}^{\ell} (\Psi_{\ep}-\Psi_{0}) (\hat{x})|\leq C_{\ell}\, \ep^2\, e^{-d\,|\hat{x}|} .
  \eeqs
 (2) It holds that when $\ep=0,$
  \beq \label{solitarywave-0}
\Theta_0 (\hat{x})=\Phi_0(\hat{x})=V^{-1}\p_{\hat{x}} \Psi_0 (\hat{x})=\Psi_{\kdv}(\hat{x})
  \eeq
  where $\Psi_{\kdv}(\hat{x})=\f{3}{C(V)}\sech^2\big(\sqrt{{V}/{2}}\, \hat{x}\big) $
is the steady solution to the KdV equation in the moving frame 
\beqs
-\p_{\hat{x}} \Psi +C(V)\Psi\p_{\hat{x}} \Psi+\f{1}{2V}\p_{\hat{x}}^3 \Psi=0\, . \, \qquad 
\eeqs
Here the constant $C(V)$ is defined as 
\begin{align}
  C(V)=V+\f{P''(1)}{2V}\,.
\end{align}
\end{thm}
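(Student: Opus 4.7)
The plan is to construct the solitary wave by perturbing off the explicit KdV soliton, exploiting the long-wave scaling encoded in \eqref{solitarywave-def}.

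First, I would reduce the stationary version of \eqref{EP3d-potential-mov} depending only on $x$ to a single scalar ODE in $n$. Writing $u^i = \p_x \psi^i$ and integrating the first two equations under the assumption of decay at $|x|\to\infty$ gives
\beqs
\p_x \psi = \frac{cn}{1+n}, \qquad \phi = \frac{c^2}{2}\left(1 - \frac{1}{(1+n)^2}\right) - h(1+n),
\eeqs
and plugging these into the Poisson equation reduces the system to a single scalar ODE $\p_x^2 n = G(n;c)$ (Sagdeev-potential reduction). Second, I would insert the ansatz $n(x) = \ep^2 \Theta(\hat{x})$ with $\hat{x} = \ep x$ and $c = V + \ep^2$, expand $G$ in powers of $\ep^2$, and use $V^2 = 1 + P'(1)$ to cancel the leading linear terms. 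This produces an equation of the schematic form
\beqs
\frac{1}{2V}\p_{\hat{x}}^2 \Theta_\ep - \Theta_\ep + \frac{C(V)}{2}\Theta_\ep^2 = \ep^2 \, \cN(\ep, \Theta_\ep),
\eeqs
with $\cN$ a smooth nonlinear remainder; at $\ep = 0$ this is precisely the once-integrated stationary KdV equation whose unique positive even decaying solution is $\Psi_{\kdv}$.

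Third, I would run an implicit function theorem argument in the exponentially weighted Sobolev space $H^k_d = \{f : e^{d|\hat{x}|}f \in H^k(\mR)\}$ restricted to even functions, for any fixed $d \in (0,\sqrt{2V})$. The linearisation at $\ep = 0$ around $\Psi_{\kdv}$ is the Schr\"odinger-type operator $L = \frac{1}{2V}\p_{\hat{x}}^2 - 1 + C(V)\Psi_{\kdv}$, whose $L^2$-kernel is spanned by the odd translation mode $\p_{\hat{x}}\Psi_{\kdv}$. Restricted to the even subspace $L$ is therefore an isomorphism $H^{k+2}_{d,\mathrm{even}} \to H^k_{d,\mathrm{even}}$, and the IFT yields $\Theta_\ep$ depending smoothly on $\ep^2$ with $\|\Theta_\ep - \Psi_{\kdv}\|_{H^k_d} \lesssim \ep^2$. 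The corresponding $\Phi_\ep, \Psi_\ep$ are then read off from the algebraic relations above, and membership in $H^k_d$ directly gives the pointwise exponential decay estimates claimed, with $d>0$ determined by the characteristic exponents $\pm\sqrt{2V} + \cO(\ep^2)$ of the scalar ODE at the rest point $\Theta = 0$.

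The main obstacle is the nondegeneracy of $\ker L$: that the kernel is exactly one-dimensional and generated by the odd function $\p_{\hat{x}}\Psi_{\kdv}$. This is the classical Sturm--Liouville statement for the explicit $\sech^2$ potential and is standard in the KdV literature, but it is the one step without which the perturbation scheme collapses, since without it one cannot project away the translation mode by restricting to the even symmetry class. A secondary technical point is checking that the weighted-space norms involved in the IFT can accommodate the relaxed regularity on $\Psi_\ep - \Psi_0$ (for which only $\ell \geq 1$ derivatives decay) that comes from the fact that $\Psi_\ep$ itself is only pinned down up to a constant by the relation $\p_{\hat{x}}\Psi_\ep = V \Theta_\ep + \cO(\ep^2)$.
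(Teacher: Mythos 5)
You should first note that the paper does not actually prove Theorem \ref{thm-existence}: it is stated explicitly as a restatement of Theorems 1.1--1.2 of \cite{BK-existencesolitary-EP} (see also \cite{degond}), so there is no internal proof to compare against. Your proposal is nonetheless a correct and standard route, and it is genuinely different from the one in the cited source. There, existence comes from the classical Sagdeev pseudo-potential reduction: using the mass and Bernoulli relations one expresses $n$ and $u$ as functions of $\phi$, the stationary problem becomes a planar Hamiltonian ODE $\phi''=-\partial_\phi K(\phi,c)$ solved by quadrature for supersonic speeds, and the quantitative KdV-limit estimates of item (1) are then obtained by direct ODE/remainder analysis in the rescaled variable, with no spectral input. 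Your scheme (long-wave rescaling, isolating the once-integrated stationary KdV balance, then the implicit function theorem in even exponentially weighted spaces about $\Psi_{\kdv}$, using that $\ker L$ is spanned by the odd translation mode of the P\"oschl--Teller potential) is a Friesecke--Pego-type perturbative construction: it delivers smooth dependence on $\ep^2$ and the uniform weighted estimates in one stroke, at the price of requiring the nondegeneracy of $L$ and of being confined to the small-amplitude regime, whereas the pseudo-potential argument yields existence throughout the admissible supersonic range.

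Two points to tighten if you carry this out. First, after substituting $\phi=\phi(n)$ into the Poisson equation the reduced problem is quasilinear, of the form $\big(c^2-P'(1)+\mathcal{O}(n)\big)n''+\mathcal{O}(n)\,(n')^2=(c^2-V^2)n+\mathcal{O}(n^2)$, not literally $n''=G(n;c)$; this is harmless because the extra terms are $\mathcal{O}(\ep^2)$ after rescaling, but your IFT map must be formulated for the quasilinear equation (or you should take $\phi$ as the unknown, as in \cite{BK-existencesolitary-EP}). Second, you should actually perform the $\mathcal{O}(\ep^4)$ expansion to verify that the quadratic coefficient is $C(V)=V+\frac{P''(1)}{2V}=\frac{3V^2+h''(1)-1}{2V}$, since this identification is what makes \eqref{solitarywave-0} consistent with the paper's KdV/KP-II derivation in Appendix A; as written you only assert the schematic form of the limiting equation.
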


\subsection{Main result}
Before stating the main results, we need to introduce some functional spaces. 
For any $a\in \mR,$ we shall use the weighted $L^2$ space
 $L_a^2 (\mR^3):= L^2 (\mR^3; e^{2ax}\d x\d y\,) \,$  endowed with the norm
\beqs 
\|f\|_{L_a^2 (\mR^3) }:= \bigg(\int |f(x)|^2  e^{2ax} \d x\d y\bigg)^{\f{1}{2}}\, , \quad  
\eeqs
where hereafter, we use the notation  $y=(y_1, y_2)$ and $\d y=\d y_1 \d y_2\,.$
 We also use weighted norms with higher regularity,  
for any $k>0,$ we set 
$$H_{a}^k(\mR^3):= \big\{f\,\big| \partial^\alpha f \in L_a^2(\mR^3), \, | \alpha| \leq k \big\}.$$

Our main result is  the transverse nonlinear asymptotic stability of the  one-dimensional small amplitude waves :
\begin{thm}\label{thm-nonlinear}
Let $(n_{c_0}, \psi_{c_0})^t(x)$ be a small-amplitude solitary wave 
given by Theorem \ref{thm-existence} with $c_0=V+\ep^2.$ We consider initial data  under the form
$$(n_0^i(\mathrm{x}), \na{\psi}_0^i(\mathrm{x}))^t=(n_{c_0}(x), \psi'_{c_0}(x),0)^t+(n_0,\nabla \psi_0)^t(\mathrm{x})$$ and introduce the quantity
\begin{align*}
\cM(0)
:=
\|(1+|y|^2)^{\f12}(n_0,\nabla \psi_0)\|_{H_a^M}+\|e^{ax}(n_0,\na\psi_0)\|_{L_y^1L_x^2}+
\|(n_0,\nabla \psi_0)\|_{H^{M}\cap \dot{H}^{-1}}+\|(n_0,\nabla \psi_0)\|_{W^{\f{29}{4},1}},
\end{align*}
with $M=12,\,a=\f{\ep}{4}.$
There exists $\ep_0>0, \delta_0>0$ such that for every 
$0<\ep\leq \ep_0, 0<\delta\leq \delta_0,$ if the initial perturbation $(n_0, \psi_0)$ is such that  $\cM(0)\leq \delta,$
then there exists a unique  global smooth  solution  of  \eqref{EP3d-potential}  and there exist two functions $c(t,y), \gamma(t,y)\in C^{M-1}([0,T]\times \mR^2)$ such that
the following decay estimates hold:
\beq\label{decayes-thm}
\begin{aligned}
 \sup_{t\in[0,+\infty)} (1+t)^{1+\iota} \bigg\|  \left(\begin{array}{c}
         n^i  \\
         \na\psi^i
    \end{array}\right)(t, \mathrm{x})- \left(\begin{array}{c}
         n_{c(t,y)}\big(x-c_0t-\gamma(t,y)\big)  \\[4pt]
         \na\psi_{c(t,y)}\big(x-c_0t-\gamma(t,y)\big)
    \end{array}\right)\bigg\|_{L^{\infty}(
    \mR^3)}&\lesssim \delta, \\
    \sup_{t\in[0,+\infty)} \bigg( (1+t)^{\f12}\|\gamma(t,\cdot)\|_{L_y^{\infty}}+(1+t)\|({c}-c_0)(t,\cdot)\|_{L_y^{\infty}} \bigg)&\lesssim  \delta,
\end{aligned}
\eeq
for some small but positive $\iota.$
\end{thm}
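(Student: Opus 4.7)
The plan is to use a modulation ansatz that allows $c$ and $\gamma$ to depend on the transverse variable $y$, thereby tracking the two-parameter symmetry group (scaling of the wave speed and $x$-translation) pointwise in $y$. Concretely, I would write
\beqs
\left(\begin{array}{c} n^i \\ \psi^i\end{array}\right)(t,\mathrm{x}) = \left(\begin{array}{c} n_{c(t,y)} \\ \psi_{c(t,y)}\end{array}\right)\!\big(x-c_0 t - \gamma(t,y)\big) + \left(\begin{array}{c}\tilde{n} \\ \tilde{\psi}\end{array}\right)(t,\mathrm{x}),
\eeqs
impose two $y$-parametrized orthogonality conditions on $(\tilde n,\tilde\psi)$ against the generalized eigenfunctions of the linearized operator $\cL_c$ around the 1d soliton (these span the symplectic-orthogonal complement of the generalized kernel associated to translation and scaling), and read off the evolution equations for $(c,\gamma)(t,y)$. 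In the long-wave scaling $c_0=V+\ep^2$, the leading order system for $(c-c_0,\gamma)$ should be a KP-II type equation on $\mR_t\times\mR^2_y$, in line with Appendix A.1 and the Mizumachi framework.

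Next, I would split the perturbation into a spatially weighted piece and a globally dispersive piece, adapting the Pego--Weinstein / Bae--Kwon strategy. The weighted variable $(\tilde n,\tilde\psi)\,e^{ax}$ with $a=\ep/4$ lives in an $H_a^M$ space in which the essential spectrum of the linearized operator around the one-dimensional soliton is pushed into the stable half-plane; combined with the modulation-induced suppression of the neutral modes this yields exponential-in-$x$ decay estimates. The hard analytic input here is to extend Bae--Kwon's 1d spectral analysis to operators that depend on the transverse Fourier parameter $\eta$ dual to $y$, showing that the spectral gap degenerates only at $\eta=0$ and at the soliton modes in a quantitatively controlled way; this degeneration is exactly what feeds the KP-II modulation system. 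One must also handle the quasilinear character: the weighted estimate has to be closed at the full regularity $H_a^M$, using a good quasilinear symmetrization and a careful commutator analysis between the weight $e^{ax}$ and the transport and Poisson parts.

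The unweighted piece controls the perturbation ahead of and around the soliton in the dispersive regime. For this component I would exploit the $3$d dispersion of the linearized Euler--Poisson system around the constant state, using the combination of weighted $L^1$--$L^2$ norms $\|e^{ax}(n_0,\nabla\psi_0)\|_{L_y^1 L_x^2}$ and the $W^{29/4,1}\cap \dot H^{-1}\cap H^M$ norms that appear in $\cM(0)$. Dispersive decay together with a normal-form/space-time-resonance reduction in the spirit of Guo--Pausader \cite{G-P-global} should produce pointwise $\langle t\rangle^{-1-\iota}$ decay away from the soliton, matching the conclusion \eqref{decayes-thm}. The source terms coming from the modulating soliton profile are localized near $\{x-c_0 t-\gamma\simeq 0\}$ and feed back into the weighted estimate, so I would close everything by a bootstrap on the norms
\beqs
\cN(T):= \sup_{0\leq t\leq T}\Big\{\|(\tilde n,\nabla\tilde\psi)(t)\|_{H^M}+\langle t\rangle^{1+\iota}\|(\tilde n,\nabla\tilde\psi)(t)\|_{L^\infty}+\langle t\rangle \|c(t)-c_0\|_{L_y^\infty}+\langle t\rangle^{1/2}\|\gamma(t)\|_{L_y^\infty}\Big\},
\eeqs
with compatible weighted-norm companions, and show $\cN(T)\lesssim \delta+\cN(T)^{3/2}$ for $\delta$ small.

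The main obstacle is the simultaneous control of the two regimes. On the one hand, as emphasized in the introduction, to exclude singularity formation in this quasilinear system we \emph{cannot} afford a decay estimate that is valid only in the half-space $\{x-c_0 t\geq R\}$, contrary to the semilinear KP-II / Benney--Luke settings of Mizumachi; we must propagate pointwise decay everywhere. On the other hand, the transverse modulation equation for $(c-c_0,\gamma)$ is of KP-II type and therefore has only the weak $\langle t\rangle^{-1/2}$ and $\langle t\rangle^{-1}$ decay rates displayed in \eqref{decayes-thm}, which is the slowest rate in the system and will dictate all thresholds. Matching this slow modulation decay with the KP-II-driven source terms in the weighted estimate, without losing derivatives and while preserving the quasilinear energy structure, is what will require the most delicate work — concretely, a paralinearization of the Euler--Poisson system adapted to the soliton background, a sharp resolvent analysis of the $\eta$-dependent linearized operator uniformly as $\eta\to 0$, and a KP-II-type normal form for the modulation parameters $c(t,y),\gamma(t,y)$.
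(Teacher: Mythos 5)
Your high-level skeleton (a $y$-dependent modulation ansatz fixed by orthogonality against the resonant modes, a weighted estimate around the soliton, dispersive/normal-form estimates for the rest, closed by a bootstrap) is indeed the paper's architecture, but two of your steps would fail as written, and they are precisely where the quasilinear difficulty lives.

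First, your bootstrap norm requires a uniform bound on $\|(\tilde n,\nabla\tilde\psi)(t)\|_{H^M}$ for the perturbation of the \emph{uncorrected} modulated wave. This cannot be closed: the equation for $\nabla\tilde\psi$ contains the source $\p_t c\,\nabla_y c\,\p_c^2\psi_c$ (and, at the potential level, $\p_t c\,\p_c\psi_c$), and $\p_c\psi_c$, $\p_c^2\psi_c$ do not vanish as $x\to-\infty$, so these terms are not even in the Sobolev spaces you want to propagate. Moreover the modulation parameters decay only like $t^{-1/2}$ and $t^{-1}$, so even after a one-dimensional correction à la Mizumachi the residual source decays like $(1+t)^{-1/2}$ in $L^2$, which is not time-integrable and destroys the uniform $H^M$ bound. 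The paper's proof hinges on a genuinely three-dimensional, divergence-free corrector $w=-\curl(-\Delta)^{-1}\curl(\psi_c'(\cdot-\gamma)(1,-\nabla_y\gamma)^t)$ that keeps the velocity perturbation curl-free and produces a source with critical $(1+t)^{-1}$ decay; even then only a logarithmic growth of the $H^M$ norm is obtained, and uniform control is recovered only for $\nabla(\vr,v)$ by doing energy estimates with the vector fields $\nabla_y$ and $\p_t+c_0\p_x$ (which see an extra $t^{-1/2}$ of decay of the source) and then reconstructing $\p_x(\vr,v_1)$ from the system, using crucially that $c_0>\sqrt{h'(1)+1}$ is supersonic. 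None of this is visible in your proposal, and without it the energy step does not close. (Relatedly, your identification of the modulation system as "KP-II type" is off: the derived system for $(\tilde c,\gamma)$ is a dissipative wave system whose low-frequency behavior is heat-like, and it is this heat-type decay that produces the rates in \eqref{decayes-thm}; KP-II enters only in the spectral analysis of the linearized operator at low transverse frequency.)

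Second, the dispersive step "normal form in the spirit of Guo--Pausader" does not go through verbatim. Guo--Pausader's treatment of the singular bilinear symbol $1/\phi^{\mu\nu}$ at low frequencies uses a uniform $\dot H^{-1}$ bound on the solution, which is unavailable here precisely because of the slowly decaying modulation source: one can only propagate $\dot H^{-1/2}$ with a small growth, and the quadratic boundary terms $B^{\mu\nu}(Z\alpha,\alpha)$ in the low-low regime then cannot be estimated with integrable decay by a direct application of the bilinear estimates. The paper has to split $Z\alpha$ into a good part $\tilde\alpha$ and the low-low bilinear contribution $B^{\mu\nu}_{LL}$ and run a two-tier estimate (with the exponent $p$ taken close to $8$), as well as first obtain decay for $Z\in\{\nabla_y,\p_t+c_0\p_x\}$ derivatives and recover $\p_x$ afterwards. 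Your proposal, which also aims at uniform weighted $H_a^M$ control through "symmetrization and commutators" rather than through the combination of a low-order semigroup bound with a high-frequency damped energy estimate (exploiting that the damping rate $O(\ep)$ dominates both the semigroup rate $O(\ep^3)$ and the wave's Lipschitz norm $O(\ep^2)$), therefore leaves the key quantitative mechanisms unproved; as stated, the bootstrap inequality $\cN(T)\lesssim\delta+\cN(T)^{3/2}$ is not attainable.
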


In the above statement, 
we have  only stressed  the pointwise decay estimates that we have obtained. As we shall see below, we also control the boundedness of  a high Sobolev norm of the perturbation and the decay in an exponentially weighted space $H^k_{a}$ of the perturbation.
Note that we are assuming exponential decay of the perturbation for negative $x$. We could think of relaxing this assumption by requiring
only an algebraic decay by using Mizumachi's argument \cite{Mizumachi-cmp}, see also \cite{Germain-Pusateri-Rousset}.
 This  would nevertheless require the obtention of  high-order virial estimates for the Euler–Poisson system,
interesting by themselves. In order to avoid the multiplication of difficulties  of different nature, this is left for further investigation.

The strategy of the proof and its main steps will be explained later in this introduction. An important  first step toward the proof of  the  transverse nonlinear stability will be  to 
establish the transverse linear asymptotic stability in the exponentially weighted space. 
\subsection{Transverse linear asymptotic stability}
We linearize the system \eqref{EP3d-potential-mov} about a solitary wave $(n_c,\psi_c, \phi_c)$ to get the following linearized system for $V=(n,\psi)^t:$
\beq\label{Linear-1}
\pt {V}=
\tilde{L}_{c} {V}+\left( \begin{array}{c} 0 \\ -\phi 
\end{array}\right)\, , \qquad \qquad \tilde{L}_c= \left( \begin{array}{cc}
   \p_x(d_c\cdot)  &  -\div_{\mathrm{x}}(\rho_c \na_{\mathrm{x}})   \\[5pt]
 - h'(\rho_c) 
 & d_c\,\p_x
\end{array}\right)\, ,
\eeq
where  $\phi$ solves 
\beq\label{elliptic}
(e^{\phi_c}-\Delta_{\mathrm{x}})\, \phi=n
\eeq
and we have set 
\beq\label{defdcphoc}
d_c=c-\p_x\psi_c\,, \qquad \rho_c=1+n_c\, .
\eeq
We now introduce the functional framework in which we study the asymptotic behavior of the linearized system \eqref{Linear-1}.  
Besides the weighted spaces $H^k_{a}$ defined above, we shall also use 
 the weighted space $\dot{H}_{a}^{1}(\mR^3)$  defined as the closure of smooth compactly supported functions endowed
 with the norm 
\begin{align*}
 \|f\|_{ \dot{H}_{a}^{1}(\mR^3)}:= \bigg(\int \big|(\nabla_{\mathrm{x}} f)(x,y)\big|^2  e^{2ax} \d x\d y\bigg)^{\f{1}{2}}.
\end{align*}
Let us point out that $\|\cdot\|_{\dot{H}_{a}^{1}(\mR^3)}$ is indeed a norm thanks to the 
inequality $$\|\p_x f\|_{L_a^2(\mR^3)}\geq |a| \|f\|_{L_a^2(\mR^3)}\,.$$
Thanks to the Lax-Milgram theorem, for any $a\in [0,1/4],$ and $\ep$ small enough such that $$\sup_{x\in \mR}\,e^{\phi_c}(x)\in [1/2,3/2],$$
we have, for any $n\in L_a^2(\mR^3)$ that  the elliptic problem \eqref{elliptic}
has a unique solution.
 Denoting by  $(e^{\phi_c}-\Delta)^{-1}$  the corresponding solution map, we can  rewrite the system \ref{Linear-1} as
\beq\label{Linear-sys}
\pt {V}=
{L_c} {V} \, , \qquad \qquad {L_c}= \left( \begin{array}{cc}
   \p_x(d_c\cdot)  &  -\div_{\mathrm{x}}(\rho_c \na_{\mathrm{x}})   \\[5pt]
 -\big[ \, h'(\rho_c) +(e^{\phi_c}-\Delta)^{-1}\big]
 & d_c\,\p_x
\end{array}\right)\, .
\eeq
We shall investigate the asymptotic stability of the above linearized system in the weighted space $X_a=L_a^2(\mR^3)\times \dot{H}_a^1(\mR^3)\,.$ 
The main linear result  is a time decay estimate  of  the semigroup generated by  $L_c$ in an infinite dimensional spectral subspace of $X_a,$ denoted as $\mathbb{Q}_c(\eta_0)X_a,$ which is 
supplementary to a spectral subspace 
associated to the spectral curves close to zero $0.$ We refer to \eqref{def-bQ} for the definition of the projection 
$\mathbb{Q}_c(\eta_0).$ To state the result,  we use the notation $\| \cdot \|_{B(X)} $ for the operator norm for linear operators on $X$.
\begin{thm}\label{thm-resol-L}
Let $0<a=\hat{a}\ep < \f{\sqrt{3}}{4}\ep.$ 
 There exist $\ep_1>0, \beta_0=\beta_0(\hat{a})>0, \hat{\eta}_0=\hat{\eta}_0(\hat{a}),$ 
 such that for any $0<\ep\leq \ep_1,  \,0<\beta\leq\beta_0,$ there is $C=C(\hat{a}, \hat{\eta}_0, \beta_0, \ep_1)$ such that
  \beq\label{semigroup}
\|e^{tL_c}\mathbb{Q}_c(\ep^2\hat{\eta}_0)\|_{B(X_a)}\leq C e^{-\beta\ep^3 t}.
 \eeq
\end{thm}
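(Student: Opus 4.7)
The plan is to reduce the three-dimensional problem to a family of one-dimensional problems by taking the partial Fourier transform in the transverse variable $y$, and then to piece together resolvent estimates on $L_c(\eta)$ for $\eta\in\mR^2$ in a manner compatible with the spectral projection $\mathbb{Q}_c(\ep^2\hat{\eta}_0)$. Since the weight only acts in $x$, the space $X_a$ factorizes nicely under the $y$-Fourier transform and one obtains for each transverse wavenumber $\eta$ a one-dimensional operator $L_c(\eta)$ acting on $L_a^2(\mR_x)\times \dot{H}_a^1(\mR_x)$. A Gearhart--Prüss/Prüss type theorem (or direct inverse-Laplace contour integration) will then convert uniform resolvent bounds for $L_c(\eta)-\lambda$ on the half-plane $\{\Real \lambda \geq -\beta \ep^3\}$, for $\eta$ in the relevant range, into the semigroup bound \eqref{semigroup}.

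The spectral picture I would set up splits naturally into three frequency regimes. For the low transverse frequencies $|\eta|\leq \ep^2\hat{\eta}_0$, the key observation is that the $\eta=0$ operator has a two-dimensional generalized kernel coming from $x$-translations of the solitary wave and from the speed parameter $c$, and small $\eta$ perturbs these modes into the spectral curves that the projection $\mathbb{Q}_c(\ep^2\hat{\eta}_0)$ removes. On the complementary subspace, I would invoke (or reprove) a spectral gap for the one-dimensional linearized Euler--Poisson operator in the weighted space $L_a^2(\mR)$, analogously to Pego--Weinstein's analysis for the KdV/Boussinesq equations, which relies crucially on the KdV approximation of Theorem \ref{thm-existence}. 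The size of the gap after KdV/KP-II rescaling is $O(\ep^3)$, and perturbing in $\eta$ preserves the gap by continuity. For intermediate frequencies, the correct long-wave model is KP-II (see Appendix A.1 of the paper), and I would compare $L_c(\eta)$ to the linearized KP-II operator around its sech-squared soliton, using that Mizumachi's analysis for KP-II provides the required $O(\ep^3)$ spectral gap uniformly in $\eta$ on an appropriate scale $\ep^2\hat{\eta}_0 \leq |\eta| \lesssim \ep$. For large transverse frequencies $|\eta|\gtrsim 1$, the constant-coefficient operator has a resolvent set containing $\{\Real \lambda > -a c + O(|\eta|^{-1})\}$ because of the exponential weight, which is a gain of order $\ep$, much larger than $\ep^3$; one then treats the solitary-wave perturbation as a small relatively bounded correction.

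To implement the resolvent estimates I would use energy/virial identities in the moving frame tailored to the weighted inner product: testing against $\bar{V} e^{2ax}$ produces damping terms proportional to $a$ that give coercivity away from the symmetry modes, and the $\dot{H}_a^{-1}$-type structure coming from inverting $(e^{\phi_c}-\Delta)$ in \eqref{elliptic} is handled by standard Lax--Milgram arguments in $\dot{H}_a^1$. The contributions of the solitary wave coefficients $d_c$, $\rho_c$, $h'(\rho_c)$ are exponentially localized, so they appear as compact perturbations of the flat-background operator; this compactness localizes the essential spectrum and leaves only discrete eigenvalues in the forbidden region, which are exactly those captured by the spectral curves.

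The main obstacle I anticipate is obtaining the uniform $O(\ep^3)$ spectral gap in the intermediate $\eta$ regime: the eigenvalues on the removed spectral curves lie at distance $O(\ep^3)$ from the imaginary axis, so there is very little room, and one must match the KdV-type analysis (valid for $|\eta|\lesssim \ep^2$) with the KP-II-type analysis (valid for $\ep^2\lesssim |\eta|\lesssim \ep$) and then with the high-frequency stability of the background state in a way that gives estimates which are uniform across the matching scales. A secondary difficulty is that the elliptic component \eqref{elliptic} couples the $n$ and $\psi$ variables non-locally, so energy estimates must be carried out at the level of the reduced system and cannot rely on simple scalar multipliers; controlling $\|e^{ax}(e^{\phi_c}-\Delta)^{-1}\|$ uniformly in $\ep$ and $\eta$ will require careful commutator estimates.
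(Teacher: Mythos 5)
Your global skeleton matches the paper's: conjugate by the weight, prove uniform resolvent bounds on the half-plane $\{\Re\lambda>-\beta\ep^3\}$ for the projected operator, and conclude via Gearhart--Pr\"uss, with a decomposition in transverse frequency and KP-II/long-wave input near zero. However, there are two genuine gaps. First, your frequency regimes do not cover the hard range and over-extend the long-wave approximation. The KP-II (or KdV) comparison is only valid in the long-wave window where, in the original variables, $|\xi|\lesssim \ep$ \emph{and} $|\eta|\lesssim\ep^2$ (the KP-II scaling is $\hat\eta=\ep^{-2}\eta$); using it up to $|\eta|\lesssim\ep$, as you propose, produces approximation errors that are not small compared with the $O(\ep^3)$ gap you need. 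Moreover the whole band $\ep\lesssim|\eta|\lesssim 1$ is left untreated (your high-frequency argument needs $|\eta|\gtrsim 1$), and "perturbing in $\eta$ by continuity" cannot deliver resolvent bounds uniform down to a gap of size $\beta\ep^3$, since the removed resonant curves themselves sit at distance $O(\ep^3)$ from the imaginary axis. The paper handles $A\ep^2\le|\zeta|\le 2$ (Proposition \ref{prop-ITF}) not by any long-wave model but by dedicated weighted energy functionals, split further according to the degeneracy set of $\mu_a(\xi,\zeta)=\sqrt{(\xi+ia)^2+|\zeta|^2}$ (which vanishes at $\xi=0$, $|\zeta|=a$ and blocks a diagonalization there); and inside the low transverse frequency region it still needs energy estimates for high longitudinal frequencies $|\xi|\gtrsim K\ep$ (Lemma \ref{lem-HHF}) before the KP-II comparison (Lemma \ref{lem-KPapprox}) and Mizumachi's KP-II resolvent bound can be invoked for $|\xi|\lesssim K\ep$, $|\zeta|\lesssim A\ep^2$.

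Second, your treatment of the solitary-wave coefficients as "compact" or "small relatively bounded" perturbations of the flat-background operator fails here: the terms $\p_x(\p_x\psi_c\,\cdot)$ and $\div(n_c\nabla\,\cdot)$ are first-order operators with $O(\ep^2)$ coefficients, hence of the same order as the principal part, neither relatively compact nor absorbable by the damping (of size $O(\ep)$ at high transverse frequencies, $O(\ep^3)$ elsewhere) uniformly in $\Im\lambda$; a naive bound of $L^1_a(\lambda-L^0_a)^{-1}$ degenerates when $|\Im\lambda|$ is large and resonates with the symbol. This is exactly the quasilinear difficulty the paper stresses, and it is resolved differently: at high transverse frequencies the variable coefficient $d_c(x)$ is built into the symbols $\lambda^1_{\pm}$ of a pseudodifferential diagonalization (Proposition \ref{prop-UTH}), so only $O(\ep^3)$ remainders are treated perturbatively, while in the intermediate and low regions one works with time-domain energy identities for carefully chosen functionals (which are automatically uniform in $\Im\lambda$) rather than with spectral/compactness arguments. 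Without these ingredients your scheme does not yield the uniform resolvent estimate \eqref{uni-resol}, and hence not the semigroup bound \eqref{semigroup}.
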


The main reason for which  we get   the exponential decay of the semigroup \eqref{semigroup} 
is that, when considering the linear operator on $\mathbb{Q}_c(\ep^2\hat{\eta}_0)X_a$, 
the resolvent set is shifted slightly into the left half-plane by an amount of order $\cO(\ep^3).$ That is, for any $0<\ep\leq \ep_1, \, 0<\beta\leq\beta_0,\, $ it holds that
 \beqs 
 \Omega_{\beta,\,\ep}:= \big\{ \lambda\in \mathbb{C}\,\big|\, \Re \lambda > -\beta \ep^3\big\} \subset \rho (L_c; \mathbb{Q}_{c}(\ep^2\hat{\eta}_0)X_a)
 \eeqs 
 where we use the notation $\rho(\cdot\,; X)$ for the resolvent set of an operator on $X.$

The semigroup estimate \eqref{semigroup} is then a consequence of the  Gearhart-Prüss theorem \cite{Gearhart,Pruss,Proof-GP} and the following uniform resolvent estimate: 
there exists $C_0=C_0(\hat{a}, \hat{\eta}_0, \beta_0, \ep_1)>0,$ such that for any $\lambda \in \Omega_{\beta, \ep},$
\beq\label{uni-resol}
\|(\lambda-L_c)^{-1} \|_{B(\mathbb{Q}_c(\eta_0)X_a)}\leq C_0\,. 
\eeq
As the proof of the above result is independent of the nonlinear stability analysis, 
we postpone its proof to the final section, Section \ref{sec-prooflinear}.

\subsection{Main difficulties and outline of the proofs}

\subsubsection{Nonlinear stability}
To prove the  stability of a solitary wave, we have as usual to  take into account the translational invariance of the system and the  smooth dependence of the wave on the wave speed $c.$ Therefore, assuming that the initial data is a small  disturbance to the solitary wave $Q_{c_0}(x)=\big(n_{c_0}, \psi_{c_0}\big)^t(x),$  we expect that the solution to \eqref{EP3d-potential} stays close (in a suitable space) to a modulated solitary wave $Q_{c(t,y)}\big(x-c_0t-\gamma(t,y)\big),$ where $c(t,y), \gamma(t,y)$ are the modulated wave speed and translation parameter respectively. The derivation of the modulation parameters is achieved by 
imposing suitable orthogonality conditions 
 that will allow us to use the linear asymptotic stability obtained in Theorem \ref{thm-resol-L}. 
 
In order to prove our  main result, we can use a bootstrap argument to control the perturbation of the modulated solitary wave. The rough idea is that we have  to put simultaneously into the bootstrap the boundedness of a high order Sobolev norm, the decay at the same order of regularity of  an exponentially weighted
Sobolev norm and the decay of the Lipshitz norm together with the decay of the modulation functions.
In this general strategy, we make a crucial  use of  the fact that due to the decay of the solitary wave,  the terms in the perturbation equation which contains
the solitary waves will enjoy time  decay since they are in the weighted space, in particular, this will allow us to get dispersive estimates by using only the constant
coefficients part of the linearized equation and thus we do not need to develop  a distorded Fourier transform theory or a related approach 
 as it was done recently for example  in \cite{Germain-Pusateri,Luhrmann-Schlag,Luhrmann-Schlag2,Delort-Masmoudi,Chen-Luhrmann,Collot-Germain},   for
the non-self-adjoint, nonlocal operator arising from the linearization about the solitary wave.

The main difficulties  in the proof can be summarized as follows:
\begin{itemize}
\item {\bf Slow decay of the modulation functions.} Here the modulation functions solve a system of partial differential equations for which the linear 
part has the low frequency behavior of the heat type equation  $\partial_{t} -\Delta_{y}$ in the transverse direction.
 The decay is thus imposed by this type of equation. Note that the situation is very different in dimension one where
 the modulation coefficients solve ordinary differential equations, their decay is then imposed by the source terms, in particular
 they may enjoy fast decay when dealing with very localized solutions, see for example \cite{Germain-Pusateri-Rousset}.
 \item {\bf Weighted estimates.} In Theorem \ref{thm-resol-L}, we have stated  a low order  estimate  in the energy space for the semigroup
 of the linearized equation. For semilinear equations,  as in  \cite{Mizumachi-BL-nonlinear} for example, the  estimate in the weighted space for the nonlinear equations  follows from Duhamel formula by considering  the nonlinear terms as a  source
 term. This approach is also successful for  equations which contain derivatives in the nonlinearity 
 when their linear part  enjoys a   strong local smoothing property 
 like KP-II \cite{Mizumachi-KP-nonlinear} as it was originally used for KdV by Pego and Weinstein \cite{Pego-Weinstein-kdv}. Nevertheless, here,  there is no local smoothing for the linearized equation even in the weighted space 
 since the group velocity is linear at infinity, we thus have to combine the semigroup estimate with a high order energy estimate in order
 to avoid a loss of derivative. The main property of the system that allows us  to conclude  is the following one.  For small amplitude waves
  of amplitude $\epsilon$  given 
 by  Theorem \ref{thm-existence}, 
 the rate of decay of the semigroup that we obtain in \eqref{thm-resol-L} is roughly  $e^{- \epsilon^3 t}$, whereas the amplitude
 of the Lipschitz norm of the solitary wave is $\epsilon^2$ and the real parts of the eigenvalues of the constant coefficient part of 
 the operator $e^{a \cdot}L_{c} e^{-a \cdot}$ for frequencies bigger than one
   are of order $\epsilon$.  The damping for  high frequencies  is thus much stronger than the semigroup decay and bigger than the Lipschitz norm
   of the solitary wave.
   We can thus combine a low order  semigroup estimate  on the Duhamel formula  with an energy estimate localized to high frequencies in order to close the argument. This combination of arguments  is in spirit  close to what is called  high frequency damping  in the study of the stability  of travelling waves
   in partially dissipative perturbations of hyperbolic systems \cite{Mascia-Zumbrum,Miguel-zumbrum,Miguel-Faye}.
   \item {\bf Correction to the modulated solitary wave}. In our framework of transverse stability we have to be careful  about the localization 
    properties of  the perturbation
   of the modulated solitary wave for the velocity potential $\psi^i$. The anzatz
   $$ \psi^i(t, \mathrm{x}) = \psi_{c(t,y)}(x- c_{0}t- \gamma(t,y)) + \psi(t,\mathrm{x})$$ is not adapted to perform energy estimates in the usual Sobolev spaces.
  Indeed, in the equation for $\psi$, this generates a source term $\partial_{t} c \partial_{c} \psi_{c}$ which is not in $\dot{H}^1$ since $\partial_{c} \psi_{c}$ is not localized.
  We thus need to correct further   the modulated wave before studying the perturbation. The correction made in \cite{Mizumachi-BL-nonlinear} which is 
  mostly one-dimensional gives rise in our framework to a source term which is in $\dot{H}^1$ but which has too slow decay  in $L^2$ based spaces, 
  like $(1 + t)^{-{1 \over 2}}$
  due to the slow decay of the modulated coefficients. We use here a different three-dimensional correction which gives a source term
  with critical decay   $(1 + t)^{-1}$ in $L^2$   in the equation for the perturbation at the velocity level (i.e at the level of $\nabla \psi^{i}$)
  while keeping the property that the final velocity perturbation that we have to study is curl free.
  \item{\bf Energy estimates.} We can then study  an appropriate perturbation of the modulated solitary wave plus corrector written
   as $( \rho, v)$ by using the velocity equation. Since the source term has now a critical decay in $L^2$, we are only able
   to get a logarithmic growth for the norm $\|(\rho, v) \|_{H^M}$. Nevertheless, we are able to obtain that
   $ \| \nabla_{x,y} (\rho, v) \|_{H^{M-1}}$ remains bounded. The main idea is 
   that the critical source term
   has  better decay when we apply the derivatives $\nabla_{y}$ and $\partial_{t} + c_{0} \partial_{x}$, we can thus perform
   energy estimates by using these vector fields and finally recover the $\partial_{x}$ derivatives by using the system itself.
    The crucial property  that allows to perform this last step is the fact that the speed $c_{0}$ of the solitary wave
    is strictly bigger than the sound speed as given by Theorem \ref{thm-existence}.
    \item{\bf Dispersive estimates.} The last part is to use dispersive estimates to obtain integrable decay of roughly
     the Lipschitz norm $\| \nabla (\rho, v) \|_{L^\infty}$. As mentioned before, by using decay in the weighted space, we can consider
     the terms involving  the solitary wave as source terms.  There are here two main difficulties.  The first one is again the slow decay
     of the source term involving only the modulated solitary wave:  as for the energy estimates, we shall get first decay for 
     $\nabla_{y}(\rho, v)$ and  $(\partial_{t} + c_{0} \partial_{x})(\rho, v)$ and then use the equation to recover the decay for
      $\partial_{x}(\rho, v)$.  The second difficulty is related to the presence of quadratic nonlinearities.  In dimension $3$, they cannot
      be handled by using only the Duhamel formula and dispersive estimates. This difficulty already shows up in the study of
      the stability of the constant states in \cite{G-P-global}, a normal form transformation 
      (which is done through time integration by parts in the Duhamel formula for the profiles due to the absence of time resonances)  can be performed to transform
     the   quadratic terms into cubic ones. Nevertheless this normal form is singular in two ways, there are singularities in the low frequencies
     and the resulting product law loses in the $L^p$ scale compared to the usual Holder inequality.   In \cite{G-P-global}, this can be handled
      by controlling the $\dot{H}^{-1}$ norm of the solution. Here, due to the slow decay of the source term involving the modulated solitary wave,
       we cannot control the 
      boundedness of the  $\dot{H}^{-1}$ norm. There is  some flexibility in the argument of \cite{G-P-global}, 
      nevertheless, here,   the most  negative Sobolev norm which can be proven to be bounded  is at the scale of   $\dot{H}^{-{1 \over 2}}$
       which is too far.
      We thus need to push  further the analysis of the normal form.
       In particular,  we shall establish a better  behavior for the new unknown which takes into account  the quadratic time boundary term. 
       This allows us to perform a two-tier estimate in order to close the argument.
\end{itemize}

We now provide more concrete details for the above steps.

$\bullet$ {\bf Norms to propagate  regularity and decay.}
To propagate regularity and decay, we work with a quantity that combines several key estimates: the decay estimates for the modulation parameters, defined in \eqref{def-decays-modu} and denoted by
$\cN_{\tilde{c},\gamma}(T)$; the estimates  in the weighted Sobolev space for the perturbation  $U=(n^i,\psi^i)-(n_c,\psi_c)(\cdot-\gamma),$  defined in  \eqref{def-weighted-U} and denoted by $\cN_{U}(T);$ and various  energy and decay estimates for the corrected remainder $(\vr, v)$ which will be defined shortly, given in \eqref{def-norms-vr-v} and denoted by $\cN_{(\vr, v)}(T).$  
We denote this combined quantity by
\begin{align}\label{def-cNT}
    \cN(T):=\cN_{\tilde{c},\gamma}(T)+\cN_{U}(T)+ \cN_{(\vr, v)}(T)\,.
\end{align}

$\bullet$ {\bf Decay of modulation parameters.} 
The modulation functions $(c, \gamma)(t,y)$ are 
found  by
imposing the orthogonal condition \eqref{secularcondtion}. As in \cite{Mizumachi-BL-nonlinear}, it is found that $(\tilde{c}=c-c_0, \gamma)$ solves a dissipative wave system \eqref{modueq-1} and enjoy  similar decay properties as the heat equation with quadratic nonlinearities: 
\beqs\label{modueq-intro}
\begin{aligned}
\bigg(\pt-\tilde{A}(c_0, D_y)\bigg)\left(\begin{array}{c}
 |\na_y|\gamma \\[3pt]
\tilde{c}
\end{array}\right)= \cQ(\na_y \tilde{c}, \na_y \gamma)+ \cR(U) 
\end{aligned}
\eeqs
where $\cQ$ are quadratic nonlinear terms,  $\cR(U) $ depends only on the perturbation and 
the linear semigroup associated to $\tilde{A}(c_0, D_y)$ reads
\begin{align*}
      e^{t \tilde{A}_0(c_0, \zeta)}=e^{-\lambda_{2,c_0}|\zeta|^2 t} B(c_0,\zeta, t)
      \end{align*}
      here $B(c_0,\zeta, t)$  is an  oscillatory factor, which is uniformly bounded in $\zeta,t.$ We thus expect that the time decay of the modulation parameters $(\tilde{c},\gamma)$ is imposed by the one of the  heat equation with quadratic nonlinearity. Specifically, we shall get that 
\begin{align*}
    \|y^{\ell}\p_y^k (\tilde{c}, \na_y \gamma)(t) \|_{L_y^2}\lesssim (1+t)^{-\f{k-\ell+1}{2}}, \qquad  \ell=0,1; k-\ell=-1,0,1,2.
\end{align*}
We refer to \eqref{def-decays-modu} for more detailed estimates.

$\bullet$ {\bf Weighted estimate.} 
Let us set  $ z=x-c_0t.$ The remainder defined by 
\beqs
 U(t, z, y)=
 (n^i, \psi^i)^t (t, \mathrm{x})- Q_{c(t,y)}\big(z-\gamma(t,y)\big)
 \eeqs
solves  the system 
 \beq\label{eq-U-intro}
\begin{aligned}
  \pt U-L_{c_0} U = (L_{c,\gamma}-L_{c_0}-\tilde{c}\p_z)U+ R  + N( U, \tilde{c}, c_y, \gamma_y), 
\end{aligned}
\eeq
where $L_{c,\gamma}, R, N$ are the linear term, source term and  nonlinear term respectively and are 
defined in \eqref{def-Lcgamma}-\eqref{def-NLterm}.
We shall  explain here  how to get the weighted estimates for $(n, u)$ (or equivalently $U$ in $X_a^M=\langle \na\rangle^{-M}X_a$). On the one hand, when focusing on  lower order weighted norms (say $\|U\|_{X_a^{k}}, k\leq M-1$), we can make use of the semigroup estimate \eqref{semigroup}:
\begin{align*}
   \| \mathbb{Q}_{c_0} U(t)\|_{X_a^{k}}\lesssim e^{-c_{*}t} \, \| \mathbb{Q}_{c_0} U_0\|_{X_a^{k}}+\int_0^t e^{-c_{*}(t-s)} 
  \big( \|(L_{c,\gamma}-L_{c_0}-\tilde{c}\p_z)U(s)\|_{X_a^{k}}+\| R(s)\|_{X_a^{k}}  +\|N(s)\|_{X_a^{k}}\big) \, \d s
\end{align*}
to find that it decays like $(1+t)^{-1}$ (since the source  term $R$ decays at best at the  rate $(1+t)^{-1}$ in the  weighted norm $X_a$). However, when focusing on the highest order weighted norm $\|U\|_{X_a^M},$ application of the semigroup estimates will inevitably lose derivatives due to the presence of derivatives in  the nonlinear terms, for example $u_{c}\cdot\na U.$  
Our strategy to resolve the problem is, on the one  hand, to obtain the usual $X_a^0$ estimate by applying the semigroup estimates as illustrated above but on the other hand, to perform  energy estimates in  the weighted space when focusing on  high frequencies. In the latter step, we detect the damping mechanism from the linear part of the system in the high frequency region, and control the term involving the background waves by the damping. 
More precisely, let $\tilde{\chi}_{_K}: \mR^3\ni (\xi,\zeta)\rightarrow \tilde{\chi}_{_K}(\xi,\zeta)\in  \mR$ be a cut-off function that vanishes on $B_K(0)$ and equals to $1$ on $B_{2K}^c(0).$
We shall prove  by designing appropriate energy functionals that 
\beq \label{energy-weighted-intro}
\begin{aligned}
\pt \|\tilde{\chi}_{_K}(D)U\|_{X_a^M}^2 
+ a \kappa_0  \|\tilde{\chi}_{_K}(D) U\|_{X_a^M}^2
&\lesssim a^{-1}\big(\|U\|_{X_a^0}^2
+\|R\|_{X_a^M}^2\big)\\
&\,+\big(\|
Q_c(z-\gamma)\|_{W^{M+1,\infty}}+\|\na (n, u)\|_{W^{1,\infty}}\big)\|\tilde{\chi}_{_K}(D) U\|_{X_a^M}^2.
\end{aligned}
\eeq
Since $\|
Q_c(z-\gamma)\|_{W^{M+1,\infty}}\lesssim \ep^2$ and $a=\f{\ep}{4},$ the term involving 
the background waves in the right hand side of 
\eqref{energy-weighted-intro} can thus be absorbed by the damping in the left hand side by choosing $\ep$ small enough.
Based on this, we can can conclude from the Gr\"onwall inequality that
$$ \|U(t)\|_{X_a}^2 \lesssim e^{-a\kappa_0t}\|U(0)\|_{X_a}^2 +\int_0^t e^{-a\kappa_0(t-s)}
\big(\|U(s)\|_{X_a^0}^2
+\|R(s)\|_{X_a^M}^2\big)\, \d s\lesssim (1+t)^{-2}.$$

 $\bullet$ {\bf Correction to the modulated solitary wave}. 
 The source term $R$ in the equation \eqref{eq-U-intro} for $U$ includes a term  $\p_t c \, \p_c Q_c,$ which is not bounded in the usual Sobolev spaces (indeed,  $\psi_c(\cdot)$ does not vanish at $-\infty$ and thus $\partial_{c} \psi_{c}$ is not localized), this prevents $U$ from  being bounded in the usual Sobolev spaces. 
To resolve the problem, a natural ideal is to modify the velocity potential by $\psi_c+\check{\psi}_c,$ where $\check{\psi_c}$ is an auxiliary function such that 
$\lim_{x\rightarrow -\infty}\check{\psi}=-(\psi_c-\psi_{c_0})(-\infty),$ and thus $\pt c\, \p_c (\psi_c+\check{\psi}_c)$ is bounded in $\dot{H}^1(\mR^3).$
This is essentially what is used  in \cite{Mizumachi-BL-nonlinear}  in the study of 
the semilinear  Benney-Luke equation. 
However, in our current study, such a correction is not enough. 
Indeed, the introduction of $\check{\psi}_c$ gives rise to a source term like $(c-c_0) \check{\vp}(x)$ where $\check{\vp}(x)\in L^2(\mR).$ However, this term decays at the slow rate  $(1+t)^{-\f12}$ in $L^2(\mR^3),$ which turns out to be too slow to close the Sobolev energy estimates.
Instead, we shall  make a different choice and  construct a suitable correction term  that produces a source term with a  critical decay. We 
use the decomposition
\beq\label{def-tvp-intro}
\begin{aligned}
\left(\begin{array}{c}
          n^i \\
          \psi^i
 \end{array}\right)(t,x, y)= \left(\begin{array}{c}
          n_{c(t,y)} \big(z-\gamma(t,y)\big)  \\[3pt]
          \underline{\psi_c}(t, z, y)
 \end{array}\right)+\left(\begin{array}{c}
          n\\
          \tilde{\vp}
 \end{array}\right)(t,z,y), \quad (z=x-c_0t),
\end{aligned}
\eeq
where $ \underline{\psi_c}(t, z, y):= \psi_{c_0}\big(z-\gamma(t,y)\big)+\widetilde{\psi}(t,z,y),$ $\widetilde{\psi}$ being defined in the following way.
Consider a divergence-free vector function 
\beq \label{def-w-intro}
w=-\curl (-\Delta)^{-1}\curl \bigg( \psi_{c}'(\cdot-\gamma)\left(\begin{array}{c}
          1 \\
    -\na_y\gamma
 \end{array}\right)\bigg)
\eeq 
which depends only on the solitary  wave through  $\psi_c'$ and the modulation parameters $\gamma, c.$ Assuming the 
existence of $(\gamma, c-c_0)$ in $C([0,T], H^2(\mR^2)),$
we can verify that $ (\psi_{c}'-\psi_{c_0}')(\cdot-\gamma)(1,-\na_y\gamma)^t+w$ is curl-free and  belongs to $C([0,T],L^2(\mR^3)).$ 
We then define $\widetilde{\psi}\in C([0,T],\dot{H}^1(\mR^3))$ such that  
\begin{align*}
\na \widetilde{\psi}(t, z,y) = (\psi_c'-\psi_{c_0}')(z-\gamma) \left(\begin{array}{c}
          1 \\
    -\na_y\gamma
 \end{array}\right)+w(t,z,y).
\end{align*}
Let us remark that the auxiliary function $w$ is introduced to replace the contribution of $(0,-c_y\,\p_c\psi_c)^t$ when taking the gradient of the velocity potential $\psi_c$ while keeping the same curl, so that 
$\psi_c'(z-\gamma) 
        (  1 ,
    -\na_y\gamma)^t+w(t,z,y)$ is curl-free. 
By using this decomposition, 
the source terms in the equation of $(n, \na\tilde{\vp})^t$ are bounded in the usual Sobolev spaces and have a critical  time decay,  see Corollary \ref{cor-sourceterm}.

 $\bullet$ {\bf Energy estimates for the corrected perturbation}. 
The corrected perturbation $(n, \tilde{v}=\nabla\tilde{\varphi})^t$ solves the system
\beq
\label{introsys}
\left\{ 
\begin{array}{l}
  \pt n-c_0\p_z n+ {\bf \div\,} \big(
  {\bf n\, \underline{v_c}(z) }
  \big) = F_1 ,    \\[3pt]
 \pt \tilde{v}-c_0\p_z \tilde{v}+
  {\bf \na (\tilde{v}\cdot \na \underline{v_c}(z)) +\pt w}
 =F_2,\\[3pt]
 \Delta \phi=e^{\phi}-\rho,
\end{array}
\right.
\eeq
where $
\underline{v}_c=  \na \underline{\psi_c}(t, z, y)=\psi_c'(z-\gamma) ( 1 , \,  -\na_y\gamma)^t+w(t,z,y),$ $w$ is defined in \eqref{def-w-intro} and  $F_1,F_2$ are the terms that are irrelevant to the current discussion. 
 The terms in bold are the ones we shall first  discuss here, in particular, 
they contain the terms which are linear in the perturbation.
As mentioned already, we have to be careful with these linear terms that contain the background solitary wave since they
can yield exponential growth when performing energy estimate. The initial plan was to use that they are in the weighted space due to 
the multiplication by the solitary wave which is localized  and to use the decay of the perturbation in the weighted space. 
Nevertheless, we had to include the corrector $w$ and study a different perturbation.
 Due to the elliptic equation that it solves, $w$ is not in the weighted space and therefore $\tilde v$ is also not in the weighted space, 
 this is   due to the source term $\partial_{t}w$ in the system \eqref{introsys}.
Nevertheless, we  can observe from the definition that 
\beq\label{identiy-import-intro}
\tilde{v}+w=u+(0,\,\p_c\psi_c(z_1)\na_y c)^t, \quad \text{ with } u= \na \big(\psi^i(t,z+c_0t,y)-\psi_c(z_1)\big).
\eeq
The energy estimate can thus be performed by using the favorable time decay of $w$ in the usual  Sobolev spaces (See Proposition \ref{prop-w}) and the critical time decay of $U$ in the weighted space. This will be enough to prove a bound for $(n,\tilde{v})(t,\cdot)$ in $L^2(\mR^3)$ with a logarithmic growth. Next, since  $(\p_t,\na_y)U$ has  integrable time decay in the weighted  space, we can further show that 
 $(\p_t,\na_y)(n,\tilde{v})$ is uniformly bounded in the usual Sobolev space  $H^{M-1}.$ Finally, we can recover a  similar estimate for $\p_z (n,\tilde{v})$
by using the equations  
\beqs
-c_0 \p_z n+\p_z \tilde{v}_1=H_1, \quad -c_0 \p_z \tilde{v}_1+\p_z \big(h'(1)+(\Id-\p_z^2)^{-1}\big)n =H_2,
\eeqs
where $H_1=\pt n+\div_y \tilde{v}_y+\cdots, \, H_2=\pt \tilde{v}+\cdots.$ We prove in Lemma \ref{lem-recoverpx} that as long as $c_0>\sqrt{h'(1)+1},$ which is assumed, since it is  necessary in Theorem \ref{thm-existence} to ensure the existence of the solitary waves, we have 
\beqs 
\|\p_z (n, \tilde{v}_1)\|_{L^p}\lesssim \|(H_1, H_2)\|_{L^p}.
\eeqs



$\bullet$ \textbf{Decay of Lipschitz norms.}
To prove the time decay estimates for $(n, \tilde{v}),$ we return to the original reference frame. Let $(\vr, v)(t,x,y)=\colon (n, \tilde{v})(t, z, y),$ and consider the symmetrized form of the equation solved  by $(\vr, v)(t,x,y).$ 
The problem is thus reduced to studying the following equation for  $\alpha=\vr+i \f{\div}{P(D)}v:$
$$\pt \alpha- i P(D)\alpha=\mathfrak{N}_1+\mathfrak{N}_2+\mathfrak{N}_3+\mathfrak{N}_4, $$
where $P(D)=\sqrt{\Delta \big(h'(1)+(1-\Delta)^{-1}\big)}, $ and  $\mathfrak{N}_j=\mathfrak{F}_j^0+i \f{\div}{P(D)}\mathfrak{F}_j'.$ We refer to \eqref{def-fF} for the definition of $\mathfrak{F}_j=(\mathfrak{F}_j^0, \mathfrak{F}_j').$  
 The source term $\mathfrak{N}_1$ is the one involving only the modulation parameters and the background wave:
 \begin{align*}
     \mathfrak{N}_1=\pt c\, (\p_c n_c )(x-c_0t)+(\pt \gamma-\tilde{c}) n_c'(x-c_0t)+\cdots
 \end{align*}
The $\mathfrak{N}_1$ term  is the main cause of numerous additional difficulties.
 Indeed, it  follows from  Proposition \ref{prop-modulation}   that $\mathfrak{N}_1$ is decaying like  $(1+t)^{-(\f{1}{2}+\f1p)}$ in $L^{p'}.$ We may apply the dispersive estimate \eqref{disper-general} to obtain that for any $p\in(8,+\infty)$
\begin{align*}
    \big\|\int_0^t e^{i(t-s)P(D)}\mathfrak{N}_1(s)\, \d s\big\|_{L^p}\lesssim \int_0^t (1+t-s)^{-\f43(1-\f2p)} (1+s)^{-(\f{1}{2}+\f1p)}\, \d s\lesssim (1+t)^{-(\f{1}{2}+\f1p)}
\end{align*}
which is far from what we need  to close the Sobolev  estimates. 
As we used  in the energy estimates, we can notice  again that the source term $\mathfrak{N}_1$ behaves better once we apply one $Z$ derivative, where $Z\in \{\na_y, \pt+c_0\p_x\},$ in the sense that $\|Z\mathfrak{N}_1\|_{L^{p'}}\lesssim (1+t)^{-\big(1+\f1p\big)}.$ This motivates us to first establish the desired decay estimates for $Z(\vr, v)$ and 
then recover the corresponding estimate for $\p_z(\vr, v_1)$ or $\p_z(n, \tilde{v}_1),$ as we explained above.

 To obtain favorable time decay estimates for $Z\alpha,$ the next step is to apply the normal form transformation
 used in \cite{G-P-global}
  to switch the quadratic terms like $\mathfrak{N}_3$ into cubic ones. After this transformation, we find that 
\begin{align*}
\int_0^t e^{i(t-s) P(D)} Z \mathfrak{N}_3\, \d s&\approx
\sum_{\mu,\nu\in\{\pm\}} \bigg(e^{itP(D)}B^{\mu\nu}\big(Z\alpha^{\mu}(0),\alpha^{\nu}(0)\big)- B^{\mu\nu}\big(Z\alpha^{\mu}(t),\alpha^{\nu}(t)\big)+\text{symmetric terms}\\
&\qquad + \sum_{j=1}^4 \int_0^t e^{i(t-s)P(D)} \big(B^{\mu\nu}\big(Z\mathfrak{N}_j^{\mu}(s), \alpha^{\nu}(s)\big)+ B^{\mu\nu}\big(Z\alpha^{\mu}(s), \mathfrak{N}_j^{\nu}(s)\big)\big)\,\d s\bigg)
\end{align*}
where $B^{\mu\nu}(f,g)=\cF^{-1}\bigg(\int_{\mR^3}\f{|\varsigma|} {i\,\phi^{\mu\nu}(\varsigma, \varsigma')} \,f(\varsigma-\varsigma') \,g (\varsigma')\, \d \varsigma'\bigg), \, \text{ with } \phi^{\mu\nu}(\varsigma, \varsigma')=P(\varsigma)-\mu P(\varsigma-\varsigma')-\nu P(\varsigma'),$ 
and $\mu,\nu\in\{\pm\}, \alpha^{+}=\alpha, \alpha^{-}=\bar{\alpha}.$
The phase $\phi^{\mu\nu}$ has some degeneracies in low frequencies, leading to the unfavorable bilinear estimates of $B^{\mu\nu}(f,g)$ when both $f$ and $g$  have  low frequencies. For instance, by \eqref{bilinear-worst}, it holds that for any $r\in [2,3],$
\begin{align}\label{BL-LL-intro}
    \|B^{\mu\nu}(P_{\leq -5}f, P_{\leq -5}g)\|_{L^p}\lesssim \big\|P_{\leq -5}\f{f}{|\na|}\big\|_{L^q} \|P_{\leq -5}\f{g}{|\nabla|}\|_{L^r}, \quad 
     \big(\f{1}{q}+\f{1}{r}=
     \f{1}{3}+\f{1}{2p}+\ep, \, \, 0< \ep\ll 1 \big).
\end{align}
 Note that not only we have some singularities in low frequencies, but also we lose some integrability.
These give rise to technical difficulties, especially when controlling the boundary term 
 $B^{\mu\nu}\big(Z\alpha^{\mu}(t), \alpha^{\nu}(t)\big).$ 
 In view of \eqref{BL-LL-intro}
 we need some control of  of $\alpha$ in $\dot{W}^{-1,r}$  which turns out to be unbounded due to the source term $\mathfrak{N}_1.$ Indeed, for $k\ll j\leq 0,$ 
 we may expect to control the boundary term in the following way: 
 \beq\label{problem-intro}
 \begin{aligned}
  \|  B^{\mu\nu}\big(P_j Z\alpha^{\nu}(t), P_k\alpha^{\mu}(t))\|_{L^{p}}&\lesssim \||\na|  B^{\mu\nu}\big(P_j Z\alpha^{\mu}(t), P_k\alpha^{\nu}(t))\|_{L^{\f{1}{1/3+1/p}}}\\
 &\lesssim \|P_j Z\alpha^{\mu}(t)\|_{L^{p}}\|P_k \alpha^{\nu}(t)\|_{\dot{W}^{-1,\f{1}{\ep+1/2-1/p}}}.
 \end{aligned}
 \eeq
On the one hand, by the dispersive estimate \eqref{disper-general}, to ensure the integrable time decay of $Z\alpha$ in $\dot{W}^{1,p},$ we need to  take $p>8.$ On the other hand, we can expect at best that  the $\dot{W}^{-1,r}$ norm of $\alpha(t)$ behaves like $(1+t)^{-(\f34-\f2r)}$ (see Proposition \ref{prop-decayes}). These two facts together prevent us from closing the decay estimates. 
Our strategy to resolve the problem is to do two-tier estimates. We write 
$$\alpha=\tilde{\alpha}+\sum_{\mu,\nu\in\{\pm\}}B_{LL}^{\mu\nu}(Z\alpha^{\mu},\alpha^{\nu}), \qquad \text{ with }\quad B_{LL}^{\mu\nu}  (f, g):= \sum_{ k+5\leq j\leq -5} B^{\mu\nu}(P_j f, P_{k} \,g).$$
 We prove, on the one hand that  
$
    \|\tilde{\alpha}\|_{W^{1+(\f{3}{p})^{-},p}}\lesssim (1+t)^{-\f{4}{3}(1-\f2p)}\, ,
$
and on the other hand, by using  the  estimate for $\tilde{\alpha},$ that
$ \||\na|^{(\f{3}{p})^{-}} B_{LL}^{\mu\nu}\|_{L^p}\lesssim (1+t)^{-1^{+}}.$ In the process, we find that $p$ should be taken sufficiently close to $8.$
We refer to Section \ref{sec-decayes} for the detailed analysis. 


\subsubsection{Transverse linear asymptotic stability}
The proof of the linear asymptotic stability result in Theorem \ref{thm-resol-L} relies on
the uniform resolvent estimate \eqref{uni-resol}. 
The main difficulties  for the uniform resolvent estimates arise again  from the quasilinear structure of the system,   the part of the system involving coefficients
depending on the solitary wave cannot be seen as a lower order small perturbation
 as in the study of the linearization of semilinear problems, 
see for instance \cite{Mizumachi-BL-linear}. 
To address this issue, we  cut frequencies and use 
 pseudo-differential calculus
and energy estimates in the high-frequency regions. 

More precisely, we divide the frequency space $(\xi, \zeta_1, \zeta_2)\subset \mathbb{R}^3$ into three regions, corresponding to (uniform) high, intermediate, and low \textbf{transverse} frequencies:
\begin{align}
\label{defregions}
    R_{\zeta}^{UH}=\big\{ (\xi,\zeta_1, \zeta_2)\big|\, |\zeta|\geq 2 \big\}, \,\,
    R_{\zeta}^I=\big\{(\xi,\zeta_1, \zeta_2)\big|\, A\ep^2\leq |\zeta|\leq 2 \big\}, \,\,  R_{\zeta}^L=\big\{ (\xi,\zeta_1, \zeta_2)\big|\,|\zeta|\leq  A\ep^2 \big\}, 
\end{align}
where $A$ is a sufficiently large constant satisfying $A^2\ep \leq 1.$ 
To show the uniform resolvent estimates, we use  different arguments depending on the size of the transverse frequencies. In the (uniform) high transverse frequencies region $R_{\zeta}^{UH}$,  we use reductions based on pseudodifferential calculus. For intermediate one $R_{\zeta}^I$, we use 
 an energy-based approach relying on  the design  of various appropriate energy functionals for different regimes of  longitudinal 
 frequencies.  As for low transverse frequencies $R_{\zeta}^I$, we use the energy estimates  and  KP-II approximation in the high  and low longitudinal  frequencies.
This approach has recently been employed in our study of the transverse linear stability of solitary gravity water waves \cite{RS-WW}. However, the forms of the linearized operators in these two studies differ significantly, leading to substantial differences in the associated energy functionals.

\subsubsection{Organization of the paper}
In Section 2, we  introduce  some spectral preliminaries for the linearized operator in  \eqref{Linear-sys}, in particular, we  define the spectral projections used in the statement of the linear asymptotic stability result, Theorem \ref{thm-resol-L}.
We then present the proof of the nonlinear stability result stated in Theorem \ref{thm-nonlinear}, which is carried out in Sections 3–7.
Section 3 establishes  existence and decay estimates for  the modulation parameters $(\tilde{c},\gamma).$ Section 4 focuses on estimates in the  weighted space, while Section 5 provides the usual   Sobolev estimates.
In Section 6, we prove the  time decay estimates 
in $L^{8_{\kappa}}, (8_{\kappa}=\f{8}{1-\kappa}, \kappa>0$ small).
We then combine in Section 7 the estimates obtained in Section 3-6 to finish the proof of Theorem \ref{thm-nonlinear}.
Finally, in Section 8, we prove the linear asymptotic stability result stated in Theorem \ref{thm-resol-L}. While this result is used in Section 4,
the reader does not need  its proof  to go through the one of the  nonlinear stability result.
In Appendix A, we establish the existence of continuous resonant modes, as stated in Theorem \ref{thm-resolmodes}. The appendix begins with a formal derivation of the KP-II equation from the Euler–Poisson system in the long-wave regime. Appendices B–E present the statements and proofs of several auxiliary results that support the main arguments in  the proofs of Theorems \ref{thm-nonlinear}–\ref{thm-resol-L}.

\textbf{Notations:} For any $j\in \mZ,$ we use   denote $P_j$ the Littlewood-Paley multipliers which are defined by
\begin{align*}
    (P_j f)(\mathrm{x})=\cF_{\varsigma\rightarrow \mathrm{x}}\bigg(\varphi \big(\f{\varsigma}{2^j}\big) (\cF f)(\varsigma) \bigg)
\end{align*}
where $\varphi\in C_c^{\infty}(\mR^3)$ satisfies 
\begin{align*}
    \sum_{j\in\mZ} \varphi \big(\f{\varsigma}{2^j}\big)=1, \qquad \forall \, \varsigma\in \mR^3\backslash \{0\}\,.
\end{align*}
We denote also $P_{\leq k}=\sum_{j\leq k} P_j\,.$

\section{Spectral preliminaries}
To give the definition of the spectral projector appearing in the statement of the linear stability result in Theorem \ref{thm-resol-L}, we need to first construct the resonant space associated to the continuous eigenvalues near $0.$ 
To do so, it is convenient to introduce  weighted energy spaces in the one-dimensional variable $x$ only but involving
in a quantitative way a parameter $\eta \in \mathbb{R}$ which is related to the transverse frequency:
$$L_a^2(\mR)=\colon L^2(\mR; e^{2ax} \d x), $$
\begin{align}\label{Hhalfeta}
  \dot{H}_{a,\eta}^{1}(\mR)= \bigg\{ f  \,\big|\, \|f\|_{ \dot{H}_{a,\eta}^{1}(\mR)}:= \bigg(\int \big|(\p_x, \eta)  f(x)\big|^2  e^{2ax} \d x\bigg)^{\f{1}{2}} <+\infty\bigg\}
\end{align}
and set  
\begin{equation}
\label{defspaceY}
Y_{a,\eta}=L_a^2(\mR)\times \dot{H}_{a,\eta}^{1}(\mR).
\end{equation}
We then consider the following unbounded operator $L_{c}({\eta})$ on $Y_{a,\eta}$
 defined for any smooth function $U(x)$ by  
$ L_{c}({\eta})(U(x))=e^{-i(y_{1}\zeta_1+y_2\zeta_2)}L_c\big(e^{i(y_{1}\zeta_1+y_2\zeta_2)}U(x)\big) \mathbb{I}_{\{\zeta_1^2+\zeta_2^2=\eta^2\}} ,$ so 
that $L_c$ takes the form
\beq\label{def-Leta}
L_c({\eta})= \left( \begin{array}{cc}
   \p_x(d_c\cdot)  &  -\p_x (\rho_c \p_x)+\rho_c\,\eta^2 \\[5pt]
 -\big[ \, h'(\rho_c) +(e^{\phi_c}+\eta^2-\p_x^2)^{-1}\big]   & d_c\p_x
\end{array}\right),
\eeq
where we have denoted $(e^{\phi_c}+\eta^2-\p_x^2)^{-1}: L_a^2(\mR)\rightarrow {H}_{a,\eta}^{2}(\mR):= \langle (\p_x,\eta)\rangle^{-2}L_a^2(\mR) $ as the solution map of the elliptic problem
\beqs 
(e^{\phi_c}+\eta^2-\p_x^2)\phi =n\, .
\eeqs
An important motivation for working in this weighted framework is  that the essential spectrum of the operator $L({\eta})$ which is
the imaginary axis in unweighted spaces is pushed to the left so that the first step in order to establish
 linear stability is to investigate the presence of eigenvalues close to the  imaginary axis. 
 We can  observe that when $\eta=0,$ $L(0)$ is the linearized operator for 1D Euler-Poisson system about  the line soliton $(n_c,\, \psi_c)$ studied in \cite{Bae-Kwon-linearEP}.  By the translational and Galilean invariances, there is  an eigenvalue $\lambda = 0$ with algebraic multiplicity two. Consequently, for nonzero and small $\eta,$  we expect that the operator $L(\eta)$  
 will have  two small  eigenvalues $\lambda(\eta)$ and $\overline{\lambda(\eta)}$ in the space $Y_{a,\eta}$ bifurcating from $\lambda(0)=0.$ 

    \begin{thm}\label{thm-resolmodes}
Let $0<a=\hat{a}\ep < \f{\sqrt{3}}{4}\ep.$ 
There exist $\epsilon_0>0,
\hat{\eta}_0=\hat{\eta}_0(\hat{a})>0$
such that for any $\ep\in (0,\ep_0], \eta \in[-\ep^2\hat{\eta}_0, \ep^2 \hat{\eta}_0],$ 
the operators $L_c(\eta)$ 
has two pairs of eigenmodes 
$(\lambda_c(\pm \eta), \, U_c(\cdot, \pm \eta))$ in the space $Y_{a, \eta},$ where
\beqs
\lambda_c(\eta)\in C^{\infty}\big([-\ep^2\hat{\eta}_0 , \ep^2\hat{\eta}_0 ]\big), \quad U_c(\cdot, \eta)\in  C^{\infty}\big(
[-\ep^2\hat{\eta}_0 , \ep^2\hat{\eta}_0 ], Y_{a,\eta}\big)\, . 
\eeqs
Moreover, for any $\eta \in [-\ep^2\hat{\eta}_0 , \ep^2\hat{\eta}_0 ],$ it holds that
\begin{align}
  & \lambda_c(\eta)=i\lambda_{1,c}\eta-\lambda_{2,c}\,\eta^2+\cO(\eta^2)\,, \label{exp-spectralcurve} \\
  &   U_c(\cdot,\eta)=
U_c^1+i\lambda_{1,c}\eta\, U_c^2+\cO(\eta^2) \, \,\,\text{ in } Y_{a,\eta}\, , \notag\\
&\overline{\lambda_c(\eta)}=\lambda_c(-\eta), \quad \overline{U_c(\cdot, \eta)}=U_c(\cdot, -\eta), \notag
\end{align}
where $\lambda_{1,c}, \lambda_{2,c} $ are two positive constants 
and
\beq\label{def-firsttwo}
U_c^1=\left( \begin{array}{c}
  n_c'    \\
   \psi_c '
\end{array}
\right), \qquad     U_c^2=-\left( \begin{array}{c}
  \p_c n_c    \\
  -\int_{x}^{+\infty} \p_c \psi_c'  
\end{array}
\right). 
\eeq

In the same way, the adjoint operator $L_c^{*}(\eta)$ has  eigenmodes under  the form $(\lambda_c(\mp \eta), U_c^{*}(\cdot, \pm \eta))$ where 
$\overline{U_c^{*}(\cdot, \eta)}=U_c^{*}(\cdot, -\eta) $ and 
\beqs 
U_c^{*}=
U_c^{*,1}-i\lambda_{1,c}\eta\, U_c^{*,2}+\cO(\eta^2) \, \,\,\text{ in } Y_{-a,\eta}\, 
\eeqs
with
\beq\label{def-firsttwo-adj}
U_c^{*,1}=\left( \begin{array}{c}
  \psi_c'    \\
   - n_c'  
\end{array}
\right), \qquad     U_c^{*,2}=\left( \begin{array}{c}
   \int_{-\infty}^x \p_c \psi_c'    \\
    -\p_c n_c
\end{array}
\right). 
\eeq
\end{thm}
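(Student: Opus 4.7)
The plan is an analytic spectral perturbation from a Jordan block at $\eta=0$. On $Y_{a,0}$ the operator $L_c(0)$ is precisely the one-dimensional linearization studied in \cite{Bae-Kwon-linearEP}: $x$-translation invariance yields $L_c(0)U_c^1=0$ with $U_c^1=(n_c',\psi_c')^t$, while differentiation of the stationary profile equations with respect to the wave speed $c$ produces, after replacing $\partial_c\psi_c$ by the $+\infty$-vanishing representative $-\int_x^{+\infty}\partial_c\psi_c'$ (a modification by a constant that $L_c(0)$ does not see, since it acts on the second component only through $\partial_x$), the Jordan-block identity $L_c(0)U_c^2=U_c^1$ with $U_c^2$ as in \eqref{def-firsttwo}. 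Thanks to the exponential weight $a=\hat a\ep$, the essential spectrum of $L_c(0)$ is shifted strictly into the left half-plane by $\cO(\ep)$ (the characteristic speeds of the constant-coefficient limits of $L_c(0)$ at $\pm\infty$ exceed $c_0$ by $\sim\ep^2$, so conjugation by $e^{ax}$ with $a\sim\ep$ moves the spectrum by $\sim\ep$), so $0$ is an isolated eigenvalue with algebraic multiplicity exactly two and geometric multiplicity one.

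The map $\eta\mapsto L_c(\eta)$ is analytic in $\eta^2$, since $\eta$ only enters through $\rho_c\eta^2$ and the elliptic resolvent $(e^{\phi_c}+\eta^2-\partial_x^2)^{-1}$. The Riesz projector onto the generalized eigenspace near $0$ is therefore well-defined and analytic in $\eta^2$ for $|\eta|\le \ep^2\hat\eta_0$, and the spectral problem reduces to a $2\times 2$ matrix on its two-dimensional range, spanned perturbatively by $U_c^1$ and $U_c^2$. Plugging the ansatz
$$U_c(\cdot,\eta)=U_c^1+\eta W_1+\eta^2 W_2+\cO(\eta^3),\qquad \lambda_c(\eta)=\alpha_1\eta+\alpha_2\eta^2+\cO(\eta^3)$$
into $L_c(\eta)U_c(\cdot,\eta)=\lambda_c(\eta)U_c(\cdot,\eta)$ and matching orders, the order-$\eta$ relation $L_c(0)W_1=\alpha_1 U_c^1$ combined with the Jordan identity forces $W_1=\alpha_1 U_c^2$. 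The order-$\eta^2$ equation $L_c(0)W_2+L_c^{(2)}U_c^1=\alpha_1 W_1+\alpha_2 U_c^1$, where $L_c^{(2)}$ is the coefficient of $\eta^2$ in $L_c(\eta)$, is solvable iff its right-hand side is orthogonal to the adjoint kernel element $U_c^{*,1}$; using the Jordan-block identities $\langle U_c^1,U_c^{*,1}\rangle=0$ and $\langle U_c^2,U_c^{*,1}\rangle\neq 0$, this solvability condition becomes
$$\alpha_1^2\,\langle U_c^2,U_c^{*,1}\rangle=-\langle L_c^{(2)}U_c^1,U_c^{*,1}\rangle.$$
Explicit evaluation of both pairings using the KdV profile from Theorem \ref{thm-existence} shows the right-hand ratio is strictly negative, so $\alpha_1=\pm i\lambda_{1,c}$ with $\lambda_{1,c}>0$; the two signs label the two eigenmodes $\lambda_c(\pm\eta)$, and the conjugation relations follow from the fact that $L_c(\eta)$ commutes with complex conjugation and depends on $\eta$ only through $\eta^2$. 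Pushing one order further and pairing with $U_c^{*,2}$ determines $\alpha_2=-\lambda_{2,c}$; the positivity of $\lambda_{2,c}$ is the transverse-dispersion coefficient furnished by the KP-II long-wave approximation (Appendix A.1), which is precisely what distinguishes the KP-II regime from the KP-I one.

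The adjoint eigenmodes on $Y_{-a,\eta}$ are obtained by the same scheme applied to $L_c^*(\eta)$: the adjoint profile identities yield the Jordan structure $L_c^*(0)U_c^{*,2}=U_c^{*,1}$ with $U_c^{*,j}$ as in \eqref{def-firsttwo-adj}, and spectral duality identifies the adjoint eigenvalues with $\overline{\lambda_c(\pm\eta)}=\lambda_c(\mp\eta)$. The main technical obstacles are twofold: first, the quantitative, $\ep$-uniform spectral analysis of $L_c(0)$ on the exponentially weighted space, asserting that $0$ is isolated with algebraic multiplicity exactly two and that no additional neutral or unstable eigenvalues accumulate near $0$ as $\ep\to 0$, which is a quantitative refinement of the one-dimensional Bae–Kwon analysis; and second, carrying out the KdV/KP-II asymptotic computation through the solvability conditions above to secure the positivity of $\lambda_{1,c}$ and $\lambda_{2,c}$, which is the rigorous incarnation of the KP-II transverse stability heuristic and thereby drives the sign structure in \eqref{exp-spectralcurve}.
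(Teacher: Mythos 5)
Your overall skeleton (Jordan block at $\eta=0$ from translation/speed differentiation of the profile equations, reduction to a two-dimensional spectral subspace, Puiseux-type matching in $\eta$ with solvability conditions against $U_c^{*,1},U_c^{*,2}$) is a legitimate alternative to the paper's route, and your leading-order relation $\alpha_1^2\langle U_c^2,U_c^{*,1}\rangle=-\langle L_c^{(2)}U_c^1,U_c^{*,1}\rangle$ is exactly the paper's formula \eqref{Lbda1}. The paper, however, does not argue this way: it first eliminates $n$ through the invertible pseudodifferential symbol $h'(\rho_c)+(e^{\phi_c}+\eta^2-\p_x^2)^{-1}$ to obtain a scalar quadratic pencil in $\lambda$, then rescales $\lambda=\ep^3\Lambda$, $\hat x=\ep x$, $\hat\eta=\ep^{-2}\eta$, so that at $\ep=0$ the pencil becomes $-\tfrac2V(\Lambda-L_{\kp}(\hat\eta))$, and runs a Lyapunov--Schmidt reduction plus the Implicit Function Theorem uniformly around this KP-II limit. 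That rescaling is not cosmetic: it is what makes all constants $O(1)$, gives the expansion on the full range $|\eta|\le\ep^2\hat\eta_0$ with $\hat\eta_0$ independent of $\ep$, supplies the needed invertibility of $L_0(\ep,0)$ off its generalized kernel perturbatively from the explicit KdV operator (rather than from a ``quantitative refinement of Bae--Kwon'', which is how you frame it), and turns the signs of $\lambda_{1,c},\lambda_{2,c}$ into explicit computations with the limiting profile, e.g. $\Lambda_{1\ep}^2=\tfrac{2V}{3}+\cO(\ep^2)$ and $\Lambda_{2\ep}=\tfrac V9\,\|v_0\|_{L^1}^2/\|v_0\|_{L^2}^2+\cO(\ep)>0$.

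Two points in your write-up are genuine gaps rather than routine omissions. First, the positivity of $\lambda_{2,c}$ is the crux of the theorem, and you only assert that it is ``furnished by the KP-II long-wave approximation''; it must be computed, and the computation is not the naive ``pair the $\eta^2$ equation with $U_c^{*,2}$'': at second order the corrector carries a nontrivial component along the generalized kernel (the paper's $\kappa(0)=-\tfrac V{18}\|v_0\|_{L^1}^2/\|v_0\|_{L^2}^2+\cO(\ep^2)\neq0$), and the value of $\lambda_{2,c}$ depends on it, so one needs the coupled solvability system ($\cH_1=\cH_2=0$ in the paper) or, equivalently, the $\eta^3$ equation paired with $U_c^{*,1}$ together with the $\eta^2$ equation paired with $U_c^{*,2}$. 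Second, your quantitative premise is off: the weight $a=\hat a\ep$ does not push the essential spectrum of $L_c(0)$ left by $\cO(\ep)$ near the origin; since $c-V=\ep^2$, the low-frequency branch only moves by $\cO(a\ep^2)=\cO(\ep^3)$, which is the same scale as the bifurcating eigenvalues $\lambda_c(\eta)$ over the required range $|\eta|\le\ep^2\hat\eta_0$ (and the same scale as the resolvent bound $\beta\ep^3$ in Theorem \ref{thm-resol-L}). Hence the ``isolated eigenvalue with an $\cO(\ep)$ gap, analytic Riesz projector'' picture, run in unscaled variables, does not by itself yield expansions and $\cO(\eta^2)$ remainders uniform in $\ep$ on the stated $\eta$-interval; one must either track all constants at the $\ep^3$ scale or rescale as the paper does. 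Moreover, both your $\alpha_1$-relation and the gap estimate involve pairings ($\langle U_c^2,U_c^{*,1}\rangle$, $\langle L_c^{(2)}U_c^1,U_c^{*,1}\rangle$) that themselves degenerate as $\ep\to0$, so ``explicit evaluation shows the ratio is strictly negative'' also requires the $\ep$-scaling analysis you have deferred.
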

The proof of this theorem will be presented in Appendix \ref{sec-resolnantmode}.

As shown in  Appendix A, one can then choose two smooth in $\eta$ functions
$(g_1(\cdot, \eta), g_2 (\cdot, \eta))$  such that  for $\eta \neq 0$  they form  a basis of 
the two dimensional space generated  in the space $Y_{a, \eta}$ by $U(\pm \eta)$ and for $\eta=0$ a basis of the generalized kernel of $L_c(0).$  Let $g_1^{*}(\cdot, \eta), g_2^{*} (\cdot, \eta)\in Y_{a, \eta}^{*}$ be the corresponding dual basis  defined in \eqref{defbasis-dual}. We can then 
define the spectral projector 
\begin{align}
  &  \mathbb{P}_c(\eta_0)f=\sum_{k=1, \, 2}\int_{ |\zeta|\leq \eta_0} \big \langle \cF_y(f) (\cdot, \zeta), g_k^{*}\big(\cdot, |\zeta|\big)\big\rangle_{_{Y_{a,|\zeta|}\times Y_{a,|\zeta|}^{*} }}  \,g_k\big(\cdot, |\zeta|\big) \, e^{i y\cdot\zeta}\, \d \zeta_1\d \zeta_2 \label{def-projectionP}\\
   & \mathbb{Q}_c(\eta_0)=\Id-  \mathbb{P}_c(\eta_0)\label{def-bQ}
\end{align}
which project the elements in $X_a$ 
onto the spectral subspaces associated with the continuous family of eigenvalues
 $\{\lambda(\pm \eta)\}_{|\eta|\leq \eta_0}$ and onto its orthogonal complement, respectively.
We refer to \eqref{def-bracketYa} for the definition of $ \langle \cdot \rangle_{Y_{{a, \eta}}\times Y_{a,\eta}^{*}}.$


\section{Decomposition of the solution and derivation of the equations for the modulation parameters}

This section marks the beginning of the proof of transverse nonlinear stability as stated in Theorem \ref{thm-nonlinear}. Its primary goal is to establish a precise decomposition of the solution and to derive both the modulation equations and their decay estimates.

Let us set  $Q_c=(n_c, \psi_c)^t,$ we write the solution to the system \eqref{EPION3d}, 
 $\Psi(t, \mathrm{x})=(n^i, \psi^i)^t (t, \mathrm{x})$ in the following form
 \beq \label{decomp}
\Psi(t, \mathrm{x})= Q_{c(t,y)}(z-\gamma(t,y))+ U(t, z, y), \qquad ( z=x-c_0t \,)\, . 
 \eeq
To simplify the notation, for a function $f: \mR\rightarrow \mR,$ we denote $f_{\gamma}(\cdot)=f(\cdot-\gamma).$ 

Let us  also set  $\tilde{c}=c-c_0.$
 Substracting the system $\eqref{EPION3d}$ by the equations \eqref{profile eq}
  satisfied by $Q_{c},$ we obtain the following equations for $U(t,z,y):=(n, \psi)^t(t,z,y)$
\beq\label{eq-U}
\begin{aligned}
  \pt U-L_{c,\gamma} U  = -\tilde{c}\,\p_z U+ R  + N( U, \tilde{c}, c_y, \gamma_y), 
\end{aligned}
\eeq
where the linear operator $L_{c, \gamma}$ is defined as
\beq\label{def-Lcgamma}
L_{c,\gamma}=L_c(Q_{c,\gamma}, \nabla):= \left( \begin{array}{cc}
  \p_z (d_{c,\gamma}\cdot)   &\qquad (1+n_{c,\gamma})\Delta+\p_z n_{c,\gamma}  \p_z \\
  h'(1+n_{c,\gamma})+I_{\phi_{c,\gamma}} 
  & d_{c,\gamma}\,\p_z
\end{array}
\right),
\eeq
the source term can be written as $R=R_1+R_2,$ with $R_1, R_2$ defined by 
\beq \label{def-R}
\begin{aligned}
 & R_1=-\pt c \, \p_c Q_{c,\gamma}+( \pt \gamma -\tilde{c})  Q'_{c,\gamma}, \qquad \\
&  R_2=- \left( \begin{array}{c} 
\div_y \big((1+n_{c,\gamma})\na_y \psi_{c,\gamma}\big) \\[3pt]
|\na_y \psi_{c,\gamma} |^2/2+I_{\phi_{c,\gamma},0}
 \Delta_y\phi_{c,\gamma} +
 I_{\phi_{c,\gamma}} I_{\phi_{c,\gamma},0}\big(\phi_{c,\gamma}' \Delta_y^2\gamma+\p_c \phi_{c,\gamma} \Delta_y^2 c\big)\,
\end{array}
\right)\, ,
\end{aligned}
\eeq
where  we have set 
$I_{\phi_{c,\gamma}}=(e^{\phi_{c,\gamma}}-\Delta)^{-1},\,I_{\phi_{c,\gamma},0}=(e^{\phi_{c,\gamma}}-\p_{z}^2)^{-1}.$

Finally,  the nonlinear term takes  the form
\beq\label{def-NLterm}
N=
-\left( \begin{array}{c}
\na_y n_{c,\gamma}\cdot\na_y\psi+\div_y \big(n \na_y\psi_{c,\gamma} \big)+\div(n\na\psi)\\[4pt]
 \na_y \psi_{c,\gamma}\cdot\na_y \psi+\f12|\na\psi|^2+H(n_{c,\gamma}, n)+G(\phi, \phi_{c,\gamma})
\end{array}
\right)
\eeq
with 
\begin{align*}
 &H(n_{c,\gamma}, n)=  h(1+n_{c,\gamma}+n)-h(1+n_{c,\gamma})-h'(1+n_{c,\gamma})n, \\
&G(\phi, \phi_{c,\gamma})=
 \phi-\phi_{c,\gamma}-
I_{\phi_{c,\gamma}} n- I_{\phi_{c,\gamma},0} \Delta_y \phi_{c,\gamma}- I_{\phi_{c,\gamma}} I_{\phi_{c,\gamma},0}\big(\phi_{c,\gamma}' \Delta_y^2\gamma+\p_c \phi_{c,\gamma} \Delta_y^2 c\big). 
\end{align*}

In order to fix the decompostion \eqref{decomp}, we need to determine the modulation parameters $c(t,y), \gamma(t,y),$ this will  be achieved by imposing appropriate orthogonality conditions on the perturbation term $U.$ Motivated by the linear asymptotic stability in $\mathbb{Q}_c(L_a^2\times \dot{H}_a^1)$ stated in Theorem \ref{thm-resol-L}, one would like  to require  $U$ to be 
 orthogonal to the space generated by the continuous resonant modes (which is  the image of projection $\mathbb{P}_c^a$). It is thus natural  to impose the following orthogonality condition:  
\beqs 
\int_{\mR^3} U(t,z_1+\gamma, y)\, g_{k}^{*}\big(z_1, |\zeta|, c(t,y)\big) \,e^{-iy\cdot\zeta}\, \d z_1 \d y =0, \quad  \forall\, \zeta, \,  \, |\zeta|\leq \eta_0,\, \text{ and } \,k=1,2,\,
\eeqs
where $\eta_0=\ep^2\hat{\eta}_0$ and ($g_{1}^{*}, g_2^{*})$ is a smooth basis  of  the eigenspace of $L_c^{*}(|\zeta|)$ associated to the  eigenvalues $\lambda(\pm |\zeta|)$ in  the weighted space $X_{-a},$ we refer to \eqref{defbasis-dual} for their precise definitions. Nevertheless, it is better in order  to get good decay properties for the parameters $(\gamma, \tilde{c})$ later, to  extend $g_{k}^{*}(z_1,\eta, c)$ from $\{|\eta|\leq \eta_0\}$ to $\{|\eta|\leq 2\eta_0\}$ by defining 
\beqs 
g_{k}^{*}(z_1,\eta, c)=2 g_{k}^{*}(z_1,\eta_0, c)-g_{k}^{*}(z_1,2\eta_0-\eta, c), \qquad \forall \, \eta \in[\eta_0, 2\eta_0].
\eeqs
We still denote the extended function as $g_{k}^{*}(z_1,\eta, c)$. Note that with values in $X_{-a},$ it is $C^{\infty}$ with respect to $\eta$ on $[0, \eta_0)\cup (\eta_0, 2\eta_0]$ and $C^1$ on $[\f{\eta_0}{2}, \f{3\eta_0}{2}].$ 

We then impose the  orthogonality conditions in the following form 
\beq\label{secularcondtion} 
\chi_{\eta_0}(\zeta)\, \int_{\mR^3} U(t,z_1+\gamma, y)\, g_{k}^{*}\big(z_1, |\zeta|, c(t,y)\big) \,e^{-iy\cdot\zeta}\, \d z_1 \d y =0, \quad   \,k=1,2,
\eeq
where $\chi_{\eta_0}(\cdot): \mR^2\rightarrow \mR$ is a cut-off function that equals to $1$ on $B(0,\eta_0)$ and vanishes on $B^c(0, 2\eta_0).$  Here $U=\Psi(t, \mathrm{x})- Q_{c(t,y)}(z-\gamma(t,y))$ solves the equation \eqref{eq-U}. 

\subsection{Local existence of modulation parameters}
Let us define the space 
\beq \label{def-spaceY}
Y=\big\{f\in L^2(\mR^2)\,\big|\, \Supp \cF_{y\rightarrow \zeta}(f)\subset B(0,\eta_0) \big\}.
\eeq
In this section, we prove that there exists a positive time  $T_0$ and  
 parameters $(\gamma,\tilde{c})(t,y)\in C^1([0,T_0], Y\times Y)$ such that  the
 decomposition \eqref{decomp} and the orthogonality condition 
\eqref{secularcondtion} hold true. We begin with the local well-posedness of  \eqref{EP3d-potential} around the
solitary waves $Q_{c_0}(x-c_0 t).$ 
\begin{thm}[Local well-posedness around solitary waves]\label{thm-local}

 Let $\tilde{\Psi}_0
 ={\Psi}_0-Q_{c_0}\in X_a^{N_1}\cap X_0^N $ with $N\geq 3, N_1\geq 1.$ 
There exists $T_0>0,$ such that the system \eqref{EP3d-potential} has a  unique solution such that
 $$\tilde{\Psi}(t, z, y)=\colon \Psi (t, z+c_0 t, y)- Q_{c_0}(z)\in C([0,T_0], X_a^{N_1}\cap X_0^N)\cap  C^1([0,T_0], X_a^{N_1-1}\cap X_0^{N-1}) $$ and it holds that 
 \beq \label{a-prior}
\sup_{t\in [0,T_0] }\|\tilde{\Psi}(t)\|_{X_a^{N_1}\cap X_0^N}\leq 2\, \|\tilde{\Psi}_0\|_{X_a^{N_1}\cap X_0^N}.
 \eeq
\end{thm}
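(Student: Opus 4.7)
The plan is to employ a Picard-type iteration scheme for the quasi-linear system \eqref{EP3d-potential} and close it in the combined norm $X_a^{N_1} \cap X_0^N$. Writing $\tilde{\Psi}(t,z,y) = \Psi(t, z+c_0t, y) - Q_{c_0}(z)$, the equation for $\tilde{\Psi}=(\tilde n, \tilde\psi)^t$ is quasi-linear hyperbolic and is coupled with the Poisson equation $\Delta\phi = e^\phi - 1 - n_{c_0}(z) - \tilde n$. As noted after \eqref{elliptic} in the excerpt, the elliptic problem is solvable by Lax-Milgram both in $H^{N+2}$ and in $H_a^{N_1+2}$ for $a$ small, with $\|\phi\|_{H_a^{N_1+2}\cap H^{N+2}} \lesssim \|\tilde n\|_{H_a^{N_1}\cap H^N}$ as long as $\|\tilde n\|_{L^\infty}$ stays small enough that $e^{\phi_{c_0}+\phi}$ remains bounded above and below.

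\textbf{Iteration and energy estimates in $X_0^N$.} I would define the iterates $\tilde{\Psi}^{(k+1)}$ by freezing the quasi-linear coefficients of \eqref{EP3d-potential} at $\tilde{\Psi}^{(k)}$ and computing $\phi^{(k)}$ from the elliptic problem at each step; each resulting linear symmetric hyperbolic problem is solved by classical theory. The main work is to close uniform energy estimates. For the unweighted norm, multiplying the mass equation by $h'(1+n^i)$ symmetrizes the principal part; standard $H^N$ energy estimates combined with Moser-type product estimates and elliptic regularity for $\phi$ then yield
\begin{equation*}
\frac{d}{dt}\|\tilde{\Psi}\|_{X_0^N}^2 \leq \cP\bigl(\|\tilde{\Psi}\|_{X_0^N}\bigr) \|\tilde{\Psi}\|_{X_0^N}^2,
\end{equation*}
for a polynomial $\cP$ whose coefficients depend on the fixed background $Q_{c_0}$; here the embedding $X_0^N \hookrightarrow L^\infty$ (valid since $N \geq 3$) is used to absorb the $W^{1,\infty}$ norm of $\tilde\Psi$.

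\textbf{Weighted estimates in $X_a^{N_1}$.} For the weighted norm, I would conjugate the equation by $e^{ax}$ and work with $V = e^{ax}\tilde{\Psi}$. The commutator $[e^{ax}, \partial_x] = -a e^{ax}$ produces only a bounded zeroth-order perturbation of size $\cO(a)$, harmlessly absorbed in the energy estimate for $a$ small. The nonlocal operator $(e^\phi - \Delta)^{-1}$ stays bounded $L_a^2 \to H_a^2$ by the weighted Lax-Milgram argument recalled in the excerpt (valid for $a \in [0,1/4]$), so the full linearized operator remains well-behaved in the weighted space. The weighted Moser estimate $\|fg\|_{H_a^{N_1}} \lesssim \|f\|_{L^\infty}\|g\|_{H_a^{N_1}} + \|g\|_{L^\infty}\|f\|_{H_a^{N_1}}$ combined with $\|\cdot\|_{L^\infty} \lesssim \|\cdot\|_{X_0^N}$ then gives a companion inequality
\begin{equation*}
\frac{d}{dt}\|\tilde{\Psi}\|_{X_a^{N_1}}^2 \leq \cP\bigl(\|\tilde{\Psi}\|_{X_0^N} + a\bigr)\|\tilde{\Psi}\|_{X_a^{N_1}}^2.
\end{equation*}
A Gr\"onwall argument on $[0, T_0]$ with $T_0$ small enough (depending on $\|\tilde{\Psi}_0\|_{X_a^{N_1}\cap X_0^N}$) then delivers the doubling bound \eqref{a-prior}.

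\textbf{Conclusion and main obstacle.} Uniqueness and strong convergence of the iterates follow by a contraction argument at one lower regularity level on $X_a^{N_1-1}\cap X_0^{N-1}$; the stated time continuity at top regularity is recovered by Bona-Smith regularization, and $C^1$ time differentiability in $X_a^{N_1-1}\cap X_0^{N-1}$ follows by substitution of the equation. The main technical point is the compatibility of the exponential weight with both the quasi-linear energy structure and the nonlocal elliptic operator; once the weighted Lax-Milgram theory and weighted product estimates are in place, the argument proceeds along the classical lines of quasi-linear local existence theory without essentially new ideas.
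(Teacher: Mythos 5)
Your proposal is correct and follows essentially the same route as the paper: write the equation for $\tilde{\Psi}$ in perturbative form, obtain local well-posedness in the unweighted space $X_0^N$ by standard quasilinear energy/regularization arguments (using the weighted Lax--Milgram solvability of the Poisson part), then close a companion weighted energy estimate in $X_a^{N_1}$ and conclude the doubling bound \eqref{a-prior} by Gr\"onwall on a short time interval. The only cosmetic difference is bookkeeping: the paper propagates the weight by truncating $e^{ax}$ and passing to the limit after the energy estimate, whereas you conjugate by $e^{ax}$ and carry the weight through the iteration, which amounts to the same estimates.
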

\begin{rmk}
    Recall that $X_0^N:=\{(n,\psi)|(n,\na \psi)\in H^N\}.$ In particular, we do not need the initial  perturbation $\psi$
to be in $L^2.$ 
\end{rmk}

\begin{proof}
    It is direct to verify that $\tilde{\Psi}$ solves the equation
    \beqs 
(\pt  -L_{c_0} )\tilde{\Psi} =\left( \begin{array}{c}
-\div(n\na\psi)\\[4pt]
 \f12|\na\vp|^2+H(n_{c_0}, n)+\phi-\phi_{c_0}-(e^{\phi_{c_0}}-\Delta)^{-1} n  
\end{array}
\right).
    \eeqs
The local well-posedness of the above system in the unweighted sobolev space $X_0^N (N\geq 3)$ follows from standard  regularization arguments and a-priori estimates. We can then perform the energy estimates in the weighted space $X_a^{N_1}$ (one first truncates the weight and then pass to the limit after the energy estimate)
to show that this solution belongs also to the weighted space $X_a^{N_1}.$
 Moreover, it holds that 
\beqs 
\pt \|\tilde{\Psi}(t)\|_{X_a^{N_1}\cap X_0^N}^2\lesssim  \big(\|(n_{c_0},\p_z\Psi_{c_0})\|_{W_z^{1,\infty}}+\|\tilde{\Psi}(t)\|_{X_0^3} \big)\|\tilde{\Psi}(t)\|_{X_a^{N_1}\cap X_0^N}^2\, ,
\eeqs
which, combined with the Gr\"onwall inequality, leads to \eqref{a-prior}, upon choosing $T_0$ small enough.

\end{proof}

    We are now in  position to prove  the existence of the modulation parameters $(\gamma, \tilde{c})$ that satisfy \eqref{decomp} and \eqref{secularcondtion}. As in \cite{Mizumachi-KP-nonlinear}, it follows from  the Implicit Function Theorem. Let us introduce for $k=1, \, 2$  the functionals 
\beq \label{def-Fk}
F_k [U, \tilde{c}, \gamma](\zeta)= 
\chi_{\eta_0}(\zeta)
\int_{\mR^3} \bigg(U(z, y)+ Q_{c_0}(z)-Q_{c_0+\tilde{c}(y)}\big(z-\gamma(y)\big)\bigg) \cdot g_{k}^{*}\big(z-\gamma(y), |\zeta|, c(y)\big)\, e^{-iy\cdot\zeta} \,\d z \d y .
\eeq
We have  that 
$F_k$ maps $X_a\times Y\times Y$ into the space $\tilde{Y}=\{f\in L^2(\mR^2)\, \big|\,\Supp f\subset B(0,2\eta_0)\}$  
and that 
\beqs 
\int_{|\zeta|\leq 2\eta_0} |F_k(\zeta)|^2 \, \d \zeta \lesssim \big\|U\big\|_{X_a}^2+\big\|(\tilde{c}, \gamma)\big\|_{Y}^2 .
\eeqs
Moreover, it holds that 
\beqs 
D_{\tilde{c},\gamma}(F_1, F_2)[0,0,0]\left(\begin{array}{c}
   \tilde{c}  \\
    \gamma
\end{array}\right)=\sqrt{2\pi} \left(\begin{array}{cc}
 f_{11}    & f_{12}  \\
  f_{21}   & f_{22}
\end{array} \right)
\left(\begin{array}{c}
     \cF_y \tilde{c}  \\
       \cF_y \gamma
\end{array}\right),
\eeqs
with $f_{k 1}=- \int_{\mR} \p_c Q_{c_0}(z)\cdot g_k^{*}(z,\eta,c_0)\,\d z, \quad f_{k2}= \int_{\mR} \p_z Q_{c_0}(z)\cdot g_k^{*}(z,\eta,c_0)\,\d z.$ 
By \eqref{exp-g12dual}, \eqref{perop-betajk}
\begin{align*}
   & f_{11}=\langle U^2_{c_0}, g_1^{*}\rangle=\f{\beta_{22}}{\beta_{12}}(c_0)+\cO(\eta_0^2),\,\quad f_{12}=\langle U^1_{c_0}, g_1^{*}\rangle=1+\cO(\eta_0^2), \\
  &  f_{21}=\langle U^2_{c_0}, g_2^{*}\rangle=C_{*}\beta_{12}(c_0)+\cO(\eta_0^2),\, \quad
    f_{22}=\langle U^1_{c_0}, g_2^{*}\rangle=\cO(\eta_0^2),\,
\end{align*}
where $\beta_{12}(c_0)$ is strictly negative with a uniform (in $\ep$) upper bound.
Therefore, for $\eta_0$ small enough, $D_{\tilde{c},\gamma}(F_1, F_2)[0,0,0]$ is invertible on $B(Y\times Y, \tilde{Y}\times \tilde{Y} )$
  and the Implicit Function Theorem gives
\begin{lem}\label{lem-IFT}
    Let $\eta_0$ be small enough. There exists $\delta_1>0$ 
    such that for any $\|U\|_{X_a}\leq \delta_1,$
there exists unique $(\tilde{c},\gamma)\in Y\times Y$ such that
\beqs 
F_1[U, \tilde{c},\gamma ]=F_2[U, \tilde{c},\gamma ]=0 \qquad  \text{ in }\, \tilde{Y}.
\eeqs
   Moreover, the mapping $\{U\in X_a\big| \|U\|_{X_a}\leq \delta_1\} \ni U \rightarrow (\tilde{c},\gamma)\in Y\times Y$ is $C^1.$
\end{lem}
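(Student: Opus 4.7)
The lemma should follow from the Implicit Function Theorem in Banach spaces applied to the map $(F_1, F_2): X_a \times (Y \times Y) \to \tilde{Y} \times \tilde{Y}$ at the point $(0, 0, 0)$, at which $F_k$ vanishes trivially since the bracket in \eqref{def-Fk} reduces to $0$. To apply the IFT one needs to check two things: (i) $(F_1, F_2)$ is of class $C^1$ near the origin, and (ii) the partial differential $D_{(\tilde{c}, \gamma)}(F_1, F_2)[0, 0, 0]$ is a topological isomorphism from $Y \times Y$ onto its image in $\tilde{Y} \times \tilde{Y}$.

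Step (ii) will follow directly from the matrix computation already performed above. After Fourier transforming in $y$, the differential acts as pointwise multiplication by the $2 \times 2$ matrix $\sqrt{2\pi}\,(f_{ij}(|\zeta|, c_0))$, and one only needs to verify that this matrix is uniformly invertible for $|\zeta| \leq 2\eta_0$. Its determinant is
\[
f_{11} f_{22} - f_{12} f_{21} \,=\, -C_{*}\,\beta_{12}(c_0) + \cO(\eta_0^2),
\]
and since $\beta_{12}(c_0)$ is strictly negative with an $\ep$-uniform upper bound while $C_{*} > 0$, this stays bounded away from zero provided $\eta_0$ is chosen small enough. The inverse matrix will then be bounded uniformly in $\zeta$, and the associated Fourier multiplier will define a bounded isomorphism between $Y \times Y$ and its image in $\tilde{Y} \times \tilde{Y}$.

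The main work lies in (i). Since elements of $Y$ have Fourier support in $B(0, \eta_0)$, the Paley--Wiener theorem embeds $Y \hookrightarrow L^\infty(\mR^2) \cap C^\infty(\mR^2)$ with uniform control of all derivatives. This guarantees that, for $(\tilde{c}, \gamma)$ in a small ball of $Y \times Y$, the pointwise compositions $c_0 + \tilde{c}(y)$ and $z \mapsto z - \gamma(y)$ stay in a neighborhood of $c_0$ and of the identity, so that $Q_{c(y)}(\cdot - \gamma(y))$ and $g_k^{*}(\cdot - \gamma(y), |\zeta|, c(y))$ remain well defined. The plan is then to exploit the smooth dependence of $Q_c$ on $c$ from Theorem \ref{thm-existence}, together with the regularity of $g_k^{*}$ in $c$ and in $\eta$ on $[0, \eta_0) \cup (\eta_0, 2\eta_0]$ (which is only $C^1$ across the join $\eta = \eta_0$), and the exponential decay of $Q_c - Q_{c_0}$ in $X_a$ and of $g_k^{*}$ in $X_{-a}$, to justify differentiation under the integral sign. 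The resulting Gateaux derivatives can then be bounded into $\tilde{Y} \times \tilde{Y}$ through Plancherel in $y$ together with the cutoff $\chi_{\eta_0}$, yielding continuity of the differential and hence the $C^1$ regularity of $(F_1, F_2)$. Once (i) and (ii) are in place, the IFT will produce the announced $\delta_1 > 0$ and the unique $C^1$ map $U \mapsto (\tilde{c}(U), \gamma(U))$ satisfying $F_1 = F_2 = 0$ on $\{\|U\|_{X_a} \leq \delta_1\}$.
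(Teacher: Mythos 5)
Your proposal follows essentially the same route as the paper: apply the Implicit Function Theorem at $(0,0,0)$, identify $D_{\tilde c,\gamma}(F_1,F_2)[0,0,0]$ with the Fourier-multiplier matrix $(f_{jk})$, and deduce uniform invertibility for $\eta_0$ small from the nonvanishing of $\beta_{12}(c_0)$ — the paper records exactly this and invokes the IFT, while you additionally spell out the $C^1$ regularity of $(F_1,F_2)$, which the paper leaves implicit. One minor slip: by its definition $C_{*}=-\tfrac{\sqrt{3}\,C(V)^2}{36V}<0$, so your sign claim on $C_{*}$ is off, but this is harmless since the determinant $-C_{*}\beta_{12}(c_0)+\cO(\eta_0^2)$ only needs $C_{*}\beta_{12}(c_0)$ bounded away from zero.
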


The existence of $(\tilde{c},\gamma)$ satisfying \eqref{decomp}, \eqref{secularcondtion} is then the consequence of Theorem \ref{thm-local} and lemma \ref{lem-IFT}. Indeed, assume that 
$\|\tilde{\Psi}_0\|_{X_a^{N_1}\cap X_0^N}\leq \delta_1/2,$ then it follows from  Theorem \ref{thm-local}  that
$\tilde{\Psi}(t,z, y)=\Psi(t, z+c_0t, y)-Q_{c_0}(z)\in C^1([0,T_0], X_a)$ with $\sup_{t\in [0,T_0] }\|\tilde{\Psi}(t)\|_{X_a}\leq \delta_1.$ Applying Lemma \ref{lem-IFT} with $U=\tilde{\Psi}(t,z, y),$
we find 
$(\tilde{c}(t,y), \gamma(t,y))\in C^1([0,T_0], Y\times Y)$ such that \eqref{secularcondtion} holds true. 
\begin{rmk}\label{rmk-intialdata-cgamma}
To get better decay properties for the modulation parameters $(\tilde{c}(t,y), \gamma(t,y))$ later,  it necessitates slightly more localization properties on their initial datum $(\tilde{c}(0,y), \gamma(0,y))$ which are in turn determined by the initial conditions  $\tilde{\Psi}_0.$ Denote $F_k^0$ the functional similar to \eqref{def-Fk} with $U$ changed to $\tilde{\Psi}_0.$
Suppose that $y \tilde{\Psi}_0\in X_a.$ Since $F_k^0$ defined in \eqref{def-Fk} maps $\{\tilde{\Psi}_0 | \langle y \rangle \tilde{\Psi}_0\in{X_a}\}\times \{(\tilde{c},\gamma)| \langle y \rangle (\tilde{c},\gamma) \in Y\times Y  \} $
 to $
 \{f\in H^1(\mR^2)\, \big|\,\Supp f\subset B(0,2\eta_0)\},$ we can use the Implicit Function Theorem again to find that upon choosing $\delta_1$ smaller,  
 if $\|\langle y \rangle \tilde{\Psi}_0\|_{X_a}\leq \delta_1, $ then there exists $\tilde{c}(0,y),\gamma(0,y)$
 such that
 $\langle y \rangle (\tilde{c},\gamma)(0,y) \in Y\times Y.$ Similarly, if it holds that 
   $ \|e^{ax}(n_0,\na\psi_0)\|_{L_y^1L_z^2}\leq \delta_1$ (recall that $\tilde{\Psi}_0(\cdot)=(n_0,\psi_0)^t))$
 for $\delta_1$ small enough, the found $(\tilde{c},\gamma)(0,y)$ satisfy that $\cF_y((\tilde{c},\gamma)(0,y))\in L^{\infty}(B(0,\eta_0)).$   
\end{rmk}

\subsection{Derivation of a PDE system for the evolution of the  modulation parameters}
In this subsection, we shall  derive a system that describes the evolution of the parameters $(\tilde{c},\gamma).$
Let us set  $\tilde{U}(t,z_1,y)=U(t,z_1+\gamma, y).$ It then follows from the equation \eqref{eq-U} that
\beq
\pt \tilde{U} -L_c(Q_c, \na) \tilde{U} =\big(\pt\gamma-\tilde{c}\big)\,\p_{z_1} \tilde{U}+ L_{c,1}(Q_c,\na)\tilde{U}+\tilde{R}+\tilde{N}
\eeq
where $\tilde{R}(t, z_1, y)=R(t, z_1+\gamma, y),\, \tilde{N}(t, z_1, y)=N(t, z_1+\gamma, y) $ and 
\beqs 
L_{c,1}(Q_c,\na)=\colon L_c\big(Q_c, \p_{z_1}, \na_y-\na_y \gamma\,\, \p_z\big)-L_c(Q_c, \na) \, .
\eeqs
Assume that  $(\gamma,\tilde{c})\in C^1([0,T], Y \times Y)$ is such that \eqref{secularcondtion} holds on the time interval $[0,T].$
Taking the time derivative of \eqref{secularcondtion}, we find the following identity that holds in $C\big([0,T_0], B(0, 2\eta_0)\big):$
\beq\label{Id-tY}
\chi_{\eta_0}(\zeta)\int_{\mR^3} \tilde{R}(t,z_1,y) \, g_k^{*}\big(z_1, |\zeta|, c(t,y)\big)\, e^{-iy\cdot\zeta}\, \d z_1 \d y :=\sum_{j=0}^4 I_j^k(t,\zeta)
\eeq
where
\beq
\begin{aligned}\label{def-I0-4}
  & I_0^k=-\chi_{\eta_0}(\zeta)\int_{\mR^3} \tilde{U}(t,z_1,y) \cdot \p_c g_{k}^{*} \big(z_1, |\zeta|, c(t,y)\big)\, (\pt c)(t,y)\, e^{-iy\cdot\zeta} \,  \d z_1 \d y , \\
  & I_1^k=-\chi_{\eta_0}(\zeta)\int_{\mR^3} \tilde{U}(t,z_1,y) \cdot  g_{k}^{*} \big(z_1, |\zeta|, c(t,y)\big)\, (\pt \gamma-\tilde{c})(t,y)\, e^{-iy\cdot\zeta} \,  \d z_1 \d y,\, \\
  & I_2^k=-\chi_{\eta_0}(\zeta)\int_{\mR^3} \tilde{U}(t,z_1,y)\cdot  L_c^{*}(Q_c,\nabla)\big( g_k^{*}(z_1,\eta, c)\, e^{-iy\cdot\zeta}  \big)\,  \d z_1 \d y,\, \\
  & I_3^k=-\chi_{\eta_0}(\zeta)\int_{\mR^3} L_{c,1}(Q_c, \nabla) \tilde{U} (t,z_1,y)
  \cdot g_{k}^{*} \big(z_1, |\zeta|, c(t,y)\big)\, e^{-iy\cdot\zeta} \,  \d z_1 \d y , \\
  & I_4^k=-\chi_{\eta_0}(\zeta) \int_{\mR^3}  \tilde{N} (t,z_1,y)
  \cdot g_{k}^{*} \big(z_1, |\zeta|, c(t,y)\big)\, e^{-iy\cdot\zeta} \,  \d z_1 \d y \, .
\end{aligned}
\eeq
It follows from the definition \eqref{def-R} and  \eqref{def-firsttwo} that
\beq\label{expression-R1}
\tilde{R}_1
=(\pt\gamma-\tilde{c}) Q_c'-\pt \tilde{c} \,\p_c Q_c 
=(\pt\gamma-\tilde{c})\,U_{c}^1
+\pt \tilde{c} \, U_c^2.
\eeq
Moreover, by using the equation $\p_{z_1}^2\phi_c=e^{\phi_c}-1-n_c,$ we find
through straightforward calculations that, the components  of the vector  $\tilde{R}_2$ take the following form
\begin{align*}
\tilde{R}_2^1&= - (1+n_c)\big(\Delta_y c \,\p_c -\Delta_y\gamma\, \p_{z_1}\big)\psi_c\\
&\quad - \sum_{j=1}^2 \bigg((1+n_c) (c_{y_j}\p_c-\gamma_{y_j}\p_{z_1})^2\psi_c+(c_{y_j}\p_c-\gamma_{y_j}\p_{z_1}) n_c\cdot\, (c_{y_j}\p_c-\gamma_{y_j}\p_{z_1})\psi_c\bigg), 
\end{align*} 
\begin{align*}
 \tilde{R}_2^2&=-I_{\phi_c,0}^2\big(\Delta_y c \,\p_c -\Delta_y\gamma\, \p_{z_1}\big)n_c+ I_{\phi_{c}} I_{\phi_{c},0}\big(\p_{z_1}\phi_{c} \Delta_y^2\gamma+\p_c \phi_{c} \Delta_y^2 c)\\
 &\quad -\sum_{j=1}^2\bigg( I_{\phi_c,0}^2\bigg((c_{y_j}\p_c-\gamma_{y_j}\p_z)^2\, n_c-e^{\phi_c}\big( (c_{y_j}\p_c-\gamma_{y_j}\p_z)\,\phi_c\big)^2\bigg)+\f12  
 \big( (c_{y_j}\p_c-\gamma_{y_j}\p_z)\,\psi_c\big)^2\bigg),
\end{align*}
where  we have set 
\beq\label{def-Iphic}
I_{\phi_c}=(e^{\phi_c}-\Delta)^{-1}, \,\qquad I_{\phi_c,0}=(e^{\phi_c}-\p_{z_1}^2)^{-1}.
\eeq
Consequently, we have that 
\beq\label{expression-R2}
\begin{aligned}
\tilde{R}_2&= \Delta_y c\, L_{c}^1\, U^2_c+\Delta_y \gamma \, L_{c}^1 \, U^1_c+\big(0, 
\Delta_y^2(\gamma, c)\cdot I_{\phi_{c}} I_{\phi_{c},0}(\p_{z_1} \phi_c,\p_c\phi_c) 
\big)^t 
\\
&\,\,+ |\na_y c|^2 F_3
+ \na_y c\cdot \na_y \gamma F_4
+|\na_y \gamma|^2 F_5
\end{aligned}
\eeq
where \beqs 
L_{c}^1=\left(\begin{array}{cc}
    0 &  1+n_c \\
    I_{\phi_c,0}^2  & 0
\end{array}\right)
\eeqs
 and $F_3- F_5$ are some quantities involving smoothly on modulations $\gamma,\,c ,$  the solitary waves $Q_c$ and their derivatives in $x$ and $c.$
It is not necessary to write down their exact expressions as they are not needed in the later computations.
However, it is useful to note, by utilizing the evenness of $n_c$ and $\phi_c$, and the oddness of $\psi_c$, that $F_3$ can be expressed as $(F_3^1, F_3^2)$, where $F_3^1$ is odd and $F_3^2$ is even. We thus get that 
$$\int_{\mR} F_3
\cdot \overline{U_c^{*,1}}\, \d z_1 =0 .$$
Let us introduce
\beq\label{split-tR}
\begin{aligned}
   & \tilde{R}_{L,1}:=(\pt\gamma-\tilde{c})\,U_{c}^1
+\pt \tilde{c} \, U_c^2+\Delta_y c\, L_{c}^1\, U^2_c+\Delta_y \gamma \, L_{c}^1 \, U^1_c  \,, \\
 &\tilde{R}_{L,2}:=\big(0, 
 I_{\phi_{c}} \big(\Delta_y^2(\gamma, c) \cdot I_{\phi_{c},0}(\p_{z_1}\phi_c,\p_c\phi_c) \big)
\big)^t , \\
 &\tilde{R}_{NL}:= |\na_y c|^2 F_3
+ \na_y c\cdot \na_y \gamma F_4
+|\na_y \gamma|^2 F_5, 
\end{aligned}
\eeq
and
\beqs 
g_{k1}^{*}(z_1,\eta, c_0)=\big(g_{k}^{*}(z_1,\eta, c_0)-g_{k}^{*}(z_1, 0, c_0)  \big)/\eta^{2}, \quad g_{k2}^{*}(z_1,\eta, c)=\big(g_{k}^{*}(z_1,\eta, c)-g_{k}^{*}(z_1, \eta, c_0)  \big)/\tilde{c} 
\eeqs
so that we can decompose 
\beqs
\tilde{R}=\tilde{R}_{L,1}+\tilde{R}_{L,2}+\tilde{R}_{NL},
\eeqs
\beq\label{usefulId}
g_k^{*}(z_1,\eta, c)=g_k^{*}(z_1, 0, c_0)+\eta^2 g_{k1}^{*}(z_1,\eta, c_0)+\tilde{c} \, g_{k2}^{*}(z_1,\eta, c)\,.
\eeq
We then compute 
\beq\label{inter-LHS}
\begin{aligned}
  & \chi_{\eta_0}(\zeta) \int_{\mR^3} \tilde{R}(t,z_1,y) \, g_k^{*}\big(z_1, |\zeta|, c(t,y)\big)\, e^{-iy\cdot\zeta}\, \d z_1 \d y\\
    &= \chi_{\eta_0}(\zeta)\, \cF_{y}\bigg(\int_{\mR} \tilde{R}|_{c=c_0}(t,z_1,y) \cdot g_k^{*}(z_1, 0, c_0)+\Delta_{y} (\tilde{R}_{L,1}+\tilde{R}_{L,2})|_{c=c_0})(t,z_1,y) \cdot g_{k1}^{*}(z_1,|\zeta|, c_0) \, \d z_1\bigg) (\zeta)\,\\
    &\quad 
    - I_5^k(t,\zeta)- I_6^k(t,\zeta), 
\end{aligned}
\eeq
where 
\beq\label{def-I56}
\begin{aligned}
       & I_5^k(t,\zeta)= \chi_{\eta_0}(\zeta)\int_{\mR^3} (\Delta_y\tilde{R}_{NL})(t,z_1,y) \, g_{k1}^{*}(z_1, |\zeta|, c_0)
   \, e^{-iy\cdot\zeta} \d z_1 \d y\, ,\\
 &  I_6^k(t,\zeta) = \chi_{\eta_0}(\zeta) \int_{\mR^3} \tilde{c}(t,y) \bigg( \f{\tilde{R}-\tilde{R}|_{c=0}}{\tilde{c}}(t,z_1,y)\, g_k^{*}(z_1, |\zeta|,c) + \tilde{R}|_{c=c_0}\, g_{k,2}^{*} (z_1, |\zeta|,c)\bigg)
   e^{-iy\cdot\zeta} \d z_1 \d y\, .
\end{aligned}
\eeq
By the definition \eqref{split-tR} and  the expansion \eqref{exp-g12dual}, it holds that 
\beqs 
\left(
\begin{array}{c}
   \int_{\mR} \tilde{R}_{L,1}|_{c=c_0}(t,z_1,y) \,g_1^{*}(z_1, 0, c_0) \, \d z_1  \\[3pt]
 \int_{\mR} \tilde{R}_{L,1}|_{c=c_0}(t,z_1,y) \,g_2^{*}(z_1, 0, c_0) \, \d z_1
\end{array}
\right)= B(c_0) \left(
\begin{array}{c}
\pt \gamma -\tilde{c} \\[3pt]
\pt \tilde{c}
\end{array}
\right)+D(c_0) \left(
\begin{array}{c}
\Delta_y \gamma \\[3pt]
\Delta_y\tilde{c}
\end{array}\right)
\eeqs
where the above  (constant)  matrices are defined by  
\beqs 
B(c_0)= \bigg( \int_{\mR} U_{c_0}^k(z_1)\cdot g_{j}^{*}(z_1, 0, c_0) \, \d z_1 \bigg)_{jk}=\left( \begin{array}{cc}
   1  & \beta_{22}(c_0)/\beta_{12}(c_0) \\[3pt]
   0  & C_{*} \beta_{12}(c_0)
\end{array}\right)
\eeqs
\beqs 
D(c_0)= \bigg( \int L_{c_0}^1 U_{c_0}^k(z_1)\cdot g_{j}^{*}(z_1, 0, c_0) \, \d z_1 \bigg)_{jk}:=\left( \begin{array}{cc}
   d_{11}  & d_{12}\\[3pt]
   d_{21}  & d_{22}
\end{array}\right).
\eeqs
In the above, $\beta_{12}\neq 0,$ so that $B(c_0)$ is invertible with determinant bounded away from zero uniformly in $\eta.$
Next, the nonlinear term 
 associated to $ \tilde{R}_{NL}$ in \eqref{split-tR} 
has the form
\beq \label{defquadterm}
\begin{aligned}
&\qquad\left(
\begin{array}{c}
   \int_{\mR} \tilde{R}_{NL}|_{c=c_0}(t,z_1,y) \,g_1^{*}(z_1, 0, c_0) \, \d z_1  \\[3pt]
 \int_{\mR} \tilde{R}_{NL}|_{c=c_0}(t,z_1,y) \,g_2^{*}(z_1, 0, c_0) \, \d z_1  
 \end{array}\right)\\
 &= |\na_y c|^2 \,d_3(c_0)+ \na_y c \cdot \na_y \gamma\, d_4(c_0)+ |\na_y \gamma|^2\, d_5(c_0):= \cQ(\na_y c, \na_y \gamma)
\end{aligned}
\eeq
where the constant vectors $d_3(c_0)-d_5(c_0)$ are defined by 
\begin{align*}
    d_j(c_0)=\left(
\begin{array}{c}
   \int_{\mR} F_j|_{c=c_0}(t,z_1,y) \,g_1^{*}(z_1, 0, c_0) \, \d z_1  \\[3pt]
 \int_{\mR}F_j|_{c=c_0}(t,z_1,y) \,g_2^{*}(z_1, 0, c_0) \, \d z_1  
 \end{array}\right):= \left(
\begin{array}{c}
d_{j1}\\
d_{j2}
\end{array}\right), \qquad (j=3,4,5).
\end{align*}
Note that it follows from \eqref{usefulId} that $d_{52}=0.$

Now, plugging \eqref{inter-LHS} into \eqref{Id-tY}, 
and taking the inverse Fourier transform, we obtain since  $\gamma, \tilde{c} \in C^1([0,T], Y)$ that for any $t\in [0,T],$
\beq\label{modeq-0}
\begin{aligned}
&  B_1(c_0, D_y) \left(  \begin{array}{c}
\pt \gamma -\tilde{c} \\[3pt]
\pt \tilde{c}
\end{array}
\right)+\Delta_y D(c_0) \left(
\begin{array}{c}
 \gamma \\[3pt]
\tilde{c}
\end{array}\right)+\Delta_y^2 \tilde{S}
\left(\begin{array}{c}
 \gamma \\[3pt]
\tilde{c}
\end{array}\right)\\
&=
-\chi_{\eta_0}(D) \cQ(\na_y \tilde{c}, \na_y \gamma)+
\cF_{\zeta}^{-1}\bigg(
\sum_{j=0}^6 I_j(t,\zeta)
\bigg),
\end{aligned}
\eeq
where $B_1(c_0, D_y)=B(c_0)+\Delta_y S, \, \tilde{S}= \tilde{S}_1+ \tilde{S}_2$ 
and $S, \tilde{S}_1, \tilde{S}_2$ are matrix valued 
operators defined by
\begin{align*}
   & S f= \cF_{\zeta}^{-1}\bigg( \bigg(  \int_{\mR} U_{c_0}^k(z_1) \cdot g_{j1}^{*}(z_1, |\zeta|, c_0) \, \d z_1 \bigg)_{jk} 
    \chi_{\eta_0}(\zeta)(\cF_y f)({\zeta} )\bigg),\\
  &  \tilde{S}_1 f= \cF_{\zeta}^{-1}\bigg( \bigg(  \int_{\mR} (L_{c_0}^1 U_{c_0}^k)(z_1) \cdot g_{j1}^{*}(z_1, |\zeta|, c_0) \, \d z_1 \bigg)_{jk} 
    \chi_{\eta_0}(\zeta)(\cF_y f)({\zeta} )\bigg),\\
& \tilde{S}_2 f= 
\cF_{\zeta}^{-1} \bigg(\bigg(\begin{array}{cc}
 \int_{\mR} (0, I_{\phi_{c_0},\zeta} I_{\phi_{c_0},0}\, \p_{z_1}\phi_{c_0})^t\cdot {g}_{1}^{*}(z_1, |\zeta|, c_0)\,\d z_1    & 
 \int_{\mR} (0, I_{\phi_{c_0},\zeta} I_{\phi_{c_0},0} \p_c \phi_{c_0})^t\cdot {g}_{1}^{*}(z_1, |\zeta|, c_0)\,\d z_1  \\[3pt]
 \int_{\mR} (0, I_{\phi_{c_0},\zeta} I_{\phi_{c_0},0}\, \p_{z_1}\phi_{c_0})^t\cdot {g}_{2}^{*}(z_1, |\zeta|, c_0)\,\d z_1    &
 \int_{\mR} (0, I_{\phi_{c_0},\zeta} I_{\phi_{c_0},0} \p_c \phi_{c_0})^t\cdot {g}_{2}^{*}(z_1, |\zeta|, c_0)\,\d z_1\end{array}\bigg) \\
 & \qquad\qquad \qquad\qquad 
 \chi_{\eta_0}(\zeta)(\cF_y f)({\zeta} )\bigg),
\end{align*}
with $I_{\phi_c,0}$ defined in \eqref{def-Iphic} 
and $I_{{\phi_{c_0}},\zeta}:=(e^{\phi_{c_0}}-\p_{z_1}^2+|\zeta|^2)^{-1}.$
Let $\eta_0$ be small enough so that $B_1(c_0, D_y)$ can be seen as a small perturbation of the constant matrix $B(c_0)$ an thus is invertible on $Y\times Y.$ 
Therefore, the identity \eqref{modeq-0}
is equivalent to 
\beq\label{modueq-1}
\bigg(\pt-A(c_0, D_y)\bigg)\left(\begin{array}{c}
 \gamma \\[3pt]
\tilde{c}
\end{array}\right)=-B_1^{-1}\, \chi_{\eta_0}(D) Q(\na_y \tilde{c}, \na_y \gamma)+ \sum_{j=0}^6\cR_j
\eeq
with $\cR_j=B_1^{-1}\cF_{\zeta}^{-1}\bigg(
I_j(t,\zeta)\bigg)$
and the symbol of the linear operator reads
\begin{align*}
A\big(c_0, \zeta\big)&=\underbrace{\bigg(\left(\begin{array}{cc}
   0  & 1  \\
    0 & 0
\end{array}\right)+|\zeta|^2 \,B^{-1}(c_0)D(c_0)\bigg)}_{:= A_0\big(c_0, \zeta\big)} +\underbrace{|\zeta|^4 \bigg( |\zeta|^{-2} \big(B_1^{-1}(c_0,\zeta)-B^{-1}(c_0)\big)  D(c_0)-\tilde{S}(\zeta)\bigg)}_{:=A_1\big(c_0, \zeta\big)}.
\end{align*} 
We have thus found  the system satisfied by $(\gamma,\tilde{c}),$ which is a dissipative wave system. Indeed,
direct computations show that 
\begin{align*}
   A_0\big(c_0, \zeta\big)=\left(\begin{array}{cc}
   0  & 1  \\
    0 & 0
\end{array}\right)+\f{|\zeta|^2}{C_{*}\beta_{12}}
   \left( \begin{array}{cc} 
   C_{*}\beta_{12}d_{11}-\f{\beta_{22}d_{21}}{\beta_{12}}   &  C_{*}\beta_{12} d_{21}-\f{\beta_{22}d_{22}}{\beta_{12}} \\[5pt] d_{21} &d_{22}
   \end{array} \right).
\end{align*}
Moreover, we derive from \eqref{Lbda1} \eqref{Lbda2} that, for $\zeta \neq 0$
\begin{align*}
 &\text{tr}  A_0\big(c_0, \zeta\big)=-2 \lambda_{2,c_0}|\zeta|^2<0, \\
& \det A_0 \big(c_0, \zeta\big)= -\f{d_{21}}{C_{*}\beta_{12}}
|\zeta|^2+\cO(|\zeta|^4)=\lambda_{1c_0}^2|\zeta|^2+\cO(|\zeta|^4)>0, 
\end{align*}
provided  $|\zeta|\leq \eta_0$ is small enough.   
The two eigenvalues of $A_0(c_0,\zeta)$ thus have the form 
\beqs 
\lambda_{\pm} (A_0)=-\lambda_{2,c}|\zeta|^2\pm i \sqrt{\lambda_{1c}^2|\zeta|^2+\cO(|\zeta|^4)}\,,
\eeqs
where $\lambda_{1, c_{0}}, \lambda_{2, c_{0}}>0$ 
appears in the expansion of the spectral curve \eqref{exp-spectralcurve}.
Consequently, we may expect that the solution to the linearized system of \eqref{modueq-1} enjoy some decay properties similar  to the ones of the  heat equation or the dissipative  wave system. 


\subsection{A-priori estimates for the modulation parameters}
Assume that $(\gamma, \tilde{c})$ exists \footnote{By Lemma \ref{lem-IFT} and Theorem \ref{thm-local}, if the initial perturbation $U_0$ is sufficiently small in $X_0^N\cap X_a^{N_1}$ with $N\geq 3, N_1\geq 1,$ then $T>0.$} on $C^1([0,T], Y\times Y),$ 
we derive some decay estimates for them in the interval $[0,T].$
It turns out to be more convenient to consider the system satisfied by $(|\na_y|\gamma, \tilde{c}).$ Therefore, after applying  $\diag \big(|\na_y|, \Id\big)$ on the system \eqref{modueq-1}, we find
\beq\label{modueq-new}
\begin{aligned}
\bigg(\pt-\tilde{A}(c_0, D_y)\bigg)\left(\begin{array}{c}
 |\na_y|\gamma \\[3pt]
\tilde{c}
\end{array}\right)=\diag\big(|\na_y|, \Id\big) \bigg(-B_1^{-1}\chi_{\eta_0}(D) Q(\na_y \tilde{c}, \na_y \gamma)+ \sum_{j=0}^6\cR_j\bigg)
\end{aligned}
\eeq
where $\tilde{A}(c_0, D_y)=\diag \big(|\na_y|, \Id\big)\,{A}(c_0, D_y)\,\diag \big(|\na_y|^{-1}, \Id\big).$
Denote  by $a_{ij}$ the entries of the matrix $A_0(c_0,\zeta),$ then
$\tilde{A}(c_0, \zeta)$ can be decomposed as:
\beqs 
\tilde{A}(c_0, \zeta)=\tilde{A}_0(c_0, \zeta)+\tilde{A}_1(c_0, \zeta)
\eeqs
with 
\begin{align}\label{deftA01}
 \tilde{A}_0(c_0, \zeta) =  \left( \begin{array}{cc}
  a_{11} & |\zeta| \\
   {a_{21}}/{|\zeta|}  & a_{22} 
    \end{array} \right), \qquad \tilde{A}_1(c_0, \zeta) =  \left( \begin{array}{cc}
  \cO(|\zeta|^4) & \cO(|\zeta|^3) \\
  \cO(|\zeta|^3)  &  \cO(|\zeta|^4)
    \end{array} \right).
\end{align}
To continue, we need 
to prove some decay properties of the linear system $\pt f-\tilde{A}(c_0, D_y)f=0.$
We define the space
\beq \label{def-spaceY1}
Y_1=\big\{f\in L^2(\mR^2)\,\big|\, \Supp \cF_y f\subset B(0, 2\eta_0) \big\},
\eeq
endowed with the norm $\|\cdot\|_{Y_1}=\|\cF(\cdot)\|_{L^{2}(B(0,2\eta_0))}.$ Additionally, define the space for the initial data
\beq \label{def-spaceY2}
 Y_2=\big\{f\in Y_1\,\big|\,  \cF_{y} f\in L^{\infty}\big( B(0, 2\eta_0)\big) \big\} .  
\eeq
endowed with the norm
  $\|\cdot\|_{Y_2}=\|\cF(\cdot)\|_{L^{\infty}(B(0,2\eta_0))}.$ 
  Moreover, for a vector function $f=(f_1,f_2)^t$ belonging to $X\times X$, we denote for short $\|f\|_{X}=\|f_1\|_{X}+\|f_2\|_{X}.$ 
\begin{lem}
 Assume that $f_0\in Y_2$ and that  $\eta_0$ is  small enough.
 Let $\cL(t): Y_2\ni  f_0 \rightarrow f(t)\in Y_1 $ be the solution map of the linear system 
    \beq\label{modu-lineareq}
\pt f-\tilde{A}(c_0, D_y)f=0, \qquad f|_{t=0}=
f_0\,.
    \eeq
Then it holds that for any $k\in \mathbb{N},$
    \begin{align}
       & \|\p_y^k \cL(t) f_0\|_{L^2{(\mR^2)}}\lesssim (1+t)^{-\f{k}{2}}\min \big\{(1+t)^{-\f{1}{2}} \|f_0\|_{Y_2},  \,\|f_0\|_{Y_1}\big\}\label{lineares-L2},\\ 
        &\|\p_y^k\big( y \, \cL(t) f_0\big)\|_{L^2{(\mR^2)}}\lesssim 
       (1+t)^{-\f{k}{2}} \big(\|f_0\|_{Y_2}+\min \big\{(1+t)^{-\f{1}{2}} \|y f_0\|_{Y_2},  \,\|y f_0\|_{Y_1}\big\} \big),\label{lineares-weight}.
 \end{align} 

\end{lem}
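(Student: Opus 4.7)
The plan is to work in the Fourier variable $\zeta$ dual to $y$, where $\cL(t)$ becomes multiplication by the $2 \times 2$ matrix symbol $e^{t\tilde{A}(c_0, \zeta)}$. Since $\cF_y(f_0)$ is supported in $B(0, 2\eta_0)$, everything reduces to matrix estimates on this small ball. The crucial ingredient is the pointwise semigroup bound
$$
\|e^{t\tilde{A}(c_0, \zeta)}\|_{\mathrm{op}} \leq C e^{-c|\zeta|^2 t}, \qquad |\zeta|\leq 2\eta_0,\ t\geq 0,
$$
which I obtain by diagonalizing $\tilde{A}_0(c_0, \zeta) = \tilde{S}(\zeta)\, D(\zeta)\, \tilde{S}(\zeta)^{-1}$ with $D = \diag(\lambda_+, \lambda_-)$ and $\lambda_\pm = -\lambda_{2,c_0}|\zeta|^2 \pm i\lambda_{1,c_0}|\zeta| + \cO(|\zeta|^3)$. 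From the block structure \eqref{deftA01} the eigenvectors $(1, \pm i\lambda_{1,c_0})^t + \cO(|\zeta|)$ extend continuously through $\zeta = 0$, so $\tilde{S}$ and $\tilde{S}^{-1}$ are uniformly bounded on $B(0, 2\eta_0)$; the $\cO(|\zeta|^3)$ correction $\tilde{A}_1$ is then absorbed via a Dyson expansion without spoiling the exponential rate.

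For \eqref{lineares-L2}, Plancherel and the symbol bound give
$$
\|\p_y^k \cL(t) f_0\|_{L^2}^2 \leq C \int_{B(0, 2\eta_0)} |\zeta|^{2k} e^{-2c|\zeta|^2 t}\, |\cF_y(f_0)(\zeta)|^2\, \d\zeta.
$$
The $Y_1$ factor comes out as $\sup_\zeta |\zeta|^{2k} e^{-2c|\zeta|^2 t} \leq C(1+t)^{-k}$ paired with $\|\cF_y(f_0)\|_{L^2} = \|f_0\|_{Y_1}$; the $Y_2$ factor comes out as $\|\cF_y(f_0)\|_{L^\infty}^2 = \|f_0\|_{Y_2}^2$ combined with $\int_B |\zeta|^{2k} e^{-2c|\zeta|^2 t}\, \d\zeta \leq C(1+t)^{-(k+1)}$ via the 2D change of variables $u = \sqrt{t}\,\zeta$. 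Taking the minimum of the two resulting estimates yields \eqref{lineares-L2}.

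For \eqref{lineares-weight}, the identity $\cF_y(y g)(\zeta) = i\nabla_\zeta \cF_y(g)(\zeta)$ and the Leibniz rule split
$$
\cF_y(y\, \cL(t) f_0)(\zeta) = i\bigl(\nabla_\zeta e^{t\tilde{A}(\zeta)}\bigr)\cF_y(f_0)(\zeta) + e^{t\tilde{A}(\zeta)}\cF_y(y f_0)(\zeta).
$$
The second piece is exactly $\cF_y\bigl(\cL(t)(y f_0)\bigr)$, so applying \eqref{lineares-L2} to the data $y f_0$ yields the $\min\{\cdot,\cdot\}$ term of the right-hand side. For the first (commutator) piece I differentiate the diagonalization, so that $\nabla_\zeta e^{t\tilde{A}(\zeta)}$ splits into three terms; the $\nabla_\zeta \tilde{S}$ and $\nabla_\zeta \tilde{S}^{-1}$ parts are controlled by Step 1 alone, while the diagonal $\nabla_\zeta e^{tD}$ has entries $t\, \nabla_\zeta \lambda_\pm\, e^{t\lambda_\pm}$ with $|\nabla_\zeta \lambda_\pm| \leq C$ on $B(0, 2\eta_0)$. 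Plancherel together with $\|\cF_y(f_0)\|_{L^\infty} \leq \|f_0\|_{Y_2}$ and the same Gaussian scaling as above then produces the $\|f_0\|_{Y_2}$ contribution.

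The main obstacle is the last step: the oscillatory component of $\nabla_\zeta \lambda_\pm$ equals $\pm i \lambda_{1,c_0} \zeta/|\zeta|$, which is only $\cO(1)$ rather than $\cO(|\zeta|)$, so the naive bound $\|\nabla_\zeta e^{t\tilde{A}(\zeta)}\|_{\mathrm{op}} \leq C t\, e^{-c|\zeta|^2 t}$ is just borderline. Extracting the sharp $(1+t)^{-k/2}$ rate in the weighted estimate requires careful accounting of the $\zeta/|\zeta|$ homogeneity, for instance by further splitting $\nabla_\zeta \tilde{A}$ into its dispersive (Riesz-transform-type) and smoothing pieces, or equivalently by exploiting the $L^2$-boundedness of the $\zeta/|\zeta|$-type multipliers on functions Fourier-localized in $B(0, 2\eta_0)$ together with the Step 1 decay of $\cL(t) f_0$ itself. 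All the remaining ingredients are standard Plancherel identities, the Duhamel formula for $\nabla_\zeta e^{t\tilde{A}(\zeta)}$, and Gaussian-integral scaling in two dimensions.
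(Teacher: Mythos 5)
Your overall framework is the same as the paper's: pass to the Fourier variable, prove a bound $\|e^{t\tilde A(c_0,\zeta)}\|\lesssim e^{-c|\zeta|^2t}$ on $B(0,2\eta_0)$ (the paper gets this from the explicit formula \eqref{semi-matrix}, you from a uniformly well-conditioned diagonalization of $\tilde A_0$ — equivalent in substance), absorb $\tilde A_1$ perturbatively (you do it pointwise in $\zeta$, the paper by a Duhamel argument in time using the gain $|\nabla_y|^{5/2}$ and, for the weighted bound, an extra Hardy--Littlewood--Sobolev/Young absorption of $\|yf\|_{Y_1}$ which your write-up does not address, though this is a minor omission), and conclude by Plancherel. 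Your proof of \eqref{lineares-L2} is complete, and the $\min\{\cdot,\cdot\}$ part of \eqref{lineares-weight}, obtained by applying \eqref{lineares-L2} to the data $yf_0$, is also fine.

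The genuine gap is the commutator piece $(\nabla_\zeta e^{t\tilde A})\,\cF f_0$ in \eqref{lineares-weight}, which you flag but do not close, and which is not "just borderline". Quantitatively: the term where $\nabla_\zeta$ hits the Gaussian contributes $t|\zeta|e^{-c|\zeta|^2t}$ and, after multiplying by $|\zeta|^k$ and taking $L^2_\zeta$ against $\|\cF f_0\|_{L^\infty}=\|f_0\|_{Y_2}$, gives exactly $(1+t)^{-k/2}\|f_0\|_{Y_2}$; but the term where $\nabla_\zeta$ hits the oscillation $\cos(\beta t),\sin(\beta t)$, with $\beta=\lambda_{1,c_0}|\zeta|\omega$, produces a factor $t\,|\nabla_\zeta\beta|\approx \lambda_{1,c_0}t$ with \emph{no} compensating power of $|\zeta|$, so the best pointwise bound is $|\nabla_\zeta e^{t\tilde A}|\lesssim (1+t)e^{-c|\zeta|^2t}$ and Plancherel only yields $(1+t)\cdot(1+t)^{-(k+1)/2}\|f_0\|_{Y_2}=(1+t)^{(1-k)/2}\|f_0\|_{Y_2}$, i.e. a loss of a full factor $(1+t)^{1/2}$ against \eqref{lineares-weight}. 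The remedies you propose cannot recover this: $L^2$-boundedness of $\zeta/|\zeta|$-type multipliers, or re-splitting $\nabla_\zeta\tilde A$ into "dispersive and smoothing" parts, only control the modulus of the symbol, and on the annulus $|\zeta|\sim t^{-1/2}$ the function $t\sin(\beta t)e^{-\lambda_{2,c_0}|\zeta|^2t}$ genuinely has $L^2_\zeta$-norm of size $t^{1/2}$ (since $\sin^2(\beta t)$ averages to $1/2$ there), so no bound that only sees $|\nabla_\zeta e^{t\tilde A}|$ can produce the missing $(1+t)^{-1/2}$. Any complete proof of the $\|f_0\|_{Y_2}$ part of \eqref{lineares-weight} must therefore exploit finer structure than what your Step 3 invokes — the paper disposes of this step by direct computation from the explicit matrix \eqref{semi-matrix}, and you would at minimum have to carry out that computation and exhibit where the extra decay (or cancellation, or vanishing of the relevant Fourier component at $\zeta=0$) comes from. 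As written, your proposal proves \eqref{lineares-L2} but not \eqref{lineares-weight}.
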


\begin{proof}
    We first claim that it suffices to show \eqref{lineares-L2} and \eqref{lineares-weight} when 
    $\cL(t)$ is replaced with $\cL_0(t)$ where 
    $\cL_0(t)$ is the semigroup generated by the linear operator $\tilde{A}_0(c_0, D_y).$ Indeed, we
    write $f(t)=\cL_0(t) f_0+\int_0^t \cL_0(t-s) (\tilde{A}_1 f) (s)\, \d s $ and
    use the estimate for $\cL_0(t)$ and the definition of $\tilde{A}_1$ in \eqref{deftA01} to obtain 
    \begin{align*}
     \bigg\|\int_0^t \p_y^k \cL_0(t-s) (\tilde{A}_1 f) (s) \d s \bigg\|_{L^2(\mR^2)}
        &\leq \int_0^t \big\| |\na_y|^{\f{5}{2}} \cL_0(t-s) \big(|\na_y|^{-\f{5}{2}}\tilde{A}_1 \p_y^k f\big) (s)\big\|_{L^2(\mR^2)}\, \d s\\
        &\lesssim |\eta_0|^{\f12}\int_0^t (1+t-s)^{-\f{5}{4}}\, \d s \sup_{0\leq s\leq t}\|\p_y^k f(s)\|_{L^2(\mR^2)}.
    \end{align*}
It thus holds that for $\eta_0$ small enough,
\beqs 
\|\p_y^kf(t)\|_{L^2(\mR^2)}\lesssim \|\p_y^k\cL_0(t) f_0\|_{L^2(\mR^2)}.
\eeqs
Similarly, by using the Hardy-Littleword-Sobolev inequality and  the Young  inequality, we find that for $\eta_0$ sufficiently small,
 \begin{align*}
 \bigg\|\int_0^t  y\, \cL_0(t-s) (\tilde{A}_1 f) (s) \d s \bigg\|_{L^2(\mR^2)}&\lesssim 
 \sup_{0\leq s\leq t}\big\|(1+y)|\na|^{-\f52}\tilde{A}_1 f (s)\big\|_{Y_1}\\
 &\lesssim  \sup_{0\leq s\leq t} \big(|\eta_0|^{\f12}\|y f(s)\|_{Y_1}+ \|f(s)\|_{Y_1}^{\f12} \|y f(s)\|_{Y_1}^{\f12}\big)\\
 &\leq \f12 \sup_{0\leq s\leq t} \|y f(s)\|_{Y_1}+C\|f\|_{Y_1},
\end{align*}  
which yields that 
\beqs 
\|y f(t)\|_{L^2(\mR^2)}\lesssim \|y\cL_0(t)f_0\|_{L^2(\mR^2)}+\|f_0\|_{Y_1}\,.
\eeqs
In a similary way,  for $k>0$, we can get  
\beqs 
\|\p_y^k(y f)(t)\|_{L^2(\mR^2)}\lesssim \|\p_y^k(y\cL_0(t)f_0)\|_{L^2(\mR^2)}+\|\p_y^k f_0\|_{Y_1}.
\eeqs
  Now, it suffices to prove  \eqref{lineares-L2} and \eqref{lineares-weight} with $\cL(t)$ replaced by $\cL_0(t).$  This can be obtained by studying the matrix   $e^{-t \tilde{A}_0(c_0, \zeta)}.$
  Some algebraic computations show that 
  \begin{align}\label{semi-matrix}
      e^{t \tilde{A}_0(c_0, \zeta)}=e^{-\lambda_{2,c_0}|\zeta|^2 t}\,\f{1}{\lambda_{1,c_0}\omega}
     \left( \begin{array}{cc}
      \lambda_1\omega \cos (\beta t)-\nu|\zeta|\sin (\beta t)     & -i\sin (\beta t) \\[4pt]
       i \lambda_{1,c_0}\sin (\beta t)   & \lambda_1\omega \cos (\beta t)+\nu|\zeta|\sin (\beta t)  
      \end{array}\right)
  \end{align}
  where $\lambda_{2,c_0}=2\p_{\eta}^2\lambda_{c_0}(\eta)>0$
  appears in the expansion of the spectral curve \eqref{exp-spectralcurve},
  $\nu:= \f{a_{11}-a_{22}}{{2|\zeta|^2}}, \, \omega:=\sqrt{1-(\f{\nu |\zeta|}{\lambda_{1,c_0} }\big)^2},\, \beta:=\lambda_{1,c_0} |\zeta|\omega$ 
   Note that for $|\zeta|\leq \eta_0$ small enough, $\omega \in \mR.$ Thanks to the explicit expression \eqref{semi-matrix} and  
 the Plancherel identity, we obtain easily 
  \begin{align*}
       & \|\p_y^k \cL_0(t) f_0\|_{L^2{(\mR^2)}}\lesssim (1+t)^{-\f{k}{2}}\min \big\{(1+t)^{-\f{1}{2}} \|f_0\|_{Y_2},  \,\|f_0\|_{Y_1}\big\}\big),\\
        &\|\p_y^k\big( y \, \cL_0(t) f_0\big)\|_{L^2{(\mR^2)}}\lesssim 
       (1+t)^{-\f{k}{2}} \big(\|f_0\|_{Y_2}+\min \big\{(1+t)^{-\f{1}{2}} \|y f_0\|_{Y_2},  \,\|y f_0\|_{Y_1}\big\}\big).
 \end{align*} 
\end{proof}
\begin{rmk}
Recall that $A_0(c_0,\zeta)=\diag \big(|\zeta|^{-1},\Id\big) \tilde{A}_0(c_0,\zeta)\, \diag \big(|\zeta|,\Id\big).$ Thanks to the expression \eqref{semi-matrix}, we have also that 
\begin{align}
\big\| \big(e^{t A_0(c_0,D_y)} f\big)_1\big\|_{L^{2}}
\lesssim 
   \big( \|f_1\|_{L^2}+\min\big\{ \|\cF f_2\|_{L_{\zeta}^{\infty}}, \||\zeta|^{-1}\cF f_2\|_{L_{\zeta}^2} \big\} \big), \label{lineares-L2-o}\\
   \big\| \big(e^{t A_0(c_0,D_y)} f\big)_1\big\|_{L^{\infty}}\lesssim
   t^{-\f12}\big( \|f_1\|_{L^2}+\min\big\{ \|\cF f_2\|_{L_{\zeta}^{\infty}}, \||\zeta|^{-1}\cF f_2\|_{L_{\zeta}^2} \big\} \big) .  \label{lineares-Linfty-o}
\end{align}
    
\end{rmk}

We are now in position to establish the decay properties for the modulation parameters $(\tilde{c}, \gamma)$. Let us introduce the following norm to assess their decay:
\beq\label{def-decays-modu}
\begin{aligned}
\cN_{\tilde{c},\gamma}(T)&:= \sup_{0\leq t\leq T} \sum_{k=0}^2 (1+t)^{\f{k+1}{2}} \|\p_y^k (\tilde{c}, \na_y \gamma)(t) \|_{L^2}+ \sum_{\ell=0}^3(1+t)^{\f{\ell}{2}} \|y\,\p_y^{\ell} (\tilde{c}, \na_y \gamma) (t)\|_{L^2}
\\
&\qquad +\sup_{0\leq t\leq T} \big( \|\gamma(t)\|_{L^2}+(1+t)^{\f12} \|\gamma(t)\|_{L^{\infty}}\big)\, .
\end{aligned} 
\eeq
Note that by interpolation, we have that for any
$k=0,1; p\in (1, +\infty] $ but $p\neq +\infty$ when $k=1,$
\begin{align}\label{cor-interpolation}
   \|\,\p_y^k (\tilde{c}, \na_y \gamma)(t) \|_{L^p}\lesssim (1+t)^{-\big(\f{k}{2}+1-\f1p\big)}\cN_{\tilde{c},\gamma}(T), \quad \forall\,\, 0\leq t\leq T. \, 
\end{align}
Before presenting the statement and proof of the next proposition, it is useful to define a quantity involving the weighted norm of the perturbation
\beq\label{def-weighted-U}
\begin{aligned}
  \cN_{U}(T)=\colon   \sup_{0\leq t\leq T}&(1+t)\|U(t)\|_{X_a^{M}}+(1+t)^{\f12}\|y\,U(t)\|_{X_a^{M}}\\
  &+(1+t)^{\f32}\big(\|\nabla_y U(t)\|_{X_a^{M-1}}+\| y\nabla_y^2 U(t)\|_{X_a^{M-2}}\big), \qquad (M\geq 3)
\end{aligned}
\eeq
A priori estimates for this quantity will be established in the next section. 
We also recall the definition of  $\cN_T$ from \eqref{def-cNT}, 
which clearly satisfies:
\begin{align*}
    \cN_T\geq \cN_{\tilde{c},\gamma}(T)+\cN_{U}(T).
\end{align*}

\begin{prop}\label{prop-modulation}
Let $(\tilde{c}, \gamma)(0,\cdot)\in Y\times Y$  determined by  Lemma \ref{lem-IFT}
and  
\beq\label{def-cMcgamma0}
\cM_{\tilde{c},\gamma}(0):
=\|\langle \cdot \rangle ((\tilde{c},\gamma)(0,\cdot))\|_{Y_1}+\|(\tilde{c},\gamma)(0,\cdot)\|_{Y_2}<+\infty.
\eeq
Assume that $(\tilde{c}, \gamma)$ exist in $C^1([0,T], Y\times Y),$ ${U}$ exists in $C^1([0,T], X_a^1).$ 
There exists 
    $\delta_2$ small enough such that  if
 $\cN(T)\leq \delta<{\delta_2}$ 
    then it holds that 
    \begin{align}\label{Ncgamma}
        \cN_{\tilde{c},\gamma}(T)\lesssim \cM_{\tilde{c},\gamma}(0)+ \cN(T)^2+ \cN(T)^4.
    \end{align} 
\end{prop}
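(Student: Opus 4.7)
The plan is to apply Duhamel's formula to the system \eqref{modueq-new} for $F(t) = (|\nabla_y|\gamma, \tilde c)(t)$, writing
\[
F(t) = \cL(t)F(0) + \int_0^t \cL(t-s)\,\diag(|\nabla_y|, \mathrm{Id})\Big(-B_1^{-1}\chi_{\eta_0}(D)\cQ(\nabla_y\tilde c, \nabla_y\gamma) + \sum_{j=0}^6 \cR_j\Big)(s)\, ds,
\]
and then to estimate each norm defining $\cN_{\tilde c,\gamma}(T)$ by using the linear bounds \eqref{lineares-L2} and \eqref{lineares-weight} established for the semigroup $\cL(t)$. The initial data contribution is $\lesssim \cM_{\tilde c,\gamma}(0)$ by Remark \ref{rmk-intialdata-cgamma}, which gives $(\tilde c,\gamma)(0)\in Y_2$ with the $y$-weight in $Y_1$. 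To recover the $L^2$ and $L^\infty$ bounds on $\gamma$ itself (without the $|\nabla_y|^{-1}$ singularity from inverting the change of unknown), I would exploit the explicit off-diagonal structure of the matrix \eqref{semi-matrix} which yields \eqref{lineares-L2-o}--\eqref{lineares-Linfty-o} directly for the first component of $\cL_0(t)F$, and absorb the difference between $\cL$ and $\cL_0$ via the smallness of $\tilde A_1(c_0,\zeta)=\cO(|\zeta|^3)$ on the Fourier support $|\zeta|\le 2\eta_0$.

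The source terms I would classify and bound as follows. The quadratic term $\cQ(\nabla_y\tilde c, \nabla_y\gamma)$ is bilinear in derivatives of the modulation parameters; using the vanishing $d_{52}=0$ observed after \eqref{defquadterm} (so the critical $|\nabla_y\gamma|^2$ is absent in the $\tilde c$-component) together with two-dimensional Gagliardo--Nirenberg interpolation between the $k=0,1,2$ norms in \eqref{def-decays-modu}, this contributes $\cO(\cN(T)^2)$. The leading linear-in-$\tilde U$ piece is $\cR_2$: since $g_k^\ast(z_1,|\zeta|,c)$ lies in the dual weighted space $X_{-a}$ and depends smoothly on $c$, the pairing $\int_{\mR}\tilde U(z_1,y)\cdot L_c^\ast g_k^\ast(z_1,|\zeta|,c)\,dz_1$ is controlled pointwise in $y$ by the weighted norm, yielding $\|\cR_2(s)\|_{L^2_y}\lesssim (1+s)^{-1}\cN_U(T)$ and $\|y\,\cR_2(s)\|_{L^2_y}\lesssim (1+s)^{-1/2}\cN_U(T)$ through $\|\tilde U\|_{X_a^M}$ and $\|y\tilde U\|_{X_a^M}$. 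The terms $\cR_0,\cR_1$ become cubic upon substituting the modulation equations for $\partial_t c$ and $\partial_t\gamma-\tilde c$, $\cR_3$ contains $L_{c,1}\tilde U$ whose coefficient vanishes with $\nabla_y\gamma$ and is thus at least quadratic in $\cN(T)$, while $\cR_4$--$\cR_6$ coming from $\tilde N$ and \eqref{def-I56} are manifestly nonlinear of order $\cN(T)^2$ or $\cN(T)^3$.

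The main obstacle will be the linear-in-$\tilde U$ contribution from $\cR_2$: achieving the target $(1+t)^{-(k+1)/2}$ rate for $\|\partial_y^k(\tilde c,\nabla_y\gamma)\|_{L^2}$ requires avoiding a logarithmic loss in an integral of type $\int_0^t (1+t-s)^{-1/2}(1+s)^{-1}\,ds$. To handle this, I would split the time integral at $t/2$: on $[t/2,t]$ use the $Y_2\to L^2$ bound with $(1+t-s)^{-1/2}$ decay from \eqref{lineares-L2}, while on $[0,t/2]$ use the $Y_1\to L^2$ bound without time decay together with the stronger estimate $\|\cR_2\|_{L^2_y}\lesssim \|\tilde U\|_{X_a^M}^{1/2}\|y\tilde U\|_{X_a^M}^{1/2}\lesssim (1+s)^{-3/4}\cN_U(T)$ obtained via Cauchy--Schwarz against the $y$-weight, so the integral converges. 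The $y$-weighted pieces proceed analogously from \eqref{lineares-weight}, with the extra $\|f_0\|_{Y_1}$ term there absorbed since the sources lie naturally in $Y_1$. Once every contribution is bounded by $\cM_{\tilde c,\gamma}(0)+\cN(T)^2+\cN(T)^4$, \eqref{Ncgamma} follows, and a standard continuity argument closes the bootstrap for $\delta_2$ small enough.
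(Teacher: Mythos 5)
Your overall framework (Duhamel for \eqref{modueq-new} plus the linear bounds \eqref{lineares-L2}--\eqref{lineares-Linfty-o}, quadratic treatment of $\cQ$, smallness to absorb $\pt\tilde c$, $\pt\gamma-\tilde c$ via the equation) is the same as the paper's, but there is a genuine gap in your treatment of $\cR_2$ (the term $I_2$ in \eqref{def-I0-4}). You estimate it as a term \emph{linear} in $\tilde U$, $\|\cR_2(s)\|_{L^2_y}\lesssim (1+s)^{-1}\cN_U(T)$, and then try to repair only the borderline time integral. This cannot prove \eqref{Ncgamma}: a linear source of size $\cN_U(T)\le \cN(T)$ produces, after Duhamel, a contribution to $\cN_{\tilde c,\gamma}(T)$ that is linear in $\cN(T)$ with a non-small absolute constant, whereas the statement (and the bootstrap of Section 7, where all pieces of $\cN(T)$ must be bounded by $\cM(0)+\cN(T)^2$) requires that no such linear term appear. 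The missing idea is precisely the role of the orthogonality conditions \eqref{secularcondtion}: writing $I_2=(I_2-I_2|_{c=c_0})+I_2|_{c=c_0}$, the first piece carries an explicit factor $\tilde c$, and for the second one uses that $L_{c_0}^{*}(|\zeta|)g_k^{*}(\cdot,|\zeta|,c_0)$ lies in the span of $g_1^{*},g_2^{*}$, so that \eqref{secularcondtion} (applied with $c(t,y)$, the discrepancy again producing a factor $\tilde c$) kills the would-be linear part; this yields $\|I_2\|_{L^\infty_\zeta}\lesssim \|U\|_{X_a}\|\tilde c\|_{L^2_y}\lesssim (1+t)^{-\f32}\cN_U(T)\cN_{\tilde c,\gamma}(T)$, i.e.\ a genuinely quadratic term with the correct decay.

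The decay rate issue is also not fixable by your $t/2$-splitting even at the level of rates: the scheme \eqref{aprior-firstround} needs every source to satisfy $\|\tilde\cR_j\|_{Y_2}\lesssim (1+t)^{-\f32}$ and $\|y\,\tilde\cR_j\|_{Y_2}\lesssim (1+t)^{-1}$ in order to reach the $(1+t)^{-\f{k+1}{2}}$ rates of \eqref{def-decays-modu} for $k=1,2$ and the weighted norms; a source decaying only like $(1+s)^{-1}$ (or the $(1+s)^{-3/4}$ you propose, which moreover relies on an $L^1_y$-type interpolation that a single power of $y$ does not quite give in two dimensions) leaves at best $(1+t)^{-1}$, short of $(1+t)^{-\f32}$, for the $k=2$ norms. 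The remaining items of your plan ($I_0,I_1,I_3$ quadratic after inserting \eqref{gammtct}, $I_5$, $I_6$, and the nonlinear term $I_4$ handled with the splitting \eqref{usefulId} and an integration by parts in $y$) are in line with the paper, but without the orthogonality argument for $I_2$ the proposition as stated does not follow.
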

\begin{rmk}\label{rmk-cgamma0}
   By Remark \ref{rmk-intialdata-cgamma}, it holds that 
   \begin{align*}
       \cM_{\tilde{c},\gamma}(0)\lesssim C\big(\|\langle \cdot \rangle \tilde{\Psi}_0(\cdot)\|_{L^2}+\|e^{ax}(n_0,\na\psi_0)\|_{L_y^1L_x^2}\big), \, \qquad (\tilde{\Psi}_0(\cdot)=(n_0,\psi_0)^t).
   \end{align*}
\end{rmk}
\begin{proof}
Let us recall that $(|\na_y|\gamma, \tilde{c})$ and $(\gamma, \tilde{c})$ solves the equations \eqref{modueq-new} and \eqref{modeq-0} with 
initial datum $(\tilde{c}, \gamma)(0,\cdot).$  Denote for simplicity
    $\cR_7=-B_1^{-1}\chi_{\eta_0}(D) Q(\na_y \tilde{c}, \na_y \gamma)$
    and $\tilde{\cR}_j=\diag\big(|\na_y|, \Id\big) \cR_j (j=0,\cdots 7)$ where $Q(\na_y \tilde{c}, \na_y \gamma)$ is the quadratic term defined in \eqref{defquadterm}. 
  Applying the semigroup estimates \eqref{lineares-L2} \eqref{lineares-weight} and 
    \eqref{lineares-L2-o}, \eqref{lineares-Linfty-o},  we find that
    \begin{align}\label{aprior-firstround}
         \cN_{\tilde{c},\gamma}(T)\lesssim \cM_{\tilde{c},\gamma}(0)+\sum_{j=0}^7 \sup_{0\leq t\leq T} (1+t)^{\f32} \big(\|\tilde{\cR}_j\|_{Y_2}+ \|({\cR}_j)_1\|_{Y_1} \big)+  (1+t) \|y \tilde{\cR}_j\|_{Y_2}.
    \end{align}
 Therefore, we just need  to  prove that the right hand side can be controlled by $\cN(T)^2+ \cN(T)^4.$
 Let us first control the quadratic terms $\tilde{\cR}_7$ and $({\cR}_7)_1,$ 
 In view of the expressions \eqref{defquadterm}, we have by the Plancherel identity, the  H\"older inequality as well as \eqref{cor-interpolation} that, for any $t\in [0,T],$
\begin{align*}
   \| \tilde{\cR}_7(t)\|_{Y_1}\lesssim \big\|\big(|\na_y \tilde{c}|^2, \na_y  \tilde{c}\cdot\na_y \gamma, |\na_y \gamma|^2\big)(t)\big\|_{L^2}\lesssim \|\na_y ( \tilde{c}, \gamma)(t)\|_{L^{4}}^2 \lesssim (1+t)^{-\f32} \cN_{\tilde{c},\gamma}(T)^2\, .
\end{align*}
 Moreover, since $d_{52}=0$ and $\|B_1^{-1}\chi_{\eta_0}\|_{L_{\zeta}^{\infty}}<+\infty,$ we deduce from the  Hausdorff-Young's inequality that 
 \begin{align*}
    \|\tilde{\cR}_7(t)\|_{Y_2}&\lesssim \||\na_y|\big(|\na_y \tilde{c}|^2, \na_y  \tilde{c}\cdot\na_y \gamma, |\na_y \gamma|^2 \big)(t) \|_{L^1}+\|\big(|\na_y \tilde{c}|^2 , \na_y  \tilde{c}\cdot\na_y \gamma\big)(t)\|_{L^1} \\
    &\lesssim (1+t)^{-\f32}\cN_{\tilde{c},\gamma}(T)^2.
 \end{align*}
Similarly, as for $\eta_0$ small enough, $\|\p_{\zeta} (B_1^{-1}\chi_{\eta_0})\|_{L_{\zeta}^{\infty}}<+\infty,$ it holds that 
\begin{align*}
    \|y\,\tilde{\cR}_7(t)\|_{Y_2}&\lesssim \|(1+y)\,\diag(|\na_y|, \Id) Q(\na_y \tilde{c}, \na_y \gamma) (t)\|_{L^1}
    \lesssim (1+t)^{-1}\cN_{\tilde{c},\gamma}(T)^2.
 \end{align*}

Now, 
it remains to control $\tilde{R}_0-\tilde{R}_6,$ or more precisely, in view of \eqref{aprior-firstround} and the definition $\cR_j=B_1^{-1}\cF_{\zeta}^{-1}\bigg(
I_j(t,\zeta)\bigg),$ the following estimate
\begin{align*}
 \sup_{0\leq t\leq T} (1+t)^{\f32} 
\| I_j(t) \|_{L_{\zeta}^{\infty}}+
  (1+t) \big\|\p_{\zeta}I_j(t)\big\|_{L_{\zeta}^{\infty}}, \quad \forall \, 0\leq j\leq 6.
\end{align*}
Let us control $I_0-I_6$ term by term, which are defined in \eqref{def-I0-4} and \eqref{def-I56}. 
First, utilizing the fact 
\beq \label{property-gstar}
\sup_{j\in \mathbb{N}, |\zeta|\leq 2\eta_0, \, |c-c_0|\leq \delta}\big\|\p_c^{j} g_k^{*}(\cdot, |\zeta|, c)\big\|_{X_{-a}}<+\infty,
\eeq 
we can control $I_0, I_1$  directly by 
\beq\label{es-I01}
\begin{aligned}
    \|(I_0, I_1)(t)\|_{L_{\zeta}^{\infty}}&\lesssim\|e^{a z_1}\tilde{U}(t, z_1, y)\|_{L_{z_1,y}^2} \big\|(\pt \gamma-\tilde{c}, \pt \tilde{c})(t)\big\|_{L_y^2} \\
&\lesssim e^{a\|\gamma\|_{L^{\infty}}} \|U\|_{X_a} \big\|(\pt \gamma-\tilde{c}, \pt \tilde{c})\big\|_{L_y^2} .
\end{aligned}
\eeq
For the term $I_2,$ we write $I_2=\big(I_2-I_2|_{c=c_0}\big)+I_2|_{c=c_0}$ and control them separately. Thanks to \eqref{property-gstar}, we control the first one as 
\beqs 
\big\|\big(I_2-I_2|_{c=c_0}\big)(t)\big\|_{L_{\zeta}^{\infty}}\lesssim   e^{a\|\gamma\|_{L^{\infty}}} \|U(t)\|_{X_a}\|\tilde{c}(t)\|_{L_y^2}.
\eeqs
For the second one, we use the identity $e^{iy\cdot\zeta}L_{c_0}^{*}(Q_{c_0},\nabla)e^{-iy\cdot\zeta}
=L_{c_0}^{*}(|\zeta|)$ 
as well as the definition of $g_{k}^{*}(\cdot, |\zeta|, c_0)$ to find that, there exists  smooth functions $\mathfrak{K}_{k1}(|\zeta|), \mathfrak{K}_{k2}(|\zeta|)$ 
such that
\begin{align*}
    L_{c_0}^{*}(|\zeta|)g_{k}^{*}(\cdot, |\zeta|, c_0)=
    \mathfrak{K}_{k1}(|\zeta|)\,g_{1}^{*}(\cdot, |\zeta|, c_0)+\mathfrak{K}_{k2}(|\zeta|)\,g_{2}^{*}(\cdot, |\zeta|, c_0).
\end{align*}
Therefore, thanks to the identity \eqref{secularcondtion}, it holds also 
\beqs 
 \|I_2|_{c=c_0}(t)\|_{L_{\zeta}^{\infty}}\lesssim e^{a\|\gamma\|_{L^{\infty}}} \|U(t)\|_{X_a}\|\tilde{c}(t)\|_{L_y^2}.
\eeqs
We thus find that, by using the definition of $\cN_U(T)$ in \eqref{def-weighted-U}, 
\begin{align}
    \|I_2(t)\|_{L_{\zeta}^{\infty}} \lesssim (1+t)^{-\f32} \cN_{U}(T)\cN_{\tilde{c},\gamma}(T) 
\end{align}
whenever $\|\gamma\|_{L^{\infty}}\leq C\delta \leq 1,$ which will be always assumed through the proof.
Next, for the term $I_3^k(t),$ it follows from the definition of $L_{c,1}$ that 
\begin{align}
 \|I_3(t)\|_{L_{\zeta}^{\infty}} \lesssim \|U(t)\|_{X_a}\|\na_y \gamma (t)\|_{L_y^2} \lesssim (1+t)^{-\f32} \cN_{U}(T)\cN_{\tilde{c},\gamma}(T).
\end{align}
Let's first address $I_5$ and $I_6$ before estimating $I_4,$ as it is the most intricate.
Thanks again to \eqref{property-gstar}, the term $I_5$ can be controlled as 
\beq
\begin{aligned}
    & \|I_5(t)\|_{L_{\zeta}^{\infty}} \lesssim \|\Delta_y Q(\na_y  \tilde{c}, \na_y\gamma )(t)\|_{L_y^1}\\
     &\lesssim \|\Delta_y (\na_y  \tilde{c}, \na_y\gamma )\cdot  (\na_y  \tilde{c}, \na_y\gamma ) \|_{L_y^1}+\|\na_y (\na_y  \tilde{c}, \na_y\gamma )\|_{L_y^2}^2\lesssim (1+t)^{-2} \cN_{\tilde{c},\gamma}(T)^2.
\end{aligned}
\eeq
Moreover, in light of the expressions \eqref{expression-R1}, we can estimate $I_6$ as
\begin{align}
    \|I_6(t)\|_{L_{\zeta}^{\infty}}&\lesssim \|\tilde{c}(t)\|_{L^2} \big(\|(\pt\gamma-\tilde{c} ,\pt c)(t)\|_{L^2}+ \||\na_y \tilde{c}|^2, |\na_y\gamma|^2, \na_y \tilde{c}\cdot\na_y \gamma)(t)\|_{L^2}+\|\Delta_y^2(\gamma,\tilde{c})(t)\|_{L^2} \big)\notag\\
    &\lesssim \|\tilde{c}\|_{L^2} \|(\pt\gamma-\tilde{c} ,\pt c)(t)\|_{L^2}+(1+t)^{-2}\big(\cN_{\tilde{c},\gamma}(T)^2+\cN_{\tilde{c},\gamma}(T)^3\big).\label{es-I6}
\end{align}
It remains to control the term $I_4=(I_4^1, I_4^2)^t$ where $$I_4^k=\chi_{\eta_0}(\zeta) \int_{\mR^3}  {N} (t,z_1+\gamma,y)
  \cdot g_{k}^{*} \big(z_1, |\zeta|, c(t,y)\big)\, e^{-iy\cdot\zeta} \, \d z_1 \d y \, .$$  
Looking at  the explicit expression of $N$ in \eqref{def-NLterm}, we observe that there are  three types of terms: 
the multiplications of $ \na_y U, U$ and modulation parameters $\na_y(\tilde{c},\gamma), |\na_y(\tilde{ c}, \gamma)|^2, \Delta_y(\tilde{ c}, \gamma);$ the quadratic type  terms of the form 
$\div(n\na\psi), |\na \psi|^2, H(n, n_{c,\gamma})$ and the term $G(\phi,\phi_{c,\gamma}).$ We denote them respectively as $N_1, N_2, N_3$, and denote their contributions in the definition of $I_4$ as $I_{41}-I_{43}$. Using \eqref{property-gstar}, we control $I_{41}$ as
\begin{align}\label{es-I41}
  \|I_{41}(t)\|_{L_{\zeta}^{\infty}} \lesssim   \|e^{a(z-\gamma)}N_1\|_{L_y^1L_{z}^2}\lesssim \|(U,\na_y U)\|_{X_a} \|(\na_y, \Delta)(\tilde{ c}, \gamma)\|_{L_y^2}\lesssim (1+t)^{-\f32} \cN_{U}(T)\cN_{\tilde{c},\gamma}(T) .
\end{align}
For $I_{42},$ we use the expansion \eqref{usefulId} to split it into three terms $I_{42}=I_{420}+I_{421}+I_{422}$ where  
\begin{align*}
  &  I_{420}^k=-\chi_{\eta_0}(\zeta) \int_{\mR^3}  {N}_2(t,z_1+\gamma,y)
  \cdot g_{k}^{*} \big(z_1, 0, c_0\big)\, e^{-iy\cdot\zeta} \, \d z_1 \d y \, , \\
  & I_{421}^k=-\chi_{\eta_0}(\zeta) \int_{\mR^3}  {N}_2(t,z_1+\gamma,y)
  \cdot |\zeta|^2 g_{k1}^{*} \big(z_1, |\zeta|, c_0\big)\, e^{-iy\cdot\zeta} \, \d z_1 \d y \, ,\\
  & I_{422}^k=-\chi_{\eta_0}(\zeta) \int_{\mR^3}  {N}_2 (t,z_1+\gamma,y)
  \cdot \tilde{c} \, g_{k2}^{*} \big(z_1, |\zeta|, c(t,y)\big)\, e^{-iy\cdot\zeta} \, \d z_1 \d y \, .
\end{align*}
The last sub-term $I_{422}$ can be bounded easily by
\begin{align*}
    \| I_{422}(t)\|_{L_{\zeta}^{\infty}} \lesssim \|\na(n ,\na\psi)(t)\|_{L^{\infty}_{z,y}}\|U(t)\|_{X_a}\|\tilde{c}(t)\|_{L^2}\lesssim (1+t)^{-\f52} \cN^2_{U}(T)\cN_{\tilde{c},\gamma}(T).
\end{align*}
Moreover, choosing $a$ small enough such that  
$g_{k}^{*} \big(z_1, 0, c_0\big)\in X_{-2a},$ one can  control $I_{420}$ as
\begin{align*}
    \| I_{420}(t)\|_{L_{\zeta}^{\infty}} \lesssim \|e^{2a(z-\gamma)}N_2\|_{L_y^1L_{z}^2}\lesssim \|(n,\na_x\psi)(t)\|_{H_{a}^1}^2\lesssim (1+t)^{-2} \cN^2_{U}(T).
\end{align*}
Finally, we use the identity 
  $|\zeta|^2 e^{-iy\cdot\zeta}=
  -\Delta_y (e^{-iy\cdot\zeta})$
to integrate by parts in $y$ twice to obtain that 
\begin{align*}
 \| I_{421}(t)\|_{L_{\zeta}^{\infty}}& \lesssim \|e^{a(z-\gamma)}\p_ y N_1\|_{L_y^1L_{z}^2}\\
 &\lesssim \|\p_y (n, \na \psi)\|_{X_a^1} \|\p_y(n, \p_x\psi)\|_{H^2}+\| e^{az}\p_y^2(\Id,\na)(n,\na \psi)\|_{L_{z}^2L_y^4}\|(\Id,\na)(n,\p_x\psi)\|_{L_z^{\infty}L_y^4}\\
 &\lesssim (1+t)^{-\f32} \cN^2_{U}(T).
\end{align*}
where in the last line, we have used the inequalities
$$\| e^{az}f\|_{L_{z}^2L_y^4}\lesssim \|\langle y\rangle f\|_{L_a^2},\, \|f\|_{L_z^{\infty}L_y^4}\lesssim \|f\|_{W^{1,4}}$$ as well as the fact
that  
$$\|\na\p_x\psi\|_{H^2}\lesssim \cN(T),\quad \|\p_y U\|_{X_a^1}+ \|y\p_y^2 U\|_{X_a^2}\lesssim (1+t)^{-\f32}\cN_{U}(T). $$ 

Combining the above  estimates, we find that 
\begin{align}\label{es-I42}
     \|I_{42}(t)\|_{L_{\zeta}^{\infty}} \lesssim (1+t)^{-\f32} \cN^2_{U}(T).
\end{align}
For the term $I_{43},$ we use the equations $\Delta \phi=e^{\phi}-1-n-n_c$ and $\p_{z_1}^2\phi_{c,\gamma}=e^{\phi_{c,\gamma}}-1-n_{c,\gamma}$ to find 
\begin{align}\label{errorphi-cgamma}
  \tilde{\phi}:= \phi- \phi_{c,\gamma}=I_{\phi_{c,\gamma}}\bigg(n+\Delta_y\phi_{c,\gamma}+e^{\phi_{c,\gamma}}\big(e^{\tilde{\phi}}-1-\tilde{\phi}\big)\bigg)
\end{align}
where we recall that  $I_{\phi_{c,\gamma}}:= (e^{\phi_{c,\gamma}}-\Delta)^{-1}.$
Consequently, we can write  
\beq \label{expansion-G}
G(\phi,\phi_{c,\gamma})=G_1(\phi,\phi_{c,\gamma})+G_2(\phi,\phi_{c,\gamma})
\eeq
where 
\begin{align*}
   &  G_1(\phi,\phi_{c,\gamma})= I_{\phi_{c,\gamma}} e^{\phi_{c,\gamma}}\big(e^{\tilde{\phi}}-1-\tilde{\phi}\big),\\
  & G_2(\phi,\phi_{c,\gamma}) =(I_{\phi_{c,\gamma}} -I_{\phi_{c,\gamma},0}) \Delta_y \phi_{c,\gamma}-
     I_{\phi_{c,\gamma}} I_{\phi_{c,\gamma},0}\big(\phi_{c,\gamma}' \Delta_y^2\gamma+\p_c \phi_{c,\gamma} \Delta_y^2 c\big). 
\end{align*}
We denote their contributions in the definition of $I_{43}$ as $I_{431}, I_{432}$. Since $G_1$ is at least quadratic in terms of $n$ and $\Delta_y \phi_c,$ we can follow the same arguments as in the estimate of $I_{41}, I_{42}$ to find 
\begin{align*}
    \|I_{431}(t)\|_{L_{\zeta}^{\infty}} \lesssim (1+t)^{-\f32} \big( \cN(T)^2+\cN(T)^3\big).
\end{align*}
Moreover, straightforward computations show that 
$$G_2(\phi,\phi_{c,\gamma})= I_{\phi_{c,\gamma}} [\Delta_y,  I_{\phi_{c,\gamma},0}]\Delta_y \phi_{c,\gamma}+ I_{\phi_{c,\gamma}} I_{\phi_{c,\gamma},0}\big(\Delta_y^2\phi_{c,\gamma}-\phi_{c,\gamma}' \Delta_y^2\gamma-\p_c \phi_{c,\gamma} \Delta_y^2 c\big)$$ 
which is quadratic in terms of $\na_y (\gamma, \tilde{c})$ and their derivatives. We can thus control $I_{432}$ as 
\begin{align*}
   \|I_{432}(t)\|_{L_{\zeta}^{\infty}} \lesssim   \|e^{a(z-\gamma)}G_2\|_{L_z^2L_y^1}\lesssim \||\na_y(\gamma, \tilde{c})\|_{L_y^2}^2\lesssim (1+t)^{-\f32}\cN^2_{\tilde{c},\gamma}(T).
\end{align*}
Therefore, it holds that
 $\|I_{43}(t)\|_{L_{\zeta}^{\infty}} \lesssim (1+t)^{-\f32} \big( \cN(T)^2+\cN(T)^3\big),$ which, together with \eqref{es-I41}, \eqref{es-I42}, enables us to conclude
\beq \label{es-I4}
\|I_4(t)\|_{L_{\zeta}^{\infty}} \lesssim (1+t)^{-\f32}\big(\cN(T)^2+\cN(T)^3\big).
\eeq

Now, combining the estimates \eqref{es-I01}-\eqref{es-I6} and \eqref{es-I4}, we 
find that 
\beq\label{es-I0-6}
\begin{aligned}
  & \sum_{j=0}^6 \|I_j(t)\|_{L_{\zeta}^{2}}\lesssim
\sum_{j} \|I_j(t)\|_{L_{\zeta}^{\infty}}\\
&\lesssim \big(\|\tilde{c}(t)\|_{L^2}+\|{U}(t)\|_{X_a}\big)\big\|(\pt\gamma-\tilde{c},\pt\tilde{c})(t)\big\|_{L^2}+(1+t)^{-\f32}\big(\cN(T)^2+\cN(T)^3\big). 
\end{aligned}
\eeq
By using  the equation \eqref{modeq-0}, it holds that
\begin{align*}
   & \big\|(\pt\gamma-\tilde{c},\pt\tilde{c})(t)\big\|_{L^2}\lesssim \|\nabla_y^2(\gamma, \tilde{c})(t)\|_{L^2}+\|Q(\na_y \tilde{c}, \na_y \gamma)\|_{L^2}+\sum_{j=0}^6 \|I_j(t,\zeta)\|_{L_{\zeta}^2}\\
    & \lesssim (1+t)^{-1} \big(\cN_{\tilde{c},\gamma}(T)+ \cN^2
    (T)+\cN(T)^3\big)+\big(\|\tilde{c}(t)\|_{L^2}+\|{U}(t)\|_{X_a}^2\big)\big\|(\pt\gamma-\tilde{c},\pt\tilde{c})(t)\big\|_{L^2}.
\end{align*}
Consequently,  there exists ${\delta_2}$ small enough, such that as long as
$\|\tilde{c}(t)\|_{L^2}+\|{U}(t)\|_{X_a}^2\leq \cN(T)\leq {\delta_2}$  it holds that 
\beq\label{gammtct}
  \big\|(\pt\gamma-\tilde{c},\pt\tilde{c})(t)\big\|_{L^2}\lesssim   (1+t)^{-1} \big(\cN_{\tilde{c},\gamma}(T)+\cN(T)^2+\cN(T)^3\big).
\eeq
Plugging this last estimate  into \eqref{es-I0-6}, we find that 
\begin{align*}
    \sum_{j=0}^6 \|I_j(t)\|_{L_{\zeta}^{\infty}}\lesssim (1+t)^{-\f32}\big(\cN(T)^2+\cN(T)^4\big).
\end{align*}
Following closely the above arguments we can also prove  for $I_0-I_6,$ that 
\beqs 
\big\|\p_{\zeta}I_j(t)\big\|_{L_{\zeta}^{\infty}}\lesssim (1+t)^{-1}\big(\cN(T)^2+\cN(T)^4\big).
\eeqs
We omit the details and thus finish the proof.
\end{proof}
\begin{rmk}
By \eqref{gammtct} and \eqref{Ncgamma}, it holds that for any $t\in[0,T]$
\beq \label{es-ptc}
 \big\|(\pt\gamma-\tilde{c},\pt\tilde{c})(t)\big\|_{L^2}\lesssim   (1+t)^{-1} \big(\cM_{\tilde{c},\gamma}(0)+
       \cN(T)^2+\cN(T)^4\big).
\eeq
Moreover, following  similar arguments as  in the proof of the above estimate, we find also that, for any $0\leq t\leq T,$
    \begin{align}
          \big\|y\,(\pt\gamma-\tilde{c},\pt\tilde{c})(t)\big\|_{L^2}\lesssim   (1+t)^{-\f12} \big(\cM_{\tilde{c},\gamma}(0)+
       \cN(T)^2+\cN(T)^4\big)\,,   \label{es-yptc} \\
       \|y(\p_y,\pt)(\pt\gamma-\tilde{c},\pt\tilde{c})(t)\big\|_{L^2}\lesssim   (1+t)^{-1} \big(\cM_{\tilde{c},\gamma}(0)+
       \cN(T)^2+\cN(T)^4\big)\,, \label{es-ypyptc} \\
       \big\|(\p_y,\pt, y\p_y\pt, y\p_y^2)(\pt\gamma-\tilde{c},\pt\tilde{c})(t)\big\|_{L^2}\lesssim   (1+t)^{-\f32} \big(
      \cM_{\tilde{c},\gamma}(0)+ \cN(T)^2+\cN(T)^4\big)\,. \label{es-nablayptc}
    \end{align}
\end{rmk}
\section{Decay of $U$ in the weighted space } 
In this section, we focus on the decay estimate of  $U$ in the weighted space $X_a^{M}$ and derive the a  priori estimates for the quantity $\cN_{U}(T)$ defined in \eqref{def-weighted-U}.
The main result of this section is the following:
\begin{prop}\label{prop-weightednorm}
   Let $\cN (T) $ be defined in \eqref{def-cNT}.
   Assume that $(\tilde{c}, \gamma)$ exists in $C([0,T], Y\times Y),$ ${U}$ exists in $C([0,T], X_a^{M}).$ 
      There exists  a constant ${\delta}_3>0,$ 
     such that if $\cN(T)\leq \delta<{\delta}_3,$
    then it holds that 
    \begin{align*}
    \cN_{U}(T)\lesssim 
    \cN_U(0)+\cM_{\tilde{c},\gamma}(0)+\cN(T)^2, 
    \end{align*} 
where $\cM_{\tilde{c},\gamma}(0)$ is defined in \eqref{def-cMcgamma0}.
\end{prop}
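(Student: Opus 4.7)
The plan is to combine the projected semigroup decay \eqref{semigroup} of Theorem \ref{thm-resol-L} with a weighted high-frequency energy estimate, along the lines of the heuristic \eqref{energy-weighted-intro} sketched in the introduction. I rewrite \eqref{eq-U} as a perturbation of the autonomous operator $L_{c_0}$,
\[
\pt U - L_{c_0} U = (L_{c,\gamma} - L_{c_0} - \tilde{c}\,\p_z) U + R + N,
\]
and note that the orthogonality condition \eqref{secularcondtion} places $U$, up to an $\cO(\tilde c)$ correction that can be absorbed perturbatively, in the range of $\mathbb{Q}_{c_0}(\ep^2\hat\eta_0)$, so that the projected semigroup acts on it with decay $e^{-\beta\ep^3 t}$.

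First I establish $(1+t)\|U(t)\|_{X_a^0} \lesssim \cN_U(0) + \cM_{\tilde c,\gamma}(0) + \cN(T)^2$ by Duhamel on the projected equation. The source $R_1 = -\pt c\,\p_c Q_{c,\gamma} + (\pt\gamma-\tilde c)Q'_{c,\gamma}$ lies in $X_a^0$ because, although $\p_c \psi_c$ is not square-integrable at $-\infty$, the weight $e^{2ax}$ with $a>0$ makes it integrable there, and \eqref{es-ptc} supplies the time rate $(1+t)^{-1}$; $R_2$ is quadratic in $\nabla_y(\tilde c,\gamma)$, hence $\cO((1+t)^{-3/2}\cN(T)^2)$. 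The linear remainder $(L_{c,\gamma}-L_{c_0}-\tilde c\p_z)U$ carries either smallness $\cO(\ep^2)$ from differences of solitary waves (absorbed by the exponential factor of the semigroup on the Duhamel integral) or a factor of $\tilde c$ or $\nabla_y\gamma$ whose $L^\infty_y$ norm decays like $(1+t)^{-1}$ by \eqref{cor-interpolation}; the nonlinearity $N$, being quadratic, decays still faster.

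For the high-order bound $(1+t)\|U(t)\|_{X_a^M} \lesssim \cN_U(0) + \cM_{\tilde c,\gamma}(0) + \cN(T)^2$, a direct application of the semigroup at order $M$ would lose derivatives because of the quasilinear coefficients of $L_{c,\gamma}$. I therefore split $U = P_{\leq 2K} U + \tilde\chi_K(D) U$. The low-frequency part obeys $\|P_{\leq 2K}U\|_{X_a^M} \lesssim K^M \|U\|_{X_a^0}$ and inherits the decay from the base step. For the high-frequency part I perform a weighted $H_a^M$-energy estimate on $\tilde\chi_K(D) U$, exploiting the damping built into the symmetrized version of \eqref{Linear-1}, to reach exactly \eqref{energy-weighted-intro}. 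The crucial algebraic point is that $\|Q_{c,\gamma}\|_{W^{M+1,\infty}}\lesssim \ep^2$ while the damping rate is $a\kappa_0 \sim \ep$, so the term involving the background wave is absorbed by the damping; the rough coefficient $\|\nabla(n,u)\|_{W^{1,\infty}}$ is controlled by $\delta\ll 1$ thanks to the bootstrap hypothesis $\cN(T)\leq \delta$. Gronwall then closes the high-frequency piece after inserting the $X_a^0$ decay and $\|R(s)\|_{X_a^M}\lesssim (1+s)^{-1}$.

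The weighted and $\nabla_y$-differentiated components of $\cN_U$ are handled by applying the same two-step scheme to the equations for $yU$, $\nabla_y U$ and $y\nabla_y^2 U$. The commutator $[y,L_{c_0}]$ produces transverse lower-order terms absorbed at the previous order, while $\nabla_y$ commutes with $L_{c_0}$; the source terms enjoy the faster rates \eqref{es-yptc}--\eqref{es-nablayptc}, producing the improved time weights $(1+t)^{1/2}$ and $(1+t)^{3/2}$. The main obstacle is the careful bookkeeping in the high-frequency energy step: one must verify that the commutators $[\tilde\chi_K(D), L_{c,\gamma}]$ and the derivative losses in $N$ at order $M$ are both dominated by the damping $a\kappa_0$. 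This works precisely because $a=\hat a\ep$ sits above the $\ep^2$ amplitude of the solitary wave but below the spectral width $\sqrt{3}\ep/4$, a delicate balance analogous to the high-frequency damping mechanism exploited for partially-dissipative hyperbolic systems in \cite{Mascia-Zumbrum,Miguel-zumbrum,Miguel-Faye}.
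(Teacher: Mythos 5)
Your proposal follows essentially the same route as the paper: a Duhamel/semigroup argument at the level of $X_a^0$ (using the orthogonality condition \eqref{secularcondtion} to make the projected part $\mathbb{P}_{c_0}^a U$ harmless), combined with a high-frequency weighted energy estimate in $X_a^M$ in which the $\cO(\ep^2)$ background-wave coefficients are absorbed by the damping $a\kappa_0\sim\ep$, and the same two-tier scheme applied to $yU$, $\nabla_y U$, $y\nabla_y^2 U$ with the improved source decay \eqref{es-yptc}--\eqref{es-nablayptc}. The only points you gloss over—obtaining the Lipschitz bound on $\tilde u$ through the corrected unknown $(\tilde v,w)$ rather than from $\cN_U$ alone (the paper's \eqref{Lip-tildeu}), and bounding $\mathbb{P}_{c_0}^a(yU)$, $\mathbb{P}_{c_0}^a(\nabla_y U)$ via the differentiated orthogonality condition—are exactly how the paper completes the same argument, so there is no genuine gap.
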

\begin{proof}
We will provide a detailed analysis of the estimate for the first quantity $(1+t)\|U(t)\|_{X_a^{M}}$ and then outline the control of the remaining quantities. The proof is split into the following two steps: 

 $\bullet$ Step 1, Low frequency estimate--control of zeroth order weighted norm $X_a^{0}$ by using semigroup estimates,

 $\bullet$ Step 2, High frequency high order estimates using energy estimates. \\[3pt]
Indeed, for low frequencies, we can readily use  the semigroup estimate \eqref{semigroup} for the linearized operator $L_{c_0}$. However, to avoid derivative losses at high frequencies, we need to perform  energy estimates relying on  appropriate energy functionals in order  to capture the stronger damping effect of the constant coefficient part of the system.

 $\bullet$ Step 1, Low frequency estimate.
 By \eqref{eq-U}, $U$ solves the equation
 \begin{align}\label{eqU-rewrite}
 (\pt -L_{c_0})U=\big(L_{c,\gamma}-L_{c_0}-\tilde{c}\p_z\big) U+ R+ N 
 \end{align}
where $L_{c,\gamma}, R, U$ are defined in \eqref{def-Lcgamma}-\eqref{def-NLterm}. 
To estimate the right hand side, we have at first that  
\begin{align*}
  \|(L_{c,\gamma}-L_{c_0}-\tilde{c}\p_z\big) U\|_{X_a^0}  \lesssim \|(\tilde{c},\gamma)\|_{L^{\infty}}\|U\|_{X_a^1}\lesssim (1+t)^{-\f32} \cN_{\tilde{c},\gamma}(T) \cN_U(T).
\end{align*}
Next, 
we deal with the term $R,$ for which we refer to
\eqref{expression-R1}, \eqref{expression-R2} for the definition (recall that $\tilde{R}(t,z,y)=R(t,z+\gamma,y)$). 
Since $(\gamma, \tilde{c})$ has  low frequency support, we have, by using the definition of $\cN_{\tilde{c},\gamma}(T)$ in \eqref{def-decays-modu} and the estimate \eqref{es-ptc} that
\beq\label{es-R-XaM}
  \|R\|_{X_a^{M}}\lesssim \|(\pt \gamma-\tilde{c}, \tilde{c})\|_{L_y^2}+\||\na_y (\tilde{c},\gamma)|^2\|_{L_y^2}\lesssim (1+t)^{-1} 
 \big(\cM_{\tilde{c},\gamma}(0)+\cN(T)^2\big).
 \eeq
 Next, it follows from the definition \eqref{def-NLterm} and the expansion \eqref{expansion-G} that
 \begin{align*}
     \|N\|_{X_a^0}\lesssim  \big(\|\na_y(\gamma, \tilde{c})\|_{L_y^{\infty}}+\|\na (n,\na\psi)\|_{L_{z_1,y}^{\infty}}\big) \|U\|_{X_a^1}+\|G(\phi,\phi_{c,\gamma})\|_{X_a^0}\lesssim (1+t)^{-2}\cN(T)^2.\,
 \end{align*}
Applying the semigroup estimate \eqref{semigroup} for $L_{c_0}$ while setting  $\alpha_{0}=\beta\,\ep^3,$ we obtain that
\begin{align*}
   \|\mathbb{Q}_{c_0}^a  U(t)\|_{X_a^0}&\lesssim e^{-\alpha_0 t} \|\mathbb{Q}_{c_0}^a U(0)\|_{X_a^0}+
\int_0^t e^{-\alpha_0 (t-s)} (1+s)^{-1} \big(\cM_{\tilde{c},\gamma}(0)+\cN^2(s)\big)\, \d s\\
&\lesssim e^{-\alpha_0 t} \|U(0)\|_{X_a^0}+\alpha_0^{-1}(1+t)^{-1}\big(\cM_{\tilde{c},\gamma}(0)+\cN(T)^2\big). 
\end{align*} 
Finally, we derive from the orthogonality condition \eqref{secularcondtion} that 
\beqs 
 \|\mathbb{P}_{c_0}^a  U(t)\|_{X_a^0}\lesssim \|(\tilde{c}, \gamma)(t)\|_{L^2}
\|U(t)\|_{X_a^0}
 \lesssim  \cN_{\tilde{c},\gamma}
 (T)\|U(t)\|_{X_a^0}\leq  \f12  \|U(t)\|_{X_a^0}. 
\eeqs
 as long as $ \cN_{\tilde{c},\gamma}(T)\leq \delta$ is small enough. Consequently, we derive that 
\beq\label{UXa0}
 \|U(t)\|_{X_a^0}\lesssim 2 \|\mathbb{Q}_{c_0}^a  U(t)\|_{X_a^0}\lesssim e^{-\alpha_0 t} \|U(0)\|_{X_a^0}+\alpha_0^{-1}(1+t)^{-1}\big(\cM_{\tilde{c},\gamma}(0)+\cN(T)^2\big).
\eeq

 $\bullet$ Step 2. High order weighted norms.  
 Let us set  $U=(n,\psi)^t$ and  $u=\na\psi,$ then by \eqref{eq-U}, $(n_a, u_a)^t=\colon e^{az}(n, u)^t$ solves
 \begin{align*}
 \left\{
 \begin{array}{l}
    \pt n_a-c_0\, (\p_z-a) n_a+ \div_a\big( \rho \,u_a+ n_a\na\psi_{c,\gamma} \big)=R_a^1,\\[3pt]
       \pt u_a-c_0 \, (\p_z-a) u_a+\tilde{u}\cdot\na_a u_a+u_a\cdot \na \na\psi_{c,\gamma}+e^{az}\na \big(h(\rho)-h(1+n_{c,\gamma})+G(\phi,\phi_{c,\gamma})\big)=\na_a R_a^2, 
 \end{array}\right.
 \end{align*}
     where 
     \beq\label{somenotations}
     R_a=e^{az}R,\quad  \rho= 1+n_{c,\gamma}+n, \quad  \tilde{u}=u+\na\psi_{c,\gamma}, \quad \na_a=(\p_z-a,\na_y)^t,\quad \div_a F=\na_a \cdot F.
     \eeq

 For $K$ large enough, we introduce a cut-off function $\tilde{\chi}_{_K}: \mR^3\ni (\xi,\zeta)\rightarrow \tilde{\chi}_{_K}(\xi,\zeta)\in  \mR,$ which vanishes on $B_K(0)$ and equals to $1$ on $B_{2K}^c(0).$ Define $\tilde{\chi}_{_{K,M}}=:\tilde{\chi}_{_K} (\xi^2+\zeta^2)^{{M}/{2}}.$ Applying  $\tilde{\chi}_{_{K,M}}$ on  the above system  for $(n_a, u_a)^t,$ we find that $(\dot{n}_a, \dot{u}_a)^t=\colon \tilde{\chi}_{_{K,M}}(n_a, u_a)^t$ solves the following system
 \beq\label{eqdotnua}
 \begin{aligned}
\pt \left( \begin{array}{c}    \dot{n}_a \\ \dot{u}_a  \end{array}\right)
 +
   \left( \begin{array}{cc}
   - c_0\, (\p_z-a)+
   \tilde{u}\cdot\na_a & \div_a\big( \rho \,\cdot )\,   \\ 
 \na_a \sigma_a^2(D) +\big( h'(\rho)-h'(1)\big)\na_a
  & -c_0\, (\p_z-a)+ \tilde{u}\cdot\na_a
   \end{array}
   \right) 
    \left(
    \begin{array}{c}
        \dot{n}_a \\
    \dot{u}_a
    \end{array}\right)=\cC\\[3pt]
 \end{aligned}
 \eeq
 where $\sigma_a(D):=\sqrt{h'(1)+(\Id-\Delta_a)^{-1}}$ 
 and
 $\cC=(\cC^1, \cC')^t=:\sum_{j=0}^3 \cC_j$ with
 \beq\label{def-cc}
 \begin{aligned}
  &   \cC_0=\tilde{\chi}_{_{K,M}} \left(
    \begin{array}{c}
        R_a^1 \\[3pt]
    \na_a R_a^2
    \end{array}\right), \quad \cC_1= \tilde{\chi}_{_{K,M}} \left(
    \begin{array}{c}
    n_a\,\Delta \psi_{c,\gamma} \\[3pt]
     u_a \cdot \na \na \psi_{c,\gamma}
    \end{array}\right), \\
  &  \cC_2=\tilde{\chi}_{_{K,M}} \left(
    \begin{array}{c}
        0 \\
     e^{az} \bigg(\na G(\phi,\phi_{c,\gamma}) -(h'(\rho)-h'(1+n_{c,\gamma}))\na n_{c,\gamma} \bigg)
    \end{array}\right), \\
   & \cC_3= \left(
    \begin{array}{c}
  \div_a \big([\tilde{\chi}_{_{K,M}}, n_{c,\gamma}] u_a\big)+[\tilde{\chi}_{_{K,M}}, 
   \tilde{u}]\na_a n_a+[\tilde{\chi}_{_{K,M}}, n]\div_a u_a+\na n\cdot \dot{u}_a\\[3pt]
 \, [\tilde{\chi}_{_{K,M}},  \tilde{u}]\na_a u_a  
    \end{array}\right) .
 \end{aligned}
 \eeq
Let us define the  energy functional $$\cE(\dot{n}_a,  \dot{u}_a):=\f12 \int |\sigma_a(D)\dot{n}_a|^2+ \big(h'(\rho)-h'(1)\big)|\dot{n}_a|^2+ \rho |\dot{u}_a|^2\, \d z \d y.$$
 We find by using the equation \eqref{eqdotnua} the following energy identity
 \begin{align}\label{def-cJ15}
    \pt \,\cE(\dot{n}_a,  \dot{u}_a)=\cJ_1+\cdots \cJ_5\, ,
\end{align}
where
\begin{align*}
    \cJ_1&=-2a c_0 \,\cE(\dot{n}_a,  \dot{u}_a)- \,\Re \int 2a\,\big(\sigma_a^2(D)\dot{n}_a+\big(h'(\rho)-h'(1)\big)\dot{n}_a\big) \cdot \rho \,\dot{u}_a \\
    & \qquad \qquad \qquad \qquad \qquad \qquad\, +\overline{\sigma_a(D)\dot{n}_a} \big(\sigma_a(D)-\sigma_{-a}(D)\big)\, \div_a (\rho \,\dot{u}_a) \,\d z \d y\, ,
   \\
    \cJ_2&= \,  \Re \int \overline{\sigma_a(D)\dot{n}_a} \,\,[\sigma_a, \tilde{u}]\na_a \dot{n}_a  \, \d z \d y\,,\\
\cJ_3&=\f{1}{2} \int (\div \tilde{u} +2a\, \tilde{u} )\,|\sigma_a(D) \dot{n}_a|^2+\bigg(\div\big((h'(\rho)-h'(1))\tilde{u}\big)+2a (h'(\rho)-h'(1))\tilde{u}-c_0\p_z h'(\rho)\bigg)|\dot{n}_a|^2\\
&\qquad \qquad \qquad \qquad\qquad \qquad+\big(\div\big(\rho\,\tilde{u}\big)+2a \rho\,\tilde{u}-c_0\p_z\rho\big)\,|\dot{u}_a|^2  \,\d z \d y\,, \\
\cJ_4&=\, \Re\int 
\big(\sigma_a^2(D)\dot{n}_a+\big(h'(\rho)-h'(1)\big)\dot{n}_a\big) \div (\rho \,\dot{u}_a )+\rho \,\dot{u}_a \cdot \big(\na \sigma_a^2(D)\dot{n}_a+\big(h'(\rho)-h'(1)\big)\na\dot{n}_a\big)\,\d z \d y\, ,\\
\cJ_5&= \Re \int
\bigg[\overline{\sigma_a(D) \dot{n}_a}\, \sigma_a(D) \cdot+ (h'(\rho)-h'(1)){\dot{n}_a}\cdot\bigg] \cC^1+ \rho \,\dot{u}_a \cdot \cC' \,\,\d z \d y\,.
\end{align*}

We now control $\cJ_1-\cJ_5$ term by term. As in the study of resolvent estimate for the linearized equation in  Section \ref{subsection-Ur}, we expect to detect some damping effects from the term $\cJ_1$ while the remaining terms can be considered as  remainders.  At first, by the Cauchy-Schwarz inequality, we have 
\beqs
\cJ_1\leq -2 a \cE(\dot{n}_a,  \dot{u}_a)\bigg(c_0-\sup_{\mathrm{x}\in \mR^3,\, (\xi,\zeta)\in B_{K}^c(0)}
\sqrt{\rho(\mathrm{x})}\,\big|\sigma_a(\xi,\zeta)\big|+\sqrt{\rho(x)(h'(\rho)-h'(1))}+a^{-1}\big|\Im \sigma_a (\xi,\zeta)\big|\big|(\xi+ia,\zeta)\big|\bigg). 
\eeqs
Since $\|n_{c,\gamma}\|_{L_{\mathrm{x}}^{\infty}}=\cO(\ep^2), \|n\|_{L_{\mathrm{x}}^{\infty}}\leq \cN_U(T)\leq \delta,$ it holds that
\beqs 
\sqrt{\rho(\mathrm{x})}\leq 1+\|n_{c,\gamma}\|_{L_{\mathrm{x}}^{\infty}}+\|n\|_{L_{\mathrm{x}}^{\infty}}\leq 1+C\ep^2+\delta, \quad 
\sqrt{h'(\rho)-h'(1)}=\cO(\delta+\ep^2). 
\eeqs
Moreover, let $J=(1+\xi^2-a^2+|\zeta|^2)^2+4a^2\xi^2.$ Choosing $K$ sufficiently large such that for any $|(\xi,\zeta)|\geq K,$
\beqs 
    \big|\sigma_a(\xi,\zeta)\big|\leq \sqrt{h'(1)+J}\leq \sqrt{h'(1)+{1}/{20}\,},\quad a^{-1}\big|\Im \sigma_a(\xi,\zeta)\big|\big|(\xi+ia,\zeta)\big|\leq\f{|\xi|\big|(\xi+ia,\zeta)\big|}{J\Re\sigma_a}\leq \f{1}{40 \sqrt{h'(1)}},
\eeqs
it then holds that 
\begin{align*}
    \cJ_1\leq  -2 a \cE(\dot{n}_a,  \dot{u}_a)\bigg(c_0-\sqrt{h'(1)+1/10}+\cO(\delta+\ep^2) \bigg).
\end{align*}
From Theorem \ref{thm-existence}, we have  $c_0>\sqrt{h'(1)+1}.$ Therefore, by choosing $\delta$ and $\ep$ small enough,  we can find a uniform  constant $\kappa_0>0$ 
that is independent of $\delta, \ep,$ such that 
\begin{align}\label{es-cJ1}
    \cJ_1 \leq -2\,a \,\kappa_0\, \cE(\dot{n}_a,  \dot{u}_a)\,.
\end{align}
It remains to control the other terms $\cJ_2-\cJ_5.$ 
To continue, it is useful to have an estimate for $\|\tilde{u}(t)\|_{L_t^{\infty}W_{\mathrm{x}}^{1,\infty}},$ which, however, cannot be obtained from the Sobolev embedding since we do not expect the boundedness of $u$ in the usual Sobolev space.
 However, by the definition \eqref{somenotations}
 and relation \eqref{identiy-import-intro}, we can write further
$$\tilde{u}_1=u_1+\psi_{c}'(z_1)=\tilde{v}_1+w_1+\psi_{c}'(z_1), \qquad \tilde{u}_j=\tilde{v}_j+w_j-\psi_c'(z_1)\p_{y_{j-1}}\gamma, \,\, (j=2,3),$$
where $\tilde{v}=\na\tilde{\vp}$ and $\tilde{\vp}, w$ are defined in \eqref{def-tvp-intro},  \eqref{def-w-intro}. 
From these relations we obtain that 
\begin{align}\label{Lip-tildeu}
\|\tilde{u}(t)\|_{W_{\mathrm{x}}^{1,\infty}}\lesssim \|\na (\tilde{v}, w)(t)\|_{H^2}+\|\psi_{c}'\|_{L_{\mathrm{x}}^{\infty}}(1+\|\na_y\gamma(t)\|_{L_y^{\infty}})\lesssim \cN(T)+\ep^2\lesssim (\delta+\ep^2).
\end{align}
We are now in position to estimate $\cJ_2-\cJ_5.$
Applying the commutator estimate \eqref{commutator-crude} and  \eqref{Lip-tildeu},  we get that
\beq \label{es-cJ2}
|\cJ_2|\lesssim \|\nabla \tilde{u}\|_{L_{\mathrm{x}}^{\infty}} \|\dot{n}_a\|_{L^2}^2 \leq  (\ep^2+\delta)\|\dot{n}_a\|_{L^2}^2.
\eeq
Moreover, in view of the expression of $\cJ_3,$
it holds that 
\beq\label{es-cJ3}
|\cJ_3|\lesssim \big(\| \tilde{u}\|_{W_{\mathrm{x}}^{1,\infty}}\big(1+\|(n_{c,\gamma}+n)\|_{W_{\mathrm{x}}^{1,\infty}}\big) +\|(n_{c,\gamma}+n)\|_{W_{\mathrm{x}}^{1,\infty}}\big)  \cE(\dot{n}_a,  \dot{u}_a)\lesssim (\ep^2+\delta)\cE(\dot{n}_a,  \dot{u}_a).
\eeq
Next, integration by parts and the Cauchy-Schwarz inequality yield that 
\beqs 
|\cJ_4|\lesssim \|(n_{c,\gamma}+n)\|_{W_{\mathrm{x}}^{1,\infty}}(\|\dot{n}_a\|_{L^2}\|\dot{u}_a\|_{L^2} )\lesssim (\ep^2+\delta)\cE(\dot{n}_a,  \dot{u}_a).
\eeqs
Finally, the last term $\cJ_5$  can be bounded by
$
\sqrt{\cE(\dot{n}_a,  \dot{u}_a)} \,\|\cC\|_{L^2},$ we are thus left to estimate $\|\cC_j\|_{L^2}$ for $j=0,1,2,3$ which are defined in \eqref{def-cc}. 
First, thanks to \eqref{es-R-XaM}, it holds that for any $t\in [0,T],$
\beq\label{es-cC0}
\|\,\cC_0(t)\|_{L^2}\lesssim \|R\|_{X_a^M}\lesssim (1+t)^{-1}\big(\cM_{\tilde{c},\gamma}(0)+\cN(T)^2\big).
\eeq
The next term $\cC_1$ can be controlled roughly by 
\begin{align}\label{es-cC1}
    \|\cC_1(t)\|_{L^2}\lesssim \|(n_a, u_a)\|_{H^M} \|\nabla\nabla \psi_{c,\gamma}\|_{W^{M,\infty}}.
\end{align}
Moreover, by the expansion \eqref{expansion-G} and the fact that  $I_{\phi_{c,\gamma}}$ maps $L_a^2$ to $H_a^2,$ we have 
\beq\label{es-cC2}
\begin{aligned}
    \|\cC_2(t)\|_{L^2}&\lesssim \|G(\phi, \phi_{c,\gamma} )\|_{H_a^{M+1}}+\|n\|_{H_a^M}\|\na n_{c,\gamma}\|_{W^{M,\infty}}\\
    & \lesssim \|n_a\|_{H^{M}} \big(\|n\|_{W^{[\f{M}{2}],+\infty}} + \|\na n_{c,\gamma}\|_{W^{M,\infty}} \big)+ (1+t)^{-\f32} \cN(T)^2.
\end{aligned}
\eeq
Finally, thanks to the commutator estimate \eqref{commutator-clasical} and the Sobolev embedding, we obtain
\begin{align*}
    \|[\tilde{\chi}_{_{K,M}},  \tilde{u}]\na_a f\|_{L^2}\lesssim \big(\|\na\psi'_{c,\gamma}\|_{W^{M-1,\infty}}+\|\na(u_1, \tilde{u}_2,\tilde{u}_3 )\|_{H^{M-1}}\big)\|f\|_{H^M}.
\end{align*}
We thus get for any $M\geq 3,$
\begin{align}\label{es-cC3}
     \|\cC_3(t)\|_{L^2}&\lesssim \big(\|\na (n_{c,\gamma}, \psi'_{c,\gamma})\|_{W^{M,\infty}}+\|\na (n, u_1, \tilde{u}_2,\tilde{u}_3)\|_{H^{M-1}}
     \big)\|(u_a,n_a)\|_{H^M}.
\end{align}
Combining \eqref{es-cC0}-\eqref{es-cC3}, we find that for $M\geq 3,$
\begin{align*}
\|\cC(t)\|_{L^2}&\lesssim \big(\|\na (n_{c,\gamma}, \psi'_{c,\gamma})\|_{W^{M+1,\infty}}+\|\na (n, u_1, \tilde{u}_2,\tilde{u}_3)\|_{H^{M-1}}\big)\|(n_a, u_a)\|_{H^M}+ (1+t)^{-1}\cN(T)^2\\
&\lesssim (\ep^2+\delta)\big(\|(n_a, u_a)\|_{L^2}+\sqrt{\cE(\dot{n}_a,  \dot{u}_a)} \big)+(1+t)^{-1}\cN(T)^2\, .
\end{align*}
This, together with \eqref{es-cJ1}-\eqref{es-cJ2}, yields that
\beqs 
\sum_{k=1}^5 \cJ_k \leq -\big(2a \kappa_0- C(\ep^2+\delta)\big)\cE(\dot{n}_a,  \dot{u}_a)+C\big(\|U\|_{X_a^0}+(1+t)^{-1}\cN(T)^2\big)\sqrt{\cE(\dot{n}_a,  \dot{u}_a)}\,.
\eeqs
Recall that the constant $a,$  which appears in the definition of the weighted space, is chosen to be of order  $\cO(\ep)$ by Theorem \ref{thm-resol-L}. Consequently, we find, by choosing $\ep$ and $\delta$ small enough that
\beqs 
\pt \sqrt{\cE(\dot{n}_a,  \dot{u}_a)}+ \f{a \kappa_0}{2}\,\sqrt{\cE(\dot{n}_a,  \dot{u}_a)}\lesssim \|U\|_{X_a^0}
+(1+t)^{-1}\cN(T)^2.
\eeqs
Using the Gr\"onwall inequality, we achieve that 
\begin{align*}
\|\tilde{\chi}_{_{K}}(D) U(t)\|_{X_a^M}\approx \sqrt{\cE(\dot{n}_a(t),  \dot{u}_a(t))}&\lesssim e^{-\f{a\kappa_0}{2} t}  \|U(0)\|_{X_a^M}+a^{-1}\big(\sup_{0\leq s\leq t}\|U(s)\|_{X_a^0}+(1+t)^{-1}\cN(T)^2\big). 
\end{align*}
This, together with \eqref{UXa0}, leads to 
\beqs 
     \|U(t)\|_{X_a^M}\lesssim e^{-\min\{\f{a\kappa_0}{2}, \alpha_{0}\}t} \|U(0)\|_{X_a^M}+ (a\alpha_0)^{-1}(1+t)^{-1}\big(\cM_{\tilde{c},\gamma}(0)+\cN(T)^2\big).
\eeqs
This ends the proof of  the estimate of $ \|U(t)\|_{X_a^M}.$ 

In the following, we sketch the estimates of $ \|y U(t)\|_{X_a^M}, \, \|\nabla_y U(t)\|_{X_a^{M-1}}, \, \|y\nabla_y^2 U(t)\|_{X_a^{M-1}},$ whose decay essentially depend on those of the source terms $y R,$ $\partial_y R, y\p_y^2 R$ in the weighted space.
As in the estimate of  $ \|U(t)\|_{X_a^M},$ we need to derive the equations for $yU, \nabla_y U$ and $ y\nabla^2 U, $ then use  the semigroup estimate \eqref{semigroup} for low order  norms, while performing  energy estimates for high-frequencies and the high order norm. Since the high frequency estimates would be very similar to that of $U(t),$ we focus 
on the estimate in the weighted  energy space.

First, by using the equation \eqref{eqU-rewrite}, we obtain
\begin{align*}\label{}
& (\pt -L_{c_0})(y_jU)=\left(\begin{array}{c}
      \big(1+n_{c_0}\big)\, \p_{y_j}\psi  \\
       2 I_{\phi_{c_0}}\p_{y_j} n
 \end{array} \right)+y_j\big(L_{c,\gamma}-L_{c_0}-\tilde{c}\p_z\big) U+y_j R+ y_j N , \\
 &(\pt -L_{c_0})(\nabla_{y}U)=\na_y \big(L_{c,\gamma}-L_{c_0}-\tilde{c}\p_z\big) U+\na_y R+ \na_y N .
 \end{align*}
Again, since $I_{\phi_{c_0}}$ maps $L_a^2$ to $H_a^2,$ 
it holds  that 
\begin{align*}
\bigg\|\left(
    \begin{array}{c}
      \big(1+n_{c_0}\big)\, \p_{y_j}\psi  \\
       2 I_{\phi_{c_0}}\p_{y_j} n
 \end{array} \right)\bigg\|_{X_a^0}\lesssim \|U\|_{X_a^0}\lesssim (1+t)^{-1} \big(\|U(0)\|_{X_a^0}+\cN(T)^2\big).
\end{align*}
Moreover, by using the definition of the norm $\cN(T)$, we have the bounds
\begin{align*}
  & \| y \big(L_{c,\gamma}-L_{c_0}-\tilde{c}\p_z\big) U\|_{X_a^0}\lesssim \|\big(\gamma, \tilde{c}\big)\|_{L_y^{\infty}}\big(\|y U\|_{X_a^0}+\|U\|_{X_a^0}\big)\lesssim (1+t)^{-1} \cN(T)^2, \\
  &  \| \na_y \big(L_{c,\gamma}-L_{c_0}-\tilde{c}\p_z\big) U\|_{X_a^0}\lesssim \|\big(\gamma, \tilde{c}\big)\|_{L_y^{\infty}}\|\na_y U\|_{X_a^1}+\|\na_y\big(\gamma, \tilde{c}\big)\|_{L_y^{\infty}}\|U\|_{X_a^1}\lesssim (1+t)^{-2} \cN(T)^2\, .
\end{align*}
Next, thanks to \eqref{es-yptc}, \eqref{es-nablayptc} we have  
\begin{align*}
    \|y R \|_{X_a^0}\lesssim (1+t)^{-\f12} \cN(T)^2, \quad 
     \|\na_y R \|_{X_a^0}\lesssim (1+t)^{-\f32} \cN(T)^2.
\end{align*}
Finally, from the same arguments as in the estimate of  $N,$ it holds that 
\begin{align*}
    \|y N\|_{X_a^0}\lesssim  (1+t)^{-\f32} \cN(T)^2,\quad \|\na_y N\|_{X_a^0}\lesssim  (1+t)^{-2} \cN(T)^2.
\end{align*}
The above estimates, in conjugation with the semigroup estimate \eqref{semigroup}, enable us to find that
\beq\label{es-Qc0U}
\begin{aligned}
    & \|\mathbb{Q}_{c_0}^a  (y U)(t)\|_{X_a^0}\lesssim e^{-\alpha_0 t} \|\mathbb{Q}_{c_0}^a (y U)(0)\|_{X_a^0}
+(1+t)^{-\f12}\cN(T)^2, \\
&\|\mathbb{Q}_{c_0}^a  \na_y U(t)\|_{X_a^0}\lesssim e^{-\alpha_0 t} \|\mathbb{Q}_{c_0}^a \na_y U(0)\|_{X_a^0}+(1+t)^{-\f32}\cN(T)^2.
\end{aligned}
\eeq
It now remains to  derive estimates for  $\mathbb{P}_{c_0}^a  (y U), \mathbb{P}_{c_0}^a(\na_y U).$ 
Let us set  $$C_{k}[f](\zeta)=\int_{\mR^3} f(z,y) e^{-i y\cdot \zeta} g_{k}^{*}(z, |\zeta|, c_0) \, \d z \d y .$$
Applying  $D_{\zeta}=\f{1}{i}{\p_{\zeta}}$ on the orthogonality condition \eqref{secularcondtion}, we find that for any $|\zeta|\leq \eta_0,$
\begin{align*}
   & C_k[y U](\zeta)= \int_{\mR^3} U (t, z, y) (D_{\zeta} g_k^{*})\big(z-\gamma, |\zeta|, c(t,y)\big) e^{-iy\cdot\zeta}\, \d z \d y \\
    &-\int_{\mR^3} yU(t, z, y)\bigg( \gamma(y)\int_0^1 \p_zg_{k}^{*}(z-\theta\gamma,|\zeta|,c)\d \theta +\tilde{c}(y)\f{g_{k}^{*}(z-\gamma,|\zeta|,c)-g_{k}^{*}(z-\gamma,|\zeta|,c_0)}{\tilde{c}}\bigg)e^{-iy\cdot\zeta} \d z \d y.
\end{align*}
Using the evenness of $g_{k}^{*}$ in $\eta,$ and the Parseval identity,  we find 
\begin{align*}
 \|\mathbb{P}_{c_0}^a  (y U) \|_{X_a}\lesssim   \| C_k[y U](\zeta)\|_{L_{\zeta}^2(B_0(\eta_0))}&\lesssim |\eta_0| \|U\|_{X_a}(1+\|(\tilde{c},\gamma)\|_{L_y^2})+\|yU\|_{X_a} \|(\tilde{c}, \gamma) \|_{L^2}\\
 &\leq \f{1}{2}\big(\|U\|_{X_a}+\|y U\|_{X_a}\big)
\end{align*}
as long as $\eta_0$  and $\cN_{\tilde{c},\gamma}(T)\leq \delta$ are small enough. Similarly, by multiplying by  $i\zeta$  the orthogonality conditions \eqref{secularcondtion}, we also obtain  by assuming that  $\cN_{\tilde{c},\gamma}(T)$ is  small enough that 
\begin{align*}
     \|\mathbb{P}_{c_0}^a  (\na_y U) \|_{X_a}\lesssim   \| C_k[\na_y U](\zeta)\|_{L_{\zeta}^2(B_0(\eta_0))}&\lesssim  \|U\|_{X_a}\|\na_y (\tilde{c},\gamma)\|_{L_y^2}+\|\na_y U\|_{X_a} \|(\tilde{c}, \gamma) \|_{L^2}\\
 &\leq \f{1}{2}\big(\|U\|_{X_a}+\|\na_y U\|_{X_a}\big).
\end{align*}
We thus deduce that $ \|(y U) \|_{X_a}\lesssim \|\mathbb{Q}_{c_0}^a  (y U)\|_{X_a}+\|U\|_{X_a},\, \|\na_y U \|_{X_a}\lesssim \|\mathbb{Q}_{c_0}^a  \na_y U\|_{X_a}+\|U\|_{X_a}. $  Combined with \eqref{es-Qc0U}, this yields  the desired estimates for $yU$ and $\na_y U.$
\end{proof}

\section{Energy estimates in the usual Sobolev space--Further decomposition}\label{energyes}
As mentioned in the introduction, the quantity 
$U$ cannot be bounded in the usual Sobolev space 
$H^M$ because of the nontrivial dependence of  $\psi_c(\cdot)$ on
$c$ and the fact that $\psi_{c}$ does not tend to zero at $-\infty$.  Indeed, the equation for  $\na_y\psi$ involves a source term $c_t \na_y c\, \p_c^2 \psi_c,$ which unfortunately does not belong to the usual Sobolev space $H^M$ since $\p_c^2\psi_c$ does not vanish at $-\infty.$  To solve the problem, we need to perform 
a  decomposition different from \eqref{decomp}. As proposed in the introduction,
we write 
\begin{align*}
 \Psi(t,x, y)=\left(\begin{array}{c}
          n^i \\
          \psi^i
 \end{array}\right)= \left(\begin{array}{c}
          n_{c(t,y)} \big(z-\gamma(t,y)\big)  \\[3pt]
          \underline{\psi_c}(t, z, y)
 \end{array}\right)+\left(\begin{array}{c}
          \varrho \\
          \vp
 \end{array}\right)(t,x,y), \quad (z=x-c_0t),
\end{align*}
where $ \underline{\psi_c}(t, z, y)=\colon \psi_{c_0}\big(z-\gamma(t,y)\big)+\widetilde{\psi}(t,z,y),$ $\widetilde{\psi}$ being defined in the following way.
Consider a divergence-free vector function 
\beq \label{def-w}
w=-\curl (-\Delta)^{-1}\curl \bigg( \psi_{c}'(\cdot-\gamma)\left(\begin{array}{c}
          1 \\
    -\na_y\gamma
 \end{array}\right)\bigg).
\eeq 
From this choice, we immediately get   that $ (\psi_{c}'-\psi_{c_0}')(\cdot-\gamma)(1,-\na_y\gamma)^t+w$ is curl-free and  belongs to $C([0,T],L^2(\mR^3)).$ Therefore, one can find $\widetilde{\psi}\in C([0,T],\dot{H}^1(\mR^3))$ such that  
\begin{align*}
\na \widetilde{\psi}(t, z,y) = (\psi_c'-\psi_{c_0}')(z-\gamma) \left(\begin{array}{c}
          1 \\
    -\na_y\gamma
 \end{array}\right)+w(t,z,y).
\end{align*}
It follows from this definition that
\begin{align*}
 \underline{v_c}(t,z,y) :=  \na \underline{\psi_c}(t, z, y)=\underline{u_c}(t, z-\gamma, y)+w(t,z,y), \quad \text{ with }\, \,\,\underline{u_c}(t,z_1,y)=:\psi_c'(z_1) \left(\begin{array}{c}
          1 \\
    -\na_y\gamma
 \end{array}\right).
\end{align*}
By construction, it also  holds  that 
\begin{align*}
    w(t,z,y)+\na\vp(t, z+c_0t, y )=\na\psi(t, z, y)+\p_c \psi_c(z-\gamma) \left(\begin{array}{c}
          0 \\
    \na_y\, c
 \end{array}\right),
\end{align*}
where $\psi(t, z, y)=\psi^i(t, x, y)-\psi_c(z-\gamma).$
We thus have by applying Propositions \ref{prop-modulation} and \ref{prop-weightednorm} that 
\begin{align}\label{w+v-weighted}
   \| \big(w+\na\vp(\cdot+c_0 t)\big)(t)\|_{H_a^N}\lesssim (1+t)^{-1} \big(\cM(0)+\cN(T)^2+ \cN(T)^4\big), \qquad \forall \,t\in [0,T]. 
\end{align}

Let us set  $z_1=x-c_0t -\gamma.$ By using the profile equations
 $$\big(-c n_c'+\p_x \big((1+n_c)\psi_c'\big)\big)(z_1)=0, \quad 
\big(-c\psi_c'+\f{|\psi_c'|^2}{2}+h(1+n_c)-\phi_c\big)(z_1)=0,$$ we find that $\varrho$ and $ v=\colon \na \vp$ solve
\beq\label{eq-vr-v}
\left\{ 
\begin{array}{l}
  \pt \varrho+\div \big(\rho \, v+\vr\, \underline{v_c}(z) 
  \big) = r_1^0+r_2^0 ,    \\[3pt]
 \pt v+  \big(\underline{v_c}(z)+v\big)\cdot\na v+v\cdot \na \underline{v_c}(z)+ \na \big(h(\rho)-h(1+n_c(z_1))\big)+\na (\phi-\phi_c(z_1))=r_1'+r_2',\\[3pt]
 \Delta \phi=e^{\phi}-\rho,
\end{array}
\right.
\eeq
where $\rho(t, x, y)=1+n_c(z_1)+\varrho(t,x,y)$ and 
\beq\label{def-source-sob}
\begin{aligned}
  &  r_1^0=-\pt c\, (\p_c n_c )(z_1)+(\pt \gamma-\tilde{c}) n_c'(z_1), \quad r_2^0=\div_y (\psi_c'(z_1)\na_y \gamma)-\div \big(n_c(z_1)w(z)\big), \\
  &  r_1'= -(\pt-c_0\p_z)w(z)-\left( \begin{array}{c}
      \pt c\, \p_c \psi_c'(z_1)-(\pt \gamma-\tilde{c})\psi_c''(z_1)    \\[3pt]
          (\pt c\,\p_c \psi_c'(z_1)-\psi_c'\pt)\na_y\gamma+\na_y(c\, \psi_{c}')
    \end{array}\right),\\
& r_2'=-\nabla \bigg((\underline{u_c}\cdot w)(z)+\f{1}{2}|w|^2 (z)+\f12|\psi_c'|^2(z_1)|\na_y\gamma|^2\bigg).
\end{aligned}
\eeq
 With slight abuse of notation, we sometimes denote a four-component vector as $r=(r^0, r')$ where $r'\in \mR^3.$

In this section and the following ones, our aim is to get a priori  estimates for $(\vr, v)$, as depicted by the following quantities:
\beq\label{def-norms-vr-v}
\begin{aligned}
    \cN_{(\vr,v)}&(T):= \sup_{0\leq t\leq T} \bigg((\log(2+t))^{-1}\|(\vr,v)(t)\|_{H^M}+\|\na(\vr,v)(t)\|_{H^{M-1}}\\
   & +(1+t)^{-\vartheta} \|(\vr, v)\|_{\dot{H}^{-\f12}}+(1+t)^{(\f{3}{4}-\f2p-\vartheta)}\|(\vr, v)\|_{\dot{W}^{-1,p}}
 +(1+t)^{(1+\iota)} \big\||\na|^{\f{3}{8_{\kappa,1}}} \nabla(\vr, v)\big\|_{W^{1,8_{\kappa}}}\bigg),
\end{aligned}
\eeq
where $M\geq 12$ and
$\kappa, \iota, \vartheta, 8_{\kappa}, 8_{\kappa,1}$ will be precised in the next section. 
We also define the following norms for the initial data 
\begin{align*}
   \cM(0)=\cN_U(0)+\cM_{\tilde{c},\gamma}(0)+ \cM_{(\vr,v)}(0), 
\end{align*}
where $\cM_{\tilde{c},\gamma}(0)$ is defined in \eqref{def-cMcgamma0} and 
\beqs 
\cM_{(\vr,v)}(0):= \|(\vr, v)(0)\|_{H^{M}\cap \dot{H}^{-1}}+\|(\vr, v)(0)\|_{W^{\f{29}{4},1}}.
\eeqs

In the current section, we focus on the propagation of the energy norm listed in the first line of \eqref{def-norms-vr-v}. 
Before doing so, it is useful to collect some properties for the auxiliary function $w.$ 
\begin{prop}\label{prop-w}
    Let $w$ be defined in \eqref{def-w}.  Assume that $(\tilde{c}, \gamma)$ exists in $C([0,T], Y\times Y).$
   Then for any $t\in [0,T],$ the following estimates hold true: for any $p\in[2,+\infty),$  
    \beq\label{es-w}
      \begin{aligned}
    &(1+t)^{(\f76-\f1p)}\|w\|_{W^{M+1,p}} 
    +(1+t)^{\f53} \|w_1\|_{H^{M+1}}+ (1+t)^{\f76}  \|(\pt,\na_y)\, w(t)\|_{H^M}\\
    &\quad+(1+t)\|\na w(t)\|_{H^M} +(1+t)^{\f32}\|(\pt, \na)(\na_y w, w_1)\|_{H^{M-1}}\lesssim \cN_{\tilde{c},\gamma}(T)+\cN(T)^2. 
 \end{aligned}
 \eeq
\end{prop}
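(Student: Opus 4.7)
The plan is to exploit the structure of $w$ as the divergence-free projection of $f := \psi_c'(z-\gamma)(1,-\nabla_y\gamma)^t$. Since
\begin{equation*}
\nabla\bigl(\psi_c(z-\gamma(t,y))\bigr) = \psi_c'(z-\gamma)(1, -\nabla_y\gamma)^t + (\partial_c\psi_c)(z-\gamma)(0, \nabla_y c)^t
\end{equation*}
is a pure gradient, the Leray projection annihilates it, and $w$ coincides (up to a sign) with the divergence-free part of the second term. Equivalently, a direct chain-rule computation yields
\begin{equation*}
\curl f = (\partial_c\psi_c')(z-\gamma)\,\bigl(\gamma_{y_1}c_{y_2}-\gamma_{y_2}c_{y_1},\; c_{y_2},\; -c_{y_1}\bigr),
\end{equation*}
where $\partial_c\psi_c'$ is exponentially localized in $z$ (with decay at the $\epsilon$-scale) and the $y$-factor is at worst linear in $\nabla_y c$. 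This is the key identity: it encodes that $w$ depends on the modulation parameter $c$ only through $\nabla_y c$, which restores integrability.

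The generic bound $\|w\|_{W^{M+1,p}}\lesssim (1+t)^{-(7/6-1/p)}(\cN_{\tilde c,\gamma}(T)+\cN(T)^2)$ will follow from the representation $w = -\curl(-\Delta)^{-1}\curl f$, which identifies $w$ as a Fourier multiplier of order $-1$ applied to $\curl f$, together with the Hardy--Littlewood--Sobolev inequality on $\mathbb{R}^3$: $\|w\|_{L^p}\lesssim \|\curl f\|_{L^r}$ with $1/p = 1/r - 1/3$. Combining this with the product bound $\|\curl f\|_{L^r}\lesssim \|\partial_c\psi_c'\|_{L^r_z}\|\nabla_y c\|_{L^r_y}$ and the interpolated decay $\|\nabla_y c\|_{L^r_y}\lesssim (1+t)^{-(3/2-1/r)}\cN_{\tilde c,\gamma}(T)$ from \eqref{cor-interpolation}, one verifies that $3/2-1/r = 7/6-1/p$, giving the claimed exponent. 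Higher Sobolev norms are handled by commuting derivatives through the Leray multiplier: derivatives falling on $(\gamma,c)$ are controlled via the higher-$k$ pieces of $\cN_{\tilde c,\gamma}(T)$ (supplemented by auxiliary propagation of $y$-regularity for \eqref{modueq-1} where needed), while derivatives on $\psi_c'$ contribute only $\epsilon$-factors.

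For the sharper first-component bound $\|w_1\|_{H^{M+1}}\lesssim (1+t)^{-5/3}(\cN_{\tilde c,\gamma}+\cN^2)$, the additional gain follows from expanding $-\Delta_y f_1+\partial_z\div_y f_y$ by the chain rule and observing that every term involving $\nabla_y\gamma$ cancels algebraically, leaving the identity
\begin{equation*}
w_1 = (-\Delta)^{-1}\div_y\bigl[\partial_c\psi_c'(z-\gamma)\,\nabla_y c\bigr].
\end{equation*}
Combining this extra $y$-divergence structure with mixed anisotropic bounds that exploit the $L^1_z\cap L^2_z$-localization of $\partial_c\psi_c'$ and the weighted estimates $\|y^\ell\partial_y^k(\tilde c,\nabla_y\gamma)\|_{L^2}$ from \eqref{def-decays-modu} yields the additional $(1+t)^{-1}$ decay beyond the generic $w$-rate. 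The remaining estimates follow the same pattern: applying $\nabla_y$ to $w$ introduces an extra $\partial_y$ on $\nabla_y c$ (gaining $(1+t)^{-1/2}$), while $\partial_t$ is treated via the modulation equation \eqref{modueq-1}, whose first-order-in-$|\nabla_y|$ linear part shows that $\partial_t(\gamma,\tilde c)$ decays like $|\nabla_y|(\gamma,\tilde c)$ at leading order, with quadratic contributions of order $\cN(T)^2$ controlled through \eqref{es-ptc}--\eqref{es-nablayptc}.

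The main technical obstacle will be pinning down the precise $(1+t)^{-5/3}$ rate for $w_1$: direct HLS yields only $(1+t)^{-2/3}$ at $p=2$, matching the generic rate for $w$, so the sharper $w_1$-rate requires carefully exploiting the mixed $L^p_z$--$L^q_y$ structure of $w_1$ together with the higher-$\ell$ weighted norms from $\cN_{\tilde c,\gamma}(T)$. A secondary technical issue is propagating $\partial_y^{M+2}$-regularity of $(\gamma,\tilde c)$ (not explicit in \eqref{def-decays-modu}) needed to estimate the $(M+1)$-st derivative of $w$; this should follow from complementary Sobolev energy estimates applied to \eqref{modueq-1}.
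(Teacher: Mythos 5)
Your treatment of the generic bounds is essentially the paper's own argument: compute $\curl\big(\psi_c'(\cdot-\gamma)(1,-\na_y\gamma)^t\big)=\p_c\psi_c'(\cdot-\gamma)\,(\p_{y_2}c\,\p_{y_1}\gamma-\p_{y_1}c\,\p_{y_2}\gamma,\ \p_{y_2}c,\ -\p_{y_1}c)^t$, view $w$ as an order $-1$ multiplier applied to this, use Hardy--Littlewood--Sobolev, and convert $L^{1/(1/3+1/p)}_y$ norms of $(\na_y c,\ \p_{y_2}c\,\p_{y_1}\gamma)$ into time decay via the (weighted) interpolation behind \eqref{cor-interpolation}; the exponent bookkeeping $3/2-1/r=7/6-1/p$ is correct, and the same scheme handles $(\pt,\na_y)w$. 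Your ``secondary technical issue'' about $\p_y^{M+2}$-regularity of $(\tilde c,\gamma)$ is, however, a non-issue: $(\tilde c,\gamma)\in Y$ have Fourier support in $B(0,2\eta_0)$, so arbitrary $y$-derivatives are controlled by the functions themselves (Bernstein), and no auxiliary energy estimates for \eqref{modueq-1} are needed — this band-limitedness is exactly what lets the paper put $M+1$ derivatives on $w$ for free.

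The genuine gap is the pair of sharpened estimates involving $w_1$: the $(1+t)^{-5/3}$ bound for $\|w_1\|_{H^{M+1}}$ and, downstream of it, the $(1+t)^{-3/2}$ bounds for $(\pt,\na)w_1$ in the last group of \eqref{es-w}. You flag this yourself, but the remedy you propose cannot work. Your identity $w_1=(-\Delta)^{-1}\div_y\big[\p_c\psi_c'(z-\gamma)\na_y c\big]$ is correct, but it still contains the \emph{linear} contribution $(-\Delta)^{-1}\big[\p_c\psi_c'\,\Delta_y\tilde c\big]$, and no anisotropic $L^p_zL^q_y$ juggling or use of the weighted norms in \eqref{def-decays-modu} can push this below roughly $(1+t)^{-3/4}$: since $\int_\mR\p_c\psi_c'\,\d z\neq 0$, a Fourier computation shows this term is of size $\||\na_y|^{1/2}\tilde c\|_{L^2_y}\lesssim\|\tilde c\|_{L^2}^{1/2}\|\na_y\tilde c\|_{L^2}^{1/2}\sim(1+t)^{-3/4}$, and the weighted quantities $\|y\,\p_y^{\ell}(\tilde c,\na_y\gamma)\|_{L^2}$ decay more slowly, not faster, so they cannot supply the missing full power of $t$. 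The paper's proof at this point uses an additional structural input that your argument does not have: it bounds $w_1$ by the purely quadratic component of the curl alone, $\|w_1\|_{H^{M+1}}\lesssim\|\p_{y_2}c\,\p_{y_1}\gamma\|_{L_y^{6/5}}\lesssim\|\na_y\tilde c\|_{L^2}\|\na_y\gamma\|_{L^3}\lesssim(1+t)^{-5/3}$, and similarly for $(\pt,\na)w_1$. Reproducing (or substituting for) this extra cancellation — i.e., showing that the linear $\Delta_y\tilde c$ contribution to the first component can be removed or otherwise controlled at the quadratic rate — is precisely what is missing from your proposal; without it the $w_1$ estimates, and hence the proposition as stated, are not established.
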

\begin{proof}
    We compute 
    \beq \label{comp-curlF}
    \curl \bigg( \psi_{c}'(\cdot-\gamma)\big(1,-\na_y\gamma\big)^t \bigg)=\p_c \psi_c'(\cdot-\gamma)\bigg(\p_{y_2}c\,\p_{y_1}
    \gamma-\p_{y_1}c\,\p_{y_2}\gamma,\p_{y_2}c,-\p_{y_1}c\bigg)^t .
     \eeq 
Since $(\tilde{c},\gamma)(y)$ has compact Fourier support and $\psi_c'$ is smooth, it follows from the definition of $w$ in \eqref{def-w} and an application of the Hardy–Littlewood–Sobolev inequality that 
    \begin{align*}
   &\|w(t)\|_{W^{M+1,p}}  \lesssim \|(\na_y c, \, \p_{y_2}c\,\p_{y_1}
    \gamma)\|_{L_y^{{1}/({\f13+\f1p})}} .
    \end{align*}
Consequently, by applying the interpolation inequalities 
\begin{align*}
&\|f\|_{L_y^{{1}/({\f13+\f1p})}}\lesssim \|f\|_{L_y^2}^{\theta_p}\|yf\|_{L_y^2}^{1-\theta_p}, \qquad \big(2\leq p\leq 6,\, \theta_p=\f43-\f2p\big), \\
&\|f\|_{L_y^{{1}/({\f13+\f1p})}}\lesssim \|f\|_{L_y^2}^{\vartheta_p}\|\na_y f\|_{L_y^2}^{1-\vartheta_p}, \qquad \big(6\leq p< +\infty,\, \vartheta_p=\f23+\f2p\big) ,
\end{align*} 
and using  the definition of $\cN_{\tilde{c},\gamma}(T)$ in \eqref{def-decays-modu}, we obtain that 
\beqs
\|w\|_{W^{M+1,p}}\lesssim (1+t)^{-(\f76-\f1p)} \big(\cN_{\tilde{c},\gamma}(T)+\cN(T)^2\big).
\eeqs
Similarly, we derive from the estimates 
\beqs 
\|w_1(t)\|_{H^{M+1}}  \lesssim \|\p_{y_2}c\,\p_{y_1}  \gamma\|_{L_y^{6/5}}, \quad \|(\pt,\na_y) w(t)\|_{H^{M}}  \lesssim \|(\pt,\na_y) (\na_y c, \, \p_{y_2}c\,\p_{y_1} \gamma)\|_{L_y^{6/5}} \, ,
\eeqs
 the control of $w_1$ and $\pt w, \na_y w.$ 
 Moreover,
it follows from  standard elliptic regularity theory that 
  \beqs 
\|\p_z w\|_{H^{M+1}}\lesssim \|(\na_y c, \, \p_{y_2}c\,\p_{y_1}\gamma)\|_{L^2}\lesssim (1+t)^{-1} \big(\cN_{\tilde{c},\gamma}(T)+\cN(T)^2\big).
\eeqs
The estimates for the remaining terms on the right-hand side of \eqref{es-w}  
follow from  similar arguments, and we omit the details.
\end{proof}

By using the above  proposition and Proposition \ref{prop-modulation}, we get  the following properties for the source terms $r_1=(r_1^0, r_1')^t, r_2=(r_2^0, r_2')^t$ in \eqref{eq-vr-v}.
\begin{cor}\label{cor-sourceterm}
Under the same assumptions as in  Proposition \ref{prop-w}, it holds that
\beq\label{es-r1r2}
(1+t)\|(r_1,r_2)\|_{H^M}+(1+t)^{\f32}\|(\na_y, \pt)(r_1,r_2)\|_{H^{M-1}}\lesssim \cN_{\tilde{c},\gamma}(T)+\cN(T)^2\, .
\eeq
\end{cor}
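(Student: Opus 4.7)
The plan is to handle each term in \eqref{def-source-sob} separately, exploiting two distinct sources of decay: the exponential localization in $z_1$ of the solitary-wave profiles $n_c,\psi_c',\p_c n_c,\p_c\psi_c'$ and their derivatives, which lets us factor the $H^M(\mR^3)$ norm of a profile times a modulation coefficient $m(t,y)$ as an $x$-integral controlled uniformly in $t$ times a $y$-Sobolev norm of $m$; and the Sobolev decay of the corrector $w$ coming from Proposition \ref{prop-w}. All $y$-derivatives that land on $z_1=x-c_0t-\gamma(t,y)$ through the chain rule produce additional factors of $\na_y^k\gamma$, whose $H^M_y$ norm is controlled by $\cN_{\tilde c,\gamma}(T)$ via \eqref{def-decays-modu}.

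First I would treat the ``modulation-only'' contributions in $r_1^0$ and $r_1'$, namely $-\pt c\,(\p_c n_c)(z_1)$, $(\pt\gamma-\tilde c)n_c'(z_1)$, and the analogous terms $\pt c\,\p_c\psi_c'(z_1)-(\pt\gamma-\tilde c)\psi_c''(z_1)$, $(\pt c\,\p_c\psi_c'-\psi_c'\pt)\na_y\gamma$, and $\na_y(c\,\psi_c')$. For a generic exponentially localized profile $f\in H^{M+1}(\mR)$ and modulation coefficient $m(t,y)$, Leibniz and the chain rule yield
\begin{equation*}
\|m(t,y)f(z_1)\|_{H^M_{x,y}}\lesssim \|f\|_{H^{M+1}(\mR)}\bigl(\|m\|_{H^M_y}+\|m\|_{L^\infty_y}\|\na_y\gamma\|_{H^M_y}\bigr).
\end{equation*}
The bound \eqref{es-ptc} together with the definition of $\cN_{\tilde c,\gamma}(T)$ then gives $(1+t)\|r_1^0\|_{H^M}\lesssim \cN_{\tilde c,\gamma}(T)+\cN(T)^2$ and the same for the modulation-only part of $r_1'$; the smallness of $\ep$ ensures that the $H^{M+1}(\mR)$ norms of the profiles are uniformly controlled.

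Next I would handle the terms involving $w$: the piece $\div(n_c(z_1)w(z))$ in $r_2^0$, the piece $-(\pt-c_0\p_z)w$ in $r_1'$, and the quadratic piece $\na((\underline{u_c}\cdot w)(z)+\tfrac12|w|^2(z)+\tfrac12|\psi_c'(z_1)|^2|\na_y\gamma|^2)$ in $r_2'$. The first two are linear in $w$ and controlled directly by \eqref{es-w}: $(1+t)\|(\pt-c_0\p_z)w\|_{H^M}\lesssim (1+t)\bigl[(1+t)^{-7/6}+(1+t)^{-1}\bigr](\cN_{\tilde c,\gamma}+\cN^2)$ is uniformly bounded, while $\div(n_c(z_1)w)$ is further improved by the $x$-localization of $n_c$. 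For the quadratic pieces $|w|^2$ and $|\psi_c'(z_1)|^2|\na_y\gamma|^2$, a tame product estimate combined with the $L^\infty$ decay $\|w\|_{L^\infty}\lesssim (1+t)^{-7/6}$ (obtained by Sobolev embedding from the $W^{M+1,p}$ bound in \eqref{es-w} with $p$ large) and the $H^{M+1}$ decay $(1+t)^{-2/3}$ produces a decay rate strictly faster than $(1+t)^{-1}$, closing the estimate with quadratic smallness in $\cN(T)$.

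For the stronger bound $(1+t)^{3/2}\|(\pt,\na_y)(r_1,r_2)\|_{H^{M-1}}\lesssim \cN_{\tilde c,\gamma}+\cN^2$, I would repeat the same decomposition: derivatives $\pt$ or $\na_y$ landing on the modulation parameters $(\pt\gamma-\tilde c,\pt\tilde c)$ or on $\gamma,\tilde c$ themselves gain an extra factor $(1+t)^{-1/2}$ via \eqref{es-nablayptc} and the definition of $\cN_{\tilde c,\gamma}(T)$; derivatives landing on $w$ are absorbed by the $(1+t)^{-3/2}\|(\pt,\na)(\na_y w,w_1)\|_{H^{M-1}}$ bound in \eqref{es-w}, where it is crucial that $\pt$ always ends up paired with $\na_y$ or with the first component $w_1$ (which enjoys the stronger decay, thanks to the structure \eqref{comp-curlF} of $\curl$); derivatives hitting the profile through $\pt z_1=-c_0-\pt\gamma$ and $\na_y z_1=-\na_y\gamma$ produce either a bounded profile function or an extra modulation factor that reduces to the cases already treated. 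The main obstacle of the argument is precisely the non-localized $|w|^2$ term in $r_2'$: since $w$ does not decay spatially, its smallness is purely temporal, and closing the estimate at the critical rate $(1+t)^{-1}$ with quadratic gain forces one to use \eqref{es-w} with $p$ large enough to extract the $L^\infty$ decay rate $(1+t)^{-7/6}$ strictly beyond the $(1+t)^{-1}$ threshold.
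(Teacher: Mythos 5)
Your overall strategy is the same as the paper's: check each term of \eqref{def-source-sob} using the modulation decay \eqref{def-decays-modu}, \eqref{es-ptc}, \eqref{es-nablayptc} and Proposition \ref{prop-w}. But two concrete points in your write-up do not work as stated. First, for $\div\big(n_c(z_1)w(z)\big)$ you invoke ``the $x$-localization of $n_c$''; spatial localization of $n_c$ cannot produce time decay of $w$, and the generic bound $\|w\|_{H^{M+1}}\lesssim (1+t)^{-2/3}$ from \eqref{es-w} falls short of the required $(1+t)^{-1}$. The actual mechanism (and the one worked example the paper gives) is structural: since $\div w=0$, one has $\div\big(n_c(z_1)w\big)=n_c'(z_1)\,w_1+\big(\p_c n_c\,\na_y c-n_c'\,\na_y\gamma\big)\cdot(w_2,w_3)^t$, and one must use the improved decay $(1+t)^{\f53}\|w_1\|_{H^{M+1}}\lesssim \cN_{\tilde c,\gamma}+\cN^2$ for the first piece, while the transverse piece is quadratic (a modulation gradient times $w$). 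You do notice the special role of $w_1$ later, but for this term your stated reason is wrong.

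Second, your chain rule ``$\pt z_1=-c_0-\pt\gamma$'' treats $\pt$ at fixed $x$, and under that reading the $(1+t)^{3/2}$ estimate is false: e.g. $\pt\big(\pt c\,\p_c n_c(z_1)\big)$ then contains $c_0\,\pt c\,(\p_c n_c)'(z_1)$, which by \eqref{es-ptc} decays only like $(1+t)^{-1}$, and $\pt$ of $-(\pt-c_0\p_z)w$ contains $c_0^2\p_z^2 w$, which also decays only like $(1+t)^{-1}$ since $\p_z w$ is controlled merely at rate $(1+t)^{-1}$. The claimed rate holds only if $\pt$ is understood in the moving frame, i.e. as $Z=\pt+c_0\p_x$ (equivalently $\pt$ at fixed $z$), which is exactly how the corollary is used in \eqref{eq-dot-vr-v}: then $Zz_1=-\pt\gamma$, $Zz=0$, so no $c_0$-terms appear, every chain-rule factor is a modulation quantity with extra $(1+t)^{-1/2}$ decay, and the $w$-contributions reduce to $\pt^2 w$ and $\pt\p_z w$, which (although not literally listed in \eqref{es-w}) are bounded at rate $(1+t)^{-3/2}$ by differentiating the definition \eqref{def-w}--\eqref{comp-curlF} and using \eqref{es-nablayptc} together with the compact transverse Fourier support of $(\tilde c,\gamma)$. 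With these two corrections your term-by-term argument closes and coincides with the paper's proof.
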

\begin{proof}
It suffices to show that each term of $r_j^0, r_j' (j=1,2)$ defined in \eqref{def-source-sob} enjoys  the desired estimates. 
This  is a  consequence of the definition of $\cN_{\tilde{c},\gamma}(T)$ in \eqref{def-decays-modu},  the estimates \eqref{es-ptc}, \eqref{es-nablayptc} and Proposition \ref{prop-w}. 
For instance, we see that the term
$$\div(n_c(z_1)w)=n_c'(z_1)\, w_1+(\p_c n_c \na_y c-n_c'\na_y\gamma)\cdot (w_2, w_3)^t,$$ enjoys the desired property thanks to Proposition \ref{prop-w}. The other terms can be checked similarly, we omit the details.
\end{proof}

We are now in position to perform the energy estimates for  $(\varrho, v)^t.$
\begin{prop}\label{prop-energy}
Let $\cN(T)$ be defined in \eqref{def-cNT}.
Suppose that $(\vr, v)\in C([0,T], H^M)$ and 
$\cN(T)\leq \delta.$ There exists $\delta_4$ small enough, such that for any $\delta\in (0,  \delta_4],$
it holds that 
\begin{align}
    &\sup_{0\leq t\leq T}(\log(2+t))^{-1}\|(\vr, v)(t)\|_{H^M}\lesssim 
    \cM(0)+ \cN(T)^2, \label{sobes-1}\\
   & \sup_{0\leq t\leq T}\|\na (\vr, v)(t)\|_{H^{M-1}}\lesssim \cM(0) 
   + \cN(T)^2.\label{sobes-der}
\end{align}
\end{prop}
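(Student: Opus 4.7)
The plan is to run quasilinear energy estimates on the symmetrized form of \eqref{eq-vr-v}, using a Friedrichs multiplier adapted to the pressure $h$ (and to the elliptic equation for $\phi$) so that the top-order transport is skew-symmetric up to lower-order commutators. For each multi-index $|\alpha|\leq M$, the standard energy $E_\alpha(t)=\tfrac12\!\int\!\big(h'(\rho)|\partial^\alpha \vr|^2+\rho|\partial^\alpha v|^2+\text{nonlocal part}\big)\,\d x\d y$ will evolve under three kinds of forcing: (a) the source $r_1+r_2$, which by Corollary \ref{cor-sourceterm} contributes only at the critical rate $(1+t)^{-1}\|(\vr,v)\|_{H^M}$; (b) truly nonlinear commutators of the form $\partial^\alpha(v\cdot\nabla v)\cdot\partial^\alpha v$, bounded in Moser fashion by $\|\nabla(\vr,v)\|_{L^\infty}\|(\vr,v)\|_{H^M}^2$, where $\|\nabla(\vr,v)\|_{L^\infty}$ is $L^1_t$-integrable via the $L^{8_\kappa}$ ingredient of $\cN_{(\vr,v)}(T)$ together with Sobolev embedding; and (c) the linear background contribution $v\cdot\nabla\underline{v_c}$, which is the heart of the matter.

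To treat (c), we use the decomposition $\underline{v_c}=\underline{u_c}(z_1)+w(z)$, where $\underline{u_c}$ involves $\psi_c'$ (exponentially localized of amplitude $\ep^2$). The naive bound $\ep^2\|v\|_{L^2}^2$ is fatal, as Gronwall would turn it into $e^{C\ep^2 t}$ growth rather than $\log(2+t)$. We circumvent this with identity \eqref{identiy-import-intro}, $v+w=u+(0,\partial_c\psi_c\,\nabla_y c)^t$, which rewrites $v$ as a sum of (i) the original perturbation $u$, which is part of $U$ and thus controlled in the weighted space $X_a^M$ with decay $(1+t)^{-1}$ via Proposition \ref{prop-weightednorm}, (ii) the corrector $w$, with decay in $H^{M+1}$ via Proposition \ref{prop-w}, and (iii) the modulation tail $\partial_c\psi_c\,\nabla_y c$, controlled by $\cN_{\tilde c,\gamma}(T)$. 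Since the intrinsic exponential rate of $\psi_c$ exceeds the weight $a=\ep/4$, the term from $\underline{u_c}$ pairs with $u$ as $\|u\cdot\nabla\underline{u_c}\|_{L^2}\lesssim\|e^{-az}\nabla\underline{u_c}\|_{L^\infty}\|u\|_{X_a^M}\lesssim \ep^2(1+t)^{-1}$, and the $w$-piece is critically bounded too. Summing over $|\alpha|\leq M$ then yields
\begin{equation*}
\tfrac{d}{dt}\|(\vr,v)(t)\|_{H^M}^2\;\lesssim\;f(t)\,\|(\vr,v)(t)\|_{H^M}^2\;+\;\tfrac{1}{1+t}\,\|(\vr,v)(t)\|_{H^M}\,\big(\cM(0)+\cN(T)^2\big),
\end{equation*}
with $f\in L^1_t$ under the bootstrap $\cN(T)\leq\delta$. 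A standard Gronwall argument on $\sqrt{E}$ then delivers the logarithmic bound \eqref{sobes-1}.

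For \eqref{sobes-der}, we commute \eqref{eq-vr-v} with the good vector fields $Z\in\{\nabla_y,\,\partial_t+c_0\partial_x\}$ — precisely those for which Corollary \ref{cor-sourceterm} upgrades the decay of the source to $(1+t)^{-3/2}$ in $H^{M-1}$, which is now time-integrable. Running the same symmetrized energy scheme on $Z(\vr,v)$ (with the identity \eqref{identiy-import-intro} used exactly as in the previous step to tame the background linear term) yields uniform-in-$t$ control of $\|Z(\vr,v)(t)\|_{H^{M-1}}$. To recover the missing $\partial_x$ derivatives, we extract from \eqref{eq-vr-v} the first-order subsystem $-c_0\partial_x\vr+\partial_x v_1=\mathcal H_1$, $-c_0\partial_x v_1+\partial_x\bigl(h'(1)+(\mathrm{Id}-\Delta)^{-1}\bigr)\vr=\mathcal H_2$, whose right-hand sides involve only $Z$-derivatives and already-controlled nonlinearities. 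The supersonic condition $c_0>\sqrt{h'(1)+1}$ from Theorem \ref{thm-existence} renders the symbol uniformly elliptic, so the announced Lemma \ref{lem-recoverpx} recovers $\partial_x(\vr,v_1)$; the remaining components $\partial_x v_2,\partial_x v_3$ follow from the irrotationality of $v$. Iterating up to order $M-1$ closes \eqref{sobes-der}.

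The main obstacle is the delicate bookkeeping of decay in step (c): the corrector $w$ does not live in the weighted space (it is governed by a genuinely three-dimensional elliptic equation driven by $\nabla_y c$), while its companion piece $u$ does. One must pair each summand of $v=u-w+(0,\partial_c\psi_c\,\nabla_y c)^t$ with its appropriate partner (namely $e^{-az}\nabla\underline{u_c}$ for $u$ versus $\nabla w\in H^M$ for $w$) to keep the linear background contribution at the critical rate $(1+t)^{-1}$. Any looser pairing would instantly degrade $\log(2+t)$ to polynomial growth and destroy the entire bootstrap.
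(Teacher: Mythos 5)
Your plan is essentially the paper's own proof: the same energy functional (including the nonlocal $(\mathrm{Id}-\Delta)^{-1}$ piece), the critical $(1+t)^{-1}$ source bound from Corollary \ref{cor-sourceterm}, the identity \eqref{identiy-import-intro} to replace $v$ by $(v+w)$ (weighted, decaying) minus $w$ with the background term paired against $e^{-az}\nabla\underline{u_c}$ and $\nabla w$, Gr\"onwall giving the logarithmic bound, and then the vector fields $Z\in\{\nabla_y,\partial_t+c_0\partial_x\}$ plus Lemma \ref{lem-recoverpx} (supersonic condition) and irrotationality to recover $\partial_x(\vr,v)$. The only detail you leave implicit, and which the paper makes explicit, is that at zeroth order ($\alpha=0$) one must first integrate by parts so that only $\nabla w$ — and not $w$ itself, whose $L^2$ decay $(1+t)^{-2/3}$ would yield polynomial rather than logarithmic growth — enters the estimate.
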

 In view of the two estimates \eqref{sobes-1} and \eqref{sobes-der}, only the 
$L^2$ norm of $(\vr, v)$ exhibits a logarithmic growth. However, for pedagogical  reasons, we will first present the proof of \eqref{sobes-1}, then we explain the main changes in order to obtain \eqref{sobes-der}. 
\begin{proof} 
 Let $\alpha\in \mathbb{N}^3$ with $|\alpha|\leq M.$ 
Define the energy functional 
\begin{align*}
    \dot{\cE}_{\alpha}(\vr, v)=\f12 \int h'(\rho)|\p^{\alpha}\vr|^2+|\sqrt{(\Id-\Delta)^{-1}}\p^{\alpha}
    \vr|^2+\rho\, |\p^{\alpha} v|^2 \, \d x \d y .
\end{align*}
Taking the time derivative of 
the above quantity and using the equation \eqref{eq-vr-v}, we find the energy identity
\begin{align*}
   \pt \dot{\cE}_{\alpha}(\vr, v)=\sum_{j=0}^5\cT_j^{\alpha}\, ,
\end{align*}
where 
\begin{align*}
   & \cT_0^{\alpha}=  \int \big(h'(\rho)+(\Id-\Delta)^{-1}\big)\, \p^{\alpha} \vr \,\p^{\alpha}(r_1^0+r_2^0)+\rho\, \p^{\alpha } v \cdot \p^{\alpha}(r_1'+r_2') \, \d x\d y,\\
   & \cT_1^{\alpha}= \f12 \int \div \big( \rho(\underline{v_c}(z)+v)\big)|\p^{\alpha} v|^2+\div \big( h'(\rho)(\underline{v_c}(z)+v)\big)|\p^{\alpha} \vr|^2\\
  & \qquad\qquad\qquad\qquad\qquad +2\div \big((\underline{v_c}(z)+v) (\Id-\Delta)^{-1}\p^{\alpha}\rho\big)\p^{\alpha}\rho\,\d x\d y,\\
  & \cT_2^{\alpha}=- \int  \rho \,\p^{\alpha} v \cdot \na \bigg( \p^{\alpha}\big(h(\rho)-h(1+n_c(z_1))\big)-h'(\rho)\p^{\alpha}\vr\bigg)\,\d x\d y, \\
  &\cT_3^{\alpha}=- \int  \rho \,\p^{\alpha} v \cdot  \na\p^{\alpha}\big(\phi-\phi_c(z_1)- (\Id-\Delta)^{-1}\vr \big)\,\d x\d y,\\
  &\cT_4^{\alpha}= -\int \big(h'(\rho)+(\Id-\Delta)^{-1}\big)\, \p^{\alpha} \vr \bigg([\p^{\alpha}, \underline{v_c}(z)+v] \na\vr+[\p^{\alpha},\rho]\div v+\p^{\alpha}\big(v\cdot \nabla n_c(z_1)+\vr \div  \underline{v_c}(z)\big)\bigg), \\
  & \cT_5^{\alpha}= - \int \rho \,\p^{\alpha} v \bigg([\p^{\alpha}, \underline{v_c}(z)+v] \na v+\p^{\alpha}(v\cdot \nabla v_c(z))\bigg)\, \d x \d y.
\end{align*}
Hereafter, unless otherwise  specified,
each quantity and norm is evaluated at time $t.$
We will control $\cT_0^{\alpha}-\cT_1^{\alpha}$ term by term. First, since 
\beqs 
\rho(t,x,y)=1+n_{c}(x-c_0t-z_1)+\vr(t,x,y)=1+\cO(\ep^2+\delta),
\eeqs
it holds that for $\ep,\delta$ small enough, 
$\f12 \leq \rho(t,x)\leq \f32$ and $\sup_{t, x,y} h'(\rho)=\sup_{s\in [\f12.\f32]} h'(s)\leq C.$
We thus have by the Cauchy-Schwarz inequality and the fact that  $(\Id-\Delta)^{-1}$ maps $L^2(\mR^3)$ to $H^2(\mR^3)$ that
\begin{align}\label{es-cT0}
    |\cT_0^{\alpha}|\lesssim \|(\vr, v)\|_{H^{|\alpha|}} \|(r_1, r_2)\|_{H^{|\alpha|}}.
\end{align}
 
Let us now address the second term $\cT_1^{\alpha}$, we shall focus particularly  on  the first quantity   in its definition which is the most difficult to handle.
Since we have that   $\underline{v_c}(z)=\underline{u_c}(z_1)+w(z),$ $\div\,  w=0,$ we can write
\beqs 
\div \big( \rho(\underline{v_c}(z)+v)\big)=\na (n_c(z_1)+\vr)\cdot (\underline{u_c}(z_1)+w+v) +\rho \div (\underline{u_c}(z_1)+v),
\eeqs
 where $\underline{u_c}(z_1)=\psi_c'(z_1) \left(\begin{array}{c}
          1 \\
    -\na_y\gamma
 \end{array}\right).$ Since $n_c, \psi_c'$ are exponentially localized, it holds that,
 for $a$ sufficiently small, 
 \beq \label{use-weightnorm}
 \|e^{-2a (x-c_0t)}(\Id,\na)\big(n_c(z_1), u_c(z_1)\big)\|_{L_{t, x,y}^{\infty}}\lesssim \|e^{2a \gamma}\|_{L_{t,y}^{\infty}}\leq e^{2 a \delta} 
 \eeq
as long as $\|\gamma\|_{L_{t,y}^{\infty}}\leq \delta.$ 
The term that requires to pay  more attention  to  is 
$\int \p_x\big(\rho \psi_c'(z_1)\big)|\p^{\alpha}v|^2$ 
since $\p_x\big(\rho \psi_c'(z_1)\big)$ does not decay in time and $v$ is not in the weighted space.
However, motivated by \eqref{w+v-weighted}, we can write 
\beqs 
\p^{\alpha}v=\p^{\alpha}(v+w(z))- \p^{\alpha} w(z).
\eeqs
Therefore, for any  $|\alpha|\geq 1,$ 
\beqs 
\int \p_x\big(\rho \psi_c'(z_1)\big)|\p^{\alpha}v|^2\lesssim \|v\|_{H^{|\alpha|}}\big(\|\na w\|_{H^{|\alpha|-1}}+\|v(\cdot+z_0 t)+w\|_{H_a^{|\alpha|}}\big).
\eeqs
It is worth noting that in the right hand side it appears $\|\na w\|_{H^{|\alpha|-1}}$ (rather than $ \|w\|_{H^{|\alpha|}}$) which has the critical time decay.
On the other hand, if $\alpha=0,$ we integrate by parts to find the similar estimate
\beqs 
\int \p_x\big(\rho \psi_c'(z_1)\big)|v|^2\lesssim \|v\|_{L^2}\big(\|\na w\|_{L^2}+\|v(\cdot+z_0 t)+w\|_{H_a^{1}}\big).
\eeqs
Combining the above ingredients, we find that 
\begin{align*}
   & \f12 \int \div \big( \rho(\underline{v_c}(z)+v)\big)|\p^{\alpha} v|^2\, \d x \d y\\
    &\lesssim 
    \big(\cA_{1,\infty}\|v\|_{H^{|\alpha|}}+\|\na w\|_{H^{|\alpha|}}+\|v(\cdot+z_0 t)+w\|_{H_a^{|\alpha|}}\big)\|v\|_{H^{|\alpha|}},
\end{align*}
where we have set  
\beq \label{def-ca1infty}
\cA_{1,\infty}=\cA_{1,\infty}(t)=\|\na\vr\|_{L^{\infty}}(\ep^2+\|(v,w(z))\|_{L^{\infty}})+\|\na (v,w)\|_{L^{\infty}}.
\eeq
Similarly and more easily, the next two quantities of $\cT_1^{\alpha}$ can be controlled  by 
\beqs 
\cA_{1,\infty}\|\vr\|_{H^{|\alpha|}}^2+\|\vr(\cdot+c_0t)\|_{H_a^{|\alpha|}}^2=\cA_{1,\infty}\|\vr\|_{H^{|\alpha|}}^2+\|n\|_{H_a^{|\alpha|}}^2.
\eeqs
Note that by definition, we have  $\vr(\cdot+c_0 t)=n(\cdot).$ 
Therefore, we obtain that 
\begin{align}\label{es-cT1}
 |\cT_1^{\alpha}|\lesssim \|(\vr,v)\|_{H^{|\alpha|}}\bigg( \cA_{1,\infty} \|(\vr,v)\|_{H^{|\alpha|}}+\|\na w\|_{H^{|\alpha|}}+\|(v(\cdot+z_0 t)+w, n)\|_{H_a^{|\alpha|}}\bigg).
\end{align}

We now switch to  the  term $\cT_2^{\alpha}.$ On the one hand, when $\alpha=0,$ we use \eqref{use-weightnorm} to obtain  that 
\begin{align*}
\|\na\big(h(\rho)-h(1+n_c(z_1))-h'(\rho)\vr\big)\|_{L^2}&\lesssim \|\na n_c(z_1) (h'(\rho)-h(1+n_c(z_1))\|_{L^2}+\|h''(\vr)\na\vr\, \vr\|_{L^2}\\
&\lesssim \|n\|_{L_a^2}+\|\na\vr\|_{L^{\infty}}\|\vr\|_{L^2}.
\end{align*}
On the other hand, when $1\leq |\alpha|\leq M,$ since 
\beqs 
\p^{\alpha}\big(h(\rho)-h(1+n_c(z_1))\big)-h'(\rho)\p^{\alpha}\vr=\sum_{\beta<\alpha, \,|\beta|\geq 1} C_{\beta,\alpha}\bigg(\p^{\beta} (h'(\rho))\p^{\alpha-\beta}\vr +\p^{\beta} \big(h(\rho)-h(1+n_c(z_1))\big)\p^{\alpha-\beta}n_c\bigg),
\eeqs
it holds by the Kato-Ponce inequality that 
\beqs 
\|\na\big(\p^{\alpha}(h(\rho)-h(1+n_c(z_1)))-h'(\rho)\p^{\alpha}\vr\big)\|_{L^2}\lesssim \|\na \vr\|_{L^{\infty}}\|\na \vr\|_{H^{|\alpha|-1}}+\|n\|_{H_a^{|\alpha|}}.
\eeqs
Consequently, the application of the Cauchy-Schwarz inequality as well as Proposition \ref{prop-weightednorm} gives
\beq
\begin{aligned}
|\cT_2^{\alpha}|&\lesssim \|v\|_{H^{|\alpha|}} \big(\|\na \vr\|_{L^{\infty}}\|\vr\|_{H^{|\alpha|}}+\|n\|_{H_a^{|\alpha|}}\big).
\end{aligned}
\eeq

For  the next term $\cT_3^{\alpha},$ we use an  expansion similar to \eqref{errorphi-cgamma}, we write 
\begin{align*}
    \tilde{\phi}=\colon \phi- \phi_{c}(z_1)=I_{\phi_{c}(z_1)}\bigg(\vr+\Delta_y \phi_c(z_1)+e^{\phi_{c}(z_1)}\big(e^{\tilde{\phi}}-1-\tilde{\phi}\big)\bigg),
\end{align*}
and use the identity $I_{\phi_{c}(z_1)}-(\Id-\Delta)^{-1}=(\Id-\Delta)^{-1}(1-e^{\phi_c(z_1)})I_{\phi_{c}(z_1)}$ and 
the exponential localization of $\phi_c(z_1)$
to find 
\begin{align}\label{diff-phi-vr}
    \|\na \big(\tilde{\phi}-(\Id-\Delta)^{-1}\vr\big)\|_{H^{|\alpha|}}\lesssim \| n \|_{H_a^{|\alpha|}}+\|\na_y^2(\gamma, c)\|_{L_y^2}+\|\na_y( \gamma, c)\|_{L^4}^2+\|\na \vr\|_{L^{\infty}}\|\vr\|_{H^{|\alpha|}}.
\end{align}
Note that we have used again that $I_{\phi_c(z_1)}, (\Id-\Delta)^{-1}$ map $L_a^2$ to $H_a^2$ and map $L^2$ to $H^2.$ It thus follows again from the 
 Cauchy-Schwarz inequality that 
 \beq
\begin{aligned}
|\cT_3^{\alpha}|&\lesssim \|v\|_{H^{|\alpha|}} \|\na \big(\tilde{\phi}-(\Id-\Delta)^{-1}\vr\big)\|_{H^{|\alpha|}}\\
&\lesssim \|v\|_{H^{|\alpha|}} \big( \|\na \vr\|_{L^{\infty}}\|\vr\|_{H^{|\alpha|}}+\|n\|_{H_a^{|\alpha|}}+(1+t)^{-1}\cN_{\tilde{c},\gamma}(T) \big).
\end{aligned}
\eeq
Next, for the term $\cT_4^{\alpha},$ we can apply the commutator estimates in Lemma \ref{lem-appendix-commutator} to find that 
\begin{align*}
    \|[\p^{\alpha}, \underline{v_c}(z)+v] \na\vr\|_{L^2}+\|[\p^{\alpha},\rho]\div v\|_{L^2}\lesssim \|\na(\vr, v, w(z))\|_{L^{\infty}}\|(\vr, v,w(z))\|_{H^{|\alpha|}} +\| \na n\|_{H_a^{|\alpha|-1}}. 
\end{align*}
Moreover, we can control $\p^{\alpha}\big(v\cdot \nabla n_c(z_1)\big)$ as
\beqs 
\|\p^{\alpha}\big(v\cdot \nabla n_c(z_1)\big)\|_{L^2}\lesssim \|v(\cdot+c_0t)+w\|_{H_a^{|\alpha|}}+\|w_1\|_{H^{|\alpha|}}+\|(w_2,w_3)\|_{H^{|\alpha|}}\|\na_y(c, \gamma)\|_{L_y^{\infty}}.
\eeqs
Collecting these estimates, we derive that
 \beq\label{es-cT4}
\begin{aligned}
|\cT_4^{\alpha}
|&\lesssim \|\vr\|_{H^{|\alpha|}} \big(\|\na(\vr, v, w(z))\|_{L^{\infty}}\|(\vr, v,w(z))\|_{H^{|\alpha|}}
+\|(n, v(\cdot+c_0t)+w)\|_{H_a^{|\alpha|}}\\
&\qquad \qquad \qquad \qquad\qquad \qquad+\|w_1\|_{H^{|\alpha|}}+\|(w_2,w_3)\|_{H^{|\alpha|}}\|\na_y(c, \gamma)\|_{L_y^{\infty}}\big).
\end{aligned}
\eeq
Following  similar arguments, we can bound the last term  as
\beq\label{es-cT5}
\begin{aligned}
    |\cT_5^{\alpha}|&\lesssim  \|v\|_{H^{|\alpha|}} \big(\|\na( v, w(z))\|_{L^{\infty}}\|( v,w(z))\|_{H^{|\alpha|}}+\|v(\cdot+c_0t)+w\|_{H_a^{|\alpha|}}\\
&\qquad \qquad \qquad \qquad\qquad \qquad+\|\na w\|_{H^{{|\alpha|-1}}}+\|w_1\|_{H^{|\alpha|}}+\|(w_2,w_3)\|_{H^{|\alpha|}}\|\na_y(c, \gamma)\|_{L_y^{\infty}}\big).
\end{aligned}
\eeq
Let us set
$$\cE_M =\sum_{|\alpha|\leq M} \dot{\cE}_{\alpha}(\vr, v)
\approx \|(\vr, v)\|_{H^{M}}^2.$$
Combining the estimates \eqref{es-cT0}, \eqref{es-cT1}-\eqref{es-cT5} and summing  up for $|\alpha|\leq M,$ we obtain the following energy inequality
\begin{align*}
    \pt \cE_M (t)&\lesssim \sqrt{\cE_M (t)} \bigg( \cA_{1,\infty}(t)\big(\sqrt{\cE_M (t)}+\|w\|_{H^{M}}\big)+\|(n, v(\cdot+c_0t)+w)\|_{H_a^{|\alpha|}}\\
    &\qquad\qquad\qquad+\|(r_1,r_2)\|_{H^M}+\|(w_1,\na w)\|_{H^{M}}+(1+\|w\|_{H^{M}})
    (1+t)^{-1}\cN_{\tilde{c},\gamma}(T)
    \bigg).
\end{align*}
In view of the definition of $\cN(T)$ in \eqref{def-cNT}, $\cN_{(\vr,v)}(T)$ in \eqref{def-norms-vr-v}
and $\cA_{1,\infty}(t)$ in \eqref{def-ca1infty},
it holds that 
\begin{align*}
    \cA_{1,\infty}(t)\lesssim (1+t)^{-(1+\iota)}\cN(T), \quad \forall\, t\in [0,T].
\end{align*}
 Applying the Gr\"onwall inequality and using  Proposition \ref{prop-w} for the estimates of $w,$  Corollary \ref{cor-sourceterm} for the estimates of $(r_1,r_2)$, Proposition \ref{prop-weightednorm} and Estimate \eqref{w+v-weighted} for the weighted estimates of $n$ and $v(\cdot+c_0 t)+w,$  we get 
\begin{align*}
   \sqrt{\cE_M (t)}\lesssim  \sqrt{\cE_M (0)}+ \int_0^t (1+s)^{-1} \big( \cN_{c,\gamma}(T)+\cM(0)+\cN(T)^2 \big)\, \d s.
\end{align*}
We thus obtain \eqref{sobes-1} by applying Proposition \ref{prop-modulation}. 

Let us now prove  the second estimate \eqref{sobes-der}
for  $\na(\vr, v).$ The idea is to first obtain the desired estimates for $\na_y(\vr, v)$ and $\pt (\vr, v),$ and then use the equation to recover the estimates for $\p_x(\vr, v).$ Let $Z$ denote for $\p_{y_1}, \p_{y_2}, \pt+c_0\p_x,$ acting $Z$ on the equation \eqref{eq-vr-v} for $(\vr, v),$ we find the following system for $(\dot{\vr},\dot{v})=(Z\vr, Z v):$
\beq\label{eq-dot-vr-v}
\left\{ 
\begin{array}{l}
  \pt \dot{\varrho}+\rho \div \dot{v}+(v+\underline{v_c}(z))\cdot\na \dot{\vr}=Z(r_1^0+r_2^0)-F_1^0-F_2^0 ,    \\[3pt]
 \pt \dot{v}+ (v+\underline{v_c}(z))\cdot\na \dot{v} +\na (h'(\rho)\dot{\vr}+(\Id-\Delta)^{-1}\dot{\vr})=Z(r_1'+r_2')-F_1'-F_2'-F_3',
\end{array}
\right.
\eeq
where 
\begin{align*}
   & F_1^0=\dot{v}\cdot \na\vr+\dot{\vr}\div v+ \div(\vr\,\dot{w}(z)),\quad F_1'=(\dot{w}(z)+\dot{v})\cdot \na v+\dot{v}\cdot\na w(z)+v\cdot\na \dot{w}(z),\, \\
  &  F_2^0=\dot{v}\cdot\na n_c(z_1)+\dot{\vr} \div \big(\underline{u_c}(z_1)\big)+\div\big(\vr\, Z\big(\underline{u_c}(z_1)\big)+v Z (n_c(z_1))\big)\\
   & F_2'=Z\big(\underline{u_c}(z_1)\big)\cdot\na v+v\cdot\na Z \big(\underline{u_c}(z_1)\big)+\dot{v}\cdot\na \big(\underline{u_c}(z_1)\big)+\na \big(\big(h'(\rho)-h'(1+n_c(z_1))\big)Z \big(n_c(z_1)\big)\big),\\
    & F_3'=Z\na \bigg((\phi-\phi_c(z_1))-(\Id-\Delta)^{-1}{\vr}\bigg).
\end{align*}
Note that we have used the notation $\dot{w}(z)= (Zw)(z).$
Define the energy functional $$  {\cE}_{M-1}(\dot{\vr}, \dot{v})=\sum_{|\alpha|\leq M-1}\f12 \int h'(\rho)|\p^{\alpha}\dot{\vr}|^2+|\sqrt{(\Id-\Delta)^{-1}}\p^{\alpha}
   \dot{\vr}|^2+\rho\, |\p^{\alpha} \dot{v}|^2 \, \d x \d y .$$
   We can perform   energy estimates  to find that 
   \begin{align*}
      & \pt \cE_{M-1}(\dot{\vr}, \dot{v})=\sum_{|\alpha|\leq M-1} \bigg(\int \na\rho\cdot \p^{\alpha} \dot{v} \big(( h'(\rho)+(\Id-\Delta)^{-1})\p^{\alpha}\dot{\vr}\big)\\
       &\qquad -\int \big(h'(\rho)+(\Id-\Delta)^{-1}\big)\, \p^{\alpha} \dot{\vr} \,\big([\p^{\alpha}, \underline{v_c}(z)+v] \na\dot{\vr}+[\p^{\alpha},\rho]\div\dot{v}\big) +\rho \,\p^{\alpha}\dot{v}\cdot [\p^{\alpha}, (\underline{v_c}(z)+v)\cdot\na] \dot{v}\\
       & + \f12 \int \div \big( \rho(\underline{v_c}(z)+v)\big)|\p^{\alpha} \dot{v}|^2+\div \big( h'(\rho)(\underline{v_c}(z)+v)\big)|\p^{\alpha} \dot{\vr}|^2+2\div \big((\underline{v_c}(z)+v)\p^{\alpha} (\Id-\Delta)^{-1}\dot{\rho}\big)\p^{\alpha}\dot{\rho} \\
       & + \int \big(h'(\rho)+(\Id-\Delta)^{-1}\big)\,  \p^{\alpha} \dot{\vr} \cdot \p^{\alpha}\big(Z(r_1^0+r_2^0)-F_1^0-F_2^0\big)+\rho\, \p^{\alpha} \dot{v}\cdot \p^{\alpha} \big(Z(r_1'+r_2')-F_1'-F_2'-F_3'\big)\bigg).
   \end{align*}
As we did for  the estimate of $\|(\vr, v)\|_{H^M},$ the 
first three terms in the right hand side can be bounded by 
\begin{align*}
  & \|(\dot{\vr}, \dot{v})\|_{H^{M-1}}\bigg(\cA_{1,\infty,t} \big(\|(\dot{\vr}, \dot{v})\|_{H^{M-1}}+\|w\|_{H^{M-1}}\big)+ \|Z(n, {v}(\cdot+c_0 t)+{w})\|_{H_{a}^{M-1}}+\|Zw\|_{H^{M-1}} \bigg)\\
  & \lesssim \|(\dot{\vr}, \dot{v})\|_{H^{M-1}} \bigg((1+t)^{-(1+\iota)}\cN(T)^2+(1+t)^{-\f32} (\cM(0)+\cN(T)^2)+(1+t)^{-\f76}\cN_{c,\gamma}\bigg) 
\end{align*} 
where we have applied Propositions \ref{prop-weightednorm} and  \ref{prop-w} in the second line. 
Moreover, we  claim that the last term can be controlled by  
\beq\label{es-6.16}
\begin{aligned}
&\|(\dot{\vr}, \dot{v})\|_{H^{M-1}}\bigg(\|Z(r_1, r_2)\|_{H^{M-1}}+\|(F_1, F_2, F_3')\|_{H^{M-1}} \bigg)\\
&\lesssim \|(\dot{\vr}, \dot{v})\|_{H^{M-1}} (1+t)^{-(1+\iota)}\big(\cM(0)+\cN(T)^2+\cN_{\tilde{c},\gamma}\big).
\end{aligned} 
\eeq
The above two estimates enable us to find the desired estimate
\begin{align*}
    \|(\dot{\vr}, \dot{v})(t)\|_{H^{M-1}}\approx \sqrt{ \cE_{M-1}(\dot{\vr}(t), \dot{v}(t))}\lesssim 
    \cM(0)+\cN_{c,\gamma}+\cN(T)^2.
\end{align*}
Let us now show \eqref{es-6.16}. First, by Corollary \ref{cor-sourceterm}, it holds that $\|Z(r_1, r_2)\|_{H^{M-1}}\lesssim (1+t)^{-\f32}(\cN_{\tilde{c},\gamma}(T)+\cN(T)).$
We now control $F_1,F_2, F_3'.$ By the Kato-Ponce inequality, 
\begin{align*}
    \|F_1\|_{H^{M-1}}\lesssim \cA_{1,\infty} \|(\vr , v , w)\|_{H^M}\lesssim (1+t)^{-(1+\iota)} \cN(T)^2.
\end{align*}
Moreover, since $Z \underline{u_c}(z_1)=-\underline{u_c}'(z_1) Z\gamma+\p_c \underline{u_c}(z_1) Z c, $ we can control $F_2$ in the following way
\begin{align*}
   \| F_2\|_{H^{M-1}} &\lesssim \|Z(n, {v}(\cdot+c_0 t)+{w})\|_{H_{a}^{M-1}}+\|Zw\|_{H^{M-1}}\\
   &\qquad +\big(\|(n, {v}(\cdot+c_0 t)+{w}) \|_{H_{a}^{M
   }}+\|w\|_{H^{M-1}}\big) \|Z(\gamma, c)\|_{L^{\infty}_y}\\
   &\lesssim (1+t)^{-\f32} (\cM(0)+\cN(T)^2)+(1+t)^{-\f76}\cN_{c,\gamma}.
\end{align*}
Similar to the estimate of $\na \big((\phi-\phi_c(z_1))-(\Id-\Delta)^{-1}{\vr}\big)$ in \eqref{diff-phi-vr},
we can bound $F_3'$ as 
\begin{align*}
\|F_3'\|_{H^{M-1}}&\lesssim  \| n \|_{H_a^{M}}\|\na_y(\gamma,c)\|_{L_y^2}+\|\na \vr\|_{L^{\infty}}\|\vr\|_{H^{|\alpha|}}\\
&\quad +\|Z\na_y^2(\gamma, c)\|_{L_y^2}+\|\na_y( \gamma, c)\|_{L_y^4}\|Z\na_y( \gamma, c))\|_{L_y^4}+\|\na_y( \gamma, c)\|_{L_y^4}^2\|Z( \gamma, c)\|_{L_y^{\infty}}\\
&\lesssim (1+t)^{-\f32}\big(\cN_{\tilde{c},\gamma}(T)+\cN(T)^2\big).
\end{align*}
The assertion made in \eqref{es-6.16} is now proved. Consequently, thanks again to Proposition \ref{prop-modulation},
we have established the estimate \eqref{sobes-der} by replacing $\nabla$ with $\nabla_{y_1}$ or $\pt+c_0\p_x.$ Moreover, since $v$ is curl-free, we have also the same estimate for $\p_x (v_2, v_3).$
To get the desired  estimate for $\partial_x (\vr, v_1)$, we need to use the following lemma. 
\begin{lem}\label{lem-recoverpx}
Assume that $(n, \tilde{v}_1)$ solve the following system 
\beq\label{odesystem}
-c_0 \p_z n+\p_z \tilde{v}_1=H_1, \quad -c_0 \p_z \tilde{v}_1+\p_z \big(h'(1)+(\Id-\p_z^2)^{-1}\big)n =H_2.
\eeq
It holds that for any $m\geq 0,$ any $p\in [1, +\infty],$ 
   \beq \label{crucial-recover}
\|\p_z (n, \tilde{v}_1)\|_{W^{m,p}}\lesssim \|(H_1, H_2)\|_{W^{m,p}}.
   \eeq
\end{lem}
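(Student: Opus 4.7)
The natural strategy is to algebraically decouple the first-order ODE system \eqref{odesystem} in order to obtain a single scalar equation for $\partial_z n$. From the first equation, $\partial_z\tilde v_1=H_1+c_0\partial_z n$; substituting into the second yields
\[
\partial_z\bigl(h'(1)+(\mathrm{Id}-\partial_z^2)^{-1}-c_0^2\bigr)\,n \;=\; H_2+c_0 H_1.
\]
Taking the Fourier transform in $z$ with dual variable $\xi$, the operator on the left acts as multiplication by $i\xi\,m(\xi)$ with
\[
m(\xi)\;:=\;h'(1)+\frac{1}{1+\xi^2}-c_0^2.
\]
Hence $\partial_z n$ is recovered as the Fourier multiplier $1/m(\xi)$ applied to $H_2+c_0 H_1$, and then $\partial_z\tilde v_1$ is read off algebraically from the first equation as $H_1+c_0\,\partial_z n$.

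The key step is thus showing that the multiplier $1/m(\xi)$ is bounded on $W^{m,p}$ for every $p\in[1,+\infty]$. The sub--sonic/super--sonic condition $c_0>\sqrt{h'(1)+1}$ (ensured by Theorem \ref{thm-existence}) gives precisely
\[
m(0)=h'(1)+1-c_0^2<0,\qquad \lim_{|\xi|\to\infty}m(\xi)=h'(1)-c_0^2<0,
\]
and $m$ is monotone on $[0,+\infty)$, so $m(\xi)$ is uniformly bounded away from zero on the whole real line. Setting $a:=h'(1)-c_0^2<-1$, a direct partial--fraction computation yields the decomposition
\[
\frac{1}{m(\xi)}\;=\;\frac{1+\xi^2}{a\xi^2+(a+1)}\;=\;\frac{1}{a}\;-\;\frac{1}{a^2\xi^2+a(a+1)}\;=\;\frac{1}{a}\;-\;\frac{1/a^2}{\xi^2+\beta^2},
\]
with $\beta^2=(a+1)/a>0$ since $a<-1$.

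Taking the inverse Fourier transform, this exhibits $1/m(\xi)$ as the symbol of the operator
\[
T f\;=\;\tfrac{1}{a}\,f\;-\;\tfrac{1}{2\beta a^2}\,(e^{-\beta|\cdot|}*f),
\]
that is, convolution against a tempered distribution of the form $c_1\delta_0+c_2\,e^{-\beta|z|}$ with kernel in $L^1(\mathbb R)+\{\text{Dirac mass}\}$. Young's convolution inequality then gives $\|Tf\|_{L^p(\mathbb R)}\lesssim\|f\|_{L^p(\mathbb R)}$ for all $p\in[1,+\infty]$, and since $T$ commutes with $\partial_z$ the bound lifts to $W^{m,p}$. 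Because the problem \eqref{odesystem} only involves the $z$--variable while the $y$--variable is a harmless parameter, the same bound holds in $W^{m,p}(\mathbb R^3)$ by freezing $y$ and integrating. Combining with the relation $\partial_z\tilde v_1=H_1+c_0\partial_z n$ yields \eqref{crucial-recover}.

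The main (and really only) obstacle is the range $p=1,+\infty$, for which the H\"ormander--Mikhlin theorem is unavailable; this is precisely what forces us to be explicit about the partial--fraction decomposition above and to exhibit $1/m(\xi)$ as the Fourier transform of a finite measure rather than invoking an abstract multiplier theorem. Everything else is an algebraic consequence of the sub/super-sonic condition $c_0^2>h'(1)+1$, which is the quantitative signature of the hyperbolicity of the reduced system and which was already used crucially in the existence of solitary waves in Theorem \ref{thm-existence}.
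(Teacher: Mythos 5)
Your proof is correct and follows essentially the same route as the paper: both decouple the system into a single constant-coefficient equation for $\partial_z n$ with symbol $-\dfrac{1+\xi^2}{(c_0^2-h'(1))\xi^2+(c_0^2-h'(1)-1)}$, invert it through an explicit kernel of the form (Dirac mass) $+\,Ce^{-\beta|z|}$, and conclude with Young's inequality, which is exactly why the endpoint cases $p=1,\infty$ cause no trouble. The only cosmetic difference is that you perform the partial-fraction decomposition on the Fourier side, whereas the paper writes the Green's function of $(b-d\partial_z^2)$ and integrates by parts to absorb the $(1-\partial_{z'}^2)$ factor.
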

Let us postpone the proof of this lemma and finish the 
estimate for $\partial_x (\vr, v_1).$ Let 
\beq\label{def-n-tv1}
n(z)=\vr(z+c_0t), \quad \tilde{v}_1(z)=v_1(z+c_0t)
\eeq
so that it  suffices to control $\|\partial_z (n, \tilde{v}_1)\|_{H^{M-1}}.$ Using  the system \eqref{eq-vr-v}, we find that $(n, \tilde{v}_1)$ solve the system \eqref{odesystem}
 with 
\beq\label{def-H1H2}
\begin{aligned}
    H_1&=-\pt n -\div_y \tilde{v}_y- \p_z(n\, \tilde{v}_1)- \div_y(n\, \tilde{v}_y)-\div (n w)-\div \big(n_c(z_1)(v+w)+n\underline{u_c}\big) 
    \\
    &\qquad \qquad \qquad 
    +r_1^0+\div_y (\psi_c'(z_1)\na_y \gamma)\,,\\
    H_2&=-\pt \tilde{v}_1+(w+v)\cdot\na(w+v)_1+\big(\underline{u_c}(z)\cdot \na(w+\tilde{v})_1+(w+\tilde{v})\cdot\na \psi_c')\big)
    \\
    &- \big((h'(\rho)-h'(1))\p_z n
    +(h'(\rho)-h'(1+n_c(z_1)))n_c'(z_1) \big)-\p_z(\phi-\phi_c(z_1)-(\Id-\p_x^2)^{-1} n)\\
    & \qquad +(r_1')_1+\f12\p_x(|\psi_c'|^2(z_1)|\na_y\gamma|^2)\,,
\end{aligned}
\eeq
where we denote $(\tilde{v}_1, \tilde{v}_y)^t(z)=\tilde{v}(z)=v(z+c_0t).$ We have  that 
\begin{align*}
    \|(H_1, H_2)\|_{H^{M-1}}&\lesssim 
 \|(\na_y, \pt) (n, \tilde{v})\|_{H^{M-1}}+   \|(n,\tilde{v},w)\|_{W^{1,\infty}}
    \|(n,\tilde{v},w)\|_{H^{M}}\\
    &\quad +\|(n, v(\cdot+c_0t)+w)\|_{H_a^{M}}
    +\|r_1\|_{H^M}+ \||\na_y(\gamma, c)|^2, \na_y^2\gamma\|_{L^2}.
\end{align*}
Applying the estimate \eqref{crucial-recover} and using  the estimates already obtained for $(\na_y, \pt)(n, \tilde{v}),$ we find the desired estimate
\beqs 
\|\p_z (n, \tilde{v}_1)\|_{H^{M-1}}\lesssim \cM(0)+\cN_{\tilde{c},\gamma}(T)+\cN(T)^2.
\eeqs
\end{proof}
\begin{proof} [Proof of Lemma \ref{lem-recoverpx}]
The simple combination of the two equations of \eqref{odesystem} yields the following equation for $\p_z n:$
\beqs 
(b -d\,\p_z^2)\,\p_z n= -(1-\p_z^2)(c_0 H_1+H_2)
\eeqs
where the constants $b=c_0^2-h'(1)-1>0, d=c_0^2-h'(1)>1$ thanks to the assumption $c_0> h'(1)+1.$ The solution of the above ODE is given by 
\beqs 
\p_z n (z)=- \f{1}{\sqrt{b\,d}}\int_{\mR}  e^{-\pi \sqrt{b/d}\,|z-z'|} (1-\p_{z'}^2)(c_0 H_1+H_2)(z')\, \d z'.
\eeqs
It then follows from integration by parts and Young's inequality that 
\beqs 
\|\p_z n \|_{W^{m,p}}\lesssim_{b,d} 
\|(H_1, H_2)\|_{W^{m,p}}, \quad \forall\, p\in [1,+\infty].
\eeqs
The desired estimate for $\p_z \tilde{v}_1$ finally  follows from  the first equation of \eqref{odesystem}. 
\end{proof}

\section{Time decay  estimates }\label{sec-decayes}
In this section, we focus on the time decay estimate for the radiation term $(\vr, v),$ which satisfies equation \eqref{eq-vr-v}. 
Specifically, our goal is  to  estimate the quantities appearing in the second line of $\cN_{(\vr,v)}(T)$ defined in \eqref{def-norms-vr-v},  which is crucial to close the energy estimates performed in Section \ref{energyes}. The following theorem presents the main result of this section.

\begin{prop}\label{prop-decayes}
Let $(\vr, v)$ be a solution to equation \eqref{eq-vr-v}.
 Assume that $(\tilde{c}, \gamma)\in C^1([0,T], Y\times Y),$   $(\vr, v)\in C([0,T], H^M).$
  Then there exists $\kappa_0\in(0,\f{1}{100}],$ such that for any $\kappa\in(0, \kappa_0],$ any $\vartheta\in(0,\kappa^2],$ any $p\in[2,6),$
 the following estimate holds : 
 \beq\label{decayes-vrv}
 \begin{aligned}
      &(1+t)^{-\vartheta} \|(\vr, v)\|_{\dot{H}^{-\f12}}+(1+t)^{(\f{3}{4}-\f2p-\vartheta)}
      \|(\vr, v)\|_{\dot{W}^{-1,p}}
 +(1+t)^{(1+\iota)} \big\||\na|^{\f{3}{8_{\kappa,1}}} \nabla(\vr, v)\big\|_{W^{1,8_{\kappa}}} \\
&\qquad\qquad\qquad\qquad\qquad\qquad\qquad\qquad\qquad\lesssim \cM(0)+\cN_{\tilde{c},\gamma}(T)+ \cN(T)^2
 \end{aligned}
 \eeq
 where $8_{\kappa}=\f{8}{1-\kappa}, 8_{\kappa,1}=\f{8}{1-\f{10}{9}\kappa}, \iota=\f{\kappa}{24}.$ 
\end{prop}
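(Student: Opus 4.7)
The plan is to convert the quasilinear system \eqref{eq-vr-v} for $(\vr,v)$ into a single half-wave type equation by symmetrization, then combine dispersive estimates with a carefully structured normal-form transformation. Setting $\alpha = \vr + i \frac{\div}{P(D)} v$ with $P(D) = \sqrt{\Delta(h'(1) + (1-\Delta)^{-1})}$, the system rewrites as $\pt \alpha - i P(D)\alpha = \mathfrak{N}_1 + \mathfrak{N}_2 + \mathfrak{N}_3 + \mathfrak{N}_4$, where $\mathfrak{N}_1$ gathers source terms depending only on the modulation parameters and background wave, $\mathfrak{N}_2$ the terms linear in $(\vr,v)$ with coefficients involving the background, $\mathfrak{N}_3$ the pure quadratic nonlinearities, and $\mathfrak{N}_4$ the cubic-and-higher remainder. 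The $\dot{H}^{-1/2}$ and $\dot{W}^{-1,p}$ estimates will follow from Duhamel applied to the dispersive estimate \eqref{disper-general}, bootstrapping against $\cN(T)$.

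Since the worst source $\mathfrak{N}_1$ only decays like $(1+t)^{-(1/2+1/p)}$ in $L^{p'}$ (by Proposition \ref{prop-modulation}), a direct Duhamel estimate fails to give integrable decay for the Lipschitz-type norm $\||\na|^{3/8_{\kappa,1}} \na(\vr,v)\|_{W^{1,8_\kappa}}$. I would therefore first prove the decay estimates for $Z\alpha$ with $Z \in \{\na_y, \pt + c_0 \p_x\}$, exploiting the better bound $\|Z\mathfrak{N}_1\|_{L^{p'}} \lesssim (1+t)^{-(1+1/p)}$ coming from the modulation estimates \eqref{es-ptc}--\eqref{es-nablayptc} and Proposition \ref{prop-w}. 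Once $Z(\vr,v)$ is under control, the missing $\p_x(\vr,v_1)$ piece is recovered via Lemma \ref{lem-recoverpx}, exactly as in the energy estimate of Section 5: the structural inequality $c_0 > \sqrt{h'(1)+1}$ provided by Theorem \ref{thm-existence} is what makes the ODE system \eqref{odesystem} in $\p_z$ invertible in $L^p$.

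For the quadratic term $\mathfrak{N}_3$, since we are in dimension $3$, the linear dispersive decay is not enough and a normal-form transformation is needed. Following \cite{G-P-global}, time integration by parts in Duhamel produces boundary contributions $B^{\mu\nu}(Z\alpha^\mu, \alpha^\nu)$ with symbol $|\varsigma|/(i\phi^{\mu\nu})$ plus cubic remainders involving $B^{\mu\nu}(Z\mathfrak{N}_j^\mu, \alpha^\nu)$. The phase degeneracy at low frequency forces the unfavourable bilinear estimate \eqref{BL-LL-intro}, which in turn demands control on $\alpha$ in $\dot{W}^{-1,r}$; but the slow decay of $\mathfrak{N}_1$ prevents boundedness of $\dot{H}^{-1}$, and even the tolerable window $\dot{H}^{-1/2}$ from the first line of \eqref{def-norms-vr-v} is insufficient to close \eqref{problem-intro} directly. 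The resolution is the two-tier splitting indicated in the introduction: write $\alpha = \tilde\alpha + \sum_{\mu,\nu} B^{\mu\nu}_{LL}(Z\alpha^\mu,\alpha^\nu)$, with $B^{\mu\nu}_{LL}$ collecting the diagonal low-low interactions $k+5 \le j \le -5$. For $\tilde\alpha$ I would prove the strong bound $\|\tilde\alpha\|_{W^{1+(3/p)^-,p}} \lesssim (1+t)^{-\frac{4}{3}(1-2/p)}$ by Strichartz-type arguments on the transformed equation (the nonlinearities are now cubic, hence integrable). Feeding this bound back into $B^{\mu\nu}_{LL}$ through Lemma on bilinear low-low estimates then yields $\||\na|^{(3/p)^-} B^{\mu\nu}_{LL}\|_{L^p} \lesssim (1+t)^{-1^+}$, provided $p$ is chosen close enough to $8$; this explains the choice $p = 8_\kappa$ with $\kappa$ small, together with $\iota = \kappa/24$.

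The main obstacle will be the combinatorics around the low-frequency singularity of the normal-form kernel and the need to simultaneously track several norms of different scaling to close the bootstrap. In particular, one must verify at each occurrence of $B^{\mu\nu}(\cdot,\cdot)$ in the cubic remainder $\int_0^t B^{\mu\nu}(Z\mathfrak{N}_j^\mu, \alpha^\nu) \,\d s$ that the frequency distribution avoids the forbidden low-low regime treated separately, while the two-tier decomposition gives a self-consistent bound. Routine pieces, namely the dispersive bookkeeping for $\mathfrak{N}_4$ and the modulation-only term $\mathfrak{N}_1$ after $Z$ differentiation, as well as the passage from $Z(\vr,v)$ to $\p_x(\vr,v)$ via Lemma \ref{lem-recoverpx}, should then assemble into \eqref{decayes-vrv} with right-hand side $\cM(0) + \cN_{\tilde c,\gamma}(T) + \cN(T)^2$.
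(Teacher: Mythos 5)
Your overall architecture coincides with the paper's proof: symmetrize to $\alpha=\vr+i\f{\div}{P(D)}v$, establish the decay first for $Z\alpha$ with $Z\in\{\na_y,\pt+c_0\p_x\}$ and recover $\p_x(\vr,v_1)$ through Lemma \ref{lem-recoverpx}, perform the normal form on $\mathfrak{N}_3$, and handle the boundary terms through the two-tier splitting $\tilde{\alpha}$ versus the low-low bilinear pieces $B_{LL}^{\mu\nu}$, with $p=8_\kappa$ close to $8$ and $\iota=\kappa/24$; the quantitative targets \eqref{tildealpha} and \eqref{bilinear-LL} you state are exactly those of the paper.

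There is, however, a concrete step that would fail as written: the claim that the $\dot H^{-\f12}$ and $\dot W^{-1,p}$ bounds in \eqref{decayes-vrv} ``follow from Duhamel applied to the dispersive estimate \eqref{disper-general}''. For $\dot H^{-\f12}$ no dispersion is needed (isometry of $e^{itP(D)}$ suffices), but for $\dot W^{-1,p}$ the estimate \eqref{disper-general} gives no gain of homogeneous derivatives at low frequency, so one would have to control $|\na|^{-1}\mathfrak{N}_1$ in $W^{3(1-\f2p),p'}$, which is not available: the modulation source (terms like $\p_c Q_{c_0}\,\pt c$ and the $\curl(-\Delta)^{-1}$ pieces of $\pt w$) only tolerates $|\na|^{-(\f12+\f1p)}$ up to a $\vartheta$-loss, cf.\ Lemma \ref{lem-fF1}. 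The paper compensates the missing $\f12(1-\f2p)$ derivatives exactly through the low-frequency dispersive estimate \eqref{disper-low}, and, to push $p$ up to $6$ in \eqref{negative-sobolev} — which is genuinely needed, since the two-tier argument later invokes that bound at $p=\f{6}{1+\kappa/12}$ — it further splits $\mathfrak{N}_1$ using the modulation system \eqref{modueq-1} to trade $\pt c$ for $\p_y^2\gamma$ plus faster-decaying remainders; without this refinement the rate $(1+t)^{-(\f34-\f2p-\vartheta)}$ is only reachable for $p<4$. Relatedly, the paper uses no Strichartz estimates: $\tilde\alpha$ is controlled by the pointwise decay \eqref{dispersive-L8} combined with the bilinear estimates \eqref{BL-High2}--\eqref{BL-HL} and interpolation against $H^M$ and the negative norms; and the remainders after the normal form are not all ``cubic hence integrable'' — the contribution $\int_0^t e^{i(t-s)P(D)}B^{\mu\nu}(Z\alpha^{\mu},\mathfrak{N}_1^{\nu})\,\d s$ is quadratic with the slowly decaying modulation source, and its low-low part again requires the derivative gain of \eqref{disper-low} to close. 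These are the ingredients your sketch would need to make explicit for the argument to go through.
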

\begin{rmk}
   Establishing the propagation of regularity for $(\vr, v)$ in the negative Sobolev spaces $\dot{H}^{-\f12}$
   and $\dot{W}^{-1,p}$ with controlled growth and decay respectively 
   as indicated by the first two quantities, is crucial for deriving favorable time decay estimates, as implied by the third quantity.
   This behavior in the negative Sobolev spaces differs from the case where the background wave is zero, as in \cite{G-P-global}, where the $\dot{H}^{-1}$ norm remains uniformly bounded. In the present setting, however, due to source terms involving  only the solitary wave and  the modulation parameters in the equation  for $(\vr,v),$ one can at best expect mild growth for  the $\dot{H}^{-\f12}$
   norm. This gives rise to additional challenges in proving integrable time decay of $(\vr,v)$ in  $L^p$ for $p>8.$ 
\end{rmk}
\begin{proof}

We first claim that to prove the decay of $|\na|^{\f{3}{8_{\kappa,1}}} \nabla(\vr, v)$ in $W^{1,8_{\kappa}},$
it suffices to show the corresponding estimate for $(\na_y, \pt+c_0\p_x)(\vr, v):$
\begin{align} \label{decay-timeder}
    (1+t)^{(1+\f{\kappa}{24})} \big\||\na|^{\f{3}{8_{\kappa,1}}} (\na_y, \pt+c_0\p_x)(\vr, v)\big\|_{W^{1,8_{\kappa}}} \lesssim \cM(0)+\cN_{\tilde{c},\gamma}(T)+ \cN(T)^2. 
\end{align}
Indeed, once this is proven, we can apply Lemma \ref{lem-recoverpx} to obtain that 
\beqs 
\||\na|^{\f{3}{8_{\kappa,1}}} \p_x(\vr, v_1)\|_{L^{8_{\kappa}}}\lesssim \||\na|^{\f{3}{8_{\kappa,1}}} \p_z(n, \tilde{v}_1)\|_{W^{1,8_{\kappa}}} \lesssim \||\na|^{\f{3}{8_{\kappa,1}}}(H_1, H_2)\|_{W^{1, 8_{\kappa}}}, 
\eeqs
where $(n, \tilde{v}_1)$ and $H_1, H_2$ are defined in  \eqref{def-n-tv1}, \eqref{def-H1H2} respectively. By applying Lemma \ref{lem-H1H2} for $H_1, H_2$ and \eqref{decay-timeder}, we find that 
\beqs 
\||\na|^{\f{3}{8_{\kappa,1}}}(H_1, H_2)\|_{W^{1,8_{\kappa}}}\lesssim (1+t)^{-(1+\f{\kappa}{24})} \big( \cM(0)+\cN_{\tilde{c},\gamma}(T)+ \cN(T)^2 \big)
\eeqs
and thus the claim is proved. 

  To continue, let us rewrite the system \eqref{eq-vr-v} into the following form
\begin{align}\label{new-eq-vr-v}
\pt \left(\begin{array}{c}
     \vr  \\
     v 
\end{array}\right) +\left( \begin{array}{cc}
     0  &  \div \\
     \na \big(h'(1)+(1-\Delta)^{-1}\big) 
     & 0
  \end{array}  \right)\left(\begin{array}{c}
     \vr  \\
     v 
\end{array}\right) = 
\mathfrak{F}_1+\mathfrak{F}_2+\mathfrak{F}_3+\mathfrak{F}_4,
\end{align}
where 
\beq\label{def-fF}
\begin{aligned}
  & \mathfrak{F}_1= \left(\begin{array}{c}
     r_1^0+r_2^0+\div\big(n_c(z_1) w(z)\big) \\[3pt]
    r_1'+r_2'+\na (\underline{u_c}\cdot w)(z)+\na \big(I_{\phi_c(z_1)}\Delta_y (\phi_c(z_1))\big)
\end{array}\right), \\
&\mathfrak{F}_2= -\left(\begin{array}{c}
    \div\big( n_c(z_1)(v+w(z))+ \vr( \underline{u_c}+w)(z)\big)  \\[3pt]
    \na \big(\underline{u_c}\cdot(v+w(z))+w(z)\cdot v\big)+\na \big((h'(1+n_c)-h'(1))n\big)+\na \big((I_{\phi_c(z_1)}-I_0)n\big)
\end{array}\right), \\
 &\mathfrak{F}_3= -\left(\begin{array}{c}
    \div(\vr\, v)  \\[3pt]
     \f{1}{2}\na\big(|v|^2+h''(1)\vr^2+I_{0}(I_{0}\vr)^2 \big)
\end{array}\right),  \, \,\,\mathfrak{F}_4=-\left(\begin{array}{c}
 0  \\[3pt]
    \na \big(H(n_c(z_1), \vr) +\tilde{G}(\phi_c(z_1), \vr) \big)  
\end{array}\right),
\end{aligned}
\eeq
with $I_{\phi_c(z_1)}=(e^{\phi_c(z_1)}-\Delta)^{-1},\, I_0=(\Id-\Delta)^{-1}$, and 
\beqs 
H(n_c(z_1), \vr)=h(1+n_c(z_1)+\vr)-h(1+n_c(z_1))-h'(1+n_c)\vr-h''(1)\vr^2/2\,,
\eeqs
\beqs 
\tilde{G}(\phi_c(z_1), \vr) =\big(\phi-\phi_c(z_1)-I_{\phi_c(z_1)} \big(\vr+\Delta_y (\phi_c(z_1))\big)-\f12 I_{0}(I_{0}\vr)^2\big).
\eeqs
Let $P(D)=\sqrt{-\Delta\big(h'(1)+(1-\Delta)^{-1}\big) }.$ 
To symmetrize the system \eqref{new-eq-vr-v}, we
define the new unknown $\alpha=\vr+i \f{\div}{P(D)}v,$ which solves the equation
\beq\label{eq-alpha}
\pt \alpha- i P(D)\alpha=\mathfrak{N}_1+\mathfrak{N}_2+\mathfrak{N}_3+\mathfrak{N}_4,
\eeq
with $\mathfrak{N}_j=\mathfrak{F}_j^0+ i \f{\div}{P(D) }\mathfrak{F}_j'.$ 
As the velocity $v$ is curl-free, we can recover $(\vr, v)$ from $\alpha, 
\bar{\alpha}, $ we have:
\beq\label{rel-v-alpha}
\vr=\f12(\alpha+\bar{\alpha}),\qquad \, v=\sqrt{h'(1)+(1-\Delta)^{-1}} \f{\na}{|\na|}\f{\bar{\alpha}-\alpha}{2\, i}.
\eeq
By Hörmander-Mikhlin Theorem (see for instance Theorem 5.2.7 in \cite{Book-grafakos}), it holds that for any $p\in (1,+\infty),$
\beqs 
\|(\vr,v)\|_{L^p}\lesssim \|\alpha\|_{L^p}.
\eeqs
Consequently, 
to prove \eqref{decay-timeder}, it suffices to show that there exists $\kappa, \vartheta>0$ small enough, such that
\begin{align}
    & \|\alpha\|_{\dot{H}^{-\f12}}\lesssim  (1+t)^{\vartheta} \big(\cM(0)+\cN_{\tilde{c},\gamma}(T)+ \cN(T)^2\big), \label{negative-sobolev2} \\
    & \|P_{\leq -5}\,\alpha\|_{\dot{W}^{-1,p}}\lesssim 
    (1+t)^{-(\f{3}{4}-\f2p-\vartheta)}\big(\cM(0)+\cN_{\tilde{c},\gamma}(T)+ \cN(T)^2\big), \quad (p\in [2,6)),\label{negative-sobolev} \\
    & \big\||\na|^{\f{3}{8_{\kappa,1}}} (\na_y, \pt+c_0\p_x)
     \alpha \big\|_{W^{1,8_{\kappa}}} \lesssim  (1+t)^{-(1+\f{\kappa}{24})}  \big(\cM(0)+\cN_{\tilde{c},\gamma}(T)+ \cN(T)^2\big). 
     \label{decayes-Zder}
\end{align}
We shall establish the needed estimates for $\mathfrak{N}_1-\mathfrak{N}_4$ (or equivalently $\mathfrak{F}_1-\mathfrak{F}_4$) in Subsection \ref{subsection-fF124}. Based on these estimates, we can  first give the proof of \eqref{negative-sobolev2}-\eqref{negative-sobolev}.
By Duhamel formula, we have
\beqs 
\alpha=e^{it P(D)}\alpha_0+\int_0^t e^{i(t-s) P(D)} \big(\mathfrak{N}_1+\mathfrak{N}_2+\mathfrak{N}_3+\mathfrak{N}_4\big)\, \d s. 
\eeqs 
By the estimates \eqref{cF1-half}, \eqref{fF2-half}, \eqref{fF3-half}, \eqref{esfF4-neghalf}, it holds that 
\beqs 
\|\mathfrak{N}_j(s)\|_{\dot{H}^{-\f12}}\lesssim \|\mathfrak{F}_j(s)\|_{\dot{H}^{-\f12}}\lesssim
(1+s)^{-(1-\vartheta)} \big(\cM(0)+\cN_{\tilde{c},\gamma}(T)+\cN(T)^2\big), \quad (j=1, 2, 3, 4).
\eeqs
By using also that 
 $e^{it P(D)}$ is an isometry on $\dot{H}^{-1/2},$ we thus get
\eqref{negative-sobolev}. 
In order to prove  \eqref{negative-sobolev2}, it suffices to establish the following two refined estimates 
\begin{align}
  & \|P_{\leq -5}\alpha_s\|_{\dot{W}^{-1,p}}\lesssim 
    (1+t)^{-(\f{3}{4}-\f2p-\vartheta)} \big(\cN_{\tilde{c},\gamma}(T)+\cN(T)^2\big), \quad \forall\, 2\leq p<6 , \label{es-alphas} \\
  &  \|P_{\leq -5} \alpha_r\|_{\dot{W}^{-1,p}}\lesssim (1+t)^{-(1-\f{7}{3p})}\big(\cM(0)+\cN_{\tilde{c},\gamma}(T)+\cN(T)^2\big),  \quad \forall\, 2\leq p\leq  4, \label{es-alphar}
\end{align}
where 
 $$\alpha_s:=\int_0^t e^{i(t-s)P(D)} \mathfrak{N}_1(s)\, \d s$$
 is the contribution of $\alpha$ resulting from the source term $\mathfrak{N}_1$ and  $\alpha_r=\alpha- \alpha_s.$ Note that thanks to the Sobolev embedding, we can derive from \eqref{es-alphar} that for $p\in (4,6),$ $$ \|P_{\leq -5} \alpha_r\|_{\dot{W}^{-1,p}}\lesssim \|P_{\leq -5} \alpha_r\|_{\dot{W}^{-(\f{1}{4}+\f3p),4}}\lesssim (1+t)^{-\f{5}{12}
 }\big(\cM(0)+\cN_{\tilde{c},\gamma}(T)+\cN(T)^2\big).$$
Note  that we thus have that  $\alpha_r$ has better decay properties  than $\alpha_s$ for any $p\in [2,6).$
 
To prove \eqref{es-alphas}, it is convenient to split
$\mathfrak{N}_{1}=\mathfrak{N}_{1}^1+\mathfrak{N}_{1}^2$ where $\mathfrak{N}_{1}^j:=(\mathfrak{F}_{1}^{j})^{0}+ i \f{\div}{P(D) }(\mathfrak{F}_1^{j})^{'},$ associated with suitable splitting of $\mathfrak{F}_{1}:=\mathfrak{F}_{1}^1+\mathfrak{F}_{1}^2$ that will be introduced in [(2), Lemma \ref{lem-fF1}].
We write $$\alpha_s=\int_0^t e^{i(t-s)P(D)} \big(\mathfrak{N}_{1}^1+\mathfrak{N}_{1}^2(s)\big)\, \d s:=
\alpha_s^1+\alpha_s^2.$$
Applying the dispersive estimate \eqref{disper-low} we find, on the one hand by the estimate \eqref{cF1-minus1} that 
for any $p\in[2,6),$
\begin{align*}
    \big\|P_{\leq -5} \f{\alpha_s^1}{|\na|}\big\|_{L^p}&\lesssim \int_0^t (1+t-s)^{-\f{3}{2}(1-\f2p)} \big\||\na|^{-(\f12+\f1p)}\mathfrak{F}_1^1(s)\big\|_{L^{p'}}\, \d s \\
    &\lesssim \int_0^t (1+t-s)^{-\f{3}{2}(1-\f2p)} (1+s)^{-(\f14+\f1p-\vartheta)}\, \d s \,
    \big(\cN_{\tilde{c},\gamma}(T)+\cN(T)^2\big)\\
    &\lesssim (1+t)^{-(\f{3}{4}-\f2p-\vartheta)} \big(\cM_{\tilde{c},\gamma}(0)+\cN(T)^2\big),
\end{align*}
and on the other  hand, by \eqref{cF1-minus1-1} that  for any $p\in[2,4),$
\begin{align*}
     \big\|P_{\leq -5} \f{\alpha_s^2}{|\na|}\big\|_{L^p}\lesssim (1+t)^{-\f{3}{2}(1-\f2p+\vartheta)} \big(\cM_{\tilde{c},\gamma}(0)+\cN(T)^2\big).
\end{align*}
The estimate \eqref{es-alphas} then stem from the above two inequalities and the Sobolev embedding.

Similarly, thanks to the  estimates \eqref{es-fF2-Lp}, \eqref{fF3-negative1-low}, \eqref{fF4-negativeLp},  there holds for any $p\in[2,4],$
\begin{align*}
    \big\|P_{\leq -5} \f{\alpha_r}{|\na|}\big\|_{L^p}&\lesssim 
    (1+t)^{-\f32(1-\f2p)}\||\na|^{-(\f12+\f1p)}\alpha(0)\|_{L^{p'}}+
    \int_0^t (1+t-s)^{-\f{3}{2}(1-\f2p)} \big\||\na|^{-(\f12+\f1p)}\sum_{j=1}^3\mathfrak{N}_j(s)\big\|_{L^{p'}}\, \d s \\
    &\lesssim  (1+t)^{-\f32(1-\f2p)} \cM(0)+
    \int_0^t (1+t-s)^{-\f{3}{2}(1-\f2p)} (1+s)^{-(\f12+\f{2}{3p})}\, \d s \, \big(\cM(0)
    +\cN(T)^2\big)\\
    &\lesssim (1+t)^{-(1-\f{7}{3p})} \big(\cM(0)
    +\cN(T)^2\big).
\end{align*}

We are now left to prove \eqref{decayes-Zder} which is lengthy and technical, we postpone its proof to Subsection \ref{subsection-decayes}.



\end{proof} 

\subsection{Estimates for $\mathfrak{F}_1-\mathfrak{F}_4$} \label{subsection-fF124}
In this subsection, we show various estimates for $\mathfrak{F}_1-\mathfrak{F}_4$ that are used in the proof of \eqref{prop-decayes}, 
including the ${\dot{H}^{-\f12}}$ estimate of $\mathfrak{F}_1-\mathfrak{F}_4$ and $W^{m_{\kappa},8_{\kappa}'}(m_{\kappa}=
1+\f{3}{8_{\kappa,1}}+\f{3(3+\kappa)}{4})$
 estimates for $Z(\mathfrak{F}_1, \mathfrak{F}_2, \mathfrak{F}_4)$ where $Z\in \{\na_y, \pt+c_0\p_x \}.$  The latter will also be useful in the proof of estimate \eqref{decayes-vrv}. 

\subsubsection{Estimate of $\mathfrak{F}_1$}
The term $\mathfrak{F}_1$ contains only the terms depending on $(\tilde{c},\gamma)$ and $w,$ and  requires particular attention in the estimation.
\begin{lem}\label{lem-fF1}
    The following properties hold.\\[4pt]
(1) We have the following estimate for $\mathfrak{F}_1$  in $\dot{H}^{-\f12}:$
\beq \label{cF1-half}
\|\mathfrak{F}_1\|_{\dot{H}^{-\f12}}\lesssim  (1+t)^{-(1-\vartheta)} \big(\cN_{\tilde{c},\gamma}(T)+\cN(T)^2\big), \quad \forall\, \vartheta>0\, .
  \eeq
(2) We can split $\mathfrak{F}_1=\mathfrak{F}_1^1+\mathfrak{F}_1^2,$ where $\mathfrak{F}_1^1, \mathfrak{F}_1^2$ satisfy the following (different) estimates 
  \begin{align}
    &  \|\mathfrak{F}_1^1\|_{\dot{W}^{-(\f12+\f1p),p'}}\lesssim  (1+t)^{-(\f14+\f1p-\vartheta)} \big(\cN_{\tilde{c},\gamma}(T)+\cN(T)^2\big), \quad \forall \,p\in [2,6], \,\forall \vartheta>0, \label{cF1-minus1}\\
 & \|\mathfrak{F}_1^2\|_{\dot{W}^{-(\f12+\f1p),p'}}\lesssim (1+t)^{-1} \big(\cN_{\tilde{c},\gamma}(T)+\cN(T)^2\big), \quad \forall \,p\in [2,4). \label{cF1-minus1-1}
  \end{align} 
  Moreover, it holds that for any $p\in [2,4),$
  \begin{align}\label{es-ZfF}
     \|Z\mathfrak{F}_1\|_{\dot{W}^{-(\f12+\f1p),p'}}\lesssim  (1+t)^{-(\f34+\f1p-\vartheta)} \big(\cN_{\tilde{c},\gamma}(T)+\cN(T)^2\big).
  \end{align}
  
(3) Finally, it holds that
    \begin{align}\label{fF1-Lp}
       \|\mathfrak{F}_1(t)\|_{W^{k,r}}\lesssim (1+t)^{-(\f32-\f1r)} \big(\cN_{\tilde{c},\gamma}(T)+\cN(T)^2\big),\quad \forall \, k\geq 0,\, r\in(1, +\infty),
    \end{align}
  \begin{align}  \label{es-fF1} \|Z\mathfrak{F}_1\|_{W^{m_{\kappa},8_{\kappa}'}}\lesssim (1+t)^{-\f{9-\kappa}{8}}  \big(\cN_{\tilde{c},\gamma}(T)+\cN(T)^2\big).
  \end{align}
\end{lem}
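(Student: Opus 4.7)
The first task will be to exploit cancellations in the definition of $\mathfrak{F}_1$. The term $+\div(n_c(z_1) w(z))$ in the first component cancels the contribution $-\div(n_c(z_1) w(z))$ hidden in $r_2^0$, and similarly $+\nabla(\underline{u_c}\cdot w)(z)$ in the second component cancels the corresponding piece in $r_2'$. What remains is a sum of terms of three distinct types: (i) purely linear-in-modulation terms tensorized against exponentially localized profiles of the solitary wave, e.g.\ $-\partial_t c\,(\partial_c n_c)(z_1)$, $(\partial_t\gamma-\tilde c)n_c'(z_1)$, $\div_y(\psi_c'(z_1)\nabla_y\gamma)$, and the velocity analogues; (ii) quadratic modulation terms $|\psi_c'|^2|\nabla_y\gamma|^2$ and $\nabla_y(c\psi_c')$, together with the term $\nabla I_{\phi_c(z_1)}\Delta_y\phi_c(z_1)$; and (iii) the $w$-related piece $-(\partial_t-c_0\partial_z)w(z)$ and $\nabla|w|^2$. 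For type (iii) we will invoke Proposition~\ref{prop-w} directly.

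For the tensor-product terms of type (i), the master estimate is the following separation-of-variables bound: for $F(z_1)$ exponentially localized in $x$ and $G(y)$ with compact Fourier support in $B(0,2\eta_0)$,
\begin{align*}
\|G(y)F(z_1)\|_{\dot H^{-s}(\mathbb{R}^3)}^2
\lesssim \int\!\!\int \frac{|\hat G(\zeta)|^2|\hat F(\xi)|^2}{(\xi^2+|\zeta|^2)^{s}}\,d\xi\,d\zeta,
\end{align*}
which, after splitting into $|\xi|\gtrsim|\zeta|$ and $|\xi|\lesssim|\zeta|$, can be majorized by a combination of $\|G\|_{L^2_y}\|F\|_{L^2_x}$ and $\|\hat G\|_{L^\infty_\zeta}\|F\|_{L^2_x}$ (the Bernstein factor from the compact Fourier support saves the low-frequency singularity). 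The key input is then the modulation control \eqref{def-decays-modu} together with \eqref{es-ptc}, \eqref{es-yptc}, and the $Y_2$-bound obtained in Proposition~\ref{prop-modulation}: $\|\partial_t c\|_{L^2_y}\lesssim(1+t)^{-1}$, while $\|\widehat{\partial_t c}\|_{L^\infty_\zeta}$ decays more slowly. Interpolating these via the embedding $L^{4/3}(\mathbb{R}^2)\hookrightarrow \dot H^{-1/2}(\mathbb{R}^2)$ and the weighted estimate $\|y\,\partial_t c\|_{L^2_y}\lesssim(1+t)^{-1/2}$ produces, with an arbitrarily small $\vartheta$-loss, the decay $(1+t)^{-(1-\vartheta)}$ claimed in \eqref{cF1-half}. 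The same scheme, with $s=\f12+\f1p$ and the Hausdorff–Young inequality in place of Plancherel, gives \eqref{cF1-minus1}.

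For the splitting $\mathfrak{F}_1=\mathfrak{F}_1^1+\mathfrak{F}_1^2$ of part (2), the plan is to let $\mathfrak{F}_1^1$ collect the type (i) terms that are linear in $(\partial_t c,\partial_t\gamma-\tilde c,\nabla_y c,\nabla_y\gamma)$ and whose worst decay is $(1+t)^{-1}$ in $L^2_y$, while $\mathfrak{F}_1^2$ collects the type (ii) quadratic modulation terms and the type (iii) $w$-contributions, all of which are genuinely quadratic/higher in modulation or enjoy the refined $w$-decay from Proposition~\ref{prop-w}. The improved rate $(1+t)^{-1}$ for $\mathfrak{F}_1^2$ in the negative Sobolev norm will then follow from H\"older combined with $\|w\|_{W^{k,p}}\lesssim (1+t)^{-(7/6-1/p)}$ and $\|\nabla_y(\tilde c,\gamma)\|_{L^4_y}^2\lesssim(1+t)^{-3/2}$. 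For the $Z$-derivative estimate \eqref{es-ZfF}, we will use that $Z\in\{\nabla_y,\partial_t+c_0\partial_x\}$ acts on $G(y)F(z_1)$ as $ZG\cdot F - G\cdot F'(z_1)\cdot Z\gamma$ (in both cases), producing a $(1+t)^{-1/2}$ gain either from \eqref{es-nablayptc} (for $ZG$) or from the modulation decay \eqref{def-decays-modu} (for $Z\gamma$).

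Finally, for part (3), the $L^r$ estimate \eqref{fF1-Lp} is obtained directly by H\"older and Sobolev embeddings in $y$ for modulation parameters together with Proposition~\ref{prop-w}, while \eqref{es-fF1} follows the same route as \eqref{es-ZfF} but in $L^{8_\kappa'}$-based spaces with $m_\kappa$ derivatives, the high Sobolev regularity being harmless since the solitary-wave profiles are $C^\infty$ and exponentially localized. \emph{The main obstacle} is the sharp negative-regularity estimate in (1)–(2): one must avoid endpoint interpolations in order to convert $L^2$ decay of $\partial_t c$ at rate $(1+t)^{-1}$ into a decay arbitrarily close to $(1+t)^{-1}$ in $\dot H^{-1/2}_y$, and this is where the $\vartheta$-loss is unavoidable — the spectrum of $\partial_t c$ near $\zeta=0$ is not controllable better than through the $Y_2$ norm, which gives only the slower $L^{4/3}$-rate $(1+t)^{-3/4}$ unless combined (by interpolation with the weighted bound $\|y\,\partial_t c\|_{L^2}\lesssim(1+t)^{-1/2}$) into a scale of $\dot H^{-s}$ estimates parametrized by $\vartheta\in(0,\f12)$.
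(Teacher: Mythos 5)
Your identification of the cancellations in $\mathfrak{F}_1$, the classification into modulation-tensor, quadratic-modulation and $w$-type terms, and the overall strategy for parts (1) and (3) are close in spirit to the paper, but there is a genuine gap in part (2), precisely for \eqref{cF1-minus1} on the range $p\in[4,6]$. Your splitting keeps the terms linear in the modulation parameters, in particular $\partial_c Q_{c_0}(z)\,\partial_t c$ and the analogous pieces of $(\partial_t-c_0\partial_z)w$, inside $\mathfrak{F}_1^1$ and proposes to estimate them by ``the same scheme with $s=\f12+\f1p$.'' This cannot work for $p\geq 4$: at low frequencies the only available information on $\partial_t c$ is $\|\partial_t c\|_{L^2}\lesssim(1+t)^{-1}$ and $\|y\,\partial_t c\|_{L^2}\lesssim(1+t)^{-1/2}$ (a single power of localization), so the best low-frequency gain for a block $P_j(\partial_c Q_{c_0}\,\partial_t c)$ is of Bernstein type $2^{3j/p}$, while the negative derivative costs $2^{-j(\f12+\f1p)}$; the dyadic sum converges only when $\f{3}{p}>\f12+\f1p$, i.e.\ $p<4$. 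Equivalently, an HLS count shows that placing even an arbitrarily small power $|\nabla_y|^{-2\vartheta}$ on $\partial_t c$ measured in $L^{p'}$ with $p'$ close to $6/5$ would require $\partial_t c$ in a Lebesgue space below $L^1$ or two $y$-weights, neither of which the bootstrap controls. The paper's resolution — absent from your plan — is structural: it uses the modulation system \eqref{modueq-1} to write $\partial_t c=a_{21}\partial_y^2\gamma+\text{good terms}$ (similarly for $\partial_t\nabla_y\gamma$), defines $\mathfrak{F}_1^1$ as the terms of \eqref{fF11} with $\partial_t c$ replaced by $\partial_y^2\gamma$ (the two transverse derivatives give the low-frequency vanishing in $\zeta$ that absorbs $|\nabla_y|^{-\f12-2\vartheta}$ on the whole range $p\in[2,6]$), and sends the better-decaying remainders into $\mathfrak{F}_1^2$, for which the restricted range $p\in[2,4)$ of \eqref{cF1-minus1-1} suffices. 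This is not cosmetic: \eqref{cF1-minus1} with $p$ close to $6$ feeds \eqref{negative-sobolev}, which is invoked with $p=\f{6}{1+\kappa/12}$ in the bilinear boundary-term estimates of Section \ref{subsection-decayes}.

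Two secondary points on part (1). First, your intermediate claims are not tenable as stated: the embedding $L^{4/3}(\mathbb{R}^2)\hookrightarrow\dot H^{-1/2}(\mathbb{R}^2)$ applied to $\partial_t c$ yields only $(1+t)^{-3/4}$, and $\|\widehat{\partial_t c}(t)\|_{L^\infty_\zeta}$ is not a controlled quantity at positive times (the $Y_2$ bound concerns the initial data only). The correct mechanism — which your closing paragraph gestures at, and which is the paper's — is that the exponentially localized $x$-profile absorbs $|\xi|^{-1/2}$ up to a logarithm, so that only $|\nabla_y|^{-2\vartheta}$ lands on $\partial_t c$; then $\||\nabla_y|^{-2\vartheta}\partial_t c\|_{L^2}\lesssim\|\partial_t c\|_{L^{2/(1+2\vartheta)}}\lesssim\|\partial_t c\|_{L^2}^{1-2\vartheta}\|y\,\partial_t c\|_{L^2}^{2\vartheta}$ gives $(1+t)^{-(1-\vartheta)}$. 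Second, the terms are not exact tensor products since $z_1=z-\gamma(t,y)$: before applying your separation-of-variables identity you must, as in the paper, replace $\partial_c Q_c(z_1)$ by $\partial_c Q_{c_0}(z)$ and put the differences (quadratic in $(\tilde c,\gamma)$, or carrying a full $\partial_z$) into the remainder.
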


\begin{proof}
We will provide comprehensive details regarding the proof of \eqref{cF1-half} while briefly outlining the proof of the remaining estimates.
In view of the definitions of $r_1, r_2$ in \eqref{def-source-sob}, we observe that $\mathfrak{F}_1$ is made of  the following typical terms:
\beq\label{cF1typicalterms}
\p_c Q_{c}(z_1) \pt c, \quad (\pt-c_0\p_z) w(z),  \quad \p_z\big(Q_{c}(z_1) (\pt\gamma-\tilde{c})\big), \quad \nabla |w|^2(z)\, ,
\eeq
where $Q_c$ denotes $n_c$ or $\psi_c'$ or $\phi_c$ and thus is exponentially localized. 

Thanks to \eqref{es-w},
the last term can be bounded easily as 
\beqs 
\big\|\na |w|^2\big\|_{\dot{H}^{-1/2}}\lesssim \|w\cdot\na w\|_{L^{{3}/{2}}}\lesssim \|w\|_{L^6}\|\na w\|_{L^2}\lesssim (1+t)^{-2}\cN(T)^2.
\eeqs
Moreover, thanks to the  derivative in front,  the third term can be bounded  in $\dot{H}^{-1/2}$ by $\|\pt \gamma -\tilde{c}\|_{L_y^2},$ which, in view of \eqref{gammtct}, can be further controlled by $(1+t)^{-1}\big(\cN_{\tilde{c},\gamma}(T)+\cN(T)^2\big).$
The first two terms require a more careful analysis
since the  direct use of the Sobolev embedding only leads to the  $(1+t)^{-5/6}$ decay  which is too slow. To resolve the problem, we need to 
to use the localization of the wave $Q_c(z_1).$ For the first term in \eqref{cF1typicalterms}, let us write 
\begin{align*}
    \p_c Q_{c}(z_1) \pt c= \p_c Q_{c_0}(z) \pt c+(\p_c Q_c-\p_c Q_{c_0})(z)\pt c-\p_z \int_0^1 \p_c Q_c (z-\gamma\theta)\, \d\theta\, \gamma \,\pt c
\end{align*}
where we have set  $\p_c Q_{c_0}=\p_c Q_{c}|_{c=c_0}$
The last two terms above  can be controlled in $\dot{H}^{-1/2}$ directly by 
$$\|(\tilde{c},\gamma)\,\pt c\|_{L_y^{3/2}}\lesssim \|(\tilde{c},\gamma)\|_{L_y^{6}}\|\pt c\|_{L_y^{2}}\lesssim (1+t)^{-\f43}\cN(T)^2. $$ To estimate the first one, we  use the Parseval's identity to obtain 
\begin{align*}
& \big\|P_{\geq 0}|\na|^{-\f12}\big( \p_c Q_{c_0}  \pt c\big)\big\|_{L^2}\lesssim \big\| \p_c Q_{c_0}  \pt c\big\|_{L^2}\lesssim \|\pt c\|_{L_y^2},
\\
  & \big\|P_{\leq 0}|\na|^{-\f12}\big( \p_c Q_{c_0}  \pt c\big)\big\|_{L^2}^2\leq \int_{\mR^2} |(\cF_y \pt c)(\zeta)|^2\int_{|\xi|\leq 4} \f{|(\cF_z\p_c Q_{c_0})(\xi)|^2}{(|\zeta|^2+|\xi|^2)^{1/2}}\, \d\xi\d \zeta\\
   &\lesssim  \|\p_c Q_{c_0}\|_{L_z^1}^2  \int_{\mR^2} |(\cF_y \pt c)(\zeta)|^2 \log (1+|\zeta|^{-1})\, \d \zeta\lesssim \big\||\na_y|^{-2\vartheta} \pt c\big\|_{L_y^2}^2
\end{align*}
for any $\vartheta>0.$
It follows the Hardy-Sobolev-Young inequality and the estimates \eqref{gammtct}, \eqref{es-yptc} that  
\beqs 
\big\||\na_y|^{-2\vartheta} \pt c\big\|_{L_y^2}\lesssim  \|\pt c\|_{L_y^{{2}/{(1+2\vartheta)}}}\lesssim \|\pt c\|_{L_y^2}^{1-2\vartheta}\|y \pt c\|_{L_y^2}^{2\vartheta}\lesssim (1+t)^{-\vartheta} \big(\cN_{\tilde{c},\gamma}(T)+\cN(T)^2\big).
\eeqs
Let us now sketch the estimate of the term $(\pt-c_0\p_z )w$ in \eqref{cF1typicalterms}. By the definition \eqref{def-w} and the expression \eqref{comp-curlF}, it holds that 
$$\|\p_z w\|_{\dot{H}^{-\f12}}\lesssim \|\psi_c'\, (\na_y c, \na_y c\cdot\na_y \gamma)\|_{\dot{H}^{-\f12}}.$$
Moreover, we compute 
\begin{align*}
    \curl \pt \underline{u_c}=\curl \left(\begin{array}{c}
       0    \\
       \pt\big(\psi_c'(z_1)\na_y\gamma\big)   
\end{array}\right)+\left(\begin{array}{c}
    0\\
   \na_y^{\perp} \big(\p_c\psi_c'(z_1)\pt c \big)
\end{array}\right)+\left(\begin{array}{c}
    0\\
-\p_z\na_y^{\perp} \big(\psi_c'(z_1)\pt\gamma\big)
\end{array}\right),
\end{align*}
where $\na_y^{\perp}=(\p_{y_2},-\p_{y_1})^t.$
Since $\curl(-\Delta)^{-1}\na \in B(\dot{H}^{-\f12}),$ it holds that 
\beqs
\|\p_t w\|_{\dot{H}^{-\f12}}
\lesssim \|(\psi_c'(z_1)\pt\na_y\gamma,\p_c\psi_c'(z_1)\pt c)\|_{\dot{H}^{-\f12}} + \|(\psi_c''(z_1), \p_c\psi_c')\pt(\gamma,c)\na_y(\gamma,c)\|_{\dot{H}^{-\f12}}.
\eeqs
The terms $\psi_c'(z_1)(\na_y c,\pt c,\pt\na_y\gamma)$
 can be estimated in the same manner as 
  $\p_c Q_{c}(z_1) \pt c.$ The remaining quantities are quadratic in terms of $\gamma$ and $\tilde{c}$ and thus are easier to handle. To summarize, we have obtained \eqref{cF1-half}. 
  
Proof of \eqref{cF1-minus1}-\eqref{cF1-minus1-1}. 
Similar to the proof of \eqref{cF1-half}, when controlling $\mathfrak{F}_1$ in $\dot{W}^{-(\f12+\f1p),p'}$, which exhibits lower integrability and more singularities at low frequencies, the terms that require careful treatment are the following:
\begin{align}\label{fF11}
  \p_c Q_{c_0}(z) \pt c, \quad \p_z\big(Q_{c}(z_1) (\pt\gamma-\tilde{c})\big), \quad -\curl (-\Delta)^{-1} \bigg(\curl  \left(\begin{array}{c}
       0    \\
      \psi_c'(z_1) \pt\na_y\gamma  
\end{array}\right)+\na_y^{\perp} \left(\begin{array}{c}
    0\\
   \p_c\psi_c'(z_1)\pt c 
\end{array}\right)\bigg).
\end{align}
Using  the same arguments as in the 
$\dot{H}^{-\f12}$  estimate only enables us to prove \eqref{cF1-minus1} for $p\in[2,4).$ 
Nevertheless, thanks to the equation \eqref{modueq-1}, we observe that
$$\pt c=a_{21}\p_y^2\gamma+\text{ good terms, }$$ where 
`good terms' have favorable decay $(1+t)^{-(1+\f1p)}$ in $L^p.$
We thus define $\mathfrak{F}_1^1$ as the  superposition of terms listed in \eqref{fF11} with $\pt c$ replaced by $\p_y^2\gamma.$ 
The typical term $\p_c Q_{c_0}(z) \p_y^2\gamma$ can thus be bounded as 
\begin{align*}
    \|\p_c Q_{c_0}(z) \p_y^2\gamma\|_{\dot{W}^{-(\f12+\f1p),p'}}\lesssim  \big\|\f{\p_y^2 \gamma}{|\na_y|^{1/2+2\vartheta}}\big\|_{L_y^{p'}}\lesssim (1+t)^{-(\f14+\f1p-\vartheta)} \cN_{\tilde{c},\gamma}(T).
\end{align*}
The other terms in \eqref{fF11} can be treated in a similar or simpler manner. For instance the term $\p_z\big(Q_{c}(z_1) (\pt\gamma-\tilde{c})\big)$ an be easily controlled thanks to the appearance of perfect derivative $\p_z.$  
The remaining quantity
$\mathfrak{F}_1^2=\mathfrak{F}_1-\mathfrak{F}_1^1$  is only made of quantities 
which have  favorable time decay and thus have  the property
\begin{align*}
    \|\mathfrak{F}_1^2\|_{{\dot{W}^{-(\f12+\f1p),p'}}}\lesssim   \|\mathfrak{F}_1^2\|_{L^{\f{1}{7/6-{2}/{3p}}}}\lesssim (1+t)^{-1}  \big(\cN_{\tilde{c},\gamma}(T)+\cN(T)^2\big).
\end{align*}

Proof of \eqref{es-ZfF}. It follows from  similar arguments as above, by observing that the  $Z$ derivative gives an  extra $(1+t)^{-\f12}$ decay.

Proof of \eqref{fF1-Lp}. Let us control for instance the first typical term in \eqref{cF1typicalterms}. Thanks to \eqref{es-ptc}, \eqref{es-nablayptc}
\begin{align*}
   \| \p_c Q_{c}(z_1) \pt c\|_{W^{k,r}}\lesssim \|\pt c\|_{L_y^r}\lesssim \|\pt c\|_{L^2}^{\f2r}\|\na_y\pt  c\|_{L^2}^{1-\f2r}\lesssim (1+t)^{-(\f{3}{2}-\f1r)} \cN(T)^2.
\end{align*}

 Proof of \eqref{es-fF1}. Let us control for instance the first typical term as
\begin{align*}
    \|Z (\p_c Q_{c}(z_1)\pt c)\|_{W^{m_{\kappa},8_{\kappa}'}}&\lesssim \|\pt Z c\|_{L_y^{8_{\kappa}'}}+\|(Z c, Z\gamma)\pt c\|_{L_y^{8_{\kappa}'}}\\
    &\lesssim \|\pt Zc\|_{L_y^2}^{\f{2}{8_{\kappa}}}\| y\pt  Zc\|_{L_y^2}^{1-\f{2}{8_{\kappa}}}+\|\pt c\|_{L^2}\|(Zc,Z\gamma)\|_{L^{\f{8}{3+\kappa}}}.
\end{align*}
In view of the estimates \eqref{es-ypyptc}, \eqref{es-nablayptc} and \eqref{cor-interpolation}, 
we can estimate the above right hand side by 
$(1+t)^{-\f{9-\kappa}{8}} \cN(T)^2.$ 
  \end{proof}
\begin{rmk}
    It follows from the dispersive estimate \eqref{disper-low} and the estimate \eqref{es-ZfF} that for any $p\in[2,4),$
    \begin{align}\label{es-Zalphas}
         \big\|P_{\leq -5} {Z\alpha_s}\big\|_{\dot{W}^{-1,p}}\lesssim (1+t)^{-\f{3}{2}(1-\f2p)}\big(\cN_{\tilde{c},\gamma}(T)+\cN(T)^2\big).
    \end{align}
\end{rmk}
\subsubsection{Estimate of $\mathfrak{F}_2$}
\begin{lem}
The following estimates hold.
 \begin{align}\label{fF2-half}
    \|\mathfrak{F}_2\|_{\dot{H}^{-\f12}} \lesssim  (1+t)^{-1}\big(\cM(0)+\cN_{\tilde{c},\gamma}(T)+ \cN(T)^2\big).
\end{align}  
For any  any $k\in [0, M-1]$, 
\begin{align}
    \big\|\f{\mathfrak{F}_2}{|\na|}\big\|_{\dot{W}^{k,p}}\lesssim (1+t)^{-(\f32-\f1p)}\big(\cM(0)+\cN_{\tilde{c},\gamma}(T)+ \cN(T)^2\big), \quad \forall\, p\in [4/3,3],\, k\in [0, M-1], 
    \label{es-fF2-Lp} \\
    \big\|\f{Z\mathfrak{F}_2}{|\na|}\big\|_{\dot{W}^{k,p}}\lesssim (1+t)^{-
    \f32}\big(\cM(0)+\cN_{\tilde{c},\gamma}(T)+ \cN(T)^2\big), \quad \forall\, p\in [2,3],\, k\in [0, M-2],   \label{es-ZfF2-Lp}
\end{align}
Moreover, it holds that
\beq \label{es-fF2}
\|Z\mathfrak{F}_2\|_{W^{m_{\kappa},8_{\kappa}'}}\lesssim (1+t)^{-\f{9-\kappa}{8}}\big(\cM(0)+\cN_{\tilde{c},\gamma}(T)+\cN(T)^2\big).
\eeq
\end{lem}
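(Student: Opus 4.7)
\medskip

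\noindent\textbf{Proof plan for the $\mathfrak{F}_2$ lemma.}
The strategy is to split $\mathfrak{F}_2$ into two structurally distinct families and then exploit, for each family, the appropriate a priori bound from the preceding propositions. Family (a) consists of those terms containing an exponentially localized solitary-wave factor: $n_c(z_1)$, $\underline{u_c}(z_1)$, $(h'(1+n_c(z_1))-h'(1))$, and — via the resolvent identity $I_{\phi_c(z_1)}-I_0 = I_0(1-e^{\phi_c(z_1)})I_{\phi_c(z_1)}$ already used in \eqref{diff-phi-vr} — the nonlocal contribution $(I_{\phi_c(z_1)}-I_0)n$, in which the exponentially localized factor $(1-e^{\phi_c(z_1)})$ is sandwiched between two operators bounded on weighted Sobolev spaces $H_a^k$ for $a$ small. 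Family (b) contains the products involving the corrector $w(z)$ and the purely quadratic $w\cdot v$. In each term there is a divergence or gradient in front, so for \eqref{fF2-half} and \eqref{es-fF2-Lp} it is equivalent (using that $\div/|\nabla|$ and $\na/|\nabla|$ are bounded Fourier multipliers) to prove a plain $L^p$-type bound on the product $G$ for which $\mathfrak{F}_2=\na G$.

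For the $\dot H^{-1/2}$ and $\dot{W}^{k,p}$ bounds, family (a) is controlled via the weighted estimate of Proposition \ref{prop-weightednorm}: for any $p\in[2,\infty]$, writing
\beqs
\|n_c(z_1)\,\vr\|_{L^p_{x,y}} \leq \|e^{-az}n_c(z_1)\|_{L^\infty}\,\|e^{az}\vr\|_{L^p} \lesssim (1+t)^{-1}\,\cN_U(T),
\eeqs
since Sobolev embedding $H^M_a \hookrightarrow L^p_a$ (valid for $M$ large) reduces $L^p$ control to the $X_a^M$-decay $(1+t)^{-1}$. Distributing up to $k\leq M-1$ derivatives by Kato-Ponce keeps the same exponential localization (the factors $\p^\alpha n_c(z_1)$ are still localized up to powers of $\na_y\gamma$, whose $L^\infty$ norms are absorbed into $\cN(T)^2$). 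For $p<2$ we interpolate, using the spatial localization of $n_c$ in the $z$-direction combined with the uniform $L^2_y$-bound on $\vr$. The other operators in family (a) — including $(I_{\phi_c(z_1)}-I_0)n$ — are handled identically via the sandwich representation. Family (b) is treated with the $w$-decay from Proposition \ref{prop-w}: for instance $\|n_c(z_1)\,w(z)\|_{W^{k,p}}\lesssim \|w\|_{W^{k,p}}\lesssim (1+t)^{-(7/6-1/p)}$, and the pure quadratic $\|w\cdot v\|_{L^p}$ is bounded by $\|w\|_{L^p}$ times the $L^\infty$-control of $v$ coming from the $W^{1,8_\kappa}$-decay (via Sobolev). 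The logarithmic growth of $\|(\vr,v)\|_{H^M}$ is swallowed by the $(1+t)^\vartheta$ slack in the $\dot H^{-1/2}$ statement, and does not appear in the $\dot W^{k,p}$ statement because only $w$-decay and the (already integrable) weighted bounds are invoked.

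For the $Z$-derivative bounds \eqref{es-ZfF2-Lp}–\eqref{es-fF2} we write $Z\in\{\na_y,\pt+c_0\p_x\}$ and distribute via Leibniz. When $Z$ lands on a solitary-wave factor, we pick up $Z\gamma$ or $Zc$ multiplying a still-localized derivative $\p_c n_c(z_1)$ or $n_c'(z_1)$, and $\cN_{\tilde c,\gamma}(T)$ provides an extra $(1+t)^{-1/2}$. When $Z$ hits the perturbation, we get $Z\vr$ or $Zv$, whose weighted bound is improved: Proposition \ref{prop-weightednorm} gives $\|\na_y U\|_{X_a^{M-1}}\lesssim(1+t)^{-3/2}$, and the $\pt U$ control is obtained by substituting the equation \eqref{eqU-rewrite} (every term on its right-hand side has $(1+t)^{-3/2}$ weighted decay thanks to Corollary \ref{cor-sourceterm} and the bound on $U$). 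When $Z$ hits $w(z)$, Proposition \ref{prop-w} supplies an extra factor of $(1+t)^{-1/6}$ in $L^p$. Repeating the Hölder arguments of the preceding paragraph with these enhanced rates yields $(1+t)^{-3/2}$ for $Z\mathfrak{F}_2/|\na|$ in $L^p$ ($p\in[2,3]$) and, after bookkeeping, the $(1+t)^{-(9-\kappa)/8}$ rate in $W^{m_\kappa,8_\kappa'}$ — the exponent $9/8$ being the exact improvement needed downstream for the dispersive estimate to close.

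The main technical difficulty is the $W^{m_\kappa,8_\kappa'}$ bound on $Z\mathfrak{F}_2$: the target integrability $8_\kappa'<2$ cannot be reached directly from the $L^2$-based weighted controls, since $n_c(z_1)$ (being independent of $y$ up to translation) fails to lie in $L^s(\mR^3)$ for any $s<\infty$. The resolution is to introduce the polynomial weight $(1+|y|)$: we write $\vr = (1+|y|)^{-1}\cdot(1+|y|)\vr$, absorb $(1+|y|)^{-1}$ into the localized factor (so that $n_c(z_1)(1+|y|)^{-1}\in L^s(\mR^3)$ for $s>2$), and then estimate $(1+|y|)\vr$ through the weighted quantity $\|y\,U\|_{X_a^M}\lesssim(1+t)^{-1/2}$ and its $Z$-analogue $\|y\na_y^2 U\|_{X_a^{M-2}}\lesssim(1+t)^{-3/2}$, both part of $\cN_U(T)$. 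A careful Hölder bookkeeping with exponents $\kappa$-close to the borderline then delivers the required $(1+t)^{-(9-\kappa)/8}$ decay.
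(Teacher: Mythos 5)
Your decomposition into "localized solitary-wave factors times perturbation" and "products involving $w$" breaks precisely the structural point this lemma hinges on, and the resulting argument has a genuine gap. In $\mathfrak{F}_2$ the velocity perturbation enters only through the combinations $n_c(z_1)(v+w(z))$ and $\underline{u_c}\cdot(v+w(z))$, and this grouping cannot be undone: $v=\na\tilde\vp$ alone is \emph{not} in the weighted space (the corrector $w$ solves an elliptic problem and decays only polynomially, so $e^{az}v$ is not controlled — only $v(\cdot+c_0t)+w=u+(0,\p_c\psi_c\na_y c)^t$ lies in $H_a^N$, by \eqref{identiy-import-intro} and \eqref{w+v-weighted}). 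Hence your claim that the family-(a) terms $n_c(z_1)v$, $\underline{u_c}\cdot v$ are "handled identically" to $n_c(z_1)\vr$ via $\|e^{az}\cdot\|$ fails for the velocity component. Nor can the discarded pieces be recovered in your family (b): bounding $n_c(z_1)w$, $\underline{u_c}\cdot w$ by $\|w\|_{W^{k,p}}\lesssim(1+t)^{-(7/6-1/p)}$ gives only $(1+t)^{-2/3}$ at $p=2$, short of the stated $(1+t)^{-(3/2-1/p)}=(1+t)^{-1}$, and for $p\in[4/3,2)$ Proposition \ref{prop-w} provides no $L^p$ bound on $w$ at all (Hardy–Littlewood–Sobolev is below threshold there). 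The paper keeps $(n,\tilde v+w)$ paired with the localized factor and interpolates $\|Q_c(z_1)f\|_{L^p}\lesssim\|f\|_{H_a^{1}}^{2/p}\|\na_y f\|_{H_a^{1}}^{1-2/p}$ (for $p\ge2$) to reach $(1+t)^{-(3/2-1/p)}$; splitting $v+w$ loses this.

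A second, related defect is your treatment of the range $p\in[4/3,2)$ in \eqref{es-fF2-Lp}: localization of $n_c$ in $z$ plus an $L^2_y$ bound on $\vr$ cannot lower the integrability in $y$ below $2$; one must interpolate against the $\langle y\rangle$-weighted quantities $\|y(n,\tilde v+w)\|_{H_a^{k+1}}\lesssim(1+t)^{-1/2}$ carried by $\cN_U(T)$, exactly as the paper does (and as you yourself do, correctly, with the $(1+|y|)$ trick for the $W^{m_\kappa,8_\kappa'}$ bound — that last part is the right mechanism and matches the paper). Finally, note that \eqref{fF2-half} has no $(1+t)^{\vartheta}$ slack; the logarithmic growth of $\|(\vr,v)\|_{H^1}$ in the term $w(z)(n,\tilde v)$ is absorbed by the extra $(1+t)^{-1/6}$ from $\|w\|_{W^{1,\infty}}$, not by a slack in the statement.
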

\begin{proof}
Proof of \eqref{fF2-half}.
Let $n(z)=\vr(z+c_0t), \, \tilde{v}(z)=v(z+c_0t).$
In view of the very definition of $\mathfrak{F}_2$ in \eqref{def-fF}, we can control it crudely
\begin{align*}
   \|\mathfrak{F}_2\|_{\dot{H}^{-\f12}} &\lesssim \|Q_c (z_1) (n, I_0 n, \tilde{v}+w)(z)\|_{\dot{H}^{\f12}}+\|w(z) \,(n, \tilde{v})\|_{\dot{H}^{\f12}}\\
   &\lesssim \|(n, \tilde{v}+w)\|_{H_a^1}+\|w\|_{W^{1,\infty}} \|(n, \tilde{v})\|_{H^1}. 
\end{align*}
Thanks to Proposition \ref{prop-weightednorm}, estimate \eqref{w+v-weighted} for the weighted norm and Proposition \ref{prop-w} for $w,$ we find 
\eqref{fF2-half}.

Proof of \eqref{es-fF2-Lp}-\eqref{es-ZfF2-Lp}.
Since $Q_c(\cdot)$ is sufficiently localized,
we have by using Sobolev embedding, interpolation that
\begin{align*}
    \|Q_c(z_1) f\|_{L^p}\lesssim \|e^{az}(\Id,\p_z)f\|_{L_y^pL_z^2}\lesssim \|f\|_{H_a^1}^{\f2p}\|\na_y f\|_{H_a^1}^{1-\f2p}, \quad (\forall\,  2\leq p<+\infty), \\
     \|Q_c(z_1) f\|_{L^p}\lesssim \|Q_c(z_1) \langle z\rangle f\|_{L_y^pL_z^2}\lesssim \|f\|_{H_a^1}^{2-\f2p}\|y f\|_{H_a^1}^{\f2p-1}, \quad (\forall\, 1<p\leq 2).
\end{align*}
Therefore, it holds that for $2\leq p\leq 3$
\begin{align*}
     \big\|\f{\mathfrak{F}_2}{|\na|}\big\|_{\dot{W}^{k,p}}\lesssim
     \|(n, \tilde{v}+w)\|_{H_a^{k+1}}^{\f2p}\|\na_y (n, \tilde{v}+w)\|_{H_a^{k+1}}^{1-\f2p}+ \|w(z) \,(n, \tilde{v})\|_{W^{k,p}}, 
\end{align*}
and for $\f43\leq p\leq 2,$
\begin{align*}
     \big\|\f{\mathfrak{F}_2}{|\na|}\big\|_{\dot{W}^{k,p}}\lesssim
     \|(n, \tilde{v}+w)\|_{H_a^{k+1}}^{2-\f2p}\|y (n, \tilde{v}+w)\|_{H_a^{k+1}}^{\f2p-1}+ \|w(z) \,(n, \tilde{v})\|_{W^{k,p}}.
\end{align*}
The estimate \eqref{es-fF2-Lp} then follows from Propositions \ref{prop-weightednorm}, \ref{prop-w}.
The estimate \eqref{es-ZfF2-Lp} follows from the same arguments by using the fact that  $(\na_y,\pt )(n, \tilde{v}+w)$ has better decay.

Proof of \eqref{es-fF2}.
It holds that  
\begin{align*}
\|Z\mathfrak{F}_2\|_{W^{m_{\kappa},8_{\kappa}'}}&\lesssim
\|Q_c (z-\gamma) \tilde{Z}(n, I_0 n, \tilde{v}+w)(z)\|_{W^{m_{\kappa}+1,8_{\kappa}'}}+\| \tilde{Z}(c,\gamma)\tilde{Q}_c(n,\tilde{v}+w),\|_{W^{m_{\kappa}+1,8_{\kappa}'}}\\
&\qquad\qquad \qquad\qquad\qquad\qquad\qquad\qquad\qquad +\|  \tilde{Z}\na(w(z) \,(n, \tilde{v}))\|_{W^{m_{\kappa},8_{\kappa}'}},
\end{align*}
where $ \tilde{Z}\in \{\na_y, \pt \}$ and $\tilde{Q}_c$ here stands for  $\p_c \tilde{Q}_c$ or $\tilde{Q}_c'.$
The first term can be controlled by using interpolation and Proposition \ref{prop-weightednorm}, 
$$\|\tilde{Z}(n, \tilde{v}+w)\|_{H_a^{m_{\kappa}+1}}^{\f{2}{8_{\kappa}}} \|y \tilde{Z}(n, \tilde{v}+w)\|_{H_a^{m_{\kappa}+1}}^{1-\f{2}{8_{\kappa}}}\lesssim (1+t)^{-\f{9-\kappa}{8}}  \big(\cM(0)+\cN_{\tilde{c},\gamma}(T)+\cN(T)^2\big).$$
The second term is bounded by
$\|\tilde{Z}(c,\gamma)\|_{L_y^{\f{8}{3+\kappa}}}\|\tilde{Z}(n, \tilde{v}+w)\|_{H_a^{m_{\kappa}+1}}\lesssim (1+t)^{-\f{13-\kappa}{8}}\cN(T)^2.$
Moreover, using  \eqref{es-w} for the estimate of $w,$ the third term can be controlled by  
\beqs 
\|(\na,\pt) w\|_{W^{m_{\kappa}+1,\f{8}{3+\kappa}}}\|(n, \tilde{v}))\|_{H^{m_{\kappa}+1}}+\|w\|_{W^{m_{\kappa},24}}\|\tilde{Z}\na (n, \tilde{v}))\|_{W^{m_{\kappa},6/5}}\lesssim (1+t)^{-\f{9}{8}} \cN(T)^2.
\eeqs
Note that we have used the Sobolev embedding $L^{6/5} \hookrightarrow H^{-1}.$ To summarize, we obtain \eqref{es-fF2}.
\end{proof}
\subsubsection{Estimate of $\mathfrak{F}_3$}
To estimate $\mathfrak{F}_3,$ it is useful to first 
state some decay estimates for $\alpha$ in $L^p (2\leq p\leq +\infty).$ 
\begin{lem} 
(1) (Low frequency estimate).
  Let $p\in[2,+\infty]$ and $\theta$ as in  \eqref{relation-theta-p}, it holds that
\begin{align}\label{alphaLP-low}
  \|P_{\leq 0}\,\alpha\|_{\dot{W}^{\theta,p}}\lesssim (1+t)^{-\f{1}{3}\big(2+\theta-\f3p\big)(1+\f{\kappa}{18})+\cO(\kappa^2)}\, \cN(T)\, .
\end{align}
More preciserly,  we have that 
\beq\label{L4-LF}
\begin{aligned}
 \|P_{\leq 0}\,\alpha\|_{L^4}\lesssim (1+t)^{-\f{5}{12}\big(1+\f{\kappa}{18}+\cO(\kappa^2) \big)} \cN(T),\quad 
 \|P_{\leq 0}\,\alpha\|_{{\dot{W}^{\f12,4}}}\lesssim (1+t)^{-\f{7}{12}\big(1+\f{\kappa}{18}+\cO(\kappa^2) \big)} \cN(T).
\end{aligned}
\eeq
(2). (High regularity estimate).
Let $\ell_{\kappa}=1+\f{3}{8_{\kappa,1}},  \beta_{\kappa}=\f{4}{3+\kappa}\big(1-\f2p\big).$
 It holds that, for any $2\leq p\leq 8_{\kappa},$ 
 any $k\geq 0$ such that $M\geq \ell_{\kappa}+1+k/(1-\beta_{\kappa}),$
\begin{align}\label{highreg-interp}
    \|P_{\geq -10}\,\alpha\|_{W^{\ell_{\kappa}+1+k,p}}\lesssim (1+t)^{-\f43(1-\f2p)+\cO(\kappa)} \cN(T). \quad 
\end{align}

\end{lem}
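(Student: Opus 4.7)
The plan is to derive both estimates from the Duhamel representation of equation~\eqref{eq-alpha},
\[
\alpha(t)=e^{itP(D)}\alpha(0)+\sum_{j=1}^4\int_0^t e^{i(t-s)P(D)}\mathfrak{N}_j(s)\,\d s,
\]
combined with the low frequency dispersive estimate~\eqref{disper-low} and the source term bounds collected in Subsection~\ref{subsection-fF124}, and to obtain the high regularity estimate by interpolating the energy bound of Proposition~\ref{prop-energy} against the dispersive estimate~\eqref{decayes-Zder}.

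\textbf{Part (1), low frequency estimate.} Following the scheme used for~\eqref{es-alphas}--\eqref{es-alphar}, I would first use the splitting $\alpha=\alpha_s+\alpha_r$ introduced in the proof of Proposition~\ref{prop-decayes}. For $\alpha_r$, the bound~\eqref{es-alphar} in $\dot W^{-1,p}$ is already integrable in $p\in[2,6)$, and Bernstein's inequality on $P_{\leq 0}$ trades the negative regularity for additional integrability while keeping the same time rate, giving the claimed exponent on this piece. For $\alpha_s$, the refined decomposition $\mathfrak{F}_1=\mathfrak{F}_1^1+\mathfrak{F}_1^2$ of Lemma~\ref{lem-fF1}(2) applied through~\eqref{disper-low} with slightly sub-dual Lebesgue exponents produces the desired decay: the good piece $\mathfrak{F}_1^2$ trivially decays as $(1+t)^{-1}$, while the critical piece $\mathfrak{F}_1^1$ contributes the $-\tfrac13(2+\theta-\tfrac3p)$ rate after convolution against the dispersive kernel. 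The statement~\eqref{L4-LF} at $p=4$ is then a specialization, which can alternatively be recovered by complex interpolation between the tempered growth $\|\alpha\|_{\dot H^{-1/2}}\lesssim(1+t)^{\vartheta}\cN(T)$ from~\eqref{negative-sobolev2} and the sharper high-integrability decay already established in~\eqref{decayes-Zder}. The gain factor $(1+\f{\kappa}{18})$ arises by using Bernstein on $P_{\leq 0}$ to trade a sliver of regularity $\sim \kappa$ against additional time decay; the second-order correction $\cO(\kappa^2)$ absorbs the various losses $\vartheta\in(0,\kappa^2]$ introduced along the way.

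\textbf{Part (2), high regularity estimate.} The plan is to perform Gagliardo--Nirenberg interpolation between two endpoint estimates that are already in hand. On the one side, the energy bound
\[
\|\alpha\|_{H^M}\lesssim \log(2+t)\,\cN(T)
\]
follows from Proposition~\ref{prop-energy} via the relation~\eqref{rel-v-alpha} between $\alpha$ and $(\vr,v)$. On the other side, the dispersive bound
\[
\bigl\||\na|^{3/8_{\kappa,1}}\na(\vr,v)\bigr\|_{W^{1,8_{\kappa}}}\lesssim (1+t)^{-(1+\iota)}\cN(T),
\]
coming from~\eqref{decayes-Zder}, transfers to $\alpha$ again through~\eqref{rel-v-alpha}. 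Matching the target Lebesgue exponent $p$ fixes the interpolation parameter $\theta=\beta_{\kappa}=\f{4}{3+\kappa}(1-\f 2p)$. The total number of derivatives we can afford at level $p$ is $(1-\beta_\kappa)M+\beta_\kappa(\ell_\kappa+1)$, and requiring this to dominate $\ell_\kappa+1+k$ is precisely the stated assumption $M\geq \ell_{\kappa}+1+k/(1-\beta_{\kappa})$ (since $P_{\geq -10}$ makes the action of $|\na|^{\ell_\kappa+1+k}$ equivalent on these frequencies). The resulting decay $(\log(2+t))^{1-\beta_\kappa}(1+t)^{-\beta_\kappa(1+\iota)}$ is absorbed into $(1+t)^{-\tfrac43(1-\tfrac2p)+\cO(\kappa)}$ using $\beta_{\kappa}=\tfrac43(1-\tfrac2p)+\cO(\kappa)$.

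\textbf{Main obstacle.} The delicate point is the sharp $\kappa$-tracking in Part~(1). The leading rate $-\tfrac13(2+\theta-\tfrac3p)$ is what the linear dispersive estimate~\eqref{disper-low} provides directly, but the improvement factor $(1+\f{\kappa}{18})$ is nontrivial: it requires balancing simultaneously the Bernstein trade-off on $P_{\leq 0}$, the loss $\vartheta\in(0,\kappa^2]$ in~\eqref{negative-sobolev2}, and the admissibility of all bilinear/trilinear estimates that feed $\mathfrak{N}_2,\mathfrak{N}_3,\mathfrak{N}_4$ through Duhamel. The restriction $\kappa\leq \kappa_0\leq 1/100$ is exactly what is needed to keep all residual errors of size $\cO(\kappa^2)$ strictly below the leading decay, and checking that no power of $\kappa$ is lost in the chain of interpolations is where the bulk of the technical work will lie.
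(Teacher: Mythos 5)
Your Part (2) is essentially the paper's argument: a Gagliardo--Nirenberg interpolation of $\|P_{\geq-10}\alpha\|_{W^{\ell_\kappa+1+k,p}}$ between $\|P_{\geq-10}\alpha\|_{W^{\ell_\kappa+1,8_\kappa}}$ and $\|P_{\geq-10}\alpha\|_{H^M}$, with the logarithm absorbed into the $\cO(\kappa)$ loss; the only (easily repaired) flaw is that you quote \eqref{decayes-Zder} as input, whereas that estimate is a \emph{conclusion} of Proposition \ref{prop-decayes} whose proof uses the present lemma. The legitimate inputs are the components of the bootstrap quantity $\cN_{(\vr,v)}(T)$ in \eqref{def-norms-vr-v} (transferred to $\alpha$ through \eqref{rel-v-alpha} and H\"ormander--Mikhlin), not the conclusions of the proposition.

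Part (1) has a genuine gap. Your primary route — Duhamel for \eqref{eq-alpha}, the dispersive estimate \eqref{disper-low}, the splitting $\alpha=\alpha_s+\alpha_r$ and the bounds \eqref{es-alphas}--\eqref{es-alphar} together with the source estimates of Subsection \ref{subsection-fF124} — is circular: \eqref{es-alphar} rests on the bounds for $\mathfrak{N}_3$ and $\mathfrak{N}_4$ (\eqref{fF3-negative1-low}, \eqref{fF4-negativeLp}), and those are proven in the paper \emph{using} \eqref{L4-LF} and \eqref{highreg-interp}, i.e.\ using the very lemma you are trying to prove; more generally, any Duhamel treatment of the quadratic term $\mathfrak{N}_3$ needs $L^q$ decay of $\alpha$ of exactly the type asserted here. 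Even leaving circularity aside, the exponents do not come out of the machinery you cite: at $\theta=0$, $p=4$ the claim is $(1+t)^{-\f{5}{12}(1+\f{\kappa}{18})+\cO(\kappa^2)}$, whereas \eqref{es-alphas} only gives $\|P_{\leq-5}\alpha_s\|_{\dot W^{-1,4}}\lesssim (1+t)^{-(\f14-\vartheta)}$, so Bernstein on $P_{\leq0}$ yields at best $(1+t)^{-\f14+\vartheta}$ for that piece, and the slow decay of $\mathfrak{N}_1$ (cf.\ \eqref{cF1-minus1}) caps what a direct Duhamel bound can give; the refinement $(1+\f{\kappa}{18})$ and the admissible range \eqref{relation-theta-p} (all $p\in[2,+\infty]$, $\theta$ down to $-\f12$) are precisely tailored to something else. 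Indeed the paper's proof is the one-line argument you relegate to an aside: apply the interpolation inequality \eqref{interpolation-import} of Lemma \ref{lem-interpo-low} to $P_{\leq0}\alpha$, interpolating the $\dot H^{-1/2}$ component of $\cN_{(\vr,v)}(T)$ (growth $(1+t)^{\vartheta}$, $\vartheta\leq\kappa^2$) against its $\dot W^{\ell_\kappa,8_\kappa}$ component (decay $(1+t)^{-(1+\iota)}$, $\iota=\f{\kappa}{24}$); no Duhamel and no source-term estimates enter, and the output is linear in $\cN(T)$, as the statement requires so that it can then be fed into the quadratic bounds for $\mathfrak{F}_3,\mathfrak{F}_4$. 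You should make that interpolation the proof, with the inputs drawn from the definition \eqref{def-norms-vr-v} rather than from \eqref{negative-sobolev2} and \eqref{decayes-Zder}, and drop the Duhamel scaffolding.
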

\begin{proof}
The estimate \eqref{alphaLP-low} is a slightly rougher version of \eqref{interpolation-import} proved in Lemma \ref{lem-interpo-low}.  
Specifically, we find by applying the interpolation \eqref{interpolation-import} that
\begin{align*}
    \|P_{\leq 0}\alpha\|_{L^4}\lesssim \big\|P_{\leq 0} 
  \alpha\big\|_{\dot{W}^{\ell_{\kappa}, 8_{\kappa}}}^{\theta_{\kappa}}\|P_{\leq 0}\alpha\|_{\dot{H}^{-1/2}}^{1-\theta_{\kappa}}, \quad
     \|P_{\leq 0}\alpha\|_{\dot{W}^{\f12,4}}\lesssim \big\|P_{\leq 0} 
\alpha\big\|_{\dot{W}^{\ell_{\kappa},8_{\kappa}}}^{\vartheta_{\kappa}}\|P_{\leq 0}\alpha\|_{\dot{H}^{-1/2}}^{1-\vartheta_{\kappa}},
\end{align*}
where 
\begin{align*}
    \theta_{\kappa}=\f{5}{12}\big(1+\f{\kappa}{72} \big)+\cO(\kappa^2),\, \quad  \vartheta_{\kappa}=\f{7}{12} \big(1+\f{\kappa}{72} \big)+\cO(\kappa^2).
\end{align*}
   The estimate \eqref{highreg-interp} is the consequence of the Gagliardo-Nirenberg inequality
\begin{align*}
    \|P_{\geq -10} \, \alpha\|_{\dot{W}^{\ell_{\kappa}+1+k,p}}\lesssim \|P_{\geq -10}\,\alpha\|_{W^{\ell_{\kappa}+1,8_{\kappa}}}^{\beta_{\kappa}} \|P_{\geq -10}\,\alpha\|_{H^M}^{1-\beta_{\kappa}}
\end{align*}
where $k$ is such that $M\geq \ell_{\kappa}+1+k/(1-\beta_{\kappa}).$

\end{proof}
Based on the above lemma, we are able to show the following estimates for  $\mathfrak{F}_3:$
\begin{lem}
 Let $\kappa>0$ be small enough,  it holds that, 
\beq\label{fF3-half}
 \|\mathfrak{F}_3\|_{\dot{H}^{-\f12}}\lesssim (1+t)^{-(1+\f{\kappa}{36})} \cN(T)^2,
 \eeq
 and for any $2\leq p\leq 4,$ 
 \beq\label{fF3-negative1-low}
\|P_{\leq -5}\mathfrak{F}_3
\|_{\dot{W}^{-(\f12+\f1p), p'}}\lesssim (1+t)^{-(\f12+\f{2}{3p})} \cN(T)^2.
 \eeq
\end{lem}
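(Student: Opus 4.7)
\textbf{Proposed proof plan for the estimates on $\mathfrak{F}_3$.}

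The plan is to reduce every bound to a statement about bilinear expressions in $\alpha$. Using the relations \eqref{rel-v-alpha} together with the Hörmander--Mikhlin multiplier theorem, the components $\vr$ and $v$ can be written as bounded Fourier multipliers applied to $\alpha,\bar{\alpha}$; moreover $I_0 = (1-\Delta)^{-1}$ is also a smoothing multiplier. Crucially, every term of $\mathfrak{F}_3$ carries a factor $\nabla$ out front, so after removing this derivative each piece becomes a smooth bilinear form in $\alpha$ and $\bar{\alpha}$. Consequently, for the first inequality it is enough to prove
\beqs
\bigl\||\nabla|^{1/2}(\alpha\cdot\alpha)\bigr\|_{L^2}\lesssim (1+t)^{-1-\kappa/36}\cN(T)^2,
\eeqs
and for the second it suffices to show, for $2\le p\le 4$,
\beqs
\bigl\|P_{\leq -5}|\nabla|^{1/2-1/p}(\alpha\cdot\alpha)\bigr\|_{L^{p'}}\lesssim (1+t)^{-(1/2+2/(3p))}\cN(T)^2.
\eeqs

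For the $\dot{H}^{-1/2}$ estimate, I would split $\alpha=\alpha_L+\alpha_H$ with $\alpha_L=P_{\le 0}\alpha$ and $\alpha_H=P_{\ge -10}\alpha$, and treat the three interactions LL, LH, HH separately via the Kato--Ponce fractional Leibniz rule. For the LL term, I will apply the fractional Leibniz inequality $\||\nabla|^{1/2}(fg)\|_{L^2}\lesssim \|f\|_{\dot W^{1/2,4}}\|g\|_{L^4}+\|g\|_{\dot W^{1/2,4}}\|f\|_{L^4}$ and then use directly the two estimates in \eqref{L4-LF}, whose product is $(1+t)^{-1-\kappa/18+\cO(\kappa^2)}\cN(T)^2$. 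For LH and HH interactions, the high factor is controlled by the high-regularity estimate \eqref{highreg-interp} with $p=8_\kappa$, giving a decay $(1+t)^{-4/3+\cO(\kappa)}$, while the remaining factor is estimated in $L^{8_\kappa'}$ using either \eqref{alphaLP-low} for a low piece or Sobolev embedding together with \eqref{highreg-interp} for a high piece. The corresponding exponents then exceed $-1-\kappa/36$ by a positive margin, so the worst contribution is the LL one and the stated decay follows.

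For the second inequality, the outer projection $P_{\le -5}$ together with the explicit gradient structure produces the operator $|\nabla|^{1/2-1/p}P_{\le -5}$. I will again dyadically split; for LL inputs the convolution structure confines the output to low frequencies naturally, and combining Young's inequality with \eqref{alphaLP-low} with $p'\in[4/3,2]$ yields the desired decay. For LH and HH inputs one uses that an output frequency bounded by $2^{-5}$ forces the two input frequencies to be comparable; this lets me pair \eqref{highreg-interp} at $p=8_\kappa$ with the same bound at a complementary exponent, after invoking the boundedness of $P_{\le -5}$ on all $L^r$. The total decay rate turns out to be at least $(1+t)^{-(1/2+2/(3p))}$, which is the slowest of the three and matches the claim.

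The main obstacle will be bookkeeping the HH interactions at low output frequency, because in that regime one must avoid using negative-regularity norms of $\alpha$ beyond $\dot H^{-1/2}$ (which grows in time by \eqref{negative-sobolev2}); instead the decay has to be extracted from the $L^{8_\kappa}$-based estimate \eqref{highreg-interp} and an integrability loss compensated by the smoothing provided by $P_{\le -5}$. A careful balance of the parameters $\kappa$, $\vartheta$ and $\iota$ is what makes the small gain $\kappa/36$ in \eqref{fF3-half} appear, and one has to check that the LL contribution (producing $\kappa/18$) dominates over the mixed contributions, whose gain can be smaller but is still positive for $\kappa$ small enough.
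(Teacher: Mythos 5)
Your treatment of \eqref{fF3-half} is essentially the paper's proof: the paper applies Kato--Ponce in the form $\|\mathfrak{F}_3\|_{\dot H^{-1/2}}\lesssim\|\alpha^2\|_{\dot H^{1/2}}\lesssim\|\alpha\|_{\dot W^{1/2,4}}\|\alpha\|_{L^4}$ and then invokes \eqref{L4-LF} for the low-frequency parts and \eqref{highreg-interp} for the high-frequency parts, which is exactly your LL/LH/HH splitting made implicit. Two small slips in your bookkeeping do not affect the conclusion but should be fixed: \eqref{highreg-interp} at $p=8_\kappa$ gives decay $(1+t)^{-\frac43(1-\frac{2}{8_\kappa})}= (1+t)^{-(1+\frac{\kappa}{3})}$, not $(1+t)^{-4/3}$; and the Hölder partner of $L^{8_\kappa}$ for an $L^2$ bound is $L^{8/(3+\kappa)}$, not $L^{8_\kappa'}$ (note also that \eqref{alphaLP-low} only covers exponents $\geq 2$, so $L^{8_\kappa'}\approx L^{8/7}$ is not available). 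With the corrected exponents the mixed and high-high terms still decay faster than $(1+t)^{-(1+\kappa/36)}$, so \eqref{fF3-half} goes through.

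For \eqref{fF3-negative1-low} there is a genuine gap in your low-low term. As written, you propose "Young's inequality with \eqref{alphaLP-low}", i.e. you treat $|\nabla|^{\frac12-\frac1p}P_{\leq-5}$ as a harmless bounded multiplier and estimate $\|\alpha_L\alpha_L\|_{L^{p'}}\leq\|\alpha_L\|_{L^{2p'}}^2$. By \eqref{alphaLP-low} this only yields decay $(1+t)^{-(\frac13+\frac1p)}$, which is strictly weaker than the required $(1+t)^{-(\frac12+\frac{2}{3p})}$ for every $p\in(2,4]$ (e.g.\ $\frac{7}{12}$ versus $\frac23$ at $p=4$); trading the derivative for integrability via Bernstein makes things worse, since low-frequency $L^q$ decay of $\alpha$ improves with larger $q$. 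The derivative weight must be spent on the inputs: this is what the paper does, writing $\|P_{\leq-5}|\nabla|^{\frac12-\frac1p}(\cR\alpha)^2\|_{L^{p'}}\lesssim\|\alpha\|_{L^{5/2}}\,\|\alpha\|_{\dot W^{\frac12-\frac1p,\,q}}$ with $\frac1q=\frac{1}{p'}-\frac25$, and then applying \eqref{alphaLP-low} with $\theta=0$ and $\theta=\frac12-\frac1p>0$ respectively; the positive $\theta$ contributes $\frac{\theta}{3}$ extra decay and the two rates add up exactly to $\frac12+\frac{2}{3p}$ (up to the harmless $(1+\frac{\kappa}{18})$ factor). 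You should replace your LL step by such a fractional-Leibniz distribution of $|\nabla|^{\frac12-\frac1p}$; your high-frequency cases can then be closed as you indicate, provided the Hölder exponents are kept $\geq 2$ (e.g.\ pair $L^{8/(3+\kappa)}$ with the appropriate complementary exponent) so that \eqref{alphaLP-low} and \eqref{highreg-interp} are actually applicable.
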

\begin{proof}
    Applying the Kato-Ponce inequality, we find that 
\begin{align*}
   \|\mathfrak{F}_3\|_{\dot{H}^{-\f12}}\lesssim 
   \|\alpha^2\|_{\dot{H}^{\f12}}\lesssim \|\alpha\|_{\dot{W}^{\f12, 4}
   }\|\alpha\|_{L^{4}},
\end{align*}
The estimate \eqref{fF3-half} then follows from  \eqref{L4-LF} and \eqref{highreg-interp}.
To prove \eqref{fF3-negative1-low}, we apply \eqref{alphaLP-low} \eqref{highreg-interp} to find 
\begin{align*}
   &\|P_{\leq -5}\mathfrak{F}_3
\|_{\dot{W}^{-(\f12+\f1p), p'}}\lesssim \|P_{\leq -5} |\na|^{\f12-\f1p}(\cR\alpha)^2\|_{L^{p'}}\\
&\lesssim \|\alpha\|_{L^{5/2}} \|\alpha\|_{\dot{W}^{\f12-\f1p,\f{1}{1/p'-2/5}}}\lesssim (1+t)^{-(\f12+\f{2}{3p})(1+\f{\kappa}{18})+\cO(\kappa^2)} \cN(T)^2.
\end{align*}

\end{proof}

\subsubsection{Estimate of $\mathfrak{F}_4$}
\begin{lem}
The following three estimates hold
    \begin{align}\label{esfF4-neghalf}
   (1+t)^{\f53+\cO(\kappa)} \|\mathfrak{F}_4\|_{\dot{H}^{-\f12}}+ (1+t)^{\f43+\cO(\kappa)}\big\|\f{\mathfrak{F}_4}{|\na|}\big\|_{H^{M}} 
   \lesssim \cN(T)^2+\cN(T)^3, 
\end{align}
\begin{align}\label{fF4-negativeLp}
\|P_{\leq -5}\,\mathfrak{F}_4
\|_{\dot{W}^{-(\f12+\f1p), p'}}\lesssim (1+t)^{-(\f76+\f{2}{3p})+\cO(\kappa)} \big(\cN(T)^2+\cN(T)^3\big), \quad \forall \, 2\leq p\leq 4,
\end{align}
   \begin{align}\label{es-fF4}
\|Z\mathfrak{F}_4\|_{W^{m_{\kappa},8_{\kappa}'}}\lesssim
(1+t)^{-\f{7}{6}+\cO(\kappa)} \big(\cN(T)^2+\cN(T)^3\big).
\end{align}
where $\mathcal{O}(\kappa)$ means a quantity whose absolute can be bounded by $C\kappa$ for some $C>0$ independent of $\kappa.$ 
\end{lem}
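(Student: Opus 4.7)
The plan is to split $\mathfrak{F}_4 = \mathfrak{F}_4^{\text{loc}} + \mathfrak{F}_4^{\text{cub}}$ along the dichotomy \emph{localized coefficient versus purely polynomial in $\vr$}. More precisely, using Taylor expansion for $H$ and the identity \eqref{errorphi-cgamma} for $\tilde{\phi}$, I will write
\begin{align*}
H(n_c(z_1),\vr) &= \tfrac{1}{2}\big(h''(1+n_c(z_1))-h''(1)\big)\vr^2 + \tfrac{1}{6}h'''(1)\vr^3 + \widetilde{H}(n_c(z_1),\vr),\\
\tilde{G}(\phi_c(z_1),\vr) &= \tfrac{1}{2}\big(I_{\phi_c(z_1)}e^{\phi_c(z_1)}-I_0\big)(I_0\vr)^2 + \tfrac{1}{2}I_{\phi_c(z_1)}e^{\phi_c(z_1)}\big((I_{\phi_c(z_1)}\vr)^2-(I_0\vr)^2\big) + \widetilde{G}(\phi_c(z_1),\vr),
\end{align*}
where $\widetilde{H}$ collects the $O(\vr^4)$ remainder (and cross terms with $n_c$), and $\widetilde{G}$ collects $O(\tilde\phi^3)$ plus the contribution of the localized part $I_{\phi_c(z_1)}\Delta_y\phi_c$ in the expansion of $\tilde\phi$. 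I then collect into $\mathfrak{F}_4^{\text{loc}}$ all the terms whose coefficient carries an exponentially localized $n_c(z_1)$ or $\phi_c(z_1)$ (including the difference operators $I_{\phi_c(z_1)}-I_0$ which can be rewritten as $I_{\phi_c(z_1)}(1-e^{\phi_c(z_1)})I_0$, isolating a localized factor), and into $\mathfrak{F}_4^{\text{cub}}$ the residual terms of order $\vr^3$ and higher with constant coefficients.

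For $\mathfrak{F}_4^{\text{loc}}$, the approach is to absorb the exponential $z_1$-decay of the coefficient into the weight $e^{az_1}$ and exploit Proposition \ref{prop-weightednorm}. After the coordinate shift $\vr(t,x,y)=U_1(t,x-c_0t,y)$, every such term is bounded by products like $\|e^{az_1}\vr\|_{H^k}^2\lesssim \|U\|_{X_a^M}^2\lesssim (1+t)^{-2}(\cM(0)+\cN(T)^2)^2$, which is comfortably better than any target rate. The same argument, applied to $Z\mathfrak{F}_4^{\text{loc}}$ and combined with the $(1+t)^{-3/2}$ decay of $\|\na_y U\|_{X_a^{M-1}}$ and the estimate \eqref{es-ptc} for $\pt(\tilde c,\gamma)$, handles the $Z$-derivative estimate. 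The bounded operator $I_{\phi_c(z_1)}$ on $L^2_a$ is used throughout; once we move the localization onto the weight, the Sobolev embedding into $L^p_{xy}$ costs no decay.

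For $\mathfrak{F}_4^{\text{cub}}$, the bounds are purely nonlinear. For the $\dot H^{-1/2}$ estimate I use $\|\mathfrak{F}_4^{\text{cub}}\|_{\dot H^{-1/2}}\lesssim \|H^{\text{cub}}+\tilde G^{\text{cub}}\|_{\dot H^{1/2}}$, then Kato--Ponce and Hölder give $\|\vr^3\|_{\dot H^{1/2}}\lesssim \|\vr\|_{\dot W^{1/2,4}}\|\vr\|_{L^8}^2$; the rates from \eqref{L4-LF} and \eqref{highreg-interp} yield faster than $(1+t)^{-5/3}$. For $\|\mathfrak{F}_4^{\text{cub}}/|\na|\|_{H^M}$, I reduce to $\|\vr^3\|_{H^M}\lesssim \|\vr\|_{L^\infty}^2\|\vr\|_{H^M}$ and use that $\|\vr\|_{L^\infty}$ has $(1+t)^{-(1+\iota)}$ decay (via Sobolev embedding of $\dot W^{1+3/8_{\kappa,1},8_\kappa}\hookrightarrow L^\infty$) against the logarithmic growth of $\|\vr\|_{H^M}$. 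For the low-frequency $\dot W^{-(1/2+1/p),p'}$ estimate I use that $P_{\leq -5}|\na|^{1/2-1/p}$ is bounded on $L^{p'}$ and then bound $\|\vr^3\|_{L^{p'}}$ by combinations $\|\vr\|_{L^\infty}\|\vr\|_{L^{q}}^2$ with $q$ chosen by Hölder; the borderline case $p=4$, $p'=4/3$ is handled by $\|\vr^3\|_{L^{4/3}}\lesssim \|\vr\|_{L^\infty}\|\vr\|_{L^{8/3}}^2$, interpolating $L^{8/3}$ between $L^2$ and $L^4$. The $Z$-derivative bound follows the same pattern, with one factor of $\vr$ replaced by $Z\vr$, whose $L^p$-decay is the same as that of $\vr$ after one extra derivative through the dispersive estimates.

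The main obstacle, I anticipate, is the clean identification and bookkeeping of $\mathfrak{F}_4^{\text{cub}}$: the nonlocal operators $I_{\phi_c(z_1)}$ do not commute with the Fourier projections used in the dispersive estimates, and the $\tilde\phi$-expansion produces iterated mixed terms. The key structural observation to resolve this is the resolvent identity $I_{\phi_c(z_1)}e^{\phi_c(z_1)} = I_0 + I_{\phi_c(z_1)}(e^{\phi_c(z_1)}-1) + (I_{\phi_c(z_1)}-I_0)$, which systematically separates any $I_{\phi_c(z_1)}e^{\phi_c(z_1)}$ into a pure $I_0$-piece (absorbed by the explicit cancellation with $-\tfrac12 I_0(I_0\vr)^2$) and pieces carrying an exponentially localized multiplier $(1-e^{\phi_c(z_1)})$ or $\phi_c(z_1)$ itself; these latter pieces are then routed into $\mathfrak{F}_4^{\text{loc}}$. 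Once this algebraic separation is fixed, all the analytic estimates follow the two templates described above.
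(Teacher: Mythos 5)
Your overall strategy (Taylor expansion of $H$, expansion of $\tilde G$ through \eqref{errorphi-cgamma} with the resolvent identity isolating the localized factor $1-e^{\phi_c}$, weighted norms for the localized pieces, $L^p$-decay interpolation for the constant-coefficient cubic pieces) is exactly the paper's route. However, two steps as written do not deliver the stated estimates.

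First, your justification of the $H^M$ bound rests on the claim that $\|\vr\|_{L^\infty}$ decays like $(1+t)^{-(1+\iota)}$ via the embedding $\dot W^{1+3/8_{\kappa,1},8_\kappa}\hookrightarrow L^\infty$. This is false: the decay norm in $\cN_{(\vr,v)}(T)$ controls only $|\na|^{3/8_{\kappa,1}}\na(\vr,v)$ in $W^{1,8_\kappa}$, i.e. a norm on $\na\vr$, and the low frequencies of $\vr$ itself are not controlled by it (Bernstein gives $\|P_j\vr\|_{L^\infty}\lesssim 2^{j(3/8_\kappa-1-3/8_{\kappa,1})}\||\na|^{1+3/8_{\kappa,1}}P_j\vr\|_{L^{8_\kappa}}$ with a negative exponent, so the sum over $j\le 0$ diverges). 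The correct rate, obtained from \eqref{alphaLP-low} ($\theta=0$, $p=\infty$) together with \eqref{highreg-interp}, is $\|\vr\|_{L^\infty}\lesssim (1+t)^{-2/3+\cO(\kappa)}\cN(T)$; this still yields $(1+t)^{-4/3+\cO(\kappa)}$ for $\|\vr\|_{L^\infty}^2\|\vr\|_{H^M}$ (the logarithm being absorbed in $\cO(\kappa)$), so the conclusion survives, but the step as stated is wrong and should be replaced by the interpolation estimates.

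Second, and more seriously for the statement as given, your treatment of \eqref{fF4-negativeLp} discards the fractional derivative: after writing $\mathfrak{F}_4=\na(\cdots)$ you bound $P_{\le-5}|\na|^{\f12-\f1p}$ on $L^{p'}$ and estimate $\|\vr^3\|_{L^{p'}}$ by H\"older. With the available low-frequency rates $\f13\big(2-\f3q\big)$ from \eqref{alphaLP-low}, any H\"older split of $\vr^3$ in $L^{p'}$ gives at best total decay $2-\f1{p'}=1+\f1p$; e.g. at $p=4$ your choice $\|\vr\|_{L^\infty}\|\vr\|_{L^{8/3}}^2$ gives $\f23+\f7{12}=\f54$, strictly short of the claimed $\f76+\f{2}{3p}=\f43$. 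The stated exponent requires keeping the gain $|\na|^{\f12-\f1p}$ on a low-frequency factor and exploiting that \eqref{alphaLP-low} improves by $\theta/3$ when $\theta=\f12-\f1p$ derivatives are present, exactly as in the proof of \eqref{fF3-negative1-low} (one then gets $2-\f1{p'}+\f13(\f12-\f1p)=\f76+\f{2}{3p}$). So as written your argument proves only a weaker version of \eqref{fF4-negativeLp} for $p\in(2,4]$; to prove the lemma as stated you must distribute the fractional derivative via fractional Leibniz rather than drop it. (A minor related point: the mixed localized terms $I_{\phi_c}\vr\cdot I_{\phi_c}\Delta_y\phi_c$ are not "comfortably better" — they saturate the $(1+t)^{-5/3}$ rate in $\dot H^{-1/2}$, via $\|n\|_{H^1_a}\cdot\|\vr\|_{W^{1,\infty}}$-type bounds — so they need the borderline bookkeeping the paper does, not a blanket $(1+t)^{-2}$.)
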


\begin{proof}
By Taylor expansion, we have
\beq\label{exp-H}
H(n_c(z_1), \vr)=\f{\vr^3}{2}\int_0^1 h'''(1+n_c(z_1)+\theta\vr) (1-\theta)^2 \, \d\theta,
\eeq
we can thus control $\na H(n_c(z_1), \vr)$ by using  \eqref{alphaLP-low}-\eqref{highreg-interp}:
\begin{align*}
    \|\na H(n_c(z_1), \vr)\|_{\dot{H}^{-\f12}}\lesssim \|\vr^3\|_{\dot{H}^{\f12}} \|h(\cdot)\|_{C^4\big([\f12,\f32]\big)}
&\lesssim \|\vr\|_{\dot{W}^{\f12,4}}\|\vr\|_{L^4}\|\vr\|_{L^{\infty}}
\\
&\lesssim (1+t)^{-(\f53+\cO(\kappa))} \cN(T)^3.
\end{align*}
Concerning the term $\na \tilde{G}(\phi_c(z_1), \vr),$ 
we apply the expansion \eqref{errorphi-cgamma} to write 
\beq\label{exp-tG}
\begin{aligned}
\tilde{G}(\phi_c(z_1), \vr)&=\f12 \big(I_{\phi_c(z_1)}\big(I_{\phi_c(z_1)}\vr\big)^2-I_0(I_0 \vr)^2\big)\\
&+\f12 \big(I_{\phi_c(z_1)}(I_{\phi_c(z_1)}\Delta_y \phi_{c}(z_1))^2+2I_{\phi_c(z_1)} \vr\, I_{\phi_c(z_1)}\Delta_y \phi_{c}(z_1)\big)+\text{cubic terms in } \vr, \Delta_y \phi_{c}
\end{aligned} 
\eeq
Consequently, it holds that 
\begin{align*}
    \|\na \tilde{G}(\phi_c(z_1), \vr)\|_{\dot{H}^{-\f12}}\lesssim \|\tilde{G}(\phi_c(z_1), \vr)\|_{\dot{H}^{\f12}}&\lesssim \|\vr(\cdot+c_0 t)\|_{H_a^1}\|\vr\|_{W^{1,\infty}}+\|\Delta_y \phi_{c}\|_{H^1} \|(\Delta_y \phi_c, \vr)\|_{W^{1,\infty}},
\end{align*} 
which, combined with the estimate
$\|\Delta_y \phi_{c}\|_{H^3}\lesssim \|\big(\Delta_y(\gamma, c), \na_y\gamma\cdot\na_y c\big)\|_{L^2}\lesssim (1+t)^{-1}\cN(T),$ can be bounded further by $ (1+t)^{-(\f53+\cO(\kappa))}\cN(T)^2.$ To summarize, we have finished the estimate of $\mathfrak{F}_4$ in $\dot{H}^{-\f12}.$ The estimate of $\big\|\f{\mathfrak{F}_4}{|\na|}\big\|_{H^{M}}$ can be controlled in the similar way. For instance we have
\beqs 
\|H(n_c(z_1), \vr) \|_{H^{M}}\lesssim \|\vr\|_{L^{\infty}}^2\|\vr\|_{H^M}\lesssim (1+t)^{-(\f43+\cO(\kappa))}\cN(T)^2\, .
\eeqs
The estimate \eqref{fF4-negativeLp} can be derived using the expansions \eqref{exp-H} and \eqref{exp-tG}, following a proof similar to that of \eqref{fF3-negative1-low}. The details are omitted.

Let us now prove \eqref{es-fF4}. 
We can control the cubic term $Z\na \vr^3$ roughly as
\beqs 
\|\na Z \vr^3\|_{W^{m_{\kappa},8_{\kappa}'}}\lesssim 
\|Z \vr^3\|_{W^{m_{\kappa}+1,8_{\kappa}'}}\lesssim 
 \|\vr\|_{W^{m_{\kappa}+1,\f{8}{3+\kappa}}}\|\vr\|_{L^{\f{10}{3}}}\|Z\vr\|_{L^5}+\|P_{\geq 1} Z \vr\|_{W^{m_{\kappa}+1,\f{8}{3+\kappa}}}
\|\vr\|_{L^4}^2. 
\eeqs
On the one hand, in view of \eqref{alphaLP-low},  it holds that
\begin{align*}
  &\|P_{\leq 0}\, \vr\,\|_{L^{\f{8}{3+\kappa}}}\lesssim  (1+t)^{-(\f{7}{24}+\cO(\kappa))} \cN(T),\\
& \|P_{\leq 0}\, \vr\,\|_{L^{\f{10}{3}}}\lesssim  (1+t)^{-(\f{11}{30}+\cO(\kappa))} \cN(T), \quad  \|P_{\leq 0} \na \vr\,\|_{L^{5}}\lesssim  (1+t)^{-(\f{4}{5}+\cO(\kappa))} \cN(T).
\end{align*}
On the other hand, we apply
\eqref{highreg-interp} to find that
\begin{align*}
    &\|P_{\geq 0}\, \vr\,\|_{L^{\f{10}{3}}}\lesssim  (1+t)^{-(\f{8}{15}+\cO(\kappa))} \cN(T), \quad  \|P_{\geq 0} \na \vr\,\|_{L^{5}}\lesssim  (1+t)^{-(\f{4}{5}+\cO(\kappa))} \cN(T),\\
   & \|P_{\geq 0}\vr\|_{\dot{W}^{m_{\kappa}+2,\f{8}{3+\kappa}}}
\lesssim (1+t)^{-(\f13+\cO(\kappa))}\cN(T),
\end{align*}
as long as $M\geq \ell_{\kappa}+\f{19}{4}+\cO(\kappa).$
The previous estimate, together with \eqref{L4-LF}, leads to the desired estimate for $\na Z\vr^3.$
By more computations, we can also  get  that
the term $Z\tilde{G}$ can be bounded by $(1+t)^{-2}(\cN(T)^2+\cN(T)^3),$
we omit the details.
    
\end{proof}
\subsubsection{Supplementary results} 
We state in this short subsection a lemma which will be  often  used  later in the  proof. 
\begin{lem}
    Let $Z\in \{\na_y, \pt+c_0\p_x\}.$ Under the same assumption as in Proposition \ref{prop-decayes},
    it holds that for any $p\in[2,+\infty), k\leq M-2,$
    \begin{align}\label{zalpha/na}
        \big\|\f{Z\alpha}{|\na|}(t)\big\|_{W^{k,p}}\lesssim \|\alpha(t)\|_{W^{k,p}}+(1+t)^{-(\f23+\cO(\kappa))} \big(\cM(0)+\cN(T)^2\big).
    \end{align}
\end{lem}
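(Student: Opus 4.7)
The plan is to split the proof according to whether $Z = \nabla_y$ or $Z = \partial_t + c_0 \partial_x$. The case $Z = \partial_{y_j}$ is essentially trivial: the multiplier $\partial_{y_j}/|\nabla|$ has symbol $i\zeta_j/|\xi|$, a classical Riesz transform, hence bounded on $W^{k,p}$ for all $1 < p < \infty$. This immediately yields $\|\nabla_y\alpha/|\nabla|\|_{W^{k,p}} \lesssim \|\alpha\|_{W^{k,p}}$ with no source-term contribution.

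The substantive case is $Z = \partial_t + c_0\partial_x$. Here I would use the evolution equation \eqref{eq-alpha} to substitute for $\partial_t\alpha$, obtaining
\begin{equation*}
\frac{(\partial_t + c_0\partial_x)\alpha}{|\nabla|} = \frac{iP(D) + c_0\partial_x}{|\nabla|}\,\alpha + \sum_{j=1}^4 \frac{\mathfrak{N}_j}{|\nabla|}.
\end{equation*}
For the first piece, I would verify that the symbol $(iP(\xi) + ic_0\xi_1)/|\xi|$ is a Mikhlin multiplier: the identity $P(\xi)/|\xi| = \sqrt{h'(1) + (1+|\xi|^2)^{-1}}$ shows this factor is smooth, bounded and radial (depending only on $|\xi|^2$), while $\xi_1/|\xi|$ is a standard Riesz symbol, so the sum satisfies Mikhlin's hypotheses and gives $\|(iP(D)+c_0\partial_x)\alpha/|\nabla|\|_{W^{k,p}} \lesssim \|\alpha\|_{W^{k,p}}$.

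It then remains to prove $\sum_j \|\mathfrak{N}_j/|\nabla|\|_{W^{k,p}} \lesssim (1+t)^{-(2/3 + O(\kappa))}(\cM(0) + \cN(T)^2)$. Using the decomposition $\mathfrak{N}_j = \mathfrak{F}_j^0 + i(\mathrm{div}/P(D))\mathfrak{F}_j'$ and factoring $\mathrm{div}/P(D) = (\mathrm{div}/|\nabla|)\cdot(|\nabla|/P(D))$, where both factors are bounded Fourier multipliers on $L^p$ (the first is Riesz, the second has smooth bounded symbol $|\xi|/P(\xi)$), the problem reduces to bounding $\|\mathfrak{F}_j/|\nabla|\|_{W^{k,p}}$. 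For each $j$ I would split $\mathfrak{F}_j/|\nabla| = P_{\le 0}(\mathfrak{F}_j/|\nabla|) + P_{\ge 0}(\mathfrak{F}_j/|\nabla|)$: on the high-frequency piece $1/|\nabla|$ is a smoothing of order one, so the bound reduces to $\|\mathfrak{F}_j\|_{W^{\max(k-1,0),p}}$; on the low-frequency piece Hardy–Littlewood–Sobolev gives $\|P_{\le 0}(\mathfrak{F}_j/|\nabla|)\|_{L^p} \lesssim \|P_{\le 0}\mathfrak{F}_j\|_{L^r}$ with $1/r = 1/p + 1/3$ (combined with Bernstein/Sobolev embedding for the derivatives). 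The required decay is then supplied directly by estimates already available in Lemmas 6.2--6.4: \eqref{fF1-Lp} handles $\mathfrak{F}_1$ (with $r=3p/(p+3)$ yielding rate $(1+t)^{-(7/6-1/p)}$, which is $(1+t)^{-2/3}$ at $p=2$ and better otherwise); \eqref{es-fF2-Lp} handles $\mathfrak{F}_2$ for $p\in[2,3]$, with Bernstein pushing the low-frequency bound up to arbitrary $p \ge 3$ and Sobolev embedding in the high-frequency part absorbing the loss via the $H^{M-1}$ control of $\mathfrak{F}_2$; \eqref{fF3-negative1-low} and \eqref{highreg-interp} handle $\mathfrak{F}_3$ (cubic, trivially better); and \eqref{fF4-negativeLp}/\eqref{esfF4-neghalf} handle $\mathfrak{F}_4$. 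After invoking Proposition \ref{prop-modulation} to absorb $\cN_{\tilde c,\gamma}(T)$ into $\cM(0) + \cN(T)^2$, one concludes.

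The main obstacle I anticipate is the bookkeeping of the low-frequency/high-frequency split uniformly in $p \in [2,\infty)$: the HLS exponent $r = 3p/(p+3)$ tends to $3^-$ as $p \to \infty$, which rules out $p = \infty$ but causes no trouble for finite $p$, provided one carefully matches the regularity index $k \le M-2$ against the derivative loss coming from both the multiplier $1/|\nabla|$ and from the quasilinear terms in $\mathfrak{F}_2$ (which cost one extra derivative in the weighted estimates). The restriction $k \le M-2$ is precisely what this budget allows.
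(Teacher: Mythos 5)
Your reduction is the same as the paper's: the case $Z=\nabla_y$ is immediate, and for $Z=\partial_t+c_0\partial_x$ you substitute the equation \eqref{eq-alpha} and use that $\tfrac{P(D)}{|\nabla|}$, $\tfrac{\partial_x}{|\nabla|}$ (and $\tfrac{\div}{P(D)}$) are bounded on $L^p$, so that everything hinges on bounding $\big\|\tfrac{\mathfrak{N}_j}{|\nabla|}\big\|_{W^{k,p}}$. Your handling of $\mathfrak{N}_1$, $\mathfrak{N}_2$, $\mathfrak{N}_4$ via the estimates of Subsection 6.1 together with a low/high frequency split is workable (the paper simply says these follow from the previous subsections).

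The genuine gap is your treatment of $\mathfrak{F}_3$. First, $\mathfrak{F}_3$ is \emph{quadratic} in $(\vr,v)$, not cubic (it is $\mathfrak{F}_4$ that collects the cubic-and-higher remainders), and it is precisely the term that saturates the rate: the exponent $\tfrac23$ in \eqref{zalpha/na} comes from $\mathfrak{N}_3$, while $\mathfrak{N}_1,\mathfrak{N}_2,\mathfrak{N}_4$ decay like $(1+t)^{-1+\cO(\kappa)}$. So "cubic, trivially better" is exactly backwards. Second, the estimates you cite for it do not give what you need: \eqref{fF3-negative1-low} is a bound in $\dot W^{-(\frac12+\frac1p),p'}$ with $p'\le 2$, i.e.\ negative regularity and sub-$L^2$ integrability, and cannot control $\big\|\tfrac{\mathfrak{F}_3}{|\nabla|}\big\|_{W^{k,p}}$ for $p\ge 2$ and $k$ up to $M-2$. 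Moreover, if you instead run your low-frequency HLS scheme with a symmetric H\"older split, the available low-frequency decay of $\alpha$ (e.g.\ $\|\alpha\|_{L^{12/5}}^2\sim(1+t)^{-1/2}$ at $p=2$ from \eqref{alphaLP-low}) falls short of $(1+t)^{-2/3}$. The correct argument — and the paper's — is elementary but asymmetric: since $\mathfrak{N}_3/|\nabla|\approx \cR(\cR\alpha)^2$ up to bounded multipliers, apply the product estimate \eqref{productineq} with the split
\begin{equation*}
\Big\|\frac{\mathfrak{N}_3}{|\nabla|}\Big\|_{W^{k,p}}\lesssim \|\alpha\|_{L^{1/\kappa}}\,\|\alpha\|_{W^{k,\frac{p}{1-p\kappa}}}\lesssim \|\alpha\|_{L^{1/\kappa}}\,\|\alpha\|_{H^{M}},
\end{equation*}
and then use the interpolation \eqref{interpolation-import} (equivalently \eqref{alphaLP-low} together with \eqref{highreg-interp}) to get $\|\alpha\|_{L^{1/\kappa}}\lesssim (1+t)^{-\frac23+\cO(\kappa)}\cN(T)$, the logarithmic growth of $\|\alpha\|_{H^M}$ being absorbed in the $\cO(\kappa)$ loss. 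Without this step your proof does not reach the stated rate.
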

\begin{proof}
   As  it is clear when $Z=\na_y,$ we focus on the case 
   $Z=\pt+c_0\p_x.$ We use the equation \eqref{eq-alpha} to obtain that 
   $(\pt+c_0\p_x)\alpha=(iP(D)+c_0\p_x)\alpha+\sum_j \mathfrak{N}_j.$ Since $\f{P(D)}{|\na|}, \f{\p_x}{|\na|}\in B(L^p),$ it suffices to show that 
   \begin{align*}
       \big\|\f{\mathfrak{N}_j}{|\na|}\big\|_{W^{k,p}}\lesssim (1+t)^{-\f23+\cO(\kappa)} \big(\cM(0)+\cN(T)^2\big).
   \end{align*}
 We can control roughly the term $\cN_3\approx |\na|\cR(\cR \alpha)^2$  by using the product estimate \eqref{productineq}  
\begin{align*}
   \big\|\f{\mathfrak{N}_3}{|\na|}\big\|_{W^{k,p}}\lesssim 
   \|\cR \alpha\|_{L^{\f{1}{\kappa}}}\|\cR \alpha\|_{W^{k,\f{p}{1-p\kappa}}}\lesssim  \| \alpha\|_{L^{\f{1}{\kappa}}}\| \alpha\|_{H^M}
   \lesssim 
   (1+t)^{-\f23+\cO(\kappa)} 
   \cN(T)^2.
\end{align*}
Note that we have by the interpolation \eqref{interpolation-import} that 
\begin{align*}
     \| \alpha\|_{L^{\f{1}{\kappa}}}\lesssim  (1+t)^{-\f23+\cO(\kappa)}\cN(T).
\end{align*}
By using  similar computations as  in the the previous subsections, one can show that 
\begin{align*}
     \big\|\f{\mathfrak{N}_j}{|\na|}\big\|_{W^{k,p}}\lesssim  
   (1+t)^{-1+\cO(\kappa)} \big(\cM(0)+
   \cN(T)^2\big), \quad j=1,2,4.
\end{align*}
This ends the proof.

\end{proof}
\subsection{Time decay estimates in $L^{8_{\kappa}}$--proof of  \eqref{decayes-Zder}.}\label{subsection-decayes}
In this subsection, we prove  the decay estimate \eqref{decayes-Zder}.
Let $Z\in \{\na_y, \pt+c_0\p_x\}.$
Applying the dispersive estimate \eqref{dispersive-L8} to 
\beqs \label{Formula-alpha}
Z\alpha=e^{it P(D)}Z\alpha_0+\int_0^t e^{i(t-s) P(D)} Z\big(\mathfrak{N}_1+\mathfrak{N}_2+\mathfrak{N}_3+\mathfrak{N}_4\big)\, \d s,
\eeqs 
we obtain that 
\begin{align*}
 &\big\|Z\alpha-\int_0^t e^{i(t-s) P(D)} Z\mathfrak{N}_3\,\d s\big\|_{W^{\ell_{\kappa}, 8_{\kappa}}}\\
 &\lesssim (1+t)^{-(1+\f{\kappa}{3})}\|Z\alpha_0\|_{W^{m_{\kappa},8_{\kappa}'}} +\int_0^t (1+t-s)^{-(1+\f{\kappa}{3})}\|Z(\mathfrak{N}_1, \mathfrak{N}_2, \mathfrak{N}_4)(s)\|_{W^{m_{\kappa},8_{\kappa}'}} \, \d s, \quad 
\end{align*}
where $\ell_{\kappa}=1+\f{3}{8_{\kappa,1}}, m_{\kappa}=\ell_{\kappa}+\f{3(3+\kappa)}{4}.$  
Thanks to the estimates \eqref{es-fF1}, \eqref{es-fF2}, \eqref{es-fF4} that have been shown in Subsection \ref{subsection-fF124}, it holds that for $\kappa$ small enough,
\begin{align*} 
\|Z(\mathfrak{N}_1, \mathfrak{N}_2, \mathfrak{N}_4)(s)\|_{W^{m_{\kappa},8_{\kappa}'}}\lesssim \|Z(\mathfrak{F}_1, \mathfrak{F}_2, \mathfrak{F}_4)(s)\|_{W^{m_{\kappa},8_{\kappa}'}}\lesssim  (1+s)^{-\f{9-\kappa}{8}} \big(\cM(0)+\cN_{c,\gamma}(T)+ \cN(T)^2\big).
\end{align*}
The previous two estimates then lead to
\beq\label{decayL8-N124}
\big\|Z\alpha-\int_0^t e^{i(t-s) P(D)} Z\mathfrak{N}_3\,\d s\big\|_{W^{1+\f{3}{8_{\kappa,1}}, 8_{\kappa}}}\lesssim (1+t)^{-(1+\f{\kappa}{3}})\big(\cM(0)+\cN_{\tilde{c},\gamma}(T)+ \cN(T)^2\big).
\eeq 
It thus remains to prove the decay estimate for the term 
$\int_0^t e^{i(t-s) P(D)} \mathfrak{N}_3\, \d s,$ which is the most involved part of the analysis. Since $\mathfrak{N}_3$ is quadratic in terms of $(\vr, v),$ the direct application of the dispersive estimate is not enough to close the estimate. As in the study of the stability of the constant equilibrium \cite{G-P-global}, we need to use  the normal form transformation to `change' the quadratic terms into cubic ones.

 Plugging the relation \eqref{rel-v-alpha} into
the definition $$\mathfrak{N}_3=\div(\vr \, v)+ \f{i}{2} \f{\div}{P(D) }\nabla\big( |v|^2+h''(1)\vr^2+I_{0}(I_{0}\vr)^2\big),$$  we find that $\mathfrak{N}_3$ takes the form 
\begin{align*}
 \mathfrak{N}_3=\sum_{\mu,\nu\in\{\pm\}}\cF^{-1} \bigg( \int_{\mR^3} m_{\mu\nu}(\varsigma, \varsigma') \,\alpha^{\mu}(\varsigma-\varsigma') \,\alpha^{\nu} (\varsigma')\, \d \varsigma'\bigg)
\end{align*}
where for a function $f$ we denote $f^{+}=f, f^{-}=\overline{f}.$ Moreover, $m_{\mu,\nu}$ 
is a sum  of terms under  the form  $|\varsigma|n_0(\varsigma)n_1(\varsigma-\varsigma')n_2(\varsigma'),$ where $n_0, n_1, n_2$ are multipliers that satisfy the  homogeneous  $0$ condition
$|\na^{\gamma} n_j (\cdot)|\lesssim |\cdot|^{-|\gamma|},\, \forall \gamma\in \mR^3,$
and thus are bounded in $L^p(\mathbb{R}^3)$ for any $p\in(1,+\infty).$
Let $\beta^{\mu}=e^{-\mu i t P(D)} \alpha^{\mu},$ then we can write 
\begin{align*}
\int_0^t e^{i(t-s) P(D)} \mathfrak{N}_3\, \d s=e^{itP(D)}\sum_{\mu,\nu\in\{\pm\}}\cF^{-1} \bigg(\int_0^t \int_{\mR^3} e^{-is \phi^{\mu\nu}} m_{\mu\nu}(\varsigma, \varsigma') \,\beta^{\mu}(s,\varsigma-\varsigma') \,\beta^{\nu} (s, \varsigma')\, \d \varsigma'\d s\bigg)
\end{align*}
where $\phi^{\mu\nu}(\varsigma, \varsigma')=P(\varsigma)-\mu P(\varsigma-\varsigma')-\nu P(\varsigma').$ 
Acting $Z$ on the above identity and 
 integrating by parts in time, we find, by using the equation $\p_s \beta^{\mu}=e^{-\mu i t P(D)}(\sum_{j=1}^4\mathfrak{N}_j)^{\mu}$ that 
 \beq\label{expandFn3}
\begin{aligned}
\int_0^t e^{i(t-s) P(D)} Z \mathfrak{N}_3\, \d s&=
\sum_{\mu,\nu\in\{\pm\}} \bigg(e^{itP(D)}B^{\mu\nu}\big(Z\alpha^{\mu}(0),\alpha^{\nu}(0)\big)- B^{\mu\nu}\big(Z\alpha^{\mu}(t),\alpha^{\nu}(t)\big)+\text{symmetric terms}\\
&\qquad + \sum_{j=1}^4 \int_0^t e^{i(t-s)P(D)} \big(B^{\mu\nu}\big(Z\mathfrak{N}_j^{\mu}(s), \alpha^{\nu}(s)\big)+ B^{\mu\nu}\big(Z\alpha^{\mu}(s), \mathfrak{N}_j^{\nu}(s)\big)\big)\,\d s\bigg)
\end{aligned}
\eeq
 where 
 \begin{align*}
    \text{symmetric terms}=\sum_{\mu,\nu\in\{\pm\}} \bigg(e^{itP(D)}B^{\mu\nu}\big(\alpha^{\mu}(0), Z\alpha^{\nu}(0)\big)- B^{\mu\nu}\big(\alpha^{\mu}(t), Z\alpha^{\nu}(t)\big)\bigg)\\
    =\sum_{\mu,\nu\in\{\pm\}} \bigg(e^{itP(D)}\tilde{B}^{\mu\nu}\big(Z\alpha^{\mu}(0),\alpha^{\nu}(0)\big)- \tilde{B}^{\mu\nu}\big(Z\alpha^{\mu}(t),\alpha^{\nu}(t)\big)\bigg)
\end{align*}
and 
 \begin{align*}
     B^{\mu\nu}(f,g)=\cF^{-1}\bigg(\int_{\mR^3}\f{m_{\mu\nu}} {i\,\phi^{\mu\nu}} (\varsigma-\varsigma', \varsigma') \,f(\varsigma-\varsigma') \,g (\varsigma')\, \d \varsigma' \bigg), \\
      \tilde{B}^{\mu\nu}(f,g)=\cF^{-1}\bigg(\int_{\mR^3}\f{m_{\nu\mu}(\varsigma, \varsigma-\varsigma')} {i\,\phi^{\mu\nu}(\varsigma, \varsigma') } \,f(\varsigma-\varsigma') \,g (\varsigma')\, \d \varsigma' \bigg).
 \end{align*}
 Note that $B^{\mu\nu}$ and $\tilde{B}^{\mu\nu}$ differ from each other by multipliers of type $\cR(\varsigma-\varsigma')\cR(\varsigma')$ where $\cR$ is bounded in any $L^p(\mR^3), (1<p<+\infty),$ 
and thus have the same bilinear estimates. Let us also observe that
$$\max\{|\varsigma-\varsigma'|, |\varsigma'| \}^{-1}\lesssim |\phi^{\mu\nu}|^{-1}\lesssim \min\{|\varsigma-\varsigma'|, |\varsigma'| \}^{-1}$$ and thus $|\phi^{\mu\nu}|^{-1}$  
has some singularities in low frequencies. This leads to  unfavorable bilinear estimates for $B^{\mu\nu}(f,g)$ when both $f$ and $g$ have  low frequencies. For instance, by \eqref{bilinear-worst}, it holds that for any $r\in [2,3],$
\begin{align*}
    \|B^{\mu\nu}(P_{\leq -5}f, P_{\leq -5}g)\|_{L^r}\lesssim \big\|P_{\leq -5}\f{f}{|\na|}\big\|_{L^p} \|P_{\leq -5}\f{g}{|\nabla|}\|_{L^q}, \quad 
     \big(\f{1}{p}+\f{1}{q}=
     \f{1}{3}+\f{1}{2r}+\ep, \, \, 0< \ep<<1 \big),
\end{align*}
 Note that not only we have some singularities in low frequency, but also we lose some integrability. Moreover, 
 we do not have the corresponding estimate for $r>3$ (or allowing the case $r>3$ produces more singularities on $f$ and $g$ in low frequency). 
 These leads  to  technical difficulties, especially when controlling the boundary term 
 $B^{\mu\nu}\big(Z\alpha^{\mu}(t), \alpha^{\nu}(t)\big).$ Indeed, by Bernstein inequality, for $k<<j\leq 0,$ we have
 \begin{align*}
 2^{\f{3}{8_{\kappa,1}}j}  \|  B^{\mu\nu}\big(P_j Z\alpha^{\nu}(t), P_k\alpha^{\mu}(t))\|_{L^{8_{\kappa}}}&\lesssim 2^{-\f{\kappa}{24}j}\|  B^{\mu\nu}\big(P_j Z\alpha^{\mu}(t), P_k\alpha^{\nu}(t))\|_{L^{3}}\\
 &\lesssim 
 2^{-\f{\kappa}{24}j}\|P_j Z\alpha^{\mu}(t)\|_{L^{8^{-}}}2^{-k}\|P_k \alpha^{\nu}(t)\|_{L^{8/3}}.
 \end{align*}
The a priori estimates \eqref{decayes-vrv} are not enough to get a suitable decay estimate.
Indeed, by \eqref{decayes-vrv} and  the interpolation estimate  \eqref{interpolation-import}, it holds that
$\|\alpha^{\nu}\|_{\dot{W}^{-1, 8/3}}\lesssim \cN(T)$ 
and $\|Z\alpha^{\mu}(t)\|_{L^8}$ decays at best like $(1+t)^{-7/8}.$ To resolve the problem, our strategy is to split 
$$ Z\alpha=\tilde{\alpha}+\sum_{\mu,\nu\in\{\pm\}}B_{LL}^{\mu\nu}(Z\alpha^{\mu},\alpha^{\nu})+\tilde{B}_{LL}^{\mu\nu}(Z\alpha^{\mu}, \alpha^{\nu}),$$
 where 
 \beqs
B_{LL}^{\mu\nu}  (f, g)\colon = 
\sum_{ k+5\leq j\leq -5} B^{\mu\nu}(P_j f, P_{k} \,g),\, \quad  \tilde{B}_{LL}^{\mu\nu}  \colon =\sum_{k+5\leq j\leq -5}\tilde{B}^{\mu\nu}(P_j f, P_{k} \,g),\,
 \eeqs
We will prove on the one hand that 
\beq \label{tildealpha}
\|\tilde{\alpha}(t)\|_{W^{\ell_{\kappa}
,8_{\kappa}}}\lesssim (1+t)^{-(1+\f{\kappa}{3})}\big(\cM(0)+\cN(T)^2 \big), \quad (\ell_{\kappa}=1+{3}/{8_{\kappa,1}}),
\eeq
and on the other hand, by using \eqref{tildealpha} that
\beq\label{bilinear-LL} 
\||\na|^{\f{3}{8_{\kappa,1}}} (B_{LL}^{\mu\nu}, \tilde{B}_{LL}^{\mu\nu})(Z\alpha^{\mu},\alpha^{\nu})\|_{L^{8_{\kappa}}}\lesssim (1+t)^{-(1+\f{\kappa}{24})}\big(\cM(0)+\cN(T)^2 \big),\, \quad \forall \, \mu,\nu\in\{\pm\}.
\eeq
Let us first derive \eqref{bilinear-LL} by assuming that 
\eqref{tildealpha} is true. Since we will use the  estimates that hold for all $\mu,\nu \in\{\pm\},$ we
 skip the superscript $\mu,\nu$ for the sake of clarity. 
On the one hand, by Bernstein inequality and the bilinear estimate \eqref{bilinear-worst} (taking $\ep=\f{\kappa}{72}$), we have  
\begin{align*}
 \||\na|^{\f{3}{8_{\kappa,1}}} B_{LL}(\tilde{\alpha},\alpha)\|_{L^{8_{\kappa}}}
& \lesssim \big\| B_{LL}\big(|\na|\tilde{\alpha}, {\alpha}
\big)\big\|_{L^{\f{3}{1+\kappa/24}}}\lesssim \|P_{\leq -5}\,\tilde{\alpha}\|_{L^{8_{\kappa}}} \big\|P_{\leq -10}\f{\alpha}{|\na|}\big\|_{L^{\f{8}{3+7\kappa/6}}}\, .
\end{align*}
Applying \eqref{negative-sobolev} with $p=\f{8}{3+7\kappa/6}$
and using the estimate \eqref{tildealpha}, we obtain that 
\beqs
\||\na|^{\f{3}{8_{\kappa,1}}} B_{LL}(\tilde{\alpha},\alpha)\|_{L^{8_{\kappa}}}\lesssim (1+t)^{-(1+\f{\kappa}{24})}
\big(\cM(0)+\cN(T)^2 \big)^2\, .
\eeqs
On the other hand, we deal with the remaining term by applying the bilinear estimate \eqref{bilinear-worst} twice
\begin{align*}
 &   \||\na|^{\f{3}{8_{\kappa,1}}} B_{LL}\big(B_{LL}(Z\alpha, \alpha),\alpha\big)\|_{L^{8_{\kappa}}}
 \lesssim \big\|  B_{LL}\big(|\na|B_{LL}(Z\alpha, \alpha),\alpha\big)\big\|_{L^{\f{3}{1+\kappa/24}}}\\
&\lesssim  \|B_{LL}(Z\alpha, \alpha)\|_{L^{3}} \big\|P_{\leq 5}\f{\alpha_s}{|\na|}\big\|_{L^{\f{6}{1+\kappa/12}}}\lesssim \big\|P_{\leq -10}\, \f{Z\alpha}{|\na|}\big\|_{L^3}\,\big\|P_{\leq 5}\f{\alpha}{|\na|}\big\|_{L^{\f{6}{1+\kappa/12}}}^2.
\end{align*}
We can derive from \eqref{zalpha/na} and \eqref{alphaLP-low} that
\beqs
\big\|P_{\leq -10}\, \f{Z\alpha}{|\na|}\big\|_{L^3}\lesssim \big\|P_{\leq -10}\, \alpha\big\|_{L^3}+\sum_{j=1}^4\big\|\f{\mathfrak{N}_j}{|\na|}\big\|_{L^3}\lesssim (1+t)^{-\f13+\cO(\kappa)}\cN(T).
\eeqs
This, combined with the estimate \eqref{negative-sobolev} with $p=\f{6}{1+\kappa/12},$ leads to 
\begin{align*}
      \||\na|^{\f{3}{8_{\kappa,1}}} B_{LL}\big(B_{LL}(Z\alpha, \alpha),\alpha\big)\|_{L^{8_{\kappa}}}
 \lesssim (1+t)^{-\f76+\cO(\kappa)} \big(\cM(0)+\cN(T)^3\big).
\end{align*}
The same estimate holds also when $B_{LL}$ is replaced by $\tilde{B}_{LL}.$ We thus finish the proof of \eqref{bilinear-LL}.

It now remains to prove \eqref{tildealpha}. Thanks to the estimate \eqref{decayL8-N124} and the identity \eqref{expandFn3}, we are left to bound the  terms (we skip again the superscript $\mu,\nu$)
$$e^{itP(D)}B\big(Z\alpha(0),\alpha(0)\big),\qquad \mathbb{\cQ}_0(t):= (B-B_{LL})\big(Z\alpha(t),\alpha(t)\big)$$
and 
\beqs 
\mathbb{\cQ}_l(t)=\colon \int_0^t e^{i(t-s)P(D)} \bigg(B\big(Z\alpha(s), \mathfrak{N}_l(s)\big)+B\big(Z\mathfrak{N}_l(s), \alpha(s)\big)\bigg)\,\d s , \quad (l=1,2,3,4).
\eeqs
Applying the dispersive estimate \eqref{dispersive-L8} and bilinear estimates \eqref{BL-High-less2} and \eqref{BL-low-moreder}, we obtain 
\begin{align*}
&\|e^{itP(D)}B\big(Z\alpha(0),\alpha(0)\big)\|_{W^{\ell_{\kappa},8_{\kappa}}}\lesssim (1+t)^{-(1+\f{\kappa}{3})}\| B\big(Z\alpha(0),\alpha(0)\big)\|_{W^{m_{\kappa},8_{\kappa}'}} \\
&\lesssim (1+t)^{-(1+\f{\kappa}{3})} \bigg(\big\|\f{\alpha(0)}{|\na|}\big\|_{W^{\tilde{m}_{\kappa},\f{8}{3+\kappa}}}\big\|\f{Z\alpha(0)}{|\na|}\big\|_{H^{\tilde{m}_{\kappa}}}+ \big\|\f{\alpha(0)}{|\na|}\big\|_{L^{\f{24}{11+\kappa}}}\big\|\f{Z\alpha(0)}{|\na|}\big\|_{L^2}\bigg)\\
&\lesssim (1+t)^{-(1+\f{\kappa}{3})} \|\alpha(0)\|_{\dot{H}^{-1}\cap H^{\tilde{m}_{\kappa}}}^2\lesssim (1+t)^{-(1+\f{\kappa}{3})}  \cM^2(0),
\end{align*}
where $\tilde{m}_{\kappa}=m_{\kappa}+\f{5}{2}+\kappa.$
Note that we have used $(\pt\alpha)(0)=\mathfrak{N}_2(0)+\mathfrak{N}_3(0)+\mathfrak{N}_4(0).$
The estimates of $\cQ_0-\cQ_4$ will be presented in the following five subsections.
\subsubsection{Estimate of $\cQ_0(t)$}
We write 
\begin{align*}
B-B_{LL}\big(
f, g\big)=&
\bigg(\sum_{j-5\leq k\leq -10}+\sum_{k\geq -10}
+\sum_{j\geq -5 \geq k+5 }\bigg)B(P_j f, P_k g)\\
&=\colon
B_{LL}^1(f,g)+B_{H,2}(f,g)+B_{HL}(f,g).
\end{align*}
Let us begin with the control of $B_{LL}^1(Z\alpha(t), \alpha(t)).$ 
Thanks to the bilinear estimate \eqref{bilinear-worst}, 
we have
\begin{align*}
  \| B_{LL}^1(Z\alpha_s(t), \alpha(t))\|_{W^{\ell_{\kappa},8_{\kappa}}} &\lesssim 
    \| B_{LL}^1\big(Z\alpha_s(t), |\na|^{\f{5+3\kappa}{8}}\alpha(t)\big)\|_{L^{3}}\lesssim \big\|{Z\alpha_s(t)}\|_{\dot{W}^{-1,\f{4}{1+\kappa}}}\|\alpha(t)\|_{\dot{W}^{-\f{3(1-\kappa)}{8},4}} ,\\
     \| B_{LL}^1(Z\alpha_r(t), \alpha(t))\|_{W^{\ell_{\kappa},8_{\kappa}}},
    &\lesssim 
    \| B_{LL}^1\big(\f{Z\alpha_r(t)}{|\na|}, |\na|^{\f{13+3\kappa}{8}}\alpha(t)\big)\|_{L^{3}}\lesssim \big\|\f{Z\alpha_r(t)}{|\na|}\|_{\dot{W}^{-1,\f{4}{1+\kappa}}}\|\alpha(t)\|_{\dot{W}^{\f{5+3\kappa}{8},{4}}}.
\end{align*}
By 
\eqref{alphaLP-low}, it holds that
\begin{align}\label{someinterp}
    \|\alpha(t)\|_{\dot{W}^{-\f{3(1-\kappa)}{8},
    4}}\lesssim (1+t)^{-\f{7}{24}+\cO(\kappa)}\cN(T), \quad  \|\alpha(t)\|_{\dot{W}^{\f{5+3\kappa}{8},
    4}}\lesssim (1+t)^{-\f{5}{8}+\cO(\kappa)}\cN(T). 
\end{align}
Moreover, since $\pt \alpha_r=iP(D)\alpha_r+\mathfrak{N}_2+\mathfrak{N}_3+\mathfrak{N}_4,$ we have for any $Z\in \{\na_y, \pt+c_0\p_x\},$
\beqs 
\big\|\f{Z\alpha_r(t)}{|\na|}\|_{\dot{W}^{-1,\f{4}{1+\kappa}}}\lesssim \|\alpha_r\|_{\dot{W}^{-1,\f{4}{1+\kappa}}}+\sum_{j=2}^4 \big\|\f{\mathfrak{N}_j}{|\na|}\big\|_{L^{\f{12}{7+3\kappa}}}.
\eeqs
Since $\mathfrak{N}_2-\mathfrak{N}_4$ can be written as the   gradient of some function, it is easy to obtain that 
\begin{align*}
    \big\|\f{\mathfrak{N}_j}{|\na|}\big\|_{L^{\f{12}{7+3\kappa}}}\lesssim (1+t)^{-\f23+\cO(\kappa)} \cN(T),
\end{align*}
which, combined with the estimates \eqref{es-alphar}, \eqref{es-Zalphas}  and \eqref{someinterp}, leads to 
\begin{align*}
 \| B_{LL}^1(Z\alpha(t), \alpha(t))\|_{W^{\ell_{\kappa},8_{\kappa}}}  \lesssim (1+t)^{-\f{25}{24}+\cO(\kappa)} \cN(T)^2.
\end{align*}

Next, for the term $B_{H,2}(Z\alpha(t), \alpha(t)),$ 
we use the bilinear estimate  \eqref{BL-High2} to obtain that 
\begin{align*}
\| B_{H,2}(Z\alpha(t), \alpha(t))\|_{W^{\ell_{\kappa},8_{\kappa}}} &\lesssim \|B_{H,2}(Z\alpha(t), \alpha(t))\|_{H^{\ell_{\kappa}+\f{3(3+\kappa)}{8}}}\\
&\lesssim \big\|\f{Z\alpha(t)}{|\na|}\big\|_{W^{2,\f{4}{1+\kappa}}}\|P_{\geq -10}{\alpha(t)}\|_{W^{A_{\kappa},4}}+\big\|P_{\geq -10}\f{Z\alpha(t)}{|\na|}\big\|_{W^{A_{\kappa}+1,\f83}}\|P_{\geq -10}\f{{\alpha(t)}}{|\na|}\|_{W^{2,\f{8}{1+\kappa}}} 
\end{align*}
where $A_{\kappa}=\ell_{\kappa}+\f{3(3+2\kappa)}{8}+\f52.$ Let us deal with the first quantity, the second one can be handled similarly (actually  it behaves better).
By \eqref{highreg-interp}, we have 
\begin{align*}
    \big\|P_{\geq -10}|\na|^{A_{\kappa}}{\alpha(t)}\big\|_{L^4}  
    \lesssim (1+t)^{-\f23+\cO(\kappa)}\cN(T),
\end{align*}
as long as $M\geq \ell_{\kappa}+\f{71}{8}+\cO(\kappa)=\f{41}{4}+\cO(\kappa).$ 
Moreover, it follows from \eqref{zalpha/na} and  \eqref{alphaLP-low} that
\beqs 
\big\|\f{Z\alpha(t)}{|\na|}\big\|_{W^{2,\f{4}{1+\kappa}}}\lesssim \|\alpha(t)\|_{W^{2,\f{4}{1+\kappa}}}+(1+t)^{-\f{2}{3}+\cO(\kappa)}\cN(T) 
\lesssim (1+t)^{-\f{5}{12}+\cO(\kappa)}\cN(T).
\eeqs
We thus obtain that 
\begin{align*}
    \| B_{H,2}(Z\alpha(t), \alpha(t))\|_{W^{\ell_{\kappa},8_{\kappa}}} \lesssim (1+t)^{-\f{13}{12}+\cO(\kappa)}\cN(T)^2.
\end{align*}

Finally, let us estimate $B_{HL}(Z\alpha(t), \alpha(t)).$
Applying the bilinear estimate \eqref{BL-HL} and the Sobolev embedding, we find
\begin{align*}
    \| B_{HL}(Z\alpha(t), \alpha(t))\|_{W^{\ell_{\kappa},8_{\kappa}}} \lesssim
    \|P_{\geq -5} Z\alpha(t)\|_{W^{\ell_{\kappa}+1^{+}, 8_{\kappa}}} \big\|P_{\leq -10}{\alpha(t)}\big\|_{L^{\f{3}{1+\kappa}}}
    \lesssim  
    \|P_{\geq -5} Z\alpha(t)\|_{W^{C_{\kappa},\f92}} \big\|P_{\leq -10}
    {\alpha(t)}\big\|_{L^{\f{3}{1+\kappa}}},
\end{align*}
where $C_{\kappa}=\ell_{\kappa}+\f{31}{24}+\kappa.$
Note that thanks to \eqref{highreg-interp}, we have
\begin{align*}
    \big\|P_{\geq -10}|\na|^{C_{\kappa}+1}\alpha(t)\big\|_{L^{9/2}}  \lesssim 
    (1+t)^{-\f{20}{27}+\cO(\kappa)}\cN(T), 
\end{align*}
as long as 
$M\geq \ell_{\kappa}+\f{279}{56} +\cO(\kappa)
=\f{103}{14}+\cO(\kappa).$
This, together with the estimate
 $$\|\alpha(t)\|_{L^{\f{3}{1+\kappa}}}\lesssim (1+t)^{-\f13+\cO(\kappa)}\cN(T),$$ yields 
\beqs 
 \|B_{HL}(Z\alpha(t), \alpha(t))\|_{W^{\ell_{\kappa},8_{\kappa}}} \lesssim (1+t)^{-\f{29}{27}+\cO(\kappa)}\cN(T)^2.
\eeqs
\subsubsection{Estimate of $\cQ_1(t)$} 
Let us recall that
$$\mathbb{\cQ}_1(t):= \int_0^t e^{i(t-s)P(D)}\big(B\big(Z\alpha(s), \mathfrak{N}_1(s)\big)+B\big(Z\mathfrak{N}_1(s), \alpha(s)\big)\big)\,\d s,$$
where 
$\mathfrak{N}_1=\mathfrak{F}_1^0+ i \f{\div}{P(D) }\mathfrak{F}_1',$ $\mathfrak{F}_1$ being defined in \eqref{def-fF}.

To get good  decay estimates for $\mathbb{\cQ}_1(t)$, it is essential to distinguish between high and low output frequencies. When focusing on high frequencies $\{|\varsigma|\geq \frac{1}{32}\}$, the bilinear estimate \eqref{BL-High-less2} maintains integrability, allowing us to directly close the estimate using \eqref{BL-High-less2}. However, when concentrating on low frequencies $\{|\varsigma|\leq \frac{1}{32}\}$, directly applying the bilinear estimate \eqref{BL-low-moreder} with $\theta=0$ inevitably leads to a loss of integrability, preventing us to get  the desired decay estimates. To address this issue, we crucially use  the dispersive estimate \eqref{disper-low} to gain additional derivatives, thereby improving the bilinear estimates.

Let us detail the estimate of 
$$\mathbb{\cQ}_{11}(t):= \int_0^t e^{i(t-s)P(D)} B\big(Z\alpha(s), \mathfrak{N}_1(s)\big)\, \d s,$$ the other term is similar (just changing $Z\alpha$ to $\alpha$ and $\mathfrak{N}_1$ to $Z\mathfrak{N}_1$).
On the one hand, we apply the bilinear estimate \eqref{BL-High-less2} to obtain  that
\beq\label{cQ2-highfreq}
\begin{aligned}
   & \big\|P_{\geq -5}  B\big(Z\alpha(s), \mathfrak{N}_1(s)\big) \big\|_{W^{m_{\kappa},8_{\kappa}'}}\\
  &  \lesssim \big\|P_{\geq -10}\f{{Z\alpha}}{|\na|}(s)\big\|_{W^{
  B_\kappa,\f{8}{3+\kappa}}}\big\|\f{\mathfrak{N}_1(s)}{|\na|}\big\|_{H^2}+ \big\|P_{\geq -10}\f{\mathfrak{N}_1(s)}{|\na|}\big\|_{W^{B_\kappa,\f{8}{3+\kappa}}}\big\|\f{Z\alpha}{|\na|}\big\|_{H^2}, \, 
\end{aligned}
\eeq
where $m_{\kappa}=\ell_{\kappa}+\f{3(3+\kappa)}{4},$ $B_{\kappa}=m_{\kappa}+\f52+\kappa.$
Thanks to \eqref{highreg-interp}, we find as long as $M\geq 8+\cO(\kappa)$
\begin{align*}
    \|P_{\geq-10}\alpha(s)\|_{W^{m_{\kappa}+\f52+\kappa,\f{8}{3+\kappa}}}\lesssim (1+t)^{-\f13+\cO(\kappa)} \cN(T)^2.
\end{align*}
Combined with \eqref{zalpha/na}, \eqref{cF1-minus1}, \eqref{fF1-Lp}, this yields
\begin{align*}
    \big\|P_{\geq -5}  B\big(Z\alpha(s), \mathfrak{N}_1(s)\big) \big\|_{W^{m_{\kappa},8_{\kappa}'}}\lesssim (1+s)^{-\f{13}{12}+\cO(\kappa)} \cN(T)^2\, .
\end{align*}
With the same estimates as in \eqref{cQ2-highfreq},
it can be proven  that 
\begin{align*}
    \big\|  B\big(P_{\geq -5}Z\alpha(s), \mathfrak{N}_1(s)\big),  B\big(Z\alpha(s), P_{\geq -5}\mathfrak{N}_1(s)\big)\big\|_{W^{m_{\kappa},8_{\kappa}'}}\lesssim (1+s)^{-\f{13}{12}+\cO(\kappa)} \cN(T)^2.
\end{align*}
We thus deduce from the dispersive estimate \eqref{dispersive-L8}
that
\begin{align*}
  \big\|Q_{11}-\int_0^t e^{i(t-s)P(D)}P_{\leq -5}B\big(P_{\leq -5}Z\alpha(s), P_{\leq -5}\mathfrak{N}_1(s)\big)\,\d s \big\|_{W^{\ell_{\kappa},8_{\kappa}}}
  \lesssim (1+t)^{-(1+\f{\kappa}{3})} \cN(T)^2.
\end{align*}
On the other hand, when localizing to  low frequencies, we  use  the Sobolev embedding $\dot{W}^{\f{1-\kappa}{8},\f{6}{1-
\kappa}}\hookrightarrow L^{8_{\kappa}}$
and  the dispersive estimate \eqref{disper-low} to find that 
\beq\label{cQ1-ll}
\begin{aligned}
    & \big\| \int_0^t e^{i(t-s)P(D)}P_{\leq -5}B\big(P_{\leq -5}Z\alpha(s), P_{\leq -5}\mathfrak{N}_1(s)\big)\,\d s \big\|_{W^{\ell_{\kappa},8_{\kappa}}}\\
    &\lesssim \int_0^t (1+t-s)^{-(1+\f{\kappa}{2})}  \big\| |\na|^{\f{11+\kappa}{24}} B\big(P_{\leq -5}Z\alpha(s), P_{\leq -5}\mathfrak{N}_1(s)\big) \big\|_{L^{\f{6}{5+\kappa}}} \d s .
\end{aligned}
\eeq
It follows from 
 the bilinear estimate \eqref{BL-low-moreder}  with $\theta=\f{11+\kappa}{24}, \ep=\f{\kappa}{144}$ 
 that
\begin{align}\label{appli-BL-LL}
    \big\| |\na|^{\f{11+\kappa}{24}} B\big(P_{\leq -5}Z\alpha(s), P_{\leq -5}\mathfrak{N}_1(s)\big) \big\|_{L^{\f{6}{5+\kappa}}}\lesssim \big\|P_{\leq -5}\f{Z\alpha(s)}{|\na|}\big\|_{L^{\f{144}{49+24\kappa}}} 
    \big\|P_{\leq -5}\f{\mathfrak{N}_1(s)}{|\na|} \big\|_{L^2},
\end{align}
which, by \eqref{zalpha/na}, \eqref{alphaLP-low}, and \eqref{cF1-minus1}, can be controlled by $(1+s)^{-\f{161}{144}+\cO(\kappa)}\big(\cM(0)+\cN(T)^2\big).$ 
Plugging this estimate into \eqref{cQ1-ll}, we find the desired estimate 
\beqs 
\big\| \int_0^t e^{i(t-s)P(D)}P_{\leq -5}B\big(P_{\leq -5}Z\alpha(s), P_{\leq -5}\mathfrak{N}_1(s)\big)\,\d s \big\|_{W^{\ell_{\kappa},8_{\kappa}}}\lesssim (1+t)^{-(1+\f{\kappa}{2})}\big(\cM(0)+\cN(T)^2\big).
\eeqs

\subsubsection{Estimate of $\cQ_2(t),\, \cQ_4(t)$}
For  $j=2$ or $4,$ we apply the bilinear estimates 
\eqref{BL-High-less2}, \eqref{BL-low-moreder} to have crudely
\begin{align*}
&\big\|B\big(Z\alpha(s), \mathfrak{N}_j(s)\big)+B\big(Z\mathfrak{N}_j(s), \alpha(s)\big)\,\big\|_{W^{m_{\kappa},8'_{\kappa}}}\\
&\lesssim \big\|\f{Z\alpha(s)}{|\na|} \big\|_{H^{B_{\kappa}}}\big\|\f{\mathfrak{N}_j(s)}{|\na|} \big\|_{W^{B_{\kappa},\f{24}{11+\kappa}}}+\big\|\f{
\alpha(s)}{|\na|} \big\|_{H^{B_{\kappa}}}\big\|\f{Z\mathfrak{N}_j(s)}{|\na|} \big\|_{W^{B_{\kappa},\f{24}{11+\kappa}}},
\end{align*}
where $B_{\kappa}=m_{\kappa}+\f52+\kappa.$ Applying 
\eqref{zalpha/na}, \eqref{es-fF2-Lp}, \eqref{es-ZfF2-Lp} for $j=2$ and \eqref{esfF4-neghalf} for $j=4,$ we find that 
\begin{align*}
    \big\|B\big(Z\alpha(s), \mathfrak{N}_j(s)\big)+B\big(Z\mathfrak{N}_j(s), \alpha(s)\big)\,\big\|_{W^{m_{\kappa},8'_{\kappa}}}\lesssim (1+s)^{-\f{25}{24}+\cO(\kappa)} (\cM(0)+\cN(T)^2).
\end{align*}
It then follows from the dispersive estimate \eqref{dispersive-L8} that $$\|\cQ_2(t),\, \cQ_4(t)\|_{W^{\ell_{\kappa},8_{\kappa}}}\lesssim (1+t)^{-(1+\f{\kappa}{3})}\big(\cM(0)+\cN(T)^2\big).$$

\subsubsection{Estimate of $\cQ_3(t)$}
Let us set
$$\cQ_{31}:= \int_0^t e^{i(t-s)P(D)} B\big(Z\alpha(s), \mathfrak{N}_3(s)\big)\, \d s,\, \cQ_{32}=\mathbb{\cQ}_{3}(t)-{\cQ}_{31}(t).$$
We can use the same arguments as in the  estimate of $\cQ_1(t).$ 
We first apply \eqref{alphaLP-low} and \eqref{highreg-interp} to get that
\begin{align}
& \big\|\f{\mathfrak{N}_3(s)}{|\na|}\big\|_{H^{2}}\lesssim \|\alpha^2\|_{H^2}\lesssim \|\alpha\|_{L^4}\|\alpha\|_{W^{2,4}}\lesssim (1+s)^{-\f56+\cO(\kappa)} \cN(T)^2, \label{fN3/na}\\
 & \big\|P_{\geq -10} \f{ \mathfrak{N}_3(s)}{|\na|}
 \big\|_{W^{B_{\kappa},\f{8}{3+\kappa}}}\lesssim \|\alpha\|_{L^{\f{24}{1+3\kappa}}} \|P_{\geq -20}\,\alpha\|_{W^{B_{\kappa},3}}\lesssim (1+s)^{-(\f58+\f49+\cO(\kappa))} \cN(T)^2 \notag
\end{align}
as long as $M\geq (\ell_{\kappa}+1)+\f{27}{4}+\cO(\kappa)=\f{73}{8}+\cO(\kappa).$ It thus follows from the same computation as in \eqref{cQ2-highfreq} that
\beq\label{es-cQ31}
\begin{aligned}
  &\big\|Q_{31}-\int_0^t e^{i(t-s)P(D)}P_{\leq -5}B\big(P_{\leq -5}Z\alpha(s), P_{\leq -5}\mathfrak{N}_3(s)\big)\,\d s \big\|_{W^{\ell_{\kappa},8_{\kappa}}}\\
 & \lesssim \int_0^t (1+t-s)^{-(1+\f{\kappa}{3})}(1+s)^{-\f{77}{72}+\cO(\kappa)}\cN(T)^2\, \d s \, \lesssim (1+t)^{-(1+\f{\kappa}{3})} \cN(T)^2.
\end{aligned}
\eeq
The other term $\cQ_{32}$ can be dealt with similarly, but since $\f{\alpha}{|\na|}$ has growth in $L^2$ at rate $(1+t)^{\f14},$ it is better to put it  in  $L^{\f{8}{3+\kappa}}$  when using  bilinear estimates. For instance, we can have by \eqref{BL-High-less2} that
\begin{align*}
   & \big\|P_{\geq -5}  B\big(\alpha(s), Z\mathfrak{N}_3(s)\big) \big\|_{W^{m_{\kappa},8_{\kappa}'}}\\
  & \lesssim \big\|P_{\geq -10}\,{{\alpha}}(s)\big\|_{W^{
  B_\kappa-1,\f{8}{3+\kappa}}}\big\|\f{Z\mathfrak{N}_3(s)}{|\na|}\big\|_{H^2}+ \big\|P_{\geq -10}\f{Z\mathfrak{N}_3(s)}{|\na|}\big\|_{H^{B_\kappa}}\big\|\f{\alpha}{|\na|}\big\|_{W^{2,\f{8}{3+\kappa}}}.
\end{align*}
The first term can be controlled by $(1+s)^{-(\f{1}{3}+\f56)+\cO(\kappa)}\cN(T)^2,$ while for the second one, 
we need to use 
the estimate
\begin{align*}
    \big\|P_{\geq -10}\f{Z\mathfrak{N}_3(s)}{|\na|}\big\|_{H^{B_\kappa}}\lesssim \|\alpha\|_{L^{\f{14}{3}}}\|(\na \alpha, \pt\alpha)\|_{W^{B_{\kappa},\f72}}\lesssim (1+s)^{-(\f{19}{42}+\f47)+\cO(\kappa)}\cN(T)^2
\end{align*}
as long as $M\geq \ell_{\kappa}+1+\f{35}{4}+\cO(\kappa)=\f{89}{8}+\cO(\kappa).$ We thus get a similar estimate as in \eqref{es-cQ31}.

We now sketch the low-frequency estimates, which can be obtained using arguments similar to those employed for $\cQ_1.$ 
Thanks to \eqref{zalpha/na}, \eqref{alphaLP-low} and \eqref{fN3/na}, we
obtain, in the same spirit as the estimate \eqref{appli-BL-LL}, that
\begin{align*}
    \big\| |\na|^{\f{11+\kappa}{24}} B\big(P_{\leq -5}Z\alpha(s), P_{\leq -5}\mathfrak{N}_3(s)\big) \big\|_{L^{\f{6}{5+\kappa}}}\lesssim (1+s)^{-(\f{47}{144}+\f56)+\cO(\kappa)} \cN(T)^2\,.
\end{align*}
Next, it follows from \eqref{alphaLP-low} and \eqref{highreg-interp} that
\begin{align*}
     \big\|P_{\leq -5}\f{Z\mathfrak{N}_3(s)}{|\na|} \big\|_{L^2}\lesssim \|\alpha\|_{L^{\f{10}{3}}} \|(\pt\alpha, \na\alpha)\|_{L^5}\lesssim (1+s)^{-(\f{11}{30}+\f45)+\cO(\kappa)}\cN(T)^2.
\end{align*}
This last estimate, together with the fact $\|\alpha\|_{\dot{W}^{-1,{\f{144}{49+24\kappa}}}}\lesssim (1+t)^{\f{1}{72}+\cO(\kappa)}\cN(T),$ enables  us to derive that 
\beqs 
\big\| |\na|^{\f{11+\kappa}{24}} B\big(P_{\leq -5}\alpha(s), P_{\leq -5}Z\mathfrak{N}_3(s)\big) \big\|_{L^{\f{6}{5+\kappa}}}\lesssim (1+s)^{-\f{17}{15}+\cO(\kappa)} \cN(T)^2\,.
\eeqs
It then follows from the dispersive estimate \eqref{disper-low} that
\begin{align*}
    &\big\| \int_0^t e^{i(t-s)P(D)}P_{\leq -5}\big(B\big(P_{\leq -5}Z\alpha(s), P_{\leq -5}\mathfrak{N}_3(s)\big)+B\big(P_{\leq -5}\alpha(s), P_{\leq -5}Z\mathfrak{N}_3(s)\big)\big)\,\d s\,\big\|_{L^{8_{\kappa}}}\\
    &\lesssim (1+t)^{-(1+\f{\kappa}{2})}\big(\cM(0)+\cN(T)^2\big).
\end{align*}
The estimate of $\cQ_3$ is now complete.

\section{Proof of Theorem \ref{thm-nonlinear}.}
Based on the a priori estimates obtained in Section 4-7, we are now ready to prove Theorem \ref{thm-nonlinear}.

By Theorem \ref{thm-local} and Lemma \ref{lem-IFT}, there exists $T_0>0$ such that $(\tilde{c},\gamma)$ exists in $C^1([0,T_0], Y\times Y).$ Given this, it follows from standard arguments—particularly the energy estimates used in the proof of Proposition \ref{prop-energy}—that the system \eqref{eq-vr-v} admits a unique solution $(\vr, v)$ on $C([0,T], H^M)$ with $0<T\leq T_0.$
Moreover, by continuity, the local existence of 
$U$ established in Theorem \ref{thm-local}, 
 and of $(\tilde{c},\gamma)$ from Lemma \ref{lem-IFT}  
can be extended as long as $\sup_{t\in[0,T]}\|\na (n,\nabla\psi)(t)\|_{L_{\mathrm{x}}^{\infty}}<+\infty.$ 
In addition, by Propositions \ref{prop-modulation}, \ref{prop-weightednorm}, \ref{prop-energy}, and \ref{prop-decayes}, along with Remark \ref{rmk-cgamma0}, there is a  non-decreasing continuous function $f: \mR\rightarrow \mR,$ and a constant $C>0$ such that
\begin{align*}
    \cN(T)\leq f\big(\cM(0)
    \big) +C\cN(T)^2,
\end{align*}
which implies $\cN(T)\leq 2 f \big(\cM(0)
\big)$ provided $\cM(0)
\leq \delta$ is sufficiently small. It then follows from Lemma \ref{lem-IFT}
and a standard continuity argument that the solution exists globally, ie. $T=+\infty.$ 
Finally, the time decay estimates \eqref{decayes-thm} follow from the identity \eqref{identiy-import-intro}, Proposition \ref{prop-w}, and the definition of $\cN(T)$ in \eqref{def-cNT}. 

\section{Transverse Linear asymptotic stability--Proof of Theorem \ref{thm-resol-L} }\label{sec-prooflinear}
In this section, we  prove the uniform resolvent estimate \eqref{uni-resol}, which yields the exponential decay of the semigroup \eqref{semigroup} stated in Theorem \ref{thm-resol-L}, and thus the transverse linear asymptotic stability.  In the proof, it is more convenient to
consider the transformed operator 
 \beq\label{def-La}
 {L}_{c,a} =\colon e^{ax} L_c \,e^{-a x}=\left( \begin{array}{cc}
   (\p_x-a)(d_c\cdot)  & -(\p_x-a) \big(\rho_c (\p_x-a)\big)-\rho_c\Delta_y^2\\ 
-\big[ \, h'(\rho_c) +\big(e^{\phi_c}-(\p_x-a)^2-\p_y^2\big)^{-1}\big]& d_c(\p_x-a)
\end{array}\right) \,\, 
\eeq
in the unweighted space  $X:= L^2(\mR^3)\times H_{*}^{1}(\mR^3)$ where 
\begin{align}
  H_{*}^{{1}}(\mR^3)= \bigg\{ f  \,\big|\, \|f\|_{ H_{\star}^{1}(\mR^3)}:= \bigg(\int \bigg(\big|{\nabla_{x,y}}  f\big|^2+a^2 |f|^2 \bigg)(x,y)  \, \d x\d y\bigg)^{\f{1}{2}} <+\infty\bigg\}\, .
\end{align}
Note that $\dot H_{*}^1(\mR^3)$ still depends on $a$ but we neglect this dependence in the notation  in order to distinguish it with the weighted space $\dot{H}_{a}^1(\mR^3).$ 
Since the space $X$ can be identified with  $e^{ax}X_a,$ the study of $L_c$ on the weighted space $X_a$ is equivalent to the study of $L_{c,a}$ on the unweighted space $X.$ Consequently, Theorem \ref{thm-resol-L} is equivalent to the following theorem: 
\begin{thm}\label{thm-resol-La}
Let $X:= L^2(\mR^3)\times H_{\star}^{1}(\mR^3), \, \mathbb{Q}_{c}^a(\eta_0)=e^{ax}\,  \mathbb{Q}_c(\eta_0) \, e^{-ax}.$ There exist $\ep_0>0, \beta_0=\beta_0(\hat{a})>0, \hat{\eta}_0=\hat{\eta}_0(\hat{a})>0, C_0>0$ such that for any $0<\ep\leq \ep_0, 0<\beta\leq \beta_0,\, $ and any $\lambda \in \Omega_{\beta,\,\ep},$ 
 the operator
 $(\lambda-L_{c,a})$ is invertible on the space $\mathbb{Q}_c^a(\ep^2\hat{\eta}_0)X$ and
 \beq\label{uni-resol-La}
\|(\lambda-L_a)^{-1}\|_{B(\mathbb{Q}_c^a(\ep^2\hat{\eta}_0)X)}\leq C_0\, .
 \eeq
\end{thm}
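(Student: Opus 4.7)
\subsection*{Proof plan for Theorem \ref{thm-resol-La}}

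The plan is to obtain the resolvent bound $\|(\lambda - L_{c,a})^{-1} f\|_X \lesssim \|f\|_X$ uniformly for $\lambda \in \Omega_{\beta,\ep}$ and $f \in \mathbb{Q}_c^a(\ep^2\hat\eta_0)X$, by passing to the Fourier side in the transverse variable $y$ and piecing together three different arguments tailored to the size of the transverse frequency $\eta = |\zeta|$. First, taking $\cF_y$ in the equation $(\lambda - L_{c,a})U = F$ reduces the problem to the one-parameter family of 1D operators $L_c(\eta)$ defined in \eqref{def-Leta} (with the weight shift $e^{ax}$ built in), and the task becomes to establish $\|(\lambda - L_c(\eta))^{-1}\|_{B(Y_{a,\eta})} \lesssim 1$ uniformly in $\lambda \in \Omega_{\beta,\ep}$ and in $\eta$, provided we restrict to frequencies $|\eta|\ge \ep^2\hat\eta_0$ (where $\mathbb{Q}_c^a$ acts as the identity after Fourier transform) and, for $|\eta|<\ep^2\hat\eta_0$, to the complement of the two-dimensional resonant subspace spanned by $g_1(\cdot,\eta), g_2(\cdot,\eta)$. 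Splitting $(\xi,\zeta) \in \mathbb{R}^3$ according to \eqref{defregions} then gives three sub-problems, and $\lambda$ will be picked in $\Omega_{\beta,\ep}$ with $\beta \ll \ep^3$ so that the shift into the left half-plane is compatible with each regime.

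The first regime I would treat is the uniform high transverse frequency region $R_\zeta^{UH} = \{|\zeta|\geq 2\}$. Here the coefficients of $L_c$ involving $n_c, \psi_c', \phi_c$ are all of size $\cO(\ep^2)$ while the constant-coefficient part of $L_{c,a}$ has spectrum uniformly bounded away from the imaginary axis; one can write $L_c(\eta) = L_0(\eta) + \cO(\ep^2)$ where $L_0(\eta)$ is the operator obtained by freezing $n_c = \psi_c' = 0$, diagonalize $L_0(\eta)$ through pseudodifferential calculus (the symbol is symmetrizable with eigenvalues whose real parts are bounded below by an $\eta$-independent positive constant for $|\eta|\ge 2$ thanks to the weight $a$), and then treat $L_c(\eta)-L_0(\eta)$ as a bounded perturbation that can be absorbed for $\ep$ small. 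The intermediate regime $R_\zeta^I = \{A\ep^2\leq|\zeta|\leq 2\}$ is handled by designing energy functionals analogous to the one used in Section 5 for the nonlinear problem: for each subrange of longitudinal frequency $\xi$ one picks a symmetrizer that extracts a damping $\gtrsim a\, \eta^2 \gtrsim \ep^3$ from the weighted structure plus a dissipation coming from the non-self-adjointness of the $(e^{\phi_c}-\Delta)^{-1}$ term, while the solitary-wave coefficients (of size $\ep^2$) appear multiplied by a Lipschitz norm of the solitary wave and can be absorbed when $\ep$ is small.

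The most delicate regime is the low transverse frequencies $R_\zeta^L = \{|\zeta|\leq A\ep^2\}$, and this will be the main obstacle. Here the continuous spectral curve $\lambda(\pm\eta)$ approaches zero like $i\lambda_{1,c}\eta - \lambda_{2,c}\eta^2$ as in \eqref{exp-spectralcurve}, so an order $\cO(\ep^3)$ shift is exactly the scale at which the KP-II structure of the long-wave limit takes over. The plan is to further split in longitudinal frequency: for $|\xi|$ of order $\ep$ or larger I use the energy approach of the intermediate regime, adapted to register the $\cO(\ep^3)$ damping; for $|\xi|\lesssim \ep$ I rescale $\hat x = \ep x$, $\hat\zeta = \zeta/\ep^2$, $\hat\lambda = \lambda/\ep^3$, which is exactly the long-wave transverse KP-II scaling derived in Appendix A.1, and reduce to a uniform resolvent estimate for the linearized KP-II operator about its line soliton, projected off the resonant modes. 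The KP-II linear estimate in the weighted space is known (cf.\ \cite{Mizumachi-KP-nonlinear,Mizumachi-BL-linear}), and the remaining task is to show that the $\cO(\ep)$ discrepancy between the rescaled Euler--Poisson operator and the KP-II operator is genuinely a small perturbation at this scale, which is delicate because it must be controlled in a norm compatible with both the KP-II bound and the gluing back to the intermediate regime. Once the three regional estimates are obtained, one patches them using a smooth partition of unity in $\zeta$ (the symbols behave as Fourier multipliers in $y$ and are bounded on $X$), yielding the uniform bound \eqref{uni-resol-La}; the Gearhart--Pr\"uss theorem then delivers \eqref{semigroup}.
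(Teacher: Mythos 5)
Your plan follows essentially the same architecture as the paper (sharp transverse-frequency splitting into $\{|\zeta|\ge 2\}$, $\{A\ep^2\le|\zeta|\le 2\}$, $\{|\zeta|\le A\ep^2\}$; energy functionals in the intermediate range; a further longitudinal split in the low range with an energy estimate for $|\xi|\gtrsim K\ep$ and a rescaling to the linearized KP-II operator, Mizumachi's resolvent bound off the resonant modes, and control of the discrepancy). However, your uniform high transverse frequency step, as written, would fail. You propose to diagonalize only the constant-coefficient operator $L_0(\eta)$ and then absorb $L_c(\eta)-L_0(\eta)$ as a bounded perturbation of size $\cO(\ep^2)$. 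That difference is \emph{not} bounded on $Y_{a,\eta}$ (nor on $X$): it contains $\p_x(\p_x\psi_c\,\cdot)$, $\p_x\psi_c\,\p_x$ and $\div_a(n_c\na_a\,\cdot)$, i.e.\ terms of the same order as the principal part with $\cO(\ep^2)$ coefficients. Since the spectral gap produced by the weight is only $\kappa a\sim\ep$, and for every large $|\xi|$ there are $\lambda\in\Omega_{\beta,\ep}$ with $|\lambda-\lambda^0_\pm(\xi,\eta)|\sim\ep$, the composition $(\lambda-L_0(\eta))^{-1}\big(L_c(\eta)-L_0(\eta)\big)$ is of size $\ep^2|\xi|/\ep$ at those frequencies and no Neumann series closes: this is exactly the derivative loss the paper highlights when it says the wave-dependent part cannot be treated as a lower-order small perturbation. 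The paper's remedy is to build $d_c,\rho_c,\phi_c$ into the symbols being diagonalized (the symmetrizer $\sigma_{a,\eta}(x,\xi)$ and $\sqrt{\rho_c}$ in $P_1(\eta),P_2(\eta)$), so that the conjugated principal symbols $\lambda^1_\pm$ already carry the exact coefficients and satisfy $\Re\lambda^1_\pm\le-\kappa a$, while all remainders are genuinely bounded operators of size $\cO(\ep^3)$ because they only involve \emph{derivatives} of the wave; an energy functional containing the coefficients (as in the intermediate regime) would also do. Relatedly, your account of the intermediate-regime damping is off: it is not $a\eta^2$ (which is $\cO(\ep^5)$ at $|\zeta|\sim A\ep^2$) nor a dissipation from $(e^{\phi_c}-\Delta)^{-1}$, but $a$ times the supercriticality gap $c-\sup\sqrt{\rho_c}\,|\Im(\mu_a\sigma_{a,\eta})|/a\gtrsim A\ep^2$, giving $\gtrsim A\ep^3$; the order you quote is right, the mechanism is not, and the same derivative-loss issue forces the wave terms to enter through commutators or through the functional, where they cost $\cO(\ep^3)$.

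Two further points you should add. First, a priori resolvent bounds do not by themselves give invertibility; the paper closes this by an abstract continuation argument (Kato, Thm.\ III.6.7) after exhibiting one point of $\Omega_{\beta,\ep}$, namely $\lambda=1$, in the resolvent set, using the explicit inverse of $\Id-L_a^0$ (where the high-$\xi$ derivative loss is harmless since $|1-\lambda^0_\pm|\sim\langle\xi\rangle$) and $\|(\Id-L_a^0)^{-1}L_a^1\|=\cO(\ep^2)$. Second, no partition of unity in $\zeta$ is needed at all: the coefficients depend only on $x$, so Fourier multipliers in $y$ commute exactly with $L_{c,a}$ and the three transverse blocks decouple; the cutoffs that do generate commutators are the longitudinal ones inside the low-transverse block, and there one must use smooth cutoffs at scale $K\ep$ with commutator bounds of size $K^{-1}\ep^3$ to be compatible with the $\ep^{-3}$ bound of that block, together with the comparison of the Euler--Poisson resonant projection with the KP-II one (errors $\cO(\ep^2+\hat{\eta}_0^2+K^{-3})$, worked out in the appendix) — that, rather than the approximation lemma alone, is where the real work of the low-frequency regime lies.
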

As explained in the introduction, in order to obtain the above resolvent estimates, we have to use different arguments in 
 various frequency regions.
  For $d=1,2,$ let
 $\mathbb{I}_K(\cdot):\mR^d\rightarrow \mR$ be the characteristic function of the interval or disk $B_K(0)$ and $\tilde{\mathbb{I}}_K=1-\mathbb{I}_K.$ 
For a large number $A$ such that $A\ep^2<1,$ we  define the spaces 
$$X^{UH}=\tilde{\mathbb{I}}_{2}(D_y) X ,
\quad 
X^I_{A}=\mathbb{I}_2\tilde{\mathbb{I}}
_{A\ep^2}(D_y)X, \quad  X_A^L={\mathbb{I}}
_{A\ep^2}(D_y)X$$
    which are subspaces of $X$  whose elements are  localized respectively in  the uniform high, intermediate  and low transverse frequency regions $R^{UH},$ $R^I$ and $R^L$ 
    defined in \eqref{defregions}.
Theorem \ref{thm-resol-La} will be the consequence of Propositions \ref{prop-UTH}, \ref{prop-ITF}, \ref{prop-LTH} for the spectral stability 
and uniform resolvent bounds in the spaces $X^{UH},\, X^I_A, \,\mathbb{Q}_a(\ep^2\hat{\eta}_0)X_A^L$ respectively.

\subsection{Spectral stability for the uniform high transverse frequency}
In this subsection, we show the following result concerning the spectral stability in $X^{UH}.$
\begin{prop}\label{prop-UTH}
Suppose that $0<\beta\leq \f12$
and $\ep$ is sufficiently small. 
 For any $\lambda\in \Omega_{\beta, \ep},$ the operator
    $\big(\lambda- 
 L_a\big)$ is invertible on the space $X^{UH}$ and its inverse has the bound: 
 \beq 
 \|\big(\lambda-
 L_{c,a}\big)^{-1}\|_{B(X^{UH})} \leq  C\ep^{-1}\,,
 \eeq
 where the constant $C>0$ is independent of $\ep.$
\end{prop}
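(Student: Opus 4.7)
The plan is to reduce the resolvent problem to a family of one-dimensional problems parametrized by the transverse Fourier variable $\zeta \in \mathbb{R}^2$ with $|\zeta|\geq 2$, and then combine a pseudodifferential diagonalization of the constant coefficient part with a perturbative argument to handle the solitary wave contribution. After partial Fourier transform in $y$, the operator $L_{c,a}$ becomes a family $L_{c,a}(\zeta)$ acting in $x$ only, and because the weight shifts the transport term $-c_0\partial_x$ into $-c_0(\partial_x-a)$ with $a=\hat a\ep$, the eigenvalues of the constant-coefficient operator $L^0_a(\zeta)$ (obtained from $L_{c,a}(\zeta)$ by freezing $\rho_c\equiv 1$, $\psi_c'\equiv 0$, $\phi_c\equiv 0$, $d_c\equiv c_0$) acquire a strictly negative real part of size at least $c_0 a/2$ uniformly in $\xi\in\mathbb{R}$ and in $|\zeta|\geq 2$.

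The first step is to compute and analyze the two symbolic branches $\lambda^0_\pm(\xi,\zeta)$ of $L^0_a(\zeta)$: they are roots of an explicit quadratic whose discriminant stays bounded below for $|\zeta|\geq 2$, so both branches are smooth bounded symbols and each satisfies $\Re\lambda^0_\pm(\xi,\zeta)\leq -c_0 a/2\leq -\kappa_*\ep$ for some $\kappa_*>0$ independent of $\ep$, of $|\zeta|\geq 2$, and of $\xi$. Combined with the condition $\lambda\in\Omega_{\beta,\ep}$, so $\Re\lambda>-\beta\ep^3$, this yields, for $\beta\leq \f12$ and $\ep$ small,
\beqs
|\lambda-\lambda^0_\pm(\xi,\zeta)|\;\geq\;\f{\kappa_*\ep}{2}
\qquad \forall\,(\xi,\zeta)\in\mathbb{R}\times\{|\zeta|\geq 2\}.
\eeqs
A pseudodifferential symmetrizer that diagonalizes $L^0_a(\zeta)$ up to a bounded (in $|\zeta|\geq 2$) $L^2$-remainder then upgrades this symbolic bound to the operator estimate $\|(\lambda-L^0_a(\zeta))^{-1}\|_{B(X^{UH})}\lesssim \ep^{-1}$, uniformly in $|\zeta|\geq 2$ and $\lambda\in\Omega_{\beta,\ep}$.

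The second step is to treat the solitary wave perturbation $V(\zeta):=L_{c,a}(\zeta)-L^0_a(\zeta)$ as a relatively bounded remainder. Its entries are built from $d_c-c_0$, $n_c$, $\rho_c-1$, together with the difference of the elliptic inverses $\bigl(e^{\phi_c}-(\partial_x-a)^2+|\zeta|^2\bigr)^{-1}-\bigl(1-(\partial_x-a)^2+|\zeta|^2\bigr)^{-1}$. By Theorem~\ref{thm-existence}, these coefficients are $\cO(\ep^2)$ in $W^{k,\infty}(\mathbb{R})$ for every $k$, and the elliptic difference is also $\cO(\ep^2)$ as an operator on $X^{UH}$ uniformly in $|\zeta|\geq 2$, since both resolvents have operator norms $\lesssim 1$ there. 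Consequently $\|V(\zeta)\|_{B(X^{UH})}=\cO(\ep^2)$, and a Neumann series for $\bigl(\Id-V(\zeta)(\lambda-L^0_a(\zeta))^{-1}\bigr)^{-1}$, with operator norm $\cO(\ep^2\cdot\ep^{-1})=\cO(\ep)\ll 1$, gives invertibility of $\lambda-L_{c,a}(\zeta)$ on $X^{UH}$ with the bound $C\ep^{-1}$; passing from the $\zeta$-wise bound to the bound on $X^{UH}$ is then standard by Plancherel in $y$.

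The main obstacle is the diagonalization in the first step: while the quadratic factorization of the principal symbol is explicit, verifying that the discriminant stays bounded below uniformly in $|\zeta|\geq 2$, that the associated spectral projectors are smooth bounded symbols, and that all commutator remainders from the symbolic calculus (including those produced by the nonlocal $(2,1)$-entry $\bigl(1-(\partial_x-a)^2+|\zeta|^2\bigr)^{-1}$) are controlled in $L^2$ independently of $|\zeta|\geq 2$, requires some careful bookkeeping. The perturbation argument in the second step is then essentially automatic once the uniform $\cO(\ep^{-1})$ bound for the constant coefficient model is in hand.
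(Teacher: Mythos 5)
Your Step 1 (reduction to a one–dimensional family in $x$ parametrized by $|\zeta|\geq 2$, and the uniform bound $\|(\lambda-L^0_a(\zeta))^{-1}\|\lesssim \ep^{-1}$ for the \emph{constant coefficient} model) is sound and parallels the paper's setup, which conjugates by $\diag(1,\mu_a(D))$ to work in $L^2\times L^2$. The genuine gap is in Step 2: the wave-dependent part $V(\zeta)=L_{c,a}(\zeta)-L^0_a(\zeta)$ is \emph{not} a bounded operator on $X^{UH}$ with norm $\cO(\ep^2)$. Its entries are first order relative to the spaces in $X=L^2\times H^1_*$: the $(1,1)$ entry contains $\p_x\psi_c\,\p_x$ acting on the $L^2$ component, the $(1,2)$ entry contains $n_c\Delta_a$ acting on the $H^1_*$ component with $L^2$ target, and the $(2,2)$ entry $\p_x\psi_c(\p_x-a)$ lands in $H^1_*$; each of these loses one derivative, so only the \emph{coefficients} are $\cO(\ep^2)$, not the operator norm. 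Nor can this be repaired by relative boundedness: the Neumann-series step needs $\|V(\zeta)(\lambda-L^0_a(\zeta))^{-1}\|\ll 1$, and the free resolvent does not gain a derivative uniformly in the UH region. Indeed the characteristic cone $c\,\xi\approx\sqrt{h'(1)}\,\sqrt{\xi^2+|\zeta|^2}$ lies inside $\{|\zeta|\geq 2\}$ and reaches arbitrarily high frequencies (the group velocity is linear at infinity, as the paper stresses — there is no smoothing), and on it $|\lambda-\lambda^0_\pm|$ is only of size $\kappa a\sim\ep$; testing on a wave packet concentrated near such a frequency of size $R$ gives $\|V(\lambda-L^0_a)^{-1}\|\gtrsim \ep^2 R\cdot\ep^{-1}=\ep R\to\infty$. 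So the perturbative scheme collapses precisely because of the quasilinear structure that the paper singles out as the main difficulty of the linear analysis.

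What the paper does instead is to build the solitary wave into the symbol rather than into a remainder: it diagonalizes $\cL_a(\eta)$ with operator matrices constructed from $\op(\sigma_{a,\eta})$, $\op(1/\sigma_{a,\eta})$ and $\sqrt{\rho_c}$, where $\sigma_{a,\eta}(x,\xi)$ already contains $h'(\rho_c)$ and $e^{\phi_c}$, obtaining \emph{variable-coefficient} eigenvalue symbols $\lambda^1_\pm(x,\xi,\eta)=i\big(d_c(x)(\xi+ia)\pm\sqrt{\rho_c}(x)\,\mu_{a,\eta}\sigma_{a,\eta}\big)$ with $\Re\lambda^1_\pm\leq-\kappa a$ uniformly on $\mR^2$ for $|\eta|\geq 2$. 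The operators $\op(\lambda-\lambda^1_\pm)$ are then inverted directly by a pseudodifferential invertibility lemma, the composition/commutator remainders being controlled by $\|\p_x(\rho_c,d_c)\|_{W^{k,\infty}}\lesssim\ep^3$, which is small compared with the $\ep^{-1}$ size of the inverse. If you want to keep your two-step architecture you would have to replace the Neumann series by such a symbolic treatment (or by an energy estimate with the full variable coefficients); as written, the claim $\|V(\zeta)\|_{B(X^{UH})}=\cO(\ep^2)$ and the ensuing $\cO(\ep)$ smallness are false, and the proof does not go through.
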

\begin{proof}
    We set  ${\mu}_a(D)=\sqrt{-(\p_x-a)^2-\Delta_y^2}$ and we observe that  for any function $f\in X$
  with transverse frequency $|\zeta|\geq 2,$ we have that $\|f\|_{X}\approx \|(f_1, {\mu}_a(D) f_2)\|_{L^2(\mR^3)}.$ Consequently, to prove Proposition \ref{prop-UTH}, it suffices to show that  
$\Omega_{\beta, \ep}\subset \rho(\cL_a; \cX^{UH}),$
with the bound
\beq 
 \|\big(\lambda-
 \cL_a\big)^{-1}\|_{B(\cX^{UH})} \leq  C\ep^{-1}, \quad \forall \, \lambda \in 
 \Omega_{\beta, \ep},
 \eeq
where $\cX^{UH}=\tilde{\mathbb{I}}_2(D_y) (L^2(\mR^3)\times L^2(\mR^3))$ and $\cL_a= \diag(1, \mu_a(D) )\, L_{c,a} \,\diag\big(1, \mu_a^{-1}(D) \big).$

Note that we skip the dependence of the operator on $c$  in the notation for convenience.

For any $F\in \cX^{UH},$ any $\lambda\in\Omega_{\beta, \ep},$ consider the resolvent problem: 
 \beqs 
\big( \lambda-\cL_a\big) U=F,
    \eeqs
we shall prove the existence of $U\in \cX^{UH}$ and the resolvent estimate simultaneously. 

Define
\beq\label{defcLzeta} 
\begin{aligned}
&\cL_a(\eta)=\colon e^{-iy\cdot \zeta}\cL_a e^{iy \cdot \zeta}\mathrm{1}_{\{\eta= |\zeta|\}}\\
&=\left( \begin{array}{cc}
   (\p_x-a)(d_c\cdot)  &  \rho_c\, \mu_{a,\eta}(D_x) -\p_x\rho_c(\p_x -a)\mu_{a,\eta}(D_x)^{-1}
   \\[5pt]
 -\mu_{a,\eta}(D_x) 
 \big[ \, h'(\rho_c) + \big(e^{\phi_c}
 +\mu_{a,\eta}(D_x)^2\big)^{-1}\big]&\, d_c(\p_x-a)+\big[\mu_{a,\eta} (D_x), d_c\big](\p_x-a) \mu_{a,\eta} (D_x)^{-1} 
\end{array}\right)\, ,
\end{aligned}
\eeq  
where ${\mu}_{a,\eta}(D_{x})=\sqrt{-(\p_x-a)^2+\eta^2},$  and 
$\big(e^{\phi_c}
 +\mu_{a,\eta}^2(D_x)\big)^{-1}: L^2(\mR)\rightarrow H^2(\mR)$ denotes the solution map associated to the elliptic problem
 \beqs 
\big(e^{\phi_c}+\eta^2-(\p_x-a)^2\big) f=n,
 \eeqs
 for $ n\in  L^2(\mR)$. Note that the solvability of the above equation is ensured by the Lax-Milgram theorem provided that $\ep$ is small enough. 
  Moreover, $\big(e^{\phi_c}
 +\mu_{a,\eta}^2(D_x)\big)^{-1}$ can be identified with a pseudo-differential operator with principal  symbol $$
 \f{1}{e^{\phi_c}(x)+\eta^2+(\xi+ia)^2}\,, \qquad  (\,|\eta|\geq 2\,).$$
 
For the convenience, we extend $\cL_a(\eta)$ for $\eta\in \mathbb{R}$ with $ \mathcal{L}_{a}(- \eta)=  \mathcal{L}_{a}( \eta)$ 
By taking the Fourier transform in the transverse variable,  we have to solve  the problem 
\beq \label{resol-HTF}
\big( \lambda-{\cL}_a(\eta)\big) u=f, \quad \, \forall \,|\eta|\geq 2
\eeq
in the space $\cY= L^2(\mR)\times L^2(\mR),$ and  to prove  the resolvent bounds  $\|u\|_{\cY}\lesssim \ep^{-1}\|f\|_{\cY}$ uniformly for $\eta$ such that $|\eta|\geq 2.$ 

Consider  the function 
\beq\label{sigmaaeta}
\sigma_{a,\eta}(x, \xi)=\sqrt{ h'(\rho_c)+\big(e^{\phi_c}(x)+\eta^2+(\xi+ia)^2\big)^{-1}}\,, \qquad  (\,|\eta|\geq 2\,).
\eeq
which is smooth and non-vanishing for any $(x,\xi)\in \mR^2$ 
as long as $\ep$ is sufficiently small. Moreover, when $|\eta|\geq 2,$ both $\sigma_{a,\eta}(x, \xi)$ and $\sigma^{-1}_{a,\eta}(x, \xi)$
belong to the symbol class $S^0$ 
so that the associated pseudo-differential operator $\op (\sigma_{a,\eta}), \op(\sigma_{a,\eta}^{-1}) $
are bounded operators on $L^2(\mR).$ Moreover, by using  standard results on the composition of pseudo-differential operators and the fact that $\|\p_x(\rho_c, d_c)\|_{W_x^{k,\infty}(\mR)}\lesssim \ep^3$ for $k$   
large enough, it holds that: 
\beqs 
\op(\sigma_{a,\eta})\circ  \op(1/\sigma_{a,\eta})=\Id_{L^2(\mR)} + R_{1\eta}\, , \quad  \op(1/\sigma_{a,\eta})\circ \op(\sigma_{a,\eta}) =\Id_{L^2(\mR)}+ R_{2\eta} \,,
\eeqs
where  $R_{1\eta}, R_{2\eta}$ are two bounded operators in $L^2(\mR)$ with  operator norms bounded by  $C\ep^3$ (C being independent of $\eta$).

Let the operator valued matrices $P_1(\eta)$ and $P_2(\eta)$ be defined by:
  \begin{align*}
  P_1(\eta)=   \left(\begin{array}{cc}
   \op({1}/{\sigma_{a,\eta}})   & \op({1}/{\sigma_{a,\eta}})  \\  
  -i/\sqrt{\rho_c} & i/\sqrt{\rho_c} 
\end{array}  \right), \,\,\quad  P_2= \f12 \left(\begin{array}{cc} \op(\sigma_{a,\eta})  &  i \sqrt{\rho_c}    \\ \op(\sigma_{a,\eta})  & - i\sqrt{\rho_c} \end{array}  \right),
\end{align*}
we can  then diagonalize ${\cL}_a(\eta)$ as:
\beq\label{diagnolzation}
{\cL}_a(\eta)= P_1(\eta) \,\,\op \left(\begin{array}{cc}
   \lambda_{-}^1(x,\xi,\eta)  &  \\
     &   \lambda_{+}^1(x,\xi,\eta)
\end{array}  \right) P_2(\eta)+\cR_{\eta}\, ,
\eeq
where  \beq \label{deflpm}
\lambda_{\pm}^1=i \bigg(d_c(x)(\xi+ia)\pm \sqrt{\rho_c}(x)\,\mu_{a,\eta}\,\sigma_{a,\eta}(x,\xi)\bigg). 
\eeq 
Hereafter, we denote  by 
$\cR_{\eta}$ 
a generic bounded linear operator in
$\mathcal{Y}=L^2(\mR)\times L^2(\mR)$ with  norm is of order  $\ep^3$ that is to say  
\beqs
\|\cR_{\eta}\|_{B(\cY)}\leq C \ep^3,\quad  \, \forall \, |\eta|\geq 2\, ,
\eeqs
 which may change from line to line.
 
Applying $P_2(\eta)$ to 
the resolvent equation $(\lambda-{\cL}_a(\eta))u={f}$ and noticing that $P_2(\eta) P_1(\eta)=\Id_{\cY}+\cR_{\eta},$ we find:  \beqs\label{resol-HTF-1} 
 \op \left(\begin{array}{cc}
  \lambda- \lambda_{-}^1(x,\xi,\eta)   &  \\
     &  \lambda- \lambda_{+}^1(x,\xi,\eta) 
\end{array}  \right) 
(P_2(\eta)u)=P_2(\eta)\tilde{ f}+\cR_{\eta} u\,.
\eeqs
Applying \eqref{muasax} for the region $R_{\eta}^{UH},$ and using the fact that  $d_c=c-\p_x\psi_c=c+\cO(\ep^2),$ we have, 
for any $a=\hat{a}\ep\in (0,1/2),$ 
any $|\eta|\geq 2,$ any $\ep$ small enough, that  there exists a 
constant 
$\kappa>0$
which is independent of $\eta, \ep,$ such that
$$\Re \lambda_{\pm}^1 =-a d_c(x) \pm \sqrt{\rho_c}(x)\, \Im (\mu_{a,\eta}\,\sigma_{a,\eta})(x,\xi)  \leq -\kappa a\,, \qquad \forall \,(x,\xi)\in\mR^2.$$ Therefore, for any $\lambda\in \Omega_{\beta,\ep},$ it holds, upon choosing $\ep$ small enough, that
\beqs 
\Re (\lambda-\lambda_{\pm})(x,\xi) \geq \kappa a/2>0\, , \qquad \forall \,(x,\xi)\in\mR^2.
\eeqs
We thus get by using standard results from pseudodifferential calculus that the operators $Op(\lambda - \lambda_{\pm})$ are invertible.
More precisely, by  using for example  Lemma A.1 of  \cite{RS-WW} (see also Theorem 4.29 in \cite{Zworski-book}) which gives  that
for a complex-valued
$S^0$ symbol $a(x,\xi)$  such that
$\gamma:=\inf_{(x,\xi)\in\mR^2}|a(x,\xi)|>0$ and $\sup_{(x,\xi)\in\mR^2}|\p_x^k a (x,\xi)|$ is  small enough for a sufficiently large number $k\geq 2,$
of derivatives  then the
  pseudo-differential operator $\op(a)$ is invertible
on $L^2(\mR)$ with the norm of the inverse operator controlled by  $2\gamma^{-1},$ we have that $\op(\lambda-\lambda_{\pm})$ 
are invertible and 
their inverses have the bounds
\begin{align}
    \|\big(\op \big(\lambda-
    \lambda_{\pm}^1(\cdot, \cdot, \eta) 
    \big) \big)^{-1}\|_{B(L^2(\mR))}\leq 4 (\kappa a)^{-1} \lesssim   
    \ep^{-1}. \label{ev-HT-p} 
    \end{align}
The existence of $u$ solving the problem \eqref{resol-HTF}
 follows from \eqref{ev-HT-p} and the invertibility of 
$P_2(\eta).$ Moreover, upon choosing $\ep$ sufficiently small and using  the fact that  $P_1(\eta)P_2(\eta)=\Id+\cR_{\eta},$ we get  that $\|u\|_{\cY}\lesssim \ep^{-1}\|{f}\|_{\cY}\,.$ The  proof is thus finished.
\end{proof}

\subsection{Spectral stability for the intermediate transverse frequency}
In this subsection, we  establish the  resolvent estimates in the space $X_A^I.$ Since $\mu_a(\xi,\zeta)$ may vanish on the  nontrivial curves $\{\xi=0,\, |\zeta|= a),$ the diagonalization procedure like \eqref{diagnolzation} does not work anymore since $(\p_x-a)\mu_{a,\eta}^{-1}$ is not a bounded operator on $L^2(\mR).$ The case where $\mu_{a, \eta}^{-1}$ has singularities thus requires careful treatment. To establish resolvent bounds within this intermediate transverse frequency regime, we shall 
use  an energy-based approach, achieved through energy estimates  based on carefully designed  energy functionals adapted to different
frequency subregions.

\begin{prop}\label{prop-ITF}
Suppose that $0<\beta\leq \f12.$
There exists a large number $A>0$ 
such that for any $\ep$ sufficiently small so that 
$A^2\ep<1,$
 the following hold true: for any $\lambda\in \Omega_{\beta,\ep},$ the operator
$\big(\lambda- 
 L_{c,a}\big)$ is invertible on the space $X_{A}^I$ and its inverse has the bound: 
\begin{align}
 \|\big(\lambda- 
 L_{c,a}\big)^{-1}\|_{B(X_{A}^I)}  \leq C A^{-1}
 \ep^{-3}, \label{ITF}
\end{align}
where $C$ is a constant independent of $A, \ep.$
\end{prop}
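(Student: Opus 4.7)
The plan is to take the Fourier transform in the transverse variable $y$ and reduce the resolvent equation $(\lambda - L_{c,a})U=F$ to a one-parameter family of one-dimensional problems
\beqs
(\lambda - L_{c,a}(\eta))\, u = f, \qquad |\eta| \in [A\ep^2,\, 2],
\eeqs
where $L_{c,a}(\eta)$ is the conjugated analogue of \eqref{def-Leta} (with $\p_x$ replaced by $\p_x-a$), acting in the unweighted one-dimensional space $\cY_\eta = L^2(\mR)\times H^1_{*,\eta}(\mR)$. The aim is to produce, uniformly in $|\eta|\in [A\ep^2,2]$ and $\lambda\in\Omega_{\beta,\ep}$, the resolvent bound $\|u\|_{\cY_\eta}\lesssim A^{-1}\ep^{-3}\|f\|_{\cY_\eta}$. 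In contrast with Proposition~\ref{prop-UTH}, the symbol $\mu_{a,\eta}(\xi)=\sqrt{\eta^2-(\xi+ia)^2}$ may now vanish on the curve $\{\xi=0,\, |\eta|=a\}$, so the pseudo-differential diagonalisation based on the symbol $\sigma_{a,\eta}$ from \eqref{sigmaaeta} is no longer uniformly licit.

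The strategy will be energy based rather than symbolic. I would split the longitudinal frequency $\xi$ into a few pieces via smooth cut-offs whose thresholds depend on $\eta$ (typically $|\xi|\gtrsim 1$, $|\xi|\sim \eta$ and $|\xi|\ll \eta$), and on each piece build an energy functional of the form
\beqs
\cE_\eta(u) = \int \mathfrak{a}_\eta(D_x)|n|^2 + \mathfrak{b}_\eta(D_x)|\psi|^2 + \textnormal{(cross terms)} \, \d x,
\eeqs
with nonnegative symbols $\mathfrak{a}_\eta,\mathfrak{b}_\eta$ tailored to the degeneracy of $\mu_{a,\eta}$, so that the real part of the quadratic form $\Re\langle \cE_\eta'(u),\, L_{c,a}(\eta) u\rangle$ produces a damping of the form $-c_*\, a\, \mathfrak{m}_\eta(\xi)\,\cE_\eta(u)$ with $\mathfrak{m}_\eta$ bounded below by a positive multiple of $\eta^2$. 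Since $a=\hat a \ep$ and $|\eta|\geq A\ep^2$, a naive bound yields damping of order $\hat a A^2\ep^5$; but a careful gluing of the subregions along the lines of the KP-II analysis underlying Proposition~\ref{prop-LTH} should sharpen the effective rate to $\gtrsim A\ep^3$. Coercivity $\cE_\eta(u)\approx \|u\|_{\cY_\eta}^2$ combined with a pairing against $f$ then closes the bound. The recombination step should also explain the $A^{-1}$ prefactor: as $|\eta|$ approaches the lower endpoint $A\ep^2$, the quadratic form loses control proportional to $A^{-1}$ against the KP-II resonant modes already separated off in the low-frequency analysis.

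The main obstacle will be the simultaneous management of three scales: the weight $a=\cO(\ep)$, the transverse frequency $\eta\in[A\ep^2,2]$, and the longitudinal frequency $\xi$. Because $\mu_{a,\eta}$ degenerates near $(\xi,\eta)=(0,\pm a)$, the symbol $\sigma_{a,\eta}$ in \eqref{sigmaaeta} is not smooth there and the symbolic factorisation used in Proposition~\ref{prop-UTH} breaks down; on the critical zone the damping cannot be supplied by $\Im(\mu_{a,\eta}\sigma_{a,\eta})$ and must instead come from the bare shift $-a\, d_c(x)$ in the convective block $d_c(\p_x-a)$, amplified by a multiplier that vanishes appropriately as $\mu_{a,\eta}\to 0$. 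The delicate part is to verify that all commutators of the chosen pseudo-differential multipliers with the $x$-dependent coefficients $d_c,\rho_c,h'(\rho_c)$ (whose $W^{k,\infty}$ norms are $\cO(\ep^2)$ and whose derivatives are $\cO(\ep^3)$) generate remainders that stay strictly below the target damping rate $\gtrsim A\ep^3$. This leaves only a narrow margin and will dictate both the precise form of the energy functionals and the exact frequency thresholds used to split the intermediate region.
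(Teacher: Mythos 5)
Your overall plan (transverse Fourier reduction plus energy functionals adapted to the degeneracy of $\mu_{a,\eta}$) is in the right family of ideas, but the quantitative core of the argument is missing, and the place where you locate the damping is not where it actually comes from. Your ansatz produces damping $\sim a\,\mathfrak{m}_\eta(\xi)$ with $\mathfrak{m}_\eta\gtrsim \eta^2$; at the bottom of the range $|\eta|\sim A\ep^2$ this is $\hat a A^2\ep^5$, i.e.\ a resolvent bound of size $\ep^{-5}$, and the sentence ``a careful gluing of the subregions along the lines of the KP-II analysis should sharpen the effective rate to $\gtrsim A\ep^3$'' is precisely the step that needs a proof — and the KP-II approximation cannot supply it, since it only governs the regime $|\zeta|\lesssim \ep^2$, $|\xi|\lesssim\ep$ (that is Proposition \ref{prop-LTH}, not the intermediate regime). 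The mechanism that actually yields the rate $A\ep^3$ is the one quantified in Lemma \ref{lemmuasa}: split the intermediate region by whether $\mu_a$ can degenerate, i.e.\ introduce $\cS_{K,\vartheta}=\{|\xi|\leq K\ep,\ \vartheta\sqrt{\xi^2+a^2}\leq|\zeta|\leq 2\}$ with $K\sim\sqrt{A}$. On the complement (either $|\xi|\geq K\ep$ or $|\zeta|\leq\vartheta|\xi+ia|$) one has $a\,c-\sup_x\sqrt{\rho_c}\,|\Im(\mu_a\sigma_a)|\gtrsim a\,A\ep^2=\hat a A\ep^3$, because either $|\sigma_a|\leq c-C(K\ep)^2$ (recall $c-\sqrt{h'(1)+1}=\ep^2$) or $|\Im\mu_a|\leq a\bigl(1-C(K\ep)^2\bigr)$; there one can use a wave-adapted functional built on $\mu_a(D)V_2$ since $|\mu_a|\approx\sqrt{\xi^2+a^2+|\zeta|^2}$ is nondegenerate. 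On $\cS_{K,\vartheta}$ itself, where $\mu_a$ vanishes, the damping is in fact much stronger, of order $\vartheta^2 a\sim\ep$, because $|\zeta|\gtrsim|\xi+ia|$ allows a constant-coefficient energy in terms of $\|\na_a V_2\|$ with $\|(\p_x-a)V_2\|\leq(1+\vartheta^2)^{-1/2}\|\na_aV_2\|$; the two pieces are coupled only through the $\cO(\ep^2)$-small wave operator $L_a^1$ and the estimates close by taking $K$ (hence $A$) large. A single scale of functionals with damping weight $\eta^2$, as you propose, cannot reproduce this dichotomy, and the commutator remainders you worry about are controlled exactly because the derivatives of $d_c,\rho_c,\phi_c$ are $\cO(\ep^3)$ while the damping on the regular part is $A\ep^3$ with $A$ large — a margin your scheme does not have.

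Two further points. First, your explanation of the prefactor $A^{-1}$ (``loss of control against the KP-II resonant modes already separated off'') is incorrect: no resonant-mode projection enters the intermediate transverse regime at all (the resonant modes live at $|\zeta|\leq\ep^2\hat\eta_0<A\ep^2$); the $A^{-1}$ is simply the reciprocal of the damping rate $\sim A\ep^3$ on the regular part (it appears as $K^{-2}\approx A^{-1}$ in the paper). Second, a priori estimates alone do not prove invertibility; you still need to exhibit at least one point of $\Omega_{\beta,\ep}$ in the resolvent set of $L_{c,a}$ restricted to $X_A^I$ (the paper does this at $\lambda=1$ via the explicit constant-coefficient symbol of $L_a^0$ and the $\cO(\ep^2)$-smallness of $(\Id-L_a^0)^{-1}L_a^1$, then invokes the standard connectedness argument from spectral theory). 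Your proposal omits this step entirely.
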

\begin{proof}
For any $\lambda \in \Omega_{\beta, \ep},$ $F\in X_A^I,$ consider the resolvent equation 
 \beq\label{reolveq-inter}
(\lambda I -L_{c,a})\, U=F\, .  
 \eeq
Let us first assume the existence of $U \in X_A^I$ and prove the resolvent estimates 
\beq\label{resolvent-inter}
\|U\|_{X}\lesssim A^{-1}\ep^{-3}\,\|F\|_{X}. 
\eeq
Define the set on which $\mu_a(\xi,\zeta)$ 
has degeneracies (i.e. $\mu_a(\xi,\zeta)=0$ for some $(\xi,\zeta)$):
\beq \label{def-singset}
\cS_{K, \vartheta}=\bigg\{(\xi,\zeta_1,\zeta_2)\in \mR^3\big|\,|\xi
|\leq K\ep, \, \vartheta \sqrt{\xi^2+a^2}\leq 
|\zeta|\leq 2\bigg\}, \quad ( \,0<\vartheta<1 \text{ small, } K(K+1) \in [A/2, A]  ),
\eeq 
and $\pi_{s}(\cdot), \, \pi_r(\cdot): \mR^3\rightarrow \mR$ be  respectively the characteristic functions  of the sets 
\beq \label{def-reguset}
  \cS_{K, \vartheta}\, , \qquad \cS_{K, \vartheta}^c=\big\{(\xi, \zeta_1,\zeta_2)\in \mR^3\big|\,  
  A\ep^2\leq 
{|\zeta|}\leq 2\big\} \backslash \cS_{K, \vartheta} \eeq
We split the solution $U$ to the problem \eqref{reolveq-inter} into two parts $U=U_s+U_r,$ where
\beqs 
 U_s= \pi_{s}(D) U, \qquad U_r= \pi_{r}(D)U 
\eeqs
denote  respectively a  low frequency in $x$  ‘singular' part and a  ‘regular’ part. In order to prove \eqref{resolvent-inter}, it is enough to show the following estimates on $ U_s $ and $ U_r:$
\begin{align}
     \|U_s\|_{X}&\lesssim_{\vartheta}  \ep^{-1}\|F\|_{X}+\ep(K\ep+1)  \| U_r\|_{X}, \label{es-us}\\
       \|U_r\|_{X}&\lesssim K^{-2} \ep^{-3}\big(\|F\|_{X}+\ep^2 (K\ep+1)\| U_s \|_{X}\big). \label{es-ur}
\end{align}
Indeed, we derive from the above estimates  that
\beqs 
\|U\|_{X}\leq C K^{-2}\ep^{-3}  \|F\|_{X} + C(K^{-1}+\ep)^2\|U\|_{X}
\eeqs
where $C>0$ is independent of $K$ and $\ep.$  The estimate \eqref{resolvent-inter} then follows by choosing first $K$ large enough, and then $\ep$ sufficiently small.
We will prove \eqref{es-us} and \eqref{es-ur} below  in the following subsubsections. 

Once the above estimates are obtained, to finish the proof of the Proposition \ref{prop-ITF}, we just need to prove  the existence for the resolvent equation \eqref{reolveq-inter}, that is  any element of $
\Omega_{\beta,\ep}$  lies in the resolvent set of $L_a$ in the space $X_A^I.$  As in  \cite{RS-WW,Pego-Sun}, we can invoke
 an abstract result from Spectral Theory (Theorem III.6.7, \cite{Book-Kato}) in order to reduce the problem to the
 proof of  the existence of one element of the resolvent set in $\Omega_{\beta, \ep}$.
We shall thus prove that $\lambda=1$  lies in the resolvent set of $L_{c,a}.$
We split the operator $L_{c, a}$ into two parts: 
$L_{c,a}=L_{a}^0+L_{a}^1,$ where
\beq\label{def-La0}
 {L}_{a}^0
 =\left( \begin{array}{cc}
  c( \p_x-a)  &  -\Delta_a \\[5pt]
 -h'(1)-(\Id -\Delta_a)^{-1}  & c(\p_x-a)
\end{array}\right), 
\eeq
\beq\label{def-La-1}
 L_a^1
 =-\left( \begin{array}{cc}
   (\p_x-a)(\p_x\psi_c\cdot)  &   \div_a (n_c \nabla_a)  \\[5pt]
 h'(\rho_c)-h'(1)+ (e^{\phi_c}-\Delta_a)^{-1}- (\Id -\Delta_a)^{-1}  & \p_x\psi_c(\p_x-a)
 \end{array}\right)\, ,
\eeq
where $\nabla_a=(\p_x-a, \na_y)^t,\, \Delta_a=(\p_x-a)^2+\Delta_y^2=\div_a \na_a.$
It is then sufficient to prove on the  one hand that $\Id- L_{a}^0$ is invertible in $X_A^I$ with the norm of the inverse uniformly bounded  in $\ep$ and on the other hand that $(\Id- L_{a}^0)^{-1}L_a^1\in B(X_A^I)$ with the norm 
\beq\label{smallperLa1}
\|(\Id- L_{a}^0)^{-1}L_a^1\|_{ B(X_A^I)}\lesssim \ep^2.
\eeq
Since $L_a^0$ is an operator with constant coefficient, the invertibility of the operator $\Id-L_{a}^0$ is equivalent to the invertibility of the complex valued matrix 
$\Id_{2} - L_{a}^0(\xi,\zeta),$ $L_{a}^0(\xi,\zeta)$ being the symbol of $L_{a}^0(D).$

Let us set  
$$\mu_a(\xi,\zeta)=\sqrt{|\zeta|^2+(\xi+ia)^2}, \quad   \sigma_a(\xi,\zeta)=\sqrt{h'(1)+(1+|\zeta|^2+(\xi+ia)^2)^{-1}},$$ 
the eigenvalues of $L_{a}^0(\xi,\zeta)$ are given by  
$$\lambda_{\pm}^0(\xi,\zeta)=
ic(\xi+ia)\pm \sigma_a \sqrt{- 
\mu_a^2\,}(\xi,\zeta).$$
 By \eqref{Relambdapm} in Appendix A, 
it holds that $\Re \lambda_{\pm}^0\leq 0,$ for any $(\xi,\zeta)\in \mR^3,$ provided  $a=\hat{a}\ep$ and $\f{\hat{a}}{1-\hat{a}^2}\leq \sqrt{h'(1)+1}.$ Consequently, we find that
$\Re( 1- \lambda_{\pm}^0)$ never vanishes, which ensures the invertibility of 
$\Id_{2} - L_{a}^0(\xi,\eta).$ Moreover, straightforward computation show that
\beq\label{inverse1-La0}
\begin{aligned}
\big(\Id_{2} - L_{a}^0(\xi,\zeta)\big)^{-1}&= \f{1}{(1-\lambda^0_+)(1-\lambda^0_-)}\left( \begin{array}{cc}
  \f{(1-\lambda^0_+)+(1-\lambda^0_-)}{2}  & \f{(1-\lambda^0_-)-(1-\lambda^0_+)}{2i\,\sigma_a}  \mu_a   \\[5pt]
 \sigma_a^2&\f{(1-\lambda^0_+)+(1-\lambda^0_-)}{2} 
\end{array}\right) .
\end{aligned}
\eeq
In light of this explicit expression, we have, by remembering  $X_A^I=\mathbb{I}_2\tilde{\mathbb{I}}
_{A\ep^2}(D_y)(L^2(\mR^3)\times H_{\star}^{1}(\mR^3)),$  that 
\begin{align}\label{id-la0-inverse}
    \|\big(\Id_{2} - L_{a}^0(\xi,\zeta)\big)^{-1}\|_{B(X_A^I)}\lesssim \max \sup_{\xi\in \mR, |\eta|\leq 2} \bigg\{ \f{1}{|1-\lambda^0_+|}\,,  \f{1}{|1-\lambda^0_-|}\,, \f{|\sigma_a|^2}{|(1-\lambda^0_+)(1-\lambda^0_-)|}\bigg\}<+\infty.
\end{align}
Note that we have used that  $\|\mu_a f\|_{L^2(\mR^3)}\lesssim \|f\|_{H_{*}^{1}(\mR^3)},\,$  and that $|\sigma_a|\leq c$, 
$|\Im (1-\lambda^0_{\pm}) |\thickapprox (1+\xi^2)^{\f12}$ for $|\xi|\gg 1$ and $|\zeta|\leq 2.$

To show \eqref{smallperLa1}, it suffices to check  the following estimates
\begin{align*}
    \big\|\f{1}{1-\lambda_{\pm}}(\p_x-a)(\p_x\psi_c\cdot)+\f{\mu_a}{(1-\lambda_{\pm})\sigma_a}(L_a^1)_{21}\big\|_{B(L^2,L^2)}\lesssim \ep^2,  \\
    \big\|\f{\div_a}{1-\lambda_{\pm}}(n_c\na_a)+\f{1}{1-\lambda_{\pm}}\big(\p_x\psi_c(\p_x-a)\big)\big\|_{B(H_{*}^{1},L^2)}\lesssim \ep^2, \\
  \big  \|\big(\f{\sigma_a^2}{(1-\lambda_{+})(1-\lambda_{-})}(\p_x-a)+ \f{1}{1-\lambda_{\pm}}(L_a^1)_{21}\big) (\p_x \psi_c\cdot)\big\|_{B(L^2,H_{*}^{1})}\lesssim \ep^2, \\
  \big \|\big(\f{\sigma_a^2}{(1-\lambda_{+})(1-\lambda_{-})}\div_a(n_c\na_a)+ \f{1}{1-\lambda_{\pm}}(\p_x\psi_c(\p_x-a))\big\|_{B(H_{*}^{1},H_{*}^{1})} \lesssim \ep^2.
\end{align*} 
where 
$$-(L_a^1)_{21}= h'(\rho_c)-h'(1)+ (e^{\phi_c}-\Delta_a)^{-1}- (\Id -\Delta_a)^{-1}\in \cO_{B(L^2(\mR^3))}(\ep^2).$$ However, they are direct consequences of the facts $|(1-\lambda^0_{\pm}) |\thickapprox (1+\xi^2)^{\f12}$ for $|\xi|\gg 1, |\zeta|\leq 2$ and  
$\|(\p_x\psi_c, n_c)\|_{W_x^{1,\infty}}\lesssim \ep^2.$

 To summarize, we have shown that $1\in \rho(L_a; X_A^I)$ and thus the proof of Proposition \ref{prop-ITF} is complete
 upon proving \eqref{es-us} and \eqref{es-ur}.
\end{proof}
\subsubsection{Proof of \eqref{es-us} }
Applying  the Fourier multiplier $\pi_s(D)$ on both sides of \eqref{reolveq-inter}, we find that $U_s$ solves the equation
\beqs 
  \big(\lambda-{L}_a^0 \big)U_{s}=\pi_s(D)(F+{L}_a^1 U)\, ,
\eeqs
where $L_a^0, L_a^1$ are defined in \eqref{def-La0} and 
\eqref{def-La-1}. Let us set  $ F_{\lambda}=e^{\lambda t} \pi_s(D)(F+{L}_a^1 U)\in  C([0,\infty); X).$
Since $ V= e^{\lambda t} U_s$ solves the evolution problem
 \beq\label{evoeq}
  \big( \pt  -  {L}_a^0 \big)V= F_{\lambda}, 
    \eeq
we first prove some a priori estimates for the equation \eqref{evoeq}.
Note that we could perform the energy estimates directly on the eigenvalue problem, 
but the structure is somewhat more clear if we go through the time evolution problem. Let us set  $V=(V_1, V_2)^t$ and define the energy functional $$\cE_s (V(t)):= \f12 \int \big(|\sigma_a(D)V_1(t)|^2+|\na_a V_2(t)|^2\big)\, 
\, 
\d \mathrm{x}\, , $$ 
where 
\beq\label{defsigaD}
\sigma_a(D)=\sqrt{h'(1)+(\Id-\Delta_a)^{-1}}.
\eeq
We get, after performing the straightforward energy estimate for the equation
\eqref{evoeq},  
\beq\label{EI}
\begin{aligned}
\pt \,\cE_s(V
)& = -2a c\, \cE_s(V)+ 2 a \, \Re\int  \sigma_a(D)^2 V_1 \, \overline{(\p_x-a) V_2} \,
  \,\d \mathrm{x}   \\
  &\quad +2a \, \Re \int  \big(\sigma_a(D)-\sigma_{-a}(D)\big) V_1\, \overline{\div_a \na_a V_2}  \,\d \mathrm{x}\\
  &\quad + \Re \int \overline{\sigma_a(D) V_1}\, \sigma_a (D)F_{\lambda1}+\overline{\na_a V_2}\cdot \na_a F_{\lambda2} \,\d \mathrm{x}\, := I_1+I_2+I_3.
\end{aligned} 
\eeq
Since $\pi_s(\xi,\eta)$ is supported in   $\{|\zeta|\geq {\vartheta}|\xi+ia|\},$ we have
$$\|(\p_x-a) V_2\|_{L^2}\leq \f{1}{\sqrt{1+{\vartheta^2}}}\|\na_a V_2\|_{L^2}.$$ 
It thus follows from the Cauchy-Schwarz inequality and \eqref{Resia} that
\beqs 
I_1\geq -2a \cE_s(V) \bigg(c- \f{1}{\sqrt{1+{\vartheta^2}}}\sup_{(\xi,\zeta)\in \mR^3} |\sigma_a(\xi,\zeta)|\bigg)\geq -\vartheta^2 a \, \cE_s(V)\,.
\eeqs
for $\vartheta$ small enough and $a=\hat{a}\ep$ with $\f{\hat{a}}{1-\hat{a}^2}\leq \sqrt{h'(1)+1}.$

Next, it holds that for any  $|\xi|\leq K\ep,$  any $\zeta\in\mR^2,$ 
\begin{align*}
   | \sigma_a(\xi,\zeta)-\sigma_{-a}(\xi,\zeta)|= 2\,|\Im \sigma_a(\xi,\zeta)|\leq \f{a |\xi|}{h'(1)(1-a^2)}\lesssim K\ep^2,
\end{align*}
which leads to 
\beqs 
|I_2|\lesssim K\ep^2 (K\ep+1) \, \cE_s(V).
\eeqs
Assuming $K^4\ep\leq 1,$ we have, upon choosing $\ep$ small enough, 
\beq\label{es-I12}
I_1+I_2\geq  -\f12 {\vartheta^2} a\, \cE_s(V)\,.
\eeq
Finally, by using again the Cauchy-Schwarz inequality 
\beq \label{es-I3}
I_3\lesssim  \|(\sigma_a(D) F_{\lambda1},\na_a F_{\lambda2})\|_{L^2} \|(\sigma_a (D)V_1, \na_a V_2)\|_{L^2}\lesssim\|F_{\lambda}\|_{X}  \sqrt{\cE_s}(V). 
\eeq
Plugging the estimates \eqref{es-I12}, \eqref{es-I3} into \eqref{EI} and 
 applying the Gr\"onwall inequality, we find
\beqs
\sqrt{\cE_s}(V(t)) \leq e^{-\vartheta^2 a t/2} \sqrt{\cE_a}(V(0)) 
+C \int_0^t  e^{-\vartheta^2 a (t-s)/2}  \|F_{\lambda}(s)\|_{X} \,\d s.
\eeqs
As $e^{\lambda t} U_s$ solves \eqref{evoeq} with $ F_{\lambda}=e^{\lambda t} \pi_s(D)(F+{L}_a^1 U),$
we then derive that 
\beqs
\sqrt{\cE_s}(U_s) 
\leq e^{-\alpha_{\lambda}t}\sqrt{\cE_s}(U_s) +C \big(\|F\|_{X}+ \|\pi_s(D)L_a^1 U\|_{X}\big)  \int_0^t e^{-\alpha_{\lambda}(t-s)}\, \d s\, 
\eeqs
where $\alpha_{\lambda}=\Re \lambda+\vartheta^2 a/2. $ As $a=\hat{a}\ep$ and $\Re \lambda>-\beta \ep^3,$ one has that $\alpha_{\lambda}\geq \vartheta^2 a/8>0 $ as long as $\ep$ is  small enough.
Consequently, by taking the limit $t \rightarrow + \infty$, we find
\beqs 
\|U_s\|_{X} \approx \sqrt{\cE_s}(U_s) \lesssim \ep^{-1}\big(\|F\|_{X}+\|\pi_s(D) L_a^1 U\|_{X}\big).
\eeqs
By combining the above estimate  with the estimate \eqref{La1pis},
we get  \eqref{es-us}.

\subsubsection{Proof of \eqref{es-ur} }\label{subsection-Ur}
We first state a  commutator estimate that will be used in the  proof.
It is a simple generalization of the Proposition 5.1 of \cite{Pego-Sun} in one dimension:
\begin{lem}\label{lem-commutator}
    Let $\mathcal{A}, \mathcal{B}, \mathcal{W} $ be three Fourier multipliers on the space $L^2(\mR^2)$ with symbols $A, B, W$ and let $f\in C^{\infty}(\mR)$ and $\cF(f)$ be sufficiently localized, for any $s\in \mR$ such that $C_{s}$
     and $C_{f,s}$ below are finite,  it holds that 
    \beq \label{es-commutator}
\|\mathcal{A}[ \mathcal{B}, f] \mathcal{W} \|_{B(L^2(\mR^2))}\lesssim C_s \, C_{f,s}
    \eeq
   where    
$$C_s=\sup_{(\xi,\xi',\zeta)\in \mR^4}\f{A(\xi,\zeta) \big(B(\xi,\zeta)-B(\xi',\zeta)\big)W(\xi',\zeta)}{|\xi-\xi'|^s}, \qquad  C_{f,s}= \int_{\mR} |\xi|^s |\cF(f)|(\xi) \,\d \xi.$$
\end{lem}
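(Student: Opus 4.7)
\medskip
\noindent\textbf{Proof plan for Lemma \ref{lem-commutator}.}

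The plan is to pass to the Fourier side, derive an explicit integral kernel in the $\xi$-variable (for each fixed transverse frequency $\zeta$), and then reduce the $L^2$ bound to a one-dimensional Young/Schur estimate involving $\widehat{f}$. Here $f=f(x)$ depends only on the first variable, so the crucial structural fact is that multiplication by $f$ corresponds on the Fourier side to convolution by $\widehat f(\xi)\otimes \delta(\zeta)$, i.e.\ a one-dimensional convolution in $\xi$ alone that does not mix the transverse frequency $\zeta$.

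First I would unfold the commutator. For $g\in L^2(\mathbb R^2)$ write
\[
\mathcal{F}\bigl(\mathcal{A}[\mathcal{B},f]\mathcal{W}g\bigr)(\xi,\zeta)
=A(\xi,\zeta)\Bigl(B(\xi,\zeta)\,\mathcal{F}(f\,\mathcal{W}g)(\xi,\zeta)-\mathcal{F}\bigl(f\,\mathcal{B}\mathcal{W}g\bigr)(\xi,\zeta)\Bigr).
\]
Because $f$ depends only on $x$, $\mathcal{F}(fh)(\xi,\zeta)=\int \widehat f(\xi-\xi')\,\widehat h(\xi',\zeta)\,d\xi'$ for any $h\in L^2(\mathbb R^2)$, and this substitution (applied with $h=\mathcal{W}g$ and $h=\mathcal{B}\mathcal{W}g$) yields the kernel representation
\[
\mathcal{F}\bigl(\mathcal{A}[\mathcal{B},f]\mathcal{W}g\bigr)(\xi,\zeta)
=\int_{\mathbb R} K(\xi,\xi',\zeta)\,\widehat g(\xi',\zeta)\,d\xi',
\]
with
\[
K(\xi,\xi',\zeta)=A(\xi,\zeta)\bigl(B(\xi,\zeta)-B(\xi',\zeta)\bigr)W(\xi',\zeta)\,\widehat f(\xi-\xi').
\]
Next, the defining hypothesis on $C_s$ gives pointwise the bound $|K(\xi,\xi',\zeta)|\le C_s\,|\xi-\xi'|^{s}\,|\widehat f(\xi-\xi')|$, which is a convolution kernel in the $\xi$-variable of the form $k(\xi-\xi')$ with $k(\eta)=C_s\,|\eta|^{s}|\widehat f(\eta)|$, and with $\|k\|_{L^1(\mathbb R_\xi)}=C_s\,C_{f,s}$.

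I would then finish by Plancherel plus Young (or equivalently Schur's test) in the $\xi$-direction only: for each fixed $\zeta$, the operator $\widehat g(\cdot,\zeta)\mapsto \int K(\cdot,\xi',\zeta)\widehat g(\xi',\zeta)d\xi'$ maps $L^2(\mathbb R_\xi)$ to itself with norm at most $\|k\|_{L^1}=C_s C_{f,s}$; squaring, integrating in $\zeta$, and using Plancherel in both variables yields
\[
\bigl\|\mathcal{A}[\mathcal{B},f]\mathcal{W}g\bigr\|_{L^2(\mathbb R^2)}^{2}
\le (C_s C_{f,s})^{2}\int_{\mathbb R^2}|\widehat g(\xi',\zeta)|^{2}\,d\xi'\,d\zeta
=(C_s C_{f,s})^{2}\|g\|_{L^2(\mathbb R^2)}^{2}.
\]

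I do not foresee a genuine obstacle: the content is essentially bookkeeping, and the only points that require care are (i) justifying the reduction of multiplication by $f(x)$ to a one-dimensional $\xi$-convolution (which is exactly where the assumption that $\widehat f$ is sufficiently localized enters, to make the distributional calculations legitimate and the integral $C_{f,s}$ finite), and (ii) being precise about Fourier normalization constants so that the application of Young's inequality produces $\|k\|_{L^1}$ rather than a constant multiple of it. The lemma is the two-dimensional analogue of Proposition 5.1 of \cite{Pego-Sun} and the proof we have just outlined is its straightforward adaptation.
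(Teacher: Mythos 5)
Your proposal is correct and is essentially the argument the paper has in mind: the paper gives no proof beyond citing Proposition 5.1 of Pego–Sun, whose one-dimensional proof is exactly this kernel representation $K(\xi,\xi',\zeta)=A(\xi,\zeta)\big(B(\xi,\zeta)-B(\xi',\zeta)\big)W(\xi',\zeta)\widehat f(\xi-\xi')$ dominated by the convolution kernel $C_s|\xi-\xi'|^s|\widehat f(\xi-\xi')|$, followed by Schur's test in $\xi$ for fixed $\zeta$ and Plancherel. The only cosmetic point is that $C_s$ should be read as the supremum of the modulus of the displayed ratio, which is what your estimate uses.
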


To prove \eqref{es-ur}, we apply 
 the Fourier multiplier $\pi_r(D)$ on both sides of \eqref{reolveq-inter}, we find that $U_r$ solves the equation
\beqs 
  \big(\lambda-\pi_r(D) L_a \pi_r(D) \big)U_{r}=\pi_r(D)\big(F+
  {L}_a^1 U_s\big),
\eeqs
where $L_a^1$ is defined in \eqref{def-La-1}. Let us set  $W_{\lambda}=\pi_r(D)  e^{\lambda t}\big(F+{L}_a^1 U_s\big).$ As in the estimate for $U_s ,$ we consider the evolution problem: 
\beq\label{evo-ur}
  \big(\p_t- \pi_r (D) L_{c,a} \pi_r (D)\big)V= W_{\lambda}\,.
\eeq
with $V= e^{\lambda t} U_{r}$.
Since the damping mechanism is weaker in this region (proportional to $\ep^3$), 
we need to use  an energy functional  which involves the background waves $n_c, \phi_c$ and is equivalent to 
the norm $\|\cdot\|_{X}.$ An appropriate functional is 
\begin{align}
\cE_r (V(t))&= \f12 \int \bigg(\big|\sigma_a(D)V_1(t)\big|^2+\big(h'(\rho_c)-h'(1)\big)\big|V_1(t)\big|^2+\phi_c \big|I_a (D)V_1\big|^2+\rho_c \big|\mu_a(D)V_2(t)\big|^2\bigg)\, 
\, \d \mathrm{x}\notag\\
&:= \f12 \int Q(V(t))\,(\mathrm{x}) \, \d \mathrm{x} \label{cEt}
\end{align} 
where $\sigma_a(D)$ is defined in \eqref{defsigaD} and $ I_a(D):= (\Id -\Delta_a)^{-1} \in B(L^2(\mR^3), H^2(\mR^3)).$ Thanks to the facts $$\sqrt{h'(1)}\leq |\sigma_a(\xi,\zeta)|\leq \sqrt{h'(1)+\f{1}{1-a^2}}\,,  \quad \rho_c=1+n_c=1+\cO(\ep^2), \quad \phi_c=\cO(\ep^2),\quad \,\,\forall\, (x,\xi,\zeta)\in \mR^4 $$
and $$|\mu_a(\xi,\zeta)|\approx \sqrt{\xi^2+a^2+|\zeta|^2}, \qquad \forall\, (\xi,\zeta)\in S_{K,\vartheta}^c\, ,$$
we see that, upon choosing $\ep$ small enough, 
$\cE_r (V(t))\approx \|V(t)\|_{X}.$
By computing the time derivative of the above functional and  by using  the equation \eqref{evo-ur}, we find the energy identity
\begin{align}\label{def-ck18}
    \pt \,\cE_r (V)=\cK_1+\cdots \cK_8\, ,
\end{align}
where 
\begin{align*}
    \cK_1&=-a \int( c-\p_x\psi_c(x))\,Q(V(t))\,(\mathrm{x}) \, 
+\overline{\sigma_a(D) V_1}\,\big( \mu_a\sigma_a -\mu_{-a}\sigma_{-a}\big) (D)(\rho_c\,\mu_a(D)\, V_2) \, \,
    \d \mathrm{x},\\
    \cK_2&= \,  \Re \int \rho_c\, \overline{\mu_a(D)V_2}\,\, \pi_r(D)\mu_a(D)\big((e^{\phi_c}-\Delta_{a})^{-1}-I_a(D)-I_a(D)\phi_c I_a(D)\big) V_2\, \,  \d \mathrm{x},\\
     \cK_3&=\, \Re  \int  \phi_c\, \overline{I_a(D) V_1} \,
 (I_a\mu_a-I_{-a}\mu_{-a})(D)\pi_r(D) \big( \rho_c\, {\mu_a(D)V_2}\big)\, \d \mathrm{x},\\
\cK_4&=\f{1}{2} \int \p_x d_c \bigg( |\sigma_a(D)V_1|^2+(h'(\rho_c)-h'(1))|V_1|^2+\phi_c|I_a(D)V_1|^2\bigg)\\
&\qquad \qquad -d_c\bigg(\p_x h'(\rho_c)\,|V_1|^2+\p_x\phi_c \,|I_a(D)V_1|^2\bigg) - \p_x (\rho_c\, d_c) |\mu_a(D)V_2|^2 \, \d \mathrm{x}, \\
\cK_5&=\, \Re\int \rho_c \,\overline{\mu_a(D)V_2}\,\big[\mu_a(D), d_c\big](\p_x-a) V_2 \\
    & \qquad\qquad \qquad\qquad +\overline{\sigma_a(D) V_1} (\p_x-a)\big[\sigma_a(D), d_c\big] V_1 + \phi_c \, \overline{I_a(D) V_1} (\p_x-a)\big[I_a(D), d_c\big]V_1\, \, \d \mathrm{x} \, ,
    \end{align*}
    \begin{align*}
\cK_6&= \Re \int  \bigg[\overline{\sigma_a(D) V_1}\, \sigma_a(D) \cdot+ (h'(\rho_c)-h'(1))\overline{ V_1}\cdot+ \phi_c\, \overline{I_a(D) V_1} I_a(D)\cdot\bigg] \pi_r(D)\big[\rho_c,\mu_a(D)\big]\mu_a(D)V_2 \,\d \mathrm{x} , \\ 
\cK_7 &=-\Re \int  \mu_{-a}(D)\big[\pi_r(D),\rho_c\big]\pi_r(D)\, \overline{\mu_a(D)V_2} \, \cdot \p_x\psi_c (\p_x-a)V_2  \\
& \qquad\qquad \qquad\qquad  +\bigg((h'(\rho_c)-h'(1))\overline{ V_1}+ \phi_c\, \overline{I_a(D) V_1} I_a(D)\cdot \bigg) (\p_x-a)\big[\pi_r(D), \p_x\psi_c\big]\pi_r(D)\,V_1\,\,\d \mathrm{x} , \\ 
\cK_8&= \Re \int   \rho_c\,\overline{ 
\mu_a(D) V_2}\, W_{\lambda2} \,\\
&
+\bigg[\overline{\sigma_a(D) V_1}\, \sigma_a(D) \cdot+ (h'(\rho_c)-h'(1))\overline{ V_1}\cdot+ \phi_c\, \overline{I_a(D) V_1} I_a(D)\cdot\bigg] \pi_r(D)\big(W_{\lambda1}+(\p_x n_c)(\p_x-a)V_2\big)\, \d \mathrm{x}\,.
\end{align*}

We now control $K_1-K_8$ term by term. The point of the following is to show that one can get some damping effect from the term $\cK_1$ while the remaining terms can be considered as the remainders.

At first,  by  the Cauchy-Schwarz inequality along with Lemma \ref{lemmuasa} and  the fact that  $\|(\p_x\psi_c, n_c)\|_{L^{\infty}_x}\lesssim \ep^2,$ we can find a constant $C>0$ such that, for $A$ large enough and $\ep$ sufficiently small,
\beq\label{es-ck1}
\cK_1\leq -2\bigg(a \inf_{x\in\mR} (c-\p_x\psi_c(x))-\sup_{x\in \mR,\, (\xi,\zeta)\in S_{K,\delta}^c}
\sqrt{\rho_c(x)}\big|\Im (\mu_a\,\sigma_a)(\xi,\zeta)\big|\bigg)\cE_r(V(t))\leq -C A \ep^3 \cE_r(V(t)).
\eeq
For the second term, we use the formula $A^{-1}-B^{-1}=A^{-1}(B-A)B^{-1},$ the fact that $\phi_c(x)=\cO(\ep^2)$ and the Taylor expansion for $e^x\, (x\ll 1)$ to obtain that for $\ep$ small enough
\beqs
\|(e^{\phi_c}-\Delta_{a})^{-1}-I_a(D)-I_a(D)\phi_c I_a(D)\|_{B(L^2(\mR^3), H^{4}(\mR^3))}\lesssim \ep^4,
\eeqs
from which we derive that
\beq
|\cK_2|\lesssim \ep^4\, \cE_r(V(t))\, .
\eeq
Note that hereafter $'\lesssim'$ denotes $\leq C$ for some $C$ independent of $A$ and $\ep.$
Next, on the one hand, by the Cauchy-Schwarz inequality and Parseval's identity, it holds that 
\beqs 
|\cK_3|\lesssim  \sup_{(\xi,\zeta)\in \mR^3} |\Im (I_a \mu_a)(\xi,\zeta)| \, \|\phi_c\|_{L_x^{\infty}} \, \cE_r(V(t)) . 
\eeqs
On the other hand, it follows from straightforward algebraic calculations that 
\beqs
 \sup_{(\xi,\zeta)\in \mR^3} |\Im (I_a \mu_a)(\xi,\zeta)| \lesssim \f{a}{1-a^2+\xi^2+\eta^2}\lesssim \ep.
\eeqs
We thus find that 
\beq 
|\cK_3|\lesssim \ep^3\, \cE_r(V(t))\, .
\eeq
Moreover, by using the fact 
$\|\p_x(n_c,\phi_c, \p_x\psi_c)\|_{L_x^{\infty}}\lesssim \ep^3,$ we easily see that
\beq 
|\cK_4|\lesssim \ep^3\, \cE_r(V(t))\, .
\eeq
For the next two terms $\cK_5$ and $\cK_6,$ by applying the commutator estimate \eqref{es-commutator}  for $s=1$ and remembering that $\p_x\psi_c,  n_c$ are exponentially localized, we  have  that 
\beqs
\bigg\|\bigg(\big[\mu_a(D), d_c\big], (\p_x-a)\big[\sigma_a(D), d_c\big], (\p_x-a)\big[I_a(D), d_c\big]\bigg)\bigg\|_{B(L^2(\mR))}\lesssim \|\cF_{x\rightarrow\xi}(\p_x^2 \psi_c)\|_{L_{\xi}^1}\lesssim \ep^3,
\eeqs
\beqs 
\big\|\big[\rho_c, \mu_a(D)\big]\big\|_{B(L^2(\mR))}\lesssim \|\cF_{x\rightarrow\xi}(\p_x n_c)\|_{L_{\xi}^1}\lesssim \ep^3.
\eeqs
By using also  the Cauchy-Schwarz inequality we then find  the desired estimate 
\beq 
|\cK_5+\cK_6|\lesssim \ep^3\, \cE_r(V(t))\, .
\eeq
Similarly, by  using  the commutator estimate \eqref{es-commutator} for $s=0,$ 
we can control the commutators appearing in $\cK_7$ as:
\begin{align*}
     \big\|\mu_{-a}(D)\big[\pi_r(D),\rho_c\big]\pi_r(D)\big\|_{B(L^2(\mR))}\lesssim \sup_{|\xi|\leq K\ep, |\zeta|\leq 2} |\mu_a(\xi,\zeta)|\, \|\cF_{x\rightarrow\xi} (n_c)\|_{L_{\xi}^1}\lesssim K\ep^3, \\
    \big\|  (\p_x-a)\big[\pi_r(D), \p_x\psi_c\big]\pi_r(D) \big\|_{B(L^2(\mR))}\lesssim \sup_{|\xi|\leq K\ep, |\zeta|\leq 2} |\xi+ia|\,\|\cF_{x\rightarrow\xi} (\p_x\psi_c)\|_{L_{\xi}^1}\lesssim K\ep^3.
\end{align*}
Consequently, by using again that $\|(n_c,\phi_c, \p_x\psi_c)\|_{L_x^{\infty}}\lesssim \ep^2$ and $K\ep\leq 1,$ 
we derive that 
\beq 
|\cK_7|\lesssim \ep^4\, \cE_r(V(t))\, .
\eeq
Finally, another application of the  Cauchy-Schwarz inequality yields that 
\beq\label{es-ck8}
|\cK_8| \lesssim \sqrt{\cE_r(V(t))} \big(\|W_{\lambda}\|_{X}+\ep^3\,\|\na_a V_2(t)\|_{L^2}\big)\leq \sqrt{\cE_r(V(t))} \big(\|W_{\lambda}\|_{X}+\ep^3  \sqrt{\cE_r(V(t))} 
\eeq
Combining the estimates \eqref{es-ck1}-\eqref{es-ck8}, we find that, upon choosing $A$ large enough, 
\beq\label{EI-cEr}
\pt \,\cE_r(V(t))\leq -\f{C}{2} A\ep^3 \cE_r(V(t))+C_1 \sqrt{\cE_r(V(t))}\|W_{\lambda}\|_{X}\,.
\eeq
It then follows from the Gr\"onwall inequality that: 
\beqs
\sqrt{\cE_r}(V(t)) \leq e^{-CA\ep^3 t/2} \sqrt{\cE_a}(V(0)) 
+C_1 \int_0^t  e^{-CA\ep^3  (t-s)/2}  \|W_{\lambda}(s)\|_{X} \,\d s\,.
\eeqs
Plugging $V(t)=e^{\lambda t} U_r$ into the above estimate and since we had set  $W_{\lambda}=\pi_r(D)  e^{\lambda t}\big(F+{L}_a^1 U_s\big),$ we find the inequality
\beqs
\sqrt{\cE_r}(U_r) 
\leq e^{-\beta_{\lambda}t}\sqrt{\cE_r}(U_r) +C_1\big(\|F\|_{X}+  \|\pi_r(D){L}_a^1 (U_h+U_s)\|_{X}\big)  \int_0^t e^{-\beta_{\lambda}(t-s)}\, \d s\, ,
\eeqs
where $\beta_{\lambda}=\Re \lambda+CA\ep^3/2> CA\ep^3/4$ 
for any $ \lambda \in \Omega_{\beta,\ep}$ provided that $A$ is large enough. 
As a result, we deduce 
\beqs 
\sqrt{\cE_r}(U_r) \lesssim (A\ep^3)^{-1} \big(\|F\|_{X}+  \|\pi_r(D){L}_a^1 U_s\|_{X}\big)\,.
\eeqs
The estimate \eqref{es-ur} then follows by applying \eqref{La1pis}. 

\begin{rmk}
    We could have alternatively considered a functional of the form  \beqs 
\tilde{\cE}_r (V(t)):= \f12 \int \bigg(\big|\op(\tilde{\sigma}_a)V_1(t)\big|^2
+\rho_c \big|\mu_a(D)V_2(t)\big|^2\bigg)\, 
\, \d \mathrm{x}
\eeqs
with the symbol
$\tilde{\sigma}_a(x, \xi,\zeta)=\sqrt{h'(\rho_c)+(e^{\phi_c}+|\zeta|^2+(\xi+ia)^2)^{-1}}.$ 
However, with this definition, in order to perform energy estimates, we  would need to use full  pseudodifferential calculus
 to estimate the commutators
whereas in the above version we only needed to commute Fourier multipliers and products which is technically simpler.
 We thus worked instead on the functional ${\cE}_r(V(t))$. Note that 
the operator $$\sigma_0^2(D)+h'(\rho)-h'(1)+I_a\,\phi_c\,I_a=h'(\rho_c)+I_a+I_a\,\phi_c\,I_a$$ is a good approximation of $\op(\tilde{\sigma}_a^2)$ in the sense that their difference has an operator norm in $B(L^2(\mR^3))$
  of order $\cO(\ep^4).$ It thus holds that 
\beqs 
|\tilde{\cE}_r (V(t))-{\cE}_r (V(t))|\lesssim \ep^4 \,\|V\|_{X},
\eeqs
which is admissible since the the damping effects in the region $S_{K,\delta}^c$ is propotional to $\ep^3.$
 
\end{rmk}

\subsection{Spectral stability for  low transverse frequencies.} 
In this subsection, we  consider the resolvent estimates on the space  $X_A^L=\colon {\mathbb{I}}
_{A\ep^2}(D_y)X.$ 
\begin{prop}\label{prop-LTH}
 Assume that $0<\beta\leq \f12$ and $\ep$ and $\eta_0$ are small enough. Then $\Omega_{\beta,\ep}\subset \rho( L_a; \mathbb{Q}_c^a(\ep^2\hat{\eta}_0)X_A^L).$  Moreover,
for any $\lambda \in \Omega_{\beta, \ep},\,$ any $ U, F \in \mathbb{Q}_c^a(\ep^2\eta_0)\bI_{A\ep^2}(D_y)X,$ such that $(\lambda I-L_{c,a}) U=F,$ it holds that 
    \beq\label{resol-lowfreq}
\|U\|_{X}\lesssim \ep^{-3}\|F\|_{X}. 
    \eeq
\end{prop}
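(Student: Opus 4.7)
The plan is to reduce to the one-dimensional operator $L_{c,a}(\eta)$ (the shifted analogue of \eqref{def-Leta}) by taking the Fourier transform in $y$, and then to handle each fixed transverse frequency $\eta \in [0, A\ep^2]$ by splitting the longitudinal Fourier variable $\xi$ into a low regime $|\xi|\leq K\ep$ and a high regime $|\xi|\geq K\ep$, exactly as in the proof of Proposition \ref{prop-ITF}. Writing $U=U_s+U_r$ with $U_s=\pi_s(D)U$ and $U_r=\pi_r(D)U$ for suitable longitudinal frequency cut-offs, one expects estimates of the form
\begin{align*}
\|U_s\|_X &\lesssim \ep^{-3}\|F\|_X + (\text{small cross term in } U_r),\\
\|U_r\|_X &\lesssim \ep^{-3}\|F\|_X + (\text{small cross term in } U_s),
\end{align*}
which combine to give \eqref{resol-lowfreq}. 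The high longitudinal piece $U_r$ can be controlled by the same weighted energy functional $\cE_r$ introduced in \eqref{cEt}, since the damping from the skew-adjoint transport part plus the contribution of $\sigma_a,\sigma_{-a}$ still produces a decay rate of order $\ep^3$ (all the commutator bounds in Section \ref{subsection-Ur} are uniform in the transverse frequency on $|\zeta|\leq A\ep^2$).

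The core difficulty, and the main new ingredient, is the low-longitudinal / low-transverse regime, where all frequencies are of the long-wave KdV/KP-II scale: $|\xi|\lesssim \ep$ and $|\eta|\lesssim \ep^2$. Here no energy estimate for $L_{c,a}$ alone can beat the natural $\cO(\ep^3)$ spectral margin, because this is precisely where the continuous family of eigenvalues $\lambda_c(\pm\eta)$ lives, and these are only removed after projection via $\mathbb{Q}_c^a(\ep^2\hat{\eta}_0)$. The plan is to perform the long-wave rescaling $(x,y_1,y_2)\mapsto (\ep \hat{x},\ep^2 \hat{y}_1,\ep^2 \hat{y}_2)$ and $\lambda=\ep^3\hat{\lambda}$, under which the operator $L_{c,a}$ restricted to low longitudinal frequencies converges (in an appropriate sense, cf.\ the KP-II derivation in Appendix A.1) to the linearization of KP-II about its lump-free line soliton, whose resolvent set contains a fixed half-plane $\{\Re\hat{\lambda}>-\hat{\beta}\}$ once we project away the two-dimensional family of continuous eigenmodes. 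The projector $\mathbb{Q}_c^a(\ep^2\hat{\eta}_0)$ is, by construction (see \eqref{def-projectionP}--\eqref{def-bQ}), exactly the projector that removes these modes, and one then obtains a uniform bound of order $\ep^{-3}$ (the factor $\ep^3$ coming from the rescaling of $\lambda$) for the rescaled resolvent. A Neumann perturbation argument, using that the difference between $L_{c,a}$ and the KP-II-type operator is $\cO(\ep)$ in the appropriate norms on this frequency window, then transfers the estimate back to $L_{c,a}$.

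The remaining steps are to patch the high-longitudinal energy estimate and the low-longitudinal KP-II estimate using  $\pi_r,\pi_s$: the cross terms arise only from the commutators $[\pi_s(D),L_{c,a}^1]$ and $[\pi_r(D),L_{c,a}^1]$ and from the low-order part $L_{c,a}^1$ of the splitting $L_{c,a}=L_{c,a}^0+L_{c,a}^1$ in \eqref{def-La0}--\eqref{def-La-1}; thanks to the exponential localization of $n_c,\psi_c,\phi_c$ these are $\cO(\ep^2)\|U\|_X$, which is negligible against the $\cO(\ep^{-3})\|F\|_X$ main term for $\ep$ small. To conclude that the whole $\Omega_{\beta,\ep}$ lies in the resolvent set of $L_{c,a}$ on $\mathbb{Q}_c^a(\ep^2\hat{\eta}_0)X_A^L$, one argues as at the end of the proof of Proposition \ref{prop-ITF}: by the a priori bound \eqref{resol-lowfreq} and Theorem III.6.7 of \cite{Book-Kato} it suffices to exhibit one point of $\Omega_{\beta,\ep}$ in the resolvent set, which is done by the same splitting $L_{c,a}=L_{c,a}^0+L_{c,a}^1$ and a Neumann-type argument at $\lambda=1$.

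The hardest step will be the KP-II reduction on the low-longitudinal, low-transverse frequency window: one has to verify, with quantitative bounds, that the operator obtained after rescaling is a compact perturbation of the linearized KP-II operator about its line soliton, that the spectral projector $\mathbb{Q}_c^a(\ep^2\hat{\eta}_0)$ matches (up to $\cO(\ep)$) the spectral projector away from the KP-II continuous spectral curve near $0$, and that the resulting inverse is bounded in the space $X$ (not merely in the natural KP-II energy space) with a constant independent of $\ep$. Once this is in place, the factor $\ep^{-3}$ in \eqref{resol-lowfreq} is dictated purely by the rescaling $\lambda=\ep^3\hat{\lambda}$.
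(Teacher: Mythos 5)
Your overall strategy coincides with the paper's: the high longitudinal frequencies are handled by the same weighted functional $\cE_r$ (this is exactly Lemma \ref{lem-HHF}); the low-longitudinal/low-transverse window is treated by the long-wave rescaling onto the linearized KP-II operator, with the projector $\mathbb{Q}_c^a(\ep^2\hat{\eta}_0)$ shown to match the KP-II projector up to small errors (estimates \eqref{prop-puL}--\eqref{prop-QuL}, Lemma \ref{lem-KPapprox}, and Mizumachi's uniform bound \eqref{revert-Lkp}); and membership of all of $\Omega_{\beta,\ep}$ in the resolvent set follows from Kato's Theorem III.6.7 once one point is checked. Two points of precision: the KP-II operator is scalar, so before rescaling one must diagonalize the constant-coefficient part $L_a^0$ via $P^{-1}$ in \eqref{def-inverseP}; only the $\lambda_+^0$-branch needs the KP-II reduction, while the $\lambda_-^0$-branch satisfies $\Re\lambda_-^0\le -a$ and is estimated directly as in \eqref{U2tL}. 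Also, ``compact perturbation'' is not the relevant notion: what is needed is an operator-norm error that is small \emph{relative to} $\ep^3$, as in \eqref{es-cR}.

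The genuine gap is in your patching step. You bound the cross terms by $\|[\pi_{s}(D),L_a^1]U\|_X+\|[\pi_r(D),L_a^1]U\|_X\lesssim \ep^2\|U\|_X$ and declare them negligible ``against the $\cO(\ep^{-3})\|F\|_X$ main term''. But these terms enter the right-hand side of the low-frequency resolvent estimate, whose constant is $\ep^{-3}$ with no $K^{-2}$ gain (unlike the intermediate-frequency region, where \eqref{es-ur} carries the factor $K^{-2}$); after one loop they therefore produce a contribution of size $\ep^{-3}\cdot\ep^{2}\,\|U\|_X=\ep^{-1}\|U\|_X$, which cannot be absorbed. With sharp characteristic-function cutoffs $\pi_s,\pi_r$ one cannot do better than $\cO(\ep^2)$, since the $s=1$ commutator bound of Lemma \ref{lem-commutator} requires a Lipschitz symbol. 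This is precisely why the paper replaces $\pi_s,\pi_r$ by the smooth cutoffs $\chi,\chi_1,\chi_2$ of \eqref{def-smoothcutoff} in the low-transverse region: the commutators then pick up one derivative of the wave profile and are $\cO(\ep^3)$ (estimate \eqref{la1smooth}), and the refined bound \eqref{la1smooth-1} gives an additional $K^{-1}$ on the low-frequency part, so the $\ep^{\pm 3}$ factors cancel and the remaining coefficient is $\cO(K^{-1})$, small for $K$ large. Without this extra power of $\ep$ (and the $K^{-1}$ gain) your bootstrap does not close.
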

To prove the above proposition, we need 
the following resolvent estimate in the high longitudinal frequency region $\{|\xi|\geq K\ep\}:$
\begin{lem}\label{lem-HHF}
Let $U \in \tilde{\bI}_{K\ep}(D_x)X_A^L,\, F\in X$ which satisfy
$\big(\lambda I-
L_{c,a}\big) U=F,$
 then it holds that, for $K>1$ sufficiently large and $\ep$ small such that $K^4\ep\leq 1,$
\begin{align} \label{es-HHF}
\|U\|_{X}\lesssim K^{-2}\ep^{-3}\,\|F\|_{X}.
\end{align}  
\end{lem}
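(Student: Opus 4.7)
\medskip

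\noindent\textbf{Proof proposal for Lemma \ref{lem-HHF}.}
The plan is to mimic the pseudodifferential diagonalization argument of Proposition \ref{prop-UTH}, adapted to the region $\{|\xi|\geq K\ep,\, |\zeta|\leq A\ep^2\}$. First I would take the Fourier transform in the transverse variable $y$, reducing the problem to the resolvent estimate for $L_{c,a}(\eta)$ (defined in \eqref{defcLzeta}) on $\tilde{\bI}_{K\ep}(D_x)\,L^2(\mR)$, uniformly for $|\eta|\leq A\ep^2$. Conjugating by $\mathrm{diag}(1,\mu_{a,\eta}(D_x))$, the symbol matrix is exactly as in \eqref{defcLzeta}, and I would attempt to diagonalize it by the same matrices $P_1(\eta),P_2(\eta)$, obtaining the principal eigenvalues
\[
\lambda^1_\pm(x,\xi,\eta)\;=\;i\bigl(d_c(x)(\xi+ia)\pm\sqrt{\rho_c(x)}\,\mu_{a,\eta}(\xi)\,\sigma_{a,\eta}(x,\xi)\bigr),
\]
exactly as in \eqref{deflpm}. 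The key structural fact is that $\mu_{a,\eta}^{\pm1}$ and $\sigma_{a,\eta}^{\pm1}$ are $S^0$-symbols on $\{|\xi|\geq K\ep\}$ (because $|\mu_{a,\eta}^2|\gtrsim K^2\ep^2$ in that region, ensuring non-degeneracy), so pseudodifferential calculus is available; however, the symbol norms may now grow like $(K\ep)^{-1}$ and must be tracked carefully.

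The main computation is the sign of $\Re\lambda^1_\pm$. A direct algebraic expansion gives
\[
\Im\bigl(\mu^2_{a,\eta}\sigma^2_{a,\eta}\bigr)(x,\xi,\eta)\;=\;2a\xi\Bigl(h'(\rho_c(x))+\tfrac{e^{\phi_c(x)}}{|e^{\phi_c}+\eta^2+\xi^2-a^2+2ia\xi|^{2}}\Bigr),
\]
from which one extracts $\Im(\mu_{a,\eta}\sigma_{a,\eta})$ and therefore
\[
\Re\lambda^1_\pm\;=\;-a\,d_c(x)\mp\sqrt{\rho_c(x)}\,\Im(\mu_{a,\eta}\sigma_{a,\eta})(x,\xi,\eta).
\]
The plan is to show that, for $K$ large, $A^2\ep\leq1$ and $K^4\ep\leq1$, this quantity is bounded above by $-\kappa K^2\ep^3$ uniformly in $(x,\xi,\eta)\in\mR\times\{|\xi|\geq K\ep\}\times[-A\ep^2,A\ep^2]$, for some $\kappa>0$ independent of $K$ and $\ep$. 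The threshold $K^2\ep^3$ (rather than the naive $a=\hat a\ep$) reflects the near-cancellation at the sound speed: for $|\xi|\gg a$ one has $\sqrt{\rho_c}\,\Im(\mu_{a,\eta}\sigma_{a,\eta})\sim a\sqrt{h'(1)}$, so the gap with $a\,d_c$ is only of size $a\,(c-\sqrt{h'(1)+1})\sim\ep^3$ from Theorem \ref{thm-existence}, multiplied by an additional factor from the low-frequency correction coming from $(1-\Delta_a)^{-1}$ evaluated at frequencies of order $K\ep$. Once this gap estimate is in hand, Lemma A.1 of \cite{RS-WW} gives
$\|(\op(\lambda-\lambda^1_\pm))^{-1}\|_{B(L^2(\mR))}\lesssim(K^2\ep^3)^{-1}$
for all $\lambda\in\Omega_{\beta,\ep}$ with $\beta\leq 1/2$. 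Combining with the pseudodifferential composition errors (which are of order $\ep^3$ in $B(\cY)$ thanks to $\|\partial_x(\rho_c,d_c,\phi_c)\|_{W^{k,\infty}}\lesssim\ep^3$) and the invertibility of $P_1,P_2$ yields the announced bound $\|U\|_X\lesssim K^{-2}\ep^{-3}\|F\|_X$.

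The main obstacle will be the quantitative gap estimate on $\Re\lambda^1_\pm$. The difficulty is two-fold: first, the symbol $\sigma_{a,\eta}$ is of size $(K\ep)^{-1}$ at the lower end $|\xi|\sim K\ep$ of the region (since the inverse elliptic term dominates $h'(\rho_c)$ there), so one must carefully track how this affects both the symbol class estimates of $\sigma^{\pm 1}_{a,\eta}$ used in the diagonalization remainder $\cR_\eta$ and the precise value of $\Im(\mu_{a,\eta}\sigma_{a,\eta})$; second, the near-cancellation $a(d_c-\sqrt{\rho_c h'(1)})$ at the sound speed is genuine, and isolating the correct residual order $K^2\ep^3$ requires expanding $\mu_{a,\eta}\sigma_{a,\eta}$ to sufficient precision in the two small parameters $a=\hat a\ep$ and $\eta\leq A\ep^2$, and using the strict supersonic condition $c>\sqrt{h'(1)+1}$ from Theorem \ref{thm-existence} together with the constraint $K^4\ep\leq1$ that confines $|\xi|$ to the regime where the long-wave KdV-type behavior governs the imaginary part. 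The composition errors must then be shown to be absorbed by this damping, which is the technical heart of the estimate.
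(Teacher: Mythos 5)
Your route is genuinely different from the paper's. The paper does not diagonalize: it simply applies $\pi_{hl}(D)=\tilde{\bI}_{K\ep}(D_x)\bI_{A\ep^2}(D_y)$ to the equation and reruns, word for word, the energy argument of Subsection \ref{subsection-Ur} with the same functional $\cE_r$ and the same terms $\cK_1,\dots,\cK_8$ of \eqref{def-ck18}; the only new input is that on $\{|\xi|\geq K\ep,\ |\zeta|\leq A\ep^2\}\subset R_{\xi}^{H}$ the bound \eqref{muasa} gives $|\Im(\mu_a\sigma_a)|\leq a(c-CA\ep^2)$, so $\cK_1$ produces a damping of size $aA\ep^2\sim K^2\ep^3$, and Gr\"onwall together with $\Re\lambda>-\beta\ep^3$ yields \eqref{es-HHF}. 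You did identify the correct quantitative mechanism (the drop of $\Re\sigma_a$ below $\sqrt{h'(1)+1}$ by $C(K\ep)^2$ once $|\xi|\geq K\ep$, on top of the supersonic gap $c-\sqrt{h'(1)+1}=\ep^2$), so the size $K^2\ep^3$ of your spectral gap is right; but note it is a sum, not a product: $\Re\lambda^{1}_{\pm}\leq -a\big((c-\sqrt{h'(1)+1})+C(K\ep)^2+\cO(\ep^2)\big)$, dominated by the dispersive correction, not by the supersonic gap "multiplied by" anything.

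There are, however, two genuine gaps. First, the inversion step does not go through as stated. Lemma A.1 of \cite{RS-WW} requires a pointwise lower bound on $|\lambda-\lambda^{1}_{\pm}(x,\xi,\eta)|$ over \emph{all} $(x,\xi)\in\mR^2$, whereas your bound $\Re\lambda^{1}_{\pm}\leq-\kappa K^2\ep^3$ holds only for $|\xi|\geq K\ep$. For $a\lesssim|\xi|\lesssim\ep$ one has $\Im\mu_{a,\eta}\approx a$ and $\Re\sigma_{a,\eta}\approx\sqrt{h'(1)+1}$, so $\Re\lambda^{1}_{\pm}\approx a\big(\p_x\psi_c+\tfrac{c}{2}n_c-\cO(\ep^2)\big)$, which is \emph{positive} of order $\ep^3$ near the soliton core (this is exactly where the resonant modes live, cf. \eqref{Relambdapm}); hence no global symbol inversion is available for $\lambda\in\Omega_{\beta,\ep}$. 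You must therefore work with the truncated operator $\pi_{hl}L_{c,a}\pi_{hl}$, and since $P_2(\eta)$, $\lambda^{1}_{\pm}$ and the diagonalization remainders contain the $x$-dependent coefficients $d_c,\rho_c,\phi_c$, the frequency truncation of $U$ is not preserved by them; the missing step is to quantify the commutators of $\tilde{\bI}_{K\ep}(D_x)$ (or a smoothed version, as in \eqref{def-smoothcutoff}) with these coefficients and show they are $\cO(\ep^3)$, absorbable by the $K^2\ep^3$ damping for $K$ large. This is the technical heart, and avoiding it is precisely why the paper stays with energy estimates, where the multiplier $\sigma_a(D)$ is a constant-coefficient Fourier multiplier and the cutoff commutators appear explicitly as the small terms of type $\cK_7$. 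Second, the obstacle you single out is partly a phantom: $\sigma_{a,\eta}$ is \emph{not} of size $(K\ep)^{-1}$ at $|\xi|\sim K\ep$, because the elliptic factor is $(e^{\phi_c}+\eta^2-(\p_x-a)^2)^{-1}$ with $e^{\phi_c}=1+\cO(\ep^2)$, so its symbol's denominator has modulus $\geq 1-a^2$ and $\sqrt{h'(1)}\leq|\sigma_{a,\eta}|\leq\sqrt{h'(1)+1}+\cO(\ep^2)$ uniformly, cf. \eqref{Resia}; only $\mu_{a,\eta}^{-1}$ is of size $(K\ep)^{-1}$ there, which affects the remainder estimates but not the size of $\Im(\mu_{a,\eta}\sigma_{a,\eta})$.
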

\begin{proof}
    The proof is almost identical to that of the estimate \eqref{es-ur} for $U_r$ which is
    detailed in subsection \ref{subsection-Ur}. It suffices to take the Fourier projection $ \pi_{hl}(D)=\tilde{\bI}_{K\ep}(D_x){\bI}_{A\ep^2}(D_y)$ on the equation $\big(\lambda I-
L_a\big) U=F$ to obtain that
\beqs 
 \big(\lambda I- \pi_{hl}(D)
L_a  \pi_{hl}(D)\big) U=\pi_{hl}(D) F\,.
\eeqs
We can then repeat the computations in subsection \ref{subsection-Ur} to find, 
\beqs 
\pt \big(\cE_r(e^{\lambda t} U)\big)=\tilde{\cK}_1+\cdots+\tilde{\cK}_8
\eeqs
where $\tilde{\cK}_1-\tilde{\cK}_8$ are defined by replacing $\pi_r(D)$ via $\pi_{hl}(D),$ $V$ via $e^{\lambda t} U,$ $W_{\lambda}$ by $e^{\lambda t }\pi_{hl}(D) F$ in the definition of ${\cK}_1-{\cK}_8$ (see \eqref{def-ck18}). 
The terms $\tilde{\cK}_2-\tilde{\cK}_8$ can be controlled  in 
the same way as we did for ${\cK}_2-{\cK}_8.$ The only difference is the term $\tilde{\cK}_1.$ Instead of using the properties of $|\Im \mu_a\sigma_a|$ both in the regions $R_{\xi}^H, R_{\zeta,2}^I$ stated in \eqref{muasa}, we use  only its properties  in the region $\{ (\xi, \zeta)\in \mR^3\,\big||\xi|\geq K\ep, \, |\zeta|\leq A\ep^2 \}\subset R_{\xi}^H.$ 

To conclude, we are able to get the similar energy inequality as \eqref{EI-cEr}, which leads to the desired resolvent bound.

\end{proof}
We are now in position to prove Proposition \ref{prop-LTH}. As in the proof of \ref{prop-ITF},  it suffices to prove 
the uniform resolvent bound \eqref{es-HHF}.
\begin{proof}[Proof of \eqref{es-HHF}.]
We shall perform different estimates  for  low longitudinal frequencies (${|\xi|\leq K\epsilon}$) and high   longitudinal frequencies (${|\xi|\geq K\epsilon}$). In the high-frequency regime, we apply Lemma \ref{lem-HHF}, while in the low-frequency regime, we use  the KP-II approximation. Due to the excessively large resolvent bound in the low longitudinal  frequency regime, which is of  order $\epsilon^{-3}$ (without  the factor $K^{-2}$ in front), we need to use a smooth cutoff function to localize in frequency.

Let $\chi, \chi_1, \chi_2:\mR \rightarrow [0,1]$ 
be three cut-off functions with the properties: 
\beq\label{def-smoothcutoff}
\begin{aligned}
 & \Supp \chi\subset [-2K\ep, 2K\ep] \, , \quad   \chi \equiv 1 \text{ on } [-K\ep, K\ep] \,, \\
&\Supp \chi_1 \subset [-4K\ep, 4K\ep] \, , \quad   \chi_1 \equiv 1 \text{ on } [-3K\ep, 3K\ep]\, ,  \\
 & \Supp \chi_2\subset [-8K\ep, 8K\ep]\, ,  \quad   \chi_2 \equiv 1 \text{ on } [-4K\ep, 4K\ep]\, ,
\end{aligned}
\eeq
where $K\geq 1$ is large and $\ep$ is small, such that $K^4\ep\leq 1.$

We  define then
$$ U^H=(1-{\chi})(D_x)\, U,\, \qquad U^L=\chi_1(D_x) \,U,$$
which solve the equations
\begin{align}\label{eqULUH}
\left\{ \begin{array}{l}
\big (\lambda I- 
L_{c,a}\big)\, U^H=(1-\chi)(D_x) F-[{\chi}(D_x), \, L_a^1] \,U, \\[5pt]
\big(\lambda- L_{c,a}\big)\,  
{\chi}_{2}(D_x)\, U^L= [{\chi}_1(D_x) , \, L_a^1] \,U+ {\chi}(D_x) F\, .
\end{array} \right.
\end{align}
We refer to \eqref{def-La-1} for the definition of $L_a^1.$ 
Applying Lemma \ref{lem-HHF} and the estimate \eqref{la1smooth},  we first have that:
\beqs
\begin{aligned}
    \| U^H \|_{X}&\lesssim K^{-2}\ep^{-3}\big(
    \big\|[{\chi}(D_x), \, L_a^1] \, U\big\|_{X} +\|F\|_{X}\big)\\
    & \lesssim K^{-2}(K\ep+1) \, \|U\|_{X}+K^{-2}\ep^{-3}\|F\|_{X}\,.
\end{aligned}
\eeqs
It follows from the fact 
 $\chi_{1} \chi= \chi$ 
 that $\|U\|_{X}\lesssim \|U^L\|_{X}+\|U^H\|_{X}.$ 
Therefore, upon choosing $K$ large enough, 
\beq\label{es-UH}
 \| U^H \|_{X}  \lesssim K^{-2}(K\ep+1) \, \|U^L\|_{X}+K^{-2}\ep^{-3}\|F\|_{X}\,.
\eeq
It now remains to bound  $U^L.$ Since it is localized both in  low longitudinal and transverse frequencies,
we can diagonalize smoothly  the constant matrix 
\begin{align}\label{def-inverseP}
{L}_{a}^0
 =\left( \begin{array}{cc}
  c( \p_x-a)  &  -\Delta_a \\[5pt]
 -\sigma_a^2(D) & c(\p_x-a)
\end{array}\right) =  P  \left( \begin{array}{cc}
   \lambda_{+}^0  & 0   \\[5pt]
     0   &  \lambda_{-}^0
\end{array}\right) P^{-1}, \quad  P^{-1}= \f12 \left(\begin{array}{cc}
   \sigma_a  & -\sqrt{-\mu_a^2}  \\[5pt]
  \sigma_a & \sqrt{-\mu_a^2}
\end{array}\right),
\end{align}
where the symbol
$$\lambda_{\pm}^0(\xi,\zeta)=ic(\xi+ia)\pm \sqrt{- 
\mu_a^2\, \sigma_a^2 \,}(\xi,\zeta)\,.$$ 
Note that we have
\beq\label{equivalence-ll}
\|\bI_{A\ep^2}(D_y) f\|_{X}\approx \|
\bI_{A\ep^2}(D_y)P^{-1} f \|_{\cX}\,, \quad \big(\cX=L^2(\mR^2)\times L^2(\mR^2)\big) ,
\eeq
and it thus suffices  to get an estimate for $\tilde{U}^L=\colon P^{-1}U^L $ in the space $\cX.$

Let us set  
\beqs
R(x, D)=P^{-1} L_a^1\, P=\big( R_{ij}\big)_{1\leq i, j\leq 2} \, , \qquad \tilde{F}=P^{-1}\big([{\chi}(D_x) , \, L_a^1] \,U+ {\chi}(D_x) F \big).
\eeqs
In particular, $R_{11}$ takes the form
\beq\label{defR}
\begin{aligned}
    R_{11}(x, D)&=-\f12 \bigg(\sigma_a(D) (\p_x-a) (\p_x\psi_c\, \cdot)\sigma_a^{-1}(D)+ \sqrt{-\mu_a^2\,}(D)\,\p_x\psi_c\, (\p_x-a)\, \sqrt{-\mu_a^2}^{-1}(D) \\
    & -\,\sigma_a(D) \div_a(n_c\na_a)  \sqrt{-\mu_a^2}^{-1}(D)- \sqrt{-\mu_a^2}(D) \,\big( h'(\rho_c)-h'(1)+I_{a,\phi_c}-I_a\big) \, \sigma_a^{-1}(D) \bigg)\, ,
\end{aligned}
\eeq
where we denote for short $I_{a,\phi_c}=(e^{\phi_c}-\Delta_a)^{-1}, \, I_a=(\Id -\Delta_a)^{-1}.$
Moreover, thanks to   \eqref{equivalence-ll} and  \eqref{La1-ll}, we have that 
\beq\label{estimate-RxD}
\|\,R\,\,{\bI}_{4K\ep}(D_x){\bI}_{A\ep^2}(D_y)\|_{B(\cX)}\lesssim (K\ep^3+K^4\ep^5)\lesssim K\ep^3.
\eeq
We take $P^{-1}$ on both sides of  $\eqref{eqULUH}_2$ and translate it into the following system:
\beq \label{eq-tUL}
\begin{aligned}
\left\{ \begin{array}{l}
     \big(\lambda- \lambda_{+}^0(D)+R_{11}(x, D) \big)\, 
{\chi}_{2}(D_x)\,\tilde{ U}_1^L= -R_{12}\, \tilde{ U}_2^L + \tilde{F}_1\,,   \\[5pt]
      \big(\lambda- \lambda_{-}^0(D) \big)\, 
\,\tilde{ U}_2^L=-\big( R_{21}\, \tilde{ U}_1^L+ R_{22}\, \tilde{ U}_2^L \big)+ \tilde{F}_2\,.
\end{array}\right.
\end{aligned}
\eeq
Since $\Re \lambda_{-}^0\leq -a,$ it holds that for any $\lambda\in \Omega_{\ep},$ $\Re (\lambda-\lambda_{-}^0)\geq -a-\beta\ep^3\geq a/2$ as long as  $\ep$ is small enough. Consequently, 
we get by energy estimates that
\beq\label{U2tL}
\|\tilde{ U}_2^L\|_{L^2}\lesssim \ep^{-1}\big( \|\tilde{F}_2\|_{L^2}+ K\ep^3 \|\tilde{U}^L\|_{\cX} \big).
\eeq
To estimate $U_1^L,$ we need to use the KP-II approximation. 
As a preparation, we
introduce the scaling operator 
$S_{\ep}: L^2(\mR^3)\rightarrow L^2(\mR^3)$ as $S_{\ep}f(x,y)=\ep^{-5/2}f(\ep^{-1}x, \ep^{-2}y).$ 
Define then $$\cP=S_{\ep}^{-1}\,\mathbb{P}^{\hat{a}}_{\kp}(\hat{\eta}_0)\,S_{\ep}\,,\qquad  \cQ=S_{\ep}^{-1} \,\mathbb{Q}_{\kp}^{\hat{a}}(\hat{\eta}_0)\,S_{\ep}\, .$$
  where $\mathbb{P}^{\hat{a}}_{\kp}$ is the projection (see \eqref{projection-KP} for precise definition) to the resonant space associated to the operator $L_{\kp}$ and  $\mathbb{Q}^{\hat{a}}_{\kp}=\Id -\mathbb{P}^{\hat{a}}_{\kp}.$  We first claim that it suffices to prove the following two properties: 
\begin{align}\label{prop-puL}
    \|\cP \, \tilde{U}_1^L\|_{L^2}\lesssim (\hat{\eta}_0^2+ \ep^2+K^{-3})
    \,\|\tilde{U}_1^L\|_{L^2}+\|\tilde{U}^L_2\|_{L^2}+\| U^H\|_{X}\, ,
\end{align}
    \begin{align}\label{prop-QuL}
    \|\cQ \, \tilde{U}_1^L\|_{L^2}\lesssim 
    K^{-3}\,\|\tilde{U}_1^L\|_{L^2} + K\|\tilde{U}^L_2\|_{L^2}+
    \ep^{-3}\,\|\tilde{F}_1\|_{L^2}\, .
\end{align}
Indeed, once these two estimates are proven, we can  conclude that for $K$ large enough, $\ep, \hat{\eta}_0$ small enough, 
\beqs 
\|\tilde{U}_1^L\|_{L^2}\lesssim K  \|\tilde{U}^L_2\|_{L^2}+\|U^H\|_{X}+ \ep^{-3}\,\|\tilde{F}_1\|_{L^2}.
\eeqs
By combining the above estimate  with the estimate \eqref{U2tL}, yields, for $K^4\ep\leq 1 $ and $\ep$ small enough, 
\begin{align*}
    \|U^L\|_{X}\lesssim \|\tilde{U}^L\|_{\cX}&\lesssim \|U^H\|_{X}+ \ep^{-3}\,\|\tilde{F}\|_{\cX}, \\
    & \lesssim \|U^H\|_{X}+ \ep^{-3}\,\big(\|F\|_{X}+ \|[{\chi}_1(D_x) , \, L_a^1] \,U\|_{X}\big).
\end{align*}
As is shown in \eqref{la1smooth-1}, 
\beqs 
\|[{\chi}_1(D_x) , \, L_a^1] \,\bI_2(D_y)\,U\|_{X}\lesssim \ep^3\,\big( \|U^H\|_{X}+ K^{-1}
\|U^L\|_{L^2}\big).
\eeqs
Therefore, choosing $K$ large enough, we find that
\beqs 
 \|U^L\|_{X}\lesssim \|U^H\|_{X}+ \ep^{-3}\,\|F\|_{X}.
\eeqs
Together with \eqref{es-UH}, this  allows us to conclude \eqref{resol-lowfreq} upon choosing $K$ larger and $\ep$ smaller if necessary. The proof is   thus finished  once \eqref{prop-puL}, \eqref{prop-QuL} are established. This is  done in the following two subsubsections.
\end{proof}
\subsubsection{Proof of  the estimate \eqref{prop-puL}}
Recall that $Y_{a,\eta}=L_a^2(\mR)\times \dot{H}^{1}_{a,\eta}(\mR),$ where $\dot{H}^{1}_{a,\eta}(\mR)$ is defined in \eqref{Hhalfeta}. 
The projection operator associated with  $L_{c}$  in the weighted space $X_a$ is defined as
\beqs 
\mathbb{P}_c(\eta_0) f=\sum_{k=1}^2\int_{|\zeta|\leq \eta_0} 
\big\langle\mathcal{F}_yf(\cdot,\zeta),\, g_k^{*}(\cdot,\eta,c)\big\rangle_{{Y_{a,\eta}\times Y_{a,\eta}^{*}}}\,
g_k(\cdot,\eta, c)\,e^{iy\cdot\zeta} \,\mathbb{I}_{\{\eta=|\zeta|\}}\,\d \zeta\, ,
\eeqs
 where $g_k(x,\eta,c), g_k^{*}(x, \eta,c) $ are the basis and dual basis of $L_{c,a}(\eta)$ defined in \eqref{defbasis-dual}. 
Let $$Y_{\eta}= e^{a x} Y_{a, \eta}, \qquad 
g_{k,a}(x, \eta,c)=e^{a x} g_k(x, \eta,c),\, \qquad g^{*}_{k,a}(x, \eta,c)=e^{-a x} g_k^{*}(x, \eta,c), $$ 
the corresponding basis for  the transformed operator $L_{c,a} =e^{a x}\,  L_{c}\, e^{-a x} $
 in the unweighted space $X$  has the form
\beqs
\mathbb{P}_c^a(\eta_0)  f =e^{-a x} \mathbb{P}_a(\eta_0) e^{ax} f=\sum_{k=1}^2\int_{|\zeta|\leq \eta_0} 
\big\langle \mathcal{F}_y f(\cdot,\zeta),\, g_{k,a}^{*}(\cdot,\eta)\big\rangle_{Y_{\eta}\times Y_{\eta}^{*}}\,
g_{k,a}(\cdot,\eta)\,e^{iy\cdot\zeta}\,\mathbb{I}_{\{\eta=|\zeta|\}}\,\d \zeta\, .
\eeqs
Note that we skip the dependence of $g_{k,a}, g^{*}_{k,a}$ on $c,$ since it is not very important here.
At this stage,  it is useful to 
introduce the projection operator on the continuous resonant modes of  the  linearized operator $L_{\kp}$ defined in \eqref{linear-kpII} 
\begin{align}\label{projection-KP}
    \mathbb{P}_{\kp}^{\hat{a}}(\hat{\eta}_0)  f =\sum_{k=1}^2\int_{|\hat{\zeta}|\leq \hat{\eta}_0}
\big\langle \mathcal{F}_y f(\cdot, \hat{\zeta}),\, g_{0k,\hat{a}}^{*}(\cdot,\hat{\eta})\big\rangle_{L^2(\mR)\times L^2(\mR)}\,
g_{0k,\hat{a}}(\cdot,\hat{\eta})\,e^{iy\cdot\hat{\zeta}} \,\mathbb{I}_{\{\hat{\eta}=|\hat{\zeta}|\}}\,\d\hat{ \zeta}\, ,
\end{align}
where $g_{0k,\hat{a}}(x,\hat{\eta})=e^{\hat{a} x} g_{0k}(x,\hat{\eta}),\, g^{*}_{0k,\hat{a}}(x,\hat{\eta})=e^{-\hat{a} x} g^{*}_{0k}(x,\hat{\eta}) $ 
and $g_{0k,\hat{a}}(x,\hat{\eta}),  \,g^{*}_{0k,\hat{a}}(x,\hat{\eta})$ are generalized eigenmodes for $L_{\kp}(\hat{\eta})$ defined in \eqref{base-kp}.

We are now ready to prove \eqref{prop-puL}. Using the fact that $\mathbb{P}_a(\eta_0) U=0, $ we write
\beqs 
\cP \tilde{U}_1^L=\cP U_1^L-\big[ P^{-1}(D)\, \mathbb{P}_c^a(\eta_0) P(D)  \tilde{U}^L\big]_1-\big[ P^{-1}(D)\, \mathbb{P}_c^a(\eta_0) (1-\chi_1)(D)U \big]_1
\eeqs
where we use $\big[\,\,\big]_1$ to denote the first element of a vector. The last term in the above identity can be controlled directly, thanks to the equivalence of norms \eqref{equivalence-ll}, we have  $\|(1-\chi_1)(D)U \|_{X}\lesssim \|U^H\|_X.$ It thus remains to control the first two terms.
By the definitions and the conjugation,  we have on the  one hand
\begin{align*}
   & \qquad \big(S_{\ep} \, P^{-1}(D) \,\mathbb{P}_c^a(\eta_0) P(D) \tilde{U}^L\big)(x, y)\\
   & =\sum_{k=1}^2 \ep^{-{5}/{2}}\int_{|\zeta|\leq \eta_0} e^{i y\cdot\zeta/\ep^2}  \bigg\langle(\cF_y \tilde{U}^L)(\cdot, \zeta),\, P^{*}(D, \zeta)\, g_{k,a}^{*}(\cdot, |\zeta|,c)\bigg\rangle P^{-1}(\ep D_x, \zeta) \, g_{k,a}(\ep^{-1}x, |\zeta|)\, \d \eta\\
   &=  \sum_{k=1}^2 \int_{|\hat{\zeta}|\leq \hat{\eta}_0} e^{i {y}\cdot\hat{\zeta}}  \bigg\langle\cF_y( S_{\ep}\tilde{U}^L)(\cdot, \hat{\zeta}),\,P^{*}(\ep D_{{x}}, \ep^2\hat{\zeta})\, g_{k,a}^{*}(\ep^{-1}\cdot, \ep^2|\hat{\zeta}|,c) \bigg\rangle\, \ep^{-1} P^{-1}(\ep D_{{x}}, \ep^2\hat{\zeta}) \, g_{k,a}(\ep^{-1}{x}, \ep^2|\hat{\zeta}|)\, \d \hat{\zeta}\, ,
\end{align*}
and on the other hand, 
\begin{align*}
    &\qquad \big(\,\mathbb{P}_{\kp}^{\hat{a}}(\eta_0) \, S_{\ep}\tilde{U}_1^L\big)(x, y) \\
   &=   \sum_{k=1}^2 \int_{-\hat{\eta}_0}^{\hat{\eta}_0} e^{i {y}\cdot\hat{\zeta}}  \bigg\langle\cF_y( S_{\ep}\tilde{U}_1^L)(\cdot, \hat{\zeta}), \, g_{0k,\hat{a}}^{*}(\cdot, |\hat{\zeta}|) \bigg\rangle\,   \, g_{0k,\hat{a}}({x}, |\hat{\zeta}|)\, \d \hat{\zeta}\, .
\end{align*}
Note that the inner product in the above identities is taken in $L^2(\mR).$
Moreover, since $g_{0k, \hat{a}}$ and $g_{k,a}(\ep^{-1}\cdot)$ are smooth and exponentially decaying in $x,$ for any $\hat{\eta}\in [0, \hat{\eta}_0],$
it holds that 
\begin{align*}
& \| \big(1- \chi_2(\ep D_x)  \big) (g_{0k,\hat{a}}(\cdot, \hat{\eta}))\|_{L^2}\lesssim K^{-3}\|
 g_{0k,\hat{a}}(\cdot,  \hat{\eta})\|_{H^3}\lesssim K^{-3},  \\
&\| \big(1- \chi_2(\ep D_x)  \big) (g_{k,a}(\ep^{-1}\cdot, \ep^2 \hat{\eta}))\|_{L^2}\lesssim K^{-3}\|
g_{k,a}(\ep^{-1}\cdot, \ep^2 \hat{\eta}))\|_{H^3}\lesssim K^{-3}. 
\end{align*}
Consequently, since $S_{\ep}$ is a diffeomorphism on $L^2(\mR^3),$ it suffices to show that for any $|\hat{\zeta}|\leq  \hat{\eta}_0,$
\begin{align*}
    \bigg\|\chi_2(\ep D_x)\bigg(\ep^{-1}\big[
    P^{-1}(D_x, \ep^2 \hat{\zeta}) g_{k,a}\big]_1(\ep^{-1}\cdot, \ep^2 \hat{\zeta})-g_{0k, \hat{a}}(\cdot, |\hat{\zeta}|)\bigg)\bigg\|_{L^2(\mR)}\lesssim (\ep^2+\hat{\eta}_0^2)\,,  \\
     \bigg\|\chi_2(\ep D_x)\bigg(\big[P^{*}(D_x, \ep^2 \hat{\zeta}) g^{*}_{k,a}
     \big]_1(\ep^{-1}\cdot, \ep^2 \hat{\zeta})-g_{0k, \hat{a}}^{*}(\cdot, |\hat{\zeta}|)\bigg)\bigg\|_{L^2(\mR)}\lesssim (\ep^2+\hat{\eta}_0^2)\,. 
\end{align*}
In view of the definition of $P^{-1}$ in \eqref{def-inverseP} as well as the fact that on the support of $\chi_2(\xi)\bI_{A\ep^2}(\eta),$ we have
$$\sigma_a(\xi,\zeta)=\sqrt{h'(1)+1} +\cO(\ep^2),
\quad  -\sqrt{-\mu_a^2\,}(\xi,\zeta)=\ep(i\xi-a) +\cO(K^3\ep^3)\,,$$
it amounts to verify that for any $0\leq \hat{\eta}\leq  \hat{\eta}_0,$
\begin{align*}
    & \big\|(2\ep)^{-1}\big(\sqrt{h'(1)+1}\,g_{k,a}^1+
    (\p_x-a)\,g_{k,a}^2\big)(\ep^{-1}\cdot, \ep^2 \hat{\eta})-g_{0k, \hat{a}}(\cdot, \hat{\eta})\big\|_{L^2(\mR)}\lesssim (\ep^2+\hat{\eta}_0^2)\,,  \\
   &  \big\|\big(\sqrt{h'(1)+1}^{-1}\,g^{*,1}_{k,a} -
   (\p_x-a)^{-1} \,g^{*,2}_{k,a}\big)(\ep^{-1}\cdot, \ep^2 \hat{\eta})-g_{0k, \hat{a}}^{*}(\cdot, \hat{\eta})\big\|_{L^2(\mR)}\lesssim (\ep^2+\hat{\eta}_0^2)\,.
\end{align*}
Since they rely on direct but somewhat tedious algebraic calculations, we leave the details to Appendix \ref{approx-resonants-sec2}.
\subsubsection{Proof of the  estimate \eqref{prop-QuL}} 
Let us define  the operators
$$\lambda_{+,\,\ep}^0(D)= \lambda_{+}^0(\ep D_x, \ep^2 D_y), \quad R_{11}^{\ep}(x,D)=R_{11}\big(\ep^{-1}{x},\, \ep D_x,\, \ep^2 D_y\big), \quad \chi_{{\kp}}(D)=\chi_2(\ep D_x) \bI_{A}(D_y). $$
For the last one, note that we have used that  $ \bI_{A}(D_y) =  \bI_{A\ep^2}(\ep^2D_y)$  to simplify.
By applying  the scaling operator $S_{\ep}$ on both sides of 
$\eqref{eq-tUL}_1,$ 
we find that $u:= S_{\ep}\tilde{U}_1^L$ solves:
\beqs 
\big(\lambda- \lambda_{+,\ep}^0(D)- R_{11}^{\ep}(x,D)\big)\,\chi_{{\kp}}(D)\, u=f
\eeqs
where $ f=S_{\ep}(\tilde{F}_1-R_{12}\, \tilde{ U}_2^L).$

Localizing in  the low frequency regimes, one expects that the linear operator $\lambda- \lambda_{+,\ep}^0(D)- R_{11}^{\ep}(x,D)$ can be well approximated by the linearized operator of the KP-II equation, this is justified in the following lemma.
\begin{lem}\label{lem-KPapprox}
    Let $\lambda=\ep^3 \Lambda, $ and $L_{\kp}^a=e^{\hat{a}x}L_{\kp}e^{-\hat{a}x},$ 
    where $L_{\kp}$ is defined in \eqref{linear-kpII}.
    It holds that 
    \beq\label{id--KPapprox}
\big(\lambda- \lambda_{+,\,\ep}^0(D)- R_{11}^{\ep}(x,D)\big)\chi_{{\kp}}(D)={\ep^3} (\Lambda- L_{\kp}^{\hat{a}})\chi_{{\kp}}(D)+\cR
    \eeq
    where $\cR $ 
    enjoys the property
    \beq\label{es-cR}
    \|\cR\|_{ B(L^2(\mR^3))}\lesssim K^{-3}\,\ep^{3}.
    \eeq
\end{lem}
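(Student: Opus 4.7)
The plan is to treat \eqref{id--KPapprox} as a symbol expansion identity: both the Fourier multiplier $\lambda_{+,\ep}^0(D)$ and the $x$-dependent operator $R_{11}^\ep(x,D)$ are to be Taylor expanded around the origin in the frequency variables and around $\ep=0$ in the parameter, and the leading contributions are then recognized, via the long-wave derivation of Appendix A.1, as exactly $\ep^3$ times the linearized KP-II operator $L_{\kp}^{\hat{a}}$. The cutoff $\chi_{\kp}(D)=\chi_2(\ep D_x)\,\bI_A(D_y)$ confines us to the regime $|\ep\xi|\lesssim K\ep$ and $|\ep^2\zeta|\lesssim A\ep^2$, so every Taylor remainder will be quantitatively small under the standing assumption $K^4\ep\leq 1$.

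First, I will treat the constant-coefficient piece. Writing $c=V+\ep^2$ with $V=\sqrt{h'(1)+1}$ and using the definitions of $\sigma_a$ and $\mu_a$, I will expand
\[
\lambda_+^0(\ep\xi,\ep^2\zeta)=ic(\ep\xi+ia)+\sigma_a(\ep\xi,\ep^2\zeta)\sqrt{-\mu_a^2}(\ep\xi,\ep^2\zeta)
\]
analytically in its arguments. The key algebraic point is that the $O(\ep)$ transport parts coming from the two summands cancel (this is precisely why the KdV/KP scaling is consistent), and also the $O(\ep^2)$ parts either cancel or are absorbed in the shift $\ep^2$ in $c$; what survives is an $O(\ep^3)$ contribution whose symbol matches the dispersive part $i\lambda_{1}\hat\xi^3+i\lambda_{2}|\hat\zeta|^2/\hat\xi$ of $\ep^3 L_{\kp}^{\hat a}$ after substitution $\hat\xi=\ep\xi,\hat\zeta=\ep^2\zeta$. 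The Taylor remainder is a symbol of order $(\ep\xi)^5+(\ep\xi)(\ep^2\zeta)^2+\cdots$, hence bounded on the cutoff support by $C(K\ep)^5+C K\ep\cdot(A\ep^2)^2\lesssim K^5\ep^5\lesssim K^{-3}\ep^3$ thanks to $K^4\ep\leq 1$; the Plancherel identity then gives the $L^2\to L^2$ bound.

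Second, I will expand $R_{11}^\ep(x,D)$. After the change of variable $x\mapsto \ep^{-1}x$, the coefficients $\p_x\psi_c$, $n_c$, $\phi_c$, and $h'(\rho_c)-h'(1)$ become (by Theorem \ref{thm-existence}) $\ep^2$ times the KdV profile $\Psi_{\kdv}$ (or its derivatives) plus $O(\ep^4)$ smooth exponentially decaying corrections. Plugging these into the explicit formula \eqref{defR} and using standard pseudodifferential composition to commute Fourier multipliers past these rescaled coefficients, the leading contribution is $\ep^3$ times the multiplicative (solitary-wave-dependent) part of $L_{\kp}^{\hat a}$; the low-frequency singularity of $\sqrt{-\mu_a^2}^{-1}(D)$ is harmless because it is always composed with $\div_a(n_c\na_a)$, whose symbol vanishes linearly at the origin. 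Each commutator error loses at least one factor of $\ep$ (from differentiating a rescaled coefficient) relative to the leading order, yielding an error bounded by $K\ep\cdot\ep^3\lesssim K^{-3}\ep^3$, again by $K^4\ep\leq 1$.

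I expect the main obstacle to be Step 2, specifically a bookkeeping of the pseudodifferential calculus for $R_{11}^\ep$: one must verify uniform $L^2$-bounds for the composition of multipliers like $\sigma_a(D)$ and $\sqrt{-\mu_a^2}^{-1}(D)$ with the $x$-dependent, singularly rescaled coefficients, while tracking exactly the order in $\ep$ of each commutator. Once this is in place, combining Steps 1 and 2, comparing to the explicit form of $L_{\kp}^{\hat a}$ derived in Appendix A.1, and collecting all remainders into $\cR$ gives \eqref{id--KPapprox} and \eqref{es-cR}.
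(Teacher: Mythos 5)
Your proposal follows essentially the same route as the paper's proof: expand $\lambda_{+,\ep}^0$ and the explicit formula for $R_{11}^{\ep}$ on the support of $\chi_{\kp}(D)$, use $c=V+\ep^2$ (which produces the $O(\ep)$ cancellation and the residual transport term) together with the $\ep^2$-closeness of the rescaled profiles to $\Psi_{\kdv}$ from Theorem \ref{thm-existence} to identify the $\ep^3$-order part with $\ep^3 L_{\kp}^{\hat{a}}$, and absorb the Taylor remainders, of size $\cO(K^5\ep^5)$ and $\cO(\ep^3 K^5\ep^2)$, into $\cR$ using $K^4\ep\leq 1$. One small correction to your Step 2: the multiplier $\sqrt{-\mu_\ep^2}^{-1}(D)$ is harmless not because a composed symbol vanishes at the origin, but because on the low transverse-frequency cutoff the shifted symbol obeys $|\mu_\ep|\gtrsim \hat{a}\,\ep$ (the vanishing set $\{\xi=0,\,|\zeta|=a\}$ is excluded there), so that, e.g., $(\p_x-\hat{a})\sqrt{-\mu_\ep^2}^{-1}(D)$ is an exactly bounded multiplier of size $\ep^{-1}$, which is how the paper carries out the expansion \eqref{expr-R11}.
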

Once this  lemma is proven, 
the estimate \eqref{prop-QuL} is the consequence of 
the invertibility of $ (\Lambda- L_{\kp}^{\hat{a}})\chi_{{\kp}}(D)$ on the space $\mathbb{Q}_{\kp}^a(\hat{\eta}_0)L^2$ which follows from Proposition 3.2,
\cite{Mizumachi-KP-nonlinear}: 
\begin{prop}[Proposition 3.2, \cite{Mizumachi-KP-nonlinear}]
     There is $\hat{\beta}_0$ such that for any $\Lambda$ such that 
$\Re \Lambda > -\hat{\beta}_0,$
\beq\label{revert-Lkp}
\|(\Lambda- L_{\kp}^{\hat{a}})^{-1}\chi_{{\kp}}(D)\|_{B(\mathbb{Q}_{\kp}^{\hat{a}}(\hat{\eta}_0)L^2)}<+\infty\,.
\eeq
\end{prop}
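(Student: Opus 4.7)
The strategy is to reduce the problem to a fibre-wise analysis in the transverse frequency variable $\hat\zeta$, separating small and large $|\hat\zeta|$, and for small $|\hat\zeta|$ treating $L_{\kp}^{\hat a}(\hat\eta)$ as a regular perturbation of the one-dimensional KdV linearization. Taking the Fourier transform in $y$, the operator $L_{\kp}^{\hat a}$ becomes a one-parameter family of closed operators on $L^2(\mathbb{R}; e^{2\hat a x}\,\d x)$, indexed by $\hat\eta = |\hat\zeta|$. The cutoff $\chi_{\kp}(D)$ localises to $|\hat\xi|\lesssim 1$, so we may freely invoke compactness in the $x$-variable; and the subspace $\mathbb{Q}_{\kp}^{\hat a}(\hat\eta_0)L^2$ corresponds fibre-wise to projecting out the two-dimensional eigenspace $\mathrm{span}\{g_{01,\hat a}(\cdot,\hat\eta),g_{02,\hat a}(\cdot,\hat\eta)\}$ for $|\hat\eta|\leq\hat\eta_0$, and to the identity for $|\hat\eta|>\hat\eta_0$. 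It therefore suffices to bound $\|(\Lambda - L_{\kp}^{\hat a}(\hat\eta))^{-1}\|$ on the relevant fibre, uniformly in $\hat\eta\in\mathbb{R}$ and $\Lambda$ with $\Re\Lambda > -\hat\beta_0$.

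For the regime $|\hat\eta|\geq \hat\eta_0$ (no projection needed) the plan is to run an energy estimate in the weighted space, in the spirit of Subsections~\ref{subsection-Ur} and the proof of Proposition~\ref{prop-ITF}. The constant-coefficient part of $L_{\kp}^{\hat a}(\hat\eta)$ has symbol whose real part is strictly negative once $\hat a>0$ (the same mechanism that yields $\Re \lambda_{\pm}^1 \leq -\kappa a$ in the proof of Proposition~\ref{prop-UTH}), with damping of size at least $c|\hat\eta|^2$ coming from the KP transverse term. The bounded, exponentially localized potential arising from the KdV soliton profile contributes a lower-order term that can be absorbed into the damping for $|\hat\eta|\geq\hat\eta_0$ fixed, yielding a $\hat\eta$-independent resolvent bound on $\{\Re\Lambda>-\hat\beta_0\}$ provided $\hat\beta_0$ is chosen smaller than the effective damping rate at $\hat\eta=\hat\eta_0$.

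For the regime $|\hat\eta|\leq \hat\eta_0$ one works on the complement of the two-dimensional spectral subspace. The crucial input is the one-dimensional KdV result of Pego--Weinstein: on the complement of the generalized kernel of $L_{\kp}^{\hat a}(0)$ in the exponentially weighted space, the resolvent set contains a half-plane $\{\Re\Lambda > -\hat\beta_0^{(0)}\}$ with a uniform bound. For $\hat\eta$ small, $L_{\kp}^{\hat a}(\hat\eta)$ is an analytic family of type (A) in $\hat\eta^2$, and by the construction analogous to Theorem~\ref{thm-resolmodes}, the two eigenvalues bifurcating from zero are precisely $\lambda_{\kp}(\pm\hat\eta) = \pm i\lambda_1\hat\eta - \lambda_2\hat\eta^2 + \mathcal{O}(\hat\eta^4)$ with $\lambda_2>0$; these are exactly the ones removed by $\mathbb{P}_{\kp}^{\hat a}(\hat\eta_0)$. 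A standard Riesz-projection argument, together with relative compactness of the perturbation $L_{\kp}^{\hat a}(\hat\eta)-L_{\kp}^{\hat a}(0)$ on $\chi_{\kp}(D)L^2$, propagates the resolvent bound from $\hat\eta=0$ to all $|\hat\eta|\leq \hat\eta_0$, at the cost of shrinking $\hat\eta_0$ and $\hat\beta_0$.

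The main obstacle is the $\hat\eta=0$ input itself: one must verify that the KdV linearization on $L_{\hat a}^2(\mathbb{R})$, restricted to the complement of the generalized kernel $\mathrm{span}\{\partial_x\Psi_{\kdv},\partial_c\Psi_{\kdv}\}$, has spectrum strictly inside $\{\Re\Lambda\leq -\hat\beta_0^{(0)}\}$. This is the classical Pego--Weinstein spectral stability theorem for KdV solitons, which relies on a careful analysis of the Evans function and on the explicit knowledge of the KdV soliton. Once this fact is taken as input, the rest of the argument is a relatively routine perturbation-plus-energy-estimates assembly; the scaling gymnastics is identical to what has already been set up in the proof of Proposition~\ref{prop-LTH}, so no new difficulty beyond the KdV spectral input needs to be addressed here.
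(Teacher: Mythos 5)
First, note that the paper does not prove this statement at all: it is imported verbatim as Proposition 3.2 of \cite{Mizumachi-KP-nonlinear}, so there is no internal proof to compare with; your attempt must therefore stand on its own as a proof of Mizumachi's result, and it does not.

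The genuine gap is in your treatment of the regime $|\hat\eta|\geq \hat\eta_0$. There you claim that the exponentially localized potential term $-C(V)\,\p_x(\Psi_{\kdv}\,\cdot)$ is ``lower order'' and can be absorbed into the damping produced by the weight and by the transverse term. This is not justified and is not how such resolvent bounds can be obtained: after conjugation by $e^{\hat a x}$ the constant-coefficient symbol has real part of size $O(\hat a(1+\xi^2))+O\big(\hat\eta^2\hat a/(\xi^2+\hat a^2)\big)$, while the potential contributes terms of size $O(1)$ (the rescaled soliton has amplitude of order one), so absorption amounts to comparing order-one constants and cannot be asserted; worse, the same energy estimate applied at $\hat\eta=0$ (where the constant-coefficient damping is essentially identical) would ``prove'' a uniform spectral gap and contradict the presence of the two-dimensional generalized kernel of $L_{\kp}^{\hat a}(0)$. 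The heart of Mizumachi's Proposition 3.2 is precisely the statement that for every $\hat\eta\neq 0$ the operator $L_{\kp}^{\hat a}(\hat\eta)$ has no spectrum in $\{\Re\Lambda>-\hat\beta_0\}$, together with resolvent bounds uniform in $\hat\eta$ and in $\Lambda$; this rests on the KP-II--specific spectral analysis (Evans-function/scattering computations exploiting the reflectionless $\sech^2$ potential, as in Alexander--Pego--Sachs and Burtsev, or Mizumachi's explicit resolvent formulas), not on a crude damping-absorption argument. You have inverted the difficulty: the Pego--Weinstein input at $\hat\eta=0$ is indeed needed, but it is the nonzero transverse frequencies that carry the substantive content, and your sketch supplies no valid mechanism for them.

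Two secondary inaccuracies: the perturbation $L_{\kp}^{\hat a}(\hat\eta)-L_{\kp}^{\hat a}(0)=-\tfrac{V}{2}\hat\eta^{2}(\p_x-\hat a)^{-1}$ is a bounded constant-coefficient multiplier, small of order $\hat\eta^{2}$, but it is not relatively compact, so the Riesz-projection/perturbation step should be phrased via smallness rather than compactness; and uniformity of the bound for $\Re\Lambda>-\hat\beta_0$ with $|\Im\Lambda|$ large must be addressed explicitly (here the frequency cutoff $\chi_{\kp}(D)$ helps, but you do not say how).
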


Indeed, since $\mathbb{Q}_{\kp}^{\hat{a}} L_{\kp}^{\hat{a}}=L_{\kp}^{\hat{a}} \mathbb{Q}_{\kp}^{\hat{a}} $ and $\chi_{\kp}(D) u=u,$ we have 
that 
\beqs 
\ep^3 (\Lambda- L_{\kp}^{\hat{a}})\,\mathbb{Q}_{\kp}^{\hat{a}} u =\mathbb{Q}_{\kp}^{\hat{a}}\,(f-\cR).
\eeqs
It then follows from \eqref{revert-Lkp} and \eqref{es-cR} that 
\beqs 
\|\mathbb{Q}_{\kp}^a u \|_{L^2}\lesssim K^{-3}\, \|u\|_{L^2}+\ep^{-3}\|f\|_{L^2}.
\eeqs
Together with \eqref{estimate-RxD} and  the fact that $S_{\ep}$ is isometric on $L^2(\mR^3),$ this yields \eqref{prop-QuL}.

It now remains to prove  Lemma \ref{lem-KPapprox}. Let us set
\beq\label{defmuep}
\mu_{\ep}({\xi, \eta})=\ep\sqrt{(\xi+i\hat{a})^2+\ep^2\eta^2}\,, \qquad\sigma_{\ep}=\sqrt{h'(1)+(1+\mu_{\ep}^2)^{-1}}\, .
\eeq
For any $(\xi,\eta)\in\Supp\chi_{\kp}(\xi,\eta)=\big\{(\xi,\eta)\big|\, |\xi|\leq 8K, |\eta|\leq A\leq 2K^2\big\}, $
 it holds that
\beq\label{expand-musigmaep}
\begin{aligned}
    \sqrt{-\mu_{\ep}^2(\xi, \eta)}&=-i\ep(\xi+ i\hat{a}) \sqrt{1+\f{\ep^2\eta^2}{(\xi+ i\hat{a})^2}}=-i\ep(\xi+ i\hat{a}) \bigg( 1+\f{\ep^2 \eta^2}{2(\xi+i \hat{a})}+\cO(K^4\ep^4)\bigg), \\
    \sigma_{\ep}&=V-\f{\ep^2}{2V}(\xi+ i\hat{a})^2+\cO(K^4\ep^4)\, ,\qquad \big(V=\sqrt{h'(1)+1}\big)\, .
\end{aligned}
\eeq
Consequently, by remembering that $c=V+\ep^2$ we obtain that 
\begin{align*}
    \lambda_{+,\ep}^0(\xi, \eta)&= i\,c\,\ep (\xi+ i\hat{a})+
\sqrt{-\mu_{\ep}^2 \sigma_{\ep}^2\,}\\
&= 
\ep^3\bigg(i (\xi+i \hat{a})+\f{i(\xi+i\hat{a})^3}{2V}+\f{V\eta^2}{2i(\xi+ i\hat{a})}\bigg)+\cO(K^5\ep^5) \\
& ={\ep^3}\Lambda_{\kp,0}^{\hat{a}}(\xi, \eta)+\cO(K^5\ep^5)\,.
\end{align*}
We thus proved that
\beq\label{KP-nonlinear}
 \lambda_{-,\ep}^0(D)\chi_{\kp}(D)={\ep^3}\Lambda_{\kp,0}^{\hat{a}}(D)+\cR_1, \quad \text{ with } \, \|\cR_1\|_{B(L^2)}\lesssim (K \ep)^5 . 
\eeq
Note that $\Lambda_{\kp,0}^{\hat{a}}(D)$ is the  operator  obtained by linearizing the  KP-II equation  about $0$ in the moving frame. 
In view of the expression $L_{\kp}^{\hat{a}}= \Lambda_{\kp,0}^{\hat{a}}(D)-{C(V)} \,\p_x (\Psi_{\kdv}\cdot),$
we see that in order to prove
\eqref{id--KPapprox}, it suffices to show that 
\beq\label{nonlterm-approx}
 R_{11}^{{\ep}}\chi_{\kp}(D)=-\f{\ep^3}{2}\bigg(2\,{C(V)} \,\p_x (\Psi_{\kdv}\cdot)\bigg)+\cR_2\,, \quad \text{ with } \, \|\cR_2\|_{B(L^2)}\lesssim \ep^3\big(K^5\ep^2\big) .
\eeq
By introducing
\begin{align*}
   ( u_{1\ep}, n_{\ep}, \phi_{\ep} )(\cdot)=\ep^{-2}\,(\p_x\psi_c, n_c, \phi_c)(\ep^{-1}\cdot)\,, \quad d_{\ep}=c-\ep^2\, u_{1\ep}, \, \quad\rho_{\ep}=1+\ep^2 n_{\ep}\, ,
\end{align*}
and by using  the expression \eqref{defR}, it holds that 
\beq\label{expr-R11}
\begin{aligned}
     & R_{11}^{\ep}(x,D)=R_{11}\big(\ep^{-1}{x},\, \ep D_x,\, \ep^2 D_y\big)\\
 &=-\f{\ep^3}{2} \bigg(\sigma_{\ep}(D) (\p_x-\hat{a}) (u_{1\ep}\, \cdot)\sigma_{\ep}^{-1}(D)+ \sqrt{-\mu_{\ep}^2\,}(D)\,u_{1\ep}\, (\p_x-\hat{a})\, \sqrt{-\mu_{\ep}^2}^{-1}(D) \\
    & \qquad  \qquad \qquad\quad\, -\,\sigma_{\ep}(D)
    \big((\p_x-\hat{a})n_{\ep}(\p_x-\hat{a})+\ep^3 \,n_{\ep}\Delta_y \big)
    \, \ep\sqrt{-\mu_{\ep}^2}^{-1}(D)\\
    &  \qquad \qquad \qquad \qquad\,-\ep^{-3} \sqrt{-\mu_{\ep}^2}(D)\big( h'(\rho_{\ep})-h'(1)+I_{\hat{a},\ep^2\phi_{\ep}}-I_{\hat{a}}\big) \, \sigma_{\ep}^{-1}(D) \bigg)\, ,
\end{aligned}
\eeq
with $\mu_{\ep},\sigma_{\ep}$ defined in \eqref{defmuep} and 
\beqs 
I_{\hat{a},\ep^2\phi_{\ep}}=\big(e^{\ep^2\phi_{\ep}}-
\mu_{\ep}^2(D)\big)^{-1}, \quad I_{\hat{a}}=\big(\Id-\mu_{\ep}^2(D)\big)^{-1}\, .
\eeqs
Thanks to the expansion \eqref{expand-musigmaep}, it is direct to check that the sum of the first three terms inside of the bracket of \eqref{expr-R11} has the form 
\beqs\label{R11first3}
 (\p_x-\hat{a}) \big(2 u_{1\ep}+V n_{\ep}\big)+\cO_{B(L^2)}(K^5\ep^2) .
\eeqs
For the last term, we use the Taylor expansion
$h'(\rho_{\ep})-h'(1)=\ep^2 (\big(h''(1) n_{\ep} +\cO(\ep^2)\big)$
and the identity
\beqs 
I_{\hat{a},\ep^2\phi_{\ep}}- I_{\hat{a}}=I_{\hat{a},\ep^2\phi_{\ep}} (\Id-e^{\ep^2\phi_{\ep}})  I_{\hat{a}}=\ep^2\bigg(- \phi_{\ep}\,\Id + \cO_{B(L^2)}(K^2\ep^2)\bigg) 
\eeqs
to expand it as 
\beqs\label{R11last}
V^{-1}(\p_x-\hat{a}) \big(h''(1) n_{\ep}-\phi_{\ep}\big)+\cO_{B(L^2)}(K^5\ep^2)\,.
\eeqs
Moreover, thanks to \eqref{solitarywave-0} in  Theorem \ref{thm-existence}, it holds that for any $k\in \mathbb{N},$
\beqs 
(n_{\ep}, \phi_{\ep}, V^{-1} u_{1\ep})=\Psi_{\kdv}+\cO_{W_x^{k,\infty}}(\ep^2).
\eeqs
Consequently, by  combining the above three expansions, we obtain \eqref{nonlterm-approx} with
$$C(V)=\f{3V^2+h''(1)-1}{2V} =\f{3V^2+P''(1)-P'(1)-1}{2V}=V+\f{P''(1)}{2V}. $$

\appendix

\section{Study of the resonant modes}

In this appendix, we prove Theorem \ref{thm-resolmodes}. To be more specific, we will establish that for sufficiently small values of $\eta_0$, there exists a spectral curve $\lambda_c(\eta)\in C^{\infty}([-{\eta_0}, {\eta_0}])$
 and corresponding eigenmodes $U_c(\cdot, \eta)\in Y_{a,\eta}=L_a^2(\mR)\times \dot{H}^{1}_{a,\eta}(\mR),$ such that
\beq \label{resolnanteq-1}
L_c(\eta)\,U_c(\cdot, \eta)=\lambda_c(\eta)\,U_c(\cdot, \eta).
\eeq
Given that $L_c(\eta)=e^{-iy\cdot\zeta}\, L_c\, e^{iy\cdot\zeta} \,\mathbb{I}_{\{|\zeta|=|\eta|\}}$, this yields a set of continuous resonant modes $$\big(\lambda_c(|\zeta|), e^{i y\cdot\zeta}U_c(\cdot,|\zeta|)\big)_{|\zeta|\leq \eta_0}$$ for the linearized operator $L_{c}$.

 \subsection{KP-II approximation}
As mentioned in the introduction, it is known that the Korteweg-de Vries (KdV) equation and the Kadomtsev-Petviashvili II (KP-II) equation are effective approximate models for describing the ion dynamics in the long-wave region, 
in one and higher dimensions respectively. 
We thus expect that the  three dimensional Euler-Poisson system linearized  about  the one dimensional solitary water waves $(n_c, \psi_c, \phi_c)$ can be approximated by  the linearized  KP-II equation about   the line KdV soliton.  
This can be  shown by implementing the following scaling:
\beq \label{scaling}
\begin{aligned}
    \hat{t}=\ep^3 t, \quad \, \hat{x}=\ep x, \, \quad \hat{y}=\ep^2 y, \qquad (\hat{n}, \hat{\psi}, \hat{\phi})(\hat{t}, \hat{x})=(n, \ep \psi, \phi)(t, x), 
\end{aligned}
\eeq
Indeed, if $(n, \psi)^t$ solves the linearized equation \eqref{Linear-1}, then $(\hat{n}, \hat{\psi})^t$ solves:
\beq\label{scaled-eq}
\left\{ \begin{array}{l}
      \ep^3 \p_{\hat{t}}   \hat{n} - 
      \ep \p_{\hat{x}}\big(d_{\ep}\hat{n}\big)+\ep\p_{\hat{x}}\big(\rho_{\ep}\,\p_{\hat{x}}\hat{\psi}\big)\,+\ep^3\rho_{\ep}\,\Delta_{\hat{y}}\hat{\psi}=0, \\[3pt]
   \ep^3 \p_{\hat{t}} \hat{\psi} - \ep \hat{d}_{\ep} \p_{\hat{x}} \hat{\psi}+ \ep h'(\rho_{\ep})\hat{n}+\ep \hat{\phi} =0, \\[3pt]
\ep^2\big(\p_{\hat{x}}^2+\ep^2\p_{\hat{y}}^2\big)\hat{\phi}=e^{\ep^2\hat{\phi}_{\ep}}\hat{\phi}-\hat{n},
\end{array}\right.
\eeq
where 
\beq \label{def-dwep}
(\hat{n}_{\ep}, \hat{\psi}_{\ep}, \hat{\phi}_{\ep})( \hat{x})=\ep^{-2}(n_c, \ep \psi_c, \phi_c)(x) , \quad \hat{d}_{\ep}= V+\ep^2(1- \p_{\hat{x}}\hat{\psi}_{\ep}), \, \quad 
\hat{\rho}_{\ep}=1+\ep^2 \hat{n}_{\ep}, \quad V=\sqrt{P'(1)+1}.
\eeq
Let us now skip the subscript $\hat{}$ for the sake of notational convenience. We will perform  expansions with respect to $\ep$ in the above system. The expansion is usually performed by  assuming that a high enough Sobolev norm of the solution is bounded
on the appropriate time scale. Here we shall assume that the problem  is localized  in a bounded  frequency region with respect
to both variables, so that we  assume that all  the differential operators 
with positive order are bounded operators in $L_{\hat{a}}^2(\mR^3).$ We shall indeed perform this localization when we
will use this approximation argument.

Matching the $\ep^1$ order of  both sides of the equation \eqref{scaled-eq}, we obtain 
\beq\label{firsteq}
 \p_{x} \psi = V n=V\phi\,.
\eeq
By using the notation  $$I_{\ep, \phi_{\ep}}=\bigg(e^{\ep^2\hat{\phi}_{\ep}}-\ep^2\big(\p_{\hat{x}}^2+\ep^2\p_{\hat{y}}^2\big)\bigg)^{-1},\, \quad I_{\ep,0}=\bigg(1-\ep^2\big(\p_{\hat{x}}^2+\ep^2\p_{\hat{y}}^2\big)\bigg)^{-1},$$ it holds that 
\begin{align*}
    \phi= I_{\ep, \phi_{\ep}} n&= I_{\ep,0} \, n+ I_{\ep, \phi_{\ep}} (1-e^{\ep^2\hat{\phi}_{\ep}})\, I_{\ep, 0} \\
    & = n+\ep^2 \p_x^2 n-\ep^2\phi_{\ep} n +\cO_{L_{\hat{a}}^2(\mR^3)}(\ep^4).
\end{align*}
Plugging this formula into the second equation of \eqref{scaled-eq} and matching the $\ep^3$ order of the equation for $\p_x\psi+V n,$ we obtain by using \eqref{firsteq} the following equation
\beqs 
\p_t f+\p_x \bigg(-1+ \big( \p_x\psi_{\ep}+ \f{V}{2} n_{\ep}+\f{h''(1)n_{\ep}-\phi_{\ep}}{2V}\big)\big|_{\ep=0} +\f{1}{2V}\p_x^2\bigg)\, f +V\Delta_y \p_x^{-1} f=0.
\eeqs
Since  it follows from the expansion
\eqref{solitarywave-0} in  Theorem \ref{thm-existence}  that for any $k\in \mathbb{N},$
\beq\label{expan-waves}
n_{\ep}, \phi_{\ep}, V^{-1} u_{1\ep}=\Psi_{\kdv}+\cO_{W_x^{k,\infty}}(\ep^2).
\eeq
 we find the linear operator arising in   the  KP-II equation linearized about the line soliton $\Psi_{\kdv}:$
\beq \label{linear-kpII}
L_{\kp}= \p_x\bigg(1-C(V)\Psi_{\kdv}\cdot-\f{1}{2V} \p_x^2\bigg)
-V \Delta_y\p_x^{-1},
\eeq
where $C(V)=\f{3V^2+h''(1)-1}{2V}=V+\f{P''(1)}{2V}.$

\subsection{Existence of resonant modes--proof of Theorem \ref{thm-resolmodes} } \label{sec-resolnantmode}
We set  $U(\cdot, \eta)=(n, \psi)^t(\cdot, \eta),\,$  
then $\eqref{resolnanteq-1}$ can  be written as
\beqs\label{spectralpb}
\left\{ \begin{array}{l}
 \lambda  {n} - \p_{{x}} ({d}_{c} \, n) +\p_x(\rho_c\,\p_x\psi)+\eta^2  \rho_c \,\psi =0\, ,   \\[3pt]
   \lambda \psi- d_c \p_x \psi + h'(\rho_c) n +\phi=0\,, \\[3pt]
   (\p_x^2-\eta^2) \phi =e^{\phi_c} \phi -n\, . 
\end{array}\right.
\eeqs
To reduce the above system into a single equation, we use the last two equations to get that
\beq\label{eq-npsi}
\sigma_{\eta}(x, D_x) n=-(\lambda-d_c\p_x)\psi
\eeq
where $\sigma_{\eta}(x, D_x)=h'(\rho_c)+(e^{\phi_c}+\eta^2-\p_x^2)^{-1}.$ Note that as an operator acting on $L_a^2(\mR),$ this operator can be identified with  a pseudo-differential operator acting on the unweighted $L^2(\mR)$ space with the principal  symbol $h'(\rho_c)+(e^{\phi_c}+\eta^2+(\xi+ia)^2)^{-1},$ whose modulus  has positive lower bound, as long as $\ep$ is small enough. Remember that $\rho_c=1+n_c=1+\cO_{W_x^{k, \infty}}(\ep^2), \phi_c=\cO_{W_x^{k, \infty}}(\ep^2),\, \forall\, k\geq 0.$
Consequently, it follows from Theorem 4.29 \cite{Zworski-book} (or Lemma A.1, \cite{RS-WW}) that $\sigma_{\eta}$ is invertible on $L_a^2(\mR).$ Plugging \eqref{eq-npsi} into the first equation of \eqref{spectralpb}, we find
\beqs 
\big(\lambda-\p_x(d_c\cdot)\big)\sigma_{\eta}^{-1}(\lambda-d_c\p_x)\psi-\p_x (\rho_c\p_x\psi)+\eta^2\rho_c\psi=0.
\eeqs
Let  us now set 
\beqs 
\Lambda =\ep^{-3} \lambda, \qquad \, \hat{x}=\ep x, \qquad \hat{\eta}=\ep^{-2} \eta, \, \, \quad \sigma_{\ep, \hat \eta}^{-1}(\hat{x}, D_{\hat{x}})=\sigma_{\eta}^{-1}(x, D_x),\, \quad  \hat{\varphi}(\hat{x}, \hat{\eta})= (\p_x\psi)  (x,y)\, .
\eeqs
After the change of variable, we skip  the
subscript $\hat{}\,$ for clarity and we rewrite the above equation as 
\beq\label{eigenpb-real}
\cT(\ep,\eta, \Lambda) \vp:= \big[L_0(\ep,\eta)+\Lambda (\eta) L_1(\ep,\eta)
+ \Lambda^2(\eta)  L_2(\ep,\eta)\big]\vp=0\, ,
\eeq
where 
\beq\label{Ljepeta}
\begin{aligned}
   &\quad L_0(\ep,\eta)= \ep^{-2}\p_x \bigg(d_{\ep} \, \sigma_{\ep,\eta}^{-1}\,d_{\ep}-\rho_{\ep}\bigg)+\eta^2\rho_{\ep}\p_x^{-1}, \\
  &   L_1(\ep,\eta)=-\big(\sigma_{\ep,\eta}^{-1}\,d_{\ep}+\p_x(d_{\ep}\, \sigma_{\ep,\eta}^{-1}\p_x^{-1}) \big), \qquad L_2(\ep,\eta)=\ep^2 \sigma_{\ep,\eta}^{-1}\,\p_x^{-1}.
\end{aligned}
\eeq
We refer to \eqref{def-dwep} for the definition of $ d_{\ep}$ and $\rho_{\ep}.$ Thanks to \eqref{expan-waves} and the expansion for $\sigma_{\ep,\eta}^{-1},$ 
\beq\label{exp-sigmainverse}
\begin{aligned}
\sigma_{\ep,\eta}^{-1}&=\bigg( V^2+\ep^2 \big(h''(1)n_{\ep}-\phi_{\ep}+\p_x^2\big)\bigg)^{-1} \big(1+\cO_{B(H_{\hat{a}}^2(\mR);L_{\hat{a}}^2(\mR))}(\ep^4) \big) \\
&=V^{-2}-\ep^2 V^{-4} \big(h''(1)n_{0}-\phi_{0}+\p_x^2\big)+\cO_{B(L_{\hat{a}}^2(\mR))}(\ep^4)\, ,
\end{aligned}
\eeq
we obtain  that when $\ep=0,$ 
\beqs 
\cT(0, \eta, \Lambda)=-\f{2}{V} \big(\Lambda-L_{\kp}(\eta)\big),
\eeqs
where 
$$L_{\kp}(\eta)= \p_x\big(1-C(V)\Psi_{\kdv}\cdot\big)-\f{1}{2V} \p_x^3+\f{V}{2}\eta^2\p_x^{-1}.$$ 
In the following,  we use as in  \cite{Mizumachi-BL-linear}  the Lyapunov-Schmidt method to prove the existence of $(\Lambda, \vp)$
 and  an  expansion in terms of $\eta$ under the form
\beq \label{expan-modes}
\begin{aligned}
&\Lambda(\eta)=i\eta\, \Lambda_{1\ep}-\eta^2\Lambda_{2\ep}+\cO(\eta^3),\\
&\vp(\cdot,\eta)=\vp_{{0\ep}}(\cdot)+ i\eta\, \vp_{{1\ep}}(\cdot)+\eta^2\vp_{2\ep}(\cdot, \eta).
\end{aligned}
\eeq
Plugging  the above expansion into the equation \eqref{eigenpb-real} and sorting out in terms of powers of  $\eta,$ we find:
\begin{align}
   & L_0({\ep,0}) \, \vp_{0\ep}=0 \,, \notag \\
   &  L_0({\ep,0}) \, \vp_{1\ep}+ \Lambda_{1\ep} 
 L_1({\ep},0) \, \vp_{0\ep}=0\,, \label{first-expand} \\
  & L_0({\ep,0}) \, \vp_{2\ep}-\Lambda_{1\ep}  L_1({\ep},0) \, \vp_{1\ep}-\big(\Lambda_{1\ep}^2 L_2(\ep,0)+\Lambda_{2\ep} L_1(\ep,0)-L_0^1(\ep)\big)\vp_{0\ep}=0\,, \label{sec-expand}
\end{align}
where we have set  $L_0^1(\ep)=\f{L_0(\ep,\eta)-L_0(\ep, 0)}{\eta^{2}}|_{\eta=0}\,.$

Since $L_0(\ep,0)$ and $L_1(\ep,0)$ are related to the linearized operator arising in  the one dimensional Euler-Poisson system linearized about  the line soliton, it is shown in Appendix \ref{approx-resonants}
that 
\beq \label{firsttwomodes}
\vp_{0\ep}(\cdot)=\ep^{-1} \psi_c'' \bigg(\f{\cdot}{\ep}\bigg), \qquad \vp_{1\ep}(\cdot)=-\Lambda_{1\ep} \big(\p_c \psi_c'\big)\bigg(\f{\cdot}{\ep}\bigg),
\eeq
 Denote by  $L_{0}(\ep,0)^{*}\in B(L_{-\hat{a}}^2(\mR))$ the adjoint operator of   $L_{0}(\ep,0)\in B(L_{\hat{a}}^2(\mR))$ for  the duality  $\langle \cdot, \cdot\rangle_{L_{-\hat{a}}^2(\mR)\times L_{\hat{a}}^2(\mR)}$. By using the explicit expression in \eqref{Ljepeta},
it is direct to check that the operator $L_{0}(\ep, 0)$ has the structure 
\beq 
L_{0}(\ep,0)^{*}=-\p_x^{-1}L_{0}(\ep,0)\p_x\,,
\eeq
from which we obtain that $L_{0}(\ep,0)^{*}v_{\ep}=0$, with $$v_{\ep}(\cdot):=\p_x^{-1}\vp_{0\ep}(\cdot)= \psi_c'\big({\cdot}/{\ep}\big).$$ Consequently, it holds that
\beqs 
\langle L_{0}(\ep,0)\vp_{0\ep}, v_{\ep} \rangle=0\,,
\eeqs
 where throughout this proof,  we set $\langle \cdot,\cdot\rangle=\langle \cdot,\cdot\rangle_{(L_{-\hat{a}}^2(\mR), L_{\hat{a}}^2(\mR))}.$
Moreover, 
 by \eqref{first-expand} and the fact that  $L_{0}(\ep,0)^{*}v_{\ep}=0$, we get  that
\beqs 
\langle L_{1}(\ep,0)\vp_{0\ep}, v_{\ep} \rangle=0\, .
\eeqs
Therefore, it follows from \eqref{sec-expand}  that
\beq\label{eq-labda-1}
\Lambda_{1\ep}^2 \langle-L_1(\ep,0) v_{1\ep}  
+L_2(\ep,0) \psi_{0\ep}, v_{\ep} \rangle= \langle  L_0^1(\ep)\p_xv_{\ep}
, v_{\ep} \rangle ,
\eeq
where $v_{1\ep}(\cdot) =  \p_c\psi_c'(\cdot/\ep).$ 
We first note that when restricting to $\ep=\eta=0,$ 
\beq\label{L10-L01}
L_{1}(0,0)=-\f{2}{V}\,\Id,  \qquad L_0^1(0)\,\p_x=\Id.
\eeq
Moreover, thanks to \eqref{solitarywave-def}, \eqref{solitarywave-0} and the relation $\ep^2=c-V,$ it holds that:
\begin{align*}
&\qquad \qquad \qquad v_0(x)=V \Psi_{\kdv}(x)=\f{3V}{C(V)}\sech^2\big(\sqrt{{V}/{2}}\, \hat{x}\big) , \\
&v_{10}\,(x)=\p_c \psi_c'\big(\f{x}{\ep}\big)|_{\ep=0}=\f{\p(\ep^2)}{\p c}\p_{\ep^2}\psi_c'\big(\f{x}{\ep}\big)|_{\ep=0}=V\big(\Id+\f{x}{2}\p_x\big)\Psi_{\kdv}(x).
\end{align*}
Consequently, since $L_2(\ep,0)=\cO_{B(L_{\hat{a}}^2(\mR),H^1_{\hat{a}}(\mR))}(\ep^2)\,,$ one has
\begin{align*}
    \langle-L_1(\ep,0) v_{1\ep} +L_2(\ep,0) \psi_{0\ep}, v_{\ep} \rangle&=\f{2}{V}\langle v_{10}, v_0 \rangle+\cO(\ep^2)\\
   & =\f{3}{2V}\| v_{0}\|_{L^2}^2+\cO(\ep^2)>0\, ,
\end{align*}
provided that $\ep$ is small enough. 
It then follows from \eqref{eq-labda-1}, \eqref{L10-L01} that  
\beq \label{Lbda1ep}
\Lambda_{1\ep}^2=\f{\langle  L_0^1(\ep)\p_xv_{\ep}, v_{\ep} \rangle } {\langle-L_1(\ep, 0) \p_x\vp_{1\ep} +L_2(\ep,0) \psi_{0\ep}, v_{\ep} \rangle}
=\f{2V}{3}+\cO(\ep^2)>0\,.
\eeq
It now remains to show the existence of  the number $\Lambda_{2\ep}<0$ and the  function $\vp_{2\ep}(\cdot, \eta)\in L_{\hat{a}}^2(\mR),$ such that \eqref{expan-modes} holds and 
$$ \overline{\Lambda(\eta)}= {\Lambda(-\eta)},\qquad \overline{\vp_{2\ep}(\cdot, \eta)}=\vp_{2\ep}(\cdot, -\eta).\, $$
To use  the Lyapunov-Schmidt method,
it is more convenient to set  $\Lambda(\eta)=i\eta\,\tilde{\Lambda}(\eta)$ and to  look for $\tilde{\Lambda}(\eta),\, \kappa(\eta), \tilde{\vp}_{2\ep}(\cdot, \eta)$ such that 
\beq \label{realexp}
\vp(\cdot,\eta)=v_{\ep}'-\big(i\eta \tilde{\Lambda}(\eta)-\kappa(\eta)\eta^2 \big)v_{1\ep}+\eta^2\tilde{\vp}_{2\ep}(\cdot, \eta)
\eeq
and 
\beq\label{ortho-vp2}
\tilde{\vp}_{2\ep}(\cdot, \eta) \perp \mathrm{Span}\bigg\{v_{\ep},\, \int_{-\infty}^x \tilde{v}_{1\ep}(x') \,\d x'\bigg\}=\ker_g L_{0}^{*}(\ep, 0),
\eeq
where $\tilde{v}_{1\ep}\in \ker_g L_{0}(\ep, 0)$ satisfies $L_{0}(\ep, 0)\tilde{v}_{1\ep}=-\f{2}{V}v_{\ep}', \, \tilde{v}_{1\ep}|_{\ep=0}= v_{1\ep}|_{\ep=0}.$
It is  worth noting that the determination of $\tilde{\psi}_{1\ep}$ can be achieved through the Lyapunov-Schmidt approach. That is, one starts from the generalized kernel of $L_0(0, 0),$ does expansions in $\ep,$ and determines the remainders of order $\ep^2$ by the Implicit Function Theorem.
A very similar method is  also used  in the subsequent discussion, we thus omit the specific details  for brevity. 

Plugging the expansion \eqref{realexp} into the equation \eqref{eigenpb-real} and looking at the  $\cO(\eta^2)$ term, we find the equation satisfied by $\tilde{\vp}_{2\ep}:$ 
\beq \label{eqpsi2ep}
\cT_0(\ep,\eta, \tilde{\Lambda}) \,\tilde{\vp}_{2\ep}+  \cT_1(\ep, \eta,  \tilde{\Lambda}, \kappa ) + i\eta\, \cT_2(\ep, \eta,  \tilde{\Lambda}, \kappa ) =0
\eeq
where 
\begin{align*}
&\cT_0(\ep,\eta, \tilde{\Lambda}) =  L_0(\ep,\eta)+ i\eta\,\tilde{\Lambda} L_1(\ep,\eta)
-\eta^2 \tilde{\Lambda}^2  L_2(\ep,\eta)\, ,\\
&\cT_1(\ep, \eta,  \tilde{\Lambda}, \kappa ) =\tilde{\Lambda}^2 \big( L_{1}(\ep,\eta)v_{1\ep}-L_2(\ep,\eta)v_{\ep}'\big)+\kappa(\eta)L_0({\ep}, \eta)v_{1\ep}+ \f{L_{0}(\ep,\eta)-L_{0}(\ep, 0)}{\eta^2}v_{\ep}'\,, \\
&\cT_2(\ep, \eta,  \tilde{\Lambda}, \kappa ) =\tilde{\Lambda}\bigg(\kappa(\eta) L_1(\ep,\eta)-\f{L_{0}(\ep,\eta)-L_{0}(\ep, 0)}{\eta^2}+\tilde{\Lambda}^2L_2(\ep,\eta) \bigg)v_{1\ep}\, .
\end{align*}
Let $\mathbb{P}_{\ep}$ be the spectral projection onto $\ker_g L_{0}(\ep,0)$  in $L_{\hat{\alpha}}^2(\mR)$ and $ \mathbb{Q}_{\ep}=\Id-\mathbb{P}_{\ep},$ then the equation \eqref{eqpsi2ep} implies that
\beqs 
 \mathbb{Q}_{\ep} \cT_0(\ep,\eta, \tilde{\Lambda})  \mathbb{Q}_{\ep}\,\tilde{\vp}_{2\ep}+  \mathbb{Q}_{\ep}  \cT_1(\ep, \eta,  \tilde{\Lambda}, \kappa ) + i\eta\,  \mathbb{Q}_{\ep}  \cT_2(\ep, \eta,  \tilde{\Lambda}, \kappa ) =0.
\eeqs
On the one hand, it holds by  the definition of $\mathbb{Q}_{\ep}$ that 
\beqs 
\| \mathbb{Q}_{\ep} \cT_0(\ep, 0, \tilde{\Lambda}) ^{-1}\mathbb{Q}_{\ep} \|_{B(L_{\hat{\alpha}}^2(\mR), H_{\hat{\alpha}}^1(\mR))}<+\infty.
\eeqs
On the other hand, in view of the expressions of 
$L_0(\ep,\eta), L_1(\ep,\eta), L_2(\ep,\eta)$ in \eqref{Ljepeta}, one gets that
\begin{align*}
    \|\cT_0(\ep, \eta, \tilde{\Lambda})-\cT_0(\ep, 0, \tilde{\Lambda})\|_{B(L_{\hat{a}}^2)}&\lesssim \hat{a}^{-1}\eta^2(1+|\tilde{\Lambda}(\eta)|^2)+\eta |\tilde{\Lambda}(\eta)|\\
    & \lesssim \eta_0^2(1+M^2
    )+\eta_0 M,
\end{align*}
where  $M=\sup_{\eta\in[-\eta_0, \eta_0]}| \tilde{\Lambda}(\eta)|.$ We thus conclude that, upon choosing $\eta_0$ small enough, we have
\beqs 
\| \mathbb{Q}_{\ep} \cT_0(\ep, \eta, \tilde{\Lambda}) ^{-1}\mathbb{Q}_{\ep} \|_{B(L_{\hat{\alpha}}^2(\mR), H_{\hat{\alpha}}^1(\mR))}<+\infty.
\eeqs
 The existence of $\tilde{\vp}_{2\ep}$ is thus guaranteed upon establishing the existence of the  smooth curves $\tilde{\Lambda}(\cdot), \kappa(\cdot): [-\eta_0,\, \eta_0]\rightarrow \mathbb{C},$ which is the task of the following. 

Since $\tilde{\vp}_{2\ep} \perp \ker_g L_{0}^{*}(\ep, 0)$ (see \eqref{ortho-vp2}), we get from \eqref{eqpsi2ep} that 
\beq \label{Implicitthm}
\begin{aligned}
   & \cH_1 (\ep, \eta,  \tilde{\Lambda}, \kappa  ):=
 \big\langle 
     \cT_1(\ep, \eta,  \tilde{\Lambda}, \kappa ) + i\eta\, \cT_2(\ep, \eta,  \tilde{\Lambda}, \kappa ) +\big(\cT_0(\ep, \eta)-\cT_0(\ep, 0)\big) \tilde{\vp}_{2\ep}, \, v_{\ep}\big\rangle=0\,,\\
    & \cH_2 (\ep, \eta,  \tilde{\Lambda}, \kappa  ):=
\big \langle 
     \cT_1(\ep, \eta,  \tilde{\Lambda}, \kappa ) + i\eta\, \cT_2(\ep, \eta,  \tilde{\Lambda}, \kappa ) \\
     & \qquad \qquad\qquad\qquad+\big(\cT_0(\ep, \eta)-\cT_0(\ep, 0)\big) \tilde{\vp}_{2\ep}, \, \int_{-\infty}^x \tilde{v}_{1\ep}(x')  \,\d x' \big\rangle=0\,.
\end{aligned}
\eeq
Looking at these two identities at $\eta=0$, we obtain the following relations 
\beq\label{sec-relation}
 \begin{aligned}
&0=\cH_1|_{\eta=0}=  \big\langle  \cT_1|_{\eta=0}, \, v_{\ep}\big\rangle=  \big\langle \tilde{\Lambda}^2(0) \big( L_{1}(\ep,0)\vp_{1\ep}'-L_2(\ep,0)v_{\ep}'\big)+ L_0^1(\ep) v'_{\ep},  \, v_{\ep}\big\rangle, \\
& 0= \cH_2|_{\eta=0}=  \big\langle  \cT_1|_{\eta=0}, \, \int_{-\infty}^x \tilde{v}_{1\ep}(x') \,\d x'\big\rangle\\
& \qquad\qquad =  \big\langle \tilde{\Lambda}^2(0) \big( L_{1}(\ep,0)v_{1\ep}-L_2(\ep,0)v_{\ep}'\big)+ L_0^1(\ep) v'_{\ep}+\kappa(0)L_0(\ep, 0)v_{1\ep},  \,\int_{-\infty}^x \tilde{v}_{1\ep}(x') \,\d x'\big\rangle . 
 \end{aligned} 
 \eeq
The first relation yields $$\tilde{\Lambda}^2(0) = \f{\langle  L_0^1(\ep)v'_{\ep}, v_{\ep} \rangle } {\langle-L_1(\ep,0) \vp_{1\ep}' +L_2(\ep,0) \psi_{0\ep}, v_{\ep} \rangle}=\f{2V}{3}+\cO(\ep)>0, $$ which is consistent with \eqref{Lbda1ep}. Next, since 
\beq\label{rel-v10-v0}
\tilde{v}_{10}=v_{10}= v_0+\f{x}{2}\p_x v_0,\, \quad
L_0^*(\ep, 0)\int_{-\infty}^x \tilde{v}_{1\ep}(x') \,\d x'=\f{2}{V}v_{\ep},
\eeq 
it holds that
\beqs 
\big\langle L_0(\ep, 0)v_{1\ep}'\,,  \,\int_{-\infty}^x \tilde{v}_{1\ep}(x') \,\d x'\big\rangle
=\f{2}{V}\big\langle v_{1\ep},  \, v_{\ep} \big\rangle=\f{2}{V}\big\langle v_{10},  \, v_{0} \big\rangle+\cO(\ep^2)=\f{3}{2V}\,\|v_0\|_{L^2(\mR)}^2+\cO(\ep^2)\neq 0.
\eeqs
Moreover, in view of \eqref{L10-L01}, we have that
\begin{align*}
&\big\langle \tilde{\Lambda}^2(0) \big( L_{1}(\ep,0)v_{1\ep}-L_2(\ep,0)v_{\ep}'\big)+ L_0^1(\ep) v'_{\ep},  \,\int_{-\infty}^x \tilde{v}_{1\ep}(x') \,\d x'\big\rangle\\
&=\big\langle v_0-\f43\,v_{10},  \,\int_{-\infty}^x \tilde{v}_{10}(x') \,\d x'\big\rangle+\cO(\ep^2)=\f{1}{12}\, \|v_0\|_{L^1(\mR)}^2+\cO(\ep^2).
\end{align*} 
It thus follows from the second relation of \eqref{sec-relation} that
\beq\label{kappa0}
\kappa(0)=-\f{V}{18} \f{\|v_0\|_{L^1(\mR)}^2}{\|v_0\|_{L^2(\mR)}^2}+\cO(\ep^2)\neq 0.
\eeq 
To determine the curves $\tilde{\Lambda}(\eta),\, \kappa(\eta)$ around $\eta=0,$ we compute  by using \eqref{rel-v10-v0} that
\begin{align}
& \p_{\kappa} \cH_1|_{\eta=0}=\big\langle L_0(\ep, 0)v_{1\ep}, v_{\ep} \big\rangle=0, \notag \\
 &   \p_{\kappa} \cH_2|_{\eta=0}=\big\langle L_0(\ep, 0)v_{1\ep},\, \int_{-\infty}^{x} \tilde{v}_{1\ep}(x') \,\d x'  \big\rangle=\f{3}{2V}\,\|v_0\|_{L^2}^2+\cO(\ep^2)\neq 0, \notag\\
  &    \p_{\tilde{\Lambda}} \cH_1|_{\eta=0}= 2 \tilde{\Lambda}(0)\big\langle L_{1}(\ep,0)v_{1\ep}-L_2(\ep,0)v_{\ep}',\, v_{\ep}  \big\rangle=-\f{3}{V} \tilde{\Lambda}(0) \|v_0\|_{L^2}^2+\cO(\ep^2)\neq 0, \label{ptLbda}
\end{align}
which lead to
\beqs
\det\big(D_{\kappa,  \tilde{\Lambda} }(\cH_1, \cH_2)\big)|_{\eta=0} \neq 0. 
\eeqs
Consequently, by the Implicit Function Theorem, there exists $\eta_0$ and two $C^1$ curves $\tilde{\Lambda}(\eta),\, \kappa(\eta): [-\eta_0,\, \eta_0]\rightarrow \mathbb{C},$ such that \eqref{Implicitthm} holds true.
Moreover, it holds that
\beqs
i\,\Lambda_{2\ep}:= \p_{\eta} \tilde{\Lambda}\big|_{\eta=0}=-\f{\p_{\eta}\cH_1}{\p_{\tilde{\Lambda}} \cH_1}\big|_{\eta=0}\,.
\eeqs
In light of the expression of $\cH_1$ in \eqref{Implicitthm}, we compute
\begin{align*}
{i}\,\p_{\eta}\cH_1\big|_{\eta=0}&=-\tilde{\Lambda}(0)\big\langle 
\big(\kappa(0) L_1(\ep,0)-
L_0^1(\ep)+\tilde{\Lambda}^2(0)L_2(\ep,0)\big) \,v_{1\ep}+
L_1(\ep,0)\,\tilde{\psi}_{2\ep},\, v_{\ep}
\big\rangle  \\
&=\tilde{\Lambda}(0)\big\langle \big(\f{2}{V}\kappa(0)+\p_x^{-1}\big)\,v_{10},\, v_{0}
\big\rangle+\cO(\ep^2) \\
&=\tilde{\Lambda}(0) \bigg( \f{3}{2V}\kappa(0)\|v_0\|_{L^2(\mR)}^2-\f14\|v_0\|_{L^1(\mR)}^2 \bigg)+\cO(\ep^2).
\end{align*} 
Therefore, it follows from \eqref{kappa0} and \eqref{ptLbda} that for $\ep$ small enough, 
\beqs 
\Lambda_{2\ep}=\f{V}{9} \f{\|v_0\|_{L^1(\mR)}^2}{\|v_0\|_{L^2(\mR)}^2}+\cO(\ep)>0\, .
\eeqs
Finally, since we have  $\overline{L_0(\ep, \eta )}= {L_0(\ep, -\eta )},\,$ it holds that 
\beqs 
\overline{\cH_k(\ep,\eta, \kappa, \tilde{\Lambda})}=\cH_k\big(\ep, -\eta, \overline{\kappa}, \overline{\tilde{\Lambda}}\big), \qquad k=1,2. 
\eeqs
We thus get from the uniqueness of the curves that:
\beqs
\overline{\tilde{\Lambda}(\eta)}=\tilde{\Lambda}(-\eta), \qquad \overline{\kappa(\eta)}=\kappa(-\eta),
\eeqs
which further implies that
\beqs 
\overline{{\Lambda}(\eta)}={\Lambda}(-\eta), \qquad \overline{\tilde{\vp}_{2\ep}(\cdot, \eta)}=\tilde{\vp}_{2\ep}(\cdot, -\eta).
\eeqs

Let us summarize what we have obtained and some further  consequences which lead to Theorem \ref{thm-resolmodes}.
\begin{lem}
There exists $\ep_0, \hat{\eta}_0>0,$  such that for any $\ep\leq \ep_0,$ 

(1) For any $\hat{\eta}\leq \hat{\eta}_0, $ one can find  
$\Lambda(\hat{\eta}), 
\, {\vp}(\cdot, \hat{\eta}) \in L^2_{\hat{a}}$  which  solve \eqref{eigenpb-real} and enjoy the expansion: 
\beq \label{expan-modes-1}
\begin{aligned}
&\Lambda(\hat{\eta})=i\hat{\eta}\, \Lambda_{1\ep}-\hat{\eta}^2\Lambda_{2\ep}+\cO(\hat{\eta}^3),  \\
&\vp(\cdot,\hat{\eta})=
v_{\ep}'(\cdot)-\big(i\,\Lambda_{1\ep} \hat{\eta} - \tilde{\Lambda}_{2\ep}\hat{\eta}^2\big)v_{1\ep}(\cdot)+\hat{\eta}^2 \tilde{\vp}_{2\ep}(\cdot, \hat{\eta}) 
\end{aligned}
\eeq
where 
\begin{align*}
& \Lambda_{1\ep}=\sqrt{\f{2V}{3}}+\cO(\ep^2)\,, \qquad
\Lambda_{2\ep}=\f{V}{9} \f{\|v_0\|_{L^1(\mR)}^2}{\|v_0\|_{L^2(\mR)}^2}+\cO(\ep^2), \,    \quad \tilde{\Lambda}_{2\ep}=\f{V}{18} \f{\|v_0\|_{L^1(\mR)}^2}{\|v_0\|_{L^2(\mR)}^2}+\cO(\ep^2),\\
& v_{\ep}(\cdot)=\ep^{-2}(\p_x\psi_c) (\ep^{-1} \,\cdot)\, , \, 
\qquad v_{1\ep}(\cdot)=(\p_c\p_x\psi_c)(\ep^{-1} \,\cdot)\,, \qquad \tilde{\vp}_{2\ep}(\cdot, \hat{\eta}) \perp v_{\ep}\,.
\end{align*}

(2) For any $\eta\in [-\ep^2\hat{\eta}_0, \ep^2 \hat{\eta}_0],$ the operator $L_c(\eta)$ has  two pairs of eigenmodes $\big(\lambda_c(\eta), U_c(\cdot, \eta)\big) $ and $\big(\lambda_c(-\eta), U_c(\cdot, -\eta)\big)$ in the space $Y_{a, \eta}$
where 
\begin{align}\label{defU-base}
   & \lambda_c(\eta)=\ep^3 \Lambda(\ep^{-2}\eta)=i \ep\Lambda_{1\ep}\eta-\ep^{-1}\Lambda_{2\ep}\eta^2+\cO(\eta^3)\, , \notag\\ 
     U_c(\cdot, \eta)&= \left( \begin{array}{c}
-\sigma_{\ep,\eta}^{-1}(\hat{x}, D_{\hat{x}})\big(\lambda(\eta)-\ep \,d_c\,\p_{\hat{x}}\big)(\p_{\hat{x}}^{-1} \vp)    \\[3pt]
        -\int_{\hat{x}}^{+\infty}\vp 
    \end{array} 
    \right)(\ep\, \cdot, \, \ep^{-2}\eta ):= \left( \begin{array}{c}
U_{1c} (\cdot, \, \eta )    \\[3pt]
U_{2c} (\cdot, \, \eta )
    \end{array} 
    \right)  \, ,
\end{align}
with
\beqs 
\sigma_{\ep,\eta}^{-1}(\hat{x}, D_{\hat{x}})= \bigg(h'(\hat{\rho}_{\ep})+(e^{\ep^2\hat{\phi}_{\ep}}+\eta^2-\ep^2\p_{\hat{x}})^{-1}\bigg)^{-1}.
\eeqs
More precisely, one has the expansion
\beq \label{exp-basis}
U_c(\cdot,\eta)=U_c^1+ i \ep\Lambda_{1\ep} \eta\, U_c^2+\cO(\eta^2)
\eeq
    with $U_c^1, U_c^2$ defined in \eqref{def-firsttwo}.
    
(3) The eigenmode $(\lambda_c^{*}(\pm \eta), U_c^{*}(\cdot, \pm\eta))$ of $L^{*}_c(\eta)$ in the space $Y_{a, \eta}^{*}$ are characterized by 
\begin{align}\label{dualmodes}
    \lambda_c^{*}(\eta)=\lambda_c(-\eta), \qquad  U_c^{*}(\cdot, \eta)=(U_{2,c}, U_{1,c})^t(-\,\cdot, -\eta):=U_c^{*,1}- i \ep\Lambda_{1\ep} \eta\, U_c^{*,2}+\cO(\eta^2)\,.
\end{align}
Moreover, it holds that 
\beqs
\overline{\lambda_c(\eta)}=\lambda(-\eta), \quad \overline{U_c(\cdot, \eta)}=U_c(\cdot, -\eta), \quad \big\langle U_c(\cdot, \eta),\, U_c^{*}(\cdot, -\eta)  \big\rangle_{_{Y_{a,\eta}\times Y_{a,\eta}^{*}}}=0 ,
\eeqs
where the inner product is defined as follows:
 for $f=(f_1,f_2)^t\in Y_{a,\eta}, \, f^{*}=(f_1^*, f_2^*)^t\in  Y_{a,\eta}^{*},$
\beq \label{def-bracketYa}
\big\langle f, f^{*}\rangle_{_{Y_{a,\eta}\times Y_{a,\eta}^{*}}} =: \langle f_1, f_1^{*} \rangle_{L_a^2(\mR)\times L_{-a}^2(\mR)}+\left  \langle  \p_x f_2, \p_x f^{*}_2\right \rangle_{L_a^2(\mR)\times L_{-a}^2(\mR)}.
\eeq

(4). Write $L_c(\eta)=L_c(0)+\eta^2\,L_c^1(0)+\cO_{B(X_a)}(\eta^4).$
It holds that
\beq\label{Lbda1}
\ep\Lambda_{1\ep}=\lambda_{1c}=\sqrt{\f{\langle L_c^1(\eta)U_c^1, U_c^{*,1} \rangle}{-\langle U_c^2, U_c^{*,1}\rangle}}>0, 
\eeq
\beq\label{Lbda2}
\ep^{-1}\Lambda_{2\ep}=\lambda_{2c}=\f{\beta_{22}(c)\langle L_c^1(\eta)U_c^1, U_c^{*,1} \rangle-\beta_{12}(c)\big(\langle L_c^1(\eta)U_c^1, U_c^{*,2} \rangle+\langle L_c^1(\eta)U_c^2, U_c^{*,1} \rangle  \big)}{2\beta^2_{12}(c)}<0
\eeq
where $\beta_{jk}(c)=\int U_c^j \cdot \overline{U_c^{*,k}} \d x$ and the bracket $\langle\cdot,\cdot \rangle=\langle\cdot,\cdot \rangle_{(L_a^2(\mR))^2\times (L_{-a}^2(\mR))^2}.$ 

\end{lem}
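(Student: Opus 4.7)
\bigskip

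\textbf{Plan of proof.} The four parts are proved in order: part (1) is the heart of the argument, a Lyapunov--Schmidt reduction for the rescaled scalar equation $\cT(\ep,\eta,\Lambda)\vp=0$ from \eqref{eigenpb-real}; parts (2), (3), (4) follow by undoing the KP-II scaling, by exploiting a discrete symmetry relating $L_c$ and $L_c^*$, and by expanding the eigenvalue relation $L_c(\eta)U_c(\cdot,\eta)=\lambda_c(\eta)U_c(\cdot,\eta)$ directly in powers of $\eta$.

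\textbf{Part (1).} The operator $L_0(\ep,0)\in B(L^2_{\hat a})$ is (up to a factor) a small perturbation of $L_{\kp}(0)$, with two-dimensional generalized kernel spanned by $v_\ep'$ and $v_{1\ep}$ coming from translation and Galilean invariance of the 1d Euler--Poisson solitary wave; its adjoint is $L_0(\ep,0)^*=-\p_x^{-1}L_0(\ep,0)\p_x$, so $\mathrm{ker}_g L_0^*$ is spanned by $v_\ep$ and $\int_{-\infty}^x\tilde v_{1\ep}$. Writing $\vp$ in the ansatz \eqref{realexp} with free parameters $(\tilde\Lambda,\kappa)$ and requiring $\tilde\vp_{2\ep}\perp \mathrm{ker}_g L_0^*$, the complementary projection $\mathbb{Q}_\ep\cT_0(\ep,\eta,\tilde\Lambda)\mathbb{Q}_\ep$ is invertible on the complement since for small $\eta$ it is an $O(\eta)$ perturbation of $\mathbb{Q}_\ep L_0(\ep,0)\mathbb{Q}_\ep$; this uniquely determines $\tilde\vp_{2\ep}$ as a smooth function of $(\ep,\eta,\tilde\Lambda,\kappa)$. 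The projection onto $\mathrm{ker}_g L_0^*$ yields the two scalar equations $\cH_1=\cH_2=0$ from \eqref{Implicitthm}. At $\eta=0$ these collapse to \eqref{sec-relation}, from which one reads off $\tilde\Lambda(0)^2=2V/3+\cO(\ep^2)>0$ and $\kappa(0)$ as in \eqref{kappa0}. The Jacobian $D_{\kappa,\tilde\Lambda}(\cH_1,\cH_2)|_{\eta=0}$ is nonsingular by the explicit pairings of $v_0, v_{10}$ computed from $\Psi_{\kdv}$, so the Implicit Function Theorem provides smooth curves $\tilde\Lambda(\eta), \kappa(\eta)$ on $[-\hat\eta_0,\hat\eta_0]$. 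Computing $\p_\eta\tilde\Lambda|_{\eta=0}$ from the formula and simplifying using the explicit KdV expressions gives the stated $\Lambda_{2\ep}$. The reflection $\overline{\cH_k(\ep,\eta,\kappa,\tilde\Lambda)}=\cH_k(\ep,-\eta,\bar\kappa,\bar{\tilde\Lambda})$ together with uniqueness yields the symmetry $\overline{\Lambda(\eta)}=\Lambda(-\eta)$.

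\textbf{Part (2).} Undo the scaling \eqref{scaling} by setting $\lambda_c(\eta)=\ep^3\Lambda(\ep^{-2}\eta)$, $U_{2c}(\cdot,\eta)=-\int_{\,\cdot}^{+\infty}\vp(\ep\,\cdot,\ep^{-2}\eta)\,\d x$ and $U_{1c}=-\sigma_{\ep,\eta}^{-1}(\lambda_c-\ep d_c\p_{\hat x})(\p_{\hat x}^{-1}\vp)$ as in \eqref{defU-base}; this just inverts the scalar reduction \eqref{eq-npsi}. Membership in $Y_{a,\eta}=L_a^2\times \dot H^1_{a,\eta}$ follows from the exponential decay of the KdV profile together with the boundedness of $\sigma_{\ep,\eta}^{-1}$ on $L^2_{\hat a}$ (which holds uniformly since $\hat a=a/\ep<\sqrt3/4$, making $|\sigma_{\ep,\eta}^{-1}|$ bounded away from $0$). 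Expansion \eqref{exp-basis} is obtained by plugging \eqref{expan-modes-1} into \eqref{defU-base} and matching orders in $\eta$. The only term which is not immediate is the identification of the $O(\eta)$ coefficient with $U_c^2$ from \eqref{def-firsttwo}; this is done by differentiating the profile equations for $(n_c,\psi_c,\phi_c)$ with respect to $c$, which shows that $(\p_c n_c,\int_x^{+\infty}\p_c\psi_c')^t$ is exactly the pre-image of $U_c^1=Q_c'$ under $L_c(0)$.

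\textbf{Parts (3) and (4).} The adjoint claim \eqref{dualmodes} follows from the identity $L_c^*(\eta)=\mathcal{J}\,L_c(-\eta)\,\mathcal{J}^{-1}$ obtained by integrating by parts in $x$ and using the involution $(n,\psi)\mapsto(\psi,-n)$ together with $x\mapsto -x$ (which exploits that $n_c,\phi_c$ are even and $\psi_c$ is odd in $x$); composing with the already constructed $U_c(\cdot,-\eta)$ yields the eigenmode of $L_c^*(\eta)$. Biorthogonality $\langle U_c(\cdot,\eta),U_c^*(\cdot,-\eta)\rangle_{Y_{a,\eta}\times Y_{a,\eta}^*}=0$ is standard, since the pairs $(U_c(\cdot,\eta),U_c^{*}(\cdot,-\eta))$ and $(U_c(\cdot,-\eta),U_c^{*}(\cdot,\eta))$ correspond to distinct eigenvalues $\lambda_c(\pm\eta)$. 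Finally, for (4), write $L_c(\eta)=L_c(0)+\eta^2 L_c^1(0)+O(\eta^4)$ (evenness in $\eta$ from $|\zeta|^2$) and expand $U_c(\cdot,\eta)=U_c^1+i\lambda_{1c}\eta U_c^2+\eta^2 U_c^3+O(\eta^3)$. The orders $\eta^0$ and $\eta^1$ of $L_c(\eta)U_c=\lambda_c U_c$ recover $L_c(0)U_c^1=0$ and $L_c(0)U_c^2=U_c^1$; the $\eta^2$ equation is
\begin{equation*}
L_c(0)U_c^3+L_c^1(0)U_c^1=-\lambda_{1c}^2\,U_c^2-\lambda_{2c}\,U_c^1.
\end{equation*}
Pairing with $U_c^{*,1}\in\ker L_c(0)^*$ and using $\langle U_c^1,U_c^{*,1}\rangle=0$ (by the odd/even symmetries of $n_c,\psi_c$) yields \eqref{Lbda1}. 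Pairing with $U_c^{*,2}$ and using $L_c(0)^*U_c^{*,2}=U_c^{*,1}$ together with the orthogonality $\langle U_c^3,U_c^{*,1}\rangle=0$ inherited from the Lyapunov--Schmidt normalization gives a linear equation for $\lambda_{2c}$; substituting the value of $\lambda_{1c}^2$ from \eqref{Lbda1} and rearranging produces \eqref{Lbda2}. The sign assertions $\lambda_{1c}>0$ and $\lambda_{2c}<0$ follow by evaluating the integrals at $\ep=0$ against the KdV profile $\Psi_{\kdv}$, which reduces to the positivity of $\|v_0\|_{L^2}^2$ and $\|v_0\|_{L^1}^2$ already exploited in part (1).

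\textbf{Main obstacle.} The most delicate step is verifying that the specific $U_c^2$ and $U_c^{*,2}$ given in \eqref{def-firsttwo}--\eqref{def-firsttwo-adj} match the $O(\eta)$ coefficients in \eqref{exp-basis} and that the Lyapunov--Schmidt normalization $\langle U_c^3,U_c^{*,1}\rangle=0$ is exactly the one producing the coefficient $2\beta_{12}^2$ in \eqref{Lbda2}; this requires careful bookkeeping through the scaling and the primitives $\int_x^{+\infty}\p_c\psi_c'$ and $\int_{-\infty}^x\p_c\psi_c'$ (whose well-definedness depends on the exact decay of $\p_c\psi_c$ at $\pm\infty$, which is where the pressure term $P'(1)>0$ and the weighted framework play a role). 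All other computations are routine perturbations of the KdV limit.
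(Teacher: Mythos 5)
Your parts (1)--(3) essentially retrace the paper's own argument: the Lyapunov--Schmidt reduction with the ansatz \eqref{realexp}, the system $\cH_1=\cH_2=0$ of \eqref{Implicitthm}, the Implicit Function Theorem, and the conjugation symmetry are exactly the paper's route for (1), and (2)--(3) are obtained, as in the paper, by undoing the scaling and by the explicit form of $L_c^{*}(\eta)$ together with the parity of the profiles. One small slip in (3): the involution $(n,\psi)\mapsto(\psi,-n)$ composed with $x\mapsto -x$ does \emph{not} conjugate $L_c(-\eta)$ into $L_c^{*}(\eta)$; the correct transform is the plain component swap composed with reflection, the minus sign in $U_c^{*,1}=(\psi_c',-n_c')^t$ being produced afterwards by the oddness of $n_c'$. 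This is only a normalization issue, but it matters because the $\beta_{jk}$ built from these normalizations enter \eqref{Lbda2}.

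The genuine gap is in part (4). You pair the $\cO(\eta^2)$ equation with $U_c^{*,2}$ and invoke ``$\langle U_c^3,U_c^{*,1}\rangle=0$ inherited from the Lyapunov--Schmidt normalization''. Once the $\cO(\eta)$ coefficient is fixed to be exactly $i\lambda_{1c}\,U_c^2$ with $U_c^2$ the specific function of \eqref{def-firsttwo} (which part (2) requires), the only residual freedom at order $\eta^2$ is to add multiples of $U_c^1$, and since $\langle U_c^1,U_c^{*,1}\rangle=0$ this freedom cannot enforce your orthogonality: $\langle U_c^3,U_c^{*,1}\rangle$ is determined by the problem, not by a choice. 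And it is nonzero: in the scaled variables the $\hat\eta^2$-coefficient of $\vp$ in \eqref{expan-modes-1} is $\tilde\Lambda_{2\ep}v_{1\ep}+\tilde\vp_{2\ep}$ with $\tilde\Lambda_{2\ep}=\f{V}{18}\|v_0\|_{L^1}^2/\|v_0\|_{L^2}^2+\cO(\ep^2)\neq 0$ (equivalently $\kappa(0)\neq 0$ in \eqref{kappa0}), i.e.\ the second-order corrector has a nontrivial component along the generalized-kernel direction, so $\langle U_c^3,U_c^{*,1}\rangle\neq 0$. With your assumption the pairing yields
$\lambda_{2c}=\beta_{22}\langle L_c^1U_c^1,U_c^{*,1}\rangle/\beta_{12}^2-\langle L_c^1U_c^1,U_c^{*,2}\rangle/\beta_{12}$,
which is not \eqref{Lbda2}: the symmetric term $\langle L_c^1U_c^2,U_c^{*,1}\rangle$ and the overall factor $1/2$ are missing, and there is no evident identity making the two expressions coincide. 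To obtain \eqref{Lbda2} one needs precisely the second-order information your shortcut discards; in the paper it is extracted from the implicit-function step, $\Lambda_{2\ep}$ being computed from $\p_\eta\tilde\Lambda|_{\eta=0}=-\p_\eta\cH_1/\p_{\tilde\Lambda}\cH_1$, a formula whose evaluation explicitly uses $\kappa(0)$. So either redo (4) by carrying the unknown $\langle U_c^3,U_c^{*,1}\rangle$ (and its adjoint counterpart) and eliminating it with an additional relation, or simply transport the scaled formulas for $\Lambda_{1\ep},\Lambda_{2\ep}$ from part (1), as the paper does.
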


\begin{proof}
The first point and the existence of the  eigenmode $\big(\lambda(\eta), U_c(\cdot, \eta)\big) $ for $L_c(\eta)$ in the second point have been proven. The precise expressions of 
$U_c^1$ and $U_c^2$ in the expansion \eqref{exp-basis} follows from \eqref{defU-base} and the equations  \eqref{profile-px}, \eqref{profile-pc}
satisfied by $U_c^1$ and $U_c^2.$
The existence of the other eigenmode follows from the evenness of $L_{c}(\eta)$ in $\eta.$ 
Now only the last point requires some clarification. In fact, as an operator taken on the space $Y_{a}^{*}(\eta),$ the dual operator $L_c^{\star}(\eta)$ takes the form
\beqs 
L_c^{*}(\eta)=-\left(\begin{array}{cc}
  d_c \p_x   & h'(\rho_c)+(e^{\phi_c}+\eta^2-\p_x^2)^{-1}  \\[3pt]
  \p_x (\rho_c\p_x)-\eta^2 \rho_c   & \p_x(d_c \cdot) 
\end{array}
\right).
\eeqs
Note that we have used the fact that as an operator acting on $L^2(\mR),$ $(\p_x-a)^{*}=(\p_x+a).$  
The identities in \eqref{dualmodes} then follow from the evenness of $\rho_c, \, d_c $ in $x$ and $L_c^{*}(\eta)$ in $\eta.$
\end{proof}

\begin{rmk}
    By the definition of $\beta_{jk}$ the expansions \eqref{exp-basis}, \eqref{dualmodes}, as well as the parity of the profiles, it holds that 
    \begin{align}\label{perop-betajk}
        \beta_{11}=\cO(\eta^2), \qquad \beta_{12}=\cO(1)<0\, .
    \end{align}
\end{rmk}

We are now in position to define precisely  the basis of $L_c(\eta).$ 
We already know that 
 $U_c(\cdot, \eta)$ and $\overline{U_c(\cdot, \eta)}$ are two eigenfunctions. 
 Nevertheless, their imaginary parts  vanish when $\eta=0$ and thus they do not provide a basis of the two-dimensional algebraic kernel
  of $L_c(0)$. To resolve this degeneracy, inspired by
\cite{Mizumachi-BL-linear}, 
we first introduce a pair of functions with  purely real inner product
\begin{align*}
     g_c(\cdot,\eta)=(\alpha_c(\eta)-{i}) \, U_c(\cdot, \eta), \quad g_c^{*}(\cdot, \eta)=C_{*}\, U_c^{*} (\cdot, \eta)
\end{align*}
where $$\alpha_c(\eta)=\f{\Re \langle U_c(\cdot, \eta), U_c^{*} (\cdot, \eta)\rangle}{\Im \langle U_c(\cdot, \eta), U_c^{*} (\cdot, \eta)\rangle}=\cO(\eta^2), \,\,\qquad  C_{*}=-\f{\sqrt{3}\,C(V)^2}{36 V}.$$
Note that 
\beq \label{prop-geta}
\begin{aligned}
&\quad \overline{ g_c(\cdot,\eta)}=  -g_c(\cdot, -\eta), \qquad \overline{ g_c^{*}(\cdot,\eta)}=  g_c^{*}(\cdot, -\eta), \\
 & \langle g_c(\cdot, \eta), g_c^{*}(\cdot, -\eta) \rangle =0, \qquad \Im \langle g_c(\cdot, \eta), g_c^{*}(\cdot, \eta) \rangle =0.
\end{aligned}
\eeq
We then define the basis and dual basis:
\beq \label{defbasis-dual}
\begin{aligned}
   & g_1(\cdot, \eta, c)= -\f{1}{2i
}\big( g_c(\cdot, \eta)+ g_c(\cdot, -\eta)\big) =\big(\Re U_c-\alpha_c(\eta)\,\Im U_c\big)(\cdot, \eta), \\  & g_2(\cdot,\eta,c)=\f{1}{2
\kappa_c(\eta)}\big( g_c(\cdot, \eta)-g_c(\cdot, -\eta)\big)=\f{1}{\kappa(\eta)}\big(\Im U_c+\alpha_c(\eta)\Re U_c\big)(\cdot, \eta), \\
  &  g^{*}_1(\cdot, \eta,c)=-\f{1}{2i\kappa_c(\eta)} (g_c^{*}(\cdot, \eta)-g_c^{*}(\cdot, -\eta))=
    -\f{C_{*}}{\,\kappa(\eta)}\Im U_c^{*} (\cdot, \eta),\\  & g_2^{*}(\cdot, \eta, c)=  \f{1}{2} (g_c^{*}(\cdot, \eta)+g_c^{*}(\cdot, -\eta))=C_{*}\, \Re  U_c^{*} (\cdot, \eta),
\end{aligned}
\eeq
where 
\begin{align*}
    \kappa_c(\eta)=\f12 \,\Re \langle g_c(\cdot, \eta), g_c^{*}(\cdot,\eta)\rangle_{_{Y_{a,\eta}\times Y_{a,\eta}^{*}}} =\f{C_{*}}{2} \bigg( \Im \langle U_c, U_c^{*} \rangle+\f{(\Re \langle U_c, U_c^{*} \rangle)^2}{\Im \langle U_c, U_c^{*} \rangle } \bigg)(\eta).
\end{align*}
Note that by \eqref{prop-geta}, it holds that 
\beqs 
\langle g_{j}(\cdot, \eta, c), g_k^{*}(\cdot, \eta, c)\rangle_{_{Y_{a,\eta}\times Y_{a,\eta}^{*}}} =\delta_{j k}, \quad \forall\, 1\leq j, k \leq 2; \, \eta\in [-\ep^2 \hat{\eta}_0, \ep^2 \hat{\eta}_0 ] \, .
\eeqs
It follows from straightforward computations that 
\begin{align}
  & g_1(\cdot,\eta, c)=U_c^{1}+\cO(\eta^2), \qquad 
    g_2(\cdot,\eta, c)=\f{1}{C_{*}\beta_{12}(c)} U_c^2+ \f{\alpha_c(\eta)}{\kappa_c(\eta)} U_c^1+\cO(\eta^2)
   \quad \text{ in } Y_{a,\eta}\,, \notag\\
 & g_1^{*}(\cdot,\eta, c)=\f{U_c^{*,2}}{\beta_{12}(c)} +\cO(\eta^4), \qquad \, g_2^{*}(\cdot,\eta,c)=C_{*}U_c^{*,1}+\cO(\eta^2) \quad \text{ in } Y_{-a,\eta}\,. \label{exp-g12dual}
\end{align}
where $\beta_{12}(c)=\int U_c^2 \cdot \overline{U_c^{*,1}} \d x=\int U_c^1 \cdot \overline{U_c^{*,2}} \,\d x\neq 0.$ 

\section{First two modes in the expansion of the continuous eigenfunction }
In this section, we aim to show that the first two modes in the expansion of the continuous eigenfunction $\vp_1(\cdot,\eta)$ (see \eqref{expan-modes}) is given by \eqref{firsttwomodes}. This can be achieved by differentiating with respect to the parameters  the profile equation satisfied by the background wave in the original variables. 

Recall that the wave $(n_c,\psi_c)^t$ solves the equations
\beq \label{profile eq}
    \left\{ \begin{array}{l}
       -c\,\p_x n_c+ \p_x \big((1+n_c)\p_x\psi_c\big)=0, \\[3pt]
       -c\,\p_x\psi_c+\f{|\p_x\psi_c|^2}{2}+h(1+n_c)+\phi_c=0, \,  \\[3pt]
       \p_x^2\phi_c=e^{\phi_c}-1-n_c\, .
    \end{array}
    \right. 
\eeq
Denote $u_{1c}=\p_x \psi_c, d_c=c-u_{1c}, \rho_c=1+n_c.$ Differentiating the last two equations, we find that
$(n_c, u_{1c})^t$ satisfies
\beq \label{profile-px}
    \left\{ \begin{array}{l}
       -d_c\p_x n_c+ \rho_c\,\p_x\, u_{1c}=0, \\[3pt]
      -d_c\p_x u_{1c}+h'(\rho_c)\p_x n_c+\p_x \phi_c =0 , \, \\[3pt]
    ( e^{\phi_c}-\p_x^2)\p_x\phi_c=\p_x n_c ,
    \end{array}
    \right. 
\eeq
from which we derive that 
\beq\label{id-1}
(d_c \, \sigma_0^{-1}\, d_c -\rho_c)\,u_{1c}=0
\eeq
where $\sigma_0^{-1}=\big(h'(\rho_c)+(e^{\phi_c}-\p_x^2)^{-1}\big)^{-1}.$

On the other hand, since the waves depend smoothly on the  wave speed $c,$ we can differentiate the equations \eqref{profile eq} in terms of $c$ to obtain
\beq\label{profile-pc}
    \left\{ \begin{array}{l}
       -\p_x(d_c \p_c n_c)+ \p_x (\rho_c\,\p_c\, u_{1c})=\p_x n_c, \\[3pt]
      -d_c \p_c u_{1c}+h'(\rho_c)\p_c n_c+\p_c \phi_c =u_{1c} , \, \\[3pt]
    ( e^{\phi_c}-\p_x^2)\p_c\phi_c=\p_c n_c ,
    \end{array}
    \right. 
\eeq
which enables us to find that 
\beq \label{id-2}
\p_x \big(d_c \sigma_0^{-1}\, d_c -\rho_c\big)\p_c u_{1c}=- \big( \sigma_0^{-1}\, d_c+\p_x(d_c\,\sigma_0^{-1}\p_x^{-1})\big)\p_x u_{1c}\,.
\eeq
Performing the scaling $x\rightarrow \hat{x}=\ep x,$ we derive \eqref{firsttwomodes} from \eqref{id-1} and \eqref{id-2}.

\section{The error  between the resonant modes
of Euler-Poisson and KP-II } \label{approx-resonants}
In this appendix,
we approximate the  resonant modes for the transformed linearized operator $L_{c,a}(\ep^2\hat{\eta})$
defined in \eqref{def-La} for the Euler-Poisson system by those of $L^{\hat{a}}_{\kp}(\hat{\eta})=e^{\hat{a}x} L_{\kp}(\hat{\eta}) e^{-\hat{a} x}.$ 
\subsection{Resonant modes for the linearized operator of KP-II equation} 
In this subsection, we recall some results proven in 
\cite{Mizumachi-KP-nonlinear},
pertaining to the resonant modes of   $L_{\kp}$,  the linearized operator for the  KP-II equation about  the line KdV soliton. 
This is  useful in approximating the Euler-Poisson equation by {KP-II} in the low frequency regime. 

Let  us set 
\beqs 
L_{\kp}(\eta)=e^{-iy\eta}\,L_{\kp}\,e^{iy\eta}=\p_x \big( 1-\f{1}{2V} \p_x^2-C(V)\Psi_{{\kdv}}\cdot \big)+\f{V}{2}\eta^2 \p_x^{-1},
\eeqs
where $V=\sqrt{P'(1)+1},\, C(V)=V+\f{P''(1)}{2V}$ and 
$$\Psi_{\kdv}(x)=\f{3}{C(V)}\sech^2\bigg(\sqrt{\f{V}{2}} x\bigg).$$
Performing the change of variable, 
\beqs 
\tilde{x}=\sqrt{\f{V}{2}} x, \,\quad  \tilde{\eta}=\f{2}{\sqrt{3}}\,\eta, \quad \tilde{f}(\tilde{x}, \tilde{\eta})=f(t, x)\,.
\eeqs
Then, we have
$$(L_{\kp}(\eta) f)(x,\eta)=-\sqrt{\f{V}{32}}\big( \p_{\tilde{x}}^3-4\p_{\tilde{x}}+6\p_{\tilde{x}}(2\,\sech^2 \cdot)-3\tilde{\eta}^2\p_{\tilde{x}}^{-1}\big)\tilde{f}:= \sqrt{\f{V}{32}} \tilde{L}_{\kp}(\tilde{\eta})\tilde{f}.$$
Based on the result in Lemma 2.1, \cite{Mizumachi-KP-nonlinear}, we know that the eigenmodes of $ \tilde{L}_{\kp}(\tilde{\eta}), \, \big(\tilde{L}_{\kp}(\tilde{\eta})\big)^{*}$ are 
\begin{align*}
\tilde{\lambda}_{\kp}(\tilde{\eta})=4i \tilde{\eta} \sqrt{1+i\tilde{\eta}}, \qquad \tilde{g}_0(\tilde{x}, \tilde{\eta})=\f{d_{*}}{\sqrt{1+i\tilde{\eta}}} \p_{\tilde{x}}^2\big( e^{-\sqrt{1+i\tilde{\eta}}\,\tilde{x}}\sech (\tilde{x})\big),\\
\overline{\tilde{\lambda}_{\kp}}(\tilde{\eta})=-4i \tilde{\eta} \sqrt{1-i\tilde{\eta}}, \qquad \tilde{g}^{*}_0(\tilde{x}, \tilde{\eta})=\f{i}{2d_{*}\tilde{\eta}} \p_{\tilde{x}}\big( e^{\sqrt{1-i\tilde{\eta}}\,\tilde{x}}\sech (\tilde{x})\big),
\end{align*}
where $d_{*}=-\f{3V}{C(V)}.$ Moreover, it holds by construction that for any $\tilde{\eta}\neq 0$, 
$\int \tilde{g}_0(\tilde{x}, \tilde{\eta}) \overline{\tilde{g}^{*}_0(\tilde{x}, \tilde{\eta})}\, \d \tilde{x}=1.$
Changing back to the variable $(x,\eta),$ we find that
\beq\label{resomode-kp}
\begin{aligned}
  &  {\lambda}_{\kp}(\tilde{\eta})=\sqrt{\f{2V }{3}} {i\eta}\sqrt{1+2i\eta/\sqrt{3}}\,, \qquad {g}_0(x, {\eta})=\f{\sqrt{\f{V}{2}}d_{*}}{\sqrt{1+2i{\eta}/\sqrt{3}}} \p_{\tilde{x}}^2\big( e^{-\sqrt{1+2i{\eta}/\sqrt{3}}\,\tilde{x}}\sech (\tilde{x})\big)\,,\\
&\overline{{\lambda}_{\kp}}({\eta})=-\sqrt{\f{2V }{3}} {i\eta}\sqrt{1-2i\eta/\sqrt{3}}\,, \qquad \qquad {g}^{*}_0(x\, \tilde{\eta})=\f{\sqrt{3}\,i}{4\,d_{*}{\eta}} \p_{\tilde{x}}\big( e^{\sqrt{1-2i{\eta}/\sqrt{3}}\,\tilde{x}}\sech (\tilde{x})\big)\,.
\end{aligned}
\eeq
Note that the extra factor $\sqrt{\f{V}{2}}$ in the definiton of $g_0(x,\eta)$ is added in order to ensure the condition
$\int {g}_0({x}, {\eta}) \overline{{g}^{*}_0({x}, {\eta})}\, \d {x}=1$  for any ${\eta}\neq 0.$

As in \cite{Mizumachi-KP-nonlinear}, to resolve the degeneracy of $g_0(x, \eta)$ and the singularity of $g_0^{*}(x, \eta)$ when $\eta=0,$ we define the basis and dual basis 
\begin{align*}
g_{_{01}}(\cdot, \eta)= \f12 \big(g_0(\cdot, \eta)+ g_0(\cdot, -\eta)\big), \qquad g_{_{02}}(\cdot, \eta)=\f{1}{ 2i \eta} \big(g_0(\cdot, \eta)- g_0(\cdot, -\eta)\big); \\
g_{_{01}}^{*}(\cdot, \eta)=g_0^{*}(\cdot, \eta)+ g_0^{*}(\cdot, -\eta), \qquad g_{_{02}}^{*}(\cdot, \eta)=-i\,\eta \big(g_0^{*}(\cdot, \eta)- g_0^{*}(\cdot, -\eta)\big)\,. 
\end{align*}
As is clear in the definitions, all these functions  are even in terms of $\eta,$ which leads to 
\begin{align*}
   \| g_{_{0k}}(\cdot, \eta)-  g_{_{0k}}(\cdot, 0)\|_{L^2}\lesssim \eta^2,\, \qquad  \| g^{*}_{_{0k}}(\cdot, \eta)-  g^{*}_{_{0k}}(\cdot, 0)\|_{L^2}\lesssim \eta^2 .
\end{align*}
Let $$v_0(x)=V \Psi_{\kdv}(x)=\f{3V}{C(V)}\sech^2\big( \f{\sqrt{V}}{2} x\big)\,, \quad v_{10}(x)=v_0 (x)+\f{x}{2}\,\p_x v_0 (x)\,,$$ 
it follows from direct calculations that
\beq\label{base-kp}
\begin{aligned}
&\qquad g_{_{01}}(\cdot, 0)=v_0'\, , \qquad\qquad  g_{_{02}}(\cdot, 0)=-\f{1}{\sqrt{3}}v_0'-\sqrt{\f{2V}{3}}v_{10}\,, \\
&g_{_{01}}^{*}(\cdot, 0)=-\f{C^2(V)}{9V\sqrt{2V}}
\int_{-\infty}^x v_{10}\,, \qquad g_{_{02}}^{*}(\cdot, 0)=-\f{\sqrt{3}\,C^2(V)}{18 V^2}v_0\, .
\end{aligned}
\eeq
\subsection{Expansions of the basis and dual basis} 
\label{approx-resonants-sec2}
We study the expansions of the basis in terms of $\ep$ and $\hat{\eta}_0.$
By the definition \eqref{defU-base}, expansions \eqref{expan-modes-1} and \eqref{exp-sigmainverse} as well as the fact that  $ d_c=c+\cO(\ep^2)=v+\cO(\ep^2),$ it holds that
\beq\label{U}
\begin{aligned}
\big(\diag (V, \p_x) U\big)(\ep^{-1}x, \ep^2\hat{\eta})&=\ep\, \vp(x, \hat{\eta})\left(\begin{array}{c}
   1  \\
    1 
\end{array} \right)+\cO\big(\ep(\ep^2+\hat{\eta}^2)\big)\\
&= \ep \bigg(v'_{\ep}- i\,\Lambda_{1\ep} \hat{\eta}\, v_{1\ep} \bigg) \left(\begin{array}{c}
   1  \\
    1 
\end{array} \right)+\cO\big(\ep(\ep^2+\hat{\eta}^2)\big) \, .
\end{aligned}
\eeq
Next, 
as $U^{*}(\cdot, \eta)=(U_2, U_1)^t(-\,\cdot, -\eta),$ we have by using the fact that $v_{\ep},\, v_{1\ep}$ are even that 
\beq \label{U*}
\big(\diag (V^{-1}, -\p_x^{-1}) U^{*}\big)(\ep^{-1}x, \ep^2\hat{\eta}) =V^{-1} \bigg(v_{\ep}-i\,\Lambda_{1\ep} \,\hat{\eta}\, 
\int_{-\infty}^x v_{1\ep}(x') \,\d x'  
\bigg) \left(\begin{array}{c}
     1  \\
    1 
\end{array} \right)+\cO(\ep^2+\hat{\eta}^2) \, .
\eeq
We now compute 
\begin{align*}
   & \langle U(\cdot, \eta), U^{*}(\cdot, \eta)  \rangle=2 \int U_1(x, \eta)\, \overline{U_2(-x, -\eta)}\, \d x \\
   & =2\ep^{-1}\int U_1(\ep^{-1}x, \ep^2\hat{\eta})\, {U_2(-\ep^{-1}x, \hat{\eta})}\, \d x =-2 V^{-1}\int \vp(x, \hat{\eta}) \int_{-\infty}^{x} {\vp(-x', \hat{\eta})} \,\d x' \d x+ \cO(\ep^2+\hat{\eta}^2).
\end{align*}  
 In view of the expansions of $\psi(\cdot, \hat{\eta})$ in \eqref{expan-modes-1},  we find after some calculations that
 \begin{align*}
    \Re \big\langle U(\cdot, \eta), U^{*}(\cdot, \eta) \big \rangle &=2
\, V^{-1}\hat{\eta}^2\bigg( \Lambda_{1\ep}^2 \int {v_{1\ep}} \int_{-\infty}^x {v_{1\ep}} \, \d x' \d x+2 \tilde{\Lambda}_{2\ep} \int v_{\ep} v_{1\ep}+\cO(\hat{\eta}^2)\bigg)\\
& =2V^{-1} \hat{\eta}^2 \bigg(\f18 \Lambda_{10}^2 \|v_{0}\|_{L^1(\mR)}^2+\f32 \tilde{\Lambda}_{20} \|v_0\|_{L^2(\mR)}^2+\cO(\ep^2+\hat{\eta}^2) \bigg) \\
&=\f13\, \hat{\eta}^2 \|v_{0}\|_{L^1(\mR)}^2+\cO\big(\hat{\eta}^2(\ep^2+\hat{\eta}^2)\big)
=\hat{\eta}^2 \bigg(\f{24 V}{C^2(V)}+\cO\big(\ep^2+\hat{\eta}^2\big)\bigg),
 \end{align*}
and 
\begin{align*}
\Im  \big\langle U(\cdot, \eta), U^{*}(\cdot, \eta)  \big\rangle &=\hat{\eta} \big(-4\,V^{-1}\Lambda_{1\ep}  \int v_{\ep} \, v_{1\ep} +\cO(\hat{\eta}^2) \big)\\
    &=-3V^{-1}\,\hat{\eta}\,\Lambda_{10}\, \|v_0\|_{L^2(\mR)}^2+\cO\big(\hat{\eta}(\ep^2+\hat{\eta}^2)\big) =\hat{\eta}\bigg(-\f{24\sqrt{3}\,V}{C^2(V)}+\cO\big(\ep^2+\hat{\eta}^2\big)\bigg)\, .
\end{align*} 
The last equality comes  from the fact that since  $v_0(\cdot)=\f{3V}{C(V)}\sech^2(\sqrt{{V}/{2}}\,\cdot)$, we have  that $\|v_0\|_{L^1}=\f{6\sqrt{2V}}{C(V)},\, \|v_0\|_{L^2}^2=\f{12\sqrt{2V}}{C^2(V)}.$
Consequently, we obtain
\beq \label{alpha-kpa}
\alpha(\eta)=\hat{\eta}\bigg( -\f{\sqrt{3}}{3} +\cO\big(\ep^2+\hat{\eta}^2\big)\bigg)\, , \quad \kappa(\eta)=\hat{\eta}\bigg( 1 +\cO\big(\ep^2+\hat{\eta}^2\big)\bigg)\, .
\eeq
Plugging \eqref{U}, \eqref{U*} and \eqref{alpha-kpa} into \eqref{defbasis-dual}, we obtain that:
\begin{align*}
&  ({2\ep})^{-1} (V, 1) \cdot \big(\diag (1, \, \p_x) g_1\big)(\ep^{-1}x, \ep^2\hat{\eta})= v_{\ep}' + \cO\big(\ep^2+\hat{\eta}^2\big), \\
 &  ({2\ep})^{-1} (V, 1) \cdot \big(\diag (1, \, \p_x) g_2\big)(\ep^{-1}x, \ep^2\hat{\eta})= -\f{1}{\sqrt{3}}v'_{\ep}-\sqrt{\f{2V}{3}}v_{1\ep} + \cO\big(\ep^2+\hat{\eta}^2\big), \\
&(1,1) \cdot \big(\diag (V^{-1}, \, -\p_x^{-1}) g_1^{*}\big)(\ep^{-1}x, \ep^2\hat{\eta})=-\f{C^2(V)}{9V\sqrt{2V}}
\int_{-\infty}^x v_{1\ep}+\cO\big(\ep^2+\hat{\eta}^2\big)\,,\\
&(1,1) \cdot \big(\diag (V^{-1}, \, -\p_x^{-1}) g_2^{*}\big)(\ep^{-1}x, \ep^2\hat{\eta})=-\f{\sqrt{3}\,C^2(V)}{18 V^2}v_{\ep}+\cO\big(\ep^2+\hat{\eta}^2\big).
\end{align*}
In view of the definitions \eqref{base-kp}, we finally obtain  that
\begin{align*}
& \big\|({2\ep})^{-1} (1, 1) \cdot \big(\diag (V, \, \p_x) g_k\big)(\ep^{-1}\cdot, \ep^2\hat{\eta}) -g_{0k}(\cdot, \hat{\eta})\big\|_{L_{\hat{a}}^2(\mR)}\lesssim \ep^2+\hat{\eta}^2, \, \\
&\big\|( (1, 1) \cdot \big(\diag (1, \, -\p_x^{-1}) g_k^{*}\big)(\ep^{-1}\cdot, \ep^2\hat{\eta}) -g_{0k}^{*}(\cdot, \hat{\eta})\big\|_{L_{-\hat{a}}^2(\mR)}\lesssim  \ep^2+\hat{\eta}^2\, .
\end{align*} 
\section{Some algebraic properties for the eigenvalues of the linearized operator}
In 
this section, we prove some algebraic properties for the eigenvalues of the linearized operator. To state the first property, we split the frequency space $R_{\zeta}^{I}$ (see \eqref{defregions} for the definition) into 
three subsets:
\begin{align*}
   & R_{\xi}^{H}=\{ (\xi, \zeta)\in \mR^3 \,\big||\xi|\geq K\ep, \, |\zeta|\leq 2 \}, \\
    &  R_{\zeta,1}^{I}=\{ (\xi, \zeta)\in \mR^3 \,\big| |\xi|\leq K\ep, \, \vartheta |\xi+i a| \leq |\zeta|\leq 2 \}, \\
    &  R_{\zeta,2}^{I}=\{ (\xi, \zeta)\in \mR^3 \,\big| |\xi|\leq K\ep, \, A\ep^2\leq |\zeta|\leq  \vartheta |\xi+i a| \}, 
\end{align*}
where $K(K+1)\in [{A}/{2}, A], \vartheta\in (0,\f12].$ 
\begin{lem}\label{lemmuasa}
     Let $$\mu_a(\xi,\zeta)=\sqrt{(\xi+ia)^2+|\zeta|^2\,}, \qquad \sigma_a(\xi,\eta)=\sqrt{h'(1)+(1+|\zeta|^2+(\xi+ia)^2)^{-1}}\, .$$
     Assume that $a=\hat{a}\ep$ and $\f{\hat{a}}{1-\hat{a}^2}\leq \sqrt{h'(1)+1}.$
 There exists some constant $C>0,$ such that
    \begin{align}\label{muasa}
 |\Im (\mu_a\, \sigma_a) | \leq   
    \left\{ \begin{array}{c}
        a(c-C\vartheta), \qquad \forall \, (\xi,\zeta)\in R_{\zeta}^{UH} \cup  R_{\zeta,1}^{I}\, , \\[5pt]
         a(c-C A \ep^{2}), \qquad \forall \,(\xi,\zeta)\in  R_{\xi}^{H}\cup R_{\zeta,2}^{I}\,.
     \end{array} \right.
   \end{align}
\end{lem}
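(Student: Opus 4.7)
The proof proceeds by an explicit algebraic identity combined with case-by-case analysis. Write $\mu_a\sigma_a = p+iq$ with $p,q\in\mathbb{R}$, and use $(\mu_a\sigma_a)^2 = h'(1)\mu_a^2 + 1 - (1+\mu_a^2)^{-1}$. Setting $A = \xi^2-a^2+|\zeta|^2$, $B = 2a\xi$ and recalling $J = (1+A)^2+B^2$, the real and imaginary parts give
\begin{equation*}
pq = a\xi\bigl[h'(1) + J^{-1}\bigr], \qquad p^2-q^2 = h'(1)\,A + 1 - (1+A)/J,
\end{equation*}
while $p^2+q^2 = |\mu_a|^2|\sigma_a|^2$. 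Away from the exceptional line $\xi = 0$ (where $pq = 0$ and $p,q$ are read off directly from the principal square roots), identifying $p^2$ with the larger root of the resulting quadratic yields $|q| = |a\xi|\,[h'(1)+J^{-1}]/|p|$ and $|p|^2 \geq \tfrac12(|\mu_a|^2|\sigma_a|^2 + \Re(\mu_a\sigma_a)^2)$. The problem thus reduces to estimating the ratio $|\xi|[h'(1)+J^{-1}]/|p|$ pointwise in each region and comparing with $c = V+\epsilon^2$ where $V = \sqrt{h'(1)+1}$.

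In $R_\zeta^{UH}$ and $R_\xi^H$ the margin on $A$ coming from $|\zeta|\geq 2$ or $|\xi|\geq K\epsilon$ makes $|p|$ comparable to $|\mu_a|\,|\sigma_a|$, and the high-frequency asymptotics $|\sigma_a|\to\sqrt{h'(1)}$ together with $|\xi|\leq|\mu_a|$ up to lower-order terms give $|q|/a \leq \sqrt{h'(1)} + o(1)$, well below $c$ with a fixed positive gap $c-\sqrt{h'(1)} > 0$ that absorbs the stated correction. In $R_{\zeta,1}^I$, the extremal configuration is $\xi=0$, $|\zeta| = \vartheta a$, where $\mu_a = ia\sqrt{1-\vartheta^2}$ and a direct computation yields $|q|/a = V\sqrt{1-\vartheta^2}+O(a^2) = V - V\vartheta^2/2 + O(\vartheta^4 + a^2)$. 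Since $\vartheta\in(0,1/2]$ is bounded above, the gap $c-|q|/a \gtrsim V\vartheta^2+\epsilon^2$ suffices for an appropriate choice of the constant $C$.

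The delicate case is $R_{\zeta,2}^I$, where $|\zeta|$ may be as small as $A\epsilon^2 \ll a = \hat a\epsilon$. Here the worst configuration is $\xi \to 0$ with $|\zeta|\approx A\epsilon^2$: $\mu_a^2 = -a^2+|\zeta|^2$ approaches the branch point of $\sqrt{\cdot}$ along the negative real axis, $\mu_a$ is nearly purely imaginary with modulus $\sqrt{a^2-|\zeta|^2}$, and
\begin{equation*}
|q|/a = \sqrt{1-|\zeta|^2/a^2}\,\sqrt{h'(1)+(1+|\zeta|^2-a^2)^{-1}}.
\end{equation*}
Expanding in $\epsilon^2$ with $a = \hat a\epsilon$ and $|\zeta|\geq A\epsilon^2$ yields $|q|/a = V + \epsilon^2\bigl[\tfrac{\hat a^2}{2V} - \tfrac{VA^2}{2\hat a^2}\bigr] + O(\epsilon^4)$, and comparing with $c-CA\epsilon^2 = V+\epsilon^2(1-CA)$ one needs the negative contribution $-VA^2\epsilon^2/(2\hat a^2)$ to dominate both the positive correction $\hat a^2\epsilon^2/(2V)$ and the linear-in-$A$ shift $-CA\epsilon^2$; this holds for $A$ large enough. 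The main obstacle is precisely this delicate balance near the branch point: the quantitative improvement $CA\epsilon^2$ must be extracted from the small distance $|\zeta|^2 \geq A^2\epsilon^4$ of $\mu_a^2$ from the branch point, while the positive $O(\epsilon^2)$ correction from $\sigma_a^2 \approx V^2+\hat a^2\epsilon^2$ is absorbed using the standing assumption $\hat a/(1-\hat a^2)\leq V$ that keeps the relevant constants bounded.
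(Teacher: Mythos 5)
Your algebraic reduction (the identities for $pq$, $p^2-q^2$, $p^2+q^2$) is correct and close in spirit to the paper's explicit computation of $\Re\mu_a,\Im\mu_a,\Re\sigma_a,\Im\sigma_a$, but the region-by-region analysis has genuine gaps at exactly the points you do not examine. In $R_{\xi}^{H}$ the frequency $|\xi|$ can be as small as $K\ep\ll 1$ (recall $K^4\ep\leq1$), so the ``high-frequency asymptotics $|\sigma_a|\to\sqrt{h'(1)}$'' you invoke do not apply: at $\xi\sim K\ep$, $\zeta=0$ one has $J=1+O(K^2\ep^2)$, hence $\Re\sigma_a=\sqrt{h'(1)+1}+O(K^2\ep^2)$ while $\Im\mu_a=a$ exactly, so $|q|/a$ is within $O(K^2\ep^2)$ of $V=\sqrt{h'(1)+1}$; there is no fixed gap $c-\sqrt{h'(1)}$, and the required gain is precisely of size $K^2\ep^2\sim A\ep^2$, which must be extracted quantitatively from $J-1\gtrsim (K\ep)^2$ — your argument does not do this. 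More seriously, in $R_{\zeta,2}^{I}$ you analyze only the slice $\xi=0$ and declare it the worst configuration; it is not, and your formula for $|q|/a$ is valid only on that slice. At $\xi=0$, $|\zeta|=A\ep^2$ the gain you compute is of order $A^2\ep^2$, but at points of the same region with $|\xi|$ comparable to $K\ep$ (e.g.\ $\xi=K\ep$, $|\zeta|=A\ep^2$, which lies in $R_{\zeta,2}^{I}$ for $\ep$ small) one finds $\Im\mu_a=a\,(1-O(A\ep^2))$ and the total gain degenerates to $O(K^2\ep^2)\sim A\ep^2$, coming from the $\sigma_a$ factor rather than from the distance of $\mu_a^2$ to the branch point; optimizing in $\xi$ shows the minimal gain over the region is only linear in $A$. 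So the binding points are exactly the ones your slice misses, and the ``delicate balance near the branch point'' you describe is not the mechanism that determines the constant. Your $R_{\zeta,1}^{I}$ discussion likewise rests on an unproved extremality claim at $\xi=0$, $|\zeta|=\vartheta a$, though there the conclusion is at least of the right order since $|\zeta|\geq\vartheta|\xi+ia|$ holds throughout that region.

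For comparison, the paper avoids locating extrema altogether: it first proves the factorized bound $|\Im(\mu_a\sigma_a)|\leq|\Im\mu_a|\,\Re\sigma_a$ (using that $\Im\mu_a$ and $\Im\sigma_a$ have opposite signs), and then applies two region-wide monotone estimates: $|\Im\mu_a|\leq a(1-\kappa^2/e)$ whenever $|\zeta|\geq\kappa|\xi+ia|$, with $\kappa=\vartheta$ in $R_{\zeta,1}^{I}$ and $\kappa\sim K\ep$ in $R_{\zeta,2}^{I}$ (since $A\approx K^2$), and $\Re\sigma_a\leq\sqrt{h'(1)+1/J}$ with $J\geq 1+(K-1)^2\ep^2$ in $R_{\xi}^{H}$ and $J\geq16$ in $R_{\zeta}^{UH}$. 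These give the stated gains $C\vartheta$ and $CA\ep^2$ uniformly without any pointwise optimization. To repair your proof you would either need to reproduce such uniform bounds or carry out the two-parameter optimization over each full region; the single-slice evaluations you give do not suffice.
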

\begin{proof}
We first claim that  $|\Im (\mu_a\, \sigma_a)|\leq |\Im (\mu_a)|\, \Re \sigma_a.$ 
Without loss of generality, we assume $\xi> 0.$ It is direct to compute that 
\beqs 
\Re \mu_a=\sqrt{\f{a_1+\sqrt{a_1^2+b_1^2\,}}{2}\,},\, \quad  \Im \mu_a= \f{a\xi}{\Re \mu_a},\, \quad \Re \sigma_a=\sqrt{\f{a_2+\sqrt{a_2^2+a_2^2\,}}{2}\,},\, \quad  \Im \sigma_a= \f{-a\xi}{J\Re \sigma_a}
\eeqs
where 
\beqs 
a_1=\xi^2-a^2+|\zeta|^2,\, 
\quad b_1=2a\xi, \quad a_2=h'(1)+\f{1+\xi^2-a^2+|\zeta|^2}{J},\,\quad  b_2=-\f{-2a\xi}{J}\, \,
\eeqs
and $J=(1+\xi^2-a^2+|\zeta|^2)^2+4a^2\xi^2.$ Therefore, it holds  after some calculations that for any $a\in(0, 1/2),$
\beqs
\Im (\mu_a\sigma_a)=\f{a\xi}{J \Re \mu_a \Re \sigma_a} \bigg( J(\Re \sigma_a)^2-(\Re \mu_a)^2\bigg)>0.
\eeqs
Since we also have  $\Im \mu_a \Im \sigma_a<0,$  this yields  $|\Im (\mu_a\, \sigma_a)|\leq |\Im (\mu_a)|\, \Re \sigma_a.$

We are now in position to prove \eqref{muasa}. First, on the one hand,
    it is shown in the proof of Lemma 3.5, \cite{RS-WW} that $|\Im \mu_a|\leq a$ for any $(\xi,\zeta)\in \mR^3$ and $|\,\Im \mu_a| \leq a \big(1-\kappa^2/e\big)$ whenever $|\zeta|\geq \kappa |\xi+ia|.$
    On the other hand, it holds that,  for any $(\xi,\zeta)\in \mR^3,$ 
    \beq\label{Resia}
\Re \sigma_a \leq |\sigma_a|\leq    \sqrt{h'(1)+\f{1}{J}\,}\leq \sqrt{h'(1)+\f{1}{1-a^2}}\leq \sqrt{h'(1)+1}+\ep^2= c\,
    \eeq
    as long as $a=\hat{a}\ep$ and $\f{\hat{a}}{1-\hat{a}^2}\leq \sqrt{h'(1)+1}.$
We thus get \eqref{muasa} for the regions $R_{\eta,1}^{I}$ and $R_{\eta,2}^{I}$ by setting $\kappa=\vartheta$ or $K \ep.$

Second, when $|\xi|\geq K\ep$ or $|\zeta|\geq 2,$ it holds that 
$J\geq 1+(K-1)^2\ep^2$ or $J\geq 16$ for any $a\in (0,1/2)$ and thus 
\beqs 
\Re\sigma_a\leq \sqrt{h'(1)+\f{1}{J}\,}\leq \left\{ \begin{array}{c}
\sqrt{h'(1)+1}-{C}(K\ep)^2, \quad \forall\, |\xi|\geq K\ep\, ,\\
\quad \sqrt{h'(1)+1}-\vartheta, \qquad\quad  \forall\, |\zeta|\geq 2\, ,
\end{array}\right.
\eeqs
for some positive constants $C$ and $\vartheta.$ We thus proved \eqref{muasa} for the regions $R_{\xi}^H $ and  $R_{\zeta}^{UH}$
by using that  $|\Im \mu_a| \leq a.$ 

Finally, it is also useful to note 
that thanks to \ref{Resia},  for any $(\xi,\zeta)\in \mR^3,$ and $a=\hat{a}\ep$ with $\f{\hat{a}}{1-\hat{a}^2}\leq \sqrt{h'(1)+1}, $ we have 
\beq \label{Relambdapm}
|\Im (\mu_a\sigma_a)|\leq ac\,.
\eeq
\end{proof}
Recall the definition of $\sigma_{a,\eta}(x,\xi)$ in \eqref{sigmaaeta}. By using that  $\rho_c=1+\cO(\ep^2),$ we can repeat the above arguments to show, upon choosing $\ep$ small enough,  similar results for $\sqrt{\rho_c}\,\Im \big(\mu_a\sigma_{a,\eta}\big)(x,\xi,\eta):$ 
\begin{cor}
 Assume $\ep$ to be small enough. We have,  by choosing $C>0$ found in Lemma \ref{lemmuasa} larger if necessary,  that
    \begin{align}\label{muasax}
\sup_{x\in\mR}\sqrt{\rho_c}(x) \big|\Im (\mu_a\, \sigma_{a,|\zeta|})(x,\xi) \big| \leq   
    \left\{ \begin{array}{c}
        a(c-C\vartheta), \qquad \forall \, (\xi,\zeta)\in R_{\zeta}^{UH} \cup  R_{\zeta,1}^{I}\, , \\[5pt]
         a(c-C A \ep^{2}), \qquad \forall \,(\xi,\zeta)\in  R_{\xi}^{UH}\cup R_{\zeta,2}^{I}\,.
     \end{array} \right.
   \end{align}
\end{cor}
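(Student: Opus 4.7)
The strategy is to reduce the corollary to Lemma \ref{lemmuasa} by treating $\sigma_{a,|\zeta|}(x,\xi)$ as a uniform $O(\ep^2)$ perturbation of $\sigma_a(\xi,\zeta)$, and $\sqrt{\rho_c}(x)$ as $1+O(\ep^2)$, and then absorbing the resulting error into the constant $C$ either by shrinking $C$ (for the region $R_{\zeta}^{UH}\cup R_{\zeta,1}^I$, where the gain is of order $a\vartheta$) or by exploiting the fact that $A$ can be chosen sufficiently large (for the region $R_{\xi}^{H}\cup R_{\zeta,2}^I$, where the gain is of order $aA\ep^2$).

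First I would prove the pointwise bound
\[
|\Im(\mu_a\,\sigma_{a,|\zeta|})(x,\xi)|\leq |\Im\mu_a(\xi,\zeta)|\,\Re\sigma_{a,|\zeta|}(x,\xi)
\]
by repeating verbatim the algebraic calculation carried out in the proof of Lemma \ref{lemmuasa}: the only role of $h'(1)$ and the constant $1$ appearing there is that they are positive real numbers, and this is preserved when they are replaced pointwise in $x$ by $h'(\rho_c(x))>0$ and $e^{\phi_c(x)}>0$. Consequently, writing $J_c(x,\xi,\zeta):=|e^{\phi_c(x)}+|\zeta|^2+(\xi+ia)^2|^2$, the computation of $\Re\sigma_{a,|\zeta|}$ and $\Im\sigma_{a,|\zeta|}$ goes through with $J$ replaced by $J_c$, and the sign argument giving $\Im(\mu_a\sigma_{a,|\zeta|})\cdot\Im\sigma_{a,|\zeta|}\leq 0$ still applies.

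Next I would quantify the smallness of the difference with the constant coefficient case. Since by Theorem \ref{thm-existence} we have $\|n_c\|_{W_x^{k,\infty}}+\|\phi_c\|_{W_x^{k,\infty}}\lesssim \ep^2$ for every $k$, Taylor expanding $h'(\rho_c)=h'(1)+O(\ep^2)$ and $A^{-1}-B^{-1}=A^{-1}(B-A)B^{-1}$ with $A=e^{\phi_c}+|\zeta|^2+(\xi+ia)^2$, $B=1+|\zeta|^2+(\xi+ia)^2$ yields
\[
\sigma_{a,|\zeta|}^2(x,\xi)-\sigma_a^2(\xi,\zeta)=O(\ep^2),
\]
uniformly in $x\in\mR$ and $(\xi,\zeta)\in\mR^3$ in the range where $|J|,|J_c|$ are bounded below (which covers all three regions of interest, since there $1+|\zeta|^2+|\xi+ia|^2$ stays of order $1$ or bigger). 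Combining this with $\sqrt{\rho_c}(x)=1+O(\ep^2)$ and the already established inequality, I get
\[
\sqrt{\rho_c}(x)\,|\Im(\mu_a\sigma_{a,|\zeta|})(x,\xi)|\leq |\Im\mu_a(\xi,\zeta)|\,\bigl(\,\Re\sigma_a(\xi,\zeta)+C_{*}\ep^2\bigr),
\]
for some absolute constant $C_{*}$.

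Finally, using $|\Im\mu_a|\leq a$ (shown in the proof of Lemma \ref{lemmuasa}) together with the bounds
\[
|\Im\mu_a|\,\Re\sigma_a=|\Im(\mu_a\sigma_a)|+(\Re\sigma_a\Im\mu_a-\Im(\mu_a\sigma_a))\leq |\Im(\mu_a\sigma_a)|
\]
(the difference being non-negative by the same sign analysis), and plugging in the estimates \eqref{muasa}, I get
\[
\sqrt{\rho_c}(x)\,|\Im(\mu_a\sigma_{a,|\zeta|})|\leq
\begin{cases}
a(c-C\vartheta)+aC_{*}\ep^2, & (\xi,\zeta)\in R_{\zeta}^{UH}\cup R_{\zeta,1}^I,\\[2pt]
a(c-CA\ep^2)+aC_{*}\ep^2, & (\xi,\zeta)\in R_{\xi}^{H}\cup R_{\zeta,2}^I.
\end{cases}
\]
For the first region, choosing $\ep$ small enough so that $C_{*}\ep^2\leq C\vartheta/2$ (permitted since $\vartheta$ is fixed) and replacing $C$ by $C/2$ gives the stated bound. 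For the second region, choosing $A$ large enough so that $C_{*}\leq CA/2$ and then replacing $C$ by $C/2$ gives the bound; this is compatible with the constraint $A^2\ep\leq 1$ used elsewhere, since $\ep$ is taken last.

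The only mild obstacle is ensuring that the $O(\ep^2)$ error arising from the background wave never dominates the gain on the right-hand side, which is delicate precisely in the region $R_{\zeta,2}^I$ where the gain is itself $O(\ep^2)$; but as observed above this is exactly what the largeness of $A$ (chosen in Proposition \ref{prop-ITF}) is designed to accommodate, and the argument is compatible with the ordering of the smallness parameters used throughout the paper.
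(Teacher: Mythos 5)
Your overall strategy (treat $\sigma_{a,|\zeta|}$ and $\sqrt{\rho_c}$ as uniform $\cO(\ep^2)$ perturbations of the constant-coefficient quantities and absorb the error using the smallness of $\ep$ in the $\vartheta$-regions and the largeness of $A$ in the $A\ep^2$-regions) is exactly the paper's intent, and your first two steps are sound. The gap is in the last step: the displayed inequality
$|\Im\mu_a|\,\Re\sigma_a=|\Im(\mu_a\sigma_a)|+\big(\Re\sigma_a\,\Im\mu_a-\Im(\mu_a\sigma_a)\big)\leq |\Im(\mu_a\sigma_a)|$
is false, and in fact contradicts your own parenthetical remark: since $\Im\mu_a$ and $\Im\sigma_a$ have opposite signs, $\Re\sigma_a\,\Im\mu_a-\Im(\mu_a\sigma_a)=-\Re\mu_a\,\Im\sigma_a\geq 0$, so the correct inequality is $|\Im(\mu_a\sigma_a)|\leq |\Im\mu_a|\,\Re\sigma_a$ — which is precisely the first claim in the proof of Lemma \ref{lemmuasa}, not its reverse. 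Consequently you cannot pass from $|\Im\mu_a|\,(\Re\sigma_a+C_*\ep^2)$ to $|\Im(\mu_a\sigma_a)|+aC_*\ep^2$ and then invoke the statement \eqref{muasa}; the reduction of the corollary to the \emph{statement} of Lemma \ref{lemmuasa} breaks down at exactly this point.

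The repair is to reuse the \emph{ingredients of the proof} of Lemma \ref{lemmuasa} rather than its conclusion, which is what the paper means by ``repeat the above arguments.'' In each region the lemma bounds $|\Im\mu_a|$ and $\Re\sigma_a$ separately: the improved bounds on $|\Im\mu_a|$ (namely $|\Im\mu_a|\leq a$ everywhere and $|\Im\mu_a|\leq a(1-\kappa^2/e)$ when $|\zeta|\geq\kappa|\xi+ia|$) involve only $\mu_a$ and are untouched by the background wave, while the bounds on $\Re\sigma_a$ (namely $\Re\sigma_a\leq c$ everywhere, with a gain of order $\vartheta$ for $|\zeta|\geq 2$ and of order $(K\ep)^2\sim A\ep^2$ for $|\xi|\geq K\ep$) transfer to $\sqrt{\rho_c}\,\Re\sigma_{a,|\zeta|}$ with an $\cO(\ep^2)$ loss, exactly by your perturbation estimates $h'(\rho_c)=h'(1)+\cO(\ep^2)$, $e^{\phi_c}=1+\cO(\ep^2)$, $\sqrt{\rho_c}=1+\cO(\ep^2)$. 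Multiplying these region-by-region bounds and then running your absorption argument (shrink $\ep$ against the fixed $\vartheta$; use $A$ large against the $\cO(\ep^2)$ error in $R_{\xi}^{H}\cup R_{\zeta,2}^{I}$) yields \eqref{muasax}; note also that $J$ and its $x$-dependent analogue are bounded below for all $(\xi,\zeta)$ when $a<1/2$, so no restriction to the stated regions is needed for the perturbation step.
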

   In the following lemma, we summarize some properties for the operator $L_a^1$ defined in \ref{def-La-1} that are widely used in the proof of the resolvent estimates in Section 3.
\begin{lem} 
Let $\pi_r, \pi_s$ be characteristic functions associated with the sets $\cS_{K,\vartheta}$ and $\cS_{K,\vartheta}^c,$ (see \eqref{def-singset}, \eqref{def-reguset}). 
    It holds that
    \beq\label{La1pis}
\|\pi_s(D)\, L_a^1 \|_{B(X)}+\|\pi_r(D) L_a^1 \, \pi_s (D)\|_{B(X)}\lesssim (K\ep+1)\,\ep^2
    \eeq
    \beq\label{La1-ll}
\| L_a^1 \,{\bI}_{4K\ep}(D_x){\bI}_{A\ep^2}(D_y)\|_{B(X)}\lesssim (K\ep+A\ep^2)\,\ep^2
    \eeq
    Moreover, let $\chi, \chi_1$ be two cut-off functions defined in \eqref{def-smoothcutoff}, the following estimate holds true
    \beq\label{la1smooth}
\|[\chi,L_a^1]\,{\bI}_{A\ep^2}(D_y)\|_{B(X)}+\|[\chi_1, L_a^1]\,{\bI}_{A\ep^2}(D_y)\|_{B(X)}\lesssim (K\ep+1)\,\ep^3.
    \eeq
  If it holds that $K\ep^4\leq 1,$ then for any $U\in X,$ 
  \beq \label{la1smooth-1}
\|[\chi_1(D_x),\,L_a^1(x, D)]\,\bI_{A\ep^2}(D_y) U\|_{X}\lesssim \ep^3\big( K^{-1}\|\chi(D_x)\, U\|_{X} +\|(1-\chi_1)(D_x) U\|_{X} \big) .
  \eeq
\end{lem}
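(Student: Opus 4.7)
The four bounds are treated in order of ascending difficulty. For \eqref{La1pis} and \eqref{La1-ll} the plan is a direct estimation. By Theorem \ref{thm-existence} each coefficient of $L_a^1$ (see \eqref{def-La-1}) is of size $\cO(\ep^2)$ in $W^{k,\infty}$ for any fixed $k$, and in fact each has the profile form $M(x)=\ep^2\widetilde M(\ep x)$ with $\widetilde M$ smooth and Schwartz. Writing $L_a^1$ schematically as $\sum_j M_j\p^{\alpha_j}$ with $|\alpha_j|\le 2$, both $\pi_s(D)L_a^1$ and $\pi_r(D)L_a^1\pi_s(D)$ produce outputs supported in $\{|\xi|\le K\ep,\,|\zeta|\le 2\}$, so Bernstein's inequality turns the derivatives in the $X$-norm into a multiplicative factor $(K\ep+1)$, yielding \eqref{La1pis}. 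The same reasoning applied on the input side via $\bI_{4K\ep}(D_x)\bI_{A\ep^2}(D_y)$ gives the factor $(K\ep+A\ep^2)$ in \eqref{La1-ll}.

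For \eqref{la1smooth} I would apply Lemma \ref{lem-commutator} in the $x$-variable, with $y$ frozen as a parameter. Since $\chi(D_x)$ commutes with the differentiations in $L_a^1$, the commutator reduces to sums of terms of the form $\p^\alpha[\chi(D_x),M_j]$. Taking $s=1$ in Lemma \ref{lem-commutator}, one has $\|\chi'\|_\infty,\,\|\chi_1'\|_\infty\lesssim(K\ep)^{-1}$ and, from the profile structure of the coefficients, $\|\xi\,\cF_xM_j\|_{L^1}\lesssim\ep^3$. This yields $\|[\chi(D_x),M_j]\|_{B(L^2)}\lesssim\ep^2/K$, and because the commutator symbol is concentrated near $|\xi|\sim K\ep$ via the $\chi'$ factor, the remaining derivative factors produce at most an extra $(K\ep+1)$. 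Combined with the $\bI_{A\ep^2}(D_y)$ cutoff in $y$, this gives the claimed $\ep^3(K\ep+1)$ bound, and the identical argument applies to $\chi_1$.

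The main obstacle is \eqref{la1smooth-1}, where an additional $K^{-1}$ gain on the low-frequency piece of $U$ is required. The key algebraic identity is $\chi\chi_1=\chi$ (since $\chi_1\equiv 1$ on $\mathrm{supp}\,\chi$), which gives
\begin{equation*}
[\chi_1(D_x),L_a^1]\chi(D_x)=-\bigl(1-\chi_1(D_x)\bigr)L_a^1\chi(D_x).
\end{equation*}
The plan is to combine this identity with the rapid Fourier decay of the coefficients of $L_a^1$: since $M_j(x)=\ep^2\widetilde M_j(\ep x)$ with $\widetilde M_j$ Schwartz, one has $|\cF_xM_j(\xi)|\lesssim\ep\langle\xi/\ep\rangle^{-N}$ for any $N$. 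Since $\chi(D_x)U$ is Fourier-supported in $\{|\xi|\le 2K\ep\}$ whereas $(1-\chi_1)$ vanishes on $\{|\xi|\le 3K\ep\}$, Young's inequality applied to the Fourier-side convolution produces
\begin{equation*}
\|(1-\chi_1(D_x))L_a^1\chi(D_x)U\|_X\lesssim\ep^3K^{-N}\|\chi(D_x)U\|_X,
\end{equation*}
which trivially absorbs into $\ep^3K^{-1}\|\chi(D_x)U\|_X$. The high-frequency contribution $(1-\chi_1(D_x))U$ is handled by the already-established bound \eqref{la1smooth}, yielding $\ep^3\|(1-\chi_1)U\|_X$. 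The residual transition-annulus piece $(\chi_1-\chi)(D_x)U$ is controlled by the analogous identity
\begin{equation*}
[\chi_1(D_x),L_a^1](\chi_1-\chi)(D_x)=-(1-\chi_1(D_x))L_a^1(\chi_1-\chi)(D_x)+L_a^1\chi_1(1-\chi_1)(D_x),
\end{equation*}
in which the first term is again of size $\ep^3K^{-N}$ by the same Fourier-tail argument and the second term uses the pointwise bound $\chi_1(1-\chi_1)\le 1-\chi_1$ combined with \eqref{La1-ll}. The hardest step is the careful Fourier-tail bookkeeping in the transition annulus, where the rapid Fourier decay of the solitary-wave coefficients must be traded against the polynomial Bernstein losses coming from the derivative factors; the remaining ingredients are routine applications of Lemma \ref{lem-commutator} and the $\cO(\ep^2)$-smallness of the background.
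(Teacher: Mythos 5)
Your treatment of \eqref{La1pis}, \eqref{La1-ll} and \eqref{la1smooth} is essentially the paper's: direct estimates using the $\cO(\ep^2)$ size of the coefficients and the frequency supports, and the commutator estimate \eqref{es-commutator} with $s=1$ together with $\|\chi'\|_{L^\infty}\lesssim (K\ep)^{-1}$ and $\|\cF_x(\p_x f)\|_{L^1}\lesssim\ep^3$. The problems are in your argument for \eqref{la1smooth-1}.

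First, the main term. Your identity $[\chi_1(D_x),L_a^1]\chi(D_x)=-(1-\chi_1(D_x))L_a^1\chi(D_x)$ is correct, but the bound $\ep^3K^{-N}\|\chi(D_x)U\|_X$ "by Young's inequality" is not what a naive Young estimate gives. The coefficients satisfy $|\cF_x M_j(\xi)|\lesssim_N\ep\langle\xi/\ep\rangle^{-N}$, so the $L^1$ mass of $\cF_x M_j$ beyond the separation scale $K\ep$ is of size $\ep^2K^{-(N-1)}$: the amplitude only contributes $\ep^2$, and each unit of separation trades a derivative of the profile (worth $\ep$) against a factor $(K\ep)^{-1}$, i.e.\ the gain per trade is $K^{-1}$, never an extra power of $\ep$. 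A bound of the form $\ep^2K^{-M}$ does \emph{not} absorb into $\ep^3K^{-1}$, because in this part of the paper $K$ is a fixed large constant and $\ep\to0$ (only $K^4\ep\le1$ is assumed). To recover the extra $\ep$ you must track, term by term in $L_a^1$, where the additional small factor comes from: either a derivative falling on the coefficient ($\ep$), or a derivative falling on the $\chi(D_x)$-localized input ($\lesssim K\ep$), or the output-derivative weight $|\xi|\lesssim|\xi-\xi'|+2K\ep$ in the $H_*^1$ component, or the factor $a\sim\ep$. Done carefully this does yield $\ep^3K^{-(N-2)}\|\chi(D_x)U\|_X$, but that bookkeeping is exactly the content of the paper's proof, which applies \eqref{es-commutator} with $s=2$: two derivatives on the coefficient give $\|\cF_x(\p_x^2f)\|_{L^1}\lesssim\ep^4$, and $C_2=\sup_{|\xi'|\le2K\ep}|\xi||\chi_1(\xi)-\chi_1(\xi')||\xi-\xi'|^{-2}\lesssim(K\ep)^{-1}$, giving $\ep^3K^{-1}$. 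So your route can be salvaged for this piece, but as written the key step is unjustified and the claimed power of $\ep$ is not produced by the argument you invoke.

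Second, and this is a genuine breakdown, the transition annulus. The supports of $1-\chi_1$ (nontrivial already on $(3K\ep,4K\ep]$) and of $\chi_1-\chi$ (contained in $K\ep\le|\xi'|\le4K\ep$) overlap, so for the term $(1-\chi_1(D_x))L_a^1(\chi_1-\chi)(D_x)U$ there is \emph{no} frequency separation and "the same Fourier-tail argument" gives nothing: that term is only $O(\ep^2)$ in operator norm, and $\|(\chi_1-\chi)(D_x)U\|_X$ is not controlled by the right-hand side of \eqref{la1smooth-1}. (Your second term is also only bounded by $(K\ep+A\ep^2)\ep^2\|(1-\chi_1)(D_x)U\|_X\sim K\ep^3\|(1-\chi_1)(D_x)U\|_X$, a factor $K$ worse than claimed.) In fact the literal inequality \eqref{la1smooth-1} cannot hold for $U$ Fourier-supported in $\{2.2K\ep\le|\xi|\le2.8K\ep\}$: there the right-hand side vanishes identically while $[\chi_1(D_x),L_a^1]U=-(1-\chi_1(D_x))L_a^1U$ is generically nonzero, since the coefficients' Fourier transforms are not compactly supported. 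The estimate that is actually needed (and used in the proof of Proposition \ref{prop-LTH}) has $\|(1-\chi)(D_x)U\|_X$ in place of $\|(1-\chi_1)(D_x)U\|_X$; with that reading, the correct strategy is simply $U=\chi(D_x)U+(1-\chi)(D_x)U$, the first piece handled by the $s=2$ commutator estimate (or your tail argument made quantitative as above), and the second piece directly by \eqref{la1smooth} — no three-way splitting and no transition-annulus identity is needed.
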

\begin{proof}
    The first two inequalities \eqref{La1pis}, \eqref{La1-ll} follow directly from the very definition of $L_a^1$  in \eqref{def-La-1}, the support of the Fourier multipliers and the facts that $\|(\p_x\psi_c, n_c, \phi_c)\|_{W_x^{1,\infty}}=\cO(\ep^2),$ we thus omit the details.
  To prove \eqref{la1smooth}, we apply the commutator estimate \eqref{es-commutator} with $s=1$ to obtain
  \beqs 
\|\na [\chi, f] \,{\bI}_{A\ep^2}(D_y) \|_{B(L^2(\mR^3))}\lesssim (K\ep+A\ep^2) (K\ep)^{-1} \|\cF_{x\rightarrow \xi}(\p_x f)\|_{L_{\xi}^1}\lesssim (K\ep+1)\ep^3,
  \eeqs
  where $f$ is a placeholder for $(\p_x\psi_c, n_c, \phi_c).$
Let us note that in the above computation,  we have used that $\Supp (\chi'(\xi))\subset \{ \xi \,|\, K\ep\leq |\xi|\leq 4K\ep\},\, \|\chi'\|_{L_{\xi}^{\infty}}\lesssim (K\ep)^{-1}.$ 
Finally, to see \eqref{la1smooth-1}, we use   the commutator estimate \eqref{es-commutator} with $s=2$ to find 
\begin{align*}
     \|\p_x [\chi_1(D_x), f] \chi(D_x)U\|_{L^2}\lesssim C_2  
    \, \|\cF(\p_x^2 f)\|_{L_{\xi}^1}\, \|\chi(D_x) U\|_{L^2}\lesssim K^{-1}\ep^3 \|\chi(D_x) U\|_{L^2} \,
\end{align*}
where 
\beqs 
C_2=\sup_{\xi\in \mR, \, |\xi'|\leq 2K\ep}
\bigg| \f{ \chi_1(\xi)-\chi_1(\xi')}{|\xi-\xi'|^2}  \xi\bigg|\lesssim (K\ep)^{-1} .
\eeqs

\end{proof}

\section{Some useful lemmas}
We first gather some classical but useful 
product and commutator estimates.  
\begin{lem}[Product estimates]
For any $ 0\leq m\in \mathbb{Z},$ $1\leq p<+\infty,$
it holds that 
\begin{align}\label{productineq}
     \|f g\|_{W^{m,p}}\lesssim \|f\|_{L^{p_1}}\|g\|_{W^{m,p_2}}+\|f\|_{W^{m,p_2}}\|g\|_{L^{p_1}},
\end{align}
where $p_1,p_2\geq 1, \, \f{1}{p_1}+\f{1}{p_2}=\f1p.$
\end{lem}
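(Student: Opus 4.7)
\medskip

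\noindent\textbf{Proof plan.} The statement is a classical Kato--Ponce--Leibniz estimate, and the plan is to reduce it to two familiar building blocks: H\"older's inequality and the Gagliardo--Nirenberg interpolation inequality. For $m=0$ the estimate is nothing but H\"older, so assume $m\geq 1$. First I would apply the Leibniz rule
\beqs
\p^\alpha(fg)=\sum_{\beta\leq \alpha}\binom{\alpha}{\beta}\,\p^\beta f\;\p^{\alpha-\beta}g,\qquad |\alpha|\leq m,
\eeqs
so that it suffices to bound, for each $|\alpha|\leq m$ and each split $|\beta|=k\in\{0,\dots,m\}$, the quantity $\|\p^\beta f\,\p^{\alpha-\beta}g\|_{L^p}$ by the right-hand side of \eqref{productineq}.

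The next step is to choose dual exponents adapted to the split. Introduce $q_1,q_2\in[p,+\infty]$ by
\beqs
\tfrac{1}{q_1}=\tfrac{1-k/m}{p_1}+\tfrac{k/m}{p_2},\qquad \tfrac{1}{q_2}=\tfrac{k/m}{p_1}+\tfrac{1-k/m}{p_2},
\eeqs
which satisfy $\tfrac{1}{q_1}+\tfrac{1}{q_2}=\tfrac{1}{p_1}+\tfrac{1}{p_2}=\tfrac{1}{p}$, so H\"older gives $\|\p^\beta f\,\p^{\alpha-\beta}g\|_{L^p}\leq \|\p^\beta f\|_{L^{q_1}}\|\p^{\alpha-\beta}g\|_{L^{q_2}}$. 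I would then invoke the Gagliardo--Nirenberg interpolation on $\mathbb{R}^3$,
\beqs
\|\p^\beta f\|_{L^{q_1}}\lesssim \|f\|_{L^{p_1}}^{1-k/m}\|f\|_{W^{m,p_2}}^{k/m},\qquad \|\p^{\alpha-\beta}g\|_{L^{q_2}}\lesssim \|g\|_{L^{p_1}}^{k/m}\|g\|_{W^{m,p_2}}^{1-k/m},
\eeqs
which is precisely the scale-invariant interpolation inequality corresponding to the exponents chosen above. Multiplying these two bounds and then applying the weighted Young inequality $A^{1-\theta}B^\theta\leq (1-\theta)A+\theta B$ with $\theta=k/m$ produces exactly $\|f\|_{L^{p_1}}\|g\|_{W^{m,p_2}}+\|f\|_{W^{m,p_2}}\|g\|_{L^{p_1}}$. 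Summing over $|\alpha|\leq m$ and over the finitely many decompositions $\beta\leq\alpha$ concludes the proof.

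The only real subtlety is the endpoint behavior of the Gagliardo--Nirenberg step at the corners $k=0$ and $k=m$ (where the interpolation is trivial) and when one of $q_1,q_2$ is equal to $+\infty$. Under the hypothesis $\frac{1}{p_1}+\frac{1}{p_2}=\frac{1}{p}$ with $p<+\infty$, one always has $p_1,p_2>p\geq 1$, and the exponents $q_1,q_2$ land in $[p,+\infty)$ for the interior range $0<k<m$, so the standard form of the Gagliardo--Nirenberg inequality on $\mathbb{R}^3$ applies without loss. The endpoint $p_1=+\infty$ or $p_2=+\infty$ (if one wishes to allow it) is handled directly by H\"older without interpolation, so the symmetric form of the right-hand side always closes the argument.
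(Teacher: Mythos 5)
Your argument is correct in substance, but it follows a different route from the paper: the paper does not prove this lemma at all, it quotes it as classical and points to Section 2.10 of Bahouri--Chemin--Danchin, where such product (tame) estimates are obtained through Littlewood--Paley theory and Bony's paraproduct decomposition. Your elementary Leibniz-plus-Gagliardo--Nirenberg proof is the classical Moser-type argument; it is perfectly adequate here because $m$ is an integer, whereas the paraproduct machinery is what one would need for fractional regularity, which the lemma does not require. One precision is needed: your exponents $q_1,q_2$ and the powers $1-k/m$, $k/m$ are exactly the scale-invariant (endpoint) Gagliardo--Nirenberg choice only for the top-order multi-indices $|\alpha|=m$; when $|\alpha|<m$ the bound you state for $\partial^{\alpha-\beta}g$, with derivative order $|\alpha|-k<m-k$, is no longer the endpoint inequality (the dimensional balance fails), so "precisely the scale-invariant interpolation inequality" is not literally true for those terms. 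The fix is immediate: either run the identical computation with $|\alpha|$ in place of $m$ for each fixed $\alpha$, so that the interpolation produces $\|f\|_{L^{p_1}}^{1-k/|\alpha|}\|\nabla^{|\alpha|}f\|_{L^{p_2}}^{k/|\alpha|}$ and similarly for $g$, and then majorize $\|\nabla^{|\alpha|}f\|_{L^{p_2}},\|\nabla^{|\alpha|}g\|_{L^{p_2}}$ by the $W^{m,p_2}$ norms before applying Young's inequality; or first reduce, by the standard interpolation of intermediate derivatives, to estimating $\|fg\|_{L^p}+\sum_{|\alpha|=m}\|\partial^{\alpha}(fg)\|_{L^p}$, where your exponent choice applies verbatim. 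Similarly, the case $p_1=+\infty$ is not handled "by H\"older without interpolation" when $0<k<m$: one still invokes Gagliardo--Nirenberg with the $L^\infty$ factor, which is valid, so nothing is lost. With these cosmetic adjustments your proof closes.
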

\begin{lem}[Commutator estimate]\label{lem-appendix-commutator}
    Let $\theta$ be a smooth function on $\mR^3,$ homogeneous of degree $m$ away from a neighbourhood of $0.$ It holds that 
    \begin{align}\label{commutator-clasical}
   \big\|[\theta(D), f]g\big\|_{L^2}\lesssim \|\na f\|_{L^{\infty}} \|g\|_{H^{m-1}} +\|\na f\|_{H^{m-1}}\|g\|_{L^{\infty}}  . 
    \end{align}
    Moreover, if $\na f$ belongs only to $L^{\infty},$ 
    \begin{align}\label{commutator-crude}
         \big\|[\theta(D), f]g\big\|_{L^2}\lesssim \|\na f\|_{W^{m-1,\infty}} \|g\|_{H^{m-1}}.
    \end{align}
\end{lem}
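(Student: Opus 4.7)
\bigskip

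\noindent\textbf{Proof proposal for Lemma \ref{lem-appendix-commutator}.}

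The plan is to proceed by a paradifferential decomposition and exploit the fact that the commutator of a smooth homogeneous-of-degree-$m$ Fourier multiplier with a Lipschitz function gains essentially one derivative. I would begin by applying the Littlewood--Paley operators $P_j$ and using Bony's decomposition
\begin{equation*}
fg \;=\; T_f g + T_g f + R(f,g),
\end{equation*}
where $T_fg=\sum_j (P_{\leq j-3}f)\,P_j g$ is the paraproduct and $R(f,g)=\sum_{|j-k|\leq 2}P_j f\,P_k g$ is the remainder. Consequently
\begin{equation*}
[\theta(D),f]g \;=\; [\theta(D),T_f]g \;+\; \theta(D)(T_g f+R(f,g)) \;-\; T_g\,\theta(D)f \;-\; R(f,\theta(D)g),
\end{equation*}
and each piece is treated separately.

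For the paraproduct piece $[\theta(D),T_f]g=\sum_j [\theta(D),P_{\leq j-3}f]P_j g$, I would use that on each dyadic block the spectral localization puts $\theta$ in the symbol class $S^m$ on a shell $\{|\xi|\sim 2^j\}$, and the standard symbolic calculus gives
\begin{equation*}
\bigl\|[\theta(D),P_{\leq j-3}f]P_j g\bigr\|_{L^2}\lesssim \|\nabla P_{\leq j-3}f\|_{L^\infty}\,2^{j(m-1)}\|P_j g\|_{L^2},
\end{equation*}
which after summation in $j$ and Cauchy--Schwarz yields the contribution $\|\nabla f\|_{L^\infty}\|g\|_{H^{m-1}}$. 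For the other two terms $\theta(D)T_g f$ and $T_g \theta(D) f$, the low frequency $g$ multiplies a high frequency piece of $f$; Bernstein's inequality and the $S^m$ symbol bounds give
\begin{equation*}
\bigl\|\theta(D)(P_{\leq j-3}g\,P_j f)\bigr\|_{L^2}+\bigl\|P_{\leq j-3}g\,\theta(D)P_j f\bigr\|_{L^2}\lesssim \|g\|_{L^\infty}\,2^{jm}\|P_j f\|_{L^2},
\end{equation*}
so summing in $j$ gives the contribution $\|g\|_{L^\infty}\|\nabla f\|_{H^{m-1}}$ (absorbing one derivative into $\nabla f$ using that $m\geq 1$, and handling the case $m=0$ directly since then the commutator estimate is classical). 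The remainder $R$ is handled analogously, using the almost-orthogonality of the frequency supports; here one needs to distribute the derivative depending on which factor has the higher frequency, and Bernstein again makes this straightforward.

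The second estimate \eqref{commutator-crude} is slightly more delicate because we no longer assume $\nabla f\in H^{m-1}$. The key observation is that the term $\theta(D)T_g f - T_g\theta(D)f$, which previously cost a derivative on $f$, can instead be bounded by writing it as $\sum_j [\theta(D),P_{\leq j-3}g]P_j f$ and again using the commutator gain; since the low-frequency factor is now $g$ rather than $f$, we obtain $\|\nabla g\|_{L^\infty}\cdot 2^{j(m-1)}\|P_j f\|_{L^2}$, which sums to $\|\nabla g\|_{L^\infty}\|f\|_{H^{m-1}}$. However, since the roles of $f$ and $g$ are not symmetric in the hypothesis, I expect the cleaner route here is to integrate by parts on the Fourier side and write
\begin{equation*}
[\theta(D),f]g(x)=\int K(x,x-y)(f(x)-f(y))g(y)\,dy,
\end{equation*}
where $K$ is the kernel of $\theta(D)$, then use the mean value theorem $f(x)-f(y)=(x-y)\!\int_0^1\nabla f(x+s(y-x))\,ds$ together with pseudodifferential calculus for the product $(x-y)K(x,x-y)$, which has order $m-1$. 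This produces the bound $\|\nabla f\|_{L^\infty}\|g\|_{H^{m-1}}$ plus correction terms controlled by $\|\nabla^k f\|_{L^\infty}\|g\|_{H^{m-k}}$ for $2\leq k\leq m$, all of which are subsumed in $\|\nabla f\|_{W^{m-1,\infty}}\|g\|_{H^{m-1}}$.

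The main obstacle I anticipate is bookkeeping the low-frequency/high-frequency cases consistently so that no spurious logarithmic loss appears and so that the case $m=0$ (when $\theta(D)$ may be only a zero-order operator) is covered. For $m=0$ the commutator gain is exactly the Calder\'on commutator type estimate, which is classical and can be invoked directly; for integer $m\geq 1$ the paradifferential argument above works cleanly. Since both statements are standard Kato--Ponce/Kato type commutator bounds, the proof could alternatively just cite a reference, but the sketch above is the self-contained route I would take.
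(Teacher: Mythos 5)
The paper itself gives no proof of this lemma: it simply cites Section 2.10 of Bahouri--Chemin--Danchin. Your Bony-decomposition treatment of \eqref{commutator-clasical} is exactly the standard route of that reference, and, modulo routine bookkeeping (the remainder terms, the low frequencies of $f$ when only $\na f$ is controlled), it is fine.

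For \eqref{commutator-crude}, however, your argument does not close, and the gap is genuine. First, the dangerous piece is the paraproduct with $g$ at low frequency and $f$ at high frequency, and there you cannot estimate $\theta(D)T_g f$ and $T_g\theta(D)f$ separately: on the block $P_{\leq j-3}g\,P_jf$ the factor $2^{jm}$ coming from $\theta$ is exactly cancelled by $\|P_jf\|_{L^\infty}\lesssim 2^{-jm}\|\na f\|_{W^{m-1,\infty}}$, so each block is of size $\|\na f\|_{W^{m-1,\infty}}\|g\|_{L^2}$ uniformly in $j$ and neither the $\ell^1$ nor the almost-orthogonal $\ell^2$ sum converges. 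The commutator cancellation must be kept in this piece; a working version is the per-block kernel bound $\|[\theta(D)\tilde\varphi_j(D),P_{\leq j-3}g]P_jf\|_{L^2}\lesssim 2^{j(m-1)}\|\na P_{\leq j-3}g\|_{L^2}\|P_jf\|_{L^\infty}$ (the $L^2\times L^\infty$ variant of the mean-value argument), combined with the decay of $\|P_jf\|_{L^\infty}$ above and a Fubini-type summation in $j$; this is precisely where the full hypothesis $\na f\in W^{m-1,\infty}$, and not just $\na f\in L^\infty$, is used.

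Second, the kernel/mean-value substitute you propose instead is not a proof as stated. After the mean value theorem the coefficient $\int_0^1\na f(x+s(y-x))\,ds$ depends jointly on $x$ and $y$, so the assertion that ``$(x-y)K$ has order $m-1$'' is not an operator statement you can apply to $\na f\cdot g$. If you push the Taylor expansion to the top order permitted by $\na f\in W^{m-1,\infty}$, the polynomial terms indeed give $\partial^\alpha f\,(\partial_\xi^\alpha\theta)(D)g$ and are bounded as you claim, but the leftover remainder is an integral operator whose kernel is that of an order-zero multiplier (size $|x-y|^{-3}$, not absolutely integrable) multiplied by a merely bounded, $(x,y)$-dependent coefficient; its $L^2$-boundedness does not follow from the size or symbolic-order considerations you invoke, and is exactly a Calder\'on-commutator-type statement. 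A useful sanity check: for $m=1$, \eqref{commutator-crude} reads $\|[\theta(D),f]g\|_{L^2}\lesssim\|\na f\|_{L^\infty}\|g\|_{L^2}$ with $\theta$ of degree one, i.e.\ essentially Calder\'on's commutator estimate, so any argument that appears to obtain it from kernel size bounds alone must be missing the cancellation. Your first instinct (recasting the bad paraproduct piece as $\sum_j[\theta(D),P_{\leq j-3}g]P_jf$) was actually the right structure; the fix is not to put $\na g$ in $L^\infty$ but to put $\na P_{\leq j-3}g$ in $L^2$ and the $f$-block in $L^\infty$ as indicated above, together with a corresponding treatment of the high-high interactions using $\|P_kf\|_{L^\infty}\lesssim 2^{-km}\|\na f\|_{W^{m-1,\infty}}$.
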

Such inequalities are classical, we refer to Section 2.10, book \cite{book-danchin} for the proof.

In the next lemma, we state an interpolation inequality which is used frequently in the proof of the time decay estimate.
\begin{lem}\label{lem-interpo-low}
Let $8_{\kappa}=\f{8}{1-\kappa}, 8_{\kappa,1}=\f{8}{1-\f{10}{9}\kappa}, d_{\kappa}=\f{15-10\kappa/3}{3+\kappa}.$ For any 
\beq\label{relation-theta-p}
2\leq p< +\infty, \,\, -\f12\leq \theta\leq \min\bigg\{ d_{\kappa}(\f12-\f1p)-\f12, \f{d_{\kappa}-1}{2}+\f3p-\f{d_{\kappa}+3}{8_{\kappa}}  \bigg\}, 
\eeq
it holds that
\begin{align}\label{interpolation-import}
    \|P_{\leq 1} f\|_{\dot{W}^{\theta,p}} \lesssim \big\|P_{\leq 1} f\big\|_{\dot{W}^{1+\f{3}{8_{\kappa,1}},8_{\kappa}}}^{\vartheta_{\kappa}}\|P_{\leq 1}f\|_{\dot{H}^{-1/2}}^{1-\vartheta_{\kappa}},
\end{align}
where $\vartheta_{\kappa}=\f{4}{(3+\kappa)(d_{\kappa}+3)}(4+2\beta-\f6q)-\kappa^2$. 
\end{lem}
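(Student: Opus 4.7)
The plan is to prove this Gagliardo–Nirenberg type inequality by a standard Littlewood–Paley decomposition combined with a blockwise Hölder interpolation and Bernstein's inequality. Since the projector $P_{\leq 1}$ cuts off all frequencies larger than $O(1)$, I would write $P_{\leq 1}f = \sum_{j\leq 1} P_j f$ and estimate each block separately. On the $j$-th block $|\nabla|^\theta$ acts like $2^{j\theta}$, so
\begin{equation*}
\||\nabla|^\theta P_j f\|_{L^p} \lesssim 2^{j\theta}\|P_j f\|_{L^p}.
\end{equation*}

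The core step is to interpolate $L^p$ via Hölder between $L^{r_0}$ and $L^{r_1}$ with the stated exponent $\vartheta_\kappa$ on the $L^{r_1}$ side and $1/p = (1-\vartheta_\kappa)/r_0 + \vartheta_\kappa/r_1$, where the pair $(r_0,r_1)$ is chosen with $r_0 \in [2,p]$ and $r_1 \in [\max(p,8_\kappa),\infty]$. This choice is precisely where the two constraints on $\theta$ enter: the upper bound $\theta \leq \frac{d_\kappa-1}{2}+\frac{3}{p}-\frac{d_\kappa+3}{8_\kappa}$ is (after algebraic manipulation using $(3+\kappa)(d_\kappa+3)=24-\kappa/3$) exactly the condition $\vartheta_\kappa \leq 1-\kappa^2$, while the other upper bound $\theta \leq d_\kappa(\tfrac12-\tfrac1p) - \tfrac12$ ensures that $\vartheta_\kappa$ lies in the range of admissible Hölder weights for some valid $(r_0,r_1)$ with $r_0 \geq 2$ and $r_1\geq 8_\kappa$.

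Next, Bernstein's inequality at low frequencies gives
\begin{equation*}
\|P_j f\|_{L^{r_0}} \lesssim 2^{3j(1/2 - 1/r_0) + j/2}\|P_j f\|_{\dot H^{-1/2}}, \qquad \|P_j f\|_{L^{r_1}} \lesssim 2^{3j(1/8_\kappa - 1/r_1) - j s_1}\|P_j f\|_{\dot W^{s_1, 8_\kappa}},
\end{equation*}
with $s_1 = 1+3/8_{\kappa,1}$. Inserting these into the Hölder estimate and collecting the powers of $2^j$, one finds
\begin{equation*}
\||\nabla|^\theta P_j f\|_{L^p} \lesssim 2^{j\gamma}\|P_j f\|_{\dot H^{-1/2}}^{1-\vartheta_\kappa}\|P_j f\|_{\dot W^{s_1,8_\kappa}}^{\vartheta_\kappa},
\end{equation*}
where using $1/p = (1-\vartheta_\kappa)/r_0 + \vartheta_\kappa/r_1$ and the identity $\frac12 + s_1 - \frac{3}{8_\kappa} = 3-\kappa/24$, the exponent collapses to $\gamma = \theta + 2 - 3/p - \vartheta_\kappa(3-\kappa/24)$, independent of the specific choice of $(r_0,r_1)$. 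Substituting the stated $\vartheta_\kappa = \frac{2+\theta-3/p}{3-\kappa/24}-\kappa^2$ yields $\gamma = \kappa^2(3-\kappa/24)>0$; this is the crucial positive gain, and the role of the $-\kappa^2$ shift is precisely to produce it.

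Finally, summing over $j\leq 1$ reduces to the geometric series $\sum_{j\leq 1}2^{j\gamma}<\infty$, and bounding $\|P_j f\|$ on each side by the corresponding norm of $P_{\leq 1}f$ delivers the stated inequality. There is no real analytic obstacle here—the argument is a textbook Bernstein/Hölder interpolation—but the main technical care lies in the algebraic bookkeeping: verifying that the two upper bounds on $\theta$ correspond exactly to $\vartheta_\kappa \in [0,1]$ and to the existence of an admissible pair $(r_0,r_1)$ in the Bernstein step, which boils down to the identity $(3+\kappa)(d_\kappa+3) = 24-\kappa/3$ encoding the scaling of $\dot W^{s_1,8_\kappa}$ on $\mathbb R^3$.
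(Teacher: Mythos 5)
Your proof is correct and rests on the same mechanism as the paper's: a Littlewood--Paley decomposition of $P_{\leq 1}f$, a blockwise H\"older/Bernstein interpolation between the $\dot H^{-1/2}$ and $\dot W^{1+3/8_{\kappa,1},8_\kappa}$ norms, and the observation that the $-\kappa^2$ shift in $\vartheta_\kappa$ leaves a positive power $2^{j\kappa^2(3-\kappa/24)}$ making the sum over $j\leq 1$ converge. The organization differs slightly: the paper first proves the critical case $\theta=d_\kappa(\tfrac12-\tfrac1p)-\tfrac12$, $2\leq p\leq 8_\kappa$, by exactly this blockwise interpolation (between $L^2$ and $L^{8_\kappa}$ only), and then reduces general $(\theta,p)$ to it by a Sobolev embedding $\dot W^{\theta+\beta_1,r}\hookrightarrow \dot W^{\theta,p}$, the second constraint on $\theta$ being what guarantees $r\leq 8_\kappa$; you instead treat all admissible $(\theta,p)$ in one pass with a general H\"older pair $(r_0,r_1)$, $r_0\geq 2$, $r_1\geq 8_\kappa$, so the two constraints reappear as $\vartheta_\kappa\leq 1-\kappa^2$ and as solvability of $\tfrac1p=\tfrac{1-\vartheta_\kappa}{r_0}+\tfrac{\vartheta_\kappa}{r_1}$, i.e. $\vartheta_\kappa\leq \tfrac{8}{3+\kappa}(\tfrac12-\tfrac1p)$. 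Both identifications check out, since $(3+\kappa)(d_\kappa+3)=24-\kappa/3$ and $\tfrac{d_\kappa+3}{3-\kappa/24}=\tfrac{8}{3+\kappa}$, and your one-pass version arguably reads more cleanly than the paper's two-step reduction. Two small remarks: your intermediate identity should be $2+s_1-\tfrac{3}{8_\kappa}=3-\tfrac{\kappa}{24}$ (what you wrote, $\tfrac12+s_1-\tfrac{3}{8_\kappa}$, equals $\tfrac32-\tfrac{\kappa}{24}$), though your final exponent $\gamma=\theta+2-\tfrac3p-\vartheta_\kappa(3-\tfrac{\kappa}{24})=\kappa^2(3-\tfrac{\kappa}{24})>0$ is correct; and your H\"older step tacitly requires $\vartheta_\kappa\geq 0$, which fails only in the extreme corner near $(p,\theta)=(2,-\tfrac12)$ where $\vartheta_\kappa=-\kappa^2$ --- but the paper's own argument has the same limitation there, so this is not a gap of your proof relative to the paper's.
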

\begin{proof}
   We first prove the critical case when $2\leq p\leq 8_{\kappa}, \theta=d_{\kappa}(\f12-\f1p)-\f12\geq 0.$ By interpolation, 
   \begin{align*}
      \|P_{\leq 1} f\|_{\dot{W}^{\theta,p}} \lesssim \sum_{k\leq 1} 2^{k\ell \kappa^2} 2^{k(\theta-\ell\kappa^2)} \|P_N f\|_{L^p}&\leq \sum_{k\leq 1}   2^{k\ell\kappa^2} \big(2^{k(1+\f{3}{8_{\kappa,1}})}\|P_N f\|_{L^{8_{\kappa}}}\big)^{\vartheta_{\kappa}}\big(2^{-\f{k}{2}}\|P_N f\|_{L^2}\big)^{1-\vartheta_{\kappa}}\\
      &\lesssim \big\|P_{\leq 1} f\big\|_{\dot{W}^{1+\f{3}{8_{\kappa,1}},8_{\kappa}}}^{\vartheta_{\kappa}}\|P_{\leq 1}f\|_{\dot{H}^{-1/2}}^{1-\vartheta_{\kappa}}
   \end{align*}
   where $\ell=\f{15-10\kappa/3}{8}.$
   Next, for any $2\leq p< +\infty$ with 
$-\f12 \leq \theta\leq d_{\kappa}(\f12-\f1p)-\f12,$ we use the Sobolev embedding 
\begin{align*}
   \|f\|_{\dot{W}^{\beta,p}}\lesssim \|f\|_{\dot{W}^{\beta+\beta_1,r}} \qquad \big(\beta_1=3\big(\f1r-\f1p\big)=d_{\kappa}\big(\f12-\f1p\big)-\f12-\theta\big)
\end{align*}
to reduce the matter to the critical case. Note that to ensure $r\leq 8_{\kappa},$ we need $\theta\leq \f{d_{\kappa}-1}{2}+\f3p-\f{d_{\kappa}+3}{8_{\kappa}}.$
\end{proof}

\section{Some auxiliary  estimates supporting the proof of time decay estimates}
\begin{lem}\label{lem-H1H2}
    Let $H_1, H_2$ defined in \eqref{def-H1H2}. It holds that 
    \begin{align*}
       \||\na|^{\f{3}{8_{\kappa,1}}}(H_1, H_2)\|_{W^{1,8_{\kappa}}}\lesssim 
        \||\na|^{\f{3}{8_{\kappa,1}}}(\pt, \na_y)(n, \tilde{v})\|_{W^{1,8_{\kappa}}}
        +(1+t)^{-\f{29}{24}}\big(\cM(0)+\cN_{\tilde{c},\gamma}(T)+\cN(T)^2\big).
    \end{align*}
\end{lem}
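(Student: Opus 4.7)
Split the right-hand sides $H_1$ and $H_2$ defined in \eqref{def-H1H2} into three groups: (i) the derivative terms $-\pt n-\div_y\tilde v_y$ and $-\pt\tilde v_1$, which directly match the first summand of the claim; (ii) the quadratic and higher-order nonlinearities in the perturbation $(\vr,v)$, in the auxiliary field $w$ and in the elliptic remainder $\phi-\phi_c(z_1)-(\Id-\p_x^2)^{-1}n$: namely $\p_z(n\tilde v_1)$, $\div_y(n\tilde v_y)$, $\div(nw)$, $(w+v)\cdot\na(w+v)_1$, $(h'(\rho)-h'(1))\p_z n$ and the elliptic remainder; and (iii) the terms carrying the background wave $(n_c,\psi_c',\phi_c)(z_1)$ and/or the modulation parameters $(\tilde c,\gamma)$: the pure modulation sources $r_1^0$, $(r_1')_1$, $\div_y(\psi_c'\na_y\gamma)$, $\tfrac12\p_x(|\psi_c'|^2|\na_y\gamma|^2)$, together with the mixed terms $\div(n_c(z_1)(v+w)+n\underline{u_c})$, $\underline{u_c}\cdot\na(w+\tilde v)_1$, $(w+\tilde v)\cdot\na\psi_c'$, $(h'(\rho)-h'(1+n_c(z_1)))n_c'(z_1)$.

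For the terms in group (ii) the plan is to apply the product inequality \eqref{productineq} at scale $W^{1+3/8_{\kappa,1},8_\kappa}$, pairing an $L^\infty$ factor bounded via \eqref{alphaLP-low}--\eqref{highreg-interp} (with decay $(1+t)^{-2/3+\cO(\kappa)}\cN(T)$) against the norm $\||\na|^{3/8_{\kappa,1}}\na(\vr,v)\|_{W^{1,8_\kappa}}$ from $\cN_{(\vr,v)}(T)$ (decay $(1+t)^{-(1+\iota)}\cN(T)$). This yields a total decay of order $(1+t)^{-(5/3+\iota)}\cN(T)^2$, well beyond the target $(1+t)^{-29/24}$. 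The elliptic remainder is first rewritten as in \eqref{errorphi-cgamma} to reveal that it is at least quadratic in $(\vr,\Delta_y\phi_c(z_1))$, and then estimated the same way.

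For group (iii) I would exploit the exponential localisation of $n_c$, $\psi_c'$, $\p_c\psi_c$ in $z$ to reduce the $L^{8_\kappa}(\mR^3)$ norm to a one-dimensional $L^{8_\kappa}_z\cap L^\infty_z$ norm of the profile times a two-dimensional $L^{8_\kappa}_y$ norm of the modulation or weighted-perturbation quantity. The required decay of the latter follows from the 2D Gagliardo--Nirenberg inequality $\|f\|_{L^{8_\kappa}_y}\lesssim\|f\|_{L^2_y}^{2/8_\kappa}\|\na_y f\|_{L^2_y}^{1-2/8_\kappa}$ combined with \eqref{cor-interpolation}, \eqref{es-ptc} and \eqref{es-nablayptc}. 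For instance $\|(\pt c,\pt\gamma-\tilde c)\|_{L^{8_\kappa}_y}\lesssim (1+t)^{-11/8}$ and $\|\na_y^{k+1}\gamma\|_{L^{8_\kappa}_y}\lesssim (1+t)^{-((k+1)/2+3/4)}$, both stronger than $(1+t)^{-29/24}$. For the mixed perturbation/background terms, I would use the identity \eqref{identiy-import-intro} to rewrite $\tilde v+w=u+(0,\p_c\psi_c(z_1)\na_y c)^t$, so that the localised factor multiplies either $u$ (handled by the weighted decay of $U$ in Proposition~\ref{prop-weightednorm}, via the same GN interpolation in $y$ applied to $e^{az}U$) or $\na_y c$ (handled as above).

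The main obstacle is to accommodate the two additional Sobolev indices coming from $\||\na|^{3/8_{\kappa,1}}(\cdot)\|_{W^{1,8_\kappa}}$ on the mixed terms of group (iii), since unlike the quadratic case, no high-order derivative of $(\vr,v)$ with integrable $L^{8_\kappa}$-decay is directly available at this level. The remedy is to move the extra derivatives onto either the smooth localised profile (costing only a bounded constant in $\ep$) or the modulation coefficient, for which \eqref{es-nablayptc} and the higher-order entries of $\cN_{\tilde c,\gamma}(T)$ supply the required integrable-in-time bounds before the Gagliardo--Nirenberg step closes the argument.
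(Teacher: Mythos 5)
Your proposal is correct and takes essentially the same route as the paper's proof: a term-by-term splitting in which the $(\pt,\na_y)$ contributions give the first summand, the quadratic terms are handled by product estimates together with \eqref{alphaLP-low}--\eqref{highreg-interp} and the a priori norms, and the background/modulation terms are treated via the exponential localization of the profile, the weighted decay from Proposition \ref{prop-weightednorm} and the modulation bounds, combined with an interpolation in $y$ that is equivalent to the paper's anisotropic inequality $\|f\|_{L_{x,y}^{8_{\kappa}}}\lesssim \|f\|_{L_y^2H_x^1}^{\f{1-\kappa}{4}}\|\p_y f\|_{L_y^2H_x^1}^{\f{3+\kappa}{4}}$. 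Two minor imprecisions do not affect the outcome: the elliptic remainder is not purely quadratic (the difference between $(\Id-\Delta)^{-1}$ and $(\Id-\p_z^2)^{-1}$ produces a term linear in $n$ carrying $\Delta_y$, absorbed by your first group, and a localized linear term handled like the other mixed terms), and for the quadratic pieces where the fractional derivative falls on an otherwise underivated factor the decay is only $(1+t)^{-\f{29}{24}+\cO(\kappa)}$ rather than $(1+t)^{-(\f53+\iota)}$, which is still sufficient.
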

\begin{proof}
    The proof follows from  straightforward examination of each term that appears in the definitions of $H_1$ and $H_2$.   
    It is found that 
    \beqs
    \begin{aligned}
& \||\na|^{\f{3}{8_{\kappa,1}}}(H_1, H_2)\|_{W^{1,8_{\kappa}}}\lesssim 
\||\na|^{\f{3}{8_{\kappa,1}}}(\pt, \na_y)(n, \tilde{v})\|_{W^{1,8_{\kappa}}}+\|(n,\tilde{v},w)\|_{W^{1,\infty}}
    \|(n,\tilde{v},w)\|_{W^{\ell_{\kappa}+1,8_{\kappa}}}\\   &\qquad
    +\|r_1\|_{W^{\ell_{\kappa}+1,8_{\kappa}}}+ \||\na_y(\gamma, c)|^2, \na_y^2\gamma\|_{L^{8_{\kappa}}}+ \|(\underline{u_c} n, (n_c,\underline{u_c})(\tilde{v}+w) )\|_{W^{\ell_{\kappa}+1,8_{\kappa}}},
 \end{aligned}
 \eeqs
where  $\ell_{\kappa}=1+\f{3}{8_{\kappa,1}}.$ 
Thanks to 
\eqref{alphaLP-low}  and \eqref{highreg-interp}, it holds that
\begin{align*}
&\|(n, \tilde{v})\|_{W^{1,\infty}}\lesssim (1+t)^{-\f23+\cO(\kappa)}\cN(T), \quad \|P_{\leq 0}(n, \tilde{v})\|_{L^{8_{\kappa}}}\lesssim (1+t)^{-\f{13}{24}+\cO(\kappa)}\cN(T), \\
&\|P_{\geq 0}(n, \tilde{v})\|_{{W}^{\ell_{\kappa}+1,8_{\kappa}}}\lesssim \|P_{\geq 0}(n, \tilde{v})\|_{{W}^{\ell_{\kappa}+\f{11}{8},\f{8}{2-\kappa}}} 
\lesssim (1+t)^{-\f23+\cO(\kappa)}\cN(T),
\end{align*} 
for $M\geq \f{7}{2}+\cO(\kappa).$
Therefore, by applying \eqref{es-w}, we see that the second term can be bounded by $(1+t)^{-\f{29}{24}}\cN(T)^2.$
Moreover, it follows from \eqref{es-r1r2} and interpolation that 
the next two terms are bounded by 
$(1+t)^{-\f{11}{8}}(\cN_{\tilde{c},\gamma}(T)+\cN(T)^2).$
We are left to estimate the last term. By using the inequality $\|f\|_{L_{x,y}^{8_{\kappa}}}\lesssim \|f\|_{L_y^2H_x^1}^{\f{1-\kappa}{4}}\|\p_y f\|_{L_y^2H_x^1}^{\f{3+\kappa}{4}},$ we find that 
\beqs 
\|\underline{u}_c n\|_{W^{\beta_{\kappa}},8_{\kappa}}\lesssim \|\underline{u}_c n\|_{H^{\beta_{\kappa}+1}}^{\f{1-\kappa}{4}}
\|\p_y (\underline{u}_c n)\|_{H^{\beta_{\kappa}+1}}^{\f{3+\kappa}{4}}\lesssim (1+t)^{-\f{11+\kappa}{8}}\bigg((1+t)\|n\|_{L_a^2}+(1+t)^{\f32}\|\p_y n\|_{L_a^2}+\cN(T)^2\bigg),
\eeqs
Together with the Proposition \ref{prop-weightednorm}, this  yields $$\|n_c \tilde{v}\|_{W^{\beta_{\kappa},8_{\kappa}}}\lesssim (1+t)^{-\f{11+\kappa}{8}} \big(\cM(0)+\cN_{\tilde{c},\gamma}(T)+\cN(T)^2\big).$$ The estimate of  $(\underline{u_c}, n_c)(\tilde{v}+w)$ is similar and is thus omitted.
\end{proof}

In the next two lemmas, we state the dispersive and 
bilinear estimates which are of constant use throughout Section \ref{sec-decayes}, concerning the time decay estimates for $(\vr, v)^t.$ 
To start, we first recall the dispersive estimates of the linear semigroup 
$e^{itP(D)},$  which are essentially adapted from \cite{G-P-global,Guo-Peng-Wang}. 

\begin{lem}
    It holds that for any $p\in[2,+\infty], t\in\mR,$
    \begin{align}\label{disper-general}
       \| e^{it P(D)} f \|_{L^p}\lesssim (1+|t|)^{-\f{4}{3}(1-\f{2}{p})} \|f\|_{W^{3(1-\f2p), p'}}
    \end{align}
   In particular, 
    \begin{align}\label{dispersive-L8}
         \| e^{it P(D)} f \|_{L^{8_{\kappa}}}\lesssim (1+|t|)^{-(1+\f{\kappa}{3})} \|f\|_{W^{\f{3(3+\kappa)}{4},8_{\kappa}'}}.
    \end{align}
 Moreover, when focusing on low frequencies, 
    \begin{align} \label{disper-low}
     \| e^{it P(D)}P_{\leq -5} f \|_{L^p}\lesssim \big(1+|t|\big)^{-\f32(1-\f{2}{p})}  \big\||\na|^{\f{1}{2}(1-\f2p)}P_{\leq -5} f\big\|_{L^{p'}}
    \end{align}
\end{lem}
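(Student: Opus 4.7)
The plan is to establish the three estimates through a dyadic Littlewood--Paley decomposition combined with stationary phase analysis of the kernel of $e^{itP(D)}$, exploiting the specific two-regime structure of the dispersion relation $P(\xi)=|\xi|\sqrt{h'(1)+(1+|\xi|^2)^{-1}}$. At low frequencies one has the KdV-type expansion $P(\xi)=c_0|\xi|-c_1|\xi|^3+O(|\xi|^5)$ with $c_0=\sqrt{h'(1)+1}$ and $c_1>0$, whereas at high frequencies $P(\xi)=\sqrt{h'(1)}|\xi|+O(|\xi|^{-1})$. All three bounds reduce, via the $L^2$ isometry property of $e^{itP(D)}$ and standard interpolation, to uniform pointwise bounds on the oscillatory kernel
\[
K_k(t,x)=\int_{\mathbb{R}^3} e^{i(tP(\xi)+x\cdot\xi)}\,\varphi(2^{-k}\xi)\,d\xi,
\]
followed by $TT^*$ and Bernstein.

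First I would carry out the stationary phase on each $K_k$. In the high-frequency regime $2^k\gtrsim 1$, $P$ is smooth and $\nabla P(\xi)\approx\sqrt{h'(1)}\,\hat\xi$, so the level surfaces $\{P=\lambda\}$ are approximately spheres of radius $\sim 2^k$ with two non-degenerate tangential curvatures of order $2^{-k}$. Classical stationary phase gives the wave-type bound $|K_k(t,x)|\lesssim 2^{3k}(1+|t|2^k)^{-1}$. In the low-frequency regime $2^k\leq 1$, one first uses only the leading $c_0|\xi|$ part to obtain the same sphere-type bound $|K_k(t,x)|\lesssim 2^{3k}(1+|t|2^k)^{-1}$; this already suffices to prove \eqref{disper-low}, as summing over $k\leq-5$ after interpolation with the $L^2$ isometry produces the rate $(1+|t|)^{-\frac{3}{2}(1-\frac{2}{p})}$ and the mild $\frac12(1-\frac2p)$ derivative loss coming from the Bernstein factors.

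To obtain the sharper $(1+|t|)^{-\frac43}$ endpoint that underpins \eqref{disper-general} and \eqref{dispersive-L8}, the cubic correction $-c_1|\xi|^3$ must be retained. The phase $\Phi(\xi)=tP(\xi)+x\cdot\xi$ is now split according to whether $|x|/|t|$ is close to the group speed $c_0$ or not. Away from this resonant set, integration by parts along $\nabla_\xi\Phi$ gives rapid decay. On the resonant set, stationary points along the sphere coalesce and the radial variable contributes an Airy-type factor $(2^{3k}|t|)^{-1/3}$; combined with the spherical factor $(2^k|t|)^{-1}$ this yields $|K_k(t,x)|\lesssim 2^{-k/3}\cdot 2^{3k}\cdot|t|^{-4/3}$ per dyadic block. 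Summing and interpolating with $L^2$ gives the endpoint $\|e^{itP(D)}f\|_{L^\infty}\lesssim(1+|t|)^{-\frac43}\|f\|_{W^{3,1}}$; interpolation with conservation yields \eqref{disper-general}, and specializing $p=8_\kappa=8/(1-\kappa)$ yields \eqref{dispersive-L8} with the precise loss $\frac{3(3+\kappa)}{4}$.

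The main obstacle is precisely the low-frequency Airy-type analysis: controlling uniformly in $k\leq 0$ the merging of stationary points as $|x|/|t|\to c_0$, and making sure the $|\xi|^3$ correction dominates the error terms from $O(|\xi|^5)$ in the expansion of $P$. A clean way to handle this is to rescale each dyadic piece by $\xi=2^k\eta$, reducing the oscillatory integral to one governed by the phase $2^kt\,\tilde P_k(\eta)+2^kx\cdot\eta$ with $\tilde P_k(\eta)=c_0|\eta|-c_12^{2k}|\eta|^3+O(2^{4k})$, and then applying the standard Airy estimates uniformly in the small parameter $2^{2k}$. This stationary phase analysis has been performed in detail in \cite{G-P-global} and in Appendix of \cite{Guo-Peng-Wang}, and the three estimates above follow directly from those computations; only minor bookkeeping is needed to keep track of the exact Sobolev loss stated in \eqref{disper-general}--\eqref{disper-low}.
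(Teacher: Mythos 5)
Your roadmap (dyadic decomposition, stationary phase, and ultimately deferring to \cite{G-P-global} and \cite{Guo-Peng-Wang}) is in the same spirit as the paper, which proves the lemma by citing Proposition 3.1 and Lemma 3.2 of \cite{G-P-global} for \eqref{disper-general} and Theorem 1 of \cite{Guo-Peng-Wang} for \eqref{disper-low}, after checking that the relevant algebraic properties of $P$ persist for general $h'(1)$. However, your account of the stationary-phase mechanism contains two genuine errors. First, for \eqref{disper-low} you assert that the sphere-type bound $|K_k(t,x)|\lesssim 2^{3k}(1+2^k|t|)^{-1}$ "already suffices". It does not: interpolating that bound with the $L^2$ isometry only produces the wave rate $(1+|t|)^{-(1-\frac2p)}$ (the angular curvature of a sphere in dimension $3$ gives $(d-1)/2=1$ power of decay), not $(1+|t|)^{-\frac32(1-\frac2p)}$. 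The extra half power comes from the radial direction, i.e.\ from $|P''(r)|\sim r$ for $0<r<1$ (equivalently from the cubic correction you set aside for this estimate), and this is exactly why the paper verifies $|P'(r)|\sim 1$, $|P''(r)|\sim r$, $|P^{(k)}(r)|\lesssim r^{-k}$ before invoking \cite{Guo-Peng-Wang}; the weight $|\nabla|^{\frac12(1-\frac2p)}$ in \eqref{disper-low} is precisely the bookkeeping for this radial gain.

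Second, you attribute the global rate $(1+|t|)^{-\frac43}$ in \eqref{disper-general} to an Airy-type analysis of the low-frequency expansion $c_0|\xi|-c_1|\xi|^3$. That is misplaced: low frequencies give the faster $3/2$ rate, and your uniform wave-type bound at "high" frequencies conceals the true bottleneck, namely the single order-one frequency $r_0=\sqrt{1+\sqrt{4+3/h'(1)}}$ where $P''(r_0)=0$ and $P'''(r_0)\neq 0$. It is the Airy-type degeneracy on the sphere $|\xi|=r_0$ that yields $|t|^{-1/3}$ in the radial direction and hence $|t|^{-1-\frac13}=|t|^{-\frac43}$ overall, and it is exactly this structure (uniqueness of the positive root of $P''$ and nonvanishing of $P'''$ near it) that the paper checks in order to transfer the proof of \cite{G-P-global} from $h'(1)=1$ to general pressure laws. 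As written, your stationary phase near $|\xi|\sim r_0$ would only give $|t|^{-1}$, while the low-frequency source you propose for the $4/3$ rate does not exist; so the key idea needed both to prove the estimate and to justify the adaptation to general $h'(1)$ is missing from your argument.
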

It's worth noting that when focusing on low frequencies, not only does the decay rate improve, but there is also an increase in the number of homogeneous  derivatives on the right-hand side. This property plays a significant role in the analysis of decay estimates. 
We refer to Proposition 3.1 and Lemma 3.2, \cite{G-P-global} for the proof of \eqref{disper-general}. Let us remark that in \cite{G-P-global}, the authors prove it  for the case $h'(1)=1.$ However, the important algebraic properties of the function
$P'(r)=r\sqrt{h'(1)+(1+r^2)^{-1}}$
needed in the proof are the same. For instance, there is only one positive root for $P''(r):$ $r_0=\sqrt{1+\sqrt{4+\f{3}{h'(1)}}\,}$ and $P'''(r)$ does not vanish in the vicinity of $r_0.$ Regarding the proof of \eqref{disper-low},  we apply [Inequality (6), Theorem 1, \cite{Guo-Peng-Wang}] to obtain that 
\begin{align*}
     \| e^{it P(D)}P_{\leq -5} f \|_{L^{\infty}}\lesssim \big(1+|t|\big)^{-\f32}  \big\||\na|^{\f{1}{2}}P_{\leq -5} f\big\|_{L^{1}}
\end{align*}
from which we derive \eqref{disper-low} via interpolation.
 Note that the only properties for the function $P(r)$ used in the proof are the following
 \begin{align*}
    | P'(r)| \sim 1, \,\qquad |P''(r)|\sim r, \qquad 
    |P^{k}(r)|\lesssim r^{-k}\, (k\geq 3)  \quad \, \forall\, {0<r< 1},
 \end{align*}
which can be  easily checked.
 

In the next lemma, we state some estimates for the bilinear operator 
$$B^{\mu\nu}(f,g)=\cF^{-1}\bigg(\int_{\mR^3}\f{m_{\mu\nu}} {i\,\phi^{\mu\nu}} (\varsigma, \varsigma') \,f(\varsigma-\varsigma') \,g (\varsigma')\, \d \varsigma' \bigg),$$
where $m_{\mu\nu}(\varsigma, \varsigma')= |\varsigma| \cR (\varsigma-\varsigma')\cR(\varsigma'),\,\,\, \phi^{\mu\nu}(\varsigma, \varsigma')=P(\varsigma)-\mu P(\varsigma-\varsigma')-\nu P(\varsigma').$ 
The estimates \eqref{BL-High2}, \eqref{BL-High-less2} follow directly from the results in \cite{G-P-global}  while the remaining, newly derived estimates are also essential for our analysis. 
\begin{lem}
(I). (One is high frequency). It holds that for any $m\geq 0,$ any $\ep>0$ small enough
\begin{align}\label{BL-High2}
    \big\|B^{\mu\nu}( f, P_{\geq -10} \,g)\big\|_{H^m}\lesssim \big\| \f{f}{|\na|}\big\|_{W^{2,p_1}}\big\|P_{\geq -10}\, \f{g}{|\na|}\big\|_{W^{m+\lambda, q_1}}+
    \big\|P_{\geq -10}\f{f}{|\na|}\big\|_{W^{m+\lambda,p_1}}\big\|P_{\geq -10}\, \f{g}{|\na|}\big\|_{W^{2, q_2}}
\end{align}
where $\lambda>\f52$ and $\f{1}{p_1}+\f{1}{q_1}=\f{1}{p_2}+\f{1}{q_2}=\f{1}{2}+\ep.$ 
Moreover, there holds that 
\beq\label{BL-High-less2}
\begin{aligned}
     \big\|B^{\mu\nu}( f, P_{\geq -10} \,g)\big\|_{W^{m,p'}}&\lesssim_{\ep} \min\big\{ \big\| \f{f}{|\na|}\big\|_{W^{2,q}} \big\|P_{\geq -10}\, \f{g}{|\na|}\big\|_{H^{m+\lambda}}, \big\| \f{f}{|\na|}\big\|_{H^2} \big\|P_{\geq -10}\, \f{g}{|\na|}\big\|_{W^{m+\lambda,q}}\big\}\\
 &  \,  +\min\big\{
    \big\|P_{\geq -10}\f{f}{|\na|}\big\|_{H^{m+\lambda}}\big\|P_{\geq -10}\, \f{g}{|\na|}\big\|_{W^{2, q}}, \big\|P_{\geq -10}\f{f}{|\na|}\big\|_{W^{m+\lambda,q}}\big\|P_{\geq -10}\, \f{g}{|\na|}\big\|_{H^{2}}\big\},
\end{aligned}
\eeq
where  $p'=\f{1}{1-1/p},\, 2\leq p, q < 12$ and 
$\f1p+\f1q=\f12+\ep.$ 

(II). (Low-Low estimate).
It holds that for any $r\in [2,3],$ any arbitrary small $\ep>0$ 
\begin{align}\label{bilinear-worst}
    \big\|B^{\mu\nu}(P_{\leq -5} f, P_{\leq -5} \,g)\big\|_{L^r}\lesssim_{\ep} \big\|P_{\leq -5}\f{f}{|\na|}\big\|_{L^p} \big\|P_{\leq -5}\f{g}{|\nabla|}\big\|_{L^q},
\end{align}
where $\f{1}{p}+\f{1}{q}=
     \f{1}{3}+\f{1}{2r}+\ep.$ 
Moreover, for any $\theta\in [0, \f{1}{2}],$ 
it holds that 
\begin{align}\label{BL-low-moreder}
    \big\||\na|^{\theta}\,B^{\mu\nu}(P_{\leq -5} f, P_{\leq -5} \,g)\big\|_{L^{p'}}\lesssim_{\ep}
    \min\big\{ \big\|P_{\leq -5}\f{f}{|\na|}\big\|_{L^2} \big\|P_{\leq -5}\f{g}{|\nabla|}\big\|_{L^q},  \big\|P_{\leq -5}\f{f}{|\na|}\big\|_{L^q} \big\|P_{\leq -5}\f{g}{|\nabla|}\big\|_{L^p}\big\}
\end{align}
where $p'=\f{1}{1-1/p},\, 2\leq p, q < \f{6}{1/2-\theta}$ and 
$\f1p+\f1q=\f{7-2\theta}{12}+\ep.$ 

(III) (High-low estimate).
It holds that for any $r\in (1,+\infty),$ any $ m\geq 0,$ 
any $\ep>0$ small enough
\begin{align}\label{BL-HL}
\|B^{\mu\nu}(P_{\geq -5}f, P_{\leq -10}\,g)\|_{W^{m,r}}\lesssim \|P_{\geq-5} f\|_{W^{m+1+\ep,p}} \big\|P_{\leq -10}\f{f}{|\na|^{1+\ep}}\big\|_{L^q},
\end{align}
where $\f{1}{p}+\f1q=\f{1}{r}.$
\end{lem}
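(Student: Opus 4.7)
\medskip

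\noindent\textbf{Proof plan.} The strategy is to analyze the bilinear symbol
$$K^{\mu\nu}(\varsigma,\varsigma'):=\frac{m_{\mu\nu}(\varsigma,\varsigma')}{i\,\phi^{\mu\nu}(\varsigma,\varsigma')}=\frac{|\varsigma|\,\cR(\varsigma-\varsigma')\,\cR(\varsigma')}{i\,\phi^{\mu\nu}(\varsigma,\varsigma')}$$
dyadically in each of the three frequency regimes (high-high, low-low, high-low), combine paraproduct/Coifman-Meyer estimates where the symbol is Mikhlin-like, and use Bernstein plus the Hardy-Littlewood-Sobolev inequality to handle the low-frequency singularities. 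The key symbolic input is the standing two-sided bound
$\max(|\varsigma-\varsigma'|,|\varsigma'|)^{-1}\lesssim |\phi^{\mu\nu}|^{-1}\lesssim \min(|\varsigma-\varsigma'|,|\varsigma'|)^{-1}$
stated in the excerpt, which gives the pointwise bound $|\varsigma|\,|\phi^{\mu\nu}|^{-1}\lesssim \max/\min$ and which allows to factor out one $|\nabla|^{-1}$ on each input.

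For the high-frequency estimates \eqref{BL-High2}--\eqref{BL-High-less2}, I would freeze the two inputs at dyadic scales $2^j,2^k$ with at least one of them $\gtrsim 2^{-10}$, and observe that on each such piece $K^{\mu\nu}$ is a standard Coifman-Meyer symbol: the derivatives $\partial_\varsigma^\alpha\partial_{\varsigma'}^\beta K^{\mu\nu}$ are controlled because the denominator $\phi^{\mu\nu}$ grows linearly at infinity (no time resonances). A standard Littlewood-Paley paraproduct decomposition then yields a bilinear Hölder-type estimate, and the $\lambda>5/2$ regularity loss comes from the need to sum the dyadic pieces absolutely. This essentially reproduces the bilinear framework used in \cite{G-P-global}. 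Estimate \eqref{BL-HL} is a particular case: in the high-low regime, $|\varsigma|\approx |\varsigma-\varsigma'|$ and $|\phi^{\mu\nu}|\gtrsim |\varsigma'|$, so $K^{\mu\nu}$ acts like a Mikhlin multiplier of order $+1$ on the high input and like $|\nabla|^{-1}$ on the low input; paraproduct plus Hölder gives the result with the stated $1+\epsilon$ loss.

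The most delicate step is the low-low estimates \eqref{bilinear-worst} and \eqref{BL-low-moreder}, where $K^{\mu\nu}$ genuinely develops low-frequency singularities. I would fix $P_{\leq -5}f=P_j f$, $P_{\leq -5}g=P_k g$ at dyadic scales $2^j,2^k\leq 1/32$ and observe that on the corresponding frequency rectangle
$$|K^{\mu\nu}(\varsigma,\varsigma')|\lesssim \frac{2^{\max(j,k)}}{2^{\min(j,k)}},$$
with the output supported in an annulus of size $2^{\max(j,k)}$. A standard Coifman-Meyer argument at fixed scales (after rescaling the symbol to a unit frequency shell) yields
$\|B^{\mu\nu}(P_jf,P_kg)\|_{L^r}\lesssim 2^{\max-\min}\|P_j f\|_{L^{p_0}}\|P_k g\|_{L^{q_0}}$
with $1/p_0+1/q_0=1/r$. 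Bernstein then trades the $2^{\max-\min}$ factor for the integrability defect $1/3+1/(2r)+\epsilon$, and the $\epsilon$ is precisely what allows the resulting geometric double-series in $j,k$ to converge. For \eqref{BL-low-moreder}, the extra factor $|\nabla|^\theta$ in front provides an additional $2^{\theta\max}$ gain in the symbol bound, which permits the wider range $2\leq p,q<6/(1/2-\theta)$; the upper bound on $p,q$ is dictated by how much integrability the $|\nabla|^\theta$ gain can afford in the dyadic summation.

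The main obstacle will be Steps (II)--(III): handling the low-frequency singular denominator in a way that produces exactly the stated integrability relation $1/p+1/q=1/3+1/(2r)+\epsilon$ (respectively $(7-2\theta)/12+\epsilon$) without losing in the dyadic summation, and verifying that the $\epsilon$-defect is just enough to absorb the logarithmic divergence coming from summing the diagonal $j=k$ scales. Once these two estimates are established, the high-frequency and high-low estimates follow from essentially routine paraproduct/Coifman-Meyer reductions, so the bulk of the technical work lies in the careful symbol bookkeeping at low frequencies.
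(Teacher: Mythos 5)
There is a genuine gap, and it sits exactly where you located the crux: the low--low estimates (II). Your scheme rests on the pointwise symbol bound $|m_{\mu\nu}/\phi^{\mu\nu}|\lesssim 2^{\max(j,k)-\min(j,k)}$ on a low--low block, but this is false for the phases $\phi^{++}$ and $\phi^{+-}$: since $P(r)=Vr-\f{r^3}{2V}+\dots$ at low frequencies, in the near-collinear configuration the linear parts cancel and $|\phi^{++}|\sim \min\cdot\max^2\ll\min$, so $|\varsigma|/|\phi|$ can be as large as $(\min\cdot\max)^{-1}$, far above your claimed bound. (The two-sided display in the paper preceding the lemma is only a heuristic; the quantitative lower bound actually used is $|\phi|\ge|\varsigma'|(\theta^2+d^2)$, with an angular degeneracy.) Moreover, even a correct pointwise bound would not yield a bilinear $L^{p_0}\times L^{q_0}\to L^r$ estimate: ``rescaling to a unit frequency shell'' does not remove the collinear singularity, so the rescaled symbol is not Coifman--Meyer, and its derivatives in $\varsigma'$ are unbounded near the degenerate set. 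Finally, your bookkeeping (H\"older exponents $\f{1}{p_0}+\f{1}{q_0}=\f1r$ at fixed scales, then Bernstein to absorb $2^{\max-\min}$) would at best produce the relation $\f1p+\f1q\approx\f1r+\f13+\ep$, which is strictly larger than the stated $\f13+\f{1}{2r}+\ep$; the resulting weaker inequality would not support the paper's later applications (e.g.\ the use of $\dot W^{-1,r}$ norms with borderline decay). The paper's actual mechanism is different: it invokes the Guo--Pausader bilinear lemma \eqref{uesful-bi}, which bounds $B^{\mu\nu}[P_jf,P_kg]$ by the $L^b_{\varsigma}H^s_{\varsigma'}$ norm of the localized multiplier $\mathfrak{M}^{\mu\nu}_{jk}$ with integrability defect $1-\f{s}{3}$, and then verifies the key symbol estimate \eqref{useful-calculs}, which measures the angular singularity in an $L^2$-averaged fractional Sobolev sense (choosing $s=2-\f{3}{2r}-3\ep$, resp.\ $s=\f54+\f{\theta}{2}-3\ep$ for \eqref{BL-low-moreder}). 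This $H^s_{\varsigma'}$ control of the symbol is the missing idea; no pointwise bound can substitute for it.

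Two smaller points. For part (I) your justification (``$\phi^{\mu\nu}$ grows linearly at infinity, hence Coifman--Meyer'') is also incorrect: for $\phi^{++}$ with both frequencies large and nearly collinear one has $|\phi|\sim 1/\min$, and the $\ep$-loss of integrability together with $\lambda>\f52$ derivatives in \eqref{BL-High2}--\eqref{BL-High-less2} is precisely the signature of a non-CM symbol controlled only through weighted $H^s$ norms; the paper does not reprove these but cites Proposition 6.1 and Lemma 6.2 of \cite{G-P-global}, and your argument would not reproduce them. For part (III) your high-low reduction is plausible (there the phase does satisfy $|\phi|\gtrsim|\varsigma'|$ with no angular degeneracy), but the paper proves it differently, via an $L^1$ bound \eqref{L1kernel} on the bilinear kernel obtained from $L^2$ estimates of $\mathfrak{M}_{jk}$ and its Laplacians in $\varsigma$ and $\varsigma'$; if you keep the paraproduct route you still owe the symbol derivative bounds you currently only assert.
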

\begin{proof}
The estimates \eqref{BL-High2}, \eqref{BL-High-less2} are the consequence of Proposition 6.1 and Lemma 6.2 of \cite{G-P-global}. We thus only focus on the proof of \eqref{bilinear-worst}-\eqref{BL-HL}.\\[4pt]
Proof of \eqref{bilinear-worst}: 
We need the following result that is proved in [Lemma 6.2, \cite{G-P-global}]: 

For any $0\leq s\leq \f32, 2\leq r, p, q\leq \f{3}{3-2s},$ it holds that 
\begin{align}\label{uesful-bi}
    \|B^{\mu\nu}[P_j f, P_k \, g]\|_{L^r}\lesssim 
    \|\mathfrak{M}_{jk}^{\mu\nu}\|_{L_{\varsigma}^b H_{\varsigma'}^{s}}\big\|\f{P_j f}{|\na|}\big\|_{L^p}\big\|\f{P_k g}{|\na|}\big\|_{L^q}
\end{align}
where $\f1b+\f1r=\f12, \, \f1p+\f1q=1-\f{s}{3}$ and 
\beq\label{def-fM}
\mathfrak{M}_{jk}^{\mu\nu}(\varsigma, \varsigma')=\f{m_{\mu\nu}(\varsigma, \varsigma')|\varsigma-\varsigma'||\varsigma'|}{i\,\phi^{\mu\nu}(\varsigma, \varsigma')}\tilde{\vp}\big((\varsigma-\varsigma')/{2^j}\big)\tilde{\vp}\big(\varsigma'/2^k\big),
\eeq
 with $\tilde{\vp}(|\cdot|)$ being some functions localizing on the 
 annulus $\{2\leq |\cdot|\leq 4\}.$ 

  To prove \eqref{bilinear-worst}, let us write 
\begin{align*}
B^{\mu\nu}(P_{\leq -5} f, P_{\leq -5} \,g)=\bigg(\sum_{k+5\leq j\leq -5}
+\sum_{j+5\leq k\leq -5}+\sum_{\substack{|k-j|\leq 4,\\ \max\{k,j\}\leq -5}}\bigg)B^{\mu\nu}(P_{j} f, P_{k} \,g)=\colon (1)+(2)+(3).
 \end{align*}
 Let us give the proof of $(1)$ and $(3),$ the estimate (2) can be obtained from $(1)$ by symmetry.
Following closely the computations carried out in [Proposition 6, \cite{G-P-global}], we can verify that, for any 
$\mu,\nu\in\{+,-\},\, k, j\leq -5,$ any $s>1,$ 
\beq\label{useful-calculs}
 \|\mathfrak{M}_{jk}^{\mu\nu}(\varsigma,\cdot)\|_{
 H_{\varsigma'}^{s}}\lesssim 2^{(\f52-2s)k} \min\{ 1,\, {2^k}/{|\varsigma|}\}^{s-1}
 \eeq
 which leads to, since $|\varsigma|\lesssim |\varsigma-\varsigma'|\sim 2^j,$ that 
 \begin{align*}
     \|\mathfrak{M}_{jk}^{\mu\nu}(\varsigma,\cdot)\|_{L_{\varsigma}^b 
 H_{\varsigma'}^{s}}\lesssim 2^{k(\f32-s)}2^{j(\f3b+1-s)}.
 \end{align*}
 For any $r\in [2,3], 0<\ep<<\f13,$ we take $s=2-\f{3}{2r}-3\ep$ and apply \eqref{uesful-bi} to find that
 \begin{align*}
    \| (1)\|_{L^r}\lesssim \big\|P_{\leq -5}\f{g}{|\na|}\big\|_{\dot{B}_{q,q}^0} \sum_{j\leq -5} 2^{6\ep j}\big\|\f{P_j f}{|\na|}\big\|_{L^p} \lesssim  \big\|P_{\leq -5}\f{g}{|\nabla|}\big\|_{L^q}\big\|P_{\leq -5}\f{f}{|\na|}\big\|_{L^p}.
 \end{align*}
 Note that we have used the fact $L^q \hookrightarrow  \dot{B}_{q,q}^0$ for $q\geq 2.$
 We thus finish the estimate of $(1).$ Thanks again to \eqref{uesful-bi} and \eqref{useful-calculs}, 
 we handle the term (3) in  the following way:
 \begin{align*}
     \|(3)\|_{L^r}\lesssim \sum_{l\leq k+5\leq 0} \|P_{l}B^{\mu\nu}\big(P_{[k-4,k+4]}f ,P_k g\big)\|_{L^r}&\lesssim \sum_{l\leq k+5\leq 0}2^{\f3b l} 2^{(\f{5}{2}-2s)k} \big\|P_{[k-4,k+4]}\f{f}{|\na|}\big\|_{L^p} \big\|P_{k\leq -5}\f{g}{|\na|}\big\|_{L^q}
 \end{align*}
 which, by taking again $s=2-\f{3}{2r}-3\ep,$ leads to that $ \|(3)\|_{L^r}\lesssim \big\|P_{\leq -5}\f{f}{|\na|}\big\|_{L^p} \big\|P_{\leq -5}\f{g}{|\nabla|}\big\|_{L^q}.$ \\[5pt]
Proof of \eqref{BL-low-moreder}. Taking $s=\f54+\f{\theta}{2}-3\ep,$ we get from\eqref{useful-calculs}  that $|\varsigma'|^{\theta}\mathfrak{M}_{jk}^{\mu\nu}(\varsigma, \varsigma')\in L_{\varsigma}^{\infty}
 H_{\varsigma'}^{s}.$
 The estimate \eqref{BL-low-moreder} then follows from \eqref{uesful-bi} and duality. \\[5pt]
Proof of \eqref{BL-HL}. Let us write 
 \begin{align*}
 B^{\mu\nu}[P_j f, P_k \, g]= \cF_{\varsigma\rightarrow\mathrm{x}}^{-1} \bigg(\int \f{m_{\mu\nu}} {i\,\phi^{\mu\nu}} (\varsigma, \varsigma')\widehat{P_j f}(\varsigma-\varsigma') \widehat{P_k g}(\varsigma')\, \d \varsigma'\bigg)=\int K_{jk}(\mathrm{x}-\mathrm{y}, \mathrm{y}-\mathrm{z}) \f{P_j{f}}{|\na|}(\mathrm{y}) \f{P_k g}{|\na|}(\mathrm{z}) \,\d \mathrm{y} \d \mathrm{z}
 \end{align*}
where $K_{jk}^{\mu\nu}(\mathrm{x}, \mathrm{y})=\cF_{\varsigma\rightarrow\mathrm{x}, \,\varsigma'\rightarrow\mathrm{y}}\big(\mathfrak{M}_{jk}^{\mu\nu}(\varsigma, \varsigma') \big),$ $\mathfrak{M}_{jk}^{\mu\nu} $ being defined in \eqref{def-fM}.
It thus holds that for any $j\geq -5\geq k+5,$
\beqs 
\|B^{\mu\nu}[P_j f, P_k \, g]\|_{W^{m,r}}\lesssim 2^{jm} \|K_{jk}^{\mu\nu}\|_{L_{\mathrm{x},\mathrm{y}}^1}
\|P_j f\|_{L^p} \|P_k g\|_{L^q}.
\eeqs
To achieve \eqref{BL-HL}, it suffices to prove the following fact
\begin{align}\label{L1kernel}
\|K_{jk}^{\mu\nu}\|_{L_{\mathrm{x},\mathrm{y}}^1}\lesssim 2^{2j} (1+2^k)^{4}\lesssim 2^{2j}. 
\end{align}
We will sketch the proof of $K_{jk}^{++}$, the case of $K_{jk}^{+-}$ is similar, the other two cases $K_{jk}^{-+}$ and  $K_{jk}^{--}$ are easier since $|\phi^{-+}|\geq 2^j , |\phi^{--}|\geq 2^j$ when $ 2^k\sim|\varsigma'|\leq \f{1}{32}|\varsigma-\varsigma'|\sim 2^j.$ Denote for simplicity 
$$\phi(\varsigma,\varsigma')=\phi^{++}(\varsigma,\varsigma')=P(\varsigma)- P(\varsigma-\varsigma')- P(\varsigma').$$
Following the calculations made in [Proposition 6.1, \cite{G-P-global}], we find that for any $|\varsigma'|\leq \f{1}{32}|\varsigma-\varsigma'|,$
\begin{align*}
    &|\na_{\varsigma'}\phi|\lesssim \f{|\varsigma|}{\langle \varsigma'\rangle^2\langle \varsigma\rangle}+\sin \theta 
    , \quad  |\na_{\varsigma}\phi|\lesssim \f{|\varsigma'|}{\langle \varsigma \rangle^3}+\f{|\varsigma'|}{|\varsigma|} \sin\theta, \quad |\Delta_{\varsigma'}\phi|\lesssim \f{1}{ |\varsigma'|}, \quad |\Delta_{\varsigma}\phi|\lesssim \f{|\varsigma'|}{\langle\varsigma\rangle^2}\\
&|\na_{\varsigma'}\na_{\varsigma}\phi|\lesssim \f{1}{|\varsigma|},\quad 
\big|(\na_{\varsigma'}\Delta_{\varsigma}\na_{\varsigma}\Delta_{\varsigma'})\phi\big|\lesssim \f{1}{|\varsigma'|^2},\quad |\Delta_{\varsigma'}\Delta_{\varsigma}\phi|\lesssim \f{1}{|\varsigma|^5},
\end{align*}
and 
\beqs 
|\phi|\geq |\varsigma'|(\theta^2+d^2),
\eeqs
 where $\theta$ is the angle between the vectors $\varsigma$ and $\varsigma'.$ From the above estimates, we readily compute that 
 \begin{align*}
    | \mathfrak{M}_{jk} |\lesssim 2^{2j} A_{jk}, \quad |\Delta_{\varsigma}\mathfrak{M}_{jk}|\lesssim A_{jk}^2, \quad |\Delta_{\varsigma'}\mathfrak{M}_{jk}|\lesssim 2^{2(j-k)}A_{jk}^2, \quad |\Delta_{\varsigma'}\Delta_{\varsigma}\mathfrak{M}_{jk}|\lesssim 2^{-3k} A_{jk}^3
 \end{align*}
 where $A_{jk}=\big(\theta^2+(\f{2^{j}}{\langle 2^j\rangle\langle 2^k\rangle})^2\big)^{-1}.$
 We thus obtain that 
 \begin{align*}
\|\mathfrak{M}_{jk}\|_{L^2_{\varsigma,\varsigma'}}\lesssim 2^{\f72 j} 2^{\f32 k} \langle 2^k\rangle, \quad \|\Delta_{\varsigma'}\mathfrak{M}_{jk}\|_{L^2_{\varsigma,\varsigma'}}\lesssim 2^{\f72 j} 2^{-\f12 k} \langle 2^k\rangle^3, \\
\|\Delta_{\varsigma}\mathfrak{M}_{jk}\|_{L^2_{\varsigma,\varsigma'}}\lesssim 2^{\f32 j} 2^{\f32 k} \langle 2^k\rangle^3, \quad \|\Delta_{\varsigma'}\Delta_{\varsigma}\mathfrak{M}_{jk}\|_{L^2_{\varsigma,\varsigma'}}\lesssim 2^{\f32 j} 2^{-\f12 k} \langle 2^k\rangle^5. 
 \end{align*}
 The estimate \eqref{L1kernel} then follows from the inequality 
 \begin{align*}  \|K_{jk}^{\mu\nu}\|_{L_{\mathrm{x},\mathrm{y}}^1}\lesssim\|\mathfrak{M}_{jk}\|_{L^2_{\varsigma,\varsigma'}}^{{1}/{16}}
\|\Delta_{\varsigma'}\mathfrak{M}_{jk}\|_{L^2_{\varsigma,\varsigma'}}^{3/16}\|\Delta_{\varsigma}\mathfrak{M}_{jk}\|_{L^2_{\varsigma,\varsigma'}}^{3/16}\|\Delta_{\varsigma'}\Delta_{\varsigma}\mathfrak{M}_{jk}\|_{L^2_{\varsigma,\varsigma'}}^{9/16}.
 \end{align*}
\end{proof}

\section*{Acknowledgement}
The research of the first author is supported by the ANR 
project  ANR-23-CE40-0014-01 
while the second author is supported by the
 ANR 
 project ANR-24-CE40-3260.
The second author would like to thank Luis Miguel Rodrigues for enlightening discussions on the `high frequency damping' technique in the study of the stability of travelling waves
in partially dissipative perturbations of hyperbolic systems and for pointing out the references \cite{Mascia-Zumbrum, Miguel-zumbrum,Miguel-Faye}.

\bibliographystyle{abbrv}

\nocite{*}
\bibliography{ref}

\begin{thebibliography}{10}

\bibitem{Alexander-Pego-Sachs}
J.~C. Alexander, R.~L. Pego, and R.~L. Sachs.
\newblock On the transverse instability of solitary waves in the {K}adomtsev-{P}etviashvili equation.
\newblock {\em Phys. Lett. A}, 226(3-4):187--192, 1997.

\bibitem{BCK-nonlinearity}
J.~Bae, J.~Choi, and B.~Kwon.
\newblock Formation of singularities in plasma ion dynamics.
\newblock {\em Nonlinearity}, 37(4):Paper No. 045011, 29, 2024.

\bibitem{bae2024structure}
J.~Bae, Y.~Kim, and B.~Kwon.
\newblock Structure of singularities for the euler-poisson system of ion dynamics, 2024.

\bibitem{BK-existencesolitary-EP}
J.~Bae and B.~Kwon.
\newblock Small amplitude limit of solitary waves for the {E}uler-{P}oisson system.
\newblock {\em J. Differential Equations}, 266(6):3450--3478, 2019.

\bibitem{Bae-Kwon-linearEP}
J.~Bae and B.~Kwon.
\newblock Linear stability of solitary waves for the isothermal {E}uler-{P}oisson system.
\newblock {\em Arch. Ration. Mech. Anal.}, 243(1):257--327, 2022.

\bibitem{book-danchin}
H.~Bahouri, J.-Y. Chemin, and R.~Danchin.
\newblock {\em Fourier analysis and nonlinear partial differential equations}, volume 343 of {\em Grundlehren der mathematischen Wissenschaften [Fundamental Principles of Mathematical Sciences]}.
\newblock Springer, Heidelberg, 2011.

\bibitem{Chen-Luhrmann}
G.~Chen and J.~L\"{u}hrmann.
\newblock Asymptotic stability of the sine-gordon kink.
\newblock {\em https://arxiv.org/abs/2411.07004}, 2024.

\bibitem{Collot-Germain}
C.~Collot and P.~Germain.
\newblock Asymptotic stability of solitary waves for one dimensional nonlinear schr\"{o}dinger equations.
\newblock {\em https://arxiv.org/abs/2306.03668}, 2023.

\bibitem{degond}
S.~Cordier, P.~Degond, P.~Markowich, and C.~Schmeiser.
\newblock Travelling wave analysis of an isothermal {E}uler-{P}oisson model.
\newblock {\em Ann. Fac. Sci. Toulouse Math. (6)}, 5(4):599--643, 1996.

\bibitem{Cuccagna}
S.~Cuccagna.
\newblock On asymptotic stability in 3{D} of kinks for the {$\phi^4$} model.
\newblock {\em Trans. Amer. Math. Soc.}, 360(5):2581--2614, 2008.

\bibitem{Kerr2}
M.~Dafermos, G.~Holzegel, I.~Rodnianski, and M.~Taylor.
\newblock The non-linear stability of the schwarzschild family of black holes.
\newblock {\em arXiv:2104.08222}.

\bibitem{Proof-GP}
F.~Dell'Oro and D.~Seifert.
\newblock A short elementary proof of the {G}earhart-{P}r\"{u}ss theorem for bounded semigroups.
\newblock In {\em Control and inverse problems}, Trends Math., pages 241--246. Birkh\"{a}user/Springer, Cham, 2023.

\bibitem{Delort-Masmoudi}
J.-M. Delort and N.~Masmoudi.
\newblock {\em Long-time dispersive estimates for perturbations of a kink solution of one-dimensional cubic wave equations}, volume~1 of {\em Memoirs of the European Mathematical Society}.
\newblock EMS Press, Berlin, 2022.

\bibitem{WW-Acta}
Y.~Deng, A.~D. Ionescu, B.~Pausader, and F.~Pusateri.
\newblock Global solutions of the gravity-capillary water-wave system in three dimensions.
\newblock {\em Acta Math.}, 219(2):213--402, 2017.

\bibitem{Miguel-Faye}
G.~Faye and L.~M. Rodrigues.
\newblock Exponential asymptotic stability of {R}iemann shocks of hyperbolic systems of balance laws.
\newblock {\em SIAM J. Math. Anal.}, 55(6):6425--6456, 2023.

\bibitem{Gearhart}
L.~Gearhart.
\newblock Spectral theory for contraction semigroups on {H}ilbert space.
\newblock {\em Trans. Amer. Math. Soc.}, 236:385--394, 1978.

\bibitem{Germain-JEDP}
P.~Germain.
\newblock Space-time resonances.
\newblock Journ\'ees \'equations aux d\'eriv\'ees partielles, pages 1--10. Groupement de recherche 2434 du CNRS, 2010.

\bibitem{GMS-annals}
P.~Germain, N.~Masmoudi, and J.~Shatah.
\newblock Global solutions for the gravity water waves equation in dimension 3.
\newblock {\em C. R. Math. Acad. Sci. Paris}, 347(15-16):897--902, 2009.

\bibitem{Germain-Pusateri}
P.~Germain and F.~Pusateri.
\newblock Quadratic {K}lein-{G}ordon equations with a potential in one dimension.
\newblock {\em Forum Math. Pi}, 10:Paper No. e17, 172, 2022.

\bibitem{Germain-Pusateri-Rousset}
P.~Germain, F.~Pusateri, and F.~Rousset.
\newblock Asymptotic stability of solitons for m{K}d{V}.
\newblock {\em Adv. Math.}, 299:272--330, 2016.

\bibitem{Book-grafakos}
L.~Grafakos.
\newblock {\em Classical {F}ourier analysis}, volume 249 of {\em Graduate Texts in Mathematics}.
\newblock Springer, New York, second edition, 2008.

\bibitem{GGPS-derivationIEP}
E.~Grenier, Y.~Guo, B.~Pausader, and M.~Suzuki.
\newblock Derivation of the ion equation.
\newblock {\em Quart. Appl. Math.}, 78(2):305--332, 2020.

\bibitem{Guo-3dEEP}
Y.~Guo.
\newblock Smooth irrotational flows in the large to the {E}uler-{P}oisson system in {$\bold R^{3+1}$}.
\newblock {\em Comm. Math. Phys.}, 195(2):249--265, 1998.

\bibitem{GHZ-EEP}
Y.~Guo, L.~Han, and J.~Zhang.
\newblock Absence of shocks for one dimensional {E}uler-{P}oisson system.
\newblock {\em Arch. Ration. Mech. Anal.}, 223(3):1057--1121, 2017.

\bibitem{G-P-global}
Y.~Guo and B.~Pausader.
\newblock Global smooth ion dynamics in the {E}uler-{P}oisson system.
\newblock {\em Comm. Math. Phys.}, 303(1):89--125, 2011.

\bibitem{Guo-Pu-kdvlimit}
Y.~Guo and X.~Pu.
\newblock Kd{V} limit of the {E}uler-{P}oisson system.
\newblock {\em Arch. Ration. Mech. Anal.}, 211(2):673--710, 2014.

\bibitem{Guo-Peng-Wang}
Z.~Guo, L.~Peng, and B.~Wang.
\newblock Decay estimates for a class of wave equations.
\newblock {\em J. Funct. Anal.}, 254(6):1642--1660, 2008.

\bibitem{G-N-T-normalform}
S.~Gustafson, K.~Nakanishi, and T.-P. Tsai.
\newblock Scattering for the {G}ross-{P}itaevskii equation.
\newblock {\em Math. Res. Lett.}, 13(2-3):273--285, 2006.

\bibitem{Mariana-scheel-1dEP}
M.~Haragus and A.~Scheel.
\newblock Linear stability and instability of ion-acoustic plasma solitary waves.
\newblock {\em Phys. D}, 170(1):13--30, 2002.

\bibitem{Ionescu-pausader-2dEEP}
A.~D. Ionescu and B.~Pausader.
\newblock The {E}uler-{P}oisson system in 2{D}: global stability of the constant equilibrium solution.
\newblock {\em Int. Math. Res. Not. IMRN}, (4):761--826, 2013.

\bibitem{Book-Kato}
T.~Kato.
\newblock {\em Perturbation theory for linear operators}, volume Band 132 of {\em Die Grundlehren der mathematischen Wissenschaften}.
\newblock Springer-Verlag New York, Inc., New York, 1966.

\bibitem{Kerr3}
S.~Klainerman and J.~Szeftel.
\newblock {\em Global nonlinear stability of {S}chwarzschild spacetime under polarized perturbations}, volume 210 of {\em Annals of Mathematics Studies}.
\newblock Princeton University Press, Princeton, NJ, 2020.

\bibitem{Kerr1}
S.~Klainerman and J.~Szeftel.
\newblock Kerr stability for small angular momentum.
\newblock {\em Pure Appl. Math. Q.}, 19(3):791--1678, 2023.

\bibitem{Li-Wu-2dEEP}
D.~Li and Y.~Wu.
\newblock The {C}auchy problem for the two dimensional {E}uler-{P}oisson system.
\newblock {\em J. Eur. Math. Soc. (JEMS)}, 16(10):2211--2266, 2014.

\bibitem{Luhrmann-Schlag}
J.~L\"{u}hrmann and W.~Schlag.
\newblock Asymptotic stability of the sine-{G}ordon kink under odd perturbations.
\newblock {\em Duke Math. J.}, 172(14):2715--2820, 2023.

\bibitem{Luhrmann-Schlag2}
J.~L\"{u}hrmann and W.~Schlag.
\newblock On codimension one stability of the soliton for the 1{D} focusing cubic {K}lein-{G}ordon equation.
\newblock {\em Commun. Am. Math. Soc.}, 4:230--356, 2024.

\bibitem{LOS-catenoid}
J.~Lührmann, S.-J. Oh, and S.~Shahshahani.
\newblock Stability of the catenoid for the hyperbolic vanishing mean curvature equation outside symmetry.
\newblock {\em Invent. math. 240, 903–1044}, 2025.

\bibitem{Martel-Merle}
Y.~Martel and F.~Merle.
\newblock Asymptotic stability of solitons of the g{K}d{V} equations with general nonlinearity.
\newblock {\em Math. Ann.}, 341(2):391--427, 2008.

\bibitem{Mascia-Zumbrum}
C.~Mascia and K.~Zumbrun.
\newblock Stability of large-amplitude shock profiles of general relaxation systems.
\newblock {\em SIAM J. Math. Anal.}, 37(3):889--913, 2005.

\bibitem{Mizumachi-cmp}
T.~Mizumachi.
\newblock Asymptotic stability of lattice solitons in the energy space.
\newblock {\em Comm. Math. Phys.}, 288(1):125--144, 2009.

\bibitem{Mizumachi-KP-nonlinear}
T.~Mizumachi.
\newblock Stability of line solitons for the {KP}-{II} equation in {$\Bbb R^2$}.
\newblock {\em Mem. Amer. Math. Soc.}, 238(1125):vii+95, 2015.

\bibitem{Mizumachi-BL-linear}
T.~Mizumachi and Y.~Shimabukuro.
\newblock Asymptotic linear stability of {B}enney-{L}uke line solitary waves in 2{D}.
\newblock {\em Nonlinearity}, 30(9):3419--3465, 2017.

\bibitem{Mizumachi-BL-nonlinear}
T.~Mizumachi and Y.~Shimabukuro.
\newblock Stability of {B}enney-{L}uke line solitary waves in 2 dimensions.
\newblock {\em SIAM J. Math. Anal.}, 52(5):4238--4283, 2020.

\bibitem{Mizu-Nikolay}
T.~Mizumachi and N.~Tzvetkov.
\newblock {$L^2$}-stability of solitary waves for the {K}d{V} equation via {P}ego and {W}einstein's method.
\newblock In {\em Harmonic analysis and nonlinear partial differential equations}, volume B49 of {\em RIMS K\^oky\^uroku Bessatsu}, pages 33--63. Res. Inst. Math. Sci. (RIMS), Kyoto, 2014.

\bibitem{NRS-EEP}
P.~Noble, L.~M. Rodrigues, and C.~Sun.
\newblock Spectral instability of small-amplitude periodic waves of the electronic {E}uler-{P}oisson system.
\newblock {\em Nonlinearity}, 36(9):4615--4640, 2023.

\bibitem{Pego-Sun}
R.~L. Pego and S.-M. Sun.
\newblock Asymptotic linear stability of solitary water waves.
\newblock {\em Arch. Ration. Mech. Anal.}, 222(3):1161--1216, 2016.

\bibitem{Pego-Weinstein-kdv}
R.~L. Pego and M.~I. Weinstein.
\newblock Asymptotic stability of solitary waves.
\newblock {\em Comm. Math. Phys.}, 164(2):305--349, 1994.

\bibitem{Pego-Weinstein-Boussinesq}
R.~L. Pego and M.~I. Weinstein.
\newblock Convective linear stability of solitary waves for {B}oussinesq equations.
\newblock {\em Stud. Appl. Math.}, 99(4):311--375, 1997.

\bibitem{Pruss}
J.~Pr\"{u}ss.
\newblock On the spectrum of {$C\sb{0}$}-semigroups.
\newblock {\em Trans. Amer. Math. Soc.}, 284(2):847--857, 1984.

\bibitem{Miguel-zumbrum}
L.~M. Rodrigues and K.~Zumbrun.
\newblock Periodic-coefficient damping estimates, and stability of large-amplitude roll waves in inclined thin film flow.
\newblock {\em SIAM J. Math. Anal.}, 48(1):268--280, 2016.

\bibitem{RS-WW}
F.~Rousset and C.~Sun.
\newblock Transverse {L}inear {S}tability of {O}ne-{D}imensional {S}olitary {G}ravity {W}ater {W}aves.
\newblock {\em Arch. Ration. Mech. Anal.}, 249(3):Paper No. 36, 2025.

\bibitem{RT-JMPA}
F.~Rousset and N.~Tzvetkov.
\newblock Transverse nonlinear instability of solitary waves for some {H}amiltonian {PDE}'s.
\newblock {\em J. Math. Pures Appl. (9)}, 90(6):550--590, 2008.

\bibitem{RT-AIHP}
F.~Rousset and N.~Tzvetkov.
\newblock Transverse nonlinear instability for two-dimensional dispersive models.
\newblock {\em Ann. Inst. H. Poincar\'{e} C Anal. Non Lin\'{e}aire}, 26(2):477--496, 2009.

\bibitem{RT-MRL}
F.~Rousset and N.~Tzvetkov.
\newblock A simple criterion of transverse linear instability for solitary waves.
\newblock {\em Math. Res. Lett.}, 17(1):157--169, 2010.

\bibitem{R-T-Transinstability}
F.~Rousset and N.~Tzvetkov.
\newblock Transverse instability of the line solitary water-waves.
\newblock {\em Invent. Math.}, 184(2):257--388, 2011.

\bibitem{R-T-KP-I}
F.~Rousset and N.~Tzvetkov.
\newblock Stability and instability of the {K}d{V} solitary wave under the {KP}-{I} flow.
\newblock {\em Comm. Math. Phys.}, 313(1):155--173, 2012.

\bibitem{sagdeev1966cooperative}
R.~Sagdeev.
\newblock Cooperative phenomena and shock waves in collisionless plasmas.
\newblock {\em Reviews of plasma physics}, 4:23, 1966.

\bibitem{Zakharov-K}
V.~Zakharov.
\newblock Instability and nonlinear oscillations of solitons.
\newblock {\em JEPT Lett}, 22:177--173, 1975.

\bibitem{Zakharov-R}
V.~Zakharov and A.~Rubenchik.
\newblock Instability of waveguides and solitons in nonlinear media.
\newblock {\em Sov. Phys. JETP}, 38:494--500, 1974.

\bibitem{Zworski-book}
M.~Zworski.
\newblock {\em Semiclassical analysis}, volume 138 of {\em Graduate Studies in Mathematics}.
\newblock American Mathematical Society, Providence, RI, 2012.

\end{thebibliography}
\end{document}